\newcommand{\qednow}{\pushQED{\qed}\qedhere\popQED}
\newcommand{\Hom}       {\operatorname{Hom}}
\newcommand{\iHom}      {\operatorname{\underline{\Hom}}}
\newcommand{\tFun}       {\operatorname{\mathbf{Fun}}}
\newcommand{\Map}       {\operatorname{Map}}
\newcommand{\map}       {\operatorname{map}}
\newcommand{\Fun}       {\operatorname{Fun}}
\newcommand{\Cart}       {\operatorname{Cart}}
\newcommand{\op}        {\operatorname{op}}
\newcommand{\opl}        {\operatorname{opl}}
\newcommand{\laxlim}    {\operatorname*{laxlim}}
\newcommand{\laxlimdag}    {\operatorname*{laxlim^\dagger}}
\newcommand{\oplaxcolimdag}    {\operatorname*{oplaxcolim^\dagger}}
\newcommand{\oplaxlim}    {\operatorname*{oplaxlim}}
\newcommand{\oplaxlimdag}    {\operatorname*{oplaxlim^\dagger}}
\newcommand{\gl}	{\mathrm{gl}}
\newcommand{\Tr}	{\mathrm{Tr}}
\newcommand{\Spec}{\operatorname{Spec}}
\newcommand{\Glo}[1]{\operatorname{Glo}_{#1}}   
\newcommand{\Spc}{\mathcal{S}}          
\newcommand{\Spcgl}[1]{\mathcal{S}_{#1\text{-}\gl}}   
\newcommand{\Spcglq}[1]{\Spcgl{\CF}^{\mathrm{fq}}} 
\newcommand{\Spgl}[1]{\Sp_{#1\text{-}\gl}}
\newcommand{\Ar}{\operatorname{Ar}}     
\newcommand{\lax}{\mathrm{lax}}
\newcommand{\lex}{\mathrm{lex}}
\newcommand{\cart}{\mathrm{ct}}
\newcommand{\LocSys}{\mathrm{LocSys}}
\newcommand{\oplaxslice}{\mathbin{\downarrow^{\opl}}}
\newcommand{\laxslice}{\mathbin{\downarrow^{\lax}}}
\newcommand{\ul}[1]{{\underline{#1}}}
\newcommand{\Nat}{\operatorname{Nat}}
\newcommand{\CB}        {{\mathbf{B}}}
\newcommand{\CC}        {{\mathcal{C}}}
\newcommand{\CR}        {{\mathcal{R}}}
\newcommand{\CD}        {{\mathcal{D}}}
\newcommand{\CE}        {{\mathcal{E}}}
\newcommand{\CG}        {{\mathcal{G}}}
\newcommand{\CI}        {{\mathcal{I}}}
\newcommand{\CJ}        {{\mathcal{J}}}
\newcommand{\CL}        {{\mathcal{L}}}
\newcommand{\CM}        {{\mathcal{M}}}
\newcommand{\CQ}        {{\mathcal{Q}}}
\newcommand{\CT}        {{\mathcal{T}}}
\newcommand{\CW}        {{\mathcal{W}}}
\newcommand{\CF}        {{\mathcal{F}}}
\newcommand{\CO}        {{\mathcal{O}}}
\renewcommand{\CW}      {{\mathcal{W}}}
\newcommand{\Q}         {{\mathbb{Q}}}
\newcommand{\Z}         {{\mathbb{Z}}}
\newcommand{\T}          {\mathbb{T}}
\newcommand{\CBD}        {{\mathbb{D}}}
\newcommand{\rP}         {\mathbf{P}}
\newcommand{\rL}         {\mathrm{L}}
\newcommand{\rR}         {\mathrm{R}}
\newcommand{\fF}         {\mathrm{F}}
\newcommand{\bG}        {{\mathbf{G}}}
\newcommand{\bS}        {{\mathbf{S}}}
\newcommand{\bE}        {{\mathbf{E}}}
\newcommand{\bC}        {{\mathbf{C}}}
\newcommand{\bGamma}{\mathpalette\makebbGamma\relax}
\newcommand{\makebbGamma}[2]{%
\raisebox{\depth}{\scalebox{1}[-1]{$\mathsurround=0pt#1\mathbb{L}$}}%
}
\newcommand{\sfS}       {\mathsf{S}}
\newcommand{\Mod}{\operatorname{Mod}}   
\newcommand{\CAlg}{\operatorname{CAlg}} 
\newcommand{\QCoh}{\operatorname{QCoh}} 
\newcommand{\Aball}       {{\mathrm{Ab}}}
\newcommand{\Orb}[1]       {\mathrm{Orb}_{#1}}
\newcommand{\Tori}       {\mathrm{Tori}}
\newcommand{\Sp}        {\mathrm{Sp}}
\newcommand{\Cat}       {\mathrm{Cat}}
\newcommand{\tCat}       {\mathrm{\mathbf{Cat}}}
\newcommand{\tC}		{\mathbf{C}}
\newcommand{\rtri}        {\triangleright}
\newcommand{\PrR}       {\mathrm{Pr}^{\rR,\mathrm{lax}}_{\mathrm{st}}}
\newcommand{\PrL}       {\CAlg(\mathrm{Pr}^{\mathrm{L}}_{\mathrm{st}})}
\newcommand{\pt}       {\mathrm{pt}}
\newcommand{\ab}       {\mathrm{ab}}
\newcommand{\fab}       {\mathrm{fab}}
\newcommand{\pr}        {\mathrm{pr}}
\newcommand{\id}        {\mathrm{id}}
\newcommand{\fgt}		{\mathrm{fgt}}
\newcommand{\aug}		{\mathrm{aug}}
\newcommand{\FinSet} 		{\mathrm{FinSet}}
\newcommand{\rex}		{\mathrm{rex}}
\newcommand{\fin}		{\mathrm{fin}}
\newcommand{\ltri}		{\triangleleft}
\newcommand{\1}{\mathbbm{1}}
\newcommand{\oplax}{\mathrm{oplax}}
\newcommand{\ev}{\mathrm{ev}}
\renewcommand{\ell}{\mathrm{ell}}
\newcommand{\quotientmap}{{quotient map}}
\newcommand{\weakcats}{naive global 2-rings}
\newcommand{\decatcoh}[3]{\mathbb{H}_{{#1}}(#2,#3)}
\newcommand{\decatcohsp}[3]{\textup{H}_{{#1}}(#2,#3)}
\newcommand{\genref}[2]{\Gamma_{#1}(#2)}
\newcommand{\pushfor}{\mathbb{P}}
\newcommand{\sX}{\mathcal{X}}
\newcommand{\sY}{\mathcal{Y}}
\newcommand{\TwoGlRingTpregen}[2]{\mathrm{2CAlg}_{#1\text{-}\gl}^{#2\textup{-pre}}}
\newcommand{\TwoGlRingTgen}[2]{\mathrm{2CAlg}_{#1\text{-}\gl}^{#2}}
\newcommand{\NaiveTwoGlRing}[1]{\mathrm{2CAlg}^{\mathrm{naive}}_{#1\text{-}\gl}}
\newcommand{\colim}  		{\operatornamewithlimits{colim}}
\newcommand{\cocolon}{\nobreak \mskip6mu plus1mu \mathpunct{}\nonscript\mkern-\thinmuskip {:}\mskip2mu \relax}
\newcommand{\Unco}[1]       {\mathrm{Un}^\mathrm{co}(#1)}
\newcommand{\Unct}[1]       {\mathrm{Un}^\mathrm{ct}(#1)}
\tikzset{curve/.style={settings={#1},to path={(\tikztostart)
        .. controls ($(\tikztostart)!\pv{pos}!(\tikztotarget)!\pv{height}!270:(\tikztotarget)$)
        and ($(\tikztostart)!1-\pv{pos}!(\tikztotarget)!\pv{height}!270:(\tikztotarget)$)
        .. (\tikztotarget)\tikztonodes}},
settings/.code={\tikzset{quiver/.cd,#1}
    \def\pv##1{\pgfkeysvalueof{/tikz/quiver/##1}}},
quiver/.cd,pos/.initial=0.35,height/.initial=0}
\newtheoremstyle{introthms}
{}{}{\itshape}{}{\bfseries }{}{ }
{\thmname{#1} \thmnumber{#2}. \thmnote{\bfseries{(#3)}}}
\theoremstyle{introthms}
\newtheorem{introthm}{Theorem}
\newtheorem{theorem}{Theorem}[section]
\newtheorem*{thm*}{Theorem}
\newtheorem{lemma}[theorem]{Lemma}
\newtheorem{proposition}[theorem]{Proposition}
\newtheorem{corollary}[theorem]{Corollary}
\theoremstyle{definition}
\newtheorem{remark}[theorem]{Remark}
\newtheorem{definition}[theorem]{Definition}
\newtheorem*{definition*}{Definition}
\newtheorem{example}[theorem]{Example}
\newtheorem{warning}[theorem]{Warning}
\newtheorem{non-example}[theorem]{Nonexample}
\newtheorem{construction}[theorem]{Construction}
\newtheorem{notation}[theorem]{Notation}
\numberwithin{equation}{theorem}
\newtheorem*{remark*}{Remark}
\newtheorem*{question*}{Question}
\theoremstyle{definition}
\tikzset{curve/.style={settings={#1},to path={(\tikztostart)
        .. controls ($(\tikztostart)!\pv{pos}!(\tikztotarget)!\pv{height}!270:(\tikztotarget)$)
        and ($(\tikztostart)!1-\pv{pos}!(\tikztotarget)!\pv{height}!270:(\tikztotarget)$)
        .. (\tikztotarget)\tikztonodes}},
settings/.code={\tikzset{quiver/.cd,#1}
    \def\pv##1{\pgfkeysvalueof{/tikz/quiver/##1}}},
quiver/.cd,pos/.initial=0.35,height/.initial=0}
\tikzset{tail reversed/.code={\pgfsetarrowsstart{tikzcd to}}}
\tikzset{2tail/.code={\pgfsetarrowsstart{Implies[reversed]}}}
\tikzset{2tail reversed/.code={\pgfsetarrowsstart{Implies}}}
\tikzset{no body/.style={/tikz/dash pattern=on 0 off 1mm}}
\title{Global 2-rings and genuine refinements}
\author{David Gepner}
\address{D.G.: Kreiger Hall, Johns Hopkins University, Baltimore, MD 21218, United States}
\email{gepner@jhu.edu}
\author{Sil Linskens}
\address{S.L.: Fakultät für Mathematik, Universität Regensburg, 93040 Regensburg, Germany}
\email{sil.linskens@mathematik.uni-regensburg.de }
\author{Luca Pol}
\address{L.P.: Max Plank Institute for Mathematics, Vivatsgasse 7, 53111 Bonn, Germany}
\email{pol@mpim-bonn.mpg.de}
\renewcommand\sslash{/\!\!/}
\begin{document}

\begin{abstract}
We introduce the notion of a naive global 2-ring: a functor from the opposite of the $\infty$-category of global spaces to presentably symmetric monoidal stable $\infty$-categories. By passing to global sections, every naive global 2-ring decategorifies to a multiplicative cohomology theory on global spaces, i.e.~a naive global ring. We suggest when a naive global 2-ring deserves to be called \emph{genuine}. As evidence, we associate to such a global 2-ring a family of equivariant cohomology theories which satisfy a version of the change of group axioms introduced in \cite{GKV95}. We further show that the decategorified multiplicative global cohomology theory associated to a genuine global $2$-ring canonically refines to an $\mathbb{E}_\infty$-ring object in global spectra. As we show, two interesting examples of genuine global 2-rings are given by quasi-coherent sheaves on the torsion points of an oriented spectral elliptic curve and Lurie's theory of tempered local systems. In particular, we obtain global spectra representing equivariant elliptic cohomology and tempered cohomology.
\end{abstract}

\keywords{Global spectra, elliptic cohomology, tempered cohomology}
\subjclass[2020]{55N91, 14A30, 55N34, 55P42}

\maketitle
\setcounter{tocdepth}{1}

\tableofcontents

\section{Introduction}

The study of cohomology theories for equivariant spaces has long been a key aspect of algebraic topology. However it has, in recent years, become increasingly clear that many of the most interesting and important cohomology theories defined on equivariant spaces are more properly understood as cohomology theories on topological stacks. Often this perspective highlights the geometric or algebraic structure underlying such a cohomology theory in a way which the restriction to any group does not. As some examples, we note that Borel cohomology, complex K-theory, (stable) bordism and stable cohomotopy all admit the additional functoriality which characterizes a cohomology theory defined on topological stacks. More recent additions to the list are tempered cohomology and global elliptic cohomology, as defined by \cite{Ell3} and \cite{GM20} respectively. Such cohomology theories have come to be known as \emph{global cohomology theories}. Before we can make concrete definitions we first have to introduce a suitable homotopy theory of topological stacks.

\subsection*{The homotopy theory of topological stacks}	We follow \cite{GH} in considering the $\infty$-category $\Spc_{\gl}$ of \emph{global spaces} (there called \emph{orbispaces}). Global spaces are defined in analogy to (Bredon) $G$-spaces, the $\infty$-category of which, by the theorem of Elmendorf, is equivalent to the presheaf $\infty$-category on the $G$-orbit category. Similarly, global spaces are defined to be a presheaf $\infty$-category on the $\infty$-category $\Glo{}$, which one should interpret as an $\infty$-category of orbit stacks, with objects given by $\CB G$ for a compact Lie group $G$. Up to homotopy, morphisms in $\Glo{}$ from $\CB H\rightarrow \CB G$  are given by conjugacy classes of group homomorphisms $\alpha\colon H\rightarrow G$. A map $f\colon \CB H\rightarrow \CB G$ is called \emph{faithful} if it is represented by a monomorphism of groups. The $\infty$-categories of $G$-spaces and global spaces are closely related: There is a fully faithful colimit preserving functor 
\[
-\sslash G\colon \Spc_G \to{\Spc_{\gl}}_{/\CB G}
\]
whose essential image is given by those global spaces over $\CB G$ whose reference map is faithful. 

It is often useful to only consider global spaces with isotropy restricted to some family $\CE$ of compact Lie groups, which one denotes $\Spcgl{\CE}$ and calls $\CE$-global spaces. For example tempered cohomology and global elliptic cohomology are most naturally defined as a cohomology theory for global spaces with isotropy in finite abelian groups and compact abelian Lie groups respectively. 

\subsection*{Multiplicative global cohomology theories}	
Having introduced our homotopy theory of stacks, one can simply define a \emph{multiplicative $\CE$-global cohomology theory} as a limit preserving functor
\[
E\colon \Spcgl{\CE}^{\op}\rightarrow \CAlg.
\]
Taking homotopy groups we obtain a functor $E^\ast$ taking values in graded commutative rings and satisfying analogues of the Eilenberg-Steenrod axioms. By higher Brown representability, the $\infty$-category of cohomology theories are equivalent to \emph{(commutative) naive global rings}, i.e., commutative algebra objects in the $\infty$-category of spectrum objects in $\CE$-global spaces.
However, as is typically the case in equivariant homotopy theory, we are more interested in ``genuine'' multiplicative global cohomology theories which are represented by\emph{(commutative) global rings}, i.e., commutative algebra objects in the $\infty$-category of \emph{$\CE$-global spectra} in the sense~\cite{Schwede18}. Global spectra can be organized into a symmetric monoidal stable $\infty$-category $\Spgl{\CE}$ which admits a suspension spectrum functor $\Sigma_+^\infty\colon \Spcgl{\CE}\rightarrow \Spgl{\CE}$. In particular, any commutative global ring $X \in \CAlg(\Spgl{\CE})$ defines a multiplicative global cohomology theory (and hence a commutative naive global ring) via the assignment
\[
\Spcgl{\CE}^{\op} \to \CAlg, \qquad  Y\mapsto \map_{\Spgl{\CE}}(\Sigma_+^\infty Y, X)
\]
However not all multiplicative global cohomology theories $E$ arise in this way. In fact a global ring contains significantly more structure. For this reason we say that a global ring $X$ is a \emph{genuine refinement} of the multiplicative global cohomology theory $E$ if there exists a natural equivalence  
\[
E(-)\simeq \map_{\Spgl{\CE}}(\Sigma_+^\infty(-), X).
\]

\begin{remark*}\label{rem:extra_str_gen_ref}
    As noted before, a genuine refinement endows $E$ with significantly more structure. For example for any compact Lie group $G$, and any $G$-space $X$ one obtains an $RO(G)$-graded abelian group $E^\star(X)$ together with dimension shifting transfer maps
    \[
    \mathrm{tr}_H^G\colon E_H^{\star+L}(X)\rightarrow E_G^{\star}(X),
    \] 
    for any subgroup $H\subset G$, where $L$ is the tangent representation of $G/H$. Moreover this structure is highly compatible as you change the group $G$, the subgroup $H$, and the space $X$.
\end{remark*}

Most of the cohomology theories recalled above are all canonically ``genuine". For example, global spectra representing Borel cohomology, K-theory, (stable) bordism and cohomotopy were constructed  by Schwede in \cite{Schwede18}. However neither tempered cohomology nor elliptic cohomology has so far been given a genuine refinement, despite the fact that considerable evidence for such a refinement is contained in \cite{Ell3} and \cites{Survey, GM20} for tempered and elliptic cohomology respectively.

\subsection*{Naive global $2$-rings}	
Therefore there is a use for general procedures which construct genuine refinements of multiplicative global cohomology theories. As such, the main concern of this paper is:
\begin{question*}
    How can one construct genuine refinements of a multiplicative global cohomology theory?
\end{question*}
We provide an answer via the process of categorification. 
One key definition of this paper is:
\begin{definition*}
    Let $\CE$ be a family of compact Lie groups. A \emph{naive ($\CE$-)global 2-ring} is a functor
    \[
    \CR \colon \Spcgl{\CE}^{\op}\rightarrow \PrL, \quad \sX \mapsto \CR_\sX, \; f \mapsto f^*
    \] from global spaces to the $\infty$-category of presentably symmetric monoidal stable $\infty$-categories, which satisfies two additional conditions that we will explain throughout this introduction. We write $f_*$ for the right adjoint to the functor $f^*$.
\end{definition*}

Given a functor $\CR$ as above, we can produce a functor $\decatcohsp{\gl}{-}{\CR}\colon \Spcgl{\CE}^{\op}\rightarrow \CAlg$ by taking endomorphism rings of the various unit objects $\1_{\CR_X}$.  The first condition which makes $\CR$ a naive $\CE$-global 2-ring is that $\decatcohsp{\gl}{-}{\CR}$ is a multiplicative global cohomology theory. Thus, we think of $\CR$ as a \emph{categorification} of $\decatcohsp{\gl}{-}{\CR}$. We summarise the situation in the following diagram:
\[\begin{tikzcd}[column sep = 0.4cm]
	&& {} &&& \fbox{\parbox{\widthof{\text{sssNaive global 2-ringssss}}}{\centering\vspace{.4em}\text{Naive global 2-rings}\vspace{1em} 
$\subset \Fun(\Spcgl{\CE}^{\op},\PrL)$}\vspace{.4em}} \\
	\\
	& {} \\
	\fbox{\parbox{\widthof{ Global rings }}{\centering\vspace{.4em}Global rings\vspace{1em} 
$\CAlg(\Spgl{\CE})$}\vspace{.4em}} && \fbox{\parbox{\widthof{ Naive global rings }}{\centering\vspace{.4em}Naive global rings\vspace{1em} 
$\mathrm{CAlg}(\mathrm{Sp}(\mathcal{S}_{\mathcal{E}\text{-}\mathrm{gl}}))$}\vspace{.4em}} &&& \fbox{\parbox{\widthof{\text{ Multiplicative global }}}{\centering\vspace{.3em}\text{\small Multiplicative global}\newline{\small cohomology theories}\vspace{.8em} 
$\Fun^{\rR}(\Spcgl{\CE}^{\op},\CAlg)$}\vspace{.3em}}  \\
	\arrow["{\text{Decat}}", shorten <=5pt, shorten >=5pt, from=1-6, to=4-6]
	\arrow["{\text{fgt}}", shorten <=5pt, shorten >=5pt,  from=4-1, to=4-3]
	\arrow["{\text{Brown rep}}", shorten <=11pt, shorten >=11pt,  tail reversed, from=4-3, to=4-6]
\end{tikzcd}\]
Our interest in considering this categorification of a multiplicative global cohomology theory comes from the fact that it endows its decategorification with significantly more structure, as we explain now.

\subsection*{Unraveling into families of equivariant cohomology theories}	
Recall that in the literature on equivariant elliptic cohomology theories associated to an elliptic curve $\mathsf{E}$, one rarely views equivariant elliptic cohomology as a global cohomology theory valued in spectra. Instead one restricts to a cohomology theory on $G$-spaces for some $G$. This has the benefit that $G$-equivariant elliptic cohomology then canonically lifts to a functor 
\[
\mathcal{E}ll_G(-)\colon \Spc_G^{\op}\rightarrow \QCoh(\mathsf{E}[\hat{G}])
\] 
valued in the stable $\infty$-category of quasi-coherent sheaves on the $\hat{G}$-torsion points of the elliptic curve (here $\hat{G}$ denotes the Pontryagin dual of the abelian compact Lie group $G$). However apriori there is a downside to this perspective: it fails to capture the global nature of equivariant elliptic cohomology. 

There is a way however to obtain the best of both worlds. One can capture the global nature of equivariant elliptic cohomology by equipping the family of functors $\mathcal{E}ll_G(-)$ with a suitable collection of \emph{change of group transformations}. Given a group homomorphism $\alpha\colon H\rightarrow G$ and a $G$-space $X$, these relate the value of $\mathcal{E}ll_H$ on $\alpha^* X$, the restriction of $X$ to an $H$-space, and the value of $\mathcal{E}ll_G$ on $X$. One can find this structure emphasised in the approach to equivariant elliptic cohomology suggested by \cite{GKV95}. As the first step of our approach, we show that any naive global $2$-ring canonically induces this data in an extremely coherent way. The next result combines \Cref{thm-E} with the results of \Cref{sec:equivariantcoh}.

\begin{introthm}\label{thm-intro1}
    Let $\CR \colon \Spcgl{\CE}^{\op} \to \PrL$ be a functor. Then for all $G\in \CE$, there exists a lax symmetric monoidal functor
    \[
    \decatcoh{G}{-}{\CR}\colon \Spc_G^{\op}\rightarrow \CR_{\CB G}
    \] 
    whose composite with the functor $\CR_{\CB G}\to \Sp$, $X \mapsto\map_{\CR_{\CB G}}(\1,X)$ agrees with $\decatcohsp{\gl}{-\sslash G}{\CR}$. Moreover, for every group homomorphism $\alpha \colon  H \to G$ there exists a natural transformation filling the square
    \begin{equation}
        \begin{tikzcd}
            {\Spc_G^{\op}} &  {\Spc_H^{\op}}\\
            {\CR_{\CB G}} & {\CR_{\CB H}.}
            \arrow["{\alpha^*}"', from=2-1, to=2-2]
            \arrow["{\decatcoh{G}{-}{\,\CR}}"', from=1-1, to=2-1]
            \arrow["{\decatcoh{H}{-}{\,\CR}}", from=1-2, to=2-2]
            \arrow["\alpha^*", from=1-1, to=1-2]
            \arrow[shorten <=10pt, shorten >=6pt, Rightarrow, from=2-1, to=1-2, "Q_{\alpha}"']
        \end{tikzcd}
    \end{equation}
    Furthermore this data is coherently functorial: This is encoded in the existence of a functor
    \[
    \decatcoh{\bullet}{-}{\CR}\colon \Glo{\CE}^{\op}\rightarrow \Fun^{\mathrm{oplax}}([1],\tCat^{\otimes,\lax})
    \] extending the assignment above.
\end{introthm}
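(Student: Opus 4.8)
The plan is to construct the coherent functor $\decatcoh{\bullet}{-}{\CR}$ first and deduce the other two assertions from it. Indeed, evaluating $\decatcoh{\bullet}{-}{\CR}$ at an object $\CB G$ of $\Glo{\CE}^{\op}$ returns the functor $\decatcoh{G}{-}{\CR}$, and evaluating it at a morphism $\alpha\colon\CB H\to\CB G$ returns the square together with its filling $Q_\alpha$, since the $2$-cell component of an oplax morphism of arrows in $\tCat^{\otimes,\lax}$ is precisely such a filling; the decategorification clause I will check en route. I will freely use the following standard facts about global spaces (contained in, or immediate from, \cite{GH}): $X\mapsto(p_X\colon X\sslash G\to\CB G)$ identifies $\Spc_G$ with the full subcategory of $(\Spcgl{\CE})_{/\CB G}$ on the faithful maps, and under this identification the restriction $\alpha^\ast\colon\Spc_G\to\Spc_H$ is the pullback functor $(\CB\alpha)^\ast$ of the $\infty$-topos $\Spcgl{\CE}$ (faithfulness being stable under base change); the square with vertices $\alpha^\ast X\sslash H$, $X\sslash G$, $\CB H$, $\CB G$ is cartesian; $(X\times Y)\sslash G\simeq(X\sslash G)\times_{\CB G}(Y\sslash G)$; and $(G/H)\sslash G\simeq\CB H$ over $\CB G$.

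\textbf{Construction for fixed $G$.} I would restrict $\CR$ along $(\Spcgl{\CE})_{/\CB G}\hookrightarrow\Spcgl{\CE}$, record for $p\colon Y\to\CB G$ the symmetric monoidal left adjoint $p^\ast:=\CR(p)\colon\CR_{\CB G}\to\CR_Y$, and pass to right adjoints --- the right adjoint $p_\ast$ of a symmetric monoidal left adjoint being canonically, and functorially via mates, lax symmetric monoidal. This gives a functor $((\Spcgl{\CE})_{/\CB G})^{\op}\to\CR_{\CB G}$, $(p\colon Y\to\CB G)\mapsto p_\ast\1_{\CR_Y}$ (on $Y'\xrightarrow{j}Y$ over $\CB G$ it uses the unit $\1_{\CR_Y}\to j_\ast j^\ast\1=j_\ast\1_{\CR_{Y'}}$), and precomposing with $-\sslash G\colon\Spc_G\hookrightarrow(\Spcgl{\CE})_{/\CB G}$ defines
\[
\decatcoh{G}{-}{\CR}\colon\Spc_G^{\op}\longrightarrow\CR_{\CB G},\qquad X\longmapsto (p_X)_\ast\1_{\CR_{X\sslash G}},
\]
where $\Spc_G^{\op}$ carries the cocartesian symmetric monoidal structure, so $X\otimes Y=X\times Y$. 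I would then verify three things. \emph{Lax monoidality}: the structure map $(p_X)_\ast\1\otimes(p_Y)_\ast\1\to(p_{X\times Y})_\ast\1$ is the (always defined) composite of the projection-formula map $(p_X)_\ast\1\otimes(p_Y)_\ast\1\to(p_X)_\ast(p_X^\ast(p_Y)_\ast\1)$ with $(p_X)_\ast$ applied to the base-change map $p_X^\ast(p_Y)_\ast\1\to r_\ast s^\ast\1$ for the cartesian square $X\sslash G\xleftarrow{r}(X\sslash G)\times_{\CB G}(Y\sslash G)\xrightarrow{s}Y\sslash G$, and these cohere formally; one does \emph{not} claim these maps are equivalences, which is exactly what ``genuine'' will demand. \emph{Limit preservation}: $-\sslash G$ preserves colimits and $\PrL\to\Cat_\infty$ preserves limits, so from $X\sslash G\simeq\colim_iX_i\sslash G$ one gets $\CR_{X\sslash G}\simeq\lim_i\CR_{X_i\sslash G}$ in $\Cat_\infty$, and then $(p_X)_\ast\1\simeq\lim_i(p_{X_i})_\ast\1$ follows by testing against an arbitrary $T\in\CR_{\CB G}$, since mapping spaces in a limit of $\infty$-categories are limits of mapping spaces. \emph{Decategorification}: $\map_{\CR_{\CB G}}(\1,-)$ applied to $\decatcoh{G}{X}{\CR}$ is $\map_{\CR_{\CB G}}(\1,(p_X)_\ast\1)\simeq\map_{\CR_{X\sslash G}}(p_X^\ast\1,\1)\simeq\map_{\CR_{X\sslash G}}(\1,\1)=\decatcohsp{\gl}{X\sslash G}{\CR}$, compatibly with $\mathbb{E}_\infty$-structures because $(p_X)_\ast$ and $\map_{\CR_{\CB G}}(\1,-)=(\CR_{\CB G}\to\Sp)_\ast$ are lax symmetric monoidal. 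Restricting along $\Orb{G}\subseteq(\Spcgl{\CE})_{/\CB G}$ gives $\decatcoh{G}{G/H}{\CR}\simeq\iota_\ast\1_{\CR_{\CB H}}$ for $\iota\colon H\hookrightarrow G$, with $\decatcoh{G}{-}{\CR}$ the right Kan extension along $\Orb{G}^{\op}\hookrightarrow\Spc_G^{\op}$; this orbit description is the source of the transfers in the Remark.

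\textbf{Naturality in $G$, and the main obstacle.} For varying $G$, the assignments $\CB G\mapsto(\Spcgl{\CE})_{/\CB G}$, $\CB G\mapsto\Spc_G$, $\CB G\mapsto\CR_{\CB G}$ and the restriction of $\CR$ to the slices are all strictly functorial on $\Glo{\CE}^{\op}$; the single non-strict step is the passage from the slice to $\CR_{\CB G}$ as the base varies. For $\alpha\colon H\to G$ and $p\colon Y\to\CB G$ one has only the Beck--Chevalley mate $(\CB\alpha)^\ast p_\ast\1_{\CR_Y}\to\bar p_\ast\1_{\CR_{\CB H\times_{\CB G}Y}}$, and taking $Y=X\sslash G$ and using that $\CR$ sends the cartesian square $\alpha^\ast X\sslash H\simeq\CB H\times_{\CB G}(X\sslash G)$ to a (cartesian, hence in particular commuting) square in $\PrL$, this mate is exactly $Q_\alpha\colon\alpha^\ast\decatcoh{G}{X}{\CR}\to\decatcoh{H}{\alpha^\ast X}{\CR}$. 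Because such mates paste coherently, the data assembles into a functor valued in oplax arrows of $\tCat^{\otimes,\lax}$ --- the oplax-commutativity of the squares is the laxness of the mates, which is why the target is $\Fun^{\mathrm{oplax}}([1],\tCat^{\otimes,\lax})$ and not $\Fun([1],-)$. I expect the real difficulty to lie entirely here: turning the pointwise recipe ``restrict $\CR$ to slices, pass to right adjoints, evaluate on units, record base-change mates'' into an honest functor of $\infty$-categories with all higher coherences. The cleanest route is to set this up abstractly for any limit-preserving $\CR\colon\CX^{\op}\to\PrL$ on an $\infty$-topos $\CX$ --- packaged as a morphism of cocartesian fibrations over a suitable category built from $\CX$, or as a morphism in an appropriate $\infty$-category of oplax functors --- and then specialize to $\CX=\Spcgl{\CE}$ with its full subcategory $\Glo{\CE}$; verifying that the $Q_\alpha$ obey the oplax-functor coherences (pasting of mates) is the technical heart.
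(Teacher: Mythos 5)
Your pointwise construction agrees exactly with the paper's (cf.\ \Cref{def-unravel-incoh}): $\decatcoh{G}{X}{\CR}=(p_X)_*\1$, morphisms via lax unit maps, $Q_\alpha$ as the Beck--Chevalley mate of the base-change square for $\alpha^*X\sslash H\simeq \CB H\times_{\CB G}(X\sslash G)$ (this identification is precisely what the paper establishes inside the proof of \Cref{lem:Q_equivalence}). Your verifications of limit preservation and of the decategorification clause are the same as \Cref{prop:Gamma_G_colim_pres} and \Cref{obser:two_unravellings_agree}, and your description of the lax monoidal structure map via projection formula and base change is the correct one (it is exactly the map shown to be invertible under genuineness in \Cref{gamma_G-strong-mon}). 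One genuine error en route: a limit-preserving $\CR\colon\Spcgl{\CE}^{\op}\to\PrL$ sends \emph{colimits} of global spaces to limits of categories, so it does \emph{not} send the pullback square defining $\alpha^*X\sslash H$ to a cartesian square in $\PrL$; that would be a descent/base-change property, which is precisely part of what ``genuine'' demands and fails for a general naive global 2-ring. Fortunately all you need to form the mate is that the square commutes, which is just functoriality of $\CR$, so this slip is harmless here --- but the conflation is worth unlearning.

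The genuine gap is the one you flag yourself: the theorem \emph{is} the coherent construction, and your proposal stops at ``packaged as a morphism of cocartesian fibrations \dots the technical heart.'' The paper fills this in \Cref{sec:decategorify} with three explicit ingredients: (i) a functorial choice of initial-object sections of cartesian fibrations whose fibers have initial objects (\Cref{s-functor}), producing the unit cone $\1_\bullet$; (ii) a ``cartesian transport to the initial object'' functor $\Tr_\emptyset$ built from free fibrations and the counit of $\fF^{\cart}\dashv\mathrm{incl}$, extended via a cone construction to be functorial in base functors that do \emph{not} preserve the terminal object (\Cref{prop:transport_to_initial_object}); and (iii) their pasting into the functor $\pushfor$ of \Cref{thm:Rel_global_sect_functor}. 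Point (ii) matters because the paper does not vary $G$ through the strict restriction functoriality you describe, but through the \emph{induction} functoriality $\alpha_!\colon\Spc_H\to\Spc_G$ coming from the oplax cocone $-\sslash\bullet$ of \Cref{prop:glquotient-oplaxlim}; $\alpha_!$ does not preserve terminal objects, and the resulting quotient maps $X\sslash G\twoheadrightarrow\alpha_!X\sslash H$ are exactly what produce the primary transformations $T_\alpha$. The squares $Q_\alpha$ of the statement are then obtained all at once by the second mate equivalence (\Cref{second-mate-equivalence}), rather than assembled directly; your plan of constructing the $Q_\alpha$ directly as coherent Beck--Chevalley mates is viable in principle but requires its own coherence machinery (naturality of mates in the sense of \cite{HHLN1}) that you would still have to set up, so it is not a shortcut. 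As submitted, the proposal correctly identifies the target of the construction but does not construct it.
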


In fact, the second condition of being a naive global 2-ring is precisely that the functors $\decatcoh{G}{-}{\CR}\colon \Spc_G^{\op}\rightarrow \CR_{\CB G}$ are all limit preserving. In this way we see that naive global 2-rings also categorify families of equivariant cohomology theories.

\subsection*{The Ginzburg--Kapranov--Vasserot axioms}

The previous result is only useful to the extent to which we are able to control the family of cohomology theories one obtains by unraveling. To isolate a case where this is possible we introduce the notion of a \emph{genuine} global 2-ring. Let $\CT\subset\CE$ be some subset of groups in the family $\CE$.

\begin{definition*}
We say that a naive global 2-ring $\CR \colon \Spcgl{\CE}^{\op} \to \PrL$ is \emph{$\CT$-genuine} if 
\begin{itemize}
    \item[(1)] $\CR$ satisfies base-change with respect to faithful morphisms $\CB H \rightarrow \CB G$ with target in $\CT$.
    \item[(2)] For any faithful morphism $f \in \Glo{\CE}$, the adjunction $(f^*,f_*)$ satisfies the projection formula. 
    \item[(3)] For all $G\in \CE$ and every irreducible $G$-representation $V$, the object $\decatcoh{G}{S^V}{\CR}\in \CR_{\CB G}$ is invertible.
\end{itemize}
\end{definition*}

In the previous definition $\CT\subset \CE$ will typically be some subset of \emph{enough injective objects}, a concept which abstracts the properties of the collection of tori inside all abelian compact Lie groups, see \Cref{def:enough-injectives}. This concept ensures that one can effectively control the values at all groups in $\CE$ by only remembering the value of $G$-equivariant cohomology theories for $G\in \CT$. The additional flexibility of specifying such a $\CT$ is crucial for applications. Specifically, the example of equivariant elliptic cohomology will only be genuine with respect to the tori.

As mentioned, the axioms of a genuine global 2-ring ensure that the unraveling $\decatcoh{\bullet}{-}{\CR}$ of a genuine global 2-ring is particularly well-behaved. In short, points (1) and (2) of the definition above ensure that the family $\decatcoh{\bullet}{-}{\CR}$ of equivariant cohomology theories transforms correctly as we vary the group. We then combine this with point (3) to show that we can extend $\decatcoh{G}{-}{\CR}$ to a family of $\mathrm{RO}(G)$-graded theories. More precisely, we show using (1) and (2) that $\decatcoh{\bullet}{-}{\CR}$ satisfies the following analogs of the axiom of \cite{GKV95}. 

\begin{introthm}\label{introthm:GKV-axioms}
Let $\CR$ be a $\CT$-genuine global 2-ring. Then the unraveling $\decatcoh{\bullet}{-}{\CR}$ satisfies the following axioms:
\begin{enumerate}[itemsep = 5pt]
    \item \emph{Induction:} Let $\alpha\colon G\rightarrow G/N$ be a surjective group homomorphism and let $X$ be a $G$-space such that the action of $N$ on $X$ is free. Then there is a natural equivalence
    \[\decatcoh{G/N}{\alpha_!(X/N)}{\CR}\xrightarrow{\sim} \alpha_*\decatcoh{G}{X}{\CR};\]
    \item \emph{Base-change:} Let $\CB \alpha \colon \CB H \to \CB G$ be a map in $\Glo{\CE}$ such that $\CB G\in \CT$. Then the natural transformation
    \[
    Q_\alpha\colon \alpha^* \decatcoh{G}{X}{\CR}\to\decatcoh{H}{\alpha^*X}{\CR}
    \] from \Cref{thm-intro1} is an equivalence for all compact $G$-spaces $X$;
    \item \emph{K\"unneth:} Let $G$ and $H$ be two groups in $\CT$, $X$ a compact $G$-space and $Y$ a compact $H$-space. Then there is an equivalence 
    \[
    \pi_G^*\decatcoh{G}{X}{\CR}\otimes \pi_H^*\decatcoh{H}{Y}{\CR} \simeq \decatcoh{G\times H}{X \times Y}{\CR},
    \]
    where $\pi_H$ and $\pi_G$ denote the two projections $G\times H\rightarrow H,G$. 
\end{enumerate}
\end{introthm}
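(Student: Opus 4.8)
The plan is to deduce all three axioms from the three defining properties of a $\CT$-genuine global 2-ring, together with the coherent functoriality packaged in \Cref{thm-intro1}. The common thread is that each axiom is really a statement about the interaction of the functors $\decatcoh{G}{-}{\CR}$, the pushforwards $\alpha_*$, and the external products, and each should be obtained by first establishing it on a suitable class of generators (orbits $G/K$, or spheres $S^V$) and then extending by the fact that all functors in sight preserve colimits of compact $G$-spaces in the appropriate variable (recall $\decatcoh{G}{-}{\CR}$ is limit preserving on $\Spc_G^{\op}$, hence sends colimits of spaces to limits, and one reduces to finite such diagrams by compactness).

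\textbf{Base-change.} This is the most direct: property (1) says $\CR$ satisfies base-change along faithful maps into $\CT$. The point is to upgrade this from a statement about the $\infty$-categories $\CR_{\CB(-)}$ to one about the equivariant cohomology functors. Using the identification of $\Spc_G$ with the faithful slice over $\CB G$, a compact $G$-space $X$ is built out of orbits $G/K$, i.e.\ out of faithful maps $\CB K \to \CB G$; for such an $X$ the object $\decatcoh{G}{X}{\CR}$ is a finite limit of pushforwards along faithful maps, and $Q_\alpha$ is assembled from the Beck--Chevalley maps for the pullback squares $\CB(K\cap^\alpha) \to \CB K$, $\CB H \to \CB G$, which are equivalences by (1) since the targets lie in $\CT$ (one needs that $\CT$ is closed under the relevant subgroups here, which is part of the ``enough injectives'' setup). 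First I would reduce to orbits by the colimit argument, then identify $Q_\alpha$ on an orbit with a genuine base-change square, then invoke (1).

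\textbf{Künneth.} Here I would use that $\decatcoh{G\times H}{-}{\CR}$ is lax symmetric monoidal (from \Cref{thm-intro1}) together with the coherent multiplicativity of the unravelling, to produce the comparison map $\pi_G^*\decatcoh{G}{X}{\CR}\otimes \pi_H^*\decatcoh{H}{Y}{\CR} \to \decatcoh{G\times H}{X\times Y}{\CR}$. To show it is an equivalence, reduce both variables to orbits $G/K$, $H/L$ by the colimit/compactness argument in each variable separately; then $X\times Y = (G\times H)/(K\times L)$ and the statement becomes that pushforward along $\CB(K\times L)\to \CB(G\times H)$ decomposes as an external product of pushforwards along $\CB K \to \CB G$ and $\CB L\to \CB H$ — this is exactly a special case of base-change (1) applied to the two projection squares, combined with the projection formula (2) to move the tensor factors past the pushforwards. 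So Künneth is essentially base-change plus the projection formula.

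\textbf{Induction.} This is the one I expect to be the main obstacle. For a free $N$-action on $X$ with $\alpha\colon G\to G/N$ surjective, the claim $\decatcoh{G/N}{\alpha_!(X/N)}{\CR}\xrightarrow{\sim}\alpha_*\decatcoh{G}{X}{\CR}$ is not about faithful maps — $\CB\alpha$ is far from faithful — so properties (1),(2) do not apply on the nose. The strategy is: since the $N$-action on $X$ is free, $X \to X/N$ exhibits $X$ as (the $G$-space underlying) a $G/N$-space pulled back along $\alpha$, i.e.\ $X \simeq \alpha^* (X/N)$ in the relevant sense, and $\alpha_!(X/N)$ is computed in $\Spc_{\gl}$. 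I would try to realize the map as the mate of $Q_\alpha$: the unit/counit of the adjunction $(\alpha^*,\alpha_*)$ together with the natural transformation $Q_\alpha$ of \Cref{thm-intro1} produce a canonical map $\decatcoh{G/N}{X/N}{\CR}\to \alpha_*\alpha^*\decatcoh{G/N}{X/N}{\CR}\to \alpha_*\decatcoh{G}{\alpha^*(X/N)}{\CR}=\alpha_*\decatcoh{G}{X}{\CR}$, and then precompose with the comparison $\decatcoh{G/N}{\alpha_!(X/N)}{\CR}\to\decatcoh{G/N}{X/N}{\CR}$ coming from the counit $\alpha_!\alpha^* \to \id$ — wait, one must be careful about directions; more likely the correct map goes the other way and uses the projection formula to identify $\alpha_*\decatcoh{G}{X}{\CR}$ with $\decatcoh{G/N}{X/N}{\CR}$ twisted by the ``norm'' of the tangent/regular representation of $N$, and freeness is what makes this twist trivial on $\alpha_!(X/N)$. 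Concretely I would again reduce to the case $X = G/K$ with $K\cap N = 1$ (so $X/N = (G/N)/(KN/N)$ and $K\xrightarrow{\sim}KN/N$), where everything becomes a comparison of pushforwards along $\CB K \to \CB G \to \CB(G/N)$ versus $\CB(KN/N)\to\CB(G/N)$; the composite $\CB K\to\CB(G/N)$ and the faithful map $\CB(KN/N)\to\CB(G/N)$ agree up to the isomorphism $K\simeq KN/N$, so $\decatcoh{G}{G/K}{\CR}$ and $\decatcoh{G/N}{(G/N)/(KN/N)}{\CR}$ are computed by the ``same'' faithful pushforward into $\CB(G/N)$, and one checks the two descriptions of the map agree. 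The subtlety is keeping track of how $\alpha_*$ (a genuinely global, non-faithful operation) is computed — this is presumably where one needs the concrete description of pushforward in global spectra along $\CB G\to\CB(G/N)$ and its compatibility with the unravelling, which I would expect to be established in \Cref{sec:equivariantcoh}; granting that compatibility, the orbit case is a bookkeeping exercise and the general case follows by the colimit argument since $\alpha_!$, $\alpha_*$, and $\decatcoh{}{-}{\CR}$ all interact correctly with the colimits building $X$ from free orbits. So the hard part is not any one axiom in isolation but pinning down the precise formula for $\alpha_*$ on the unravelling and verifying that the map constructed from the abstract $Q_\alpha$-data matches the expected induction isomorphism on orbits.
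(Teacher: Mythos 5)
Your treatments of base-change and K\"unneth match the paper's: reduce to orbits using that the $\decatcoh{G}{-}{\CR}$ are limit preserving, identify $Q_\alpha$ on an orbit with the Beck--Chevalley map of the square from axiom (1) of genuineness, and derive K\"unneth from base-change along the projections plus the projection formula (the paper routes this through first proving $\decatcoh{G}{-}{\CR}$ is strong monoidal on compact $G$-spaces, but the computation is the same). Your worry that $\CT$ must contain the various subgroups appearing in the pullback is unnecessary: axiom (1) only requires the \emph{target} $\CB G$ of the cospan to lie in $\CT$. For induction, your final reduction --- to orbits $G/K$ with $K\cap N=e$, where $\CB K\to\CB(G/N)$ is already faithful --- is exactly the paper's, but the mechanism you hedge between is partly off. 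There is no need for the projection formula, a norm/dualizing twist, or any genuineness hypothesis at all: by \Cref{def-unravel-incoh}, both $T_\alpha$ and $\decatcoh{G}{p}{\CR}$ are obtained by applying a pushforward to lax unit maps, so the composite in question is $(f/N)_*$ applied to the lax unit map of the \quotientmap{} $X\sslash G\twoheadrightarrow (X/N)\sslash(G/N)$; on an orbit with $K\cap N=e$ this \quotientmap{} is an equivalence of global spaces, and the claim follows for an \emph{arbitrary} naive global 2-ring (\Cref{prop:induction_axiom}). The ``compatibility of $\alpha_*$ with the unravelling'' you defer to \Cref{sec:equivariantcoh} is thus already built into the construction of $T_\alpha$, and the Wirthm\"uller-style identification you float as the ``more likely'' route is not available (and not needed) in this generality.
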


\subsection*{A genuine refinement of $\decatcohsp{\gl}{-}{\CR}$}

Using the previous result and the universal property of $G$-spectra, we are then able to coherently refine the equivariant cohomology theories $\decatcoh{G}{-}{\CR}$ to cohomology theories on genuine $G$-spectra.
Taking global sections we obtain a family of equivariant cohomology theories with values in $\Sp$. An application of higher Brown representability yields a compatible collection of $G$-spectra, which in turn define a global spectrum by the main theorem of \cite{LNP}. Finally we check that this global spectrum is a genuine refinement of $\decatcohsp{\gl}{-}{\CR}\colon \Spcgl{\CE}^{\op}\rightarrow \Sp$. This gives the main theorem of this paper, see \Cref{thm:Represented-theorem} and \Cref{prop:funct_gen_ref}..

\begin{introthm}\label{introthm:gen_ref}
Let $\CE$ be a multiplicative global family of compact Lie groups and let $\CR\colon \Spcgl{\CE}^{\op} \to \PrL$ be a \emph{genuine} global 2-ring. Then its associated decategorification $\decatcohsp{\gl}{-}{\CR}\colon \Spcgl{\CE}^{\op} \to \CAlg$ admits a canonical genuine refinement $\genref{\gl}{\CR} \in\CAlg(\Spgl{\CE})$. Moreover, this assignment improves to a functor from a suitable category of genuine global 2-rings to global rings.
\end{introthm}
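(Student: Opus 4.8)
The plan is to carry out the four-step strategy sketched before the statement: (i) refine the family $\decatcoh{G}{-}{\CR}$ of \Cref{thm-intro1} to cohomology theories on genuine $G$-spectra; (ii) take global sections; (iii) apply higher Brown representability together with the main theorem of \cite{LNP} to assemble the resulting family of $G$-spectra into a global spectrum; and (iv) verify the refinement property. Throughout, fix a set $\CT$ of enough injective objects of $\CE$ witnessing that $\CR$ is genuine.

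Step (i) is the heart of the argument. For each $G\in\CE$ the functor $\decatcoh{G}{-}{\CR}\colon\Spc_G^{\op}\to\CR_{\CB G}$ is lax symmetric monoidal and limit preserving, and by axiom (3) it inverts all representation spheres. I would then invoke the universal property of genuine $G$-spectra — $\Sp_G$ arises from $G$-spaces by stabilizing, formally inverting the representation spheres, and adjoining the equivariant transfers — to extend $\decatcoh{G}{-}{\CR}$, essentially uniquely as a lax symmetric monoidal limit preserving functor, along $(\Sigma_+^\infty)^{\op}\colon\Spc_G^{\op}\to\Sp_G^{\op}$. That the extension exists is precisely what the genuineness axioms and \Cref{introthm:GKV-axioms} deliver: the faithful base change axiom (yielding, via \Cref{introthm:GKV-axioms}, that $Q_\alpha$ is an equivalence for faithful $\alpha$ with target in $\CT$) together with the projection formula produce the Wirthm\"uller isomorphisms comparing restriction and induction along subgroup inclusions, the tangent-representation twist being invertible by axiom (3); the induction clause of \Cref{introthm:GKV-axioms} supplies the remaining transfer data; and the enough-injectives structure of $\CT$ (\Cref{def:enough-injectives}) propagates these verifications from the groups in $\CT$ to all of $\CE$. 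This must be done coherently in the group variable: concretely I would promote the functor $\decatcoh{\bullet}{-}{\CR}\colon\Glo{\CE}^{\op}\to\Fun^{\mathrm{oplax}}([1],\tCat^{\otimes,\lax})$ of \Cref{thm-intro1} to one whose value at $\CB G$ is $\bigl(\Sp_G^{\op}\to\CR_{\CB G}\bigr)$, the oplax change-of-group data being the transformations $Q_\alpha$ evaluated on genuine $G$-spectra.

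For step (ii), postcompose — naturally in $G$ — with the lax symmetric monoidal limit preserving global sections functor $\map_{\CR_{\CB G}}(\1,-)\colon\CR_{\CB G}\to\Sp$, obtaining a family of lax symmetric monoidal limit preserving functors $H_G\colon\Sp_G^{\op}\to\Sp$ functorial over $\Glo{\CE}^{\op}$; by the last clause of \Cref{thm-intro1} the composite of $H_G$ with $(\Sigma_+^\infty)^{\op}$ is $\decatcohsp{\gl}{(-)\sslash G}{\CR}$. For step (iii), since each $\Sp_G$ is presentably stable, higher Brown representability in its multiplicative form represents each $H_G$ by a commutative algebra $X_G\in\CAlg(\Sp_G)$, naturally in $G$, so that the family of step (ii) becomes a section assigning $X_G$ to $\CB G$ and, to each $\alpha\colon H\to G$, the map $\alpha^*X_G\to X_H$ induced by $Q_\alpha$ — an equivalence when $\alpha$ is faithful, by base change and enough injectives. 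By the main theorem of \cite{LNP} identifying $\Spgl{\CE}$ with the partially lax limit of $\CB G\mapsto\Sp_G$ along restriction (with faithful maps made strict), and since $\CAlg$ commutes with the relevant (lax) limits, this section is exactly an object $\genref{\gl}{\CR}\in\CAlg(\Spgl{\CE})$.

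Finally, for step (iv), both $\decatcohsp{\gl}{-}{\CR}$ and $Y\mapsto\map_{\Spgl{\CE}}(\Sigma_+^\infty Y,\genref{\gl}{\CR})$ are limit preserving functors $\Spcgl{\CE}^{\op}\to\CAlg$, hence are determined by their restriction along the Yoneda embedding $\Glo{\CE}\hookrightarrow\Spcgl{\CE}$; so it suffices to produce a natural equivalence on the orbit stacks $\CB G$. There $\decatcohsp{\gl}{\CB G}{\CR}=\mathrm{End}_{\CR_{\CB G}}(\1)$ by definition, whereas $\map_{\Spgl{\CE}}(\Sigma_+^\infty\CB G,\genref{\gl}{\CR})\simeq\map_{\Sp_G}(\1,X_G)\simeq H_G(\ast)\simeq\decatcohsp{\gl}{\ast\sslash G}{\CR}=\decatcohsp{\gl}{\CB G}{\CR}$, using the construction of $X_G$ and step (ii), and naturality in $G$ is built in. I expect the main obstacle to be step (i): identifying the precise form of the universal property of $\Sp_G$ that has axioms (1)--(3) as its hypotheses — in particular, extracting genuine transfers from the projection formula and faithful base change and propagating them from $\CT$ to all of $\CE$ via enough injectives — and, above all, executing this refinement coherently over $\Glo{\CE}^{\op}$ rather than one group at a time; a secondary difficulty is reconciling the oplax and partially-lax bookkeeping, namely the variance of the $Q_\alpha$ against the structure maps of the \cite{LNP} lax limit and the condition forcing faithful restrictions to be strict.
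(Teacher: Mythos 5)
Your four-step skeleton (extend to genuine $G$-spectra, take global sections, apply Brown representability, assemble via the partially lax limit description of $\Spgl{\CE}$, then compare cohomology theories) is exactly the architecture of the paper's proof of \Cref{thm:Represented-theorem}. However, step (i) contains a genuine gap: you propose to extend $\decatcoh{G}{-}{\CR}$ to a cohomology theory on $\Sp_G$ for \emph{all} $G\in\CE$, asserting that the enough-injectives condition ``propagates these verifications from the groups in $\CT$ to all of $\CE$.'' This is false in general and is not what enough injectives is for. The genuineness axioms (base change, projection formula, invertibility of representation spheres) are only imposed, and only hold, for groups in $\CT$; in the motivating example of equivariant elliptic cohomology, base change genuinely fails for targets that are not tori (\Cref{rmk:failure-of-basechange}), and the extension of $\decatcoh{G}{-}{\CR}$ to $\Sp_G^\omega$ for $G\notin\CT$ is not formal (\Cref{rem:extend_to_groups_not_in_T}). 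The correct role of enough injectives is downstream: one extends the cohomology theories to $\Sp_G^\omega$ \emph{only} for $G\in\CT$, and then uses that the inclusion $\CT^{\op}\subset\Glo{\CE}^{\op}$ is marked final (\Cref{thm-tori-marked-final}), so that $\laxlimdag_{\CT^{\op}}\CAlg(\Sp_\bullet)\simeq\CAlg(\Spgl{\CE})$ (\Cref{thm:P_gl_sp_from_tori}) and the $\CT$-indexed section already determines the global spectrum. Without this reduction your step (iii), which assembles a section over all of $\Glo{\CE}$, has nothing to assemble at the groups outside $\CT$.

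A secondary, fixable inaccuracy: the universal property of $\Sp_G$ that the extension actually uses is that $\Sp_G^\omega$ is obtained from $\Spc_{G,\ast}^\omega$ by inverting the representation spheres in $\Cat^{\otimes,\aug}_{\rex,+}$ (\Cref{thm:Sp_fin_univ}); no transfers need to be adjoined as extra input, since Wirthm\"uller isomorphisms are a consequence of inverting representation spheres. What the projection formula and faithful base change are really needed for at this stage is to prove that $\decatcoh{G}{-}{\CR}$ is \emph{strong} monoidal on compact $G$-spaces for $G\in\CT$ (\Cref{gamma_G-strong-mon}), which is what upgrades axiom (3) from irreducible representations to all representation spheres and makes the universal property applicable. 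Your step (iv) is essentially sound in outline, though establishing the equivalence naturally in $G$ (including non-faithful maps) is precisely the coherence issue the paper resolves by comparing the two unravellings as objects of $\laxlimdag\Fun^{\rR}(\Spc_\bullet^{\op},\CAlg)$ rather than orbit by orbit.
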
	

One benefit of categorification which is often highlighted is that it has the ability to turn structure into a property. We note that another example of this phenomenon is provided by naive global 2-rings: while a genuine refinement of a multiplicative global cohomology theory is structure, it is a property for a naive global 2-ring to be genuine. We can therefore complete our diagram as follows:
\[
\begin{tikzcd}[column sep = 0.55cm]
	{\fbox{\parbox{\widthof{\text{Genuine global 2-rings}}}{\vspace{.1em}\text{Genuine global 2-rings}\vspace{.1em}}}} &&&&& 
 {\fbox{\parbox{\widthof{\text{Naive global 2-rings}}}{\vspace{.1em}\text{Naive global 2-rings}\vspace{.1em}}}} \\
	&&&&& {} \\
	&&&&& {} \\
	{\fbox{\parbox{\widthof{\text{Global rings}}}{\vspace{.1em}\text{Global rings}\vspace{.1em}}}} & {} & {\fbox{\parbox{\widthof{\text{Naive global rings}}}{\vspace{.1em}\text{Naive global rings}\vspace{.1em}}}} &&& {\fbox{\parbox{\widthof{\text{Multiplicative global}}}{\vspace{.1em}\text{Multiplicative global}\newline\text{cohomology theories}\vspace{.1em}}}}
	\arrow[shorten <=20pt, shorten >=20pt, hook, from=1-1, to=1-6]
	\arrow["{\text{Thm } \ref{introthm:gen_ref}}"', shorten <=5pt, shorten >=5pt, from=1-1, to=4-1]
	\arrow["{\text{Decat.}}", shorten <=5pt, shorten >=5pt, from=1-6, to=4-6]
	\arrow["{\mathrm{fgt}}", shorten <=6pt, shorten >=6pt, from=4-1, to=4-3]
	\arrow["{\text{Brown rep.}}", shorten <=8pt, shorten >=8pt, tail reversed, from=4-3, to=4-6]
\end{tikzcd}
\]
In the body of the paper we improve this picture to an actual commutative diagram of categories and functors. 

\subsection*{Applications: elliptic and tempered cohomology}
Our first application is to equivariant elliptic cohomology. By work of \cite{GM20}, a preoriented strict abelian group object $\bG$ in a suitable $\infty$-category $\mathcal{X}$ canonically induces a functor $\Glo{\ab}\rightarrow \mathcal{X}$ from the global orbit $\infty$-category with isotropy in all compact abelian Lie groups. If the objects of $\mathcal{X}$ admit a sufficiently well-behaved notion of quasi-coherent sheaves one can obtain a naive global $2$-ring by post-composition. Applying this to an oriented elliptic curve $\bE$ in spectral Deligne-Mumford stacks over $\sfS$ we obtain a naive global 2-ring
\[
\CQ^\bE\colon \Spcgl{\ab}^{\op}\rightarrow \PrL,\quad X\mapsto \CQ^\bE_X.
\]
Applying $\CQ^\bE$ to $\CB A$ for a compact abelian group $A$ gives $\QCoh(\bE[\widehat{A}])$, the $\infty$-category of quasi-coherent sheaves on the $\widehat{A}$-torsion points of $\bE$, where $\widehat{A}$ is the Pontryagin dual of $A$. Heavily relying on \cite{GM20}, we prove:
\begin{introthm}
    Let $\bE$ be an oriented elliptic curve in spectral Deligne-Mumford stacks over $\sfS$. Then $\CQ^\bE$ is a genuine global $2$-ring and so $\decatcohsp{\gl}{-}{\CQ^\bE}$ admits a genuine refinement $\Gamma_{\bE}(\sfS,\CO_{\sfS})\coloneqq \genref{\gl}{\CQ^\bE} \in \CAlg(\Spgl{\ab})$.
\end{introthm}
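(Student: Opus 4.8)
The strategy is to verify the three axioms of a $\CT$-genuine global $2$-ring for $\CQ$, taking $\CT$ to be the subfamily of tori inside compact abelian Lie groups, and then invoke \Cref{introthm:gen_ref}. Before doing so, one must confirm that $\CQ$ is a naive global $2$-ring at all, i.e.\ that the functor $\Spcgl{\ab}^{\op}\to\PrL$ obtained by post-composing the \cite{GM20} functor $\Glo{\ab}\to\mathcal{X}$ with $\QCoh(-)$ actually extends along $\Spcgl{\ab}^{\op}=\mathrm{PSh}(\Glo{\ab})^{\op}$ to a limit-preserving functor; this is formal, since $\QCoh$ sends colimits of (spectral Deligne--Mumford) stacks to limits of presentable stable $\infty$-categories, and one just right-Kan-extends. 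The substance of the proof is checking genuineness. First I would establish axiom (2), the projection formula for $(f^*, f_*)$ along faithful maps: a faithful map $\CB H\to\CB G$ of abelian groups corresponds to an injection $H\hookrightarrow G$, and the induced map on torsion points $\bG[\hat H]\to\bG[\hat G]$ (more precisely the relevant map of torsion subschemes associated to the surjection $\hat G\twoheadrightarrow\hat H$) is an affine, or at least quasi-affine and representable, morphism; the projection formula then holds for $\QCoh$ because $f_*$ is computed by a well-behaved pushforward and the relevant base objects are affine. The input here is essentially the geometry of $\bG$ over $\sfS$ as developed in \cite{GM20}; I would cite their description of the maps $\bG[\hat H]\to\bG[\hat G]$ and their finiteness properties.

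Next I would verify axiom (1), base-change for faithful maps $\CB H\to\CB G$ with $\CB G\in\CT$, i.e.\ with $G$ a torus. Writing a morphism in $\Glo{\ab}$ as $\CB H\xrightarrow{\CB\alpha}\CB G$, one must check the Beck--Chevalley transformation in the square relating $\alpha^*$, the structure maps of $\CQ$, and the pushforwards is an equivalence on compact spaces; by \cite{GM20} this reduces to a statement about quasi-coherent sheaves on the torsion points of $\bG$, namely that a certain square of torsion subschemes of $\bG$ is a pullback and that $\QCoh$ takes it to a base-change square, which holds because one side is flat/affine and everything is reasonable. The restriction that the target group be a torus is exactly what makes the relevant torsion scheme behave well (it is where the \cite{GM20} theory gives clean statements), and this is why $\CQ$ is only genuine with respect to $\CT=$ tori, not all of $\ab$. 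Finally, axiom (3) — invertibility of $\decatcoh{G}{S^V}{\CQ}$ for irreducible $G$-representations $V$ — is where the \emph{orientation} on $\bG$ is used: an irreducible representation of an abelian compact Lie group is a character $V\colon G\to U(1)$, equivalently a point of $\hat G$; I would compute $\decatcoh{G}{S^V}{\CQ}$ using the cofiber sequence $S(V)_+\to S^0\to S^V$ and the fact that $\decatcoh{G}{-}{\CQ}$ is limit-preserving and lax monoidal, identifying it with the line bundle on $\bG[\hat G]$ cut out by the divisor of the corresponding torsion section, which is invertible precisely because the preorientation of $\bG$ is an orientation (this is the Thom-isomorphism/coordinate input of \cite{GM20}).

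The main obstacle I expect is axiom (1)/(2) — the interplay between the $\infty$-categorical pushforward $f_*$ in $\PrL$ and the geometric pushforward of quasi-coherent sheaves along maps of torsion subschemes of $\bG$. One has to know that \cite{GM20}'s functor $\Glo{\ab}\to\sfDM_{/\sfS}$ sends faithful maps to morphisms of spectral Deligne--Mumford stacks that are affine (or at least satisfy a strong enough representability and flatness hypothesis) so that $\QCoh$ converts base-change squares of stacks into base-change squares of categories and validates the projection formula; assembling this into the coherent statements of \Cref{thm-intro1} and matching it with the abstract formalism requires care, but no genuinely new idea beyond the results of \cite{GM20}. Once axioms (1)–(3) are in hand, \Cref{introthm:gen_ref} applies verbatim and produces $\genref{\gl}{\CQ}\in\CAlg(\Spgl{\ab})$, which by construction decategorifies to $\decatcohsp{\gl}{-}{\CQ}$, finishing the proof.
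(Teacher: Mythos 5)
Your proposal is correct and follows essentially the same route as the paper: verify the three axioms of a $\CT$-genuine global $2$-ring with $\CT$ the tori, using affineness/properness of the maps of torsion subschemes for the projection formula and push-pull, and the orientation (via the identification of $\decatcoh{\T}{S^\tau}{\CQ}$ with a line bundle, reduced to the tautological character of the rank-one torus) for invertibility, then invoke the general representability theorem. The one ingredient you gesture at but do not isolate is the paper's key geometric lemma that $\bG_\bullet$ itself preserves the relevant pullback squares of orbit stacks when the target group is a torus (\Cref{lem:G_preserve_conn_pullback}), which is the step that genuinely requires the torus hypothesis before any statement about $\QCoh$ is made.
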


By construction the underlying spectrum of $\Gamma_{\bE}(\sfS,\CO_{\sfS})$ is simply $\Gamma(\sfS,\CO_{\sfS})$, the global sections of $\sfS$. In particular we obtain an integral globally equivariant elliptic cohomology spectrum. Further specializing, when $\sfS$ is $\CM_{\mathrm{ell}}^{\mathrm{or}}$, the moduli stack of oriented elliptic curves, we obtain a global spectrum of topological modular forms $\mathrm{TMF}_{\gl}$.

Our second application is to tempered cohomology \cite{Ell3}. In this case we restrict to the family of \emph{finite abelian} groups and write $\Glo{\fab}$ and $\Spcgl{\fab}$ for the associated global orbit $\infty$-category and $\infty$-category of global spaces. Given an oriented $\rP$-divisible group $\bG$ over a commutative ring spectrum $R$, Lurie constructs a multiplicative global cohomology theory 
\[
R^\bullet_\bG\colon \Spcgl{\fab}^{\op}\rightarrow \CAlg,
\] 
referred to as \emph{tempered cohomology}. Furthermore Lurie in \cite{Ell3} introduces the notion of tempered $\bG$-local systems on a $\fab$-global space. As shown there, this functions as an extremely well-behaved categorification of tempered cohomology. We capture some of this richness by the following theorem.

\begin{introthm}
    Suppose $\bG$ is an oriented $\rP$-divisible group over $R$. Then the functor
    \[\LocSys_{\bG}(-)\colon \Spcgl{\fab}^{\op}\rightarrow \PrL,\quad X\mapsto \LocSys_{\bG}(X)\] is a genuine global $2$-ring and so admits a genuine refinement $\ul{R}_{\bG}^{\gl}\coloneqq \genref{\gl}{\LocSys_{\bG}}\in\CAlg(\Spgl{\fab})$.
\end{introthm}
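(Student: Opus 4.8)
The plan is to show that $\LocSys_{\bG}(-)$ is a genuine global $2$-ring; the existence of the refinement $\ul{R}^{\gl}_{\bG}$ is then immediate from \Cref{introthm:gen_ref}, once we note that $\fab$ is a multiplicative global family.

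First I would verify that $\LocSys_{\bG}$ is a \emph{naive} global $2$-ring, i.e.\ a limit-preserving functor $\Spcgl{\fab}^{\op}\to\PrL$. The contravariant functoriality of tempered local systems, together with the fact that each $\LocSys_{\bG}(X)$ is presentably symmetric monoidal and stable, is supplied by \cite{Ell3}; the two things to pin down are that the source may be taken to be $\Spcgl{\fab}^{\op}$ — via the identification of $\fab$-global spaces with the orbispaces with finite abelian isotropy on which Lurie works — and that $\LocSys_{\bG}$ sends colimits of global spaces to limits of $\infty$-categories. The latter is exactly Lurie's tempered descent, and it reduces every value $\LocSys_{\bG}(X)$ to the orbit values $\LocSys_{\bG}(\CB A)\simeq\QCoh(\bG[\hat A])$.

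Next I would check the three conditions in the definition of genuineness, taking $\CT=\fab$ (in the finite abelian setting conditions (1) and (2) below hold for all faithful morphisms, so every group may be regarded as an enough-injective object). For condition (1), base change along a faithful morphism $\CB H\to\CB G$: such a morphism is a finite covering of $\fab$-orbits, and after reducing to orbits by descent, the relevant Beck--Chevalley maps are equivalences by the proper base change and ambidexterity results for tempered local systems in \cite{Ell3} — this is the step that uses temperedness of $\bG$. For condition (2), the projection formula for faithful $f$: here $f_{*}$ is the ambidextrous pushforward along a finite covering, and its $\LocSys_{\bG}(\CB G)$-linearity is part of the construction of that pushforward. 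For condition (3): an irreducible representation of a finite abelian group $G$ is one-dimensional, $V=V_{\chi}$ for a character $\chi\in\hat G$, so I must show $\decatcoh{G}{S^{V_{\chi}}}{\LocSys_{\bG}}\in\QCoh(\bG[\hat G])$ is invertible. I would first identify the unravelling $\decatcoh{G}{-}{\LocSys_{\bG}}$ of \Cref{thm-intro1} with Lurie's $G$-equivariant tempered cohomology valued in $\QCoh(\bG[\hat G])$ — both being the pushforward of the unit along $X\sslash G\to\CB G$, this is essentially a comparison of definitions — and then invoke Lurie's computation of the tempered cohomology of a representation sphere: the \emph{orientation} of $\bG$ exhibits $\decatcoh{G}{S^{V_{\chi}}}{\LocSys_{\bG}}$ as (a shift of) the invertible sheaf on $\bG[\hat G]$ attached to $\chi$. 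This is the one place the full orientation hypothesis, as opposed to a mere preorientation, is needed.

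With the three conditions established, \Cref{introthm:gen_ref} produces the genuine refinement $\ul{R}^{\gl}_{\bG}=\genref{\gl}{\LocSys_{\bG}}$, whose decategorification recovers tempered cohomology $R^{\bullet}_{\bG}$ via the endomorphism-of-the-unit description of $\decatcohsp{\gl}{-}{\LocSys_{\bG}}$. I expect the main obstacle to be organizational rather than conceptual: translating Lurie's tempered descent and base-change theorems, phrased in \cite{Ell3} for orbispaces and their tempered cohomology, into the precise functor $\Spcgl{\fab}^{\op}\to\PrL$ and the exact shape of the base-change square in the definition of genuineness, and matching $\decatcoh{G}{-}{\LocSys_{\bG}}$ with Lurie's equivariant tempered cohomology carefully enough that his invertibility computation transfers verbatim. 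The conceptual heart, condition (3), is really a citation of Lurie's oriented tempered theory.
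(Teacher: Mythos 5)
Your overall skeleton matches the paper's: show $\LocSys_{\bG}$ is a (rigid) naive global $2$-ring using Lurie's descent and presentability results, verify the three axioms of genuineness with $\CT=\Glo{\fab}$ (the whole orbit category, as in the paper), cite \cite{Ell3}*{Theorem 7.1.6} for base change and the ambidexterity results for the projection formula, and reduce axiom (3) to one-dimensional characters. Conditions (1) and (2) are handled essentially as you propose.

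The gap is in condition (3), which is exactly where the paper has to do genuinely new work rather than cite \cite{Ell3}. First, $\LocSys_{\bG}(\CB A)$ is \emph{not} equivalent to $\QCoh(\bG[\hat{A}])\simeq\Mod_{R_{\bG}^{\CB A}}$: the evaluation functor $\CF\mapsto\CF^{A}$ preserves limits and colimits but is not conservative (only the family of evaluations at \emph{all} subgroups is, see \Cref{not-evaluation-functor}); you are conflating the tempered example with the elliptic example $\CQ_\bullet$, whose values really are $\QCoh$ of torsion points. Consequently, even granting a computation in \cite{Ell3} exhibiting the reduced tempered cohomology of $S^{V_\chi}$ as an invertible $R_{\bG}^{\CB G}$-module, this only controls the image of $\decatcoh{G}{S^{V_\chi}}{\LocSys_{\bG}}$ under a non-conservative functor and does not give invertibility in $\LocSys_{\bG}(\CB G)$ itself, which is what axiom (3) demands. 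The paper closes this gap by developing geometric fixed point functors $\Phi^{A_0}$ for tempered local systems: it proves they form a jointly conservative family of symmetric monoidal smashing localizations, so that a dualizable object is invertible iff all its geometric fixed points are (\Cref{prop:invert_temp_loc_sys}); it identifies $\Phi^{A}\circ\nu_*$ with the proper Tate construction (\Cref{tatecommuteswithgeomfix}); and it then uses the fiber sequence $\decatcoh{C_n}{S^{\iota_n}}{\bG}\to\1\to p_*\1$ together with the freeness of the $C_n$-action on $S(\iota_n)$ to show that $\Phi^{C_n}$ of the third term vanishes, so $\Phi^{C_n}\decatcoh{C_n}{S^{\iota_n}}{\bG}$ is the unit and hence invertible. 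To salvage your route you would need an invertibility statement in \cite{Ell3} at the level of the category of tempered local systems, not merely of tempered cohomology modules; no such citation appears to be available, and supplying this step is the conceptual heart of the paper's proof.
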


In particular, restricting to any finite abelian group $A$, one obtains a genuine $A$-spectrum $\ul{R}_{\bG}^{A}$. Using the technology developed in this article, we can in fact go further and actually identify $\LocSys_{\bG}(\CB A)$ with the $\infty$-category of $\ul{R}_{\bG}^{A}$-modules in $A$-spectra. The following theorem combines \Cref{thm-localsym-as-spectra} and \Cref{prop:temp_eq_natural}.

\begin{introthm}
There exists a natural equivalence 
\[
\bGamma_\bullet\colon \LocSys_{\bG}(\bullet)\rightarrow \Mod_{\ul{R}_{\bG}^{\bullet}}(\Sp_\bullet)
\]
of functors $\Glo{\fab}^{\op}\rightarrow \PrL$.
\end{introthm}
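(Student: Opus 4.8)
The plan is to realise $\bGamma_\bullet$ as the objectwise inverse of a canonical comparison functor out of a relative tensor product, and to verify the hypotheses of $\infty$-categorical monadic descent at each object of $\Glo{\fab}^{\op}$. First I would isolate, inside the construction underlying \Cref{introthm:gen_ref}, a natural transformation $\Phi$ from the functor $\CB G\mapsto\Sp_G$ --- which sends faithful morphisms to restriction, quotient morphisms to inflation, and takes values in $\PrL$ --- to $\LocSys_\bG(\bullet)$. Objectwise, $\Phi_G\colon\Sp_G\to\LocSys_\bG(\CB G)$ is obtained by extending $\decatcoh{G}{-}{\LocSys_\bG}$ along $\Sigma^\infty_+$ to a cohomology theory on $G$-spectra, restricting to finite $G$-spectra and dualizing to a strong symmetric monoidal functor --- strong because representation spheres map to invertible objects (genuineness of $\LocSys_\bG$, axiom (3)) --- and then Ind-extending; it carries $\Sigma^\infty_+(G/H)$ to a shift, by an invertible object, of $\iota_*\iota^*\1$ for the faithful morphism $\iota\colon\CB H\to\CB G$. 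Its right adjoint $\Psi$ is lax symmetric monoidal and sends the unit to $\ul{R}_{\bG}^{\bullet}$ --- this is one characterisation of the algebra underlying the genuine refinement of \Cref{introthm:gen_ref} --- so $\Psi$ factors canonically, and (as $\Phi$ and the algebra section $\ul{R}_{\bG}^{\bullet}$ both live over $\Glo{\fab}^{\op}$) naturally, as a functor $\bGamma_\bullet\colon\LocSys_\bG(\bullet)\to\Mod_{\ul{R}_{\bG}^{\bullet}}$ followed by the forgetful functor. Because a transformation of $\PrL$-valued functors is an equivalence as soon as it is one on each object, it then suffices to show $\bGamma_G$ is an equivalence for every finite abelian $G$, equivalently that its left adjoint $\bar\Phi_G\colon\Mod_{\ul{R}_{\bG}^{G}}(\Sp_G)\to\LocSys_\bG(\CB G)$ is.

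By the symmetric monoidal Barr--Beck--Lurie theorem, $\bar\Phi_G$ is an equivalence once one establishes (A) that $\Psi_G$ is conservative, and (B) that the projection map $M\otimes\Psi_G(N)\to\Psi_G(\Phi_G(M)\otimes N)$ is an equivalence: (B) at $N=\1$, together with the symmetric monoidality of $\Phi_G$, identifies the monad $\Psi_G\Phi_G$ with $-\otimes\ul{R}_{\bG}^{G}$ as an algebra, and then (A) yields the equivalence. Condition (B) I would dispatch by observing that $\Phi_G$ takes the dualizable generators $\Sigma^\infty_+(G/H)$ of $\Sp_G$ to dualizable objects; invoking the compact generation and finiteness properties of $\LocSys_\bG(\CB G)$ from \cite{Ell3}, these images are compact, so $\Phi_G$ preserves compactness, $\Psi_G$ preserves all colimits, and the projection formula reduces to the automatic case of dualizable $M$. (Alternatively, (B) can be read off from genuineness axioms (1)--(2) via the unravelling of \Cref{introthm:GKV-axioms}.)

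The crux is (A). Conservativity of $\Psi_G$ is equivalent to the assertion that the objects $\Phi_G(\Sigma^\infty_+(G/H))$ --- equivalently the invertible twists of $\iota_*\iota^*\1$, for $H\le G$ --- generate $\LocSys_\bG(\CB G)$ under colimits. Using the Künneth equivalence $\LocSys_\bG(\CB(G\times G'))\simeq\LocSys_\bG(\CB G)\otimes\LocSys_\bG(\CB G')$ of \Cref{introthm:GKV-axioms} and the compatible decomposition of orbits, this reduces to the case of $G$ cyclic, where one appeals to Lurie's explicit description of $\LocSys_\bG(\CB G)$ in terms of $\bG$ and its torsion, under which the orbit objects visibly form a generating family. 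This geometric input from \cite{Ell3} is the one non-formal ingredient, and hence the main obstacle; the remaining steps are a formal assembly of the theory of genuine global $2$-rings with standard monadicity. Finally, since $\Phi$ and $\ul{R}_{\bG}^{\bullet}$ were produced coherently over $\Glo{\fab}^{\op}$ (via \Cref{thm-intro1}), the objectwise equivalences assemble into the claimed equivalence of functors, the compatibility with the transition functors over faithful morphisms (the various $f^*$) and over quotient morphisms (inflation) being built into the algebra section $\ul{R}_{\bG}^{\bullet}$; this naturality is recorded in \Cref{prop:temp_eq_natural}.
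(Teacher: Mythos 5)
Your objectwise argument follows essentially the same route as the paper: the paper also constructs $F_A \dashv \bGamma_A$ from the dualized cohomology theory $\decatcoh{A}{\CBD(-)}{\bG}$ and then verifies the hypotheses of the symmetric monoidal monadicity criterion (\cite{MNN17}*{Proposition 5.29}): projection formula, colimit preservation of $\bGamma_A$, and conservativity. Two remarks on your version. For the projection formula the paper simply invokes rigidity via \cite{GrothendieckNeeman16}*{Theorem 1.3}; your compactness argument is a fine substitute. For conservativity, your proposed reduction to cyclic groups via K\"unneth followed by an appeal to an ``explicit description'' is an unnecessary detour: the generation of $\LocSys_\bG(\CB A)$ by the objects $\iota_!\1$ for $\iota\in\Orb{}_{/\CB A}$ is exactly the joint conservativity of the evaluation functors $\CF\mapsto\CF^{A_0}$, which is already a basic structural fact about (pre)tempered local systems (\Cref{megaprop}(5), going back to \cite{Ell3}*{Proposition 5.1.12}); the paper deduces conservativity of $\bGamma_A$ directly from this together with the identification $\bGamma_A(\CF)^B\simeq\CF^B$ of \Cref{lem:bGamma_gen_fixed_temp}. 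So what you flag as ``the crux'' is in fact the easy, already-available input.

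The genuine gap is in the naturality. The construction only produces $\bGamma_\bullet$ as a \emph{lax} natural transformation (it arises from a mate construction), and the theorem requires the lax structure maps $R_\alpha\colon\alpha^*\bGamma_C(-)\Rightarrow\bGamma_A(\alpha^*(-))$ to be equivalences for \emph{all} morphisms of $\Glo{\fab}$, not just the faithful ones. For faithful $\alpha$ this follows from rigidity (\Cref{thm-rigid}), but for quotient maps (inflation along a surjection $A\twoheadrightarrow C$) it is not ``built into the algebra section'': the paper has to prove it in \Cref{prop:temp_eq_natural} by checking the statement after applying geometric fixed points $\Phi^A$, using \Cref{gfpandgamma}, the identity (A) of \Cref{def-pretempered}, and the fact that the degenerate ideal of $R_\bG^{\CB A}$ is generated by the image of that of $R_\bG^{\CB C}$ (\cite{Ell3}*{Remark 5.2.3}). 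Citing \Cref{prop:temp_eq_natural} for this step is circular in a blind proof, since that proposition is precisely half of the statement you are asked to establish. You would need to supply this computation (or an equivalent argument) to close the proof.
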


\subsection{Conventions and notation}\label{convention}

\begin{enumerate}
    \item We use capital letters to refer to large categories, and bold font to refer to $(\infty,2)$-categories.
    \item We use $\Cat$ for the $\infty$-category of $\infty$-categories and $\Cat_2$ for the $\infty$-category of $(\infty,2)$-categories.
    \item Given a symmetric monoidal $\infty$-category $\CC$, we write $\CAlg(\CC)$ for the $\infty$-category of commutative algebras ($\mathbb{E}_\infty$-algebras) in $\CC$. When $\CC$ is the $\infty$-category of spectra we abbreviate this by $\CAlg$. As another key example we obtain $\PrL$, the $\infty$-category of presentably symmetric monoidal stable $\infty$-categories and symmetric monoidal left adjoint functors between them.
    \item Given an $\infty$-category $\CC$ we write $\Map_{\CC}(-,-)$ for the mapping spaces in $\CC$. If $\CC$ is stable then we write $\map_{\CC}(-,-)$ for the mapping spectra in $\CC$. Finally if $\CC$ is closed symmetric monoidal, then we write $\iHom_{\CC}(-,-)$ for the internal Hom in $\CC$.
    \item We write $\Delta(\CC)\colon \CI\rightarrow \Cat$ for the constant functor on an $\infty$-category $\CC$.
    \item We will assume all (nonconnective) spectral Deligne–Mumford stacks to be locally noetherian,
i.e. they are \'{e}tale locally of the form $\Spec(A)$ with $\pi_0(A)$ noetherian and $\pi_i(A)$ a finitely generated $\pi_0(A)$-module for $i\geq 0$. Moreover, we assume (nonconnective) spectral Deligne–Mumford stacks to be quasi-separated, i.e. the fiber product of any two affines over such a stack
is quasi-compact again, and that all iterated diagonals are quasi-separated (i.e. $n$-quasiseparated in the sense of \cite{GM20}*{Definition D.1} for all $n \geq 1$).
\end{enumerate}

\subsection*{Acknowledgements} 
The second author is an associate member of the Hausdorff Center for Mathematics at the University of Bonn, supported by the DFG Schwerpunktprogramm 1786 ``Homotopy Theory and Algebraic Geometry'' (project ID SCHW 860/1-1). The third author was supported by the SFB 1085 Higher Invariants in Regensburg. 

The authors would like to thank the Hausdorff Research Institute for Mathematics for the hospitality in the context of the Trimester program ''Spectral Methods in Algebra, Geometry, and Topology'', funded by the Deutsche Forschungsgemeinschaft (DFG, German Research Foundation) under Germany’s Excellence Strategy – EXC-2047/1 – 390685813.

The authors would like to thank Robert Burklund, Tim Campion, Bastiaan Cnossen, Jack Davies, Tobias Lenz, Lennart Meier and Stefan Schwede for helpful discussions. We would also like to especially thank Markus Hausmann for various helpful conversations and for suggesting the counterexample in \Cref{rem-uniqueness}, and Denis Nardin for his contributions to this project at an early stage of the collaboration. Finally, we thank Fabio Neugebauer for pointing out a mistake in the previous construction of the naive global $2$-ring associated to elliptic cohomology. 

\part{Recollections on global homotopy theory}\label{part:global_homotopy_theory}

In this part we provide some definitions and results in global homotopy theory. We emphasize the perspective on global homotopy provided by the results of \cite{LNP}, which utilizes the notion of a partially lax limit. Therefore we begin with a section on lax natural transformations and provide a definition of partially lax limits. In the second section, we define the $\infty$-category of global spaces and explain its close relationship to equivariant homotopy theory. In the third section we discuss the $\infty$-category of global spectra.

\section{Lax natural transformations}\label{sec:recall_lax}

In this section we recall the $\infty$-categories of functors and (op)lax natural transformations, discuss (op)lax slice categories and the mate equivalence.

\begin{definition}
    Let $\CI$ be an $\infty$-category and $\tC$ an $(\infty,2)$-category. We define the $(\infty,2)$-categories 
    \[
    \mathbf{Fun}^{\lax}(\CI,\tC)\quad \mathrm{and}\quad\mathbf{Fun}^{\oplax}(\CI,\tC)
    \]
    as the right adjoint objects for the left and right Gray tensor product by $\CI$ respectively, in the sense of \cite{GHL21}, so that 
    \begin{align*}
        \Map_{\Cat_2}(\mathbf{A} \boxtimes \CI, \tC)&\simeq \Map_{\Cat_2}(\mathbf{A}, \mathbf{Fun}^{\lax}(\CI,\tC)),\\
        \Map_{\Cat_2}(\CI\boxtimes \mathbf{A}  , \tC)&\simeq \Map_{\Cat_2}(\mathbf{A}, \mathbf{Fun}^{\oplax}(\CI,\tC)).
    \end{align*} 
    for every $(\infty,2)$-category $\mathbf{A}$. The underlying $\infty$-categories will be denoted $\Fun^{\lax}(\CI, \tC)$ and $\Fun^{\oplax}(\CI,\tC)$ respectively.
\end{definition}

\begin{example}
    The objects of $\Fun^{\lax}(\CI,\tC)$ agree with the objects of $\Fun(\CI,\tC)$. A morphism in $\Fun^{\lax}(\CI,\tC)$ is precisely the data of a lax natural transformation. Informally, a lax natural transformation $\eta \colon F \Rightarrow G$ between functors $F,G\colon \CI \to \tC$ is a coherent collection of squares
    \[
    \begin{tikzcd}
        {F(i)} &  {F(j)} \\
        {G(i)} & {G(j)}
        \arrow["\eta_j",from=1-2, to=2-2]
        \arrow["F(f)", from=1-1, to=1-2]
        \arrow["G(f)"', from=2-1, to=2-2]
        \arrow["\eta_i"', from=1-1, to=2-1]
        \arrow["\epsilon", shorten <=7pt, shorten >=7pt, Rightarrow, from=2-1, to=1-2]
    \end{tikzcd}
    \]
    together with 2-morphisms $\epsilon\colon G(f) \circ \eta_i \Rightarrow \eta_j\circ  F(f) $. If the 2-morphisms $\epsilon$ are all equivalences, then $\eta$ defines a natural transformation. Dually, an oplax natural transformation $\eta \colon F \Rightarrow G$ is informally a coherent collection of oplaxly commuting squares
    \[
    \begin{tikzcd}
        {F(i)} & {F(j)} \\
        {G(i)} & {G(j).}
        \arrow["\eta_j",from=1-2, to=2-2]
        \arrow["F(f)", from=1-1, to=1-2]
        \arrow["G(f)"', from=2-1, to=2-2]
        \arrow["\eta_i"', from=1-1, to=2-1]
        \arrow["\epsilon", shorten <=7pt, shorten >=7pt, Rightarrow, from=1-2, to=2-1]
    \end{tikzcd}
    \]
\end{example}

\begin{definition}
Consider two functors $F,G\colon \CI\rightarrow \Cat$. We write $\Nat^{\lax}(F,G)$ for the mapping categories in $\tFun^{\lax}(\CI,\tCat)$ from $F$ to $G$. Dually we write $\Nat^{\oplax}(F,G)$ for the mapping $\infty$-category in $\tFun^{\oplax}(\CI,\tCat)$.
\end{definition}

We now give a definition of partially (op)lax limits, first introduced in \cite{Berman}.

\begin{definition}
Let $F\colon \CI\rightarrow \Cat$ be a functor. We define the \emph{lax limit} of $F$
\[
\laxlim_{\CI} F \coloneqq \Nat^{\lax}(\Delta(\ast),F)
\] to be the $\infty$-category of \emph{lax cones}: lax natural transformations from the constant functor on the terminal $\infty$-category to $F$. Here $\Delta\colon\Cat\to\tFun^{\lax}(\CI,\Cat)$ denotes the diagonal, which under the identification $\Cat\simeq\tFun^{\lax}(\ast,\Cat)$ is precomposition with the projection $\CI\to\ast$.

Suppose $\CI$ is marked by a subcategory $\CW$. Then we define the \emph{partially lax limit} of $F$
\[
\laxlimdag_{\CI,\CW} F \subset \laxlim_{\CI} F 
\]
to be the full subcategory of $\laxlim F$ spanned by those lax cones whose restriction to $\CW\subset \CI$ is strictly natural.

Dually we define the \emph{oplax limit} $\oplaxlim F$ of $F$ to be the $\infty$-category $\Nat^{\oplax}(\Delta(\ast),F)$ of \emph{oplax cones} and $\oplaxlimdag F\subset \oplaxlim F$ to be the full subcategory spanned by those oplax cones whose restriction to $\CW$ is strictly natural.
\end{definition}

We now introduce an (un)straightening equivalence for the $\infty$-category of functors and lax natural transformations. This will be a crucial tool for working with partially lax limits, and will also allow us to connect the definition given above to the definitions of \cite{Berman} and \cite{LNP}.

\begin{definition}
Write $\mathbf{Cocart}^{\lax}(\CI)$ and $\mathbf{Cart}^{\oplax}(\CI)$ for the full subcategories of the $(\infty,2)$-category $\tCat_{/\CI}$ spanned by the cocartesian and cartesian fibrations respectively.
\end{definition}

\begin{theorem}[\cite{HHLN1}*{Theorem E}]
The (un)straightening equivalences of Lurie extend to natural equivalences
\begin{equation}\label{eq:lax_unstraightening}
\mathbf{Cocart}^{\lax}(\CI) \simeq   \mathbf{Fun}^{\lax}(\CI,\tCat) \quad \text{and} \quad 
\mathbf{Cart}^{\oplax}(\CI) \simeq \mathbf{Fun}^{\oplax}(\CI^{\op},\tCat). \qednow
\end{equation}
\end{theorem}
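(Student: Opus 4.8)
\noindent\emph{Proof strategy.} The plan is to establish the first equivalence $\mathbf{Cocart}^{\lax}(\CI)\simeq\mathbf{Fun}^{\lax}(\CI,\tCat)$; the second, $\mathbf{Cart}^{\oplax}(\CI)\simeq\mathbf{Fun}^{\oplax}(\CI^{\op},\tCat)$, then follows by the standard manipulation of opposite $(\infty,2)$-categories (reversing $1$- and $2$-morphisms and replacing $\CI$ by $\CI^{\op}$), which exchanges cocartesian with cartesian fibrations and lax with oplax transformations. As input I would take Lurie's (un)straightening equivalence of $\infty$-categories $\mathrm{Un}_\CI\colon\Fun(\CI,\Cat)\xrightarrow{\ \sim\ }\Cocart(\CI)\subset\Cat_{/\CI}$, which on the left sees all natural transformations and on the right only the functors over $\CI$ that preserve cocartesian edges. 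The content of the theorem is that, after enlarging the left-hand side to the $(\infty,2)$-category of functors, lax natural transformations and modifications, and the right-hand side to all functors and all natural transformations over $\CI$, the two remain equivalent, naturally in $\CI$.

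The first step is to upgrade $\mathrm{Un}_\CI$ to a functor of $(\infty,2)$-categories. I would produce the straightening direction $\mathbf{St}_\CI\colon\mathbf{Cocart}^{\lax}(\CI)\to\mathbf{Fun}^{\lax}(\CI,\tCat)$ using the defining universal property of $\mathbf{Fun}^{\lax}(\CI,\tCat)$: a functor $\mathbf{X}\to\mathbf{Fun}^{\lax}(\CI,\tCat)$ is the same datum as a functor $\mathbf{X}\boxtimes\CI\to\tCat$, so it suffices to build a \emph{global evaluation}
\[
\ev\colon\ \mathbf{Cocart}^{\lax}(\CI)\boxtimes\CI\longrightarrow\tCat .
\]
On objects it sends $(p\colon E\to\CI,\ i)$ to the fibre $E_i=p^{-1}(i)$. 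On the $1$-morphisms of the Gray product, which are generated by $(g\colon E\to F,\ \id_i)$ and $(\id_E,\ f\colon i\to j)$, it acts by $g_i\colon E_i\to F_i$ and by the cocartesian pushforward $f_!\colon E_i\to E_j$ respectively, and the noninvertible comparison $2$-cell of the Gray tensor product relating the two composites of $(\id,f)$ and $(g,\id)$ is sent to the canonical natural transformation $f_!\circ g_i\Rightarrow g_j\circ f_!$ supplied by the universal property of cocartesian lifts. This last map is only a $2$-cell and not an equivalence precisely because a morphism of $\mathbf{Cocart}^{\lax}(\CI)$ need not carry cocartesian edges to cocartesian edges, and its direction is the lax one — explaining why the target is $\mathbf{Fun}^{\lax}$ rather than $\mathbf{Fun}^{\oplax}$. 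Since the Gray tensor product and all of these operations are functorial in $\CI$, the resulting $\mathbf{St}_\CI$ is natural in $\CI$, and by construction it restricts, along the inclusions $\Cocart(\CI)\hookrightarrow\mathbf{Cocart}^{\lax}(\CI)$ and $\Fun(\CI,\Cat)\hookrightarrow\mathbf{Fun}^{\lax}(\CI,\tCat)$, to $\mathrm{Un}_\CI^{-1}$ — so it genuinely extends Lurie's equivalence.

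It then remains to check $\mathbf{St}_\CI$ is an equivalence of $(\infty,2)$-categories, i.e. essentially surjective on objects and an equivalence on every mapping $\infty$-category. Essential surjectivity is exactly Lurie's theorem: every cocartesian fibration straightens. For the mapping $\infty$-categories, fix cocartesian fibrations $p\colon E\to\CI$, $q\colon F\to\CI$ with straightenings $\widetilde p,\widetilde q\colon\CI\to\Cat$. Since $\mathbf{Cocart}^{\lax}(\CI)\subset\tCat_{/\CI}$ is full, its mapping $\infty$-category is $\Fun_{/\CI}(E,F)$, functors over $\CI$ with natural transformations over $\CI$; this identifies with $\Fun_{/E}(E,E\times_\CI F)$, the sections of the pullback $E\times_\CI F\to E$, which is again cocartesian, now classified by $\widetilde q\circ\pi_E\colon E\to\Cat$ with $\pi_E\colon E\to\CI$. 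Invoking the basic computation that the sections of a cocartesian fibration form the lax limit of its classifying functor, together with the identity $\laxlim_E(\widetilde q\circ\pi_E)\simeq\Nat^{\lax}(\widetilde p,\widetilde q)$ — a lax limit over a Grothendieck construction $E=\mathrm{Un}_\CI(\widetilde p)$ computes lax natural transformations out of $\widetilde p$ — one identifies $\Fun_{/\CI}(E,F)$ with the mapping $\infty$-category of $\mathbf{Fun}^{\lax}(\CI,\tCat)$, and one checks this identification is the map induced by $\mathbf{St}_\CI$.

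I expect the genuine difficulty to lie in exactly two places, everything else being formal. The first is the coherent construction of the global evaluation $\ev$: producing a single functor of $(\infty,2)$-categories out of a Gray tensor product, with the $2$-cell bookkeeping above made precise, requires either a usable point-set model of the Gray tensor product (for instance via scaled simplicial sets) or a sufficiently developed calculus of cocartesian fibrations internal to $\tCat$ — concretely one expects to realise $\ev$ by unstraightening the tautological ``$(p,i,x\in E_i)$'' fibration over $\mathbf{Cocart}^{\lax}(\CI)\boxtimes\CI$. The second is the identity $\laxlim_E(\widetilde q\circ\pi_E)\simeq\Nat^{\lax}(\widetilde p,\widetilde q)$ and the companion fact that sections of a cocartesian fibration compute a lax limit: both are themselves shadows of lax straightening, so one must prove the mapping-$\infty$-category identification directly in whichever explicit model one has chosen rather than by quoting the statement being proved, on pain of circularity. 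The reduction to the cocartesian case, essential surjectivity, naturality in $\CI$, and the compatibility of the abstract mapping-category equivalence with $\mathbf{St}_\CI$ are then routine.
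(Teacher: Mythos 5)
This statement is not proved in the paper at all: it is quoted verbatim from \cite{HHLN1}*{Theorem E} and used as a black box, so there is no internal argument to compare your proposal against. Judged on its own terms, your outline is directionally sound --- the reduction of the cartesian/oplax case to the cocartesian/lax case by $2$-categorical duality is standard, the direction of the comparison $2$-cell $f_!\circ g_i\Rightarrow g_j\circ f_!$ is correct and does explain why the target is $\mathbf{Fun}^{\lax}$ rather than $\mathbf{Fun}^{\oplax}$, and the chain $\Fun_{/\CI}(E,F)\simeq\Fun_{/E}(E,E\times_\CI F)\simeq\laxlim_E(\widetilde q\circ\pi_E)$ is a legitimate way to compute the mapping categories. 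But the two places you flag as ``the genuine difficulty'' are not loose ends of an otherwise complete argument: they \emph{are} the theorem. Coherently constructing $\ev\colon\mathbf{Cocart}^{\lax}(\CI)\boxtimes\CI\to\tCat$ is exactly the $(\infty,2)$-categorical straightening one is trying to prove, and the identification $\laxlim_E(\widetilde q\circ\pi_E)\simeq\Nat^{\lax}(\widetilde p,\widetilde q)$ is the mapping-category shadow of the same statement; as written, the proposal is a plan whose every nontrivial step is deferred.

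It is worth noting that the actual proof in \cite{HHLN1} has a different architecture that avoids building the global evaluation functor. Rather than checking essential surjectivity and full faithfulness of a directly constructed $\mathbf{St}_\CI$, they test the equivalence on functor $\infty$-categories: for each $\infty$-category $\mathbf{A}$ they identify both $\Map(\mathbf{A},\mathbf{Fun}^{\lax}(\CI,\tCat))\simeq\Map(\mathbf{A}\boxtimes\CI,\tCat)$ and $\Map(\mathbf{A},\mathbf{Cocart}^{\lax}(\CI))$ with a common $\infty$-category of two-variable fibrations (curved orthofibrations/Gray fibrations) over $\mathbf{A}\times\CI$, developed by an independent two-variable straightening theorem; since equivalences of $(\infty,2)$-categories are detected on such functor categories, the result follows. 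That route localizes all of the combinatorial work in the two-variable fibration comparison and sidesteps both of the obstacles your approach runs into, in particular the circularity you correctly worry about in the lax-limit step. If you wanted to carry out your strategy instead, you would need an independent, model-level proof of the section/lax-limit formula and of the existence of $\ev$, at which point you would essentially have reproved their two-variable straightening anyway.
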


\begin{remark}\label{rem:laxlim_sect}
The previous equivalence gives an equivalence 
\[
\Nat^{\lax}(F,G)\simeq \Fun_{\CI}(\Unco{F},\Unco{G}).
\]
when restricted to the mapping $\infty$-category from $F$ to $G$. In particular, because the identity functor on $\CI$ is the unstraightening of $\Delta(\ast)$, we obtain an equivalence 
\[
\laxlim F \simeq \Fun_{\CI}(\CI,\Unco{F}).
\]
That is, the lax limit of $F$ is equivalent to the $\infty$-category of sections of the cocartesian unstraightening of $F$. Furthermore, if $\CI$ is marked by $\CW$, then a lax cone is in $\laxlimdag F$ if and only if the associated section $\CI\rightarrow \Unco{F}$ sends maps in $\CW$ to cocartesian edges in $\Unco{F}$. This follows immediately from the fact that \Cref{eq:lax_unstraightening} restricts to the usual (un)straightening equivalence. 

We conclude by \cite{Berman}*{Theorem 4.4} that our definition agrees with the definition of partially lax limits given there, and used in \cite{LNP}. Finally we note that all of the observations above dualize and imply analogous statements for oplax limits.
\end{remark}

\begin{example}\label{ex:functors_are_lax_cones}
A particularly degenerate consequence of the previous observations is already interesting. Recall that the cocartesian unstraightening of the constant functor $\Delta(\CC)\colon \CI\rightarrow \Cat$ is given by the projection $\CC\times \CI\rightarrow \CI$. Therefore we may compute that 
\[
\laxlim \Delta(\CC) \simeq \Fun_{\CI}(\CI,\CI\times \CC) \simeq \Fun(\CI,\CC).
\]
We conclude that functors $F\colon \CI\rightarrow \CC$ are equivalent to lax cones over the constant functor on $\CC$. Dually functors $F\colon \CI^{\op}\rightarrow \CC$ are equivalent to oplax cones over the constant functor on $\CC$.
\end{example}

While not immediately clear from the description above, we note that if $F\colon \CI\rightarrow \Cat$ lifts to a diagram in $\Cat^{\otimes}$, the $\infty$-category of symmetric monoidal $\infty$-categories and symmetric monoidal functors, then the partially lax limit $\laxlimdag F$ is canonically symmetric monoidal.

\begin{proposition}\label{calg-laxlim}
Let $F\colon \CI\rightarrow \Cat^{\otimes}$ be a functor. Then $\laxlimdag F$ admits a symmetric monoidal structure such that for any symmetric monoidal $\infty$-category $\CC$ we have a natural equivalence 
\[
\Fun^{\otimes\mathrm{-\lax}}(\CC,\laxlimdag F) \simeq \laxlimdag \Fun^{\otimes\mathrm{-\lax}}(\CC,F(i)),
\]
between lax symmetric monoidal functors.
In particular,
\[
\CAlg(\laxlimdag F) \simeq \laxlimdag \CAlg(F(i)).
\]
\end{proposition}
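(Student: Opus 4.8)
The plan is to exploit that $\Cat^\otimes$ --- symmetric monoidal $\infty$-categories and symmetric monoidal functors --- is equivalent to $\CAlg(\Cat^{\times})$, the $\infty$-category of commutative monoid objects of $\Cat$ for the cartesian structure, and to transport to it the enriched limits we need. First I would record that $\Cat^\otimes$ admits all conical limits and all cotensors by $\infty$-categories, and that the forgetful functor $U\colon\Cat^\otimes\to\Cat$ creates both: conical limits are computed pointwise over $\Finp$ (the Segal condition being itself a limit condition), while the cotensor of $M\in\Cat^\otimes$ by an $\infty$-category $\CA$ is $\Fun(\CA,M)$ with its pointwise monoidal structure, which has underlying $\infty$-category $\Fun(\CA,UM)$. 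Since every $\Cat_2$-weighted limit is built out of conical limits and cotensors, $\Cat^\otimes$ admits all such weighted limits and $U$ preserves them.

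Next I would use that the partially lax limit $\laxlimdag_{\CI,\CW}(-)$ is a $\Cat_2$-weighted limit; this is implicit in \cite{Berman} and is in any case visible from the section description of \Cref{rem:laxlim_sect}, since ``sending the maps of $\CW$ to cocartesian edges'' cuts the relevant cotensor down by conical limits. Applying $\laxlimdag_{\CI,\CW}(-)$ to $F$, now viewed as a diagram in $\CAlg(\Cat^{\times})$, therefore produces an object of $\CAlg(\Cat^{\times})=\Cat^\otimes$, i.e. a symmetric monoidal structure on $\laxlimdag F$; and since $U$ preserves this weighted limit, its underlying $\infty$-category is the partially lax limit of the underlying diagram, as required. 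Concretely, this symmetric monoidal structure is the Segal object on $\Finp$ given by $\langle n\rangle\mapsto\laxlimdag_{\CI,\CW}\bigl(i\mapsto F(i)^{\otimes}_{\langle n\rangle}\bigr)$, whose Segal condition follows because $F(i)^\otimes_{\langle n\rangle}\simeq F(i)^{\times n}$ naturally in $i$ and $\laxlimdag_{\CI,\CW}(-)$, being a weighted limit, preserves finite products.

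For the universal property, I would invoke that, for a fixed $\CC\in\Cat^\otimes$, the functor $\Fun^{\otimes\mathrm{-\lax}}(\CC,-)\colon\Cat^\otimes\to\Cat$ is $\CAlg_{\CO}(-)$ for $\CO$ the $\infty$-operad underlying $\CC$, and that $\CAlg_{\CO}(-)$ preserves conical limits (created by the underlying-functor forgetful functor) and cotensors ($\CAlg_{\CO}(\Fun(\CA,-))\simeq\Fun(\CA,\CAlg_{\CO}(-))$), hence all $\Cat_2$-weighted limits. Feeding in the weighted limit that computes $\laxlimdag F$ then gives a natural equivalence
\[
\Fun^{\otimes\mathrm{-\lax}}(\CC,\laxlimdag F)\;\simeq\;\laxlimdag_{\CI,\CW}\bigl(i\mapsto\Fun^{\otimes\mathrm{-\lax}}(\CC,F(i))\bigr).
\]
Specializing to $\CC=\ast$, the terminal symmetric monoidal $\infty$-category, and using that a lax symmetric monoidal functor out of $\ast$ is exactly a commutative algebra object, yields $\CAlg(\laxlimdag F)\simeq\laxlimdag_{\CI,\CW}\CAlg(F(i))$.

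The step I expect to be the main obstacle is the foundational one: making precise that $\laxlimdag_{\CI,\CW}(-)$ is genuinely a $\Cat_2$-enriched weighted limit and that $U$ and the functors $\CAlg_{\CO}(-)$ preserve $\Cat_2$-weighted limits, at the required level of coherence --- i.e. developing (or carefully citing) enough of the theory of $\Cat_2$-enriched weighted limits of $\infty$-categories. If one instead runs the argument through the explicit Segal model above, the same difficulty resurfaces as the need to check that the levelwise equivalences $\Fun(\CC^{\times n},-)\simeq\Fun(\CC,-)^{\times n}$ assemble into an equivalence of Segal objects over $\Finp$ compatibly with the active morphisms.
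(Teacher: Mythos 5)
Your argument is correct and is essentially the route the paper takes: the paper's entire proof is a citation to \cite{LNP}*{Remark 5.1}, whose content is exactly what you reconstruct, namely that $\Cat^{\otimes}\simeq\CAlg(\Cat^{\times})$, that partially lax limits are $\Cat_2$-weighted limits and hence commute with the finite products and cotensors defining the Segal/commutative-monoid structure, and that $\CAlg_{\CO}(-)=\Fun^{\otimes\text{-}\lax}(\CC,-)$ preserves such weighted limits. The foundational point you flag (that $\laxlimdag_{\CI,\CW}$ is genuinely a weighted limit and that $U$ and $\CAlg_{\CO}$ preserve it coherently) is precisely what the cited remark supplies, so nothing is missing.
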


\begin{proof}
This follows from \cite{LNP}*{Remark 5.1}.
\end{proof}
In the remainder of this section we recall some technical material which we will use in later sections.

\subsection{Lax slice categories}
In this subsection we introduce the definition of (op)lax slice categories, and use this to make some identifications of lax limits which will be useful later.

\begin{definition}
    Given an $(\infty,2)$-category $\tC$ with underlying $\infty$-category $\CC$ and an object $X$ in $\tC$, we define $\tC \oplaxslice X $, the \textit{oplax slice category over $X$}, via the pullback
    \[\begin{tikzcd}
        {{\tC \oplaxslice X}} & {\Fun^{\oplax}([1],\tC)} \\
        {\CC\times \{X\}} & {\CC\times \CC}.
        \arrow[from=1-2, to=2-2]
        \arrow[from=2-1, to=2-2]
        \arrow[from=1-1, to=2-1]
        \arrow[from=1-1, to=1-2]
    \end{tikzcd}\]
    Analogously we can define the notion of lax slice category $\bC \laxslice X$.
\end{definition}

Our interest in oplax slice categories comes from the following theorem. 

\begin{theorem}\label{cor-lax-slice}
    Let $\bC$ be an $(\infty,2)$-category with underlying $\infty$-category $\CC$, and fix $X\in \bC$. 
    \begin{enumerate}
        \item The forgetful functor $\fgt\colon \bC \oplaxslice X\rightarrow \CC$ is a cartesian fibration and classifies the mapping functor $\Map_{\bC}(-,X)\colon \CC^{\op}\rightarrow \Cat$.
        \item The forgetful functor $\fgt \colon (\bC \laxslice X)^{\op} \to \CC^{\op}$ is a cocartesian fibration and classifies the same functor as in (a).
    \end{enumerate}
\end{theorem}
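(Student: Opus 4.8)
The plan is to unwind the pullback defining $\bC\oplaxslice X$ into an explicit model, recognise the forgetful functor as the (un)straightening of $\Map_\bC(-,X)$, and then deduce~(2) from~(1) by a formal duality; the only nontrivial input about $\Fun^{\oplax}([1],\bC)$ will be extracted from its defining universal property against the Gray tensor product. Concretely, since the forgetful functor $\Cat_2\to\Cat$ preserves limits, the underlying $\infty$-category of $\bC\oplaxslice X$ is the pullback of underlying $\infty$-categories, and combining this with the Gray-tensor adjunction for $\Fun^{\oplax}([1],-)$ identifies a functor $\CE\to\bC\oplaxslice X$ out of an $\infty$-category $\CE$ with a pair consisting of a functor $q\colon\CE\to\CC$ and an oplax natural transformation $\eta\colon q\Rightarrow\const_X$ of functors $\CE\to\bC$, in such a way that $\fgt$ returns $q$. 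In particular the objects of $\bC\oplaxslice X$ are the $1$-morphisms $f\colon c\to X$ of $\bC$, and a $1$-morphism $(f\colon c\to X)\to(g\colon c'\to X)$ is a pair $(u\colon c\to c',\ \alpha\colon f\Rightarrow g\circ u)$ — the structure $2$-cell of the oplax square with right-hand vertical $\id_X$ — composed by pasting; when $\bC$ is an ordinary $\infty$-category this recovers the right fibration $\CC_{/X}\to\CC$ classifying $\Map_\CC(-,X)$, so the statement is the expected $(\infty,2)$-categorical upgrade.

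For~(1) I would fold the verification that $\fgt$ is a cartesian fibration and the identification of its classifying functor into a single Yoneda argument in $\Cat_{/\CC}$. On one side, the description above gives, naturally in $(\CE,q)\in\Cat_{/\CC}$, an equivalence $\Fun_{/\CC}(\CE,\bC\oplaxslice X)\simeq\Nat^{\oplax}(q,\const_X)$. On the other side, since unstraightening is compatible with base change and sections of a cartesian fibration compute the oplax limit of its classifying functor (the dual of \Cref{rem:laxlim_sect}), one has $\Fun_{/\CC}(\CE,\Unct{\Map_\bC(-,X)})\simeq\oplaxlim_{\CE^{\op}}\Map_\bC(q(-),X)$, and an oplax cone over $\Map_\bC(q(-),X)$ is precisely the datum of an object $\eta_e\colon q(e)\to X$ for each $e$ together with, for each $e\to e'$ in $\CE$ with image $u$ in $\CC$, a $2$-cell $\eta_{e'}\circ u\Rightarrow\eta_e$ — that is, of an oplax natural transformation $q\Rightarrow\const_X$. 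Hence $\Fun_{/\CC}(\CE,-)$ takes naturally equivalent values on $\bC\oplaxslice X$ and $\Unct{\Map_\bC(-,X)}$, so the Yoneda lemma in $\Cat_{/\CC}$ yields an equivalence $\bC\oplaxslice X\simeq\Unct{\Map_\bC(-,X)}$ over $\CC$; in particular $\fgt$ is a cartesian fibration classifying $\Map_\bC(-,X)$, with cartesian edges the oplax triangles whose structure $2$-cell is an identity.

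Part~(2) then follows by passing to the $2$-cell-opposite $\bC^{\mathrm{co}}$. Reversing $2$-morphisms interchanges lax and oplax natural transformations and fixes underlying $\infty$-categories, so $\bC\laxslice X$ and $\bC^{\mathrm{co}}\oplaxslice X$ have the same underlying $\infty$-category over $\CC$, while $\Map_{\bC^{\mathrm{co}}}(c,X)=\Map_\bC(c,X)^{\op}$. Applying~(1) to $\bC^{\mathrm{co}}$ shows that $\bC\laxslice X\to\CC$ is a cartesian fibration classifying $c\mapsto\Map_\bC(c,X)^{\op}$; since the opposite of a cartesian fibration over $\CC$ classifying $F\colon\CC^{\op}\to\Cat$ is a cocartesian fibration over $\CC^{\op}$ classifying $(-)^{\op}\circ F$, taking opposites gives that $(\bC\laxslice X)^{\op}\to\CC^{\op}$ is a cocartesian fibration classifying $\Map_\bC(-,X)$, which is~(2).

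The step I expect to be the main obstacle is making the two "unwinding" inputs for~(1) fully rigorous: the precise description of mapping $\infty$-categories of $\Fun^{\oplax}([1],\bC)$, and hence of $\bC\oplaxslice X$, from the Gray-tensor universal property (including at the level of functor categories, not just mapping spaces), and the identification of the oplax limit of a represented $2$-presheaf with oplax natural transformations into a constant functor. Both are routine in spirit but delicate in their $(\infty,2)$-categorical bookkeeping, and could alternatively be cited from the literature on Gray tensor products and on slices of $(\infty,2)$-categories.
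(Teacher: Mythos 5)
Your proposal is correct in outline, but it is worth knowing that the paper does not prove part (1) at all: it simply cites \cite{HHLN2}*{Theorem 7.21}, and then deduces part (2) by applying (1) to the $2$-cell opposite $\bC^{2\text{-}\mathrm{op}}$. Your treatment of (2) is exactly this deduction, spelled out: $\bC\laxslice X\simeq\bC^{\mathrm{co}}\oplaxslice X$ over $\CC$, $\Map_{\bC^{\mathrm{co}}}(-,X)=\Map_{\bC}(-,X)^{\op}$, and passing to opposites of fibrations flips cartesian over $\CC$ to cocartesian over $\CC^{\op}$ while undoing the $(-)^{\op}$; the bookkeeping checks out.

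For part (1) you attempt a direct Yoneda argument in $\Cat_{/\CC}$ instead of citing. The architecture is the right one, but you should be aware that the two inputs you defer as ``delicate bookkeeping'' are not peripheral: they \emph{are} the content of the cited theorem. The first (the natural identification $\Fun_{/\CC}(\CE,\bC\oplaxslice X)\simeq\Nat^{\oplax}(q,\const_X)$ from the Gray-tensor universal property, naturally in $(\CE,q)$) is genuinely a computation with the Gray tensor product, and the second --- that the oplax limit of the represented diagram $\Map_{\bC}(q(-),X)\colon\CE^{\op}\to\Cat$ is the $\infty$-category of oplax transformations $q\Rightarrow\const_X$ --- is essentially a restatement of the theorem itself (it is the $(\infty,2)$-categorical Yoneda-type input that the unstraightening equivalence of \cite{HHLN1}/\cite{HHLN2} packages). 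So as written your argument for (1) is a reduction to the cited result rather than an independent proof; either supply these two lemmas or cite them. One small correction: the cartesian edges of $\fgt\colon\bC\oplaxslice X\to\CC$ are the triangles whose structure $2$-cell is \emph{invertible}, not literally an identity; the factorization of a general edge $(u,\alpha)$ as a fiberwise morphism followed by the lift $(u,\mathrm{id})$ shows cartesianness is detected by invertibility of $\alpha$.
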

\begin{proof}
    Part (1) is proved in~\cite{HHLN2}*{Theorem 7.21}. Part (2) follows by applying (1) to $\bC^{2\text{-op}}$, the $(\infty,2)$-category obtained from $\bC$ by passing to opposites on hom-categories.
\end{proof}

Applying this to $\bC = \tCat$ we obtain another formulation of the (un)straightening equivalence. We write $\Cart$ for the subcategory of $\Ar(\Cat)$ spanned by cartesian fibrations and maps of cartesian fibrations, see \Cref{def:Cart}.

\begin{corollary}\label{slice-cart}
    There exists an equivalence 
    \[
    \mathrm{Un} \colon \tCat\oplaxslice \Cat \xrightarrow{\sim} \Cart.
    \]
\end{corollary}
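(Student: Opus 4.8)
The plan is to deduce the corollary directly from \Cref{cor-lax-slice}(1) by recognizing the functor being classified on each side. One small caveat first: since $\Cat$ is a large object, throughout one should read $\tCat$ as the $(\infty,2)$-category $\tCAT$ of large $\infty$-categories (so that $\Cat$ is a genuine object of it) and work one universe up; this set-theoretic bookkeeping is routine and I will not belabor it.

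First I would apply \Cref{cor-lax-slice}(1) with $\bC=\tCAT$ and $X=\Cat$: it says that $\fgt\colon \tCat\oplaxslice\Cat\to\Cat$ is a cartesian fibration classifying the mapping functor $\Map_{\tCAT}(-,\Cat)\colon \Cat^{\op}\to\widehat{\Cat}$. Since the mapping $\infty$-category in $\tCAT$ between two $\infty$-categories is just the ordinary functor category, this classified functor is $\CC\mapsto \Fun(\CC,\Cat)$, with functoriality by precomposition. Next I would identify $\Cart$ as the total space of the cartesian fibration over $\Cat$ classified by this same functor. The (un)straightening equivalence — the underlying-$\infty$-categorical shadow of \Cref{eq:lax_unstraightening}, over a fixed base — gives, naturally in $\CC$, an equivalence $\Fun(\CC,\Cat)\simeq \Cart(\CC^{\op})$ sending a functor to its unstraightening $\Unct$; and the ``families'' form of this statement, with the base $\CC$ allowed to vary, says exactly that the subcategory $\Cart\subset\Ar(\Cat)$ of \Cref{def:Cart} is the total category of the cartesian fibration over $\Cat$ whose fiber over $\CC$ is the $\infty$-category of cartesian fibrations over $\CC$ and whose cartesian morphisms are the pullback squares. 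Up to the involution $\CC\leftrightarrow\CC^{\op}$ of $\Cat$, the straightening of this fibration is precisely the functor $\Map_{\tCAT}(-,\Cat)$ from the previous step. Granting this, both $\tCat\oplaxslice\Cat$ and $\Cart$ are total spaces of cartesian fibrations over $\Cat$ classifying the same functor, hence are equivalent; tracing through, the equivalence carries a functor $F\colon\CC\to\Cat$, regarded as an object of the oplax slice, to its unstraightening, which explains the notation $\mathrm{Un}$.

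The only point that is not pure unwinding — and thus the main obstacle — is the second step: promoting the fiberwise unstraightening equivalence to an equivalence of cartesian fibrations over the moving base $\Cat$, i.e.\ identifying $\Cart$ with the total space of the fibration classified by $\Map_{\tCAT}(-,\Cat)$. This is the naturality-in-the-base of straightening, which one can extract from the same machinery underlying \Cref{eq:lax_unstraightening} (and which may be built into \Cref{def:Cart} already); the one thing to watch is the opposite-category reindexing, which is exactly what is needed for the equivalence to land in \emph{cartesian} rather than cocartesian fibrations. Everything else is a matter of chasing definitions.
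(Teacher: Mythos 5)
Your proposal is correct and follows essentially the same route as the paper: the paper's proof is precisely to unstraighten the natural equivalence $\Fun(-,\Cat)\simeq \Cart((-)^{\op})$, using \Cref{cor-lax-slice}(1) to identify $\tCat\oplaxslice\Cat\to\Cat$ as the cartesian fibration classifying $\Fun(-,\Cat)$ and the base-projection $\Cart\to\Cat$ as the one classifying $\CC\mapsto\Cart(\CC)$, with the same $(-)^{\op}$ reindexing you flag. Your additional remarks on universe size and on naturality-in-the-base of straightening are sensible elaborations of what the paper leaves implicit.
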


\begin{proof}
The equivalence is induced by unstraightening the natural equivalence 
\[
\Fun(-,\Cat) \simeq \Cart((-)^{\op}).\qedhere
\]
\end{proof}

\begin{construction}\label{const:functors_into_lax_slice}
Suppose we have a functor \[F\colon \CI\rightarrow {\tCat\oplaxslice {\CC}},\quad i\mapsto [\phi_i\colon F(i)\rightarrow \CC].\] We abuse notation and write $F$ also for the functor $F\colon \CI\rightarrow \Cat$ obtaining by forgetting the functor to $\CC$. By \Cref{cor-lax-slice}(a), the forgetful functor ${\tCat\oplaxslice {\CC}} \to \Cat$ is the cartesian unstraightening of the functor $\Fun(-,\CC)\colon \Cat^{\op} \to \Cat$. By pulling back along $F\colon \CI\rightarrow \Cat$, this implies that we can interpret $F$ as a section of the cartesian fibration
\[q\colon \Unct{\Fun(F(-), \CC)}\rightarrow \CI^{\op}.\] By the dual of \Cref{rem:laxlim_sect}, this equivalently defines an object of $\oplaxlim_{\CI^{\op}} \Fun(F(-),\CC)$. Summarizing, we have exhibited for any $F\colon \CI\rightarrow \Cat$, an equivalence between $\oplaxlim \Fun(F(-),\CC)$ and $\Fun_{\Cat}(\CI, \tCat\oplaxslice {\CC})$. The previous argument in fact applies to any $(\infty,2)$-category $\bC$ and an object $X\in \bC$, and gives an identification of 
\[
F\colon \CI\rightarrow \bC \oplaxslice X
\]
and objects of $\oplaxlim \bC(F(-),X)$. We will later apply this also to $\tCat^{\otimes,\lax}$.
\end{construction}

\subsection{The mate equivalence}
We now introduce the mate equivalence, which in fact takes two forms. To state them we introduce some notation:

\begin{definition}
    Let $\CI$ be an $\infty$-category and let $\tCat$ be the (very large) $(\infty,2)$-category of large $\infty$-categories. We write 
    \[\tFun^{\lax}_\mathrm{L}(\CI, \tCat) \quad \text{and} \quad \tFun^{\oplax}_\rR(\CI, \tCat)\] for the full subcategory of $\tFun^{\lax}(\CI, \tCat)$ and $\tFun^{\oplax}(\CI, \tCat)$ respectively spanned by those functors $\CI \to \Cat$ which sends each morphism in $\CI$ to a left and right adjoint respectively.
\end{definition}

\begin{theorem}[\cite{HHLN1}*{Theorem 5.3.6}]\label{first-mate-equivalence}
    Let $\CI$ be an $\infty$-category. Then there is an equivalence
    \[
    \tFun^{\lax}_\rL(\CI, \tCat)\simeq \tFun^{\oplax}_\rR(\CI^{\op},\tCat)
    \] 
    which sends a diagram $F\colon \CI \to \Cat^L$ to the corresponding diagram $G\colon \CI^{\mathrm{op}}\to \Cat^R$ of right adjoints.
\end{theorem}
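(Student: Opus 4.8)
The plan is to deduce the equivalence from the $(\infty,2)$-categorical (un)straightening equivalences of \Cref{eq:lax_unstraightening}, by exhibiting both sides as one and the same full sub-$(\infty,2)$-category of $\tCat_{/\CI}$: the one spanned by those functors $E\to\CI$ which are simultaneously cocartesian and cartesian fibrations. Write $\mathbf{BiFib}(\CI)\subset\tCat_{/\CI}$ for this full sub-$(\infty,2)$-category; note that since it is a \emph{full} sub-$(\infty,2)$-category, all of its higher data (in particular its mapping $\infty$-categories of ``vertical'' natural transformations lying over $\CI$) are inherited from $\tCat_{/\CI}$ without ambiguity, and this is what will make the two descriptions match on the nose.

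The main input is the standard criterion that a cocartesian fibration $p\colon E\to\CI$ with cocartesian straightening $F\colon\CI\to\Cat$ is in addition a cartesian fibration precisely when every pushforward functor $F(f)$ admits a right adjoint; dually, a cartesian fibration with cartesian straightening $G\colon\CI^{\op}\to\Cat$ is in addition cocartesian precisely when every $G(f)$ admits a left adjoint. Feeding these criteria into \Cref{eq:lax_unstraightening}, the first equivalence restricts to an equivalence $\tFun^{\lax}_\rL(\CI,\tCat)\xrightarrow{\sim}\mathbf{BiFib}(\CI)$, and the second restricts to an equivalence $\tFun^{\oplax}_\rR(\CI^{\op},\tCat)\xrightarrow{\sim}\mathbf{BiFib}(\CI)$. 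Composing the former with the inverse of the latter yields the desired equivalence $\tFun^{\lax}_\rL(\CI,\tCat)\simeq\tFun^{\oplax}_\rR(\CI^{\op},\tCat)$.

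It remains to identify this composite on objects. Given such an $F$ with cocartesian unstraightening $p\colon E\to\CI$ — now an object of $\mathbf{BiFib}(\CI)$ — the image $G\colon\CI^{\op}\to\Cat$ is by construction the cartesian straightening of $p$: one has $G(i)=E_i=F(i)$, and for $f\colon i\to j$ the functor $G(f)\colon F(j)\to F(i)$ is ``pull back along a $p$-cartesian lift of $f$''. Comparing $G(f)$ with the pushforward $F(f)\colon F(i)\to F(j)$ through a mapping-space computation in $E$ — for $x\in E_i$, $y\in E_j$, rewrite $\Map_E(x,y)$ using a $p$-cocartesian lift as $\Map_{E_j}(F(f)x,y)$ and using a $p$-cartesian lift as $\Map_{E_i}(x,G(f)y)$ — exhibits $F(f)\dashv G(f)$, so the equivalence really is ``pass to right adjoints everywhere'', as claimed.

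I expect the genuine work to be concentrated in the second paragraph: one must check carefully that the two adjointness criteria cut out \emph{exactly} the subcategories $\tFun^{\lax}_\rL$ and $\tFun^{\oplax}_\rR$ under straightening, that the two full sub-$(\infty,2)$-categories of $\tCat_{/\CI}$ so obtained literally coincide (not merely up to equivalence), and that the pointwise adjunctions $F(f)\dashv G(f)$ from the third paragraph assemble into the coherent statement that the cocartesian and cartesian straightenings of a bifibration are interchanged by the $(\infty,2)$-functorial passage to adjoints. The last point should follow from uniqueness of adjoints together with the naturality of (un)straightening, but making this precise at the $(\infty,2)$-level is the delicate step.
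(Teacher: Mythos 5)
The paper does not prove this statement at all: it is imported wholesale as a citation of \cite{HHLN1}*{Theorem 5.3.6}, so there is no in-paper argument to compare yours against. That said, your proposal is the standard fibrational proof and is, at the level of strategy, exactly how the cited result is established: both $\tFun^{\lax}_\rL(\CI,\tCat)$ and $\tFun^{\oplax}_\rR(\CI^{\op},\tCat)$ are identified under the (un)straightening equivalences of (\ref{eq:lax_unstraightening}) with the \emph{same} full sub-$(\infty,2)$-category of $\tCat_{/\CI}$ spanned by the bifibrations, using the criterion that a cocartesian fibration is also cartesian iff each pushforward admits a right adjoint (and dually); fullness is what guarantees the mapping categories — and hence the lax-versus-oplax transformation data, whose 2-cells get exchanged by Beck--Chevalley mates — agree on the nose. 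Your mapping-space computation identifying the composite on objects should be phrased fiberwise over a fixed $f\colon i\to j$ (it is the fiber of $\Map_E(x,y)\to\Map_{\CI}(i,j)$ over $f$ that is computed both as $\Map_{E_j}(F(f)x,y)$ and as $\Map_{E_i}(x,G(f)y)$), but that is a cosmetic point. You are also right about where the genuine difficulty sits: the delicate content of \cite{HHLN1} is precisely making ``pass to right adjoints everywhere'' coherent — compatible with composition of mates and natural in $\CI$ — which is why their implementation routes through additional machinery (two-variable/orthofibrations) rather than stopping at the pointwise adjunctions. For the way the theorem is used in this paper (a fixed $\CI$ at a time), your sketch captures the essential argument.
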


\begin{example}
Suppose $F,F'\colon \CI\rightarrow \Cat$ are two objects of $\Fun_\rL^{\lax}(\CI,\Cat)$, and write $G$ and $G'$ respectively for the associated $\CI^{\op}$-shaped diagrams of right adjoints. Consider a lax natural transformation $\gamma \colon F\Rightarrow F'$, given at a map $f\colon i\to j$ in $\CI$ by the laxly commuting square
\begin{equation*}\label{lax-square}
    \begin{tikzcd}
        {F(i)} &  {F(j)} \\
        {F'(i)} & {F'(j).}
        \arrow["\gamma_j",from=1-2, to=2-2]
        \arrow["F(f)", from=1-1, to=1-2]
        \arrow["F'(f)"', from=2-1, to=2-2]
        \arrow["\gamma_i"', from=1-1, to=2-1]
        \arrow["\alpha", shorten <=7pt, shorten >=7pt, Rightarrow, from=2-1, to=1-2]
    \end{tikzcd}
\end{equation*}
    By \cite{HHLN1}*{Proposition 3.2.7}, the first mate equivalence sends $\gamma$ to the oplax natural transformation $\gamma \colon G\rightarrow G'$ such that given a map $f\colon i\to j$ in $\CI$ the oplaxly commuting square 
    \[
    \begin{tikzcd}
        {G(j)} &  {G(i)} \\
        {G'(j)} & {G'(i).}
        \arrow["\gamma_i",from=1-2, to=2-2]
        \arrow["G(f)", from=1-1, to=1-2]
        \arrow["G'(f)"', from=2-1, to=2-2]
        \arrow["\gamma_j"', from=1-1, to=2-1]
        \arrow["\beta", shorten <=7pt, shorten >=7pt, Rightarrow, from=1-2, to=2-1]
    \end{tikzcd}
    \] is filled by the \emph{Beck--Chevalley transformation} $\beta$ of $\alpha$, defined to be the composite 
    \[
    \gamma_j G(f) \xRightarrow{\eta} G'(f)F'(f) \gamma_i G(f) \xRightarrow{\alpha} G'(f) \gamma_j F(f) G(f) \xRightarrow{\epsilon} G'(f)\gamma_i.
    \]
\end{example}

For the second mate equivalence we again introduce some notation:
\begin{definition}\label{def:lax_fun_adj_morp}
Let $\CI$ be an $\infty$-category. We write 
\[\Fun^{\rR,\lax}(\CI, \tCat) \quad \text{and} \quad \Fun^{\rL,\oplax}(\CI, \tCat)\] for the wide subcategories of $\Fun^{\lax}(\CI, \tCat)$ and $\Fun^{\oplax}(\CI, \tCat)$ respectively spanned on morphisms by those (op)lax natural transformations $\eta\colon F \Rightarrow G$ such that each component $\eta_i$ is a right/left adjoint respectively. 
\end{definition}

\begin{theorem}[\cite{HHLN1}*{Theorem 5.3.5}]\label{second-mate-equivalence}
    Let $\CI$ be an $\infty$-category. Then there is an equivalence
    \[
    \tFun^{\rR,\lax}(\CI, \tCat)\simeq \tFun^{\rL,\oplax}(\CI,\tCat)^{(1,2)\textup{-op}},
    \]
    where $(-)^{(1,2)\textup{-op}}$ denotes the functor obtained by passing to opposites on the level of both 1 and 2-morphisms.
\end{theorem}

\begin{example}
Given the square (\ref{lax-square}), let $\gamma_i^\rL$ and $\gamma_j^\rL$ denote the left adjoints of $\gamma_i$ and $\gamma_j$ respectively. Then the second mate equivalence sends the square (\ref{lax-square}) to the oplaxly commuting square 
    \[
    \begin{tikzcd}
        {F'(i)} &  {F'(j)} \\
        {F(i)} & {F(j).}
        \arrow["\gamma_j^\rL",from=1-2, to=2-2]
        \arrow["F'(f)", from=1-1, to=1-2]
        \arrow["F(f)"', from=2-1, to=2-2]
        \arrow["\gamma_i^\rL"', from=1-1, to=2-1]
        \arrow["\beta", shorten <=7pt, shorten >=7pt, Rightarrow, from=1-2, to=2-1]
    \end{tikzcd}
    \] 
    which is filled by the Beck--Chevalley transformation $\beta$ of $\alpha$, defined to be the composite 
    \[
    \gamma_j^\rL F'(f) \xRightarrow{\eta} \gamma_j^\rL F'(f)\gamma_i \gamma_i^\rL \xRightarrow{\alpha} \gamma_j^\rL\gamma_j F(f)\gamma_i^\rL \xRightarrow{\epsilon} F(f)\gamma_i^\rL.
    \]
\end{example}

\begin{example}\label{ex:flipping_adjoints}
Specializing the second mate equivalence to the case $\CI \simeq \ast$ we obtain an equivalence $\tCat^\rR\simeq (\tCat^\rL)^{(1,2)\text{-op}}$.
\end{example}

As a first example of the mate equivalence, we observe that one can identify certain lax and oplax limits taken over adjoint diagrams. 

\begin{proposition}\label{prop:adjoint_diag_lax_oplax}
Let $G\colon \CI \rightarrow \Cat^\rR$ be a diagram of right adjoints. Write $F\colon \CI^{\op}\rightarrow \Cat^\rL$ for the diagram of left adjoints. Then there exists an equivalence $\oplaxlim_{\CI} G\simeq \laxlim_{\CI^{\op}} F$.
\end{proposition}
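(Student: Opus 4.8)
The strategy is to realize both sides of the claimed equivalence as mapping $\infty$-categories inside $(\infty,2)$-categories of functors and (op)lax transformations, and then to match them using the first mate equivalence of \Cref{first-mate-equivalence}. By definition,
\[
\laxlim_{\CI^\op} F = \Nat^\lax(\Delta(\ast), F)
\qquad\text{and}\qquad
\oplaxlim_{\CI} G = \Nat^\oplax(\Delta(\ast), G),
\]
where the first is the mapping $\infty$-category from $\Delta(\ast)$ to $F$ in $\tFun^\lax(\CI^\op, \tCat)$ and the second is the mapping $\infty$-category from $\Delta(\ast)$ to $G$ in $\tFun^\oplax(\CI, \tCat)$. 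So it suffices to exhibit a single equivalence of $(\infty,2)$-categories carrying $F$ to $G$ and $\Delta(\ast)$ to $\Delta(\ast)$, and then to pass to hom-$\infty$-categories out of $\Delta(\ast)$.

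First I would apply \Cref{first-mate-equivalence} with $\CI$ replaced by $\CI^\op$, obtaining the equivalence of $(\infty,2)$-categories
\[
\tFun^\lax_\rL(\CI^\op, \tCat)\;\simeq\;\tFun^\oplax_\rR(\CI, \tCat)
\]
which sends a diagram of left adjoints to the associated diagram of pointwise right adjoints. Since $F$ is a diagram of left adjoints and $G$ is obtained from it precisely by passing to pointwise right adjoints, this equivalence carries $F$ to $G$. Next I would note that the constant functor $\Delta(\ast)$ sends every morphism to $\id_\ast$, which is simultaneously a left and a right adjoint; hence $\Delta(\ast)$ is an object of both $\tFun^\lax_\rL(\CI^\op,\tCat)$ and $\tFun^\oplax_\rR(\CI,\tCat)$, and because its transition functors are identities, and so their own adjoints, it is fixed by the equivalence.

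Finally, since $\tFun^\lax_\rL$ and $\tFun^\oplax_\rR$ are \emph{full} $(\infty,2)$-subcategories of $\tFun^\lax$ and $\tFun^\oplax$, their mapping $\infty$-categories agree with the corresponding $\Nat$'s. Applying the equivalence above to mapping $\infty$-categories out of $\Delta(\ast)$ then yields
\[
\laxlim_{\CI^\op} F = \Nat^\lax(\Delta(\ast), F) \;\simeq\; \Nat^\oplax(\Delta(\ast), G) = \oplaxlim_{\CI} G,
\]
as desired. The one point demanding genuine care is this last step: one must use that \Cref{first-mate-equivalence} is an equivalence of $(\infty,2)$-categories, not merely of their underlying $\infty$-categories, so that it induces equivalences on all hom-$\infty$-categories, and one must track the variance carefully, since the cited statement is phrased with the roles of $\CI$ and $\CI^\op$ opposite to the ones appearing here. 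The verifications that $\Delta(\ast)$ is preserved and that the relevant mapping $\infty$-categories compute $\Nat^\lax$ and $\Nat^\oplax$ are then formal.
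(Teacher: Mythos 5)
Your proof is correct and follows essentially the same route as the paper: both sides are identified by definition with the mapping $\infty$-categories $\Nat^{\oplax}(\Delta(\ast),G)$ and $\Nat^{\lax}(\Delta(\ast),F)$, and the middle identification is exactly the content of the mate equivalence of \Cref{first-mate-equivalence} (which the paper invokes more tersely as ``the definition of adjoint diagrams in higher category theory''). Your additional checks — that $\Delta(\ast)$ is fixed because identities are their own adjoints, that the subcategories $\tFun^{\lax}_{\rL}$ and $\tFun^{\oplax}_{\rR}$ are full, and that the equivalence is one of $(\infty,2)$-categories so that it induces equivalences on hom-$\infty$-categories — are the correct points to verify and are handled properly.
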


\begin{proof}
    On categories this follows immediately from the following chain of equivalences
    \[
    \oplaxlim G = \Nat^{\oplax}(\Delta(\ast),G) \simeq \Nat^{\lax}(\Delta(\ast),F) = \laxlim F,
    \] where the first and the third are by definition and the middle one follows from the definition of adjoint diagrams in higher category theory.
\end{proof}

\section{Unstable global homotopy theory}
In this section we recall the necessary background on global homotopy theory following~\cites{GH, Rezk, LNP}.

To begin recall that a \emph{global family} is a collection of compact Lie groups $\CE$ which is closed under isomorphisms, passage to subgroups and quotients. A global family is said to be \emph{multiplicative} if in addition it is closed under finite products.

\begin{definition}
    Let $\CE$ be a global family of compact Lie group.
    \begin{enumerate} 
        \item We let $\Glo{\CE}$ denote the \emph{global orbit} $\infty$-\emph{category} of \cite{GH} with isotropy in $\CE$, whose objects are given by $\CB G$ for $G\in\CE$. Up to homotopy, morphisms $\CB H\rightarrow \CB G$ in $\Glo{\CE}$ are given by conjugacy classes of continuous group homomorphisms $\alpha\colon H\rightarrow G$. More precisely, by \cite{LNP}*{Proposition 6.3}, we have
        \begin{equation}\label{mapping-spaces-glo}
        \Glo{\CE}(\CB H, \CB G) \simeq \coprod\limits_{[\alpha]} BC(\alpha)
        \end{equation}
	where $[\alpha]$ runs through the set of conjugacy classes of continuous group homomorphisms from $H$ to $G$, and $C(\alpha)$ denotes the centraliser of the image of $\alpha$.
        \item We denote by $\Orb{\CE}\subseteq \Glo{\CE}$ the wide subcategory spanned by those morphisms which are represented by an injective group homomorphism.	
        \item The $\infty$-category of \emph{$\CE$-global spaces} $\Spcgl{\CE} \coloneqq \Fun(\Glo{\CE}^{\op},\Spc)$ is the presheaf $\infty$-category on $\Glo{\CE}$. We identify $\CB G$ with its image under the Yoneda embedding.
    \end{enumerate}
\end{definition}

\begin{notation}
    When $\CE$ is the global family of all compact Lie groups, we will omit the $\CE$ and simply write $\Glo{}, \Orb{}$ and $\Spc_{\gl}$ for these $\infty$-categories. When $\CE$ is the global family of abelian compact Lie groups, we will simplify the notation to $\Glo{\mathrm{ab}}$, $\Orb{\mathrm{ab}}$ and $\Spc_{\ab}$. Similarly when we restrict to finite abelian groups, we will write $\Glo{\fab}$, $\Orb{\fab}$ and $\Spc_{\fab}$. 
\end{notation}

\begin{notation}
    When it is clear from the context, we will often simply call an object of $\Spcgl{\CE}$ a global space, leaving the family $\CE$ implicit. We will also do this with all other ``global" objects we consider in this article.
\end{notation}

\begin{remark}\label{rem-pointed_glo}
    By \cite{GM20}*{Remark 2.14} the $\infty$-category $\mathrm{Glo}_{\ast}$ of pointed objects in $\Glo{}$ is equivalent to $\mathrm{CptLie}$, the topologically enriched category of compact Lie groups. In particular we obtain a functor $\CB(-)\colon \mathrm{CptLie}\rightarrow \Glo{}$ which ``forgets the basepoint''.
\end{remark}

\begin{remark}\label{rem:Glo_fin_prods}
If $\CE$ is a multiplicative global family, then $\Glo{\CE}$ has finite products inherited from $\mathrm{CptLie}$. This follows for example from a simple computation using \cite{LNP}*{Proposition 6.4}.
\end{remark}

Crucial to the story of global homotopy theory is its close relationship to equivariant homotopy theory. It follows from \cite{LNP}*{Lemma 6.13} that for any $G\in \CE$ the slice $\infty$-category $(\Orb{\CE})_{/{\CB G}}$ can be identified with $\mathrm{Orb}_G$, the orbit $\infty$-category of the compact Lie group $G$. Restricting along the functor $(\Orb{\CE})_{/{\CB G}} \to \Glo{\CE}$, $(\CB H \to \CB G)\mapsto
\CB H$ and using Elmendorf's theorem yields a \emph{restriction} functor 
\[
\mathrm{res}_G \colon \Spcgl{\CE} \to \Spc_G
\]
into the $\infty$-category of $G$-spaces. 
The right Kan extension along ${\Orb{\CE}}_{/{\CB G}} \to {\Glo{\CE}}$ defines a right adjoint to $\mathrm{res}_G$. We record the following special case.

\begin{notation}\label{not-righ-adj-eval}
   We denote the right adjoint of $\mathrm{res}_e\colon \Spcgl{\CE} \to \Spc$ by $X \mapsto X^{(-)}$. One verifies that this right adjoint is in fact fully faithful.
\end{notation}

\begin{example}\label{ex-classifying-space-right-induced}
    For any compact Lie abelian group $K$ we have $\CB K=BK^{(-)}$, see the discussion in \cite{Schwede20}*{Theorem 1.2.32}.
\end{example}

The restriction functor $\mathrm{res}_G$ also admits a left adjoint $-{\sslash} G\colon \Spc_G \to {\Spcgl{\CE}}$ which is given by left Kan extension. It will be important to record the following special case. 

\begin{notation}\label{not-constant-global-space}
     The functor $-\sslash e \colon \Spc \to \Spcgl{\CE}$ is fully faithful and we refer to a global space in its image as a \emph{constant} global space. Given a space $X$, we will identify $X$ with its constant global space $X \sslash e$. 
\end{notation}

One verifies that $\pt \sslash G \simeq \CB G$ so we obtain an induced functor $-\sslash G \colon \Spc_G \to {\Spcgl{\CE}}_{/\CB G}$. To describe the essential image of this functor we introduce the following definition: 

\begin{definition}
    A morphism of global spaces $f\colon \sX \to \sY$ is \emph{faithful} (or \emph{representable}) if for every $\CB G\in\Glo{\CE}$ and normal subgroup $N \triangleleft G$ the diagram 
    \[
    \begin{tikzcd}
        \sX(\CB G/N)\arrow[r]\arrow[d] & \sX(\CB G)\arrow[d]\\
        \sY(\CB G/N)\arrow[r] & \sY(\CB G)
    \end{tikzcd}
    \]
    is a pullback. We will distinguish faithful maps by writing $\sX \hookrightarrow \sY $.
\end{definition}

\begin{example}
    A morphism $\CB H \to \CB G$ in $\Glo{}$ is faithful if and only if it can be represented by an injective continuous group homomorphism $H \to G$.
\end{example}

\begin{example}\label{ex-nu-map}
   By \Cref{mapping-spaces-glo} we see that $\CB e\simeq \pt$, and $\Map_{\Glo{\CE}}(\pt, \CB G)\simeq BG$. Therefore we obtain a map $\nu\colon BG \rightarrow \CB G$, given by the counit of the adjunction between spaces and global spaces. Using~\Cref{mapping-spaces-glo} again is not hard to see that $\nu$ is faithful.
\end{example}

\begin{remark}\label{rem-faithful-pullback}
    By an application of the Yoneda Lemma, a map $f\colon \sX\rightarrow \sY$ is faithful if and only if it is right orthogonal to every map $\CB p\colon \CB G\rightarrow \CB G/N$ between representables such that $p$ is a surjective group homomorphism. In particular faithful morphisms are closed under pullback.
\end{remark}

\begin{definition}\label{def-quotient-map}
    By the dual of~\cite{HTT}*{Proposition 5.5.5.7}, the faithful maps are the right class of a factorization system on $\Spcgl{\CE}$. We call morphisms in the associated left class \emph{\quotientmap s} and denote them by $f\colon \sX\twoheadrightarrow \sY$.
\end{definition}

\begin{example}
By \Cref{rem-faithful-pullback}, a map $f\colon \CB H \to \CB G$ is a \quotientmap{} if and only if it represented by a surjective group homomorphism.
\end{example}

\begin{theorem}\label{Rezk}
    Let $\CE$ be a global family and consider $G\in \CE$. The functor 
    \[
    -{\sslash} G\colon \Spc_G\rightarrow {\Spcgl{\CE}}_{/\CB G}
    \]
    is fully faithful and an object $f\colon \sX\rightarrow \CB G$ is in its essential image
    if and only if $f$ is faithful.
\end{theorem}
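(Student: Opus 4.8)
The plan is to realize $-\sslash G$ as a left Kan extension along a fully faithful inclusion of indexing $\infty$-categories, deduce full faithfulness formally, and then identify the essential image using the (quotient map, faithful map) factorization system of \Cref{def-quotient-map}. First I would set up the comparison. By Elmendorf's theorem together with \cite{LNP}*{Lemma 6.13} we have $\Spc_G\simeq\Fun((\Orb{\CE})_{/\CB G}^{\op},\Spc)$, and since $\CB G$ is representable, the slice $\Spcgl{\CE}_{/\CB G}$ is again a presheaf $\infty$-category, on $(\Glo{\CE})_{/\CB G}$, by \cite{HTT}. The inclusion $\Orb{\CE}\subseteq\Glo{\CE}$ induces a functor $i\colon(\Orb{\CE})_{/\CB G}\to(\Glo{\CE})_{/\CB G}$, and I claim it exhibits $(\Orb{\CE})_{/\CB G}$ as the full subcategory on the faithful maps $\CB H\to\CB G$: the objects agree by definition, and a map $\CB H\to\CB K$ over $\CB G$ between two faithful maps is automatically faithful, since the composite $\CB H\to\CB K\to\CB G$ is faithful and hence, being represented by an injective homomorphism, so is its first factor. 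Unwinding the definitions of $\mathrm{res}_G$ and of the forgetful functor $\Spcgl{\CE}_{/\CB G}\to\Spcgl{\CE}$ (which correspond, respectively, to restriction and to left Kan extension along $(\Glo{\CE})_{/\CB G}\to\Glo{\CE}$), one identifies $-\sslash G$ with $i_!$. Full faithfulness of $-\sslash G$ is then the standard fact that $i_!$ is fully faithful whenever $i$ is.

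For the essential image I would prove the two inclusions separately. \emph{Every object of the essential image is faithful over $\CB G$:} the functor $-\sslash G$ preserves colimits and $\Spc_G$ is generated under colimits by the orbits $G/H$, whose images are the faithful maps $\CB H\hookrightarrow\CB G$; hence it suffices to check that the faithful maps with fixed target $\CB G$ are closed under colimits in $\Spcgl{\CE}_{/\CB G}$. Faithfulness of $p\colon\CY\to\CB G$ means that for all $\CB G'$ and $N\triangleleft G'$ the square relating $\CY(\CB G'/N)$, $\CY(\CB G')$, $\CB G(\CB G'/N)$, $\CB G(\CB G')$ is cartesian; colimits in $\Spcgl{\CE}_{/\CB G}$ are computed on underlying objects and then pointwise, and pulling back along the fixed maps $\CB G(\CB G'/N)\to\CB G(\CB G')$ commutes with colimits because colimits are universal in $\Spc$, so the squares remain cartesian.

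\emph{Every faithful map lies in the essential image:} given faithful $f\colon\sX\to\CB G$, the co-Yoneda resolution $\sX\simeq\colim_{(u\colon\CB H\to\sX)}\CB H$, transported to the slice via the forgetful functor (which creates colimits), presents $f$ as $\colim_{u}(\CB H\xrightarrow{fu}\CB G)$, indexed by $(\Glo{\CE})_{/\sX}$. The claim is that the full subcategory $\CD\subseteq(\Glo{\CE})_{/\sX}$ on those $u$ for which $fu$ is faithful is cofinal; granting this, $f\simeq\colim_{u\in\CD}(\CB H\hookrightarrow\CB G)$ is a colimit of objects in the essential image of $i_!$, hence, since $i_!$ preserves colimits and is fully faithful, is itself in the essential image. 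This cofinality is the step I expect to be the main obstacle, and it is where the factorization system does the work: fixing $u_0\colon\CB H_0\to\sX$, factor the composite $fu_0$ as $\CB H_0\xrightarrow{q}\CB K_0\xrightarrow{\iota}\CB G$ with $q$ a quotient map and $\iota$ faithful; since $f$ is faithful it is right orthogonal to $q$ by \Cref{rem-faithful-pullback}, yielding a unique $v_0\colon\CB K_0\to\sX$ with $v_0q\simeq u_0$ and $fv_0\simeq\iota$, so $v_0\in\CD$ and $q$ defines a morphism $u_0\to v_0$. Using the same orthogonality one checks that $(v_0,q)$ is initial in the comma $\infty$-category of objects of $\CD$ under $u_0$: for any $v\colon\CB H\to\sX$ in $\CD$ with $w$ satisfying $vw\simeq u_0$, orthogonality of the faithful map $fv$ against $q$ (the relevant square commutes since $(fv)w\simeq fu_0\simeq\iota q$) provides the essentially unique factorization of $w$ through $q$ compatible with $v$. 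Thus this comma $\infty$-category has an initial object, hence is weakly contractible, and Quillen's Theorem A gives the cofinality. The remaining points — that $-\sslash G$ really is $i_!$ under the chosen equivalences, and that a slice of a presheaf $\infty$-category over a representable is again a presheaf $\infty$-category — are formal, and I would treat them only briefly.
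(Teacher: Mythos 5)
Your proposal is correct, but it takes a genuinely different route from the paper. The paper's own proof is a two-line reduction: it quotes \cite{GM20}*{Proposition 2.19} for the family of all compact Lie groups and then deduces the case of a general family $\CE$ by observing that $-\sslash G$ factors as $\Spc_G\to(\Spcgl{\CE})_{/\CB G}\to(\Spc_{\gl})_{/\CB G}$ through two fully faithful functors. You instead give a self-contained argument: you identify $-\sslash G$ with $i_!$ for the fully faithful inclusion $i$ of the faithful objects into $(\Glo{\CE})_{/\CB G}$, get full faithfulness formally, prove one inclusion of essential images from universality of colimits in $\Spc$ (the right argument here, since the right class of a factorization system is not in general closed under colimits), and prove the other via a cofinality argument driven by the (\quotientmap{}, faithful) factorization system of \Cref{def-quotient-map}. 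What your approach buys is independence from the external reference and an argument that works uniformly in $\CE$ without detouring through the full global category; what the paper's buys is brevity.

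One step needs tightening. In verifying that $(v_0,q)$ is initial in the comma $\infty$-category, you invoke orthogonality of $q$ against $f v$, which produces $t$ with $tq\simeq w$ and $(fv)t\simeq\iota$; but the mapping space in the comma category requires $vt\simeq v_0$ over $\sX$, and $f$ is faithful, not a monomorphism, so compatibility after composing with $f$ does not suffice. The repair is immediate: the right class of an orthogonal factorization system satisfies right cancellation, so $fv$ and $f$ faithful force $v$ itself to be faithful, and orthogonality of $q$ against $v$ applied to the square with top edge $w$ and bottom edge $v_0$ (which commutes since $vw\simeq u_0\simeq v_0q$) identifies the space of maps $(v_0,q)\to(v,w)$ with a contractible space of lifts. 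With that adjustment the argument is complete.
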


\begin{proof}
    The claim for the global family of all compact Lie groups is proved in \cite{GM20}*{Proposition 2.19}. We can obtain the general case from this one by noting that there is a factorization 
    \[
    -{\sslash} G\colon \Spc_G \to (\Spcgl{\CE})_{/\CB G} \to (\Spc_{\gl})_{/\CB G}
    \]
    where both functors are fully faithful. 
\end{proof}

\begin{remark}
As observed in \cite{Rezk}*{Section 5.3}, the functor $-\sslash G\colon \Spc_G\rightarrow {\Spcgl{\CE}}_{/\CB G}$ admits a left adjoint, and so the $\infty$-category of $G$-spaces is a Bousfield localization of global spaces over $\CB G$. We may describe this left adjoint as follows: Given a map of global spaces $f\colon \sX\rightarrow \CB G$, we may factor $f$ as a \quotientmap{} followed by a faithful map $\sX\twoheadrightarrow \sX'\hookrightarrow \CB G$. The left adjoint applied to $f$ is the map $\sX'\hookrightarrow \CB G$, while the map  $\sX\twoheadrightarrow \sX'$ gives the unit of the adjunction.
\end{remark}

\begin{remark}\label{rem:restriction_functoriality} 
We can also give an interpretation of the functoriality of equivariant spaces using the previous result. Given a group homomorphism $\alpha\colon H\rightarrow G$, pullback and postcomposition by $B\alpha\colon \CB H\rightarrow \CB G$ induces an adjunction
\[
\CB \alpha_!\colon {\Spcgl{\CE}}_{/\CB H} \rightleftarrows {\Spcgl{\CE}}_{/\CB G}\cocolon \CB \alpha^*.
\]
As observed in \Cref{rem-faithful-pullback}, pulling back preserves faithful maps. Therefore the right adjoint above restricts to a functor
\[
(\CB \alpha)^*\colon (\Spcgl{\CE})_{/\CB G}^{\mathrm{fth}} \rightarrow (\Spcgl{\CE})_{/\CB H}^{\mathrm{fth}}
\]
between faithful morphisms.
By \cite{LNP}*{Proposition 6.17}, this functor agrees under the equivalences of \Cref{Rezk} with the standard restriction functoriality $\alpha^*\colon \Spc_G\rightarrow \Spc_H$ of equivariant spaces.

It follows by standard arguments that the left adjoint of $\CB \alpha^*$, restricted to global spaces over $\CB G$ and $\CB H$ with faithful structure map, is computed by first applying $\CB \alpha_!$ and then reflecting back into $(\Spcgl{\CE})_{/\CB G}^{\mathrm{fth}}$. This agrees by uniqueness of adjoints with the induction functor on equivariant spaces. In particular the counit of the adjunction $\alpha_!\dashv \alpha^*$ is given by factoring the counit $\alpha^* X\sslash H \simeq (\CB \alpha)^* (X\sslash G)\rightarrow X\sslash G$ of the pullback-postcomposition adjunction into a \quotientmap{} followed by a faithful map as in the following diagram 
\[\begin{tikzcd}
	{\alpha^* X\sslash H} & {\alpha_!\alpha^* X\sslash G} & {X\sslash G} \\
	{\CB H} && {\CB G.}
	\arrow[hook, from=1-1, to=2-1]
	\arrow[two heads, from=1-1, to=1-2]
	\arrow["\epsilon", hook, from=1-2, to=1-3]
	\arrow[hook, from=1-3, to=2-3]
	\arrow[""{name=0, anchor=center, inner sep=0}, "{\CB \alpha}", from=2-1, to=2-3]
	\arrow["\lrcorner"{anchor=center, pos=0.125}, draw=none, from=1-1, to=0]
\end{tikzcd}\]

\end{remark}

We also briefly discuss the identification of free $G$-spaces under the equivalence of \Cref{Rezk}. To establish notation we recall that $\mathrm{End}_{\Orb{G}} (G/e) \simeq G$, and so we obtain a restriction functor
\[
\Spc_G\coloneqq \Fun(\Orb{G},\Spc)\xrightarrow{\ev_{G/e}} \Fun(BG,\Spc)\simeq \Spc_{/BG}.
\] 
We call $G$-spaces in the image of the fully faithful left adjoint \emph{free} $G$-spaces.

\begin{proposition}\label{prop:free_G_spaces_as_global_spaces}
A map $\sX\rightarrow BG$ of global spaces is faithful if and only if it is in the image of the constant functor. Now consider the map $\nu\colon BG\rightarrow \CB G$ from \Cref{ex-nu-map}. Both functors in the adjunction 
    \[
    \nu_!\colon {\Spcgl{\CE}}_{/BG} \rightleftarrows {\Spcgl{\CE}}_{/\CB G} \cocolon \nu^*
    \]
    preserve objects with a faithful structure map, and the following square commutes 
    \[\begin{tikzcd}
        {(\Spcgl{\CE})_{/\CB G}^{\mathrm{fth}}} & {(\Spcgl{\CE})_{/BG}^{\mathrm{fth}}} \\
        {\Spc_{G}} & {\Spc_{/ BG}.}
        \arrow["{\ev_{G/e}}", from=2-1, to=2-2]
        \arrow["{\nu^*}", from=1-1, to=1-2]
        \arrow["\sim"', "\mathrm{res_e}", from=1-2, to=2-2]
        \arrow["\sim", "\mathrm{res}_G"', from=1-1, to=2-1]
    \end{tikzcd}\] In particular passing to left adjoints, we conclude that an object $\sX\rightarrow \CB G$ of ${\Spcgl{\CE}}_{/\CB G}$ corresponds to a free $G$-space if and only if $\sX$ is a constant global space and the map to $\CB G$ is faithful.
\end{proposition}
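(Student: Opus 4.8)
The plan is to prove the four assertions in turn. For the first claim, observe that $BG = BG\sslash e$ is a constant global space, so it sends every morphism of $\Glo{\CE}$ to an equivalence; the same holds for the terminal global space $\pt = \ast\sslash e$, and hence any map between constant global spaces is faithful, since in the defining pullback squares of a faithful map both horizontal legs are then equivalences. Together with the full faithfulness of $-\sslash e$ (\Cref{not-constant-global-space}) this gives the ``if'' direction. Conversely, if $\sX\to BG$ is faithful then so is the composite $\sX\to BG\to\pt$, and feeding the quotient maps $\CB K\to\CB K/K\simeq\CB e$ into the definition of a faithful map shows that $\sX(\CB e)\to\sX(\CB K)$ is an equivalence for every $\CB K$. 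This is precisely the statement that the counit $(\mathrm{res}_e\sX)\sslash e\to\sX$ of the adjunction $-\sslash e\dashv\mathrm{res}_e$ is an equivalence, so $\sX$ is constant.

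The preservation statements are then immediate: $\nu^*$ is base change along $\nu$ and faithful maps are stable under pullback (\Cref{rem-faithful-pullback}), while $\nu_!$ is postcomposition with the faithful map $\nu$ (\Cref{ex-nu-map}) and faithful maps compose. The left-hand vertical functor is an equivalence by \Cref{Rezk}; the right-hand one is an equivalence by the first claim, which identifies $(\Spcgl{\CE})_{/BG}^{\mathrm{fth}}$ with the image of the constant functor $\Spc_{/BG}\to(\Spcgl{\CE})_{/BG}$, on which $\mathrm{res}_e$ is inverse to $-\sslash e$.

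For the commutativity of the square — the technical heart — I would show that, on faithful objects, both $\mathrm{res}_e\circ\nu^*$ and $\ev_{G/e}\circ\mathrm{res}_G$ agree with the functor that applies $\mathrm{res}_e = \ev_{\CB e}$ to an object \emph{together with its structure map}. On one side, $\ev_{\CB e}$ preserves pullbacks and $\mathrm{res}_e(\nu)$ is an equivalence (by the triangle identities and full faithfulness of $-\sslash e$ it is the inverse of the unit equivalence $\mathrm{id}\to\mathrm{res}_e(-\sslash e)$), so $\mathrm{res}_e(\nu^*\sX) = \mathrm{res}_e(BG\times_{\CB G}\sX)\simeq\sX(\CB e)$, with structure map over $BG$ the restriction of $\sX\to\CB G$. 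On the other side, under the identification $\Orb{G}\simeq(\Orb{\CE})_{/\CB G}$ the free orbit $G/e$ corresponds to the essentially unique faithful map $\CB e\hookrightarrow\CB G$, whose automorphism group is $G$; evaluating the section over $(\Orb{\CE})_{/\CB G}$ that encodes $\mathrm{res}_G(\sX)$ at this object and then taking homotopy orbits by $G$ recovers first the fibre and then the total space of $\sX\to\CB G$ over $\CB e$, i.e.\ recovers $\ev_{\CB e}$ of the structure map. The delicate point is that the equivalence $\Orb{G}\simeq(\Orb{\CE})_{/\CB G}$ is compatible with the restriction functors in the manner recalled in \Cref{rem:restriction_functoriality} (after \cite{LNP}*{Proposition 6.17}); granting this bookkeeping, the two composites coincide, and this is where I expect the real work to lie.

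Finally, the ``in particular'' follows by passing to left adjoints in the commuting square: $\mathrm{res}_G$ and $\mathrm{res}_e$ are equivalences inverse to $-\sslash G$ and $-\sslash e$, $\nu^*$ has left adjoint $\nu_!$, and $\ev_{G/e}$ has left adjoint the fully faithful ``free $G$-space'' functor, so the mate square gives a natural equivalence $\nu_!\circ(-\sslash e)\simeq(-\sslash G)\circ(\mathrm{free})$ of functors $\Spc_{/BG}\to(\Spcgl{\CE})_{/\CB G}^{\mathrm{fth}}$. Thus if $\mathrm{res}_G(\sX\to\CB G)$ is a free $G$-space $\mathrm{free}(E)$ then $\sX\simeq\nu_!(E\sslash e)$, which is a constant global space with faithful structure map by the preservation step; conversely, if $\sX = Y\sslash e$ with $\sX\to\CB G$ faithful, then by naturality of the counit $\nu$ the structure map factors as $Y\sslash e\to BG\xrightarrow{\nu}\CB G$ through the adjunct $Y\to\mathrm{res}_e\CB G = BG$, whence $\sX\simeq\nu_!((Y\to BG)\sslash e)\simeq(-\sslash G)(\mathrm{free}(Y\to BG))$ and $\mathrm{res}_G(\sX)$ is free. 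The main obstacle is the third step; the other three are essentially formal manipulations with the factorisation system of faithful maps and with the relevant adjunctions.
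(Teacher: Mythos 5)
Your proposal is correct and follows essentially the same route as the paper: constant targets force the values $\sX(\pt)\to\sX(\CB K)$ to be equivalences, the preservation claims come from pullback-stability and composability of faithful maps, and the square commutes because both composites return $\sX(\pt)\to BG$ (the paper dispatches this last point in one line using that $\mathrm{res}_e=\ev_{\pt}$ preserves pullbacks and that $\nu$ evaluates to an equivalence at $\pt$, which is the same computation you spell out). The extra detail you give on identifying $\ev_{G/e}\circ\mathrm{res}_G$ via the fibre-then-Borel-construction is a correct elaboration of what the paper leaves implicit.
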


\begin{proof}
    Suppose $f\colon \sX\rightarrow BG$ is a faithful map of global spaces. For any $\CB H$ the square 	
    \[
    \begin{tikzcd}
        \sX(\pt)\arrow[r]\arrow[d] & \sX(\CB H)\arrow[d]\\
        BG \arrow[r, "\sim"] &  BG
    \end{tikzcd}
    \] is a pullback square. So we conclude that the map $\sX(\pt)\rightarrow \sX(\CB H)$ is an equivalence, and so $\sX$ is constant. For the second statement we first note that faithful maps are closed under pullback, and so $\nu^*$ clearly preserves faithful maps. Since composition of faithful maps is again faithful and $\nu_!$ is given by postcomposing with $\nu$, it follows that $\nu_!$ preserves faithful maps. The commutativity of the diagram follows from the observation that $\mathrm{res}_e$ (which agrees with evaluation at $\pt$) preserves pullbacks square so we obtain the first equivalence in the following sequence: 
    \[
    \mathrm{res}_e \nu^*\sX\simeq \mathrm{res}_e X \simeq \mathrm{ev}_{G/e}\circ \mathrm{res}_G \sX.\qedhere
    \]
\end{proof}

The connection between global and equivariant homotopy theory expressed by \Cref{Rezk} can be extended to give a different perspective on global homotopy theory.
Recall from \cite{LNP}*{Section 6}, that there exists a functor 
$\Spc_{\bullet}\colon \Glo{\CE}^{\mathrm{op}} \to \Cat $, which sends $\CB G$ to $\Spc_G$, and sends a morphism $\CB\alpha \colon \CB H\to \CB G$ to the restriction-inflation functor $\alpha^* \colon \Spc_G \to \Spc_H$. We recall the following result.

\begin{theorem}[\cite{LNP}*{Theorem 6.18}]\label{thm:__lax_lim}
    Let $\CE$ be a global family of compact Lie groups. Then the restriction functors induce an equivalence of $\infty$-categories
    \[
    \Spcgl{\CE}\simeq \laxlimdag_{\Glo{\CE}^{\op},\Orb{\CE}^{\op}} \Spc_\bullet
    \]
    where we view $\Glo{\CE}^{\op}$ as a marked $\infty$-category via the inclusion $\Orb{\CE}^{\op}\subset \Glo{\CE}^{\op}$. 
\end{theorem}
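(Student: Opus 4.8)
The plan is to exhibit the equivalence as the comparison functor built from the restriction functors $\mathrm{res}_G$, and then to check that it is an equivalence either by producing an explicit inverse or by reducing to a computation on representables. First I would assemble the functors $\mathrm{res}_G\colon\Spcgl{\CE}\to\Spc_G$ into a lax cone over $\Spc_\bullet$. Using the identification $(\Orb{\CE})_{/\CB G}\simeq\Orb G$ recalled above, $\mathrm{res}_G$ is restriction of presheaves along the forgetful functor $(\Orb{\CE})_{/\CB G}\to\Glo{\CE}$, $(\CB H\to\CB G)\mapsto\CB H$. For a faithful morphism $\CB\alpha\colon\CB H\hookrightarrow\CB G$ the corresponding triangle of orbit $\infty$-categories commutes, giving $\mathrm{res}_H\simeq\alpha^*\circ\mathrm{res}_G$; for a general morphism, and in particular for a quotient map $\CB p\colon\CB G\to\CB G/N$, there is only a canonical $2$-cell $p^*\circ\mathrm{res}_{G/N}\Rightarrow\mathrm{res}_G$, induced by the maps $\CB K\to\CB (KN/N)$. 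By the description of the restriction functoriality in Remark~\ref{rem:restriction_functoriality} this data is coherent, so $\mathrm{res}_\bullet$ assembles into a lax cone over $\Spc_\bullet$ with apex $\Spcgl{\CE}$ that is strictly natural over $\Orb{\CE}^{\op}$; by the currying universal property of the partially lax limit this is precisely the data of a functor
\[
\Phi\colon\Spcgl{\CE}\longrightarrow\laxlimdag_{\Glo{\CE}^{\op},\Orb{\CE}^{\op}}\Spc_\bullet.
\]

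To see that $\Phi$ is an equivalence, note first that by Remark~\ref{rem:laxlim_sect} its target is the $\infty$-category of those sections of the cocartesian fibration $\Unco{\Spc_\bullet}\to\Glo{\CE}^{\op}$ which are cocartesian over $\Orb{\CE}^{\op}$; since each transition functor $\alpha^*$ preserves colimits, this target is presentable with colimits computed fiberwise, and the identity $\mathrm{ev}_{\CB G}\circ\Phi\simeq\mathrm{res}_G$ (evaluation at the object $\CB G$) together with colimit-preservation of each $\mathrm{res}_G$ shows that $\Phi$ preserves colimits. I would then construct an explicit inverse $\Psi$: to a compatible family $X_\bullet$ it assigns the presheaf on $\Glo{\CE}$ with value $X_K(K/K)$ at $\CB K$, the value of the $K$-space $X_K$ at the terminal orbit, with functoriality along faithful maps coming from the strict naturality over $\Orb{\CE}^{\op}$ --- which forces $X_L(L/L)\simeq X_K(K/L)$ whenever $L\leq K$ --- and functoriality along quotient maps coming from the lax $2$-cells of $X_\bullet$. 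One then verifies $\Psi\Phi\simeq\mathrm{id}$, which on $\CB K$ reads $\mathrm{res}_K(X)(K/K)\simeq X(\CB K)$, and $\Phi\Psi\simeq\mathrm{id}$, which amounts to reconstructing the $K$-space $X_K$ from the values $X_L(L/L)$, $L\leq K$, again via the strict naturality. Equivalently, one may use that $\Phi$ preserves colimits and $\Spcgl{\CE}$ is generated under colimits by the representables to reduce full faithfulness and essential surjectivity to the case of $\CB G$, where both follow from the computation of $\mathrm{res}_H(\CB G)$ via the mapping-space formula~\eqref{mapping-spaces-glo} and Elmendorf's theorem.

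The substantive difficulty here is coherence rather than the formulas: one must promote either the lax cone $\mathrm{res}_\bullet$ or the putative inverse $\Psi$ to an honest functor of $\infty$-categories, with all of its higher compatibilities, and not merely a rule on objects and morphisms. For this I would work throughout in the unstraightening model of Remark~\ref{rem:laxlim_sect}, realizing $\mathrm{res}_\bullet$ as a map of fibrations over $\Glo{\CE}^{\op}$ and using the Beck--Chevalley and mate formalism of Section~\ref{sec:recall_lax} to keep track of the $2$-cells; granted this, the verification that $\Phi$ is an equivalence reduces, as indicated, to Elmendorf's theorem and the computation~\eqref{mapping-spaces-glo} of the morphism spaces of $\Glo{\CE}$.
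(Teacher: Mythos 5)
The paper does not actually prove this statement: it is imported wholesale from \cite{LNP}*{Theorem 6.18}, so the only proof to compare against is the one in that reference, and it is structurally different from yours. Rather than building the comparison functor and an explicit inverse by hand, \cite{LNP} first identify the global orbit category itself as a partially (op)lax colimit of the equivariant orbit categories, $\Glo{\CE}\simeq\oplaxcolimdag\,\mathrm{Orb}_\bullet$ (marked at the faithful maps), and then apply two formal inputs: the fact that $\Fun((-)^{\op},\Spc)$ carries such marked colimits of $\infty$-categories to marked lax limits of presheaf categories, and Elmendorf's theorem $\Spc_G\simeq\Fun(\mathrm{Orb}_G^{\op},\Spc)$ applied fiberwise. (Both inputs are visible in the chain of equivalences in the proof of \Cref{prop:glquotient-oplaxlim}.) The point of that route is precisely that the universal property of the partially lax colimit supplies all higher coherences at once, so no section of the unstraightening ever has to be constructed explicitly.

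Measured as a self-contained argument, your proposal has two gaps. First, the coherence problem you flag at the end is not a technical afterthought but the substantive content of the theorem: promoting $\mathrm{res}_\bullet$, and especially the putative inverse $\Psi$, to honest functors with all higher compatibilities is exactly the step the colimit decomposition is designed to avoid, and ``granted this'' concedes the hard part. Second, the fallback reduction to representables is incomplete as stated: for a colimit-preserving functor $\Phi$ out of a presheaf category, full faithfulness on representables does not yield an equivalence unless the objects $\Phi(\CB G)$ are completely compact generators of the target, i.e.\ unless each $\Map(\Phi(\CB G),-)$ preserves colimits and these functors jointly detect equivalences. Showing that $\Phi(\CB G)$ corepresents ``evaluate the $G$th coordinate at $G/G$'' on the partially lax limit is itself a nontrivial computation with sections of the unstraightening, essentially equivalent to the coherence you deferred. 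So the outline is reasonable and the formulas are the right ones, but the proof is not complete without supplying one of these two missing ingredients.
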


\begin{remark}\label{rem:marking_Glo}
In this article we have many occasions to take a partially (op)lax limits over the marked $\infty$-category $(\Glo{\CE},\Orb{\CE})$, as well as subcategories and opposite categories thereof. We therefore make the global\footnote{Pun intended.} convention that any category derived from $\Glo{}$ is marked by the collection of faithful edges.
\end{remark}

\begin{remark}
    Using the previous remark, we can informally summarize \Cref{thm:__lax_lim} as follows: a global space $\sX$ is equivalent to the data of
    \begin{itemize}
        \item a $G$-space $\mathrm{res}_G \sX$ for each group $G\in \CE$;
        \item an $H$-equivariant map $f_\alpha\colon \alpha^*\mathrm{res}_G \sX\to \mathrm{res}_H \sX$ for each continuous group homomorphism $\alpha\colon H \to G$;
         \item the maps $f_\alpha$ are functorial, in the sense that there are given homotopies $f_{\beta\circ \alpha}\simeq f_\beta\circ \beta^*(f_{\alpha})$ for all composable maps $\alpha$ and $\beta$, and $f_{\mathrm{id}}\simeq\mathrm{id}$;
        \item the map $f_\alpha$ is an equivalence for every continuous \emph{injective} homomorphism $\alpha$.
        \item a homotopy between  the map $f_{c_g}$ induced by the conjugation isomorphism and the map $l_g \colon c_g^* \mathrm{res}_G X \to \mathrm{res}_G X$ given by left multiplication by $g$;
        \item higher coherences for the homotopies.
    \end{itemize}
\end{remark}	

Here we record a consequence of this description for constructing functors out of global spaces.

\begin{proposition}\label{prop:glquotient-oplaxlim}
Let $\mathcal{C}$ be an $\infty$-category. Restriction along the functors $-{\sslash}G\colon\Spc_G\rightarrow \Spcgl{\CE}$ induces a functor 
\[
\Phi\colon \Fun(\Spcgl{\CE},\mathcal{C})\to \oplaxlimdag_{\Glo{\CE}^{\op}} \Fun(\Spc_\bullet,\mathcal{C}),
\] 
where the diagram $\Fun(\Spc_\bullet,\mathcal{C})$ is functorial in $\Glo{\CE}^{\op}$ as follows: a morphism $\CB \alpha \colon \CB H \to \CB G$ in $\Glo{\CE}$ is sent to restriction along the induction functor $\alpha_!\colon \Spc_H \rightarrow \Spc_G$. Furthermore suppose $\mathcal{C}$ admits all small colimits. Then $\Phi$ restricts to an equivalence
\[
\Fun^\mathrm{L}(\Spcgl{\CE},\mathcal{C})\simeq \oplaxlimdag\limits_{\Glo{\CE}^{\op}} \Fun^\mathrm{L}(\Spc_\bullet,\mathcal{C}).
\]
\end{proposition}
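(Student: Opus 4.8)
The plan is to reorganise \Cref{thm:__lax_lim} by passing to left adjoints. Under the equivalence $\Spcgl{\CE}\simeq\laxlimdag_{\Glo{\CE}^{\op},\Orb{\CE}^{\op}}\Spc_\bullet$ the restriction functors $\mathrm{res}_G$ are the projections of the partially lax limit, so they assemble into the universal lax cone $\Delta(\Spcgl{\CE})\Rightarrow\Spc_\bullet$, each component of which is a right adjoint, with left adjoint $-\sslash G$; likewise every transition functor $\alpha^*$ of $\Spc_\bullet$ is a right adjoint, to induction $\alpha_!$. I would therefore take $\Phi(F)$, for $F\colon\Spcgl{\CE}\to\mathcal{C}$, to have $G$-component $F\circ(-\sslash G)$ and, at a map $\CB\alpha\colon\CB H\to\CB G$, the oplax structure map obtained by applying $F$ to $\iota_\alpha\colon Y\sslash H\to\alpha_!Y\sslash G$, the \quotientmap{} part of the factorisation of $Y\sslash H\to\CB H\xrightarrow{\CB\alpha}\CB G$ (this is the unit of $\alpha_!\dashv\alpha^*$ transported via \Cref{Rezk}, as in \Cref{rem:restriction_functoriality}). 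The coherence of this assignment --- i.e.\ that it genuinely lands in $\oplaxlimdag_{\Glo{\CE}^{\op}}\Fun(\Spc_\bullet,\mathcal{C})$ with the functoriality $\CB\alpha\mapsto(\alpha_!)^*$ of the statement --- I would extract by applying \Cref{second-mate-equivalence} to the universal lax cone (its components becoming the $-\sslash G$) and then rewriting the resulting oplax cone through the straightening/mate dictionary of \Cref{rem:laxlim_sect} and \Cref{prop:adjoint_diag_lax_oplax}, using that $(\alpha^*)^*$ is left adjoint to $(\alpha_!)^*$. That $\Phi$ lands in the dagger part is because $\iota_\alpha$ is an equivalence whenever $\CB\alpha$ is faithful: by \Cref{Rezk} the structure map $Y\sslash H\to\CB H$ is faithful, so its composite with the faithful map $\CB\alpha$ is faithful, and the factorisation is trivial.

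For the equivalence on colimit-preserving functors, first note that $(\alpha_!)^*$ and $-\sslash G$ preserve colimit-preserving functors (both $\alpha_!$ and $-\sslash G$ are left adjoints), so $\Phi$ restricts to a functor $\Fun^{\mathrm L}(\Spcgl{\CE},\mathcal{C})\to\oplaxlimdag_{\Glo{\CE}^{\op}}\Fun^{\mathrm L}(\Spc_\bullet,\mathcal{C})$. The heart of the argument is the dual of \Cref{thm:__lax_lim}: because the projections $\mathrm{res}_G$ of the partially lax limit admit the colimit-preserving left adjoints $-\sslash G$, the very same $\infty$-category $\Spcgl{\CE}$ is the partially oplax colimit $\oplaxcolimdag_{\Glo{\CE}^{\op},\Orb{\CE}^{\op}}\Spc_\bullet$ taken in $\mathrm{Pr}^{\mathrm L}$ along $\CB\alpha\mapsto\alpha^*$, with cocone the $-\sslash G$; I would prove this by the same (co)cartesian-fibration argument underlying \cite{LNP}*{Theorem 6.18}, transported by \Cref{first-mate-equivalence} and \Cref{prop:adjoint_diag_lax_oplax}. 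The universal property of this oplax colimit against $\mathcal{C}$ then gives $\Fun^{\mathrm L}(\Spcgl{\CE},\mathcal{C})\simeq\oplaxlimdag_{\Glo{\CE}^{\op}}\Fun^{\mathrm L}(\Spc_\bullet,\mathcal{C})$, the lax/oplax and op flips landing on exactly the target functoriality $(\alpha_!)^*$ as in the first paragraph, and this equivalence is $\Phi$ by construction. As an alternative I could route through presheaves: $\Spcgl{\CE}=\mathcal{P}(\Glo{\CE})$, $\Spc_G=\mathcal{P}(\Orb{G})$, and $-\sslash G$ restricts on representables to $\Orb{G}\simeq(\Orb{\CE})_{/\CB G}\to\Glo{\CE}$, which reduces the claim to the identification of $\Glo{\CE}$ itself as the corresponding partially lax colimit of the diagram $\CB G\mapsto\Orb{G}$, the orbit-category shadow of \Cref{thm:__lax_lim}.

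The only genuinely non-formal input is \Cref{thm:__lax_lim} (equivalently its dual), which is already available, so the hard part will be bookkeeping rather than homotopy theory: keeping every op and every lax/oplax choice coherent through the mate equivalences --- in particular ensuring it is the adjunction $\alpha_!\dashv\alpha^*$ (induction as left adjoint of restriction), and not $\alpha^*\dashv\alpha_*$, that governs the target functoriality; checking that the dagger/strictness conditions transport correctly through \Cref{first-mate-equivalence}, \Cref{second-mate-equivalence} and through the passage from $\Cat$-colimits to $\mathrm{Pr}^{\mathrm L}$-colimits; and verifying that the abstract equivalence of the second paragraph agrees with the explicitly constructed $\Phi$. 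To make these compatibilities manifest I would phrase the whole argument using the section-of-a-fibration descriptions of \Cref{rem:laxlim_sect}, so that each partially (op)lax (co)limit becomes a concrete statement about sections of an explicit (co)cartesian fibration carrying the marked edges to (co)cartesian edges.
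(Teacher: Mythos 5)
Your proposal is correct and follows essentially the same route as the paper: the functor $\Phi$ is obtained exactly as you describe, by passing the universal partially lax cone of \Cref{thm:__lax_lim} to left adjoints and precomposing, and the equivalence on colimit-preserving functors rests on the identification of $\Spcgl{\CE}$ as a partially oplax colimit. In fact your ``alternative'' presheaf route is precisely the paper's argument --- it reduces to $\Glo{\CE}\simeq\oplaxcolimdag\Orb{\bullet}$ and \cite{LNP}*{Proposition 4.15}, thereby sidestepping the $\Cat$-versus-$\mathrm{Pr}^{\mathrm L}$ colimit subtlety that your primary route would still have to confront.
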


\begin{proof}
Recall that $\Spcgl{\CE}\simeq \laxlimdag \Spc_G$. In particular we obtain a universal partially lax cone
\[\begin{tikzcd}
        & {\Spc_H} \\
        {\Spcgl{\CE}} \\
        & {\Spc_G.}
        \arrow[""{name=0, anchor=center, inner sep=0}, "{\mathrm{res}_H}", from=2-1, to=1-2]
        \arrow["{\mathrm{res}_G}"', from=2-1, to=3-2]
        \arrow["{\alpha^*}"', from=3-2, to=1-2]
        \arrow[shorten <=19pt, shorten >=12pt, Rightarrow, from=3-2, to=0]
    \end{tikzcd}\]
Passing to left adjoints (by which we mean using the equivalence $\tCat^\rR \simeq (\tCat^\rL)^{(1,2)\text{-}\op}$, see \Cref{ex:flipping_adjoints}) we obtain a partially oplax cocone
\begin{equation}\label{oplaxcone}
    \begin{tikzcd}
            {\Spc_H} \\
            & {\Spcgl{\CE}} \\
            {\Spc_G.} &
            \arrow[""{name=0, anchor=center, inner sep=0}, "{- \sslash H}", from=1-1, to=2-2]
            \arrow["{ - \sslash G}"', from=3-1, to=2-2]
            \arrow["{\alpha_!}"', from=1-1, to=3-1]
            \arrow[shorten <=12pt, shorten >=19pt, Rightarrow, from=0, to=3-1]
    \end{tikzcd}
\end{equation}
Given a functor $F\colon \Spcgl{\CE}\to \mathcal{C}$ we may precompose by this cocone to obtain a partially oplax cocone with target $\mathcal{C}$, encoded by a diagram $\Glo{\CE}\rightarrow \tCat \oplaxslice \mathcal{C}$. This in turn gives an object of $\oplaxlimdag \Fun(\Spc_G,\mathcal{C})$, see \Cref{const:functors_into_lax_slice}.
    
For the final statement we first observe that because each functor $-\sslash G$ preserves colimits, the functor constructed above restricts appropriately. To see that it is an equivalence we compute
\begin{align*}
    \Fun^{\rL}(\Spcgl{\CE},\mathcal{C}) \simeq \Fun(\Glo{\CE},\mathcal{C}) &\simeq \Fun(\oplaxcolimdag \mathrm{Orb}_\bullet, \mathcal{C}) \\ &\simeq \oplaxlimdag \Fun(\mathrm{Orb}_\bullet,\mathcal{C}) \simeq \oplaxlimdag \Fun^{\rL}(\Spc_\bullet,\mathcal{C}),
\end{align*}
where the second and third equivalence are justified by the proof of Theorem 6.18 and Proposition 4.15 of \cite{LNP} respectively.
\end{proof}

\begin{remark}
Unwinding the proof of the previous proposition, we find that the image of $F\colon \Spcgl{\CE} \rightarrow \CC$ under the functor of \Cref{prop:glquotient-oplaxlim} and the equivalence of \Cref{Rezk} is given by the family of functors $\{F_G(X ) = F(X\sslash G)\}$. The oplax structure map $F_G(\alpha_!(-))\rightarrow F_H(-)$ is given at an object $X$ by applying $F$ to the unique \quotientmap{} $X\sslash G\twoheadrightarrow \alpha_! X\sslash H$ for which the square
\[
\begin{tikzcd}
    X\sslash G & {\alpha_!X\sslash H} \\
    {\CB G} & {\CB H}
    \arrow["f", from=2-1, to=2-2]
    \arrow["i", hook, from=1-1, to=2-1]
    \arrow["\alpha_!(i)",hook, from=1-2, to=2-2]
    \arrow[two heads, from=1-1, to=1-2]
\end{tikzcd}
\]
commutes.
\end{remark}

\section{Stable global homotopy theory}

Global spectra were defined in \cite{Schwede18} as a $\infty$-category of representing objects for genuine cohomology theories on global spaces. Just as in the unstable setting, one can describe global spectra as a laxly compatible collection of genuine equivariant spectra.

\begin{theorem}[\cite{LNP}*{Theorem 11.10}]\label{thm:Sp_gl_lax_lim}
Let $\CE$ be a multiplicative global family of compact Lie groups. Then there exists a functor 
\[
\Sp_\bullet\colon \Glo{\CE}^{\op}\rightarrow \PrL, \quad \CB G\mapsto \Sp_G
\] 
which sends $\CB G$ to the $\infty$-category of \textit{genuine $G$-spectra} and an equivalence of symmetric monoidal $\infty$-categories
\[
\Spgl{\CE} \simeq \laxlimdag \Sp_\bullet.
\]
\end{theorem}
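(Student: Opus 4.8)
The plan is to bootstrap the stable statement from the unstable equivalence $\Spcgl{\CE}\simeq\laxlimdag_{\Glo{\CE}^{\op},\Orb{\CE}^{\op}}\Spc_\bullet$ of \Cref{thm:__lax_lim}. The strategy has three parts: first construct the functor $\Sp_\bullet$; then produce a symmetric monoidal comparison functor $\Spgl{\CE}\to\laxlimdag\Sp_\bullet$ out of the restriction functors; and finally show it is an equivalence by exhibiting both sides as a fibrewise stabilisation of the corresponding unstable datum and invoking \Cref{thm:__lax_lim}.

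First I would construct $\Sp_\bullet\colon\Glo{\CE}^{\op}\to\PrL$. Genuine equivariant spectra are functorial for restriction along continuous group homomorphisms: an injective homomorphism $H\hookrightarrow G$ induces the usual restriction, a surjection $G\twoheadrightarrow G/N$ induces inflation, and these are coherently compatible with conjugation, exactly as the transition functors in $\Spc_\bullet$. One clean way to package this is to start from $\Spc_\bullet$, pass to the fibrewise pointed variant $\Spc_{\bullet,\ast}$, and invert representation spheres fibrewise: the transition functors are symmetric monoidal, preserve finite limits, and carry $S^V$ to $S^{\alpha^*V}$, so this localisation is natural in $\Glo{\CE}^{\op}$ and produces a functor valued in $\PrL$ whose value at $\CB G$ is $\Sp_G$.

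Next I would produce the comparison functor. The restriction functors $\mathrm{res}_G\colon\Spgl{\CE}\to\Sp_G$ commute with one another up to the canonical Beck--Chevalley structure maps, hence assemble into a lax cone over $\Sp_\bullet$; moreover for a faithful morphism $\CB H\hookrightarrow\CB G$ these structure maps are equivalences, so, with the marking convention of \Cref{rem:marking_Glo}, the cone is partially lax, i.e.\ an object of $\laxlimdag\Sp_\bullet$. Under the unstraightening description of \Cref{rem:laxlim_sect} this is the same as a functor $\Spgl{\CE}\to\laxlimdag\Sp_\bullet$. Since each $\mathrm{res}_G$ is symmetric monoidal, the universal property of $\laxlimdag\Sp_\bullet$ as a symmetric monoidal $\infty$-category (\Cref{calg-laxlim}) upgrades this to a symmetric monoidal functor.

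It remains to show this functor is an equivalence, and this is where I expect the main obstacle to lie. I would use that $\Sp_G$ is the stabilisation of pointed $G$-spaces obtained by inverting the representation spheres $\{S^V\}$, and dually that $\Spgl{\CE}$ is obtained from pointed $\CE$-global spaces by inverting the corresponding ``global'' representation spheres lying over the various $\CB G$. Granting \Cref{thm:__lax_lim}, the claim then reduces to showing that forming the partially lax limit commutes with this fibrewise localisation. Because a partially lax limit is \emph{not} an ordinary limit, this interchange is not formal; I would argue instead at the level of the cocartesian unstraightening $\Unco{\Sp_\bullet}\to\Glo{\CE}^{\op}$, using that each transition functor preserves the local (``genuinely stable'') objects and that the relevant adjoints are themselves compatible, so that the fibrewise local objects cut out a relative Bousfield localisation of the unstraightening of $\Spc_{\bullet,\ast}$. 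Taking sections, and recording that a section takes faithful edges to cocartesian edges both before and after localisation, should identify $\laxlimdag\Sp_\bullet$ with a Bousfield localisation of $\laxlimdag\Spc_{\bullet,\ast}\simeq(\Spcgl{\CE})_\ast$ matching the one defining $\Spgl{\CE}$; combined with \Cref{thm:__lax_lim} this yields the equivalence, and carrying the symmetric monoidal structures through the localisation (again via \Cref{calg-laxlim}) makes it symmetric monoidal. The delicate point throughout is verifying that the partially lax cone conditions are preserved by passing to and from the fibrewise local subcategories.
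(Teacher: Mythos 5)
First, a point of order: the paper does not prove this statement at all -- it is imported verbatim from \cite{LNP}*{Theorem 11.10} -- so there is no in-paper proof to compare against; the relevant comparison is with the argument in \cite{LNP}. Your first two steps are reasonable and broadly consistent with constructions that do appear later in the paper: building $\Sp_\bullet$ by fibrewise inversion of representation spheres in $\Spc_{\bullet,\ast}$ is essentially \Cref{thm:Sp_fin_univ} and \Cref{cor:univ_prop_suspension} applied levelwise, and assembling the restriction functors into a symmetric monoidal comparison functor $\Spgl{\CE}\to\laxlimdag\Sp_\bullet$ is the right shape (though producing a \emph{coherent} family of restrictions out of Schwede's model is itself a nontrivial part of what \cite{LNP} accomplishes).

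The genuine gap is in your third step, and it is twofold. (i) Your reduction rests on the premise that $\Spgl{\CE}$ is obtained from pointed $\CE$-global spaces by formally inverting ``global representation spheres.'' No such description of Schwede's category is available, and the unstable-to-stable passage in global homotopy theory is precisely where the analogy with the $G$-equivariant case breaks down: global spectra contain strictly more information than any localisation of $\Spcgl{\CE,\ast}$ at a set of objects would see (this is why the paper must separately \emph{define} $\Sigma^\infty_{\gl}$ as a lax limit of the $\Sigma^\infty_G$ and even then flags, in the Warning following that definition, that comparing it with Schwede's suspension functor is open). So the statement you want to reduce to is not weaker than the theorem -- it essentially presupposes it. (ii) Even granting (i), your proposed mechanism does not apply: inverting representation spheres is a formal inversion in $\PrL$ (computed, e.g., as a filtered colimit of copies of the category along $S^V\otimes-$), not a Bousfield localisation onto a full subcategory of ``local objects.'' There is therefore no full subcategory of $\Unco{\Spc_{\bullet,\ast}}$ to cut out, and the question of whether $\laxlimdag$ commutes with this fibrewise inversion -- which, as you correctly note, is not formal -- is left entirely unresolved. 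The actual proof in \cite{LNP} avoids this interchange problem altogether: it works with point-set models of orthogonal spectra, where a single global orthogonal spectrum tautologically determines a compatible family of $G$-orthogonal spectra (a section of the relevant Grothendieck construction), and the resulting comparison functor is shown to be an equivalence by checking it on compact generators and identifying mapping spectra on both sides via the tom Dieck splitting and Wirthm\"uller isomorphisms.
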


In this section we refine this result, and show that in certain cases a simpler description of global spectra is possible.

\begin{definition}\label{def:enough-injectives}
    Let $\CE$ be a multiplicative global family. We say a full subcategory $\CT\subset \Glo{\CE}$ is a subcategory of \emph{enough injectives} if
    \begin{enumerate}
        \item $\CT$ is closed under products in $\Glo{\CE}$.
        \item Given any object $\CB G$, there exists a faithful map $\CB G\hookrightarrow \CB K$ for some $\CB K\in \CT$.
    \end{enumerate}
    We view $\CT$ as a marked $\infty$-category by once again marking the subcategory of faithful maps.
\end{definition}

\begin{remark}
    Recall from \Cref{rem:Glo_fin_prods} that $\Glo{\CE}$ has finite products, so condition (1) of the previous definition is well-defined.
\end{remark}

\begin{example}
    The full subcategory $\Sigma \subset \Glo{\fin}$ of the global orbit $\infty$-category of finite groups spanned by those $\CB G$ such that $G$ is a finite product of symmetric groups is a subcategory of enough injective objects. This is a consequence of Cayley's theorem.
\end{example}

\begin{example}
    The full subcategory $\Tori\subset \Glo{\ab}$ of the global orbit $\infty$-category of abelian compact Lie groups spanned by those $\CB G$ such that $G$ is a torus is a subcategory of enough injective objects. 
\end{example}

\begin{example}
  The full subcategory $\mathrm{U}\subset \Glo{}$ of the global orbit $\infty$-category of compact Lie groups spanned by those $\CB G$ such that $G$ is a finite product of unitary groups is a subcategory of enough injective objects. This follows from the fact that any compact Lie group admits a faithful unitary representation, by an application of the Peter--Weyl theorem.
\end{example}

The variant of \Cref{thm:Sp_gl_lax_lim} which we will prove is the following:

\begin{theorem}\label{thm:P_gl_sp_from_tori}
    Let $\CE$ be a multiplicative global family of compact Lie groups, and suppose $\CT\subset \Glo{\CE}$ is a family of enough injective objects. Then there exists an equivalence of symmetric monoidal $\infty$-categories
    \[
    \Spgl{\CE} \simeq \laxlimdag_{\CT^{\op}} \Sp_\bullet.
    \]
\end{theorem}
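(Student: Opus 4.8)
The plan is to deduce the statement from \Cref{thm:Sp_gl_lax_lim}, which identifies $\Spgl{\CE}$ with the partially lax limit $\laxlimdag_{\Glo{\CE}^{\op}}\Sp_\bullet$ computed over $\Glo{\CE}^{\op}$ marked by its faithful edges (as per the convention of \Cref{rem:marking_Glo}). The inclusion of marked $\infty$-categories $\iota\colon\CT^{\op}\hookrightarrow\Glo{\CE}^{\op}$ induces a restriction functor
\[
\iota^{\ast}\colon\laxlimdag_{\Glo{\CE}^{\op}}\Sp_\bullet\longrightarrow\laxlimdag_{\CT^{\op}}\Sp_\bullet|_{\CT^{\op}},
\]
and since $\Sp_\bullet$ is in particular a diagram of symmetric monoidal $\infty$-categories, both sides are symmetric monoidal by \Cref{calg-laxlim} and $\iota^{\ast}$ is a symmetric monoidal functor; it therefore suffices to show that $\iota^{\ast}$ is an equivalence of underlying $\infty$-categories. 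I would prove this via the following general statement, of which the theorem is the special case $F=\Sp_\bullet$: let $(\CJ,\CV)\hookrightarrow(\CI,\CW)$ be a full marked subcategory with $\CV=\CW\cap\CJ$ such that (i) every object of $\CI$ receives a $\CW$-morphism from an object of $\CJ$, (ii) $\CJ$ is closed under the finite products of $\CI$, and (iii) every morphism of $\CI$ sits in a square with its remaining three vertices in $\CJ$ and both legs towards $\CI$ in $\CW$ --- these are exactly the three closure conditions of \Cref{def:enough-injectives} --- then restriction $\laxlimdag_{\CI}F\to\laxlimdag_{\CJ}F|_{\CJ}$ is an equivalence for every $F\colon\CI\to\Cat$.

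To prove this I would use the unstraightening model of \Cref{rem:laxlim_sect}: writing $q\colon\CX\to\CI$ for the cocartesian unstraightening $\Unco{F}$ of $F$, the partially lax limit $\laxlimdag_{\CI}F$ is the $\infty$-category of sections $\CI\to\CX$ of $q$ that carry $\CW$-edges to $q$-cocartesian edges, and the target is computed the same way after base change to $\CJ$. The point is that such a section is rigidly constrained along $\CJ$: by (i) any object $i$ admits a marked edge $x\to i$ with $x\in\CJ$, which the section must send to a cocartesian lift, so the value at $i$ is forced to be the cocartesian pushforward of the value at $x$. Property (ii) makes this forced value independent of the choice up to coherent equivalence --- two marked edges $x\to i$, $x'\to i$ out of $\CJ$ are both dominated by $x\times x'\to i$, again marked and in $\CJ$, and functoriality of the section together with the factorization through the (non-marked) projection $x\times x'\to x$ identifies the two candidates. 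Concretely, in our situation this says that for faithful maps $\CB G\hookrightarrow\CB K$ and $\CB G\hookrightarrow\CB K'$ with $\CB K,\CB K'\in\CT$ one has, in $\Sp_G$,
\[
\iota_K^{\ast}s_0(\CB K)\simeq(\iota_K,\iota_{K'})^{\ast}s_0(\CB K\times\CB K')\simeq\iota_{K'}^{\ast}s_0(\CB K'),
\]
since both outer terms occur as the source and the composite of a single cocartesian edge of the section. Property (iii) plays the analogous role for the structure map of the section over an arbitrary, non-marked morphism of $\CI$: a resolving square with vertices in $\CJ$ expresses that a priori merely lax structure map in terms of the $\CJ$-data, and compatibility as the square varies supplies functoriality. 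As $\CW$-edges are closed under composition, the section extended in this way again sends $\CW$ to cocartesian edges; (i) applied to identity morphisms gives $\iota^{\ast}s=s_0$; and the uniqueness of the extension gives the reverse identification, so $\iota^{\ast}$ is an equivalence.

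The main obstacle is making the previous paragraph rigorous: promoting the recipe ``the value at $i$ is the cocartesian pushforward from $\CJ$, well defined by (ii), with lax structure maps determined by (iii)'' from a prescription on objects and morphisms to an honest functor $\CI\to\CX$ over $\CI$, and then checking that it is a $\CW$-cocartesian section and that the two composites $\iota^{\ast}\circ(\text{extend})$ and $(\text{extend})\circ\iota^{\ast}$ are naturally equivalent to the identities. This is precisely a relative Kan extension statement along $\iota$, whose content is the weak contractibility --- after passing to a cofinal subcategory of marked morphisms --- of the relevant comma categories out of $\CJ$, which is where (i)--(iii) enter essentially. To keep the coherences under control I would repackage $\CW$-cocartesian sections as functors into a lax slice using \Cref{const:functors_into_lax_slice} together with \Cref{cor-lax-slice} and \Cref{slice-cart}, so that the extension becomes a cofinality statement for the functor $\iota$ relative to a fixed cartesian fibration; the cofinality check then reduces to the comma-category analysis above, and it is this coherence-heavy reduction, rather than any single computation, that I expect to be the crux.
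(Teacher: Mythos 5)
Your reduction is the same as the paper's: both proofs pass to underlying $\infty$-categories (the forgetful functor $\Cat^{\otimes}\to\Cat$ being conservative and preserving partially lax limits), invoke \Cref{thm:Sp_gl_lax_lim}, and then reduce everything to the statement that the inclusion $\CT^{\op}\hookrightarrow\Glo{\CE}^{\op}$ of marked $\infty$-categories is marked final --- this is exactly \Cref{thm-tori-marked-final}. The difference is in how that finality is established. The paper black-boxes the general recognition principle as \Cref{thm:markedcofinality} (citing \cite{AG20}) and only verifies its hypotheses; your concrete verifications --- existence of a marked edge into $\CT$ from \Cref{def:enough-injectives}(2), the product/diagonal trick comparing two faithful embeddings $\CB G\hookrightarrow\CB K$ and $\CB G\hookrightarrow\CB K'$ via $\CB K\times\CB K'$, and condition (3) of \Cref{def:enough-injectives} to rule out empty mapping spaces --- are precisely the paper's proof of \Cref{thm-tori-marked-final}. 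What you propose to do in addition is to prove the recognition principle itself by a hands-on relative-Kan-extension argument in the unstraightened model; this is where all the coherence difficulty you flag lives, and it is avoidable by citation. One small correction on the criterion you are implicitly reproving: the relevant condition is not weak contractibility of the comma categories $\CT_{\CB G/}$, but that the marked objects become \emph{initial} in the localizations $\mathcal{L}(\CT_{\CB G/})$ at the marked edges, together with the transition functors preserving these initial objects; your product argument is exactly what shows mapping spaces in $\mathcal{L}(\CT_{\CB G/})$ are empty or contractible, so it does prove the right thing, but the target statement should be phrased in terms of initial objects of localized comma categories rather than contractibility.
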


These equivalences will follow from the fact that the inclusion $\CT^{\op}\subset \Glo{\CE}^{\op}$ is marked final, a concept from \cite{AG20} which we recall now.

\begin{proposition}\label{prop:markedfinal}
    Let $F\colon \CI\rightarrow \CJ$ be a functor of marked $\infty$-categories. The following are equivalent:
    \begin{enumerate}
        \item Given any functor $G\colon \CJ\rightarrow \Cat$, $\laxlimdag G\simeq \laxlimdag GF$ whenever either exist.
        \item Given any functor $G\colon \CJ\rightarrow \Cat$, $\operatorname*{oplaxlim^\dagger} G\simeq \operatorname*{oplaxlim^\dagger} GF$ whenever either exist.
    \end{enumerate}
\end{proposition}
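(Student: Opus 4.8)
The plan is to deduce the equivalence of the two conditions from a single duality statement which exchanges partially lax and partially oplax limits. Write $(-)^{\op}\colon\Cat\to\Cat$ for the involution sending an $\infty$-category to its opposite, and for a diagram $H\colon\CK\to\Cat$ set $H^{\vee}:=(-)^{\op}\circ H$, so that $H^{\vee}(k)=H(k)^{\op}$. The first step is to prove: for every marked $\infty$-category $(\CK,\CV)$ and every $H\colon\CK\to\Cat$ there is an equivalence
\[
\laxlimdag_{\CK,\CV}H\;\simeq\;\Big(\oplaxlimdag_{\CK,\CV}H^{\vee}\Big)^{\op}
\]
under which one side exists exactly when the other does, and which is natural in $(\CK,\CV)$ along marked functors; in particular it intertwines restriction along a marked functor $\CI\to\CK$ on both sides. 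Granting this, the proposition is formal. Indeed $(-)^{\vee}$ is an involution on $\Fun(\CJ,\Cat)$ with $(GF)^{\vee}=G^{\vee}F$, so if (1) holds then, applying it to the diagram $G^{\vee}$, the restriction functor (along $F$) $\laxlimdag_{\CJ}G^{\vee}\to\laxlimdag_{\CI}G^{\vee}F$ is an equivalence; by the duality over $\CJ$ and over $\CI$, together with its naturality, this functor is identified, after passing to opposite categories, with the restriction functor $\oplaxlimdag_{\CJ}G\to\oplaxlimdag_{\CI}GF$, which is therefore also an equivalence — and the existence clause of the duality matches the ``whenever either exist'' hypotheses. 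This gives (2), and the implication (2)$\Rightarrow$(1) is identical, the duality being symmetric under $H\leftrightarrow H^{\vee}$.

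For the duality itself I would argue through the unstraightening picture of \Cref{rem:laxlim_sect}. That remark presents $\laxlimdag_{\CK,\CV}H$ as the $\infty$-category of sections of the cocartesian fibration $p\colon\Unco{H}\to\CK$ that carry $\CV$-edges to $p$-cocartesian edges, and dually presents $\oplaxlimdag_{\CK,\CV}H^{\vee}$ as the $\infty$-category of sections of the cartesian unstraightening of $H^{\vee}$ carrying marked edges to cartesian edges. The one nontrivial ingredient is the standard fact (e.g.\ \cite{HTT}) that the opposite $p^{\op}\colon(\Unco{H})^{\op}\to\CK^{\op}$ of a cocartesian fibration is a cartesian fibration whose straightening is the fibrewise opposite of that of $p$; thus $(\Unco{H})^{\op}\to\CK^{\op}$ \emph{is} the cartesian unstraightening $\Unct{H^{\vee}}$. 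Passing to opposite categories then identifies sections of $p$ with sections of $p^{\op}$, replaces the ambient $\infty$-category of sections by its opposite, and swaps cocartesian with cartesian edges, so that a section takes $\CV$-edges to cocartesian edges if and only if the opposite section takes them to cartesian edges; this is precisely the displayed equivalence. Naturality in $(\CK,\CV)$ — and hence the compatibility with restriction along $F$ used above — follows because cocartesian unstraightening, passage to opposite fibrations, and restriction of sections are each natural in the base; the existence clause is likewise visible from the construction.

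The substantive difficulty here is not conceptual but bookkeeping: one must keep careful track of which $\infty$-category is being opposed at each step, and verify that all the marking conditions — ``sends marked edges to (co)cartesian edges'' on the fibration side, and ``strictly natural over the marked subcategories of $\CI$ and $\CJ$'' on the cone side — match up exactly under these opposites, including the point that the canonical restriction functors on lax and on oplax cones are genuinely intertwined (so that one obtains an equivalence of restriction functors, not merely an abstract equivalence of their sources and targets). Once the fibrewise-opposite identification $(\Unco{H})^{\op}\simeq\Unct{H^{\vee}}$ is set up compatibly with markings and with restriction, nothing else is needed. As an alternative to unstraightening one can run the same argument directly at the level of $(\infty,2)$-functor categories, exchanging lax with oplax via a $2$-categorical opposite together with postcomposition with $(-)^{\op}\colon\Cat\to\Cat$ (which fixes the constant diagram $\Delta(\ast)$); the bookkeeping is the same.
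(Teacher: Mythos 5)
Your proposal is correct and is essentially the paper's own argument: the proof given there is the one-line observation that $(-)^{\op}$ is an auto-equivalence of $\Cat$ exchanging (partially) lax and oplax limits, which is exactly the duality $\laxlimdag_{\CK,\CV}H\simeq(\oplaxlimdag_{\CK,\CV}H^{\vee})^{\op}$ you formulate and then verify via the fibrewise-opposite identification $(\Unco{H})^{\op}\simeq\Unct{H^{\vee}}$. You simply supply the bookkeeping the paper leaves implicit.
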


\begin{proof}
    This follows from the fact that $(-)^{\op}$ is an auto-equivalence of $\Cat$ which sends lax limits to oplax limits and vice-versa. 
\end{proof}

\begin{definition}
    We say a functor $F\colon \CI\rightarrow \CJ$ of marked $\infty$-categories is \emph{marked final} if it satisfies the equivalent conditions of the previous proposition. We say $F$ is \emph{marked cofinal} if $F^{\op}$ is marked final.
\end{definition}

\begin{remark}
    $F$ is marked cofinal if and only if it preserves partially (op)lax colimits, in the sense of Proposition \ref{prop:markedfinal}. Therefore our definition agrees with \cite{AG20}*{Definition 5.4}, and we may freely cite their results.
\end{remark}

\begin{notation}
    Given a functor $F\colon\CI\rightarrow \CJ$ such that $\CJ$ is a marked $\infty$-category and $j\in \CJ$, we write $\CI_{j/}$ for the comma category $\{j\}\downarrow F$. We consider this as a marked $\infty$-category by marking all the edges whose projection to $\CJ$ is marked. 
\end{notation}

Given a marked $\infty$-category $\CC$, we write $\mathcal{L}(\CC)$ for the localization of $\CC$ at the marked edges in $\CC$.

\begin{lemma}\label{lemma:productsinloc}
    Let $\CC$ be a marked $\infty$-category which admits finite products. Suppose that the functor $X\times (-)\colon \CC\rightarrow \CC$ is a marked functor for every object $X\in \CC$. Then $\mathcal{L}(\CC)$ admits finite products, and the localization functor $\CC\rightarrow \mathcal{L}(\CC)$ preserves them.
\end{lemma}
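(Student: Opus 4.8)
The plan is to recast ``admits finite products'' and ``preserves finite products'' in terms of adjunctions and then to transport those adjunctions along the localization functor $L\colon\CC\to\mathcal{L}(\CC)$. Recall that an $\infty$-category $\CD$ admits a terminal object iff the unique functor $\CD\to\ast$ has a right adjoint, and admits binary products iff the diagonal $\Delta_\CD\colon\CD\to\CD\times\CD$ has a right adjoint (which then computes the binary product); moreover, a functor between $\infty$-categories with finite products preserves them iff it commutes, up to the canonical comparison map, with these right adjoints. Since ``admits finite products'' amounts to ``has a terminal object and binary products'' (and likewise for preservation), it suffices to produce, on $\mathcal{L}(\CC)$, right adjoints to $\mathcal{L}(\CC)\to\ast$ and to $\Delta_{\mathcal{L}(\CC)}$ in a way compatible with $L$. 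Write $W$ for the marking on $\CC$, equip $\CC\times\CC$ with the product marking $W\boxtimes W$ (morphisms $(f,g)$ with both $f,g\in W$), and $\ast$ with its unique marking.

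The first ingredient is that all functors in sight are marked. The diagonal $\Delta_\CC$ and the projection $\CC\to\ast$ are visibly marked, and the terminal-object functor $\ast\to\CC$ is marked since identities are marked. The real content — and the only place the hypothesis is used — is that $\mu=(-\times-)\colon(\CC\times\CC,W\boxtimes W)\to(\CC,W)$ is marked. Given $f\colon X\to X'$ and $g\colon Y\to Y'$ in $W$, factor $\mu(f,g)$ as $X\times Y\xrightarrow{\id_X\times g}X\times Y'\xrightarrow{f\times\id_{Y'}}X'\times Y'$. The first map is $(X\times-)$ applied to the marked morphism $g$, hence marked by hypothesis; the second is $(-\times Y')$ applied to $f$, and $(-\times Y')\simeq(Y'\times-)$ as functors $\CC\to\CC$ by the symmetry of the cartesian product, so it is also marked because the marked subcategory is closed under equivalences of morphisms. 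A composite of marked morphisms is marked, so $\mu(f,g)\in W$.

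The second ingredient is formal and standard. First, localization commutes with finite products of $\infty$-categories: $\mathcal{L}(\CD_1\times\CD_2,W_1\boxtimes W_2)\simeq\mathcal{L}(\CD_1)\times\mathcal{L}(\CD_2)$ compatibly with the localization functors, and $\mathcal{L}(\ast)\simeq\ast$. (For the product: via $\Fun(\CD_1\times\CD_2,\CE)\simeq\Fun(\CD_1,\Fun(\CD_2,\CE))$ one checks that a functor out of $\CD_1\times\CD_2$ inverts $W_1\boxtimes W_2$ iff it inverts $W_1\boxtimes\{\id\}$ and $\{\id\}\boxtimes W_2$, since every $(w_1,w_2)$ factors as $(w_1,\id)\circ(\id,w_2)$.) Second, a marked adjunction $F\colon\CD\rightleftarrows\CE\colon G$ — meaning both $F$ and $G$ send marked edges to marked edges — descends to an adjunction $\bar F\colon\mathcal{L}(\CD)\rightleftarrows\mathcal{L}(\CE)\colon\bar G$ compatible with localization: $L_\CE\circ F$ and $L_\CD\circ G$ invert the marked edges and so factor uniquely through $\bar F,\bar G$, and the unit, counit, and triangle identities descend because restriction along a localization is fully faithful.

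Assembling: applying the descent of marked adjunctions to $\Delta_\CC\dashv\mu$ and to $(\CC\to\ast)\dashv(\ast\to\CC)$, and using $\mathcal{L}(\CC\times\CC)\simeq\mathcal{L}(\CC)\times\mathcal{L}(\CC)$ together with the fact that $L$ carries the diagonal of $\CC$ to the diagonal of $\mathcal{L}(\CC)$ (so the descended left adjoint is indeed $\Delta_{\mathcal{L}(\CC)}$), we get a right adjoint $\bar\mu$ to $\Delta_{\mathcal{L}(\CC)}$ and a right adjoint to $\mathcal{L}(\CC)\to\ast$; hence $\mathcal{L}(\CC)$ admits finite products. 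Finally $L$ preserves them, since by construction $L\circ\mu\simeq\bar\mu\circ(L\times L)$ and $L$ likewise intertwines the projections and terminal-object sections, so the canonical comparison maps $L(X\times Y)\to LX\times LY$ and $L(1_\CC)\to1_{\mathcal{L}(\CC)}$ are equivalences. I expect the only genuine subtlety to be this last bookkeeping — verifying that the right adjoint produced on $\mathcal{L}(\CC)$ is literally the product functor and that the comparison maps are the canonical ones rather than merely abstract equivalences. If one prefers to avoid the ``localization commutes with products'' input, one can instead build $\bar\mu$ by hand: the transpose $\tilde\mu\colon\CC\to\Fun(\CC,\CC)$, $Y\mapsto(-\times Y)$, lands after postcomposition with $L$ in $\Fun^{W}(\CC,\mathcal{L}(\CC))\simeq\Fun(\mathcal{L}(\CC),\mathcal{L}(\CC))$ since each $(-\times Y)$ is marked, and $\tilde\mu$ itself inverts marked edges because $X\times(-)$ is marked for every $X$; transposing back yields $\bar\mu\colon\mathcal{L}(\CC)\times\mathcal{L}(\CC)\to\mathcal{L}(\CC)$, whose adjunction to the diagonal is then checked directly from the mates of the unit and counit of $\Delta_\CC\dashv\mu$.
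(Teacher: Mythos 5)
Your proof is correct and follows essentially the same route as the paper: realize the product as the right adjoint of the diagonal, observe that the hypothesis makes $-\times-$ a marked functor, use that localization commutes with products of marked $\infty$-categories, and descend the adjunction. You supply somewhat more detail than the paper (the explicit factorization $f\times g=(f\times\id)\circ(\id\times g)$ showing $\mu$ is marked, and the separate treatment of the terminal object), but the underlying argument is the same.
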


\begin{proof}
    The product functor $-\times-\colon \CC\times\CC\rightarrow \CC$ is determined by being right adjoint to the diagonal functor $\Delta_{\CC}\colon \CC\rightarrow \CC\times\CC$. Our assumptions imply that the product derives to a functor $\mathcal{L}(-\times-)\colon \mathcal{L}(\CC\times \CC)\rightarrow \mathcal{L}(\CC)$, which is right adjoint to $\mathcal{L}(\Delta_{\CC})$. Here we consider $\CC\times \CC$ as a marked $\infty$-category by taking the product in marked $\infty$-categories, explicitly this is given by marking an edge $(f,g)$ whenever both $f$ and $g$ are marked. One can compute that $\mathcal{L}(\CC\times \CC)\simeq \mathcal{L}(\CC)\times \mathcal{L}(\CC)$, that is $\mathcal{L}$ preserves products. 
    We therefore have a commutative diagram 
    \[
    \begin{tikzcd}
        \CC \times \CC \arrow[r,shift right, "-\times -"'] \arrow[d] & \CC \arrow[d]\arrow[l, shift right, "\Delta_{\CC}"']\\
        \mathcal{L}(\CC \times \CC)\arrow[d,"\sim"'] \arrow[r,shift right, "\mathcal{L}(-\times -)"'] & \mathcal{L}(\CC)\arrow[l, shift right, "\mathcal{L}(\Delta_{\CC})"']\arrow[d,"="]\\
        \mathcal{L}(\CC)\times \mathcal{L}(\CC) \arrow[r, shift right,dotted, "-\times -"'] & \mathcal{L}(\CC)\arrow[l, shift right, "\Delta_{\mathcal{L}(\CC)}"'].
    \end{tikzcd}
    \]  
    and the dotted arrow defines a product in $\mathcal{L}(\CC)$. The commutativity of the above diagram also shows that the localization functor preserves products.
\end{proof}

It will be helpful to keep the following example in mind. 

\begin{example}\label{ex-products-inj}
    Let $\CT \subset \Glo{\CE}$ be a subcategory of enough injectives. For any $\CB G\in \Glo{\CE}$, we can form the comma category $\CT_{\CB G/}$. A morphism 
    \[
    \begin{tikzcd}
        & \CB G \arrow[rd]\arrow[ld] & \\
        \CB K \arrow[rr, "f"] & & \CB J
    \end{tikzcd}
    \]
    in this comma category is marked precisely when $f$ is represented by an injective group homomorphism. Note that $\CT_{\CB G/}$ has finite products since $\CT$ is closed in $\Glo{\CE}$ under finite products. Furthermore, taking products with a fixed object of $\CT_{\CB G/}$ preserves faithful maps, and therefore Lemma \ref{lemma:productsinloc} implies that $\mathcal{L}(\CT_{\CB G/})$ has finite products and that the localization $\CT_{\CB G/}\rightarrow \mathcal{L}(\CT_{\CB G/})$ preserve them.
    
\end{example}

The key ingredient for the proof of \Cref{thm:P_gl_sp_from_tori} is the following criterion.

\begin{theorem}[\cite{AG20}*{Theorem 5.10}] \label{thm:markedcofinality}
A functor $F\colon I\rightarrow J$ is marked cofinal if and only if 
    \begin{enumerate}
        \item For every $j\in J$, there exists an object $g\colon j\rightarrow F(i)$ of $I_{j/}$ which is marked when viewed as a morphism in $J$;
        \item Any object of $I_{j/}$ of the form above is an initial object in $\mathcal{L}(I_{j/})$;
        \item Given a marked edge $j\rightarrow j'$ of $J$, the induced map $\mathcal{L}(I_{j'/})\rightarrow \mathcal{L}(I_{j/})$ preserves initial objects.   
    \end{enumerate}	
\end{theorem}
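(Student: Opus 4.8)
The plan is to recognise this, following \cite{AG20}, as a statement about restriction of sections of a fibration. First, using \Cref{prop:markedfinal} and the definition of marked cofinality, I would reduce to the corresponding characterisation of \emph{marked final} functors: replacing $F$ by $F^{\op}$, the task becomes to show that restriction along $F^{\op}\colon\CI^{\op}\to\CJ^{\op}$ induces an equivalence $\laxlimdag_{\CJ^{\op}}G\xrightarrow{\sim}\laxlimdag_{\CI^{\op}}GF^{\op}$ for every $G\colon\CJ^{\op}\to\Cat$, precisely when (1)--(3) hold. By \Cref{rem:laxlim_sect}, a partially lax cone is a section of the cocartesian unstraightening $\Unco{G}\to\CJ^{\op}$ that carries marked edges to cocartesian edges, so the content becomes: restriction of these \emph{marked sections} along $F^{\op}$ is an equivalence of $\infty$-categories. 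The comma category $\CI_{j/}$ of the statement enters because, for $j\in\CJ$, its objects $(i,\,j\to F(i))$ are exactly the arrows $F^{\op}(i)\to j$ of $\CJ^{\op}$ along which one can transport the restricted section into the fibre over $j$.

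The main step is to build an inverse to restriction. Given a marked section $t$ of $\Unco{G}$ over $\CI^{\op}$ and an object $j\in\CJ$, condition (1) supplies an object of $\CI_{j/}$ whose underlying arrow $j\to F(i_0)$ is marked in $\CJ$, i.e.\ a marked arrow $\alpha\colon F(i_0)\to j$ in $\CJ^{\op}$; the value at $j$ of the extended section is then forced to be the cocartesian transport $\alpha_!(t(i_0))$, since a marked section must send $\alpha$ to a cocartesian edge and hence has its value at $j$ determined by its value $t(i_0)$ at $F(i_0)$. I would next use condition (2) to see that this is independent of the choice of $\alpha$: any two choices are both initial objects of $\mathcal{L}(\CI_{j/})$, hence canonically identified by a zig-zag of marked edges of $\CI_{j/}$, and along these the transports of $t$ agree because $t$ sends marked edges to cocartesian edges. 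Condition (3) then ensures these values are compatible as $j$ varies along a marked edge of $\CJ$, so the assembled section is again marked, and one further appeal to initiality upgrades this pointwise recipe to a genuine functor $\CJ^{\op}\to\Unco{G}$ and checks the triangle identities, exhibiting it as an inverse to restriction. For the reverse implication I would evaluate the assumed equivalence of partially lax limits on appropriately chosen test diagrams $G$: essential surjectivity of restriction forces (1), full faithfulness forces (2), and naturality in $j$ forces (3).

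The hard part will be homotopy coherence rather than the object-level formula: turning ``cocartesian transport along a marked arrow into the image of $F$'' into an actual functor requires a theory of marked relative Kan extensions in which the comma-category colimit computing the extension collapses to evaluation at the initial object of $\mathcal{L}(\CI_{j/})$, compatibly with the lax (un)straightening of \Cref{rem:laxlim_sect}. Developing that machinery — and controlling the localisations $\mathcal{L}(\CI_{j/})$ throughout — is the technical heart of \cite{AG20}, which I would cite rather than reprove; the sketch above is how one would argue directly.
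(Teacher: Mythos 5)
The paper does not prove this statement at all: it is imported verbatim as \cite{AG20}*{Theorem 5.10}, so the "official" proof is the citation, and your proposal — which also ultimately defers the coherence machinery to \cite{AG20} — is consistent with that. Your sketch of how the cited proof goes is the right strategy: pass to marked sections of the (co)cartesian unstraightening via \Cref{rem:laxlim_sect}, use condition (1) to define the extension at $j$ by cocartesian transport along the marked arrow, and use (2) and (3) for well-definedness and functoriality. The one loose spot is the independence-of-choice step: two initial objects of $\mathcal{L}(\CI_{j/})$ are equivalent in the \emph{localization}, which is not the same as being joined by a zig-zag of marked edges of $\CI_{j/}$ itself, so the comparison of the two transports does not follow merely from $t$ sending marked edges to cocartesian ones; the actual mechanism is that the extension's value at $j$ is a marked colimit over $\CI_{j/}$ which collapses to evaluation at an initial object of the localization. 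You correctly identify this as the technical heart and cite \cite{AG20} for it, so there is no substantive gap, but be aware that the "zig-zag of marked edges" phrasing overstates what initiality in $\mathcal{L}(\CI_{j/})$ gives you directly.
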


In order to verify condition (2) of the previous criterion we will need the following consequence of the axioms of a subcategory of enough injectives.
\begin{lemma}\label{lem-condition3-enough-inj}
    Let $\CT\subset \Glo{\CE}$ be a subcategory of enough injective objects. Given any map $f\colon \CB H\rightarrow \CB G$ in $\Glo{\CE}$ there is a commutative square
        \[
        \begin{tikzcd}
            \CB K & \CB J \\
            {\CB H} & {\CB G}
            \arrow["{\tilde{f}}", from=1-1, to=1-2]
            \arrow["f", from=2-1, to=2-2]
            \arrow[hook, from=2-1, to=1-1]
            \arrow[hook, from=2-2, to=1-2]
        \end{tikzcd}
        \] 
        in $\Glo{\CE}$ such that the vertical maps are faithful and $\CB K,\CB J \in\CT$.
\end{lemma}

\begin{proof}
    By condition (2) for $\CT$ there are faithful maps $i \colon \CB H \hookrightarrow \CB H'$ and $j \colon \CB G \hookrightarrow \CB G'$ with $\CB H',\CB G' \in \CT $. We can then form the commutative square
    \[
    \begin{tikzcd}
        \CB H' \times \CB G' \arrow[r,"\pr_2"] & \CB G' \\
        \CB H \arrow[u, hook,"i \times( j\circ f)"] \arrow[r,"f"] & \CB G \arrow[u,hook,"j"']\\.
    \end{tikzcd}
    \]
    It is only left to note that $\CB H' \times \CB G'\in \CT$ by condition (1), and that the map $i \times (j \circ f)$ is faithful since $i$ is so.
\end{proof}

\begin{theorem}\label{thm-tori-marked-final}
    Let $\CT\subset \Glo{\CE}$ be a subcategory of enough injective objects. Then the inclusion $\CT^{\op} \rightarrow \Glo{\CE}^{\op}$ is marked final.
\end{theorem}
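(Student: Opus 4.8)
The assertion is that the inclusion $\CT^{\op}\to\Glo{\CE}^{\op}$ is marked final; by the definition of marked cofinal this is equivalent to asking that the inclusion $F\colon\CT\to\Glo{\CE}$ be marked cofinal. The plan is to verify the three conditions of the criterion \Cref{thm:markedcofinality} for $F$. For $j=\CB G\in\Glo{\CE}$, the comma category $\CT_{\CB G/}$ appearing there is exactly the one analysed in \Cref{ex-products-inj}: objects are maps $\CB G\to\CB T$ with $\CB T\in\CT$, and a morphism is marked precisely when the underlying map of $\CT$-objects is faithful. Call an object $\CB G\hookrightarrow\CB K$ whose structure map is faithful a \emph{faithful-type} object, and abbreviate $\mathcal{L}_{\CB G}\coloneqq\mathcal{L}(\CT_{\CB G/})$.

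Condition (1) of \Cref{thm:markedcofinality} is immediate from \Cref{def:enough-injectives}(2): a faithful embedding $\CB G\hookrightarrow\CB K$ with $\CB K\in\CT$ is a faithful-type object of $\CT_{\CB G/}$, and the morphism $g\colon\CB G\to\CB K$ is marked in $\Glo{\CE}$. Granting condition (2), condition (3) is formal: a marked (hence faithful) edge $\CB G\hookrightarrow\CB G'$ induces the precomposition functor $\CT_{\CB G'/}\to\CT_{\CB G/}$, which carries a faithful-type object $\CB G'\hookrightarrow\CB K$ to $\CB G\hookrightarrow\CB G'\hookrightarrow\CB K$; since composites of faithful maps are faithful (\Cref{rem-faithful-pullback}), this is again faithful-type, hence initial in $\mathcal{L}_{\CB G}$ by condition (2), so the induced functor $\mathcal{L}_{\CB G'}\to\mathcal{L}_{\CB G}$ preserves initial objects.

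The heart of the matter is condition (2): every faithful-type object $d=(\CB G\hookrightarrow\CB K)$ is initial in $\mathcal{L}_{\CB G}$. Here I would exploit the finite products on $\mathcal{L}_{\CB G}$ and the fact that the localization functor $\CT_{\CB G/}\to\mathcal{L}_{\CB G}$ preserves them, as recorded in \Cref{ex-products-inj} via \Cref{lemma:productsinloc}. Three observations drive the proof. First, the diagonal morphism $d\to d\times d$ in $\CT_{\CB G/}$ is represented by the group diagonal $\CB K\to\CB K\times\CB K\simeq\CB(K\times K)$, which is a monomorphism and hence faithful; therefore $d\to d\times d$ is marked and becomes an equivalence in $\mathcal{L}_{\CB G}$, which in turn forces $\Map_{\mathcal{L}_{\CB G}}(d,d)\simeq\ast$. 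Second, the object $d\times e=(\CB G\to\CB K\times\CB T)$ is again faithful-type for every $e=(\CB G\to\CB T)$, because the composite of its structure map with the first projection is the faithful map $\CB G\hookrightarrow\CB K$. Third, a weak initiality statement: for any $e=(\CB G\xrightarrow{\phi}\CB T)$, applying \Cref{def:enough-injectives}(3) to $\phi$ produces a faithful-type object $d_\phi=(\CB G\hookrightarrow\CB K_\phi)$, a faithful map $\CB T\hookrightarrow\CB T_\phi$ (hence an equivalence in $\mathcal{L}_{\CB G}$), and a map $\CB K_\phi\to\CB T_\phi$ compatible with the structure maps; composing with the inverse equivalence yields a map $d_\phi\to e$ in $\mathcal{L}_{\CB G}$. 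Assembling these -- the idempotence of faithful-type objects for $\times$, the contractibility of their endomorphism spaces, and the maps out of faithful-type objects provided by \Cref{def:enough-injectives}(3) -- one deduces that $\Map_{\mathcal{L}_{\CB G}}(d,e)$ is contractible for every $e$, so that $d$ is initial.

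I expect this last assembly to be the main obstacle: weak initiality on its own does not force initiality in a general $\infty$-category, so the argument must use essentially that $\mathcal{L}_{\CB G}$ has finite products preserved by the localization, that faithful-type objects are closed under products with arbitrary objects and have invertible diagonals there, and the resolution property \Cref{def:enough-injectives}(3) -- it is precisely the interplay of these that turns ``some faithful-type object maps to $e$'' into ``$d$ maps uniquely to $e$''. Once condition (2) is established, \Cref{thm:markedcofinality} applies and shows $F\colon\CT\to\Glo{\CE}$ is marked cofinal, equivalently that $\CT^{\op}\to\Glo{\CE}^{\op}$ is marked final.
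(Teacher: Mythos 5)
Your overall strategy is the paper's: verify the three conditions of \Cref{thm:markedcofinality} for the inclusion $\CT\to\Glo{\CE}$, with condition (1) coming from \Cref{def:enough-injectives}(2) and condition (3) reducing to condition (2) because composites of faithful maps are faithful. Those parts are fine. But condition (2) is where the content lies, and there you have a genuine gap which you yourself flag: you produce weak initiality of a faithful-type object $d$ plus contractibility of $\Map_{\mathcal{L}_{\CB G}}(d,d)$, and then assert that ``assembling'' these with closure of faithful-type objects under products yields initiality. As you note, weak initiality plus contractible endomorphisms does not give initiality in a general $\infty$-category, and the particular ingredients you list (e.g.\ that $d\times e$ is again faithful-type) do not obviously combine to close this.

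The missing idea is to apply your diagonal observation to the \emph{target} of the mapping space rather than to $d$. For an \emph{arbitrary} object $e=(\CB G\to\CB T)$ of $\CT_{\CB G/}$, the composite $e\xrightarrow{\Delta}e\times e\xrightarrow{\mathrm{pr}_1}e$ is the identity, and $\Delta$ has underlying map the group diagonal $\CB T\to\CB T\times\CB T$, which is faithful and hence marked --- note this uses nothing about the structure map $\CB G\to\CB T$, so it applies to every object, not just faithful-type ones. Thus $\mathrm{pr}_1\colon e\times e\to e$ is an equivalence in $\mathcal{L}_{\CB G}$. Since the localization preserves finite products (\Cref{ex-products-inj}, \Cref{lemma:productsinloc}), for every $\beta$ the induced map $\Map_{\mathcal{L}_{\CB G}}(\beta,e)\times\Map_{\mathcal{L}_{\CB G}}(\beta,e)\to\Map_{\mathcal{L}_{\CB G}}(\beta,e)$ is an equivalence, which forces each such mapping space to be empty or contractible; i.e.\ $\mathcal{L}_{\CB G}$ is a poset. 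Now your weak-initiality argument via \Cref{def:enough-injectives}(3) (factoring the structure map of $e$ into a quotient map followed by a faithful one) shows $\Map_{\mathcal{L}_{\CB G}}(d,e)$ is nonempty, hence contractible, so $d$ is initial. With this step supplied your argument coincides with the paper's proof.
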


\begin{proof}
    We apply Theorem \ref{thm:markedcofinality} to the inclusion $\CT\rightarrow \Glo{\CE}$. Condition (1) follows from \Cref{def:enough-injectives}(2). Note that if we assume condition (2), then condition (3) is immediate from the fact that a composite of faithful maps is faithful. Therefore all that remains is condition (2). Fix an object $\CB G\in \Glo{\CE}$, and consider the $\infty$-category $\CT_{\CB G/}$. As discussed in \Cref{ex-products-inj}, the comma category $\mathcal{L}(\CT_{\CB G/})$ has finite products and the localization $\CT_{\CB G/}\rightarrow \mathcal{L}(\CT_{\CB G/})$ preserves them. However given an object $\alpha\colon \CB G\rightarrow X$ in $\CT_{\CB G/}$, the composite 
    \[
    \begin{tikzcd}
        & \CB G \arrow[ld, "\alpha"'] \arrow[d, "\alpha\times\alpha" description] \arrow[rd, "\alpha"] &      \\
        \CB K \arrow[r, hook, "\mathrm{\Delta}"'] \arrow[rr, bend right, "\mathrm{id}"] & \CB K\times K \arrow[r, "\mathrm{pr}_1"']                                              & \CB K
    \end{tikzcd}
    \] 
    in $\CT_{\CB G/}$ exhibits $\mathrm{pr_1}$ as an equivalence in $\mathcal{L}(\CT_{\CB G/})$; i.e., every object is equivalent to its product. On mapping spaces we obtain that 
    \[
    \mathrm{pr}_1\colon \Map_{\mathcal{L}(\CT_{\CB G/})}(\beta, \alpha)\times \Map_{\mathcal{L}(\CT_{\CB G/})}(\beta, \alpha) \rightarrow \Map_{\mathcal{L}(\CT_{\CB G/})}(\beta, \alpha)
    \] 
    is an equivalence for every two objects $\alpha,\beta$ of $\CT_{\CB G/}$, which implies that $\Map_{\mathcal{L}(\CT_{\CB G/})}(\beta, \alpha)$ is always either empty or contractible. Condition (2) of Theorem \ref{thm:markedcofinality} requires that any faithful map $i\colon\CB G\hookrightarrow \CB J$ is initial in $\mathcal{L}(\CT_{\CB G/})$. First we note that the commutative diagram
    \[\begin{tikzcd}
	& {\CB G} \\
	{\CB J} & {\CB J\times K} & {\CB K}
	\arrow["i"', hook', from=1-2, to=2-1]
	\arrow["{i\times j}", hook, from=1-2, to=2-2]
	\arrow["j", hook, from=1-2, to=2-3]
	\arrow[hook, from=2-1, to=2-2]
	\arrow[hook', from=2-3, to=2-2]
\end{tikzcd}\]
    shows that any two faithful maps $i\colon\CB G\hookrightarrow \CB J$ and $j\colon\CB G\hookrightarrow \CB K$ are isomorphic in $\CL(\CT_{/\CB G})$. Combining this with the observation before, we see that it suffices to show that for any $\beta\in \CT_{\CB G/}$, there exists a faithful map $i\colon \CB G\hookrightarrow \CB J$ such that $\Map_{\mathcal{L}(\CT_{\CB G/})}(i, \beta)$ is non-empty. For this we apply \Cref{lem-condition3-enough-inj} to the map $\beta$ to obtain the diagram
    \[\begin{tikzcd}
	{\CB J} & {\CB L} \\
	{\CB G} & {\CB K.}
	\arrow["{\bar{\beta}}", from=1-1, to=1-2]
	\arrow["i", hook, from=2-1, to=1-1]
	\arrow["\beta"', from=2-1, to=2-2]
	\arrow["j"', hook, from=2-2, to=1-2]
\end{tikzcd}\]
Observe that we may interpret this as a zig--zag 
    \[\begin{tikzcd}
	i & {\bar{\beta}i} & \beta
	\arrow["{\bar{\beta}}", from=1-1, to=1-2]
	\arrow["j"', hook', from=1-3, to=1-2]
\end{tikzcd}\]
    in $\CT_{\CB G/}$ where the second map is faithful. This proves that $\Map_{\mathcal{L}(\CT_{\CB G/})}(i, \beta)$ is non-empty.
\end{proof}

\begin{proof}[Proof of \Cref{thm:P_gl_sp_from_tori}]
    Since the forgetful functor $\Cat^\otimes_\infty\to \Cat$ is conservative and preserves partially lax limits (see \cite{LNP}*{Remark 5.1}), it suffices to verify that the induced functors on underlying 
    $\infty$-categories are equivalences. This now follows from combining \Cref{thm:Sp_gl_lax_lim} and \Cref{thm-tori-marked-final}.
\end{proof}

\part{Naive global 2-rings}

In this part we introduce naive global and equivariant ring spectra. We then give a precise definition of a genuine refinements for such a ring. We then introduce the notion of a naive global $2$-ring, the central concept of this work. We discuss how a naive global $2$-ring decategorifies to give a naive global ring and list some examples. Finally, we spend the remainder of the part on the proof of \Cref{thm-intro1} and some related material.

\section{Naive global rings and genuine refinements}\label{sec:genuineref}

In this section we introduce the naive analogs of global and equivariant ring spectra. We then discuss multiplicative cohomology theories and introduce the notion of genuine refinements.

Before diving into the main definitions of this section we review some background on spectrum objects in an $\infty$-category. Recall that given any presentable $\infty$-category $\mathcal{B}$, one can define the category $\Sp(\mathcal{B})$ of spectrum objects in $\mathcal{B}$. Combining \cite{HA}*{Proposition 4.8.1.17 and Example 4.8.1.23} we deduce that 
\[
\Sp(\mathcal{B}) \simeq \mathcal{B} \otimes \Sp \simeq \Fun^{\rR}(\mathcal{B}^{\op},\Sp).
\]
If $\mathcal{B}$ is presentably symmetric monoidal, then $\Sp(\mathcal{B})$ acquires a symmetric monoidal structure uniquely determined by the following universal property: for any stable and presentably symmmetric monoidal $\infty$-category $\CC$, precomposition with the suspension functor $\Sigma_+^\infty \colon \mathcal{B} \to \Sp(\mathcal{B})$ induces an equivalence
\[
\Fun^{\rL,\otimes}(\Sp(\mathcal{B}), \CC)\simeq \Fun^{\rL,\otimes}(\mathcal{B}, \CC)
\]
see \cite{GGN}*{Theorem 5.1}.

\begin{remark}\label{def:spectral_yoneda}
For a stable $\infty$-category $\CC$ and two objects $X,Y\in \CC$, we write $\map_\CC(X,Y)$ for the spectrum of maps from $X$ to $Y$. Moreover we let
\[
y \colon \CC \rightarrow \Fun^{\rR}(\CC^{\op}, \Sp), \quad X\mapsto \map_{\CC}(-,X)
\]
denote the spectral Yoneda embedding. For an arbitrary presentably symmetric monoidal $\infty$-category $\mathcal{B}$, the equivalence $\Sp(\mathcal{B}) \simeq \Fun^{\rR}(\mathcal{B}^{\op},\Sp)$ is concretely given by the assignment $X \mapsto \map_{\Sp(\mathcal{B})}(\Sigma_+^\infty(-),X)$. 
\end{remark}
We now specialize to our cases of interest. 
\begin{definition}
 We will call $\Sp(\Spcgl{\CE})$ and $\Sp(\Spc_{G})$ the $\infty$-categories of \emph{naive global spectra} and \emph{naive $G$-spectra} respectively. 
Recall that $\Sp(\Spcgl{\CE})$ and $\Sp(\Spc_G)$ both inherit symmetric monoidal structures from the cartesian monoidal structure on $\Spcgl{\CE}$ and $\Spc_G$. We call objects in $\CAlg(\Sp(\Spcgl{\CE}))$ and $\CAlg(\Sp(\Spc_G))$ \emph{(commutative) naive global rings} and \emph{(commutative) naive $G$-rings} respectively.
\end{definition}

\begin{remark}
Since in this paper we only consider commutative (i.e.~$\mathbb{E}_\infty$-)rings, we will typically drop this from the terminology. 
\end{remark}

The terminology above is intended to distinguish such objects from their genuine analogs, which contain substantially more structure.

\begin{definition}
We call an object of $\CAlg(\Sp_G)$ a \emph{(commutative) $G$-ring} and an object of $\CAlg(\Spgl{\CE})$ a \emph{(commutative) global ring}. If we wish to emphasise the distinction with naive equivariant/global rings, we may call them genuine.
\end{definition}

\begin{definition}
Let $G$ be a compact Lie group and recall that there exists a symmetric monoidal colimit preserving suspension functor $\Sigma_G^\infty \colon\Spc_G\to \Sp_G$. By the universal property of stabilization this lifts to a strong monoidal left adjoint $\Sp(\Spc_G)\rightarrow \Sp_G$, whose right adjoint we denote by $\mathbb{U}_G \colon \Sp_G\rightarrow \Sp(\Spc_G)$. As the right adjoint of a symmetric monodial functor, $\mathbb{U}_G$ is canonically lax symmetric monoidal. In particular we obtain a functor $\mathbb{U}_G\colon \CAlg(\Sp_G)\rightarrow \CAlg(\Sp(\Spc_G))$ from $G$-rings to naive $G$-rings.
\end{definition}

\begin{remark}\label{rem:non-refinement}
Intuitively, the functor $\mathbb{U}_G\colon \CAlg(\Sp_G)\rightarrow \CAlg(\Sp(\Spc_G))$ forgets the additional deloopings for representation spheres with non-trivial $G$-action encoded by the original naive $G$-ring. Alternatively, once restricted to the heart of $\Sp_G$ this functor forgets from Green functors to coefficient systems rings. Therefore, in general one may think of a naive G-ring as lacking the additive transfers contained in a (genuine) $G$-ring. 
\end{remark}

It has been crucial in equivariant homotopy theory to build and exploit the additional structure contained in a genuine $G$-spectrum, over and above that of a naive $G$-spectrum. To systematically consider this, we make the following definition:

\begin{definition}
Let $X$ be a naive $G$-ring. We say a $G$-ring $\tilde{X}\in \CAlg(\Sp_G)$ is a \emph{genuine refinement} of $X$ if there is an equivalence $\mathbb{U}_G(\tilde{X}) \simeq X$. The space of genuine refinements of $X$ is the following pullback 
\[\begin{tikzcd}
	{\mathrm{GenRef}(X)} & {\CAlg(\Sp_G)} \\
	\ast & {\CAlg(\Sp(\Spc_G))}
	\arrow[from=1-1, to=1-2]
	\arrow[from=1-1, to=2-1]
	\arrow["\lrcorner"{anchor=center, pos=0.125}, draw=none, from=1-1, to=2-2]
	\arrow[from=1-2, to=2-2]
	\arrow["{\{X\}}", from=2-1, to=2-2]
\end{tikzcd}\]
in $\Cat$. Because $\mathbb{U}_G\colon \CAlg(\Sp_G) \rightarrow \CAlg(\Sp(\Spc_G))$ is conservative, $\mathrm{GenRef}(X)$ is a space.
\end{definition}

\begin{remark}\label{rem-uniqueness}
We emphasize again that the space $\mathrm{GenRef}(X)$ of genuine refinements of a naive $G$-ring is not necessarily contractible. For a concrete example, we note that from the discussion in \Cref{rem:non-refinement}, it suffices to give two different Green functor structures on the same coefficient system of rings. consider the following $C_2$-coefficient system $R$:
\[
\begin{tikzcd}
    R(C_2/C_2) \arrow[d, bend right, "\mathrm{res}"'] \\
    R(C_2/1) \arrow[loop,in=-120,out=-60,looseness=3, "\gamma"]\arrow[u, bend right, "\mathrm{tr}"',dotted]
\end{tikzcd}
\quad =\quad 
\begin{tikzcd}
    \mathbb{F}_2[x]/x^2 \arrow[d, bend right, "x \mapsto 0"'] \\
    \mathbb{F}_2.\arrow[loop,in=-120,out=-60,looseness=3, "\mathrm{id}"]\arrow[u, bend right, "\mathrm{tr}"', dotted]
\end{tikzcd}
\]
Note that there are different choices of transfer maps $\mathrm{tr}\colon \mathbb{F}_2\to \mathbb{F}_2[x]/x^2$ such that $R$ is a Green functor; for examples $a \mapsto 0$ and $a \mapsto ax$.
\end{remark}

We may also consider a global analog of the previous definition. To do so we first note that the equivariant suspension spectrum functors assemble to give a natural transformation $\Sigma_\bullet^\infty \colon \Spc_\bullet \Rightarrow \Sp_\bullet$ of symmetric monoidal left adjoint functors defined over $\Glo{\CE}^{\op}$, see~\cite{LNP}*{Proposition 10.5}.

\begin{definition}
We define the \emph{global suspension} functor
\[
\Sigma_\gl^{\infty}\colon \Spcgl{\CE} \to \Spgl{\CE}
\] 
as the functor induced on partially lax limits by the natural transformation $\Sigma_\bullet^\infty\colon \Spc_\bullet\rightarrow \Sp_\bullet$.
\end{definition}

\begin{warning}
In \cite{Schwede18}*{Construction 4.1.7}, Schwede also constructs a suspension spectrum functor $\Sigma_+^\infty\colon \Spcgl{\CE}\rightarrow \Spgl{\CE}$. 
We warn the reader that we have not shown that this functor agrees with the functor defined above, but we do expect this to be the case. In fact we strongly suspect that the methods of \cite{LNP} suffice to prove this statement. We use the definition above because its connection to the equivariant suspension spectrum functor is more immediate and, importantly for us, coherent by definition.
\end{warning}

\begin{definition}
We write $\Omega^\infty_\bullet\colon \Sp_\bullet \Rightarrow \Spc_\bullet$ for the lax natural transformation associated to $\Sigma^\infty_\bullet$ under the second mate equivalence (\ref{second-mate-equivalence}). In particular the component $\Omega^\infty_G\colon \Sp_G\rightarrow \Spc_G$ of $\Omega^\infty_\bullet$ at $G$ is the right adjoint of $\Sigma^\infty_G\colon \Spc_G \rightarrow \Sp_G$.
\end{definition}

\begin{lemma}\label{lem:global_omega}
The global suspension functor $\Sigma_\gl^\infty$ is strong monoidal, and admits a right adjoint $\Omega_\gl^\infty\coloneqq \laxlimdag \Omega_\bullet^\infty$.
\end{lemma}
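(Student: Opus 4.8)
The plan is to deduce both claims from the universal properties recorded just before the statement and the formalism of partially lax limits. For the monoidality: recall that $\Spgl{\CE} \simeq \laxlimdag \Sp_\bullet$ and $\Spcgl{\CE} \simeq \laxlimdag \Spc_\bullet$ as symmetric monoidal $\infty$-categories by \Cref{thm:Sp_gl_lax_lim} and \Cref{thm:__lax_lim}, and that by \Cref{calg-laxlim} the partially lax limit of a diagram in $\Cat^{\otimes}$ is canonically symmetric monoidal with the expected universal property for lax symmetric monoidal functors out of it. The natural transformation $\Sigma^\infty_\bullet \colon \Spc_\bullet \Rightarrow \Sp_\bullet$ is, by \cite{LNP}*{Proposition 10.5}, a morphism in $\Fun^{\rR,\lax}(\Glo{\CE}^{\op}, \tCat^{\otimes})$ each of whose components $\Sigma^\infty_G$ is a symmetric monoidal left adjoint (strong monoidal). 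First I would observe that, since each component is strong monoidal and the structure maps of the lax natural transformation are the strong-monoidal comparison data, applying the partially lax limit functor to $\Sigma^\infty_\bullet$ (viewed as a functor $[1] \to \Fun(\Glo{\CE}^{\op},\Cat^{\otimes})$, or directly via the functoriality of $\laxlimdag$ in the diagram) yields a symmetric monoidal functor $\Sigma^\infty_\gl$; concretely, it is strong monoidal because the tensor product on $\laxlimdag$ is computed componentwise on underlying objects and each component preserves tensor products and the unit.

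For the existence of the right adjoint: the cleanest route is to invoke the second mate equivalence \Cref{second-mate-equivalence}. The natural transformation $\Sigma^\infty_\bullet$, being pointwise a left adjoint, is a morphism in $\Fun^{\rL,\oplax}(\Glo{\CE}^{\op},\tCat)$; dually $\Omega^\infty_\bullet \colon \Sp_\bullet \Rightarrow \Spc_\bullet$ is the associated morphism in $\Fun^{\rR,\lax}(\Glo{\CE}^{\op},\tCat)$, with components $\Omega^\infty_G$ the right adjoints of $\Sigma^\infty_G$, as already set up in the Definition preceding this lemma. Now I would apply \Cref{prop:adjoint_diag_lax_oplax}, or more directly the general principle that $\laxlimdag$ sends a fiberwise adjunction between diagrams to an adjunction between the partially lax limits: given the pointwise adjunctions $\Sigma^\infty_G \dashv \Omega^\infty_G$ compatible with the (op)lax structure maps, one gets an adjunction $\laxlimdag \Sigma^\infty_\bullet \dashv \laxlimdag \Omega^\infty_\bullet$, i.e.\ $\Sigma^\infty_\gl \dashv \Omega^\infty_\gl$ with $\Omega^\infty_\gl = \laxlimdag \Omega^\infty_\bullet$ as claimed. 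Then I would note $\Omega^\infty_\gl$ is automatically lax symmetric monoidal as the right adjoint of a strong monoidal functor, which also re-confirms the first assertion.

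The main obstacle I anticipate is making precise the assertion that ``$\laxlimdag$ takes a fiberwise adjunction to an adjunction,'' since $\laxlimdag$ is not literally a $2$-functor on all of $\Cat$ but only has the functoriality in the diagram variable packaged through the (un)straightening equivalences of \Cref{rem:laxlim_sect}. The careful way to argue is: realize both $\Sigma^\infty_\gl$ and the candidate $\Omega^\infty_\gl$ on sections of the respective (co)cartesian unstraightenings — $\Sigma^\infty_\gl$ is postcomposition with a map of cocartesian fibrations over $\Glo{\CE}^{\op}$ induced by $\Sigma^\infty_\bullet$, and $\Omega^\infty_\gl$ similarly via the mate — and then produce the unit and counit of the desired adjunction fiberwise, checking that they assemble to sections (i.e.\ that the triangle identities and the compatibility with marked/cocartesian edges hold). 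Alternatively, and more economically, one can cite \cite{LNP}*{Remark 5.1} together with the fact proven there that $\Sigma^\infty_\gl$ arises from a morphism in $\Fun^{\rR,\lax}$ whose mate lives in the appropriate functor category, so that the adjunction on partially lax limits is part of the structure already established; I would lean on that citation rather than reprove it. Everything else — strong monoidality and lax monoidality of the adjoint — is then formal.
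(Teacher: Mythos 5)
Your argument for strong monoidality matches the paper's: $\Sigma^\infty_\gl$ is a partially lax limit of strong monoidal functors, hence strong monoidal. That part is fine.

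For the right adjoint, however, there is a genuine gap. The principle that ``$\laxlimdag$ takes a fiberwise adjunction to an adjunction'' is precisely what is \emph{not} automatic here, and you do not supply the condition that makes it true. The fiberwise right adjoints $\Omega^\infty_G$ assemble into a \emph{lax} natural transformation $\Omega^\infty_\bullet$ (the mate of $\Sigma^\infty_\bullet$), and postcomposition with it defines a functor on the full lax limits, i.e.\ on all sections of the unstraightenings. But $\Spgl{\CE}$ and $\Spcgl{\CE}$ are the \emph{partially} lax limits: the full subcategories of sections sending faithful edges to cocartesian edges. For the candidate right adjoint to land in this subcategory (and for the fiberwise units and counits to be morphisms there), one must check that the lax naturality squares of $\Omega^\infty_\bullet$ are invertible on the marked edges — that is, that the Beck--Chevalley transformation $f^*\Omega^\infty_G \Rightarrow \Omega^\infty_H f^*$ is an equivalence for every faithful $f\colon \CB H\to\CB G$. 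This is the actual mathematical content of the lemma, and it is a nontrivial equivariant input: passing to total mates it amounts to the statement that suspension spectra commute with induction along subgroup inclusions (a Wirthm\"uller-adjacent fact). You gesture at ``compatibility with marked/cocartesian edges'' as something to be checked but never identify what the condition is, let alone verify it; and your fallback citation of \cite{LNP}*{Remark 5.1} does not help, since that remark concerns monoidal structures on partially lax limits, not the existence of adjoints. Likewise \Cref{prop:adjoint_diag_lax_oplax} identifies an oplax limit of right adjoints with a lax limit of left adjoints; it does not produce an adjunction between two partially lax limits from a pointwise adjunction. The paper's proof instead invokes an explicit adjunction criterion for partially lax limits (\cite{LinskensGlobalization}*{Proposition 3.22}), reduces to the Beck--Chevalley condition on faithful morphisms, and discharges it by the compatibility of suspension with induction. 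Your proposal would be repaired by inserting exactly this verification.
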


\begin{proof}
As a partially lax limit of strong monoidal functors, $\Sigma_\gl^\infty$ is automatically strong monoidal. To construct $\Omega_\gl^\infty$ and exhibit it as the right adjoint of $\Sigma_\gl^\infty$ we apply the criteria of \cite{LinskensGlobalization}*{Proposition 3.22}. For this it suffices to show that for any faithful morphism $f \colon \CB H \to \CB G$, the Beck--Chevalley natural transformation
\[
f^* \Omega^\infty_G \Rightarrow \Omega^\infty_H f^*
\]
is an equivalence. In other word, we have to check that $\Omega^\infty_\bullet$ is a natural transformation when restricted to $\Orb{\CE}^{\op}\subset \Glo{\CE}^{\op}$. To check this we can pass to total mates and instead verify that taking suspension spectra commutes with induction. This is a well-known fact, which could for instance be checked in ones preferred model of equivariant spectra.
\end{proof}

\begin{definition}
As before, the strong monoidal left adjoint $\Sigma^\infty_\gl\colon \Spcgl{\CE}\rightarrow \Spgl{\CE}$ lifts to a symmetric monoidal functor out of the stabilization of global spaces, and we write $\mathbb{U}_{\gl}\colon \Spgl{\CE}\rightarrow \Sp(\Spcgl{\CE})$ for its right adjoint. This is again lax symmetric monoidal, and so induces a functor from global rings to naive global rings.
\end{definition}

Given this we can discuss the notion of a genuine refinement of a naive global ring.

\begin{definition}\label{def-genuine-ref}
Let $X$ be a naive global ring. We say a global ring $\tilde{X}\in \CAlg(\Spcgl{\CE})$ is a \emph{genuine refinement} of $X$ if there is an equivalence $\mathbb{U}_{\gl}(\tilde{X})\simeq X$. Once again we can define the space of genuine refinements.
\end{definition}

\subsection{Multiplicative cohomology theories}

We will typically construct and compare naive global and equivariant rings via the cohomology theories they represent.

\begin{definition}
Let $\mathcal{B}$ be a presentably symmetric monoidal $\infty$-category and let $\mathcal{C}$ be a presentably symmetric monoidal stable $\infty$-category. Then a \emph{multiplicative $\mathcal{C}$-valued cohomology theory} on $\mathcal{B}$ is a lax symmetric monoidal limit preserving functor $F\colon \mathcal{B}^{\op}\rightarrow \mathcal{C}$. We write $\Fun^{\rR,\otimes\mathrm{-lax}}(\mathcal{B}^{\op},\mathcal{C})$ for the category of multiplicative cohomology theories. 
We are most interested in the following two cases:
\begin{enumerate}
\item If $\CE$ is a global family and $\mathcal{B} = \Spcgl{\CE}$, we will refer to the above functor as a multiplicative $\CE$-global $\CC$-valued cohomology theory.
\item If $G$ is a compact Lie group and $\mathcal{B}= \Spc_G$, we will refer to the above functor as a multiplicative $G$-equivariant $\CC$-valued cohomology theory.
\end{enumerate}
\end{definition}

\begin{proposition}\label{rem:mult-cohom-alg-in-spectra}
Let $\mathcal{B}$ be a presentably symmetric monoidal $\infty$-category. The assignment $X \mapsto \map_{\Sp(\mathcal{B})}(\Sigma_+^\infty(-), X)$ defines an equivalence 
\[
\CAlg(\Sp(\mathcal{B})) \simeq \Fun^{\rR,\otimes\mathrm{-\lax}}(\mathcal{B}^{\op},\Sp).
\]
\end{proposition}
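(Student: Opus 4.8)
The plan is to bootstrap the multiplicative equivalence from the non-multiplicative one recorded in \Cref{def:spectral_yoneda}: first identify the symmetric monoidal structures on both sides, and then pass to commutative algebra objects. By the discussion preceding \Cref{def:spectral_yoneda}, the spectral Yoneda embedding $X\mapsto \map_{\Sp(\mathcal{B})}(\Sigma_+^\infty(-),X)$ already implements an equivalence $\Sp(\mathcal{B})\simeq \Fun^{\rR}(\mathcal{B}^{\op},\Sp)$ of underlying $\infty$-categories. Since $\CAlg(-)$ depends only on the symmetric monoidal $\infty$-category, it suffices to exhibit a symmetric monoidal structure on $\Fun^{\rR}(\mathcal{B}^{\op},\Sp)$ upgrading this to an equivalence of symmetric monoidal $\infty$-categories, and then to identify its $\CAlg$ with $\Fun^{\rR,\otimes\mathrm{-lax}}(\mathcal{B}^{\op},\Sp)$.

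For the symmetric monoidal structure, I would choose a small full symmetric monoidal subcategory $\mathcal{B}_0\subseteq\mathcal{B}$ containing the unit and generating $\mathcal{B}$ under colimits, so that the symmetric monoidal left Kan extension exhibits $\mathcal{B}$ as a symmetric monoidal localization of the Day convolution $\infty$-category $\mathcal{P}(\mathcal{B}_0)=\Fun(\mathcal{B}_0^{\op},\Spc)$. Tensoring with $\Sp$ in $\mathrm{Pr}^{\rL}$ then exhibits $\Sp(\mathcal{B})=\mathcal{B}\otimes\Sp$ as a symmetric monoidal localization of $\mathcal{P}(\mathcal{B}_0)\otimes\Sp\simeq\Fun(\mathcal{B}_0^{\op},\Sp)$, the latter carrying the Day convolution structure. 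Unwinding the localization, its local objects are precisely the limit-preserving functors, so $\Sp(\mathcal{B})$ with its $\mathrm{Pr}^{\rL}$-induced symmetric monoidal structure is identified with $\Fun^{\rR}(\mathcal{B}^{\op},\Sp)$ equipped with the localized Day convolution structure.

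Finally, commutative algebras in a Day convolution $\infty$-category are exactly lax symmetric monoidal functors, so $\CAlg(\Fun(\mathcal{B}_0^{\op},\Sp))\simeq\Fun^{\otimes\mathrm{-lax}}(\mathcal{B}_0^{\op},\Sp)$. Since a symmetric monoidal localization induces a localization on $\CAlg$, and since limit-preservation is a condition on the underlying functor, this descends to an equivalence $\CAlg(\Fun^{\rR}(\mathcal{B}^{\op},\Sp))\simeq\Fun^{\rR,\otimes\mathrm{-lax}}(\mathcal{B}^{\op},\Sp)$. Combined with the previous step this yields the proposition, and chasing the identifications shows the equivalence is implemented by $X\mapsto\map_{\Sp(\mathcal{B})}(\Sigma_+^\infty(-),X)$, as claimed.

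The step I expect to require the most care is the compatibility of all three localizations (at the level of symmetric monoidal $\infty$-categories, and then on $\CAlg$) with the passage to limit-preserving functors --- in particular, correctly identifying the local objects of the Day convolution localization $\Fun(\mathcal{B}_0^{\op},\Sp)\to\Sp(\mathcal{B})$ with $\Fun^{\rR}(\mathcal{B}^{\op},\Sp)$ and matching the resulting tensor product with the $\mathrm{Pr}^{\rL}$-tensor product on $\Sp(\mathcal{B})$. The associated set-theoretic bookkeeping (choosing $\mathcal{B}_0$ closed under $\otimes$, controlling accessibility) is routine. An alternative, more hands-on route would be to observe that the lax symmetric monoidal spectral Yoneda embedding defines a functor $\CAlg(\Sp(\mathcal{B}))\to\Fun^{\rR,\otimes\mathrm{-lax}}(\mathcal{B}^{\op},\Sp)$ directly and to check it is an equivalence on underlying objects (already known) and on mapping spaces, but I expect this to reduce to essentially the same bookkeeping.
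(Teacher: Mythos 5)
Your proposal is correct and follows essentially the same route as the paper: the paper's proof invokes \Cref{thm:Day-convolution-is-mon-struc-on-PrL-tensor} (proved in the appendix by exactly your argument --- Day convolution on a small generating subcategory, symmetric monoidal localization onto limit-preserving functors, and identification with the $\mathrm{Pr}^{\rL}$-tensor product) and then passes to $\CAlg$ via the universal property of Day convolution together with the fact that commutative algebras in a symmetric monoidal Bousfield localization are those with local underlying object. The only difference is organizational: you re-derive the appendix result inline rather than quoting it.
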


\begin{proof}
By \Cref{thm:Day-convolution-is-mon-struc-on-PrL-tensor}, there is a symmetric monoidal equivalence $\mathcal{B} \otimes \Sp \simeq \Fun^\rR(\mathcal{B}^{\op},\Sp)$. Here the right hand side is symmetric monoidal by localizing the Day convolution symmetric monoidal structure, and the left hand side is symmetric monoidal as it is the tensor product of two  presentably symmetric monoidal $\infty$-categories. As mentioned there is an equivalence $\Sp(\mathcal{B})\simeq \mathcal{B} \otimes \Sp$. We observe that after precomposing with the suspension functor it agrees with $\mathcal{B}\otimes \Sigma_+^\infty \colon \mathcal{B}\otimes \Spc \to\mathcal{B}\otimes\Sp$, and so is canonically symmetric monoidal. Therefore by the universal property of $\Sp(\mathcal{B})$, the equivalence above is also symmetric monoidal. All in all we have constructed a symmetric monoidal equivalence 
\[
\Sp(\mathcal{B}) \xrightarrow{\sim} \Fun^{\rR}(\mathcal{B}^{\op}, \Sp),
\]
which by \Cref{def:spectral_yoneda} is given as in the proposition. In particular we may compute:
\[
\CAlg(\Sp(\mathcal{B})) \simeq \CAlg(\Fun^\rR(\mathcal{B}^{\op},\Sp)) \simeq \Fun^{\rR,\otimes\mathrm{-lax}}(\mathcal{B}^{\op},\Sp),
\]
where the second equivalence utilises the universal property of Day convolution (\cite{HA}*{Example 2.2.6.9}), and the fact that the category of commutative algebras in a symmetric monoidal Bousfield localization is equivalent to commutative algebras whose underlying object is local.
\end{proof}

Suppose $\mathcal{B}$ is equipped with the cartesian symmetric monoidal structure. By \cite{HA}*{Theorem 2.4.3.18} we obtain an equivalence
\[
\Fun^{\otimes\mathrm{-lax}}(\mathcal{B}^{\op},\CC) \simeq \Fun(\mathcal{B}^{\op},\CAlg(\CC))
\]
between lax symmetric monoidal functors $F\colon \mathcal{B}^{\op}\rightarrow \CC$ and ordinary functors $\tilde{F}\colon \mathcal{B}^{\op}\rightarrow \CAlg(\CC)$. Moreover, because the forgetful functor $\CAlg(\CC)\rightarrow \CC$ preserves and detects limits, $F$ is limit preserving if and only if $\tilde{F}$ is. Therefore the equivalence above restricts to an equivalence
\[
\Fun^{\rR,\otimes\mathrm{-lax}}(\mathcal{B}^{\op},\CC) \simeq \Fun^\rR(\mathcal{B}^{\op},\CAlg(\CC)),
\]
which gives an alternative definitions of multiplicative $\mathcal{C}$-valued cohomology theories on cartesian monoidal $\mathcal{B}$. 
When $\mathcal{B}$ is the $\infty$-category of $G$-spaces or of global spaces and $\CC$ is the $\infty$-category of spectra, we may combine this with the previous proposition to deduce the following result, where we write $\CAlg$ for $\CAlg(\Sp)$.

\begin{proposition}\label{prop-naive-global-rings}
For any compact Lie group $G$, the assignment $X \mapsto \map_{\Sp(\Spc_G)}(\Sigma_+^\infty(-), X)$ defines an equivalence
\[
\CAlg(\Sp(\Spc_G))\simeq \Fun^{\rR}(\Spc_G^{\op}, \CAlg).
\]
Similarly for any global family $\CE$, the assignment $X \mapsto \map_{\Sp(\Spcgl{\CE})}(\Sigma_+^\infty(-), X)$ 
defines an equivalence
\[
\CAlg(\Sp(\Spcgl{\CE}))\simeq \Fun^{\rR}(\Spcgl{\CE}^{\op},\CAlg).
\] 
\end{proposition}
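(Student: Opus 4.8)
The plan is to specialize the machinery assembled in the paragraph preceding the statement to the two presheaf $\infty$-categories $\Spc_G$ and $\Spcgl{\CE}$, each equipped with its cartesian symmetric monoidal structure. The whole proof is essentially a bookkeeping composition of \Cref{rem:mult-cohom-alg-in-spectra} with the $\CAlg$-versus-lax-monoidal identification recorded just above the statement.

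First I would record that $\Spc_G = \Fun(\Orb{G},\Spc)$ and $\Spcgl{\CE} = \Fun(\Glo{\CE}^{\op},\Spc)$ are presentable and, being presheaf $\infty$-categories, are cartesian closed; hence the cartesian product preserves colimits separately in each variable, so both are presentably symmetric monoidal with respect to the cartesian structure. This places us in the hypotheses of \Cref{rem:mult-cohom-alg-in-spectra} with $\mathcal{B}$ equal to either of these, yielding a symmetric monoidal equivalence
\[
\CAlg(\Sp(\mathcal{B})) \simeq \Fun^{\rR,\otimes\mathrm{-lax}}(\mathcal{B}^{\op},\Sp)
\]
implemented by $X \mapsto \map_{\Sp(\mathcal{B})}(\Sigma_+^\infty(-),X)$.

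Second, I would invoke the chain of equivalences displayed just above the statement: since $\mathcal{B}$ is cartesian monoidal, \cite{HA}*{Theorem 2.4.3.18} identifies lax symmetric monoidal functors $\mathcal{B}^{\op}\to\Sp$ with ordinary functors $\mathcal{B}^{\op}\to\CAlg$ (where as usual $\CAlg=\CAlg(\Sp)$), and because the forgetful functor $\CAlg\to\Sp$ preserves and detects limits this identification restricts to an equivalence $\Fun^{\rR,\otimes\mathrm{-lax}}(\mathcal{B}^{\op},\Sp) \simeq \Fun^{\rR}(\mathcal{B}^{\op},\CAlg)$ between the full subcategories of limit-preserving functors. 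Composing with the equivalence of the previous step gives the two asserted equivalences, one for $\mathcal{B}=\Spc_G$ and one for $\mathcal{B}=\Spcgl{\CE}$.

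Finally I would check that the composite still has the explicit form claimed in the statement. Under \cite{HA}*{Theorem 2.4.3.18} a lax symmetric monoidal functor into a cartesian monoidal target is sent to the functor with the same underlying functor, now carrying its pointwise algebra structure; so on underlying objects the composite equivalence continues to send $X$ to $Y\mapsto \map_{\Sp(\mathcal{B})}(\Sigma_+^\infty Y,X)$, merely regarded as landing in $\CAlg$. The only genuinely content-bearing point, on which I would spend a sentence, is precisely this compatibility of the two equivalences --- that the cartesian-monoidal identification of \cite{HA}*{Theorem 2.4.3.18} is implemented by passing to underlying functors --- so that the concrete description of the Yoneda-type equivalence of \Cref{rem:mult-cohom-alg-in-spectra} survives the composition; this is immediate from the construction in \cite{HA}, and there is no essential obstacle. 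The statement is thus a convenient repackaging of \Cref{rem:mult-cohom-alg-in-spectra} in the cartesian case.
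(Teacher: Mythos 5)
Your proof is correct and follows exactly the route the paper takes: the paper deduces this proposition in the paragraph immediately preceding it by combining \Cref{rem:mult-cohom-alg-in-spectra} with the cartesian-monoidal identification $\Fun^{\rR,\otimes\mathrm{-lax}}(\mathcal{B}^{\op},\Sp)\simeq\Fun^{\rR}(\mathcal{B}^{\op},\CAlg)$ from \cite{HA}*{Theorem 2.4.3.18}, applied to $\mathcal{B}=\Spc_G$ and $\mathcal{B}=\Spcgl{\CE}$. Your added remark that the composite equivalence retains the explicit form $X\mapsto\map_{\Sp(\mathcal{B})}(\Sigma_+^\infty(-),X)$ is a point the paper leaves implicit but is verified exactly as you say.
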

Now that we have discussed naive rings and multiplicative cohomology theories we can further investigate the consequences of having a genuine refinement. 

\begin{proposition}\label{prop:family-of-eq-vs-global}
\leavevmode 
\begin{enumerate}
\item The composite 
\[
\mathbb{U}_G\colon \CAlg(\Sp_G)\rightarrow \CAlg(\Sp(\Spc_G)) \simeq \Fun^\rR(\Spc_G^{\op},\CAlg)
\] corresponds to the functor which sends a $G$-ring $X$ to the functor $\map(\Sigma^\infty_{G}(-),X)$.
The analogous statement holds in the global case. 
\item Precomposition by the lax cocone $-\sslash G\colon \Spc_G^{\op}\rightarrow \Spcgl{\CE}^{\op}$ defines an equivalence 
\[
\Phi \colon \Fun^{\rR}(\Spcgl{\CE}^{\op}, \CAlg) \xrightarrow{\simeq}\laxlimdag \Fun^\rR(\Spc_G^{\op},\CAlg).
\]
\item The restriction functor induces an equivalence $F\colon  \CAlg(\Sp(\Spcgl{\CE}))\xrightarrow{\sim} \laxlimdag \CAlg(\Sp(\Spc_G))$ which fits into the following commutative squares
\[
\begin{tikzcd}
 {\CAlg(\Spgl{\CE})}\arrow[r, "\sim"]\arrow[d, "\mathbb{U}_\gl"']& {\laxlimdag \CAlg(\Sp_G)}\arrow[d, "\laxlimdag \mathbb{U}_G"]\\
{\CAlg(\Sp(\Spcgl{\CE}))} \arrow[d, "\sim","\ref{prop-naive-global-rings}"']\arrow[r,"\sim"', "F"]& {\laxlimdag \CAlg(\Sp(\Spc_G))} \arrow[d,"\sim"',"\ref{prop-naive-global-rings}"]\\
{\Fun^\rR(\Spcgl{\CE}^{\op},\CAlg)} \arrow[r,"\sim"', "\Phi"]& {\laxlimdag \Fun^\rR(\Spc_G^{\op},\CAlg),}
\end{tikzcd}\]
where the top horizontal equivalence uses \Cref{calg-laxlim}.
\end{enumerate}
\end{proposition}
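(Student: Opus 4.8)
The plan is to prove the three parts in order, deriving (3) from (1), (2) and results recalled earlier. \emph{Part (1)} is a routine unwinding: by construction $\mathbb{U}_G$ is the right adjoint of the symmetric monoidal functor $L_G\colon\Sp(\Spc_G)\to\Sp_G$ obtained from $\Sigma^\infty_G$ by the universal property of stabilization, so $L_G\circ\Sigma^\infty_+\simeq\Sigma^\infty_G$. Under the spectral Yoneda equivalence $\Sp(\Spc_G)\simeq\Fun^\rR(\Spc_G^{\op},\Sp)$ of \Cref{def:spectral_yoneda}, which sends $Y$ to $\map_{\Sp(\Spc_G)}(\Sigma^\infty_+(-),Y)$, the adjunction $L_G\dashv\mathbb{U}_G$ identifies $\mathbb{U}_G(X)$ with $\map_{\Sp_G}(\Sigma^\infty_G(-),X)$; as both equivalences are right adjoints of symmetric monoidal functors, this identification is lax symmetric monoidal and passes to $\CAlg$. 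The global statement is identical, using $\Sigma^\infty_\gl$ (a symmetric monoidal left adjoint by \Cref{lem:global_omega}), its stabilized lift, and $\mathbb{U}_\gl$.

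For \emph{part (2)} I would \emph{dualise} \Cref{prop:glquotient-oplaxlim}. Since $\CAlg=\CAlg(\Sp)$ is complete, $\mathcal{C}=\CAlg^{\op}$ admits all small colimits, so that proposition applies and yields an equivalence $\Fun^\rL(\Spcgl{\CE},\CAlg^{\op})\simeq\oplaxlimdag_{\Glo{\CE}^{\op}}\Fun^\rL(\Spc_\bullet,\CAlg^{\op})$ implemented by restriction along the functors $-\sslash G$. A colimit-preserving functor $\Spcgl{\CE}\to\CAlg^{\op}$ is the same datum as a limit-preserving functor $\Spcgl{\CE}^{\op}\to\CAlg$, with natural transformations reversed, so passing to opposite categories rewrites the left side as $\Fun^\rR(\Spcgl{\CE}^{\op},\CAlg)$; and since $(-)^{\op}$ interchanges partially lax and partially oplax limits (compare the proof of \Cref{prop:markedfinal}), it rewrites the right side as $\laxlimdag_{\Glo{\CE}^{\op}}\Fun^\rR(\Spc_\bullet^{\op},\CAlg)$, with $\CB\alpha$ now acting by precomposition with $\alpha_!^{\op}$. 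Restriction along $-\sslash G\colon\Spc_G\to\Spcgl{\CE}$ becomes restriction along $-\sslash G\colon\Spc_G^{\op}\to\Spcgl{\CE}^{\op}$, which is $\Phi$, so $\Phi$ is an equivalence.

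For \emph{part (3)} I would \emph{define} $F$ to be the composite of the equivalence $\CAlg(\Sp(\Spcgl{\CE}))\simeq\Fun^\rR(\Spcgl{\CE}^{\op},\CAlg)$ of \Cref{prop-naive-global-rings}, the equivalence $\Phi$ of part (2), and the termwise equivalence $\laxlimdag\Fun^\rR(\Spc_\bullet^{\op},\CAlg)\simeq\laxlimdag\CAlg(\Sp(\Spc_\bullet))$, again from \Cref{prop-naive-global-rings}; this is an equivalence by construction, and it remains to identify it with the functor induced by the restriction functors $\Sp(\mathrm{res}_G)$ and to check the three squares. The identification, and commutativity of the lower square, follow from $-\sslash G\dashv\mathrm{res}_G$ (hence $\Sp(-\sslash G)\dashv\Sp(\mathrm{res}_G)$) together with naturality of $\Sigma^\infty_+$, which gives $\map_{\Sp(\Spcgl{\CE})}(\Sigma^\infty_+(-\sslash G),X)\simeq\map_{\Sp(\Spc_G)}(\Sigma^\infty_+(-),\Sp(\mathrm{res}_G)X)$; the middle square commutes by part (1). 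For the top square, the top equivalence is $\CAlg(\Spgl{\CE})\simeq\CAlg(\laxlimdag\Sp_\bullet)\simeq\laxlimdag\CAlg(\Sp_\bullet)$ by \Cref{thm:Sp_gl_lax_lim} and \Cref{calg-laxlim}; transporting $\mathbb{U}_\gl$ and $\laxlimdag\mathbb{U}_G$ to the $\Fun^\rR$-level via part (1), commutativity reduces to $\Sigma^\infty_\gl(Z\sslash G)\simeq(\Sigma^\infty_GZ)\sslash G$ for $Z\in\Spc_G$ (with $\mathrm{res}_GX\simeq X_G$), which holds because $\Sigma^\infty_\gl$ is by construction induced on partially lax limits by $\Sigma^\infty_\bullet$ and $-\sslash G$ is the left adjoint of $\mathrm{res}_G$.

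The main obstacle is not conceptual but bookkeeping: one must keep the many opposite-category and lax/oplax dualities straight so that $\Phi$, stated as precomposition by $-\sslash G$, really is the equivalence obtained by dualising \Cref{prop:glquotient-oplaxlim}, and one must verify that the various functorialities over $\Glo{\CE}^{\op}$ that appear — restriction along $-\sslash G$, precomposition with $\alpha_!^{\op}$, $\Sp(\alpha^*)$, and $\CAlg(\Sp(\alpha^*))$ — all coincide, the last again via naturality of $\Sigma^\infty_+$ and the adjunction $\alpha_!\dashv\alpha^*$.
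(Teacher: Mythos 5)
Your proposal is correct and follows essentially the same route as the paper: part (1) via the adjunction equivalence $\map_{\Sp(\Spc_G)}(\Sigma^\infty_+(-),\mathbb{U}_G X)\simeq\map_{\Sp_G}(\Sigma^\infty_G(-),X)$ together with \Cref{prop-naive-global-rings}, part (2) by dualising \Cref{prop:glquotient-oplaxlim}, and part (3) by the same adjunction computation relating $-\sslash G\dashv\mathrm{res}_G$ to $\Sigma^\infty_+$. The only (immaterial) difference is the logical order in (3): the paper first constructs $F$ componentwise via the universal property of the partially lax limit and the characterisation $\Omega^\infty F_G\simeq\mathrm{res}_G\Omega^\infty$, and then deduces from the commutativity of the bottom square that $F$ is an equivalence, whereas you define $F$ as a composite of known equivalences and afterwards identify it with the restriction-induced functor.
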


\begin{proof}
Part (1) follows from \Cref{prop-naive-global-rings} combined with the adjunction equivalence 
\[
\map_{\Sp(\Spc_{G})}(\Sigma^\infty_+(-),\mathbb{U}_G(X)) \simeq \map_{\Sp_G}(\Sigma^\infty_G(-),X).
\]
The proof of the global statement is completely analogous. The dual argument of \Cref{prop:glquotient-oplaxlim} gives part (2). 
For part (3) we start by constructing a lax symmetric monoidal functor $F\colon \Sp(\Spcgl{\CE}) \to \laxlimdag \Sp(\Spc_G)$. Using the universal property of the partially lax limits (see \cite{LNP}*{Remark 5.1}), it suffices to define a partially lax family of lax symmetric monoidal functors $F_G \colon\Sp(\Spcgl{\CE})\to \Sp(\Spc_G)$. We can then invoke \cite{Nikoluas} to see that there exists a unique lax symmetric monoidal functor $F_G$ making the following diagram commute
\[
\begin{tikzcd}
    \Sp(\Spcgl{\CE})\arrow[d,"\Omega^\infty"'] \arrow[r,"F_G"] & \Sp(\Spc_G)\arrow[d,"\Omega^\infty"] \\
    \Spcgl{\CE} \arrow[r,"\mathrm{res}_G"] & \Spc_G.
\end{tikzcd}
\]
Passing to commutative algebras and using \Cref{calg-laxlim} we obtain a functor $$F=\{F_G\}\colon\CAlg(\Sp(\Spcgl{\CE}))\to \laxlimdag \CAlg(\Sp(\Spc_G)).$$ We claim that $F$ makes the bottom square in part (3) commute. This in particular shows that $F$ is an equivalence. The commutativity follows from the series of equivalences 
\begin{align*}
\Map_{\Sp(\Spcgl{\CE})}(\Sigma_+^\infty(-\sslash G), X) & \simeq \Map_{\Spc_G}(-, \mathrm{res}_G \Omega_{-}^\infty X)\\
& \simeq \Map_{\Spc_G}(-, \Omega_{-}^\infty F_G X) \\
& \simeq \Map_{\Sp(\Spc_G)}(\Sigma_+^\infty(-), F_GX).
\end{align*}
It remains to prove that the top square in part (3) commutes. unraveling the definitions, we see that the commutativity of the square reduces to proving an equivalence $\mathbb{U}_G \mathrm{res}_G X\simeq F_G \mathbb{U}_{\gl}(X)$. Consider the following diagram:
\[
\begin{tikzcd}
 \Spgl{\CE}\arrow[dd, bend right= 110, "\Omega^\infty_\gl"']\arrow[d,"\mathbb{U}_\gl"'] \arrow[r,"\mathrm{res}_G"] & \Sp_G \arrow[d,"\mathbb{U}_G"]\arrow[dd, bend left=110, "\Omega^\infty"] \\
 \Sp(\Spcgl{\CE})\arrow[d,"\Omega^\infty_{\gl}"'] \arrow[r,"F_G"] & \Sp(\Spc_G)\arrow[d,"\Omega^\infty"] \\
 \Spcgl{\CE} \arrow[r,"\mathrm{res}_G"] &\Spc_G .
\end{tikzcd}
\]
We note that the left and right triangles commutes as we can check this after passing to left adjoints, where it holds by definition. Moreover the other square also commutes by the definition of $\Omega_\gl^\infty$. Then the commutativity of the top square follows by the universal property of $\Sp(\Spc_G)$ as we can check this after precomposition with $\Omega^\infty_{-}$.
\end{proof}

\begin{corollary}
Suppose $\tilde{X}\in \Spgl{\CE}$ is a genuine refinement of a naive global ring $X$. Then $\mathrm{res}_G X$ is a genuine refinement of the naive $G$-ring $\mathrm{res}_G X$.\qedhere
\end{corollary}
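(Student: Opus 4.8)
The plan is to obtain the statement as an immediate consequence of the commutative diagram relating $\mathbb{U}_{\gl}$ to the family $\{\mathbb{U}_G\}_{G\in\CE}$ established in \Cref{prop:family-of-eq-vs-global}(3). The essential input is that the top square of that diagram,
\[
\begin{tikzcd}
{\CAlg(\Spgl{\CE})}\arrow[r, "\sim"]\arrow[d, "\mathbb{U}_\gl"']& {\laxlimdag \CAlg(\Sp_G)}\arrow[d, "\laxlimdag \mathbb{U}_G"]\\
{\CAlg(\Sp(\Spcgl{\CE}))} \arrow[r, "\sim"]& {\laxlimdag \CAlg(\Sp(\Spc_G))}
\end{tikzcd}
\]
commutes, where the horizontal equivalences are the ones induced by the restriction functors $\mathrm{res}_G$ — the top via \Cref{thm:Sp_gl_lax_lim} and \Cref{calg-laxlim}, and the bottom being the equivalence $F$ of \Cref{prop:family-of-eq-vs-global}(3). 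In particular the top equivalence sends a global ring $Y$ to the compatible family $\{\mathrm{res}_G Y\}_G$, and the bottom equivalence sends a naive global ring to the family of its restrictions to naive $G$-rings.

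Granting this, the argument is a one-step diagram chase. Starting from $\tilde X\in\CAlg(\Spgl{\CE})$ and travelling right-then-down along the square produces the family $\{\mathbb{U}_G(\mathrm{res}_G\tilde X)\}_G$, since the right-hand vertical map is the partially lax limit of the $\mathbb{U}_G$. Travelling down-then-right produces $F(\mathbb{U}_{\gl}(\tilde X))\simeq F(X)\simeq\{\mathrm{res}_G X\}_G$, where the first equivalence is exactly the hypothesis that $\tilde X$ is a genuine refinement of $X$. Commutativity of the square then supplies a natural equivalence of naive $G$-rings $\mathbb{U}_G(\mathrm{res}_G\tilde X)\simeq\mathrm{res}_G X$, compatibly in $G$ and in particular for each fixed $G\in\CE$; this is precisely the assertion that $\mathrm{res}_G\tilde X$ is a genuine refinement of $\mathrm{res}_G X$.

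I do not anticipate any genuine obstacle, as the substantive work is already contained in \Cref{prop:family-of-eq-vs-global}. The only point deserving a line of justification is the bookkeeping claim that the abstractly-defined equivalences $F$ (and, if one prefers the cohomology-theory formulation, $\Phi$) are really implemented by the restriction functors; but this follows from their construction, since $F_G$ is characterized by the identity $\Omega^\infty F_G\simeq\mathrm{res}_G\Omega^\infty$, which forces it to coincide with the functor on naive spectra induced by $\mathrm{res}_G\colon\Spcgl{\CE}\to\Spc_G$.
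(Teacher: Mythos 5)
Your proof is correct and is exactly the paper's argument: the paper simply cites \Cref{prop:family-of-eq-vs-global}(3), and your diagram chase around the top square of that proposition is the intended (and only needed) elaboration. No issues.
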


\begin{proof}
    This follows from \Cref{prop:family-of-eq-vs-global} (3).
\end{proof}

\section{Naive global 2-rings}

In this section we introduce the notion of a naive global 2-ring, which is the central concept of this work, and list some key examples.  

The starting point of our discussion is a functor $\CR\colon \Spcgl{\CE}^{\op}\rightarrow \PrL$. We can now perform two different constructions to $\CR$. 

\begin{definition}
    The \emph{decategorification} of $\CR$ is the composite 
    \[
    \decatcohsp{\gl}{-}{\CR} \colon \Spcgl{\CE}^{\op} \xrightarrow{\CR}\PrL \xrightarrow{\mathrm{End}_\bullet(\1)} \CAlg
    \]
    where the global section functor is given by $\mathcal{C}\mapsto \map_{\mathcal{C}}(\1,\1)$.
\end{definition}

The second construction that we can perform is given by the following result, which we state here but defer proving until later sections, as its proof requires substantial additional work.
\begin{theorem}\label{thm:Unraveling}
    Consider a functor $\CR \colon \Spcgl{\CE}^{\op} \to \PrL$. Then for all $G\in \CE$, there exists a functor as described in \Cref{def-unravel-incoh}
    \[
    \decatcoh{G}{-}{\CR}\colon \Spc_G^{\op}\rightarrow \CAlg(\CR_{\CB G})
    \] 
     such that for every group homomorphism $\alpha \colon  H \to G$ there exists a natural transformation filling the square
    \begin{equation}
        \begin{tikzcd}[column sep = huge]
            {\Spc_G^{\op}} &  {\CAlg(\CR_{\CB G})}\\
            {\Spc_H^{\op}} & {\CAlg(\CR_{\CB H})}
            \arrow["{\alpha^*}"', from=1-1, to=2-1]
            \arrow["{\decatcoh{G}{-}{\,\CR}}", from=1-1, to=1-2]
            \arrow["{\decatcoh{H}{-}{\,\CR}}"', from=2-1, to=2-2]
            \arrow["{\alpha^*}", from=1-2, to=2-2]
            \arrow[shorten <=18pt, shorten >=18pt, Rightarrow, from=2-1, to=1-2, "Q_{\alpha}"]
        \end{tikzcd}
    \end{equation}
    This data is coherently functorial: this is encoded in the existence of a functor
    \[
    \decatcoh{\bullet}{-}{\CR}\colon \Glo{\CE}^{\op}\rightarrow \Fun^{\mathrm{oplax}}([1],\tCat^{\otimes,\lax})
    \] extending the assignment above. 
\end{theorem}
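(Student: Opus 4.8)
The plan is to produce the coherently functorial data by "unraveling" the naive global 2-ring $\CR$ through the presentation $\Spcgl{\CE}\simeq \laxlimdag \Spc_\bullet$ of \Cref{thm:__lax_lim}, exactly mirroring the strategy used in \Cref{prop:glquotient-oplaxlim} but now keeping track of the symmetric monoidal structure and of the target categories. First I would restrict $\CR$ along the family of functors $-\sslash G\colon \Spc_G^{\op}\to \Spcgl{\CE}^{\op}$ to obtain, for each $G\in \CE$, the composite $\CR_{(-)\sslash G}\colon \Spc_G^{\op}\to \PrL$; since $-\sslash G$ preserves colimits and $\CR$ is limit preserving, this composite is still limit preserving, hence a multiplicative $G$-equivariant $\PrL$-valued cohomology theory. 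Applying $\CAlg(-)$ (which preserves limits) and taking endomorphisms of the unit, or rather simply composing with $\CAlg\colon \PrL\to\tCat^{\otimes,\lax}$, yields the desired functor $\decatcoh{G}{-}{\CR}\colon\Spc_G^{\op}\to\CAlg(\CR_{\CB G})$; the compatibility of its global sections with $\decatcohsp{\gl}{-\sslash G}{\CR}$ is then immediate from \Cref{ex:decat}.

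Next I would assemble the squares $Q_\alpha$. The key input is that, just as in the proof of \Cref{prop:glquotient-oplaxlim}, the universal partially lax cone $\mathrm{res}_\bullet\colon \Delta(\Spcgl{\CE})\Rightarrow \Spc_\bullet$ dualizes, via $\tCat^\rR\simeq(\tCat^\rL)^{(1,2)\text{-}\op}$ (\Cref{ex:flipping_adjoints}), to a partially oplax cocone $(-)\sslash\bullet$ over $\Spcgl{\CE}$ indexed by $\Glo{\CE}$, whose structure 2-cell at $\CB\alpha\colon\CB H\to\CB G$ is the quotient map $X\sslash G\twoheadrightarrow \alpha_! X\sslash H$ described in \Cref{rem:restriction_functoriality}. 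Composing this partially oplax cocone with $\CR^{\op}$ (which sends it to an oplax cocone since $\CR$ reverses $2$-cells appropriately) and then with $\CAlg$ produces, for each $\CB\alpha$, the oplax square exhibiting $Q_\alpha\colon \alpha^*\decatcoh{G}{-}{\CR}\Rightarrow \decatcoh{H}{\alpha^*(-)}{\CR}$ as the mate of the pullback functoriality of $\CR$ along the quotient map; here one uses \Cref{rem:restriction_functoriality} to identify $(\CB\alpha)^*$ on faithful objects with the equivariant restriction $\alpha^*$, and the identification $f^*$ on quotient maps with the structure maps of $\CR$.

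To package all of this as a single functor $\decatcoh{\bullet}{-}{\CR}\colon\Glo{\CE}^{\op}\to\Fun^{\oplax}([1],\tCat^{\otimes,\lax})$, I would invoke \Cref{const:functors_into_lax_slice} applied to $\bC=\tCat^{\otimes,\lax}$: the data of such a functor is equivalent to an object of the oplax limit $\oplaxlim_{\Glo{\CE}^{\op}}\tCat^{\otimes,\lax}((\decatcoh{\bullet}{-}{\CR}),\decatcoh{\bullet}{-}{\CR})$, which is exactly what the partially oplax cocone above, pushed through $\CR^{\op}$ and $\CAlg$, provides; more directly, the assignment $\CB G\mapsto[\alpha^*\colon\decatcoh{G}{-}{\CR}\to\decatcoh{?}{-}{\CR}]$ arises as the image of $\CR$ under the functor of \Cref{prop:glquotient-oplaxlim} applied with target $\CC=\Fun^{\rR,\otimes\text{-}\lax}((-)^{\op},-)$-style categories, combined with the Grothendieck construction that turns a family of functors with oplax naturality data into a functor to an oplax arrow category. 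The main obstacle I anticipate is precisely this last bookkeeping step: one must verify that the oplax $2$-cells $Q_\alpha$ are genuinely coherent — i.e. that $Q_{\beta\alpha}$ decomposes as the expected pasting of $Q_\beta$ and $\beta^*(Q_\alpha)$, together with all higher coherences — and that the whole assembly lands in the $\infty$-category $\Fun^{\oplax}([1],\tCat^{\otimes,\lax})$ rather than merely in its homotopy category. This is handled not by hand but by the straightening/unstraightening formalism: by \Cref{rem:laxlim_sect} and \Cref{const:functors_into_lax_slice} the coherence is automatic once one has produced the requisite section of the relevant (co)cartesian fibration, and producing that section is exactly the content of restricting $\CR$ along the universal partially (op)lax cocone of \Cref{thm:__lax_lim}. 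Finally one checks the lax symmetric monoidality of each $\decatcoh{G}{-}{\CR}$ using \Cref{calg-laxlim} and the fact that $\CR$ lands in $\PrL$, so that $\CAlg(\CR_{\CB G})$ is symmetric monoidal and the functoriality is through lax symmetric monoidal functors.
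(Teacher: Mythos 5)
There is a genuine gap at the very first step, and it sits exactly where the technical heart of the paper's argument lies. You claim that composing $\CR_{(-)\sslash G}\colon \Spc_G^{\op}\to\PrL$ with $\CAlg(-)$, ``or taking endomorphisms of the unit,'' yields the functor $\decatcoh{G}{-}{\CR}\colon \Spc_G^{\op}\to\CAlg(\CR_{\CB G})$. Neither operation does: composing with $\CAlg$ produces a diagram of $\infty$-categories $X\mapsto \CAlg(\CR_{X\sslash G})$ with \emph{varying} target, while taking endomorphisms of the unit produces a functor valued in $\CAlg = \CAlg(\Sp)$, i.e.\ the decategorified theory $\decatcohsp{G}{-}{\CR}$. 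The theorem asks for a functor into the single fixed category $\CAlg(\CR_{\CB G})$, sending a $G$-space $X$ with structure map $p\colon X\sslash G\to\CB G$ to $p_*\1_{\CR_{X\sslash G}}$. Producing this requires two constructions your proposal omits entirely: a coherent ``unit section'' of the fibration classified by $\CAlg(\CR_{-\sslash G})$, which the paper obtains from initiality of the unit in commutative algebras (\Cref{s-functor}), and a coherent cone $\CAlg(\CR_{\bullet\sslash G})\Rightarrow\Delta(\CAlg(\CR_{\CB G}))$ built from the right adjoints $p_*$ (the cartesian transport functor of \Cref{prop:transport_to_initial_object}). Pasting these is the content of the relative global sections functor $\pushfor$ of \Cref{thm:Rel_global_sect_functor}; without it, no amount of (un)straightening bookkeeping produces a functor with the stated target.

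A second, related difficulty you underestimate: to make $\decatcoh{\bullet}{-}{\CR}$ functorial over $\Glo{\CE}^{\op}$ you must make the pushforward-to-the-terminal-object construction natural with respect to the induction functors $\alpha_!\colon\Spc_H\to\Spc_G$, which do \emph{not} preserve terminal objects. This is exactly why the paper cannot restrict to terminal-object-preserving base changes and must interpose the cone construction $\Phi\colon\Cart_{(\emptyset)}\to\Cart_{\emptyset}$ before applying the transport functor. Your appeal to ``straightening/unstraightening makes the coherence automatic'' does not address this: the coherence that comes for free from \Cref{rem:laxlim_sect} and \Cref{const:functors_into_lax_slice} only gets you as far as the object of $\oplaxlimdag \Fun(\Spc_\bullet,\Cat^{\otimes,\lax})$ produced by \Cref{prop:glquotient-oplaxlim}, i.e.\ the family of diagrams $\CAlg(\CR_{-\sslash G})$ with their oplax structure maps. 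The later steps of your outline (identifying those structure maps with \quotientmap s via \Cref{rem:restriction_functoriality}, obtaining $Q_\alpha$ as the mate of $T_\alpha$ via \Cref{second-mate-equivalence}, and checking limit preservation) do match the paper's proof, but they all presuppose the missing construction.
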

 We call the output $\decatcoh{\bullet}{-}{\CR}$ of the previous theorem the \emph{unraveling} of $\CR$. Our consideration of such families is inspired by the perspective on equivariant elliptic cohomology given in \cite{GKV95}.
\begin{definition}\label{def-unravel-incoh}
The construction of this data is simple to explain in an incoherent manner: 
\begin{enumerate}
    \item\label{item-unravel-obj} Given a group $G\in \CE$ and a $G$-space $X$, which we regard as a global space $f\colon X\sslash G \rightarrow \CB G$ equipped with a faithful map to $\CB G$, we define $\decatcoh{G}{X}{\CR} \coloneqq f_* \1_{X\sslash G}$ to be the pushforward of the unit of $\CR_{X\sslash G}$ to $\CR_{\CB G}$. 
    \item\label{item-unravel-morph} Given a map of $G$-spaces $X\to Y$, encoded by a commutative diagram 
\[\begin{tikzcd}	X\sslash G \arrow[rr, "h"] \arrow[rd, hook, "f"']& &  Y\sslash G \arrow[ld, hook, "g"] \\
            & \CB G &
        \end{tikzcd} \]
in $\Spcgl{\CE}$, we define $\decatcoh{G}{h}{\CR}$ to be the map        
\[
g_*\1_{\CR_{Y\sslash G}}\xrightarrow{g_*(\lax_{h})} g_* h_* \1_{\CR_{X\sslash G}} \simeq f_* \1_{\CR_{X\sslash G}},
\]
where $\lax_{h}$ is the lax unit map of the lax monoidal functor $h_*$. 
\item\label{item-unravel-nat-tran} Finally, given a group homomorphism $\alpha\colon H\rightarrow G$, we define the transformation
\[
Q_\alpha\colon \alpha^*\decatcoh{G}{-}{\CR} \Rightarrow \decatcoh{H}{\alpha^*(-)}{\CR}
\] to be the mate of a natural transformation 
\[
T_\alpha\colon \decatcoh{G}{\alpha_!(-)}{\CR}\Rightarrow \alpha_*\decatcoh{H}{-}{\CR}.
\] 
We define $T_\alpha$ at an $H$-space $Z$, corresponding to a faithful map $f\colon Z\sslash H\hookrightarrow \CB H$, by consider the factorization of $\alpha f$ into a \quotientmap{} $h$ followed by an injection $\alpha_! f$:
    \[
    \begin{tikzcd}
            Z\sslash H & {\alpha_!Z\sslash H} \\
            {\CB H} & {\CB G,}
            \arrow["\CB \alpha", from=2-1, to=2-2]
            \arrow["f"', hook, from=1-1, to=2-1]
            \arrow["\alpha_! f", hook, from=1-2, to=2-2]
            \arrow["h",two heads, from=1-1, to=1-2]
        \end{tikzcd}
        \]
        and then defining $T_\alpha$ as the map
        \[
        \decatcoh{G}{\alpha_! Z\sslash H}{\CR} \simeq (\alpha_! f)_*(\1_{\CR(\alpha_! Z\sslash H)}) \xrightarrow{(\alpha_! f)_* (\lax_{h})} (\alpha_! f)_* h_* \1_{\CR_{Z\sslash H})} \simeq \alpha_* f_*\1_{\CR_{Z\sslash H}}  \simeq \alpha_* \decatcoh{H}{Z}{\CR}.
        \]
        \end{enumerate}
\end{definition}

It will not surprise the reader that carefully dealing with all of the coherences implicit in this construction is a nontrivial task, which we carry out in \Cref{sec:decategorify} and \Cref{sec:unraveling}. 

We want to think of the functor $\CR$ as the categorification of the naive global ring $\decatcohsp{\gl}{-}{\CR}$, and we want to think of the unraveling $\decatcoh{\bullet}{-}{\CR}$ as a family of equivariant cohomology theories equipped with change of group transformations which are canonically associated to $\CR$. However, in this generality, the functors $\decatcoh{G}{-}{\CR}$ and $\decatcohsp{\gl}{-}{\CR}$ will not be limit-preserving, so we can fix this with the following definition.

\begin{definition}
A \emph{naive global 2-ring} is a functor $\CR\colon \Spcgl{\CE}^{\op}\rightarrow \PrL$ whose associated decategorications $\decatcohsp{\gl}{-}{\CR}$ and $\decatcoh{G}{-}{\CR}$ are limit-preserving for all $G \in \CE$. A morphism of naive global $2$-rings is a symmetric monoidal natural transformation $F \colon \CR \to \CR'$. The $\infty$-category of naive global $2$-rings is denoted by $\NaiveTwoGlRing{\CE}$.
\end{definition}

\begin{warning}
We note that the terminology 2-ring is often reserved for small stable categories, see for example \cite{Ma16}*{Section 2.2}. We find it simpler to work in a context with arbitrary colimits, but we make no essential use of this fact. For example we could always index cohomology theories over finite (equivariant or global) spaces, removing the necessity for filtered colimits in the target entirely.
\end{warning}
\begin{example}\label{ex:decat}
By definition the decategorication of a naive global $2$-ring defines a functor 
\[
\NaiveTwoGlRing{\CE} \to \Fun^{\rR}(\Spcgl{\CE}^{\op},\CAlg) \simeq \CAlg(\Sp(\Spcgl{\CE})), \quad \CR \mapsto \decatcohsp{\gl}{-}{\CR}
\]
to naive global rings, where we used \Cref{prop-naive-global-rings}.
\end{example}
A good source of examples is given by the following result.
\begin{lemma}\label{lem-limit_naive}
  Any limit-preserving functor $\CR\colon \Spcgl{\CE}^{\op}\rightarrow \PrL$ defines a naive global 2-ring.  
\end{lemma}
\begin{proof}
    Since the global section functor preserves limits, it is clear that the decategorication $\decatcohsp{\gl}{-}{\CR}$ is limit-preseving. Let us now argue that $\decatcoh{G}{-}{\CR}$ preserves limits, which we can do after forgetting commutative algebra structures. Now consider $X$ in $\Spc_G$ and suppose $X$ is equivalent to the colimit of G-spaces $X_i$. Because $-\sslash G\colon \Spc_G\rightarrow \Spcgl{\CE}$ preserves colimits we find that $Y:=X\sslash G$ is a colimit of the global spaces $Y_i:=X_i\sslash G$. By our assumption on $\CR$ we have an equivalence $\CR_{Y}\simeq \lim \CR_{Y_i}.$
In particular, writing $\pi_i\colon Y_i\rightarrow Y$ for the components of the universal cocone, given any object $B$ in $\CR_{X\sslash G}$ we obtain an equivalence
\[B\simeq \lim_i (\pi_i)_*\pi_i^* B\] in $\CR_Y$. To see this we may for example compare universal properties:
\begin{align*}
    \Map_{\CR_Y}(A,B) &\simeq \lim_i\Map_{\CR_{Y_i}}(\pi_i^* A,\pi_i^*B) \\
    &\simeq \lim_i\Map_{\CR_{Y}}(A, (\pi_i)_*\pi_i^* B) \\ 
    &\simeq \Map_{\CR_{Y}}(A, \lim_i (\pi_i)_*\pi_i^* B). 
\end{align*} 
Finally note that the canonical faithful map $f\colon Y \rightarrow \CB G$ is induced by the universal property of a colimit by the maps $f_i\colon Y_i \rightarrow \CB G$. We can now calculate
\begin{align*}
\decatcoh{G}{X}{\CR} = f_* \1_{\CR_{X\sslash G}} \simeq f_*  \lim_i (\pi_i)_*\pi_i^* \1_{\CR_{X\sslash G}} &\simeq \lim_i f_* (\pi_i)_* \1_{\CR_{{X_i\sslash G}}} \\
&\simeq \lim_i (f_i)_* \1_{\CR_{{X_i\sslash G}}} \simeq \lim_i \decatcoh{G}{X_i}{\CR}.\qedhere
\end{align*}
\end{proof}

As the main theme of this work, we will see that a naive global $2$-ring in fact endows its decategorification with significantly more structure. Before we dive into this, let us first give some examples of \weakcats{}.

\begin{example}\label{ex:borel}
Given any presentably symmetric monoidal stable $\infty$-category $\CC$, we may define a functor \[\CC_{\mathrm{bor}}\colon {\Glo{}}^{\op}\rightarrow \PrL,\quad \CB G\mapsto \CC^{B G}.\] The limit extension of this functor defines a naive global 2-ring, which we call a \emph{Borel} 2-ring.
\end{example}

\begin{example}\label{ex:genuine_spectra}
Consider the functor ${\Glo{}}^{\op} \to \PrL$ which sends $\CB G$ to the $\infty$-category of genuine $G$-spectra $\Sp_G$, and a morphism $\CB\alpha \colon \CB H \to \CB G$ to the restriction-inflation functor $\alpha^* \colon \Sp_G \to \Sp_H$. Limit extending we obtain a naive global 2-ring 
\[\Sp_\bullet \colon \Spc_{\gl}^{\op} \to \PrL.\] The resulting decategorified naive global ring is equivalent to the underlying naive global ring associated to the global sphere spectrum $\mathbb{S}_\gl$.
\end{example}

\begin{example}\label{ex:global-spectra-cat-cohom}
We may also consider the limit extension of the functor
\[
\Sp_{\bullet\text{-}\gl}\colon \Glo{\mathrm{fin}}^{\op}\to \PrL, \quad \CB G\mapsto \Sp_{G\text{-}\gl}
\]
defined for the family of finite groups in \cite{CLL_Global}. This functor sends $\CB G$ to the $\infty$-category of $G$-global spectra in the sense of \cite{LenzGglobal}. Once again the associated decategorified global ring is equivalent to the global sphere spectrum.
\end{example}

\begin{example}\label{ex:cat_gl_cohom_of_gl_spectrum}
Generalizing \Cref{ex:genuine_spectra}, one can associate to any $R\in \CAlg(\Spgl{\CE})$ the functor 
\[
\Mod_{R_\bullet}\colon \Glo{\CE}^{\op}\rightarrow \PrL, \quad \CB G \mapsto \Mod_{\mathrm{res}_G R}(\Sp_G).
\] 
We refer to~\cite{LNP}*{Theorem 5.10} for a construction of this functor. Taking the limit extension we obtain a naive global 2-ring. Its decategorification agrees with the underlying naive global ring of $R$.
\end{example}

The following two examples form the motivation for this paper. We will introduce two multiplicative global cohomology theories, and then categorifications thereof. Here we will be brief; we refer the reader to \Cref{examples} for a more extensive account of these examples.

\begin{example}\label{tempered cohomology}
Let $\bG$ be a preoriented $\rP$-divisible group over a commutative ring spectrum $R$, in the sense of \cite{Ell3}*{Definition 2.6.1}. Given this data Lurie constructs~\cite{Ell3}*{Construction 4.0.3} a multiplicative global cohomology theory
\[
R_{\bG}^\bullet\colon \Spcgl{\fab}^{\op}\to \CAlg,
    \]
    called $\bG$-tempered cohomology. Here $\fab$ refers to the family of finite abelian groups. Given a finite abelian group $H$, there is a non-canonical equivalence 
    \[
    \mathrm{Spec}(R_{\bG}^{\CB H})\simeq \bG[\widehat{H}]
    \]
    between the spectrum of $R_{\bG}^{\CB H}$ and the $\widehat{H}$-torsion points in $\bG$, where $\widehat{H}$ denotes the Pontryagin dual of $H$.
\end{example}

Tempered cohomology is categorified by the notion of tempered local systems.

\begin{example}
Let $\bG$  be a preoriented $\rP$-divisible group over $R$. As discussed in ~\cite{Ell3}*{Remark 5.2.11}, there exists a limit preserving functor
\[
\LocSys_\bG \colon \Spcgl{\fab}^{\op}\rightarrow \widehat{\Cat}, \quad \sX \mapsto  \LocSys_\bG(\sX)
\] 
which sends a global space $\sX$ to the $\infty$-category of $\bG$-tempered local systems on $\sX$, and a morphism of global spaces $f\colon \sX \to \sY$ to the pullback functor $f^* \colon \LocSys_{\bG}(\sY) \to \LocSys_{\bG}(\sX)$, see \Cref{sec:tempered} for more details. If in addition $\bG$ is \emph{oriented} in the sense of \cite{Ell2}*{Proposition 4.3.23}, then \Cref{megaprop} below implies that $\LocSys_\bG$ is a naive global 2-ring.
\end{example}

So far we have only given examples of naive global $2$-rings coming from limit-preserving functors. The following example motivates our more general definition. 

\begin{example}\label{ex:cat_ell_coh}
Let $\sfS$ be a nonconnective spectral Deligne-Mumford stack and let $\bE$ be an oriented elliptic curve in nonconnective spectral Deligne-Mumford stacks over $\sfS$, in the sense of~\cite{GM20}*{Definition 3.1}. Following \cite{GM20}, we can associate to $\bE$ a global cohomology theory
\[ 
\decatcohsp{\gl}{-}{\bE} \colon \Spcgl{\ab}^{\op}\rightarrow \CAlg(\Sp),\quad \CB A \mapsto \Gamma(\bE[\widehat{A}], \CO_{\bE[\widehat{A}]}).
\]
Similarly to the case of tempered cohomology, if $A$ is a compact abelian Lie group, we denote by $\bE[\widehat{A}]$ the $\widehat{A}$-torsion points of $\bE$.
\end{example}

\begin{example}\label{ex:QCoh_cohom}
As we will explain in \Cref{sec:oriented_ab_groups}, there is also a way to associate to $\bE\to \sfS$ a naive global 2-ring:
\[
\CQ_\bullet^\bE \colon \Spcgl{\ab}^{\op}\rightarrow \PrL.
\] 
Morally this comes about by taking the categories of quasi-coherent sheaves on a suitable geometric extension of $\bE$ to compact $\ab$-global spaces. This assignment will however not be limit preserving, and so this is an example of a naive global 2-ring not coming from \Cref{lem-limit_naive}. Considering the natural equivalence $\mathrm{End}_{\QCoh(X)}(\1)\simeq \Gamma(X,\mathcal{O}_X)$, we see that $\CQ_\bullet^\bE$ categorifies $\decatcohsp{\gl}{-}{\bE}$. 
\end{example}

\section{Relative global sections}\label{sec:decategorify}

In this section we construct the functoriality which is required to prove \Cref{thm:Unraveling}. To motivate the steps of our construction we first sketch a construction of the functor \[\decatcoh{G}{-}{\CR}\colon \Spc_G^{\op}\rightarrow \CAlg(\CR_{\CB G}), \quad [X\sslash G \xrightarrow{p} \CB G] \mapsto p_*\1_{X\sslash G}.\] 
\begin{enumerate}
\item First we claim that there exists an oplax cone
\[\begin{tikzcd}
& {\CAlg(\CR_{X\sslash G})} \\
\ast \\
& {\CAlg(\CR_{Y\sslash G)}}
\arrow["{f_*}", from=1-2, to=3-2]
\arrow["{\1_{X\sslash G}}", from=2-1, to=1-2]
\arrow[""{name=0, anchor=center, inner sep=0}, "{\1_{Y\sslash G}}"', from=2-1, to=3-2]
\arrow[shorten <=12pt, shorten >=15pt, Rightarrow, from=0, to=1-2]
\end{tikzcd}\] which sends a $G$-space $X$ to the unit $\1_{X\sslash G}$ of the symmetric monoidal $\infty$-category $\CR_{X\sslash G}$. The triangles are filled by the map $\1_{Y\sslash G}\rightarrow 
f_*\1_{X\sslash G}$ coming from the lax monoidal structure of $f_*$.
\item Next we observe that because the $\infty$-category of $G$-spaces admits a final object, there exists a unique cone $\CAlg(\CR_{\bullet \sslash G})\Rightarrow \Delta(\CAlg(\CR_{\CB G}))$ which sends a $G$-space $p\colon X\sslash \CB G \rightarrow \CB G$ to the functor $p_*\colon \CAlg(\CR_{X\sslash \CB G})\rightarrow \CAlg(\CR_{\CB G})$. 
\item Given the previous two constructions, we define the functor $\decatcoh{G}{-}{\CR}$ by pasting the previous two oplax natural transformations together
\[
\Delta(\ast)\xRightarrow{\1_{\bullet}} \CAlg(\CR_{\bullet\sslash G}) \xRightarrow{{\color{white}\1_{\bullet}}} \Delta(\CAlg(\CR_{\CB G})),
\]
using the identifications of functors $\Spc_G^{\op}\rightarrow \CAlg(\CR_{\CB G})$ with $\Spc_G$-shaped oplax cones $\Delta(\ast)\Rightarrow \Delta(\CAlg(\CR_{\CB G}))$, see \Cref{ex:functors_are_lax_cones}. 
\end{enumerate}

Clearly the resulting functor will send $p\colon X\sslash G\to \CB G$ to $p_* \1_{X\sslash G}$ and so agrees with (\ref{item-unravel-obj}) of \Cref{def-unravel-incoh}. Similarly, considering the definition one sees that it will agree on morphisms with the desideratum (\ref{item-unravel-morph}) of \Cref{def-unravel-incoh}.

We will construct the oplax transformations sketched in step (1) and step (2) above in \Cref{subsec:unit-sec} and \Cref{sec:pushforward_sections}, respectively. In both cases our task is made more difficult by the fact that we require a construction which is functorial enough to allow us to deduce \Cref{thm:Unraveling}. Finally in \Cref{subsec:pushforward} we combine these two constructions together, as in step (3).

\subsection{Constructing unit sections}\label{subsec:unit-sec}

In this section we construct the oplax cone $\1_\bullet\colon 
\Delta(\ast)\Rightarrow \CR_{-\sslash G}$, in the guise of a section of the cartesian unstraightening of the functor $\CAlg(\CR_{-\sslash G})$. We begin with a simplification of the task: Note that the unit $\1_{\CR_X}$ is the initial object of $\CAlg(\CR_X)$. In particular given $f\colon X\rightarrow Y$ in $\Spcgl{\CE}$ the map $\lax_f\colon \1_{\CR_Y}\rightarrow f_* \1_{\CR_X}$ can be characterized as the unique map of commutative algebras from $\1_{\CR_Y}$ to $f_*\1_{\CR_X}$. Therefore the construction of a unit section as above is subsumed by the construction of initial object sections.

\begin{definition}\label{def:Cart}
    We write $\Cart$ for the subcategory of $\Ar(\Cat)$ spanned on objects by the cartesian fibrations and on morphisms by those squares
    \[
    \begin{tikzcd}
        X \arrow[r, "F"] \arrow[d, "p"] & Y \arrow[d, "q"] \\
        S \arrow[r, "f"]							& T
    \end{tikzcd}
    \] such that $F$ sends $p$-cartesian edges to $q$-cartesian edges. We define $\Cart^{(\emptyset)}$ to be the full subcategory of $\Cart$ on those cartesian fibrations $p\colon X\rightarrow S$ such that for each object $s\in S$ the fiber $X_s$ of $p$ over $s$ admits an initial object.
\end{definition}

\begin{proposition}\label{s-functor}
    The inclusion $\Cart^{(\emptyset)}\subset \Ar(\Cat)$ factors through the subcategory $\Fun_{\rR}^{\oplax}([1],\mathbf{\Cat})$. In particular postcomposing with the first mate equivalence (\Cref{first-mate-equivalence}) we obtain a functor
    \[s_\emptyset\colon \Cart^{(\emptyset)} \rightarrow \Fun_\rL^{\lax}([1],\mathbf{\Cat}).\]
\end{proposition}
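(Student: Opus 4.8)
The plan is to reduce the statement to a single object-level fact and then let the rest follow formally. First recall that $\Fun^{\oplax}_{\rR}([1],\mathbf{Cat})$ is, by definition, the \emph{full} subcategory of $\Fun^{\oplax}([1],\mathbf{Cat})$ on those functors $[1]\to\Cat$ carrying the nondegenerate arrow to a right adjoint, and that there is an evident functor $\Ar(\Cat)=\Fun([1],\Cat)\to\Fun^{\oplax}([1],\mathbf{Cat})$, the identity on objects (a natural transformation between functors with values in $\mathbf{Cat}$ can in particular be read as an oplax one; coherently, this comes from the lax comparison $\mathbf{A}\boxtimes[1]\to\mathbf{A}\times[1]$ from the Gray tensor product to the cartesian product, see \cite{GHL21}). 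Since this target subcategory is full, it will suffice to check that the composite $\Cart^{(\emptyset)}\subset\Ar(\Cat)\to\Fun^{\oplax}([1],\mathbf{Cat})$ sends every \emph{object} into $\Fun^{\oplax}_{\rR}([1],\mathbf{Cat})$; functoriality on morphisms and higher simplices is then automatic.

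The content is therefore the classical fact that a cartesian fibration $p\colon X\to S$ all of whose fibers admit an initial object is a right adjoint functor. I would prove this by exhibiting the left adjoint directly as the ``initial section'' $L_{\emptyset}(p)$: on objects $s\mapsto 0_{s}$, the initial object of the fiber $X_{s}$, and on a morphism $f\colon s\to t$ the essentially unique lift $0_{s}\to 0_{t}$ over $f$, which exists because morphisms $0_{s}\to 0_{t}$ lying over $f$ are identified, via the cartesian edge $f^{*}0_{t}\to 0_{t}$, with morphisms $0_{s}\to f^{*}0_{t}$ in the fiber $X_{s}$, and $0_{s}$ is initial there. Replacing $0_{t}$ by an arbitrary object $x$ of $X$, the same identification yields $\Map_{X}(L_{\emptyset}(p)(s),x)\simeq\coprod_{f\colon s\to p(x)}\ast\simeq\Map_{S}(s,p(x))$, naturally in $x$, which exhibits $L_{\emptyset}(p)\dashv p$. (Alternatively one invokes the standard criterion that a functor admits a left adjoint as soon as every relevant comma category has an initial object, which for a cartesian fibration reduces to a condition on the fibers; see \cite{HTT}.)

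Granting this, the functor $\Cart^{(\emptyset)}\to\Fun^{\oplax}_{\rR}([1],\mathbf{Cat})$ is in hand, and $s_{\emptyset}$ is defined by postcomposing with the first mate equivalence (\Cref{first-mate-equivalence}) for $\CI=[1]$, read — via $[1]^{\op}\cong[1]$ — as an equivalence $\Fun^{\oplax}_{\rR}([1],\mathbf{Cat})\simeq\Fun^{\lax}_{\rL}([1],\mathbf{Cat})$. Unwinding, $s_{\emptyset}$ carries $p\colon X\to S$ to its left adjoint $L_{\emptyset}(p)$, now regarded as a functor sending the arrow of $[1]$ to a left adjoint, and carries a morphism of cartesian fibrations to the lax square obtained as the Beck--Chevalley mate of the (invertible) commutativity datum of the original square; this is exactly the comparison map between the two resulting initial-object sections that will be used in \Cref{subsec:unit-sec}, and nothing further needs checking since $\Fun^{\lax}_{\rL}([1],\mathbf{Cat})$ is again a full subcategory. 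The one step I expect to require genuine care is the innocuous-looking first reduction — pinning down the comparison functor $\Ar(\Cat)\to\Fun^{\oplax}([1],\mathbf{Cat})$ precisely enough that it is visibly the identity on objects — whereas the initial-section construction is entirely standard and the mate equivalence is quoted wholesale.
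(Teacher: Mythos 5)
Your proposal is correct and follows essentially the same route as the paper's proof: both reduce, via fullness of $\Fun^{\oplax}_{\rR}([1],\mathbf{Cat})$ inside $\Fun^{\oplax}([1],\mathbf{Cat})$, to the object-level claim that a cartesian fibration $p\colon X\to S$ with fiberwise initial objects admits a left adjoint. The verification is the same in both cases — the fiber of $\Map_X((\emptyset,s),(x,t))\to\Map_S(s,t)$ over $f$ is $\Map_{X_s}(\emptyset,F(f)(x))\simeq\ast$ by \cite{HTT}*{Proposition 2.4.4.2} — with your version merely packaging the pointwise left adjoint objects into an explicit initial section before checking the adjunction.
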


\begin{proof}
    Consider a cartesian fibration $p\colon X\rightarrow S$, which classifies a functor $F\colon S^{\op}\rightarrow \Cat$. It will suffices to show that $p\colon X \to S$ has a left adjoint. We claim that the object $(\emptyset,s)$ is a left adjoint object of $s\in S$, where $\emptyset$ is the initial object of $X_s$. To see this consider the map  
    \[
    \phi \colon \Map_X((\emptyset,s),(x,t))\rightarrow \Map_S(s,t)
    \] 
    induced by $p$.	By \cite{HTT}*{Proposition 2.4.4.2} the fiber over $f\colon s\to t$ is equivalent to $$\Map_{X_s}(\emptyset, F(f)(x))\simeq \ast.$$ Thus $\phi$ is an equivalence.
\end{proof}

\begin{remark}\label{s-section}	
    The argument given in the proof of \Cref{s-functor} also shows that the left adjoint $s$ to $p\colon X\rightarrow S$ is fully faithful. In particular the composite $ps$ is canonically equivalent to the identity on $S$ via the unit of the adjunction $s\dashv p$, and so $s$ is canonically a section of $p$.
\end{remark}

\begin{example}\label{ex:unit-sections}
    Consider a cartesian fibration $p\in \Cart^{(\emptyset)}$, which classifies a functor $F\colon S^{\op}\rightarrow \Cat$, and let $s$ be a section constructed by this procedure, that is $s=s_{\emptyset}(p)$. Then $s$ may be interpreted as oplax cones $\ast\Rightarrow F$, see dual of \Cref{rem:laxlim_sect}. The component at $i\in \CI$ is given by the functor $\{\emptyset_{F(i)}\}\colon \ast \rightarrow F(i)$. If we begin with a functor $F\colon \CI\rightarrow \Cat^{\otimes,\lax}$, then applying the previous construction to the functor $\CAlg(F)\colon \CI \rightarrow \Cat^{(\emptyset)}$ we obtain a lax cocone which we will denote by $\1_\bullet\colon \ast\Rightarrow \CAlg(F)$.
\end{example}

\subsection{Cartesian transport to the initial object}\label{sec:pushforward_sections}
Having constructed the unit section, we now build the natural transformation $\CAlg(\CR_{\bullet\sslash G})\Rightarrow \Delta(\CAlg(\CR_{\CB G}))$. Note that such a natural transformation unstraightens to a functor $\Unct{\CAlg(\CR_{\bullet\sslash G})}\rightarrow \CAlg(\CR_{\CB G})\times \Spc_G^{\op}$, which by adjunction is in turn equivalent to a functor $\Unct{\CAlg(\CR_{\bullet\sslash G})}\rightarrow \CAlg(\CR_{\CB G})$. It in this guise that we will build the natural transformation above. Of course we will define it more generally for any functor $F\colon \CI\rightarrow \Cat$ such that $\CI$ admits a final object. In this case it will be a functor with signature $\Unct{F}\rightarrow F(\ast)$.

To be able to deduce \Cref{thm:Unraveling} we will require the functor above to itself be functorial in both natural transformations $F\Rightarrow G$ and precomposition by an \emph{arbitrary} functor $\CJ\rightarrow \CI$, not just one which preserves the final object. This is the case because the induction functors generally do not preserve the final object. To make this precise we introduce the following definition:

\begin{definition}
Let $\Cart_{(\emptyset)}$ be the \emph{full} subcategory of $\Cart$ spanned by those cartesian fibrations $p\colon X\rightarrow \CI$ such that $\CI$ has an initial object.
\end{definition}

Note that the base of a cartesian fibration $p\colon X\rightarrow\CI$ is given by the opposite of the source of its straightening $\CI^{\op}\to\Cat$, explaining the switch from final to initial objects. With the previous definition, we may state the goal of this section.

\begin{proposition}\label{prop:transport_to_initial_object}
There exists a functor $\Cart_{(\emptyset)}\rightarrow \Fun([1],\Cat)$ which sends a cartesian fibration $p\colon X\rightarrow \CI$ to the functor
\[
\Tr_\emptyset\colon X\rightarrow X_\emptyset, \quad (x,i)\mapsto F(\emptyset \rightarrow i)(x).
\]
\end{proposition}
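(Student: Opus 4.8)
The plan is to build $\Tr_\emptyset$ pointwise, characterise it as an adjoint, and then promote this to a functor on $\Cart_{(\emptyset)}$ via the (un)straightening and mate equivalences.

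\textbf{Pointwise.} Fix $p\colon X\to\CI$ in $\Cart_{(\emptyset)}$, with $\emptyset\in\CI$ initial and straightening $F\colon\CI^{\op}\to\Cat$, and let $\iota_\emptyset\colon X_\emptyset\hookrightarrow X$ be the inclusion of the fibre over $\emptyset$. Since $\emptyset$ is initial, $\Map_\CI(\emptyset,i)\simeq\ast$ for all $i$, so the computation used in the proof of \Cref{s-functor} (via \cite{HTT}*{Proposition 2.4.4.2}) shows that $\iota_\emptyset$ is fully faithful with right adjoint
\[
\Tr_\emptyset\colon X\to X_\emptyset,\qquad \Tr_\emptyset(x,i)=F(\emptyset\to i)(x)=(\emptyset\to i)^\ast x ,
\]
exhibiting $X_\emptyset$ as a coreflective subcategory of $X$ (with $\Tr_\emptyset\circ\iota_\emptyset\simeq\id$). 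Equivalently, $\Tr_\emptyset$ is the composite $X\xrightarrow{(\id,\,\sigma p)}X\times_{\CI,\ev_1}\Fun([1],\CI)\xrightarrow{\ \tau\ }X$, where $\tau$ is cartesian transport — obtained from the trivial fibration onto $X\times_{\CI,\ev_1}\Fun([1],\CI)$ from the full subcategory $\Fun([1],X)^{\mathrm{cart}}$ of $\Fun([1],X)$ on the $p$-cartesian edges, followed by $\ev_0$ — and $\sigma\colon\CI\xrightarrow{\sim}\{\emptyset\}\times_{\CI,\ev_0}\Fun([1],\CI)\hookrightarrow\Fun([1],\CI)$ is the section classifying the arrows out of $\emptyset$. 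As right adjoints are unique, this pins down $\Tr_\emptyset$ with the asserted values.

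\textbf{Functoriality.} I would first organise $p\mapsto[\iota_\emptyset\colon X_\emptyset\hookrightarrow X]$ into a functor and then mate. The assignments $p\mapsto\Fun([1],\CI)$, $p\mapsto X\times_{\CI,\ev_1}\Fun([1],\CI)$, $p\mapsto\Fun([1],X)^{\mathrm{cart}}$ and $p\mapsto\tau$ are all natural on the whole of $\Cart$, since a morphism in $\Cart$ is by definition a square whose top functor preserves cartesian edges and hence commutes with $\tau$. The only piece that is not natural on $\Cart_{(\emptyset)}$ is $\sigma$: a morphism $(G,g)\colon p\to q$ in $\Cart_{(\emptyset)}$ need not have $g$ preserve initial objects, so $\Fun([1],g)\circ\sigma_\CI$ and $\sigma_\CJ\circ g$ differ by the canonical $2$-cell given by precomposition with the unique arrow $\emptyset_\CJ\to g(\emptyset_\CI)$. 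Pushing this $2$-cell through $\tau$ — equivalently, carrying out the construction inside the oplax slice $\tCat\oplaxslice\Cat$ of \Cref{slice-cart}, where the universal cartesian fibration and its transport functoriality are available from the start — produces a functor $\Cart_{(\emptyset)}\to\Fun^{\lax}_{\rL}([1],\tCat)$ sending $p$ to $[\iota_\emptyset\colon X_\emptyset\hookrightarrow X]$, the square attached to $(G,g)$ being laxly filled by the cartesian edge $(\emptyset_\CJ\to g\emptyset_\CI)^\ast(Ga)\to Ga$. Applying the mate equivalence \Cref{first-mate-equivalence} (and the description of its effect on morphisms recorded just after it) turns this into a functor sending $p$ to $[\Tr_\emptyset\colon X\to X_\emptyset]$, with squares filled by the associated Beck--Chevalley transformations.

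\textbf{Descending to $\Fun([1],\Cat)$, and the main difficulty.} It remains to check these Beck--Chevalley $2$-cells are invertible, so that the resulting functor takes values in $\Fun([1],\Cat)$. This is a pointwise computation: the unique arrow $\emptyset_\CJ\to g(i)$ factors as $\emptyset_\CJ\to g(\emptyset_\CI)\xrightarrow{\,g(\emptyset_\CI\to i)\,}g(i)$, cartesian transport is functorial along this composite, and $G$ preserves cartesian edges, so it commutes with transport along $g(\emptyset_\CI\to i)$; hence $\Tr^Y_\emptyset\circ G$ is canonically equivalent to $\bigl((\emptyset_\CJ\to g\emptyset_\CI)^\ast\circ G|_{X_\emptyset}\bigr)\circ\Tr^X_\emptyset$, and this equivalence is the Beck--Chevalley cell. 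I expect the genuine obstacle to lie not here but in the bookkeeping of the previous paragraph: the assignment $p\mapsto[\Tr_\emptyset]$ is strictly natural on $\Cart_{(\emptyset)}$ only after systematically inserting the cartesian transports along the arrows $\emptyset\to(\text{image of }\emptyset)$, and turning this compatible collection of data into an honest functor of $\infty$-categories is cleanest done inside $\tCat\oplaxslice\Cat$, where by \Cref{slice-cart} and \Cref{cor-lax-slice} these structures are already encoded functorially.
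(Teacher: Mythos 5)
Your pointwise analysis is correct, and your route is genuinely different from the paper's. You characterize $\Tr_\emptyset$ as the right adjoint of the fully faithful fibre inclusion $\iota_\emptyset\colon X_\emptyset\hookrightarrow X$ (the same mapping-space computation used in \Cref{s-functor}, applied one level down), and you propose to get functoriality by first producing $p\mapsto[\iota_\emptyset]$ as a functor into $\Fun^{\lax}_{\rL}([1],\tCat)$ and then applying \Cref{first-mate-equivalence}. The paper never passes through this adjoint: it constructs the transport $\Tr\colon\Cart\to\Ar(\Cat)$ as the counit of the free-fibration adjunction $\fF^{\cart}\dashv\mathrm{incl}$ (which is functorial on all of $\Cart$ for free), makes the section $c_\emptyset\colon\CI\to\Ar(\CI)$, $i\mapsto[\emptyset\to i]$, functorial only on initial-object-preserving base functors via a mate, and then extends from $\Cart_{\emptyset}$ to $\Cart_{(\emptyset)}$ by a cone construction $\Phi\colon\Cart_{(\emptyset)}\to\Cart_{\emptyset}$ that formally adjoins a new initial object to the base (with fibre $X_\emptyset$ again), so that every base functor preserves it. Your closing Beck--Chevalley computation --- factoring the unique arrow $\emptyset_\CJ\to g(i)$ through $g(\emptyset_\CI)$ --- is exactly the reason the paper's $\Phi$ outputs strictly commuting squares, so the two arguments agree on where the mathematical content sits.

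The genuine gap is that the step you defer is the entire content of the proposition. The value of $\Tr_\emptyset$ on objects is obvious; what must be proved is that the assignment extends coherently over morphisms of $\Cart_{(\emptyset)}$ whose base functors do not preserve initial objects. To build the lax functor $p\mapsto[\iota_\emptyset]$ you would need to specify, coherently over all strings of composable morphisms, how the inserted transports along $\emptyset_\CJ\to g(\emptyset_\CI)$ compose and interact with the $2$-cells you describe; saying this is "cleanest done inside $\tCat\oplaxslice\Cat$" via \Cref{slice-cart} is not yet a construction, because the assignment $\CI\mapsto\emptyset_\CI$ is itself only laxly natural and must be rigidified before any of the ambient functoriality can be invoked. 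The paper's $(-)^{\ltri}$ trick is precisely a device for performing this bookkeeping once and for all: after applying $\Phi$, the section $c_\emptyset$ becomes strictly natural in the base, and all remaining coherence is inherited from $\fF^{\cart}$ and a single application of the mate equivalence. Your argument becomes complete if you either import $\Phi$ (or an equivalent rigidification) to construct the lax functor you intend to mate, or replace that paragraph with an explicit coherent construction; as written, the proof of the key step is asserted rather than given.
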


To prove this statement we will appeal to the functoriality of cartesian lifts in a cartesian fibration. There is a convenient formulation of this which goes via the notion of free fibrations, in the sense of \cite{GHN}, which we recall now.

\begin{proposition}
The inclusion $\Cart \subseteq \Ar(\Cat)$ admits a left adjoint $\fF^{\cart}\colon \Ar(\Cat) \to \Cart$.
\end{proposition}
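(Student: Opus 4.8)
The plan is to realise $\fF^{\cart}$ as the \emph{free cartesian fibration} functor of \cite{GHN}. Concretely, for an object $p\colon X\to S$ of $\Ar(\Cat)$ I would set
\[
\fF^{\cart}(p)\;\coloneqq\;\Ar(S)\times_{S}X,
\]
the pullback formed along the target evaluation $\mathrm{ev}_{1}\colon\Ar(S)\to S$ and along $p$, made into an object of $\Ar(\Cat)$ by the structure map $\fF^{\cart}(p)\xrightarrow{\mathrm{pr}}\Ar(S)\xrightarrow{\mathrm{ev}_{0}}S$; its objects are pairs $(a\in X,\ \psi\colon s\to p(a))$ lying over $s$. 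The first step is to recall from \cite{GHN} that $\fF^{\cart}(p)\to S$ is indeed a cartesian fibration, that an edge of $\fF^{\cart}(p)$ is cartesian exactly when its image in $X$ is an equivalence, and that the functor over $S$
\[
\eta_{p}\colon X\longrightarrow \fF^{\cart}(p),\qquad a\longmapsto (a,\ \mathrm{id}_{p(a)}),
\]
exhibits the fibrewise universal property: for every cartesian fibration $q\colon E\to S$, restriction along $\eta_{p}$ is an equivalence $\Fun^{\cart}_{S}(\fF^{\cart}(p),E)\xrightarrow{\ \sim\ }\Fun_{S}(X,E)$, where $\Fun^{\cart}_{S}(-,-)\subseteq\Fun_{S}(-,-)$ denotes the full subcategory of functors over $S$ preserving cartesian edges.

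Next I would promote this into a functor $\fF^{\cart}\colon\Ar(\Cat)\to\Cart$. Since $S\mapsto\Ar(S)$ is functorial and the pullback $\Ar(S)\times_{S}X$ is functorial in the cospan $\Ar(S)\to S\leftarrow X$, any morphism $p\to p'$ in $\Ar(\Cat)$ — that is, a commuting square over some $f\colon S\to S'$ — induces a canonical functor $\fF^{\cart}(p)\to\fF^{\cart}(p')$ over $f$; because the cartesian edges of the source and target are characterised intrinsically (the $X$-component is an equivalence), this functor automatically preserves them, hence is a morphism of $\Cart$. This yields $\fF^{\cart}$ together with a natural transformation $\eta\colon\mathrm{id}_{\Ar(\Cat)}\Rightarrow\iota\circ\fF^{\cart}$, where $\iota\colon\Cart\hookrightarrow\Ar(\Cat)$ is the inclusion, with components the $\eta_{p}$.

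It then remains to check that $\eta$ is the unit of an adjunction, i.e.\ that for every $p=(X\to S)\in\Ar(\Cat)$ and every cartesian fibration $q=(E\to T)$ the composite
\[
\Map_{\Cart}\bigl(\fF^{\cart}(p),q\bigr)\;\xrightarrow{\ \eta_{p}^{*}\ }\;\Map_{\Ar(\Cat)}\bigl(p,\iota q\bigr)
\]
is an equivalence. Both sides fibre over $\Map_{\Cat}(S,T)$; over a point $f\colon S\to T$, a morphism $p\to\iota q$ in $\Ar(\Cat)$ amounts to a functor $X\to f^{*}E$ over $S$, while a morphism $\fF^{\cart}(p)\to q$ of $\Cart$ lying over $f$ amounts to a cartesian-edge-preserving functor $\fF^{\cart}(p)\to f^{*}E$ over $S$ — here one uses that $f^{*}E\to S$ is again a cartesian fibration, with cartesian edges precisely those mapping to $T$-cartesian edges, and that a functor over $T$ preserves cartesian edges iff the induced functor into the pullback over $S$ does. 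Hence fibrewise the comparison is exactly the equivalence recalled above, and one concludes. (Alternatively one may argue abstractly: $\Cart$ is presentable and $\iota$ preserves limits and $\kappa$-filtered colimits for suitable $\kappa$, so a left adjoint exists by the adjoint functor theorem; we prefer the explicit description since it is the one used in the sequel, e.g.\ in the proof of \Cref{prop:transport_to_initial_object}.)

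The main obstacle is precisely this last bookkeeping: one must verify with care that a morphism in $\Ar(\Cat)$ really decomposes as a base map $f$ together with a map into the $f$-pullback, compatibly with the explicit model $\Ar(S)\times_{S}X$ and with the cartesian-edge condition, so that the fibrewise universal property of \cite{GHN} — which is the genuine technical content and may simply be quoted — upgrades to the statement over a varying base.
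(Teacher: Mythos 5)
Your proposal is correct and rests on the same essential input as the paper's proof, namely the free cartesian fibration of \cite{GHN} (your explicit model $\Ar(S)\times_S X$ is literally the paper's subsequent definition of $\fF^{\cart}$). The difference is in how the fiberwise statement is globalized. The paper observes that both $\Cart$ and $\Ar(\Cat)$ are cartesian fibrations over $\Cat$ via the base, that the inclusion is a map of cartesian fibrations, and then invokes \cite{HA}*{Proposition 7.3.2.6} to reduce the existence of a left adjoint to the existence of fiberwise left adjoints $\Cat_{/\CI}\to\Cart(\CI)$, which is exactly \cite{GHN}*{Theorem 4.5}. You instead build the unit $\eta$ by hand and verify the adjunction by decomposing both mapping spaces over $\Map_{\Cat}(S,T)$ and comparing fibers via the universal property from \cite{GHN}; this is in effect a re-derivation of the relevant special case of HA 7.3.2.6. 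Your route is more self-contained and makes the unit explicit (which is indeed what the paper later uses), but it carries the burden you yourself flag: one must make the assignment $p\mapsto\fF^{\cart}(p)$ and the transformation $\eta$ coherently functorial at the $\infty$-categorical level, and justify the fiber identifications $\Map_{\Ar(\Cat)}(p,\iota q)_f\simeq\Map_{\Cat_{/S}}(X,f^*E)$ via \cite{HTT}*{Proposition 2.4.4.2}. The paper's appeal to the fibered adjoint functor criterion outsources precisely this bookkeeping, which is why it is the shorter argument; both are valid.
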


\begin{proof}
Both $\Cart$ and $\Ar(\Cat)$ are cartesian fibrations over $\Cat$, and the inclusion is clearly a map of cartesian fibrations. Therefore by \cite{HA}*{Proposition 7.3.2.6} it suffices to prove that the restriction of the inclusion to each fiber 
\[\mathrm{incl} \colon \Cart(\CI)\rightarrow \Cat_{/\CI}\]
has a left adjoint. This is precisely \cite{GHN}*{Theorem 4.5}. 
\end{proof}

\begin{definition}
    Concretely, the functor $\fF^{\cart}$ sends a functor $p\colon X \to \CI$ to the cartesian fibration whose total space is given by the following pullback square 
    \[
    \begin{tikzcd}
        \fF^{\cart}_{\CI}(X) \arrow[r] \arrow[d] & \mathrm{Ar}(\CI) \arrow[d,"\mathrm{ev}_1"]\\
        X \arrow[r,"p"] & \CI.
        \arrow["\lrcorner"{anchor=center, pos=0.125}, draw=none, from=1-1, to=2-2]
    \end{tikzcd}
    \]
    We view $\fF_{\CI}^{\cart}(X)$ as living over $\CI$ via the composite $\fF^\cart_{\CI}(X) \to \mathrm{Ar}(\CI) \xrightarrow{\mathrm{ev}_0} \CI$. When the base $\infty$-category $\CI$ is clear from the context, we will drop it from the notation and simply write $\fF^\cart(X)$. We refer to $\fF^{\cart}(X)\rightarrow \CI$ as the \emph{free fibration} of $p$.
\end{definition}

\begin{definition}
    We write \[\Theta \colon \Cart\rightarrow \Fun([1]\times[1],\Cat)\] for the functor which sends a cartesian fibration $p\colon X \rightarrow S$ to the counit of the adjunction $\fF^{\cart}(X)\dashv \mathrm{incl}$,
    \[\begin{tikzcd}
        {\fF^{\cart}(X)} & X \\
        \CI & \CI.
        \arrow[from=1-1, to=2-1]
        \arrow[Rightarrow, no head, from=2-1, to=2-2]
        \arrow["p", from=1-2, to=2-2]
        \arrow["\Tr", from=1-1, to=1-2]
    \end{tikzcd}\] We write $\Tr\colon \Cart \rightarrow \Ar(\Cat)$ for the projection to the top of the square. Write $G\colon S^{\op}\rightarrow \Cat$ for the straightening of $p\colon X\rightarrow S$. Unwinding the definitions we find that $\Tr$ sends the pair $(x\in X_i, f\colon i\rightarrow j)$ to $G(f)(x)\in X_j\subset X$. The functor $\Tr$ encodes the functoriality of cartesian lifts. Therefore we call it the \emph{cartesian transport functor}.
\end{definition}

We will now apply the cartesian transport functor to construct the functor $\Tr_\emptyset\colon \Unct{F}\rightarrow F(\emptyset)$. Recall that we will require this construction to be functorial in maps in the base which do not necessarily preserve the initial object. Nevertheless we will begin by assuming that it does, and then later reduce to this case.

\begin{construction}
    Let $\Cat_{\emptyset}$ be the subcategory of $\Cat$ spanned by those $\infty$-categories which have initial objects and those functors which preserve the initial object. We build a functor $\Cat_{\emptyset} \rightarrow \Fun([1],\Cat)$ as follows: First consider the functor $\Cat \rightarrow \Fun([1],\Cat)$ which sends an $\infty$-category $\CC$ to the functor $\ev_1 \colon \CC\times [1] \rightarrow \CC$. Now note that when $\CC$ has an initial object this functor admits a right adjoint, given by the unique functor 
    \[
    c_\emptyset \colon \CI\rightarrow \mathrm{Ar}(\CI),\quad x\mapsto [\emptyset \rightarrow x].
    \] Therefore we may pass through the first mate equivalence (\ref{first-mate-equivalence}) to obtain a functor 
    \[
    c_\emptyset \colon \Cat^\ast \rightarrow \Fun^\lax([1],\Cat).
    \] 
    A quick check shows that the Beck--Chevalley transformations are in fact equivalences, and so this functor factors through $\Fun([1],\Cat)$.
\end{construction}

\begin{construction}
    Consider the subcategory $\Cart_{\emptyset} = \Cart\times_{\Cat} \Cat_{\emptyset}$ of $\Cart$ spanned by those cartesian fibrations whose base admits an initial object and those maps of cartesian fibrations such that the functor in the base preserves the initial object. We build a functor 
    $\Cart_{\emptyset} \rightarrow \Fun([1]\times\Lambda^2_2,\Cat)$ by sending an object $p\colon X\rightarrow \CI$ to the diagram
    \[\begin{tikzcd}
        \CI & \CI & X \\
        {\mathrm{Ar}(\CI)} & \CI & X.
        \arrow["{c_\emptyset}", from=1-1, to=2-1]
        \arrow[Rightarrow, no head, from=1-1, to=1-2]
        \arrow["p", from=1-3, to=1-2]
        \arrow["p", from=2-3, to=2-2]
        \arrow[Rightarrow, no head, from=1-2, to=2-2]
        \arrow[Rightarrow, no head, from=1-3, to=2-3]
        \arrow["{\ev_1}"', from=2-1, to=2-2]
    \end{tikzcd}\]
    Taking the limit along $\Lambda^2_2$ results in a functor $\Psi\colon \Cart_{\emptyset} \rightarrow \Fun([1],\Cat)$. Note that the composite $\ev_1\circ \Psi$ is equivalent to the restriction of the functor $\fF^\cart\colon \Cart\rightarrow \Cat$ to $\Cart_{\emptyset}$ and $\ev_0\circ \Psi$ is equivalent to the identity functor on $\Cart_{\emptyset}.$ In particular $\Psi$ defines a natural transformation from the identity of $\Cart_{\emptyset}$ to $\fF^{\cart}$. 
\end{construction}

\begin{construction}
    As observed before, the source of $\Tr$ agrees with the target of $\Psi$, and so we may paste them to obtain a new functor \[\Tr_{\emptyset} \colon \Cart_{\emptyset} \rightarrow \Fun([1],\Cat).\] By definition $\Tr_{\emptyset}$ sends a cartesian fibration $p\colon X \to \CI$ to the functor:
    \[
    \Tr_{\emptyset}(X)\colon X \xrightarrow{(\id,c_\emptyset)} \fF^\cart(X) \xrightarrow{\Tr} X.
    \] 
    So we find that $\Tr_{\emptyset}(X)(x,i) \simeq (\emptyset \to i)_*(x)$. In particular the composite factors through the fiber $X_\emptyset$. We will implicitly restrict the target of the functor to the fiber over the final object from here on forward. Similarly applying this functor to a map 
    \[
    \begin{tikzcd}
        X \arrow[r, "H"] \arrow[d, "p"]& Y \arrow[d,"q"] \\
        \CI	\arrow[r, "h"] &	\CJ
    \end{tikzcd}
    \]in $\Cart_{\emptyset}$ returns a commutative square 
    \[\begin{tikzcd}
        X & {X_\emptyset} \\
        Y & {Y_\emptyset}.
        \arrow["\Tr_{\emptyset}(Y)"', from=2-1, to=2-2]
        \arrow["\Tr_{\emptyset}(X)", from=1-1, to=1-2]
        \arrow["H(\emptyset)", from=1-2, to=2-2]
        \arrow["H"', from=1-1, to=2-1]
    \end{tikzcd}\]
\end{construction}

We have built the functor $\Tr_\emptyset$ of \Cref{prop:transport_to_initial_object}. However we have so far only exhibited its functoriality in natural transformations and maps in the base which preserve the initial object. Next we explain how to proceed in the case that the map in the base does \emph{not} necessarily preserve the final object. 

\begin{construction}\label{cons-colimit-diagram}
    Consider the functor
    \[(-)^{\ltri}\colon \Cat\rightarrow \Cat, \quad \CI\mapsto \CI^{\ltri}\coloneqq \{\pt\}\ast \CI,\] where $\ast$ refers to the join of two $\infty$-categories. Note that including the second component of the join induces a natural transformation $b\colon \id\Rightarrow (-)^{\ltri}$.
    Consider the induced natural transformation 
    \[b^*\colon \Fun((-)^{\ltri},\Cat) \Rightarrow \Fun(-,\Cat)\] and note that each restriction functor admits a right adjoint, given by right Kan extension. Passing through the first mate equivalence (\ref{first-mate-equivalence}) we obtain an oplax natural transformation
    \[\Fun((-),\Cat) \Rightarrow \Fun((-)^{\ltri},\Cat)\] which evaluated at an $\infty$-category $\CI$ is given by right Kan extension along $b\colon \CI\rightarrow \CI^{\ltri}$. Observe that the right Kan extension of a diagram $F\colon \CI\rightarrow \Cat$ along $b$ is given by the universal limit cone $\CI^{\rtri}\rightarrow \Cat$ associated to $F$. Such an oplax natural transformation induces an endofunctor 
    \[\Phi\colon \Cart \rightarrow \Cart\] of the cartesian unstraightening of the functor $\Fun((-)^{\op},\Cat)$.
\end{construction}

\begin{construction}
    Suppose we now restrict $\Phi$ to the \textit{full} subcategory $\Cart_{(\emptyset)}\subseteq \Cart$ spanned by those fibrations whose base has an initial object. We emphasize that the functors do not necessarily preserve the initial object. The crucial observation, however, is that the induced functor $h^{\ltri}\colon \CI^\ltri\rightarrow \CJ^\ltri$ does preserve the initial object. Therefore $\Phi$ restricts to a functor 
    \[\Phi\colon \Cart_{(\emptyset)}\rightarrow \Cart_{\emptyset}.\]
    
    It is worth unraveling the effect of the functor $\Phi$. Note that the limit of a functor $F \colon \CI\rightarrow \Cat$ such that $\CI$ has a initial object is simply given by $F(\emptyset)$, and so $\Phi(F)\colon \CI^{\ltri}\rightarrow \Cat$ agrees with $F$ on $\CI$ and  sends the cone point to $F(\emptyset)$ again. Therefore the effect of $\Phi$ on objects is not so interesting. Nevertheless the effect of $\Phi$ on morphisms is useful. Suppose 
    \[
    \begin{tikzcd}
        X \arrow[r, "H"] \arrow[d, "p"]& Y \arrow[d,"q"] \\
        \CI	\arrow[r, "h"] &	\CJ
    \end{tikzcd}
    \] is a morphism in $\Cart_{(\emptyset)}$ which represents the natural transformation $\eta\colon F\Rightarrow G\circ h$. Then the induced morphism
    \[
    \begin{tikzcd}
        \Phi(X) \arrow[r, "\Phi(H)"] \arrow[d, "p^{\ltri}"]& \Phi(Y) \arrow[d,"q^{\ltri}"] \\
        \CI^\ltri	\arrow[r, "h^\ltri"] &	\CJ^\ltri
    \end{tikzcd}
    \]
    represents the natural transformation 
    \[\Phi(\eta)\colon \Phi(F) \rightarrow \Phi(G)\] which agrees with $\eta$ on every object except at the cone point, where it is given by the composite 
    \[F(\emptyset)\xrightarrow{\eta_\emptyset} G(h(\emptyset))\xrightarrow{G(h(\emptyset)\rightarrow \emptyset)} G(\emptyset).\]
\end{construction}

\begin{proof}[Proof of \Cref{prop:transport_to_initial_object}]
    We may consider the composite 
    \[\Cart_{(\emptyset)}\xrightarrow{\Phi} \Cart_{\emptyset}\xrightarrow{\Tr_{\emptyset}} \Fun([1],\Cat).\] On objects $\Phi \circ \Tr_{\emptyset}$ sends the cartesian fibration $p\colon \Unct{F} \rightarrow \CI$ to the functor $\Tr_{\emptyset}\colon \Unct{F^\ltri}\rightarrow F(\emptyset).$ We will from now on restrict this functor in the source to the subcategory $\Unct{F}$. This is natural in $\CI$ and so constitutes a new functor 
    \[
    \Tr_{(\emptyset)}\colon \Cart_{(\emptyset)} \rightarrow \Fun([1],\Cat).
    \] Note that on objects this functor agrees with $\Tr_{\emptyset}$. However on morphisms $\Tr_{(\emptyset)}$ sends a square
    \[
    \begin{tikzcd}
        \Unct{F} \arrow[r, "H"] \arrow[d, "p"]& \Unct{G} \arrow[d,"q"] \\
        \CI	\arrow[r, "h"] &	\CJ,
    \end{tikzcd}
    \] where now the map $h$ does not necessarily preserve the initial object, to the commutative square
    \[\begin{tikzcd}[column sep = huge]
        {\Unct{F}} &[10] {\Unct{G}} \\
        {F(\emptyset)} & {G(\emptyset).}
        \arrow["H", from=1-1, to=1-2]
        \arrow[from=1-1, to=2-1]
        \arrow[from=1-2, to=2-2]
        \arrow["{G(\emptyset \rightarrow h(\emptyset)) H(\emptyset)}", from=2-1, to=2-2]
    \end{tikzcd}\qedhere\]
\end{proof}

\subsection{Combining the previous two constructions}\label{subsec:pushforward}

We now combine the previous two sections to obtain the required result. To state it, we define $\tCat_{(\ast)}$ to be the \textit{full} subcategory of $\tCat$ spanned by those $\infty$-categories which admit a final object. Given an $\infty$-category $\CI$ with a terminal object and an object $i\in \CI$, we write $p_i\colon i\rightarrow \ast$ for the unique map from $i$ to $\ast$. Combining the constructions of this section we obtain the following functor.

\begin{theorem}\label{thm:Rel_global_sect_functor}
    There exists a functor
    \[
    \pushfor(-)\colon {\tCat_{(\ast)}}\oplaxslice{\Cat^{\otimes,\lax}}\rightarrow \Fun^{\lax}([1],\tCat)
    \] with the following properties. 
    
    On objects, $\pushfor(-)$ sends a functor $F\colon \CI\rightarrow \Cat^{\otimes,\lax}$ to the functor 
    $
    \pushfor(F)\colon \CI^{\op}\rightarrow \CAlg(F(\ast))
    $ 
    such that
    \begin{enumerate}
    \item $\pushfor(F)(i) \simeq F(p_i) \1_{F(i)}$.
    \item Given a map $f\colon X\rightarrow Y$ in $\CI$, the map $\pushfor(F)(f)\colon F(p_j) \1_{F(j)} \rightarrow F(p_i)\1_{F(i)}$ is given by the composite 
    \[
        F(p_j) \1_{F(j)} \xrightarrow{F(p_j) \lax_f} F(p_j) F(f) \1_{F(i)} \simeq F(p_i) \1_{F(i)},
    \] 
    where $\lax_f$ is the lax unit map of the lax monoidal functor $F(f)$.
    \end{enumerate}
    
    On morphisms, $\pushfor(-)$ sends the oplaxly commuting triangle 
    \[\begin{tikzcd}
    \CI && \CJ \\
	& {\makebox{$\Cat^{\otimes,\lax}$}}
	\arrow[""{name=0, anchor=center, inner sep=0}, "H"', from=1-1, to=2-2]
	\arrow[""{name=1, anchor=center, inner sep=0}, "G", from=1-3, to=2-2]
	\arrow["F", from=1-1, to=1-3]
	\arrow["\eta"{description}, shift left=1, shorten <=13pt, shorten >=13pt, Rightarrow, from=0, to=1]
\end{tikzcd}\]
to the square 
\[\begin{tikzcd}[column sep = large]
	{\CI^{\op}} & {\CAlg(H(\ast))} \\
	{\CJ^{\op}} & {\CAlg(G(\ast))}
	\arrow["{\pushfor(H)}", from=1-1, to=1-2]
	\arrow["{\pushfor(G)}"', from=2-1, to=2-2]
	\arrow["F"', from=1-1, to=2-1]
	\arrow["{G(p_{F(\ast)})\circ\eta_\ast}", from=1-2, to=2-2]
	\arrow["{\pushfor(\eta)}", shorten <=12pt, shorten >=12pt, Rightarrow, from=2-1, to=1-2]
\end{tikzcd}\]
where 
\begin{enumerate}
    \item[(3)] $\pushfor(\eta)$ is given at an object $i \in \CI$ by the composite: 
\begin{align*}
G(p_{F(i)})\1_{GF(i)} \xrightarrow{G(p_{F(i)})(\lax_{\eta_i})} G(p_{F(i)}) \eta_i \1_{H(i)} \simeq G(p_{F(\ast)}) GF(p_i)\eta_{i} \1_{H(i)} \simeq G(p_{F(\ast)})\eta_* H(p)(\1_{H(i)}),
\end{align*}
where $\lax_{\eta_i}$ is the lax unit map of $\eta_i$.
\end{enumerate}
\end{theorem}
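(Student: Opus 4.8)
The plan is to realise $\pushfor$ as a composite of functorial operations following the three-step sketch opening this section: pass to algebras and unstraighten, apply the unit-section functor $s_\emptyset$, apply the transport functor $\Tr_{(\emptyset)}$, and paste the latter two together. I would first record the concrete description of the source. Treating $\Cat^{\otimes,\lax}$ as in \Cref{const:functors_into_lax_slice}, an object of $\tCat_{(\ast)}\oplaxslice\Cat^{\otimes,\lax}$ is a functor $F\colon\CI\to\Cat^{\otimes,\lax}$ with $\CI$ admitting a terminal object $\ast$, and a morphism is an oplaxly commuting triangle $(F\colon\CI\to\CJ,\ H\colon\CI\to\Cat^{\otimes,\lax},\ G\colon\CJ\to\Cat^{\otimes,\lax},\ \eta\colon H\Rightarrow G\circ F)$ exactly as in the statement, where the components $\eta_i$ of $\eta$ are lax symmetric monoidal functors. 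Post-composing with $\CAlg$ — which enhances to a functor of $(\infty,2)$-categories $\tCat^{\otimes,\lax}\to\tCat$, and hence acts on oplax slices — gives a functor to $\tCat_{(\ast)}\oplaxslice\Cat$, $F\mapsto\CAlg(F)$. Since every fiber $\CAlg(F(i))$ has an initial object, namely the unit $\1_{F(i)}$, and $\CI^{\op}$ has the initial object $\ast$, composing further with the unstraightening equivalence $\tCat\oplaxslice\Cat\simeq\Cart$ of \Cref{slice-cart} yields a functor $U\colon\tCat_{(\ast)}\oplaxslice\Cat^{\otimes,\lax}\to\Cart^{(\emptyset)}_{(\emptyset)}$, $F\mapsto[\Unct{\CAlg(F)}\to\CI^{\op}]$, into the full subcategory $\Cart^{(\emptyset)}_{(\emptyset)}\coloneqq\Cart^{(\emptyset)}\cap\Cart_{(\emptyset)}$ of cartesian fibrations whose base and all of whose fibers have initial objects.

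Next I would paste together, over $\Cart^{(\emptyset)}_{(\emptyset)}$, the two functors already available: the restriction of the unit-section functor $s_\emptyset$ of \Cref{s-functor}, which lands in $\Fun_\rL^\lax([1],\tCat)\subseteq\Fun^\lax([1],\tCat)$ and sends $p\colon X\to\CI^{\op}$ to its left-adjoint section $[\CI^{\op}\xrightarrow{\ s\ }X]$ (see \Cref{ex:unit-sections}), and the restriction of $\Tr_{(\emptyset)}$ of \Cref{prop:transport_to_initial_object}, which lands in $\Fun([1],\tCat)\subseteq\Fun^\lax([1],\tCat)$ and sends $p$ to $[X\xrightarrow{\Tr_\emptyset}X_\emptyset]$. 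These are composable: $\ev_1\circ s_\emptyset$ and $\ev_0\circ\Tr_{(\emptyset)}$ are both canonically the total-space functor $p\mapsto X$. Using that the restriction functor $\Fun^\lax([2],\tCat)\to\Fun^\lax([1],\tCat)\times_{\ev_1,\tCat,\ev_0}\Fun^\lax([1],\tCat)$ is an equivalence — on objects it is the Segal equivalence $\Fun([2],\tCat)\simeq\Fun([1],\tCat)\times_\tCat\Fun([1],\tCat)$, and on morphisms it says a lax square over $[2]$ is exactly a compatible pair of lax squares — these two functors assemble into a functor $\Cart^{(\emptyset)}_{(\emptyset)}\to\Fun^\lax([2],\tCat)$; restricting along the inclusion $[1]\hookrightarrow[2]$, $0\mapsto0$, $1\mapsto2$, produces a functor $\Cart^{(\emptyset)}_{(\emptyset)}\to\Fun^\lax([1],\tCat)$ sending $p$ to $[\CI^{\op}\xrightarrow{\Tr_\emptyset\circ\, s}X_\emptyset]$. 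Precomposing with $U$ defines $\pushfor$; by construction $\pushfor(F)$ is the functor $\CI^{\op}\to\CAlg(F(\ast))$ equal to $\Tr_\emptyset\circ s$ for the cartesian fibration $\Unct{\CAlg(F)}\to\CI^{\op}$, whose fiber over the initial object $\ast$ is $\CAlg(F(\ast))$.

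It then remains to read off properties (1)--(3). For (1), $\pushfor(F)(i)=\Tr_\emptyset(s(i))=\Tr_\emptyset(\1_{F(i)},i)=F(p_i)\1_{F(i)}$ by the description of $\Tr_\emptyset$ in \Cref{prop:transport_to_initial_object}. For (2), a morphism $f\colon i\to j$ of $\CI$ determines the edge $s(f^{\op})\colon(\1_{F(j)},j)\to(\1_{F(i)},i)$ of $\Unct{\CAlg(F)}$, whose cartesian factorization has vertical part the unique map of algebras $\1_{F(j)}\to F(f)\1_{F(i)}$ — equivalently the lax unit $\lax_f$, as $\1_{F(j)}$ is initial — so that $\Tr_\emptyset$ of it is $F(p_j)$ applied to this vertical part, which after the identification $p_j\circ f=p_i$ is exactly the composite in (2). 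For (3), one pastes the lax square produced by $s_\emptyset$ on the given triangle on top of the strict square produced by $\Tr_{(\emptyset)}$: the $2$-cell of the former at $i$ is the comparison $\1_{GF(i)}\to\eta_i\1_{H(i)}$, which equals $\lax_{\eta_i}$ since the left-adjoint sections are fibrewise initial, while the right leg of the latter is $G(\emptyset\to h(\emptyset))\circ H(\emptyset)=G(p_{F(\ast)})\circ\eta_\ast$ by the computation of $\Tr_{(\emptyset)}$ on morphisms in \Cref{prop:transport_to_initial_object}; whiskering $\Tr_\emptyset$ onto the $2$-cell turns it into $G(p_{F(i)})(\lax_{\eta_i})$, and rewriting its target using $p_{F(i)}=p_{F(\ast)}\circ F(p_i)$ and the naturality of $\eta$ at $p_i$ gives precisely the displayed formula for $\pushfor(\eta)$.

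I expect the main obstacle to be the pasting step together with the attendant variance bookkeeping: one must set up the composition on the (mixed strict/lax) arrow categories so that it is genuinely functorial in the cartesian fibration, and then keep track of all the opposites ($\CI$ versus $\CI^{\op}$), the switch between terminal and initial objects, and the direction of each $2$-cell carefully enough that the pasted square agrees with the one in the statement on the nose — in particular so that the naturality of $\eta$ enters the $2$-cell of property (3) in the claimed way. By contrast, once the pasting is arranged, the identifications (1) and (2) fall out directly from the explicit formulas for $s_\emptyset$ and $\Tr_\emptyset$.
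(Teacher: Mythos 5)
Your proposal is correct and follows essentially the same route as the paper: post-compose with $\CAlg(-)$, unstraighten via \Cref{slice-cart} to land in $\Cart^{(\emptyset)}\cap\Cart_{(\emptyset)}$, and then paste $s_\emptyset$ with $\Tr_{(\emptyset)}$ along the shared total-space functor. Your explicit construction of the pasting via the Segal equivalence for $\Fun^{\lax}([2],\tCat)$ and your unwinding of properties (1)--(3) merely fill in details the paper leaves implicit.
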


\begin{proof}
    Consider the composite 
    \[
    {\tCat_{(\ast)}}\oplaxslice {\Cat^{\otimes,\lax}} \xrightarrow{{\CAlg(-)}\circ{-}} \tCat\oplaxslice{\Cat}  \underset{{\vspace{-0pt}\sim}}{\xrightarrow{\Unct{-}}} \Cart
    \]
    where the last equivalence is \Cref{slice-cart}. Because the $\infty$-category $\CAlg(\CD)$ always has an initial object given by the unit $\1_\CD$ and $\CI^{\op}$ has an initial object by assumption this composite lands in the intersection  $\Cart^{(\emptyset)}\cap \Cart_{(\emptyset)}$. We may therefore apply the functor 
    \[
    (s_\emptyset,\Tr_{(\emptyset)})\colon \Cart^{(\emptyset)}\cap \Cart_{(\emptyset)}\rightarrow \Fun^{\lax}([1],\Cat)\times_{\Cat} \Fun^{\lax}([1],\Cat).
    \] 
    Here the pullback is taken along evaluation at source and target respectively. The fact that $(s_\emptyset,\Tr_{(\emptyset)})$ factors through the pullback is justified by the fact that the target of $s_\emptyset$ agrees with the source of $\Tr_{(\emptyset)}$: both are the identity on $\Cart^{(\emptyset)}\cap \Cart_{(\emptyset)}$. Now we may paste these two lax natural transformations to obtain the functor $\pushfor$. More formally, these means we postcompose by the functor 
    \[
    \mathrm{paste}\colon \Fun^{\lax}([1],\Cat)\times_{\Cat} \Fun^{\lax}([1],\Cat)\rightarrow \Fun^{\lax}([1],\Cat).
    \] We define the result to be $\pushfor(-)$. Unwinding the construction one finds it has the properties required.
\end{proof}

\begin{remark}
We note the similarity between the properties of $\pushfor(-)$ and the desiderata of \Cref{def-unravel-incoh}.
\end{remark}

\section{Families of equivariant cohomology theories}\label{sec:unraveling}
After the technical work of the last section, we can cash in and prove \Cref{thm:Unraveling}.

\begin{proposition}\label{thm-E}
    Consider a functor $\CR \colon \Spcgl{\CE}\to\Cat^{\otimes,\lax}$. Then there exists a functor 
    \[
    \decatcoh{\bullet}{-}{\CR}\colon \Glo{\CE} \to \Fun^{\lax}([1], \tCat^{\otimes,\lax})
    \] such that: 
    \begin{enumerate}
        \item $\decatcoh{\bullet}{-}{\CR}$ sends $\CB G$ to a functor 
        \[
        \decatcoh{G}{-}{\CR}\colon \Spc_G^{\op}\rightarrow \CAlg(\CR_{\CB G})
        \] which agrees on objects and morphisms with \textup{(\ref{item-unravel-obj})} and \textup{(\ref{item-unravel-morph})} of \Cref{def-unravel-incoh} respectively.
        \item $\decatcoh{\bullet}{-}{\CR}$ sends a morphism $\CB \alpha \colon \CB H \to \CB G$ to the lax square  
        \begin{equation}
        \begin{tikzcd}[column sep = huge]
            {\Spc_H^{\op}} &  {\CAlg(\CR_{\CB H})}\\
            {\Spc_G^{\op}} & {\CAlg(\CR_{\CB G})}
            \arrow["{\alpha_!}"', from=1-1, to=2-1]
            \arrow["{\decatcoh{H}{-}{\,\CR}}", from=1-1, to=1-2]
            \arrow["{\decatcoh{G}{-}{\,\CR}}"', from=2-1, to=2-2]
            \arrow["{\alpha_*}", from=1-2, to=2-2]
            \arrow[shorten <=18pt, shorten >=18pt, Rightarrow, from=2-1, to=1-2, "T_{\alpha}"]
        \end{tikzcd}
    \end{equation}
        filled by a natural transformation 
        \[T_\alpha\colon \decatcoh{G}{\alpha_!(-)}{\CR} \Rightarrow \alpha_*\decatcoh{H}{-}{\CR}\] which agrees pointwise with \textup{(\ref{item-unravel-nat-tran})} of \Cref{def-unravel-incoh}.
    \end{enumerate}
\end{proposition}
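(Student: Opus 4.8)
The plan is to realise $\decatcoh{\bullet}{-}{\CR}$ as the composite of the relative global sections functor $\pushfor(-)$ of \Cref{thm:Rel_global_sect_functor} with a functor
\[
\kappa\colon\Glo{\CE}\longrightarrow\tCat_{(\ast)}\oplaxslice{\Cat^{\otimes,\lax}},\qquad \CB G\longmapsto\bigl(\Spc_G,\ \CR_{(-)\sslash G}\bigr),
\]
where $\CR_{(-)\sslash G}$ denotes the composite $\Spc_G\xrightarrow{-\sslash G}\Spcgl{\CE}\xrightarrow{\CR}\Cat^{\otimes,\lax}$. So the work splits into: (i) constructing $\kappa$; (ii) feeding it into $\pushfor(-)$ and identifying the result.

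For (i), recall the universal partially oplax cocone \eqref{oplaxcone} with legs $-\sslash G\colon\Spc_G\to\Spcgl{\CE}$, obtained in the proof of \Cref{prop:glquotient-oplaxlim} by passing to left adjoints in the universal partially lax cone of \Cref{thm:__lax_lim}; its structure $2$-cell at $\CB\alpha\colon\CB H\to\CB G$ is given pointwise by the canonical \quotientmap{}s $Z\sslash H\twoheadrightarrow\alpha_!(Z)\sslash G$ of \Cref{rem:restriction_functoriality}. Postcomposing this cocone with $\CR$ and encoding the result via \Cref{const:functors_into_lax_slice} (applied to the $(\infty,2)$-category $\tCat$ and the object $\Cat^{\otimes,\lax}$) yields a functor $\Glo{\CE}\to\tCat\oplaxslice\Cat^{\otimes,\lax}$ with the asserted values on objects; on a morphism $\CB\alpha$ its filling $2$-cell is $\CR$ applied to the cocone structure cell, i.e.\ pointwise the lax pushforward $\CR(h)\colon\CR_{Z\sslash H}\to\CR_{\alpha_!(Z)\sslash G}$ along the quotient map $h$. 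Since each $\Spc_G$ has a terminal object (the one-point $G$-space), this functor factors through the full subcategory $\tCat_{(\ast)}\oplaxslice\Cat^{\otimes,\lax}$, and I take $\kappa$ to be this factorisation.

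For (ii), set $\decatcoh{\bullet}{-}{\CR}\coloneqq\pushfor(-)\circ\kappa$, using that $\pushfor(-)$ refines to a functor valued in $\Fun^{\lax}([1],\tCat^{\otimes,\lax})$: by \cite{HA}*{Theorem 2.4.3.18} a functor $\Spc_G^{\op}\to\CAlg(\CR_{\CB G})$ is the same datum as a lax symmetric monoidal functor $\Spc_G^{\op}\to\CR_{\CB G}$ (with $\Spc_G$ cartesian monoidal), while $\alpha_!^{\op}$ and $\alpha_*$ are automatically lax symmetric monoidal for the pertinent cocartesian structures; this bookkeeping runs through the proof of \Cref{thm:Rel_global_sect_functor} (using \Cref{calg-laxlim} to pass through $\CAlg(-)$). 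It then remains to match the output with \Cref{def-unravel-incoh}. Property (1) is read off from clauses (1)--(2) of \Cref{thm:Rel_global_sect_functor} with $F=\CR_{(-)\sslash G}$: for $p_X\colon X\to\ast$ the unique map in $\Spc_G$, $F(p_X)$ is pushforward along $X\sslash G\to\CB G$, so $\pushfor(F)(X)=F(p_X)\1_{\CR_{X\sslash G}}$ agrees with \Cref{def-unravel-incoh}\,(\ref{item-unravel-obj}), and the description of $\pushfor(F)$ on morphisms agrees with (\ref{item-unravel-morph}). Property (2) is read off from the clause of \Cref{thm:Rel_global_sect_functor} describing $\pushfor(-)$ on morphisms, applied to the oplax triangle $\kappa(\CB\alpha)$: its horizontal leg $\alpha_!$ gives the left edge of the square, the identification $G(p_{F(\ast)})\circ\eta_\ast\simeq\alpha_*$ (the composite of pushforwards along $\CB H\twoheadrightarrow\alpha_!(\ast)\sslash G\hookrightarrow\CB G$, which is pushforward along $\CB\alpha$) gives the right edge, and since $\eta_Z=\CR(h)$ has lax unit map $\lax_h$, the formula for $\pushfor(\eta)$ in \Cref{thm:Rel_global_sect_functor}(3) specialises exactly to the composite defining $T_\alpha$ in \Cref{def-unravel-incoh}\,(\ref{item-unravel-nat-tran}).

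I expect the main obstacle to lie in step (i): carrying out the passage from the universal partially oplax cocone to a functor into the oplax slice in a fully precise way — tracking variances and the directions of all oplax $2$-cells through \Cref{prop:glquotient-oplaxlim}, \Cref{const:functors_into_lax_slice} and postcomposition with $\CR$, and confirming that the structure $2$-cells of $\kappa$ really are the quotient-map pushforwards claimed — so that the two unwindings in step (ii) visibly reproduce \Cref{def-unravel-incoh}. The symmetric monoidal refinement of $\pushfor(-)$ and the final comparison should then be routine.
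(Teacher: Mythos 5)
Your proposal is correct and follows essentially the same route as the paper: the paper also obtains the functor $\Glo{\CE}\to\tCat_{(\ast)}\oplaxslice\Cat^{\otimes,\lax}$ by passing $\CR$ through \Cref{prop:glquotient-oplaxlim} and the identification of \Cref{const:functors_into_lax_slice} (your $\kappa$ is exactly this composite, unwound), then postcomposes with $\pushfor(-)$ and upgrades to $\tCat^{\otimes,\lax}$ via the cocartesian-monoidal observation of \Cref{rem-cocartesian2}. Your explicit matching of the output with \Cref{def-unravel-incoh} is the same reading-off from the clauses of \Cref{thm:Rel_global_sect_functor} that the paper invokes.
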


\begin{remark}\label{rem:Talpha-equivalence}
    Note that $T_\alpha$ is an equivalence when $\alpha$ is injective. Indeed, borrowing the notation of \Cref{def-unravel-incoh}(\ref{item-unravel-nat-tran}), in this case $h$ is necessarily an equivalence and so its associated lax structure map $\1_{\CR(\alpha_! Z)} \rightarrow h_* \1_{\CR(Z)}$ is also.
\end{remark}

\begin{remark}\label{rem-cocartesian2}
    Let $\mathbf{Cat}^{\amalg,\lax}$ be the $(\infty,2)$-category of cocartesian monoidal $\infty$-categories and lax monoidal functors. We note that $\mathbf{Cat}^{\amalg,\lax}$ is equivalent to the full subcategory $\mathbf{Cat}^{\amalg}\subseteq \mathbf{Cat}$ spanned by those $\infty$-categories which admit finite coproducts, see \cite{HA}*{Proposition 2.4.3.8}.
\end{remark}

\begin{proof}
    We first pass $\CR$ through the composite 
    \[\Fun(\Spcgl{\CE},\Cat^{\otimes,\lax})\xrightarrow{ \ref{prop:glquotient-oplaxlim}} \oplaxlimdag \Fun(\Spc_G,\Cat^{\otimes,\lax}) \xrightarrow{\mathrm{forget}} \Fun(\Glo{\CE},\tCat\oplaxslice \Cat^{\otimes,\lax}),\] 
    where the second functor is justified by the identification of \Cref{const:functors_into_lax_slice}. Next we observe that resulting object factors through the subcategory $\tCat_{(\ast)}\oplaxslice \Cat^{\otimes,\lax}$. Therefore we may postcompose by the functor 
    \[\pushfor\colon \tCat_{(\ast)}\oplaxslice \Cat^{\otimes,\lax}\rightarrow \Fun^{\lax}([1],\tCat)\]
    from \Cref{thm:Rel_global_sect_functor} to obtain
    \[\decatcoh{\bullet}{-}{\CR}\colon \Glo{\CE} \to \Fun^{\lax}([1], \tCat).\]
    
    Finally we observe that this functor factors through the subcategory $\Fun^{\lax}([1], \tCat^{\amalg})$, and so by \Cref{rem-cocartesian2} we can equivalently view $\decatcoh{\bullet}{-}{\CR}$ as a functor 
    \[\decatcoh{\bullet}{-}{\CR} \colon \Glo{\CE} \to \Fun^{\lax}([1], \tCat^{\otimes,\lax}).\]
    This functor has the properties advertised by \Cref{thm:Rel_global_sect_functor}.
\end{proof}

If we now assume that the diagram $\CR\colon \Spcgl{\CE}\to \Cat^{\otimes,\lax}$ is obtained from a naive global 2-ring by passing to right adjoints, then one can pass the functor $\decatcoh{\bullet}{-}{\CR}$ constructed above through the mate equivalence, a fact which we record here.

\begin{corollary}\label{mainfunctor}
    Let $\CR \colon \Spcgl{\CE}^{\op} \to \PrL$ be a functor. Then there is a functor
    \[
    \decatcoh{\bullet}{-}{\CR}\colon \Glo{\CE}^{\op}\rightarrow \Fun^{\mathrm{oplax}}([1],\tCat^{\otimes,\lax})
    \]
    which agrees with the functor from \Cref{thm-E} on objects, but sends a map $\CB \alpha\colon \CB H\rightarrow \CB G$ in $\Glo{\CE}$ to the oplax square
    \begin{equation}\label{diag-cEll}
        \begin{tikzcd}[column sep = huge]
            {\Spc_G^{\op}} &  {\CAlg(\CR_{\CB G})}\\
            {\Spc_H^{\op}} & {\CAlg(\CR_{\CB H})}
            \arrow["{\alpha^*}"', from=1-1, to=2-1]
            \arrow["{\decatcoh{G}{-}{\,\CR}}", from=1-1, to=1-2]
            \arrow["{\decatcoh{H}{-}{\,\CR}}"', from=2-1, to=2-2]
            \arrow["{\alpha^*}", from=1-2, to=2-2]
            \arrow[shorten <=18pt, shorten >=18pt, Rightarrow, from=2-1, to=1-2, "Q_{\alpha}"]
        \end{tikzcd}
    \end{equation}
    which is filled by the Beck--Chevalley transformation of $T_\alpha$.
\end{corollary}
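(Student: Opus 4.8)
The plan is to deduce \Cref{mainfunctor} from \Cref{thm-E} via the mate equivalence of \Cref{first-mate-equivalence}. A naive global $2$-ring $\CR\colon \Spcgl{\CE}^{\op}\to\PrL$ is a diagram of strong monoidal left adjoints, so passing to right adjoints produces a functor $\CR\colon \Spcgl{\CE}\to\Cat^{\otimes,\lax}$, $f\mapsto f_*$, to which \Cref{thm-E} applies; the resulting functor $\decatcoh{\bullet}{-}{\CR}\colon \Glo{\CE}\to\Fun^{\lax}([1],\tCat^{\otimes,\lax})$ is by construction the functor whose values on objects are recorded in \Cref{mainfunctor}, so it only remains to transpose its values on morphisms.

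By the Gray-tensor adjunctions of \cite{GHL21}, a functor $\Glo{\CE}\to\Fun^{\lax}([1],\tCat^{\otimes,\lax})$ is the same datum as an oplax natural transformation of functors $\Glo{\CE}\to\tCat^{\otimes,\lax}$; concretely $\decatcoh{\bullet}{-}{\CR}$ becomes the oplax transformation $\mathcal{P}\Rightarrow\mathcal{C}$ with $\mathcal{P}\colon \CB G\mapsto\Spc_G^{\op}$, $\CB\alpha\mapsto\alpha_!$ and $\mathcal{C}\colon \CB G\mapsto\CAlg(\CR_{\CB G})$, $\CB\alpha\mapsto\alpha_*$, components $\decatcoh{G}{-}{\CR}$, and structure $2$-cells $T_\alpha$. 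Using \Cref{rem-cocartesian2} we regard $\tCat^{\otimes,\lax}$ throughout as the full sub-$(\infty,2)$-category $\mathbf{Cat}^{\amalg}\subseteq\tCat$ of $\infty$-categories with finite coproducts, so that adjunctions there are plain adjunctions of underlying functors. Then $\mathcal{P}$ and $\mathcal{C}$ each send every morphism of $\Glo{\CE}$ to a right adjoint: on $\Spc_G^{\op}$ one has $\alpha^*\dashv\alpha_!$ (the opposite of the induction--restriction adjunction), and on $\CAlg(\CR_{\CB G})$ one has $\alpha^*\dashv\alpha_*$. Hence $\mathcal{P},\mathcal{C}\in\tFun^{\oplax}_\rR(\Glo{\CE},\mathbf{Cat}^{\amalg})$, and \Cref{first-mate-equivalence}, read as $\tFun^{\oplax}_\rR(\Glo{\CE},\mathbf{Cat}^{\amalg})\simeq\tFun^{\lax}_\rL(\Glo{\CE}^{\op},\mathbf{Cat}^{\amalg})$, carries $\decatcoh{\bullet}{-}{\CR}$ to a lax natural transformation of functors $\Glo{\CE}^{\op}\to\mathbf{Cat}^{\amalg}$. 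By the computation of the mate equivalence recalled after \Cref{first-mate-equivalence}, it replaces $\mathcal{P}$ by $\CB\alpha\mapsto\alpha^*$ (restriction of equivariant spaces), $\mathcal{C}$ by $\CB\alpha\mapsto\alpha^*$ (pullback of algebras), leaves the components $\decatcoh{G}{-}{\CR}$ unchanged, and replaces each $T_\alpha$ by its Beck--Chevalley transformation.

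Currying back, a lax natural transformation of functors $\Glo{\CE}^{\op}\to\tCat^{\otimes,\lax}$ is the same as a functor $\Glo{\CE}^{\op}\to\Fun^{\oplax}([1],\tCat^{\otimes,\lax})$; this is the desired functor. It agrees with \Cref{thm-E} on objects, and it sends $\CB\alpha$ to the oplax square \eqref{diag-cEll} with both vertical legs $\alpha^*$, filled by the Beck--Chevalley transformation of $T_\alpha$, which by \Cref{def-unravel-incoh}(3) is exactly $Q_\alpha$. The only point needing real care is the bookkeeping with the Gray-tensor conventions of \cite{GHL21} and the naturality statement of \cite{HHLN1} underlying \Cref{first-mate-equivalence}: one must verify that currying and taking mates interact as claimed, in particular that the lax/oplax decorations in the $[1]$-variable transform correctly. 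This is formal but is where the argument must be spelled out; granted it, the rest follows at once.
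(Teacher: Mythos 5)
Your argument is correct and produces the right object, but it invokes a different key lemma than the paper does: the paper's proof stays entirely in the $[1]$-variable, observing that the lax squares supplied by \Cref{thm-E} are morphisms in $\Fun^{\rR,\lax}([1],\tCat)$ (their components $\alpha_!$ on $\Spc^{\op}_\bullet$ and $\alpha_*$ on $\CAlg(\CR_\bullet)$ are right adjoints, the latter because $\alpha^*$ is strong monoidal and so the adjunction descends to commutative algebras --- a point you use implicitly and should record), and then simply post-composes the functor $\Glo{\CE}\to\Fun^{\rR,\lax}([1],\tCat)$ with the \emph{second} mate equivalence (\Cref{second-mate-equivalence}), whose $(1,2)$-op simultaneously accounts for the passage $\Glo{\CE}\rightsquigarrow\Glo{\CE}^{\op}$ and for the lax squares becoming oplax ones filled by the Beck--Chevalley transforms. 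You instead curry the same data into an oplax natural transformation $\mathcal{P}\Rightarrow\mathcal{C}$ over $\Glo{\CE}$ and take mates in the $\Glo{\CE}$-variable via the \emph{first} mate equivalence (\Cref{first-mate-equivalence}). Both routes compute the identical square --- replace the two vertical right adjoints by their left adjoints $\alpha^*$ and fill with the mate of $T_\alpha$, which is $Q_\alpha$ by \Cref{def-unravel-incoh}(3) --- but your route incurs the extra curry/uncurry identifications between functors into $\Fun^{\lax}([1],\tCat^{\otimes,\lax})$ and (op)lax natural transformations over $\Glo{\CE}^{(\op)}$, i.e.\ the Gray-tensor interchange you rightly flag as the delicate point; the paper's choice of the second mate equivalence is designed precisely to avoid that detour. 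The remaining ingredients (working in $\tCat^{\amalg}$ via \Cref{rem-cocartesian2} so that the relevant adjunctions are plain adjunctions, and the identification $\alpha^*\dashv\alpha_!$ on $\Spc_\bullet^{\op}$) match the paper's.
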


\begin{proof}
    Consider the functor $\CR \colon \Spcgl{\CE} \to \Cat^{\otimes,\lax}$ given by passing to adjoints. Applying the previous proposition to this we obtain a functor 
    \[
    \decatcoh{\bullet}{-}{\CR}\colon \Glo{\CE}\rightarrow \Fun([1],\tCat)
    \]
    Observe that because $\alpha^*$ is strong monoidal, the adjunction $\alpha^*\dashv \alpha_*$ induces an adjunction on commutative algebra objects. Therefore $\decatcoh{\bullet}{-}{\CR}$ factors through the subcategory $\Fun^{\rR,\lax}([1],\Cat)$ from \Cref{def:lax_fun_adj_morp}. We may therefore pass $\decatcoh{\bullet}{-}{\CR}$ through the second mate equivalence (\ref{second-mate-equivalence}) to obtain the functor
    \[
    \decatcoh{\bullet}{-}{\CR}\colon \Glo{\CE}^{\op}\rightarrow \Fun^{\mathrm{oplax}}([1],\tCat).
    \]
    Once again we observe that the diagram factors through $\tCat^\amalg$, and so lifts uniquely to a functor into $\Fun^{\oplax}([1],\tCat^{\otimes,\lax})$. 
\end{proof}

The unfurling construction is functorial in the following sense.

\begin{proposition}\label{prop:funct_unfurl}
The previous construction constitutes a functor 
\[
\mathbb{H}\colon \NaiveTwoGlRing{\CE} \rightarrow \Fun(\Glo{\CE}^{\op},\Fun^{\oplax}([1],\tCat^{\otimes,\lax}))
\]
\end{proposition}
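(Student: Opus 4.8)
The plan is to revisit the construction of $\decatcoh{\bullet}{-}{\CR}$ carried out in the proofs of \Cref{thm-E} and \Cref{mainfunctor} and to observe that each of its steps is the application of a functor, in places between $(\infty,2)$-categories, so that composing these functors will produce $\mathbb{H}$. Recall that $\decatcoh{\bullet}{-}{\CR}$ was built from $\CR$ by (i) passing the structural functors $f^*$ to their right adjoints $f_*$, yielding a diagram $\Spcgl{\CE}\to\Cat^{\otimes,\lax}$; (ii) applying the functor $\Phi$ of \Cref{prop:glquotient-oplaxlim} and the identification of \Cref{const:functors_into_lax_slice} to land in functors $\Glo{\CE}\to\tCat_{(\ast)}\oplaxslice\Cat^{\otimes,\lax}$; (iii) postcomposing with the functor $\pushfor$ of \Cref{thm:Rel_global_sect_functor}; and (iv) factoring through $\Fun^{\lax}([1],\tCat^{\amalg})$, invoking \Cref{rem-cocartesian2} to reinterpret this as a functor valued in $\Fun^{\lax}([1],\tCat^{\otimes,\lax})$, factoring further through the wide subcategory $\Fun^{\rR,\lax}([1],-)$ of \Cref{def:lax_fun_adj_morp}, applying the second mate equivalence (\Cref{second-mate-equivalence}), and factoring through $\tCat^{\amalg}$ once more. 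Since every named operation is a functor, it remains to check that the whole composite accepts morphisms of naive global $2$-rings as inputs, and to track the induced morphism through it.

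The crux is step (i) and its propagation. A morphism $F\colon\CR\to\CR'$ of naive global $2$-rings consists of symmetric monoidal left adjoints $F_X\colon\CR_X\to\CR'_X$ fitting into invertible naturality squares; passing the $f^*$ to adjoints converts $F$ into an \emph{oplax} natural transformation $\CR^\vee\Rightarrow(\CR')^\vee$ of diagrams $\Spcgl{\CE}\to\Cat^{\otimes,\lax}$ whose coherence $2$-cells are the Beck--Chevalley transformations of those squares. This is exactly a morphism-level instance of (the symmetric monoidal enhancement of) the mate equivalence \Cref{first-mate-equivalence}, so $\CR\mapsto\CR^\vee$ promotes to a functor $\NaiveTwoGlRing{\CE}\to\Fun^{\oplax}(\Spcgl{\CE},\Cat^{\otimes,\lax})$. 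One then verifies that the remaining steps carry oplax transformations along: $\Phi$, being precomposition with the fixed oplax cocone appearing in the proof of \Cref{prop:glquotient-oplaxlim}, and the oplax-slice identification of \Cref{const:functors_into_lax_slice}, being assembled from the $(\infty,2)$-categorical equivalence \Cref{cor-lax-slice}, both upgrade to functors on the pertinent categories of diagrams and oplax transformations; and $\pushfor$ together with the two mate equivalences, being functors of $(\infty,2)$-categories, transport the remaining $2$-cells. The factorings through the \emph{full} subcategories $\tCat^{\amalg}$ cause no difficulty, and the factoring through the \emph{wide} subcategory $\Fun^{\rR,\lax}([1],-)$ is a condition on objects only — namely that $\alpha_*$ and $\alpha_!^{\op}$ are right adjoints — which is preserved on passing the mate equivalence to mapping $\infty$-categories.

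I expect the main obstacle to be purely organizational: keeping track of the various $\op$'s and $(1,2)$-$\op$'s introduced by the mate equivalences, so as to confirm that the composite functor genuinely has source $\NaiveTwoGlRing{\CE}$ and target $\Fun(\Glo{\CE}^{\op},\Fun^{\oplax}([1],\tCat^{\otimes,\lax}))$, with morphisms honestly strictly natural. As a consistency check, the resulting morphism $\mathbb{H}(F)$ should admit the pointwise description anticipated by \Cref{def-unravel-incoh}: its component at $\CB G$ is the morphism of $\Fun^{\oplax}([1],\tCat^{\otimes,\lax})$ given by $\CAlg(F_{\CB G})\colon\CAlg(\CR_{\CB G})\to\CAlg(\CR'_{\CB G})$ together with the natural transformation $\decatcoh{G}{-}{\CR'}\Rightarrow\CAlg(F_{\CB G})\circ\decatcoh{G}{-}{\CR}$ obtained, at a $G$-space $p\colon X\sslash G\hookrightarrow\CB G$, by applying the Beck--Chevalley transformation $p'_*F_{X\sslash G}\Rightarrow F_{\CB G}\,p_*$ to $\1_{\CR_{X\sslash G}}$ and using that $F_{X\sslash G}$ is strong monoidal. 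Checking that the abstractly produced $\mathbb{H}$ restricts to this description on objects and morphisms will then be immediate from properties (1)--(3) of \Cref{thm:Rel_global_sect_functor}, exactly as in the proofs of \Cref{thm-E} and \Cref{mainfunctor}.
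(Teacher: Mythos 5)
Your top-level strategy — exhibit the unravelling as a composite of named functors and conclude — is exactly the paper's, and most of your chain matches the paper's display (inclusion into $\Fun(\Spcgl{\CE}^{\op},\PrL)$, passage to adjoint diagrams, \Cref{prop:glquotient-oplaxlim}, the forgetful functor to the oplax slice, $\pushfor_*$, the second mate equivalence, and the $\tCat^{\amalg}$ reinterpretations). The difference, and the gap, is in how you transport a \emph{morphism} $F\colon\CR\to\CR'$ through the adjoint-passage step.

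You propose to convert $F$ into an \emph{oplax} natural transformation $\CR^{\vee}\Rightarrow(\CR')^{\vee}$ between the right-adjoint diagrams, keeping the components $F_X$ and filling the squares with Beck--Chevalley $2$-cells, and then assert that $\Phi$, the oplax-slice identification of \Cref{const:functors_into_lax_slice}, and $\pushfor$ ``upgrade'' to accept such oplax transformations. This is precisely what is not available: those constructions are functors of $(\infty,1)$-categories whose morphisms of diagrams are \emph{strict} natural transformations (i.e.\ genuine $2$-cells of $\tCat$ — note that a morphism in $\tCat\oplaxslice\Cat^{\otimes,\lax}$ is a functor of index categories together with a strictly natural $2$-cell, not an oplax one). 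Extending them to oplax $2$-cells is substantial unproven work, essentially a re-run of the constructions at the $(\infty,2)$-level, and your proof offers only ``one then verifies''. The paper sidesteps this entirely: the first step is postcomposition with the equivalence $\PrL\simeq(\PrR)^{\op}$, which dualizes the \emph{components} $F_X$ to their right adjoints as well, so that the induced transformation between the right-adjoint diagrams is again strict (its naturality squares are total adjoints of commuting squares) at the cost of reversing direction. The resulting $(-)^{\op}$ is carried through the whole composite and is undone only at the end by the second mate equivalence, which is also where the non-invertible oplax $2$-cell of $\mathbb{H}(F)$ is finally produced. Relatedly, your consistency check has the variance backwards: the induced $2$-cell goes $\CAlg(F_{\CB G})\circ\decatcoh{G}{-}{\CR}\Rightarrow\decatcoh{G}{-}{\CR'}$, given on a $G$-space by the Beck--Chevalley map $F_{\CB G}\,p_*\1\to p_*F_{X\sslash G}\1$ (see \Cref{rmk:unwind_funct_unravelling}), not the reverse; the map $p_*F_{X\sslash G}\Rightarrow F_{\CB G}\,p_*$ you write down is not a mate of the square $F_{\CB G}\,p^*\simeq p^*F_{X\sslash G}$ with respect to the adjunctions at hand. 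To repair the proof, replace your morphism-level use of \Cref{first-mate-equivalence} by postcomposition with $\PrL\simeq(\PrR)^{\op}$ and track the resulting op through the chain.
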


\begin{proof}
The construction is given by the following composite of functors
\begin{align*}
\NaiveTwoGlRing{\CE} \subset& \Fun(\Spcgl{\CE}^{\op},\PrL)\xrightarrow{\mathrm{adj}} \Fun(\Spcgl{\CE}^{\op}, (\PrR)^{\op}) \simeq \Fun(\Spcgl{\CE}, \PrR)^{\op} \subset 
 \dots \\ 
 \dots \subset& \Fun(\Spcgl{\CE},\Cat^{\otimes,\lax})^{\op} \xrightarrow{\ref{prop:glquotient-oplaxlim}} \bigl(\oplaxlimdag \Fun(\Spc_G,\Cat^{\otimes,\lax}) \bigr)^{\op} \rightarrow \dots \\
 \dots \xrightarrow{\mathrm{forget}}& \Fun(\Glo{\CE},\tCat_{(\ast)}\oplaxslice \Cat^{\otimes,\lax})^{\op} \xrightarrow{\pushfor_*} \Fun(\Glo{\CE},\Fun^{\rR,\lax}([1],\tCat^{\amalg}))^{\op} \rightarrow \dots \\
 \dots \xrightarrow{\ref{second-mate-equivalence}}& \Fun(\Glo{\CE},\Fun^{\rL,\oplax}([1],\tCat^{\amalg})^{\op})^{\op} \simeq \Fun(\Glo{\CE}^{\op},\Fun^{\rL,\oplax}([1],\tCat^{\amalg})) \subset \dots \\
 \dots \subset& \Fun(\Glo{\CE}^{\op},\Fun^{\oplax}([1],\tCat^{\otimes,\lax})),
\end{align*}
and so functorial.
\end{proof}

\begin{remark}\label{rmk:unwind_funct_unraveling}
Unwinding the effect of the construction above, we find that it sends a map $F\colon \CR\rightarrow \CR'$ of naive global rings to a $\Glo{\CE}^{\op}$-indexed collection of oplaxly commuting squares
\[\begin{tikzcd}[column sep = large]
	{\Spc_G^{\op}} & {\CR_{\CB G}} \\
	{\Spc_G^{\op}} & {\CR'_{\CB G}.}
	\arrow["{\decatcoh{G}{-}{\CR}}", from=1-1, to=1-2]
	\arrow[Rightarrow, no head, from=1-1, to=2-1]
	\arrow["{u_{F, G}}"', shorten <=7pt, shorten >=7pt, Rightarrow, from=1-2, to=2-1]
	\arrow["F_{\CB G}", from=1-2, to=2-2]
	\arrow["{\decatcoh{G}{-}{\CR'}}"', from=2-1, to=2-2]
\end{tikzcd}\]
The transformation $u_{F,G}$ is given at a $G$-space $X$ by the 
map 
\[
F\decatcoh{G}{X}{\CR} \coloneqq F_{\CB G} p_*\1_{\CR(X\sslash G)} \xrightarrow{BC} p_* F_{X\sslash G} \1_{\CR(X\sslash G)} \simeq p_* \1_{\CR'(X\sslash G)} \simeq \eqqcolon \decatcoh{G}{X}{\CR}, 
\]
where $BC$ denotes the Beck--Chevalley transformation of the naturality equivalence $ F_{\CB G} p^* \simeq p^* F_{X\sslash G}$. 
\end{remark}

All in all, we have now proven \Cref{thm:Unraveling}.

\begin{proof}[Proof of \Cref{thm:Unraveling}]
The natural transformation constructed in \Cref{mainfunctor} has all of the required properties by \Cref{thm-E}.
\end{proof}

\section{Global sections and equivariant cohomology theories}\label{sec:equivariantcoh}

In this section we show that the family of equivariant cohomology theories constructed before recovers the naive global ring $\decatcohsp{\gl}{-}{\CR}$. More precisely we exhibit an equivalence
\begin{equation}\label{two_unravelings}
    \decatcohsp{\gl}{-\sslash \bullet}{\CR} \simeq \Gamma\decatcoh{\bullet}{-}{\CR}
\end{equation}
as objects of $\laxlimdag \Fun(\Spc_\bullet^{\op},\CAlg)$. Of course, we should begin by explaining the notation used and how precisely we interpret both sides of this expression as objects of the partially lax limit. We begin with the left hand side, which we have previously seen.

\begin{construction}\label{unraveling1}
    We may apply the equivalence of \Cref{prop:family-of-eq-vs-global}(2)
    \[
    \Phi \colon \Fun^\rR(\Spcgl{\CE}^{\op},\CAlg)\rightarrow \laxlimdag \Fun^\rR(\Spc_\bullet^{\op}, \CAlg)
    \]
    to the functor $\decatcohsp{\gl}{-}{\CR}$ to obtain the object $\decatcohsp{\gl}{-\sslash \bullet\,}{\CR}$ of $\laxlimdag \Fun^{\rR}(\Spc_\bullet^{\op}, \CAlg)$.
\end{construction}

Before we can explain how to view $\Gamma\decatcoh{\bullet}{-}{\CR}$ as an object of the partially lax limit we must reinterpret the global section functor $\Gamma$.

\begin{construction}\label{lem-functor-glsec}
Recall that $\Sp$ is initial in $\PrL$, and so there exists a unique natural transformation $i\colon \Delta(\Sp)\Rightarrow \id_{\PrL}$ from the constant functor at $\Sp$ to the identity on $\PrL$. We write $\Gamma\colon \CC\rightarrow \Sp$ for the right adjoint of the component of $i$ at $\CC$, and call this \emph{the global sections functor} of $\CC$. The global section functors form a natural transformation $\Gamma\colon \PrL^{\op}\rightarrow \Fun([1],\PrR)$ by passing to right adjoints. By uniqueness of adjoints one finds that $\Gamma$ agrees with the natural transformation of functors $\map(\1,-)\colon \CC\rightarrow \Sp$. Postcomposing with the functor $\CAlg(-)$, we obtain another natural transformation $\CAlg(\Gamma)\colon \PrL^{\op}\rightarrow \Fun([1],\Cat)$ which sends $\CC$ to $(\Gamma \colon \CAlg(\CC) \to \CAlg)$. 
\end{construction}
    
\begin{remark}
Pasting the natural transformation $\CAlg(\Gamma)$ with the oplax cocone $\1_\bullet$ of units constructed in \Cref{ex:unit-sections} we obtain an $(\PrL)^{\op}$-shaped oplax cocone $\Gamma\1\colon \ast \Rightarrow \Delta(\CAlg)$. By \Cref{ex:functors_are_lax_cones}, this is equivalent to a functor in $\Fun(\PrL,\CAlg)$. As such it agrees with $\mathrm{End}_\bullet(\1)\colon \PrL\rightarrow \CAlg$ of \Cref{ex:decat}. This follows from the identification in the previous construction.
\end{remark}

\begin{construction}\label{secondway}
Let $\CR$ be a naive global 2-ring and consider the functor $\CR \colon \Spcgl{\CE} \to \PrR$ given by passing to right adjoints. We may paste the lax transformation $\decatcoh{\bullet}{-}{\CR}$ from \Cref{thm-E} with the natural transformation $\CAlg(\Gamma)$ to obtain a new lax natural transformation
    \[
    \decatcohsp{\bullet}{-}{\CR} \colon {\Glo{\CE}}\rightarrow \Fun^{\mathrm{lax}}([1],\tCat)
    \]
which sends $\CB G\in \Glo{\CE}$ to the composite
    \begin{equation}\label{eq-G-coh}
        \decatcohsp{G}{-}{\CR} \colon \Spc_G^{\op}\xrightarrow{	\decatcoh{G}{-}{\,\CR}}\CAlg(\CR_{\CB G})\xrightarrow{\Gamma} \CAlg.
    \end{equation}
     Because the target of this lax natural transformation is constant at $\CAlg$, $\decatcohsp{\bullet}{-}{\CR}$ defines a functor ${\Glo{\CE}}\rightarrow {\tCat\laxslice {\CAlg}}$ into the lax slice category. As a consequence of the dual of \Cref{const:functors_into_lax_slice}, we may equivalently view this as an object of $\laxlim \Fun(\Spc_\bullet^{\op},\CAlg)$ as before. Furthermore by \Cref{rem:Talpha-equivalence} and by definition of naive global $2$-ring, we find it is contained in the full subcategory $\laxlimdag \Fun^{\rR}(\Spc_\bullet^{\op},\CAlg)$.
\end{construction}

Having constructed our two objects of $\laxlimdag \Fun(\Spc_\bullet^{\op},\CAlg)$, we may now compare them.

\begin{proposition}\label{obser:two_unravelings_agree}
    The objects $\decatcohsp{\gl}{-\sslash \bullet}{\CR}$ and $\decatcohsp{\bullet}{-}{\CR}$ of $\laxlimdag \Fun(\Spc_\bullet^{\op},\CAlg)$ are equivalent.
\end{proposition}

\begin{proof}
This is essentially true by definition. However since the definitions are slightly involved, this deserves elaboration. On the one hand, the object $\decatcohsp{\bullet}{-}{\CR}$ is given by the partially lax diagram of composite oplax natural transformations  
\[
\ast \xRightarrow{\1_{- \sslash G}} \CAlg(\CR_{- \sslash G}) \xRightarrow{\Tr_\emptyset} \CAlg(\CR_{\CB G}) \xRightarrow{\CAlg(\Gamma)} \CAlg.
\]
Note that the second two oplax natural transformations compose to $\CAlg(\CR_{-\sslash G})\xRightarrow{\CAlg(\Gamma)}\CAlg$ by uniqueness of the transformation $\CAlg(\Gamma)$. On the other hand $\decatcohsp{\gl}{-\sslash \bullet}{\CR}$ is given by applying the equivalence of \Cref{prop:family-of-eq-vs-global}(2) to the composite oplax natural transformation
\[
\ast\xRightarrow{\1_-} \CAlg(\CR) \xRightarrow{\CAlg(\Gamma)} \CAlg.
\]
Therefore it is also equivalent to the partially lax diagram of oplax natural transformations 
\[
\ast\xRightarrow{\1_{-\sslash G}} \CAlg(\CR_{-\sslash G})  \xRightarrow{\CAlg(\Gamma)} \CAlg.
\]
We conclude that the two objects agree.
\end{proof}
\part{Genuine global 2-rings}\label{part:genuine}
We now introduce a refinement of the notion of a naive global 2-ring, which we call \emph{(genuine) global 2-rings}. We will show that for such global 2-rings, the family of equivariant cohomology theories constructed before are strongly related and computable. We then explain how to use this to construct a canonical genuine refinement $\Gamma(\CR)_{\gl}\in \CAlg(\Sp_{\gl})$ for the associated naive global ring $\decatcohsp{\gl}{-}{\CR}$, thus accomplishing the main goal of this article. Finally given a global $2$-ring $\CR$ and a group $G$, we construct a genuine global section functor $\bGamma_G\colon \CR_G\rightarrow \Sp_G$. These are best behaved in the case of a \emph{rigid} global 2-ring, as we explain in the final section of this part.

\section{The Ginzburg-Kapranov-Vasserot axioms}\label{sec:cat_to_global}

So far we have associated to any naive global 2-ring $\CR\colon \Spcgl{\CE}^{\op}\rightarrow \PrL$ a family \[\decatcoh{G}{-}{\CR}\colon \Spc_G^{\op}\rightarrow \CAlg(\CR_{\CB G})\] of multiplicative $G$-equivariant $\CR_{\CB G}$-valued cohomology theories. These cohomology theories are related by two \emph{change-of-group transformations} 
\[
Q_\alpha\colon \decatcoh{H}{\alpha^*(-)}{\CR} \Rightarrow \alpha^*\decatcoh{G}{-}{\CR} \quad \text{and} \quad T_\alpha\colon \decatcoh{G}{\alpha_!(-)}{\CR} \Rightarrow \alpha_*\decatcoh{H}{-}{\CR}
\]
which are associated to any group homomorphism $\alpha\colon H\rightarrow G$. Moreover this data is coherently functorial in the group homomorphism, as expressed by the existence of a functor
\[
\decatcoh{\bullet}{-}{\CR}\colon \Glo{\CE}^{\op}\rightarrow \Fun^{\mathrm{oplax}}([1],\tCat^{\otimes,\lax}).
\]

Such data is reminiscent of the axiomatics for equivariant elliptic cohomology, as expressed by \cite{GKV95}. In particular our choice of the name $T_\alpha$ for the first transformation above is inspired by the notation there. In this section we introduce a condition on $\CR$ such that this data satisfies the change-of-group axioms of \emph{op.~cit}.

\begin{definition}\label{def:oriented_glo_cat}
Let $\CE$ be a multiplicative global family of compact Lie groups and let $\CT\subset\Glo{\CE}$ be a subcategory of enough injective objects in the sense of \Cref{def:enough-injectives}. We say a naive global 2-ring $\CR $ is \emph{$\CT$-pregenuine} if the following conditions hold:
    \begin{enumerate}
        \item\label{item:gen-adjoint} Given a pullback square in $\Spcgl{\CE}$ 
        \[
        \begin{tikzcd}
            {\sX} & \CB K\ \\
            {\CB H} & {\CB G}
            \arrow["g'", from=2-1, to=2-2]
            \arrow["f'", hook, from=1-2, to=2-2]
            \arrow["f"', hook, from=1-1, to=2-1]
            \arrow["\lrcorner"{anchor=center, pos=0.125}, draw=none, from=1-1, to=2-2]
            \arrow["g", from=1-1, to=1-2]
        \end{tikzcd}
        \]
        where $f'$ is faithful and 
        $\CB G\in\CT$,  
        the Beck--Chevalley transformation filling the square 
        \[
        \begin{tikzcd}
        {\CR_{\CB K}} & {\CR_{\sX}} \\    
        {\CR_{\CB G}} & {\CR_{\CB H}}
        \arrow["{f'_*}"', from=1-1, to=2-1]
        \arrow["{g^*}", from=2-1, to=2-2]
        \arrow["{(g')^*}", from=1-1, to=1-2]
        \arrow["{f_*}", from=1-2, to=2-2]
        \arrow[shorten <=8pt, shorten >=8pt, Rightarrow, from=2-1, to=1-2]
        \end{tikzcd}
        \]is an equivalence;\\
        \item\label{item:gen-proj} Given a faithful morphism $\CB \alpha\colon\CB H\to\CB G$ in $\Orb{\CE}$, the canonical map 
        \[
        \CF \otimes \alpha_* \CG \rightarrow \alpha_* (\alpha^* \CF \otimes \CG)
        \] adjoint to the composite 
        \[
        \alpha^*(\CF \otimes \alpha_* \CG) \simeq \alpha^*\CE \otimes \alpha^*\alpha_* \CG \xrightarrow{\alpha^* \CF \otimes \epsilon} \alpha^* \CF \otimes \CG 
        \] is an equivalence for all $\CF\in \CR_{\CB G}$ and $\CG\in \CR_{\CB H}$. In other words, we require that the adjunction $(\alpha^*,\alpha_*)$ satisfies the right projection formula.
    \end{enumerate}
\end{definition}

\begin{remark}\label{rem:para-perspective}
If $\CT = \CE$ is the whole family, then the previous definition admits a recasting in the language of global $\infty$-categories developed in \cite{CLL_Global} and \cite{CLL_Partial}. Namely a $\CE$-pregenuine global 2-ring $\CR$ is equivalently a fiberwise presentable, fiberwise stable, equivariantly complete $\CE$-global $\infty$-category equipped with a symmetric monoidal structure which commutes with fiberwise colimits and finite equivariant limits in each variable. We will not use this perspective in this article, except in \Cref{lem-cohomotopy}.
\end{remark}

Given these assumptions, we prove the following theorem.

\begin{theorem}\label{thm:GKV-axioms}
Let $\CR$ be a $\CT$-pregenuine global 2-ring. Then
\begin{enumerate}[itemsep = 5pt]
    \item \emph{Induction:} Let $\alpha\colon G\rightarrow G/N$ be a surjective group homomorphism with kernel $N$ and let $X$ be a $G$-space such that the action of $N$ on $X$ is free. Write $p\colon X\rightarrow X/N$ for the canonical projection map, which is $G$ equivariant. Then the composite 
    \[\decatcoh{G/N}{\alpha_!(X/N)}{\CR}\xrightarrow{T_\alpha} \alpha_*\decatcoh{G}{X/N}{\CR} \xrightarrow{\alpha_*\decatcoh{G}{p}{\,\CR}} \alpha_*\decatcoh{G}{X}{\CR}\] is an equivalence;
    \item \emph{Base change:} Let $\CB \alpha \colon \CB H \to \CB G$ be a map in $\Glo{\CE}$. If $\CB G\in \CT$, then the natural transformation
    \[
    Q_\alpha\colon \alpha^* \decatcoh{G}{X}{\CR}\to \decatcoh{H}{\alpha^*X}{\CR}
    \] of \Cref{diag-cEll} is an equivalence for all compact $G$-spaces $X$;
    \item \emph{K\"unneth:} Let $G$ and $H$ be two groups in $\CT$, $X$ a compact $G$-space and $Y$ a compact $H$-space. Then there is an equivalence 
    \[
    \pi_G^*\decatcoh{G}{X}{\CR}\otimes \pi_H^*\decatcoh{H}{Y}{\CR} \simeq \decatcoh{G\times H}{X \times Y}{\CR},
    \]
    where $\pi_H$ and $\pi_G$ denote the two projections $G\times H\rightarrow H,G$. Moreover, the functor $\decatcoh{G}{-}{\CR}\colon (\Spc_G^\omega)^{\op} \to \CR_{\CB G}$ is strong monoidal.
\end{enumerate}
\end{theorem}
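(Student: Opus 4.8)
The plan is to deduce all three statements from the explicit formulas for the change-of-group transformations $T_\alpha$ and $Q_\alpha$ in \Cref{def-unravel-incoh} and \Cref{mainfunctor}, the geometry of the quotient global spaces $X\sslash G$, and the two hypotheses built into $\CT$-pregenuineness. The uniform device is the identification $(G/K)\sslash G\simeq\CB K$ over $\CB G$, with structure map $\CB\iota$ for $\iota\colon K\hookrightarrow G$, so that by \Cref{prop:Gamma_G_colim_pres} the functor $\decatcoh{G}{-}{\CR}$ sends a compact $G$-space to a finite limit assembled from the objects $(\CB\iota)_*\1_{\CB K}$; since $\alpha^*$ is exact, this will reduce each claim to the case of orbits, where \Cref{def:oriented_glo_cat}(\ref{item:gen-adjoint}) and \Cref{def:oriented_glo_cat}(\ref{item:gen-proj}) apply directly.

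The main preliminary step is a bookkeeping lemma identifying the transformations with genuine Beck--Chevalley maps. For $\CB\alpha\colon\CB H\to\CB G$ and a $G$-space $X$, the component of $Q_\alpha$ at $X$ should be the Beck--Chevalley transformation $\alpha^* f_*\1_{X\sslash G}\to f'_*\1_{\alpha^* X\sslash H}$ of the square
\[
\begin{tikzcd}
	{\alpha^* X\sslash H} & {X\sslash G} \\
	{\CB H} & {\CB G}
	\arrow[from=1-1, to=1-2]
	\arrow["f'"', hook, from=1-1, to=2-1]
	\arrow["f", hook, from=1-2, to=2-2]
	\arrow["\CB\alpha", from=2-1, to=2-2]
\end{tikzcd}
\]
which is a pullback by \Cref{rem:restriction_functoriality} and \Cref{rem-faithful-pullback}; dually $T_\alpha$ at $Z$ should be the mate of the Beck--Chevalley transformation of the analogous square for $\alpha_!Z$. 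To prove this I would unwind the definition of $T_\alpha$ --- the composite of the lax unit of the pushforward along the quotient part $h$ of the factorization $\CB\alpha\circ f=\alpha_!f\circ h$ with the identification $(\alpha_!f)_* h_*\simeq\alpha_* f_*$ --- and then check that passing through the second mate equivalence (\Cref{second-mate-equivalence}), by which $Q_\alpha$ is defined in \Cref{mainfunctor}, reassembles it into the displayed map. I expect getting this mate calculus exactly right to be the main obstacle of the theorem.

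Granting the lemma, \emph{base change} (2) is immediate on orbits: for $X=G/K$ the square above has $f=\CB\iota\colon\CB K\hookrightarrow\CB G$ faithful with representable source and $\CB G\in\CT$, so $Q_\alpha$ at $G/K$ is an equivalence by \Cref{def:oriented_glo_cat}(\ref{item:gen-adjoint}); since $\alpha^*\decatcoh{G}{-}{\CR}$ and $\decatcoh{H}{\alpha^*(-)}{\CR}$ both send finite colimits of $G$-spaces to finite limits (using \Cref{prop:Gamma_G_colim_pres}, exactness of $\alpha^*\colon\CR_{\CB G}\to\CR_{\CB H}$, and the fact that $\alpha^*\colon\Spc_G\to\Spc_H$ has both adjoints), and compact $G$-spaces are generated under finite colimits and retracts by the orbits, $Q_\alpha$ is an equivalence on every compact $X$. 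For \emph{K\"unneth} (3), using $(X\times Y)\sslash(G\times H)\simeq(X\sslash G)\times(Y\sslash H)$ and $\CB(G\times H)\simeq\CB G\times\CB H$ (\Cref{rem:Glo_fin_prods}), both sides of the K\"unneth map preserve finite colimits of compact equivariant spaces in each variable (the tensor product and the functors $\pi_G^*,\pi_H^*$ preserve colimits, hence finite limits by stability), so it suffices to treat $X=G/K$, $Y=H/L$; there $(G/K\times H/L)\sslash(G\times H)\simeq\CB(K\times L)$ with structure map $\CB(\iota_K\times\iota_L)$, part (2) (applicable as $\CB G,\CB H\in\CT$) identifies $\pi_G^*\decatcoh{G}{G/K}{\CR}$ with $\CB(\iota_K\times\id_H)_*\1_{\CB(K\times H)}$ and symmetrically for $\pi_H^*$, and the right projection formula \Cref{def:oriented_glo_cat}(\ref{item:gen-proj}) for the orbit morphism $\CB(\iota_K\times\id_H)$ in $\Orb{\CE}$ together with one further base change along $\CB(\id_G\times\iota_L)$ --- legitimate because $\CB(G\times H)\simeq\CB G\times\CB H\in\CT$, $\CT$ being product-closed --- combines the two pushforwards into $\CB(\iota_K\times\iota_L)_*\1_{\CB(K\times L)}\simeq\decatcoh{G\times H}{G/K\times H/L}{\CR}$. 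Strong monoidality of $\decatcoh{G}{-}{\CR}$ on $\Spc_G^\omega$ then follows by pulling the K\"unneth equivalence with $H=G$ back along the diagonal $\CB G\to\CB G\times\CB G$, applying part (2) once more (corner $\CB(G\times G)\in\CT$) and checking by naturality that the resulting equivalence is the lax structure map of \Cref{thm-E}; the unit identity holds because $\pt\sslash G\simeq\CB G$.

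Finally, \emph{induction} (1) is essentially formal and, notably, uses neither of the pregenuineness hypotheses. Writing $f\colon X\sslash G\hookrightarrow\CB G$, the displayed composite unwinds --- using that the lax unit maps of pushforwards compose --- to $q_*(\lax_\psi)\colon q_*\1_{(X/N)\sslash(G/N)}\to q_*\psi_*\1_{X\sslash G}$, where $q\colon(X/N)\sslash(G/N)\hookrightarrow\CB(G/N)$ is the faithful structure map and $\psi\colon X\sslash G\to(X/N)\sslash(G/N)$ is the canonical map over $\CB(G/N)$ induced by $p$, namely $p\sslash G$ followed by the reflection $(X/N)\sslash G\twoheadrightarrow(X/N)\sslash(G/N)$. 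When $N$ acts freely on $X$ the composite $\CB\alpha\circ f\colon X\sslash G\to\CB(G/N)$ is faithful --- this is the orbispace shadow of descent along the free $N$-action, $[X/G]\simeq[(X/N)/(G/N)]$, and can be checked on representables via \Cref{rem-faithful-pullback} in the spirit of \Cref{prop:free_G_spaces_as_global_spaces} --- so by \Cref{Rezk} it exhibits $X\sslash G$ over $\CB(G/N)$ as $(X/N)\sslash(G/N)$, under which identification $\psi$ is an equivalence; hence $\lax_\psi$, and therefore $q_*(\lax_\psi)$, is an equivalence.
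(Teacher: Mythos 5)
Your proposal is correct and follows essentially the same route as the paper: reduce each statement to orbits using limit-preservation of $\decatcoh{G}{-}{\CR}$, identify $Q_\alpha$ on an orbit with the Beck--Chevalley map of the pullback square $\alpha^*X\sslash H \to X\sslash G$ over $\CB\alpha$, and invoke axioms (\ref{item:gen-adjoint}) and (\ref{item:gen-proj}) of \Cref{def:oriented_glo_cat}; your treatment of induction likewise matches \Cref{prop:induction_axiom}, including the observation that it needs no pregenuineness. The one step you defer --- verifying that the composite of lax unit maps and (co)units defining $Q_\alpha$ really is the Beck--Chevalley transformation --- is precisely the diagram chase carried out in \Cref{lem:Q_equivalence}, so you have correctly isolated the technical core. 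The only structural difference is in part (3): the paper first proves internal strong monoidality (\Cref{gamma_G-strong-mon}) via the pullback $\CB K\times_{\CB G}\CB H$ and then deduces the external K\"unneth formula from base change along the projections, whereas you prove the external formula first (the same projection-formula-plus-base-change computation, performed over $\CB(G\times H)$) and recover strong monoidality by restricting along the diagonal $\CB G\to\CB(G\times G)$, which is legitimate since $\CT$ is closed under products; the two orderings are interchangeable and cost the same.
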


We will consider each point in turn, we begin with the induction axiom. This axiom in fact holds for the unraveling of an arbitrary naive global 2-ring.

\begin{proposition}\label{prop:induction_axiom}
    Consider a naive global 2-ring $\CR\colon \Spcgl{\CE}^{\op}\rightarrow \PrL$. Let $\alpha\colon G\rightarrow G/N$ be a surjective group homomorphism with kernel $N$ and let $X$ be a $G$-space such that the action of $N$ on $X$ is free. Write $p\colon X\rightarrow X/N$ for the canonical projection map, which is $G$-equivariant. Then the composite 
    \[\decatcoh{G/N}{\alpha_!(X/N)}{\CR}\xrightarrow{T_\alpha} \alpha_*\decatcoh{G}{X/N}{\CR} \xrightarrow{\alpha_*\decatcoh{G}{p}{\,\CR}} \alpha_*\decatcoh{G}{X}{\CR}\] is an equivalence.
\end{proposition}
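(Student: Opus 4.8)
The plan is to unwind the definitions of the two maps occurring in the composite, recognize the whole thing as $g_*(\lax_e)$ for a single morphism $e$ of global spaces, and then show that $e$ is an equivalence precisely because $N$ acts freely on $X$.

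First I would fix notation. View $X/N$ as a $G$-space with trivial $N$-action, so that $p\colon X\to X/N$ is $G$-equivariant and $\alpha_!(X/N)$ — the value at $X/N$ of the left adjoint to inflation — is just $X/N$ regarded as a $G/N$-space; write $f_X\colon X\sslash G\hookrightarrow\CB G$, $f\colon (X/N)\sslash G\hookrightarrow\CB G$ and $g\colon (X/N)\sslash(G/N)\hookrightarrow\CB(G/N)$ for the associated faithful maps (\Cref{Rezk}). By \textup{(\ref{item-unravel-nat-tran})} of \Cref{def-unravel-incoh}, the component of $T_\alpha$ at $X/N$ is obtained by factoring $\CB\alpha\circ f$ as a \quotientmap{} $h\colon(X/N)\sslash G\twoheadrightarrow(X/N)\sslash(G/N)$ followed by $g$, and it equals $g_*(\lax_h)$, using $g_*h_*=(\CB\alpha\circ f)_*=\alpha_*f_*$. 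By \textup{(\ref{item-unravel-morph})} of \Cref{def-unravel-incoh}, the map $\decatcoh{G}{p}{\CR}$ equals $f_*(\lax_{p\sslash G})$, using $f_*(p\sslash G)_*=(f_X)_*$ since $f\circ(p\sslash G)=f_X$. Writing $e\coloneqq h\circ(p\sslash G)\colon X\sslash G\to(X/N)\sslash(G/N)$, the composite in the proposition is
\[
g_*\1_{\CR_{(X/N)\sslash(G/N)}}\xrightarrow{g_*(\lax_h)}g_*h_*\1_{\CR_{(X/N)\sslash G}}\xrightarrow{g_*h_*(\lax_{p\sslash G})}g_*h_*(p\sslash G)_*\1_{\CR_{X\sslash G}}\simeq\alpha_*(f_X)_*\1_{\CR_{X\sslash G}}.
\]
Since $\CB\alpha\circ f_X=\CB\alpha\circ f\circ(p\sslash G)=g\circ h\circ(p\sslash G)=g\circ e$, the right-hand side is $g_*e_*\1$; and since the lax unit of a composite lax monoidal functor is the composite of the lax units, the displayed composite is canonically $g_*(\lax_e)\colon g_*\1\to g_*e_*\1$.

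It therefore remains to prove that $e$ is an equivalence of global spaces: then $\CR$ sends it to a symmetric monoidal equivalence $e^*$, its inverse $e_*$ is strong monoidal, so $\lax_e$ is an equivalence and hence so is $g_*(\lax_e)$. To prove $e$ is an equivalence I would argue orbit-wise. Since $N$ acts freely on $X$, every stabilizer $L$ of $X$ satisfies $L\cap N=e$, and $X$ is the canonical colimit of its orbits $G/L$; since $-\sslash G$, $-\sslash(G/N)$ and the left adjoint to inflation all preserve colimits, and since $\CB$ of a surjective homomorphism is a \quotientmap{} (so that the orbit-wise \quotientmap s assemble to $h$), the morphism $e$ is the colimit over the orbit diagram of $X$ of the maps obtained by running the construction on each $G/L$. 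For such an orbit the stabilizer of the image point in $X/N$ is $NL$, the map $p$ restricts to the canonical projection $G/L\to G/(NL)$, and the $G$-orbit $G/(NL)$ of $X/N$ corresponds to the $G/N$-orbit $(G/N)/(NL/N)$; tracing through, the component of $e$ at $G/L$ is $\CB$ applied to the composite homomorphism $L\hookrightarrow NL\twoheadrightarrow NL/N$, which is an isomorphism since $L\cap N=e$. Hence $e$ is a colimit of equivalences, so an equivalence.

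I expect the real work, and essentially the only obstacle, to be bookkeeping: matching up the coherence isomorphisms $g_*h_*\simeq\alpha_*f_*$ and $f_*(p\sslash G)_*\simeq(f_X)_*$ together with the associativity and naturality of the lax-unit maps so that the composite is literally $g_*(\lax_e)$, and making precise the orbit-wise description of $e$ (in particular that the colimit presentations of its source, target and underlying map are compatible, which is where the fact that $\CB$ of a surjection is a \quotientmap{} is needed). The genuinely homotopical input — that a free $N$-action identifies $X\sslash G$ with $(X/N)\sslash(G/N)$ — is elementary once expressed on orbits.
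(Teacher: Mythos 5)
Your proposal is correct and follows essentially the same route as the paper: both identify the composite with $g_*(\lax_e)$ for the single map $e=h\circ(p\sslash G)$ via composability of lax unit maps, and both reduce to orbits $G/L$ with $L\cap N=e$, where the map in question becomes $\CB$ of the isomorphism $L\xrightarrow{\sim}LN/N$. The only (harmless) difference is where the orbit reduction takes place: you show $e$ itself is an equivalence of global spaces by exhibiting it as a colimit of equivalences, whereas the paper reduces the statement in $\CR_{\CB(G/N)}$ to orbits using that $\decatcoh{G}{-}{\CR}$ turns colimits of $G$-spaces into limits.
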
	

\begin{proof}
    We note that the $G/N$-space $X/N$ is equivalent to $\alpha_!(X/N)$ and so the $G$-space $\alpha^*(X/N)$ is equivalent to $\alpha^*\alpha_!X$. Furthermore the map $p\colon X\rightarrow X/N$ identifies with the unit of the adjunction $\alpha_!\dashv \alpha^*$. Therefore, applying \Cref{rem:restriction_functoriality}, we obtain the following diagram
\[\begin{tikzcd}
	X\sslash G & {\alpha^*(X/N)}\sslash G & {X/N}\sslash G/N \\
	& {\CB G} & {\CB(G/N)}
	\arrow["p", from=1-1, to=1-2]
	\arrow["q", curve={height=-24pt}, two heads, from=1-1, to=1-3]
	\arrow["f"', from=1-1, to=2-2]
	\arrow["h", two heads, from=1-2, to=1-3]
	\arrow[hook, from=1-2, to=2-2]
	\arrow["\lrcorner"{anchor=center, pos=0.125}, draw=none, from=1-2, to=2-3]
	\arrow["g", hook, from=1-3, to=2-3]
	\arrow["\alpha", two heads, from=2-2, to=2-3]
\end{tikzcd}\]
    of global spaces. Recall that the maps $T_\alpha$ and $\alpha_*\decatcoh{G}{p}{\CR}$ are both given by the image of the lax unit comparisons of $h$ and $p\sslash G$ in $\CR_{\CB (G/N)}$, see \Cref{def-unravel-incoh}(3) and (2). Since lax unit maps compose we find that the composite in the statement is equivalent to the map
    \[(f/N)_*\1_{\CR_{X/N\sslash G/N}} \xrightarrow{(f/N)_*(\lax_q)}  (f/N)_* q_* \1_{\CR_{X\sslash G}} \simeq \alpha_*f_* \1_{\CR_{X\sslash G}}.\]
    We have to show that this map is an equivalence when $X$ is a $G$-space on which $N$ acts freely. Note that the subcategory of $G$-spaces such that the action of $N$ on $X$ is free is generated under colimits by the $G$-orbits $G/H$ such that $H\cap N = e$.  Because the condition of the proposition is closed under limits it suffices to consider such a $G$-orbit.
    In this case $X\sslash G \simeq \CB H$ and $f\simeq \CB\iota$ for an inclusion $\iota \colon H \to G$. Note that the composite $\CB \alpha \circ \CB\iota\colon \CB H\rightarrow \CB (G/N)$ is clearly faithful since $H \cap N=e$, and so $q$ is an equivalence. From this the conclusion follows immediately.
\end{proof}

Next we consider the base-change axiom.

\begin{lemma}\label{lem:Q_equivalence}
Let $\CR \colon \Spcgl{\CE}^{\op} \to \PrL$ be a $\CT$-pregenuine global 2-ring, and let $\alpha \colon  H \to G$ be a group homomorphism such that $\CB G\in \CT$. The natural transformation $Q_\alpha\colon \alpha^* \decatcoh{G}{X}{\CR}\Rightarrow \decatcoh{H}{\alpha^*X}{\CR}$ from \Cref{diag-cEll} is an equivalence on all compact $G$-spaces. 
\end{lemma}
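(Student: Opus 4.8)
The plan is to identify $Q_\alpha$ evaluated at a $G$-space $X$ with an explicit Beck--Chevalley transformation, reduce to the case of orbits where \Cref{def:oriented_glo_cat}(\ref{item:gen-adjoint}) applies verbatim, and then bootstrap to all compact $G$-spaces using stability.

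First I would make the component $Q_\alpha(X)$ explicit. Regarding $X$ as a global space $f\colon X\sslash G\hookrightarrow\CB G$ with faithful structure map (\Cref{Rezk}), \Cref{rem:restriction_functoriality} shows that the pullback of $f$ along $\CB\alpha\colon\CB H\to\CB G$ is the faithful map $f'\colon(\alpha^*X)\sslash H\hookrightarrow\CB H$ classifying the $H$-space $\alpha^*X$, fitting into a pullback square
\[
\begin{tikzcd}
(\alpha^*X)\sslash H & X\sslash G \\
\CB H & \CB G.
\arrow["g", from=1-1, to=1-2]
\arrow["f'"', hook, from=1-1, to=2-1]
\arrow["f", hook, from=1-2, to=2-2]
\arrow["\CB\alpha"', from=2-1, to=2-2]
\arrow["\lrcorner"{anchor=center, pos=0.125}, draw=none, from=1-1, to=2-2]
\end{tikzcd}
\]
Since $g^*$ is strong monoidal we have $f'_*g^*\1_{X\sslash G}\simeq f'_*\1_{(\alpha^*X)\sslash H}=\decatcoh{H}{\alpha^*X}{\CR}$, and I would check that under this identification $Q_\alpha(X)$ is exactly the Beck--Chevalley transformation $\alpha^*f_*\1_{X\sslash G}\to f'_*g^*\1_{X\sslash G}$ of the square above. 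By \Cref{mainfunctor}, $Q_\alpha$ is the Beck--Chevalley (mate) transformation of $T_\alpha$; by \Cref{thm-E}(2) and \Cref{def-unravel-incoh}(3), $T_\alpha$ is pointwise the image under $(\alpha_!f)_*$ of the lax-unit comparison of the \quotientmap{} part of the factorization of $\CB\alpha\circ f'$; and the description of the counit of $\alpha_!\dashv\alpha^*$ in \Cref{rem:restriction_functoriality} uses exactly this factorization, exhibiting the square above as the relevant pullback. Chasing the mate through these identifications produces the displayed Beck--Chevalley map. I expect this bookkeeping to be the only real obstacle; everything afterwards is formal.

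Granting the identification, the orbit case is immediate: for $X=G/K$ one has $X\sslash G\simeq\CB K$, so the pullback square above has the shape demanded by \Cref{def:oriented_glo_cat}(\ref{item:gen-adjoint}), with representable top-right corner $\CB K$, faithful left-hand map $f'$, and $\CB G\in\CT$; hence $\alpha^*f_*\Rightarrow f'_*g^*$ is an equivalence of functors $\CR_{\CB K}\to\CR_{\CB H}$, and in particular $Q_\alpha(G/K)$ is an equivalence. To conclude I would let $\mathcal{D}\subseteq\Spc_G^\omega$ be the full subcategory of compact $G$-spaces on which $Q_\alpha$ is an equivalence; it contains all orbits. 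Both $X\mapsto\alpha^*\decatcoh{G}{X}{\CR}$ and $X\mapsto\decatcoh{H}{\alpha^*X}{\CR}$ send finite colimits of $G$-spaces to finite limits: the functors $\decatcoh{G}{-}{\CR}$ and $\decatcoh{H}{-}{\CR}$ do so by \Cref{prop:Gamma_G_colim_pres}, the functor $\alpha^*\colon\Spc_G\to\Spc_H$ preserves colimits, and $\alpha^*\colon\CR_{\CB G}\to\CR_{\CB H}$ is a colimit-preserving functor between stable $\infty$-categories, hence exact and in particular finite-limit-preserving. As $Q_\alpha$ is a natural transformation between these two functors, its value on a finite colimit of $G$-spaces is the induced comparison of the associated finite limits, which is an equivalence once the pieces are; thus $\mathcal{D}$ is closed under finite colimits, and it is trivially closed under retracts. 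Since $\Spc_G^\omega$ is the idempotent completion of the closure of the orbits under finite colimits, this forces $\mathcal{D}=\Spc_G^\omega$, which is the assertion of the lemma.
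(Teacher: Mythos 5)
Your proposal is correct and follows essentially the same route as the paper: identify $Q_\alpha$ with the Beck--Chevalley transformation of the pullback square from \Cref{rem:restriction_functoriality}, apply axiom (\ref{item:gen-adjoint}) of \Cref{def:oriented_glo_cat} on orbits (where the top-right corner becomes representable), and extend to all compact $G$-spaces by closure under finite colimits and retracts using \Cref{prop:Gamma_G_colim_pres}. The only cosmetic difference is the order of steps (the paper reduces to orbits before unwinding $Q_\alpha$ as a mate), and the diagram chase you defer is exactly the one the paper carries out.
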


\begin{proof}
    Note that because $\decatcoh{G}{-}{\CR}$ and $\decatcoh{H}{-}{\CR}$ preserve all limits by definition of a naive global $2$-ring, the condition that $Q_\alpha$ is an equivalence is closed under finite limits and retracts of $G$-spaces. Therefore it suffices to check that $Q_\alpha$ is an equivalence at every orbit. Therefore we fix a $G$-orbit $X \simeq G/K$. As discussed in \Cref{rem:restriction_functoriality}, $\alpha^* X$ fits into the following pullback:
    \[\begin{tikzcd}
        {\alpha^*X\sslash H} & {\alpha_!\alpha^* X\sslash G} & {X\sslash G} \\
        {\CB H} && {\CB G,}
        \arrow["\CB \alpha", from=2-1, to=2-3]
        \arrow["f", hook, from=1-3, to=2-3]
        \arrow["{\alpha^*f}", hook, from=1-1, to=2-1]
        \arrow["\lrcorner"{anchor=center, pos=0.125}, draw=none, from=1-1, to=2-3]
        \arrow["\epsilon", hook, from=1-2, to=1-3]
        \arrow["\gamma", two heads, from=1-1, to=1-2]
    \end{tikzcd}\]
    where we have furthermore fixed a factorization of the map $h\colon\alpha^* X\sslash H \rightarrow X\sslash G$ into a \quotientmap{} followed by a faithful morphism. The natural transformation $Q_\alpha$ is by definition given by the composite
   \[\alpha^*\decatcoh{G}{X}{\CR}\xrightarrow{\alpha^*\decatcoh{G}{\epsilon}{\CR}} \alpha^*\decatcoh{G}{\alpha_!\alpha^* X}{\CR} \xrightarrow{T_\alpha} \alpha^*\alpha_*\decatcoh{H}{\alpha^*X}{\CR}\xrightarrow{\epsilon'} \decatcoh{H}{\alpha^*X}{\CR},\] 
   where $\epsilon'$ is the counit of the adjunction $\alpha^*\dashv \alpha_*$. Recall from \Cref{def-unravel-incoh} that the map $\decatcoh{G}{\epsilon}{\CR}$ is given by applying $f_*$ to the lax unit map of $\epsilon$ while $T_\alpha$ is given by applying $f_*\epsilon_*$ to the lax unit map of $\gamma$. Therefore the composite is given by applying $f$ to the lax unit map of the composite $h = \epsilon \circ \gamma$. Recall that the lax unit map of $h$ is induced by $h_*$ being a right adjoint of $h^*$ and therefore is equivalent to the composite
    \[\1_{\CR_{X\sslash G}} \xrightarrow{\eta} h_*h^* \1_{\CR_{X\sslash G}} \simeq h_* \1_{\CR_{\alpha^* X\sslash H}}.\]
    We find that $Q_\alpha$ is given by going the right-most way around the following diagram:
    \[\begin{tikzcd}
        {\alpha^*f_*\1_{\CR_{X\sslash G}}} \\
        {\alpha^*f_*h_* h^*\1_{\CR_{X\sslash G}}} & {\alpha^*f_*h_* \1_{\CR_{\alpha^* X\sslash H}}} \\
        {\alpha^*\alpha_*(\alpha^* f)_* h^*\1_{\CR_{X\sslash G}}} & {\alpha^*\alpha_*(\alpha^* f)_* \1_{\CR_{\alpha^* X\sslash H}}} \\
        {(\alpha^* f)_* h^*\1_{\CR_{X\sslash G}}} & {(\alpha^* f)_* \1_{\CR_{\alpha^* X\sslash H}}.}
        \arrow["\eta"', from=1-1, to=2-1]
        \arrow["\sim", from=2-1, to=2-2]
        \arrow["\sim", from=3-1, to=3-2]
        \arrow["\sim", from=2-2, to=3-2]
        \arrow["{\epsilon'}"', from=3-1, to=4-1]
        \arrow["\sim"', from=2-1, to=3-1]
        \arrow["{\epsilon'}", from=3-2, to=4-2]
        \arrow["\sim"', from=4-1, to=4-2]
    \end{tikzcd}\]
    Note that both squares commute by naturality. In particular $Q_\alpha$ is an equivalence if and only if the left composite in the diagram above is an equivalence. However this is the Beck--Chevalley transformation associated to the square
    \[\begin{tikzcd}
        {\CR_{\alpha^* X\sslash H}} & {\CR_{X\sslash G}} \\
        {\CR_{\CB H}} & {\CR_{\CB G}.}
        \arrow["{(\alpha^* f)_*}"', from=1-1, to=2-1]
        \arrow["{\alpha_*}", from=2-1, to=2-2]
        \arrow["{f_*}", from=1-2, to=2-2]
        \arrow["{h_*}", from=1-1, to=1-2]
    \end{tikzcd}\] 
    This is an equivalence by (\ref{item:gen-adjoint}) of \Cref{def:oriented_glo_cat}, finishing the proof.
\end{proof}

We now turn to the K\"unneth axiom. We find it simpler to first prove that $\decatcoh{G}{-}{\CR}$ is strong monoidal on compact $G$-spaces.

\begin{lemma}\label{gamma_G-strong-mon}
    Let $\CR \colon \Spcgl{\CE}^{\op} \to \PrL$ be a $\CT$-pregenuine global 2-ring. Then for all $G\in \CT$, the functor $\decatcoh{G}{-}{\CR}\colon (\Spc_G^{\omega})^{\op}\rightarrow \CR_{\CB G}$ is strong monoidal. 
\end{lemma}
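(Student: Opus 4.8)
The plan is to use that $\decatcoh{G}{-}{\CR}$ is already a lax symmetric monoidal functor into $(\CR_{\CB G},\otimes)$ (this is part of \Cref{thm:Unraveling}) and to show that for $G\in\CT$ its structure constraints are equivalences. Since $(\Spc_G^{\omega})^{\op}$ carries the cocartesian monoidal structure (opposite to the cartesian one on $\Spc_G^{\omega}$), the unit constraint is the map $\1_{\CR_{\CB G}}\to\decatcoh{G}{\pt}{\CR}$, which is an equivalence because $\pt\sslash G\simeq\CB G$ with identity structure map, so that $\decatcoh{G}{\pt}{\CR}=\id_{*}\1_{\CR_{\CB G}}=\1_{\CR_{\CB G}}$. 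It then remains to prove that the binary comparison $\mu_{X,Y}\colon\decatcoh{G}{X}{\CR}\otimes\decatcoh{G}{Y}{\CR}\to\decatcoh{G}{X\times Y}{\CR}$ is an equivalence for all $X,Y\in\Spc_G^{\omega}$, where $X\times Y$ is the cartesian product of $G$-spaces.

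First I would reduce to the case of orbits. The transformation $\mu_{X,Y}$ is natural in each variable. Fixing $Y$, both functors $X\mapsto\decatcoh{G}{X}{\CR}\otimes\decatcoh{G}{Y}{\CR}$ and $X\mapsto\decatcoh{G}{X\times Y}{\CR}$ send finite colimits of $G$-spaces to finite limits in $\CR_{\CB G}$: for the second this uses that $-\times Y$ preserves colimits in the $\infty$-topos $\Spc_G$ together with \Cref{prop:Gamma_G_colim_pres}; for the first it uses that $\decatcoh{G}{-}{\CR}$ sends finite colimits to finite limits, hence, by stability of $\CR_{\CB G}$, to finite colimits, which are preserved by the colimit-preserving functor $-\otimes\decatcoh{G}{Y}{\CR}$. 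Since $\Spc_G^{\omega}$ is generated under finite colimits and retracts by the orbits $G/H$, it follows that $\mu_{X,Y}$ is an equivalence for all $X$ as soon as it is one for $X$ an orbit; arguing symmetrically in $Y$ we may assume $X=G/H$ and $Y=G/K$.

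For orbits we have $X\sslash G=\CB H$ and $Y\sslash G=\CB K$ with faithful structure maps $f\colon\CB H\hookrightarrow\CB G$ and $g\colon\CB K\hookrightarrow\CB G$; moreover $-\sslash G\colon\Spc_G\to(\Spcgl{\CE})_{/\CB G}$ is a right adjoint by \Cref{Rezk} and the subsequent remark, hence preserves products, so $(G/H\times G/K)\sslash G\simeq\CB H\times_{\CB G}\CB K=:P$, with projections $p\colon P\to\CB H$ and $q\colon P\to\CB K$ (faithful, being pullbacks of faithful maps) and $\pi:=f p=g q\colon P\to\CB G$. I would then build the desired equivalence as the composite
\[
f_{*}\1_{\CB H}\otimes g_{*}\1_{\CB K}\;\xrightarrow{\ \mathrm{proj.}\ }\;g_{*}\bigl(g^{*}f_{*}\1_{\CB H}\otimes\1_{\CB K}\bigr)=g_{*}\bigl(g^{*}f_{*}\1_{\CB H}\bigr)\;\xrightarrow{\ \mathrm{b.c.}\ }\;g_{*}q_{*}p^{*}\1_{\CB H}=(g q)_{*}\1_{P}=\pi_{*}\1_{P},
\]
where the first arrow is the right projection formula equivalence for the faithful map $g$ (axiom~(2) of \Cref{def:oriented_glo_cat}), the second is the Beck--Chevalley equivalence for the pullback square with corner $\CB G$ (axiom~(1), applicable since $f\colon\CB H\to\CB G$ is faithful and $\CB G\in\CT$ because $G\in\CT$), and I use $p^{*}\1_{\CB H}=\1_{P}$ and $g_{*}q_{*}=\pi_{*}$. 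The right-hand side is $\decatcoh{G}{G/H\times G/K}{\CR}$.

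The remaining point, and the step I expect to require the most care, is to identify this composite with $\mu_{G/H,G/K}$ rather than with some a priori different equivalence. Since $\decatcoh{G}{-}{\CR}$ takes values in $\CAlg(\CR_{\CB G})$ and the source is cocartesian monoidal, $\mu_{G/H,G/K}$ is the canonical map out of the coproduct $f_{*}\1_{\CB H}\otimes g_{*}\1_{\CB K}$ determined by the two algebra maps $\decatcoh{G}{\mathrm{pr}_{1}}{\CR}$ and $\decatcoh{G}{\mathrm{pr}_{2}}{\CR}$ attached to the projections $G/H\times G/K\to G/H,\ G/K$. By \Cref{def-unravel-incoh}(2), $\decatcoh{G}{\mathrm{pr}_{1}}{\CR}$ equals $f_{*}\1_{\CB H}\xrightarrow{f_{*}\lax_{p}}f_{*}p_{*}\1_{P}\simeq\pi_{*}\1_{P}$ with $\lax_{p}$ the unit $\1_{\CB H}\to p_{*}p^{*}\1_{\CB H}=p_{*}\1_{P}$, and symmetrically for $\mathrm{pr}_{2}$. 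Using the description of the projection-formula equivalence as adjoint to $g^{*}(\CF\otimes g_{*}\CG)\simeq g^{*}\CF\otimes g^{*}g_{*}\CG\xrightarrow{\epsilon}g^{*}\CF\otimes\CG$, the definition of the Beck--Chevalley map, and the triangle identities, I would check that precomposing the composite above with the coprojection $f_{*}\1_{\CB H}=f_{*}\1_{\CB H}\otimes\1_{\CR_{\CB G}}\xrightarrow{\id\otimes\lax_{g}}f_{*}\1_{\CB H}\otimes g_{*}\1_{\CB K}$ recovers $\decatcoh{G}{\mathrm{pr}_{1}}{\CR}$, and likewise for the other coprojection. This exhibits the composite as $\mu_{G/H,G/K}$, so $\decatcoh{G}{-}{\CR}$ is strong monoidal; the K\"unneth statement of \Cref{thm:GKV-axioms} then follows by combining strong monoidality for $G\times H\in\CT$ with the base-change equivalences $Q_{\pi_{G}},Q_{\pi_{H}}$ of \Cref{lem:Q_equivalence}.
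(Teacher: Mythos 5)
Your proof is correct and follows essentially the same route as the paper's: reduce to orbits using that both sides of the comparison map turn finite colimits of compact $G$-spaces into finite limits, identify the product of orbits with the pullback $\CB H\times_{\CB G}\CB K$ over $\CB G$, and then assemble the equivalence from the right projection formula (axiom (2)) and the Beck--Chevalley equivalence (axiom (1)), deferring to a diagram chase the identification of this composite with the lax structure map. The only cosmetic difference is that you run the chain of equivalences in the opposite direction and apply the projection formula to $g$ rather than to $f$ and a leg of the pullback; your explicit characterization of $\mu$ via the coproduct in $\CAlg(\CR_{\CB G})$ is a clean way to set up that final diagram chase.
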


\begin{proof}
    Once again the result of the lemma is closed under finite limits and retracts in each variable, and so it suffices to prove that the lax symmetric monoidal structure map 
    \[\decatcoh{G}{X}{\CR} \otimes \decatcoh{G}{Y}{\CR}\rightarrow  \decatcoh{G}{X\times Y}{\CR}\] is an equivalence when $X$ and $Y$ are both orbits, that is $X=G/K$ and $Y=G/H$. Note as well that under the equivalence of \Cref{Rezk}, the cartesian product of $G$-spaces corresponds to pullback of global spaces over $\CB G$. Therefore we consider the pullback square
    \[\begin{tikzcd}
        \sX & {\CB H} \\
        {\CB K} & {\CB G}
        \arrow["f", hook,from=2-1, to=2-2]
        \arrow["g", hook, from=1-2, to=2-2]
        \arrow["{\pi_1}"', from=1-1, to=2-1]
        \arrow["{\pi_2}", from=1-1, to=1-2]
        \arrow["\lrcorner"{anchor=center, pos=0.125}, draw=none, from=1-1, to=2-2]
    \end{tikzcd}\] 
    of global spaces and compute
    \begin{align*}
        \decatcoh{G}{G/K \times G/H}{\CR} = (f\pi_1)_* \1_{\CR_\sX} &\simeq f_*(\pi_1)_* (\pi_1^* \1_{\CR_{\CB K}}\otimes \pi_2^* \1_{\CR_{\CB H}}) \\
        &\simeq f_* (\1_{\CR_{\CB K}} \otimes (\pi_1)_*\pi_2^* \1_{\CR_{\CB H}}) \\
        &\simeq f_* (\1_{\CR_{\CB K}} \otimes f^* g_* \1_{\CR_{\CB H}}) \\
        &\simeq  f_*\1_{\CR_{\CB K}} \otimes g_* \1_{\CR_{\CB H}} \\
        &\simeq \decatcoh{G}{G/K}{\CR} \otimes \decatcoh{G}{G/H}{\CR}.
    \end{align*}
    The second and fourth equivalence are an application of the right projection formula for $f$ and $\pi_1$, and so guaranteed by (\ref{item:gen-proj}) of \Cref{def:oriented_glo_cat}. The third equivalence is base-change for the pullback square defining $X$, and so follows from (\ref{item:gen-adjoint}) of \Cref{def:oriented_glo_cat}. The remaining equivalences are clear. A diagram chase shows that the equivalence constructed agrees with the lax monoidal structure map of $\decatcoh{G}{-}{\CR}$.
\end{proof}

We now show that this internal K\"unneth formula implies the external analog.

\begin{corollary}
Consider two groups $H,G\in \CT$ and $X$ and $Y$ compact $H$- and $G$-spaces respectively. Let $\CR$ be a $\CT$-pregenuine global 2-ring. Then there is an equivalence 
\[
\pi_H^*\decatcoh{H}{X}{\CR}\otimes \pi_G^*\decatcoh{G}{Y}{\CR} \simeq \decatcoh{H\times G}{X \times Y}{\CR},
\] where $\pi_H$ and $\pi_G$ denote the two projections $G\times H \rightarrow H,G$.
\end{corollary}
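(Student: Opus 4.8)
The approach is to reduce the external formula to the internal K\"unneth formula of \Cref{gamma_G-strong-mon} together with the base-change axiom of \Cref{lem:Q_equivalence}. Since $\CT$ is a subcategory of enough injectives it is closed under products, so $\CB(H\times G)$ lies in $\CT$. Restriction along the projections yields functors $\pi_H^*\colon \Spc_H\to \Spc_{H\times G}$ and $\pi_G^*\colon \Spc_G\to \Spc_{H\times G}$; these preserve finite colimits and retracts and send an orbit $H/K$ to the orbit $(H\times G)/(K\times G)$, hence preserve compact objects. Therefore $\pi_H^*X$ and $\pi_G^*Y$ are compact $(H\times G)$-spaces. Unwinding the actions, there is moreover a canonical identification $\pi_H^*X\times \pi_G^*Y\simeq X\times Y$ of $(H\times G)$-spaces, where the right-hand side carries the componentwise action appearing in the statement.

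Next I would apply \Cref{lem:Q_equivalence} to the homomorphisms $\pi_H\colon H\times G\to H$ and $\pi_G\colon H\times G\to G$. Since $\CB H,\CB G\in\CT$ and $X,Y$ are compact, the natural transformations
\[
Q_{\pi_H}\colon \pi_H^*\decatcoh{H}{X}{\CR}\xrightarrow{\ \sim\ } \decatcoh{H\times G}{\pi_H^*X}{\CR},\qquad
Q_{\pi_G}\colon \pi_G^*\decatcoh{G}{Y}{\CR}\xrightarrow{\ \sim\ } \decatcoh{H\times G}{\pi_G^*Y}{\CR}
\]
of \Cref{diag-cEll} are equivalences. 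Applying the internal K\"unneth formula of \Cref{gamma_G-strong-mon} to the group $H\times G\in\CT$ and the compact $(H\times G)$-spaces $\pi_H^*X$ and $\pi_G^*Y$ then produces an equivalence
\[
\decatcoh{H\times G}{\pi_H^*X}{\CR}\otimes \decatcoh{H\times G}{\pi_G^*Y}{\CR}\simeq \decatcoh{H\times G}{\pi_H^*X\times \pi_G^*Y}{\CR}\simeq \decatcoh{H\times G}{X\times Y}{\CR}.
\]
Tensoring the two equivalences $Q_{\pi_H}$, $Q_{\pi_G}$ together and composing with the above chain yields the asserted equivalence $\pi_H^*\decatcoh{H}{X}{\CR}\otimes \pi_G^*\decatcoh{G}{Y}{\CR}\simeq \decatcoh{H\times G}{X\times Y}{\CR}$.

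The only content beyond bookkeeping is the two identifications in the first paragraph --- that $\pi^*$ preserves compactness and that the external product of restricted spaces is the product $(H\times G)$-space --- and, if one wants the sharper statement, checking that the resulting composite equivalence agrees with the lax monoidal structure map of $\decatcoh{\bullet}{-}{\CR}$; the latter is a diagram chase entirely parallel to the one concluding the proof of \Cref{gamma_G-strong-mon}. I do not expect a genuine obstacle, as all the substantive work has already been carried out in \Cref{lem:Q_equivalence} and \Cref{gamma_G-strong-mon}.
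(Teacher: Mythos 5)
Your proof is correct and is essentially the paper's own argument: tensor the two base-change equivalences $Q_{\pi_H}\otimes Q_{\pi_G}$ from \Cref{lem:Q_equivalence} and then apply the internal K\"unneth formula (strong monoidality) of \Cref{gamma_G-strong-mon} for the group $H\times G\in\CT$. The extra bookkeeping you supply (closure of $\CT$ under products, compactness of $\pi_H^*X$ and $\pi_G^*Y$, and the identification $\pi_H^*X\times\pi_G^*Y\simeq X\times Y$) is left implicit in the paper but is exactly what justifies the two cited lemmas applying.
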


\begin{proof}
The equivalence is given by the composite
\[
\pi_H^*\decatcoh{H}{X}{\CR}\otimes \pi_G^*\decatcoh{G}{Y}{\CR} \xrightarrow{Q_{\pi_H}\otimes Q_{\pi_G}} \decatcoh{H\times G}{\pi_H^* X}{\CR} \otimes \decatcoh{H\times G}{\pi_G^* Y}{\CR} \simeq \decatcoh{H\times G}{X\times Y}{\CR},
\]
where we have applied both \Cref{lem:Q_equivalence} and \Cref{gamma_G-strong-mon}.
\end{proof}

In total we have proven \Cref{thm:GKV-axioms}.

\begin{proof}[Proof of \Cref{thm:GKV-axioms}]
This follows immediately by combining all of the results of this section.
\end{proof}

For later use we observe the following consequence of the theorem for the functor $\decatcoh{\bullet}{-}{\CR}$. 

\begin{corollary}\label{thm:Gamma_strong_strict}
    Suppose $\CR \colon \Spcgl{\CE}^{\op}\rightarrow \PrL$ is a $\CT$-pregenuine global 2-ring. Then the functor
    \[\decatcoh{\bullet}{-}{\CR}\colon \CT^{\op} \rightarrow \Fun^{\oplax}([1],\Cat^{\otimes,\lax}), \quad \CB G \mapsto [\decatcoh{G}{-}{\CR}\colon (\Spc_G^{\omega})^{\op}\rightarrow \CR_{\CB G}],\]obtained from \Cref{mainfunctor} factors through the subcategory
    \[
    \Fun([1],\Cat_\lex^{\otimes})\subset \Fun^{\oplax}([1],\Cat^{\otimes,\lax})
    \]
    of limit preserving strong symmetric monoidal functors and strict natural transformations.
\end{corollary}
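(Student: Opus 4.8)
The plan is to read off \Cref{thm:Gamma_strong_strict} from the three main results of this section --- \Cref{gamma_G-strong-mon}, \Cref{prop:Gamma_G_colim_pres} and \Cref{lem:Q_equivalence} --- together with the standard functoriality of equivariant spaces. The first step is to unwind the assertion: a functor factors through a (not necessarily full) subcategory precisely when it does so on objects and on morphisms, so it suffices to check that the restriction of the functor $\decatcoh{\bullet}{-}{\CR}\colon\Glo{\CE}^{\op}\to\Fun^{\oplax}([1],\tCat^{\otimes,\lax})$ of \Cref{mainfunctor} along $\CT^{\op}\hookrightarrow\Glo{\CE}^{\op}$ sends each object $\CB G\in\CT$ to an object of $\Fun([1],\Cat_\lex^{\otimes})$, i.e.\ to a strong symmetric monoidal, finite-limit-preserving functor between $\infty$-categories with finite limits, and sends each morphism of $\CT$ to a \emph{strictly} commuting square of such functors.

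For the object part I would fix $\CB G\in\CT$. Identifying lax symmetric monoidal functors out of the cocartesian $\infty$-category $(\Spc_G^{\omega})^{\op}$ with functors valued in $\CAlg(\CR_{\CB G})$, the functor $\decatcoh{G}{-}{\CR}\colon(\Spc_G^{\omega})^{\op}\to\CR_{\CB G}$ is strong symmetric monoidal by \Cref{gamma_G-strong-mon} --- whose hypotheses hold since $G\in\CT$ and $\CR$ is $\CT$-pregenuine --- and preserves all limits, hence in particular finite ones, by \Cref{prop:Gamma_G_colim_pres}; moreover it sends the terminal object $\pt$ to $\1_{\CR_{\CB G}}$ since $\pt\sslash G=\CB G$ with identity structure map. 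As both $(\Spc_G^{\omega})^{\op}$ and $\CR_{\CB G}$ admit finite limits and carry symmetric monoidal structures, this exhibits $\decatcoh{G}{-}{\CR}$ as an object of $\Cat_\lex^{\otimes}$, and hence as an object of $\Fun([1],\Cat_\lex^{\otimes})$.

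For the morphism part I would take an arbitrary map $\CB\alpha\colon\CB H\to\CB G$ in $\CT$, so that in particular $\CB G\in\CT$. By \Cref{mainfunctor} the functor $\decatcoh{\bullet}{-}{\CR}$ sends $\CB\alpha$ to the oplax square \Cref{diag-cEll}, filled by $Q_\alpha$, and \Cref{lem:Q_equivalence} guarantees that $Q_\alpha$ is an equivalence on all compact $G$-spaces; thus this square commutes strictly. I would then observe that its horizontal legs are morphisms in $\Cat_\lex^{\otimes}$: the restriction functor $\alpha^*\colon\Spc_G\to\Spc_H$ preserves all limits and colimits and is symmetric monoidal for the cartesian structures, so $\alpha^*\colon(\Spc_G^{\omega})^{\op}\to(\Spc_H^{\omega})^{\op}$ is left exact and strong symmetric monoidal, and it preserves compact objects because each $\alpha^*(G/K)$ is a compact smooth $H$-manifold --- hence a finite $H$-CW complex --- and compact objects are closed under finite colimits and retracts; while $\alpha^*\colon\CR_{\CB G}\to\CR_{\CB H}$ is strong symmetric monoidal by the definition of a naive global $2$-ring and, being a left adjoint between stable $\infty$-categories, preserves finite colimits and therefore finite limits. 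Assembling the object and morphism parts yields the desired factorization.

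I expect the only real difficulty to be the $(\infty,2)$-categorical bookkeeping of the first paragraph: one has to keep straight which forgetful and mate identifications relate $\Fun^{\oplax}([1],\tCat^{\otimes,\lax})$ to $\Fun([1],\Cat_\lex^{\otimes})$, and in particular confirm that the strong monoidal structure produced in \Cref{gamma_G-strong-mon} and the square-filler appearing in \Cref{diag-cEll} are literally the data that must be tested against the subcategory. No new geometric or homotopy-theoretic input is needed beyond what has already been established in this section and the elementary properties of restriction of equivariant spaces recalled above.
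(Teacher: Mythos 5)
Your proposal is correct and follows essentially the same route as the paper, which likewise deduces the statement by combining the base-change equivalence for $Q_\alpha$ (\Cref{lem:Q_equivalence}, i.e.\ \Cref{thm:GKV-axioms}(2)), the strong monoidality of each $\decatcoh{G}{-}{\CR}$ (\Cref{gamma_G-strong-mon}, i.e.\ \Cref{thm:GKV-axioms}(3)), and limit preservation from \Cref{prop:Gamma_G_colim_pres}. The extra verifications you supply (that factoring through a non-full subcategory is checked on objects and morphisms, and that the legs $\alpha^*$ lie in $\Cat_\lex^{\otimes}$) are routine bookkeeping the paper leaves implicit.
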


\begin{proof}
Point (2) of \Cref{thm:GKV-axioms} shows that $Q_\alpha$ is always an equivalence, while point (3) shows that each functor is strong monoidal. Finally each functor is limit preserving since $\CR$ is in particular a naive global $2$-ring.
\end{proof}

Let us once again consider the functoriality of the unraveling construction on $\CT$-pregenuine rings.

\begin{definition}
We say that a morphism $F \colon \CR \to\CR'$ of $\CT$-pregenuine global $2$-rings is $\CT$-\emph{pregenuine} if for every faithful morphism $\iota \colon \CB H \to \CB G$ such that $\CB G \in \CT$, the Beck-Chevalley transformation  $F_{\CB G} \circ \iota_*(X) \to\iota_* \circ F_{\CB H}(X)$, associated to the natural equivalence $F_{\CB H}\iota^*\simeq \iota^* F_{\CB G}$, is an equivalence on  $X=\1_{\CB H}$.

We write $\TwoGlRingTpregen{\CE}{\CT}$ for the $\infty$-category of $\CT$-genuine global 2-rings and $\CT$-genuine morphisms.
\end{definition}

\begin{proposition}\label{prop:unravel_funct_pregen}
Consider the functor 
\[
\mathbb{H}\colon \NaiveTwoGlRing{\CE} \rightarrow \Fun(\Glo{\CE}^{\op},\Fun^{\oplax}([1],\Cat^{\lax,\otimes}))
\]
of \Cref{prop:funct_unfurl}. After restricting the source to $\TwoGlRingTpregen{\CE}{\CT}$, and restricting the result in the target to $\CT$ and compact equivariant spaces (as in \Cref{thm:Gamma_strong_strict}), $\mathbb{H}$ restricts to a functor 
\[
\mathbb{H}\colon \TwoGlRingTpregen{\CE}{\CT}\rightarrow \Fun(\CT^{\op},\Fun([1],\Cat^{\otimes}_{\lex}))
\]
\end{proposition}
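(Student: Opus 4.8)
The plan is to treat the object and the morphism level of the assignment separately, the former being already in hand. Given a $\CT$-pregenuine global $2$-ring $\CR$, \Cref{thm:Gamma_strong_strict} asserts precisely that $\mathbb{H}(\CR)=\decatcoh{\bullet}{-}{\CR}$, after restriction along $\CT^{\op}\subset\Glo{\CE}^{\op}$ and to compact equivariant spaces, defines an object of $\Fun(\CT^{\op},\Fun([1],\Cat^{\otimes}_{\lex}))$. So what must be checked is that a $\CT$-pregenuine morphism $F\colon\CR\to\CR'$ is sent by $\mathbb{H}$ to a \emph{morphism} of $\Fun(\CT^{\op},\Fun([1],\Cat^{\otimes}_{\lex}))$.

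By \Cref{rmk:unwind_funct_unravelling}, after this restriction $\mathbb{H}(F)$ is the $\CT^{\op}$-indexed family of a priori oplaxly commuting squares with horizontal functors $\decatcoh{G}{-}{\CR}$ and $\decatcoh{G}{-}{\CR'}$, left vertical the identity, right vertical $F_{\CB G}\colon\CR_{\CB G}\to\CR'_{\CB G}$, and filled by the transformation $u_{F,G}$. I would first note that $F_{\CB G}$ itself is a $1$-morphism of $\Cat^{\otimes}_{\lex}$: it is strong symmetric monoidal since $F$ is a symmetric monoidal natural transformation, and, being a colimit-preserving functor between presentably stable $\infty$-categories, it is exact and in particular preserves finite limits. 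Granting that each $u_{F,G}$ is an equivalence on compact $G$-spaces, every relevant square commutes strictly; the cubes encoding naturality in a morphism $\CB\alpha\colon\CB H\to\CB G$ then also commute strictly, since their remaining $2$-cells are the equivalences $Q_\alpha$ supplied by \Cref{thm:Gamma_strong_strict}; and the family therefore assembles to a morphism in $\Fun(\CT^{\op},\Fun([1],\Cat^{\otimes}_{\lex}))$, with functoriality in $\CR$ inherited from \Cref{prop:funct_unfurl}.

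It thus remains to show that $u_{F,G}\colon F_{\CB G}\circ\decatcoh{G}{-}{\CR}\Rightarrow\decatcoh{G}{-}{\CR'}$ is an equivalence on compact $G$-spaces, for every $\CB G\in\CT$. I would argue by reduction to orbits. Both $F_{\CB G}\circ\decatcoh{G}{-}{\CR}$ and $\decatcoh{G}{-}{\CR'}$ take finite colimits of $G$-spaces to finite limits in $\CR'_{\CB G}$: this holds for $\decatcoh{G}{-}{\CR}$ and $\decatcoh{G}{-}{\CR'}$ by \Cref{prop:Gamma_G_colim_pres}, and is then preserved by the exact functor $F_{\CB G}$. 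Hence the full subcategory of compact $G$-spaces on which $u_{F,G}$ is an equivalence is closed under finite colimits and retracts of $G$-spaces, so it suffices to check $u_{F,G}$ on orbits $X=G/K$. For such $X$ we have $X\sslash G\simeq\CB K$ with structure map $\CB\iota\colon\CB K\hookrightarrow\CB G$ the faithful map induced by the inclusion $\iota\colon K\hookrightarrow G$, and by \Cref{rmk:unwind_funct_unravelling} the map $u_{F,G}(G/K)$ is the Beck--Chevalley transformation $F_{\CB G}\iota_*\1_{\CR_{\CB K}}\to\iota_*F_{\CB K}\1_{\CR_{\CB K}}$, postcomposed with the equivalence $F_{\CB K}\1_{\CR_{\CB K}}\simeq\1_{\CR'_{\CB K}}$ coming from strong monoidality of $F_{\CB K}$. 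Since $\CB G\in\CT$ and $F$ is a $\CT$-pregenuine morphism, this Beck--Chevalley transformation is an equivalence on $\1_{\CR_{\CB K}}$ by definition; hence $u_{F,G}(G/K)$ is an equivalence, completing the reduction.

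The main obstacle is the last step: one must (i) match the abstractly produced transformation $u_{F,G}$ with the explicit Beck--Chevalley description of \Cref{rmk:unwind_funct_unravelling} so as to feed it into the defining condition of a $\CT$-pregenuine morphism, and (ii) observe that the apparent failure of $F_{\CB G}$ to commute with the limits computing $\decatcoh{G}{X}{\CR}$ is illusory, because $F_{\CB G}$ is exact and only finite limits are involved for compact $X$. Everything else is bookkeeping with the constructions of \Cref{prop:funct_unfurl} and \Cref{thm:Gamma_strong_strict}.
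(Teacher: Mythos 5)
Your proof is correct and follows the same route as the paper: check the statement on objects (which is \Cref{thm:Gamma_strong_strict}) and on morphisms, where the paper simply asserts that the claim "follows immediately" from the definition of a $\CT$-pregenuine morphism via \Cref{rmk:unwind_funct_unravelling}. You supply the details the paper leaves implicit — identifying $u_{F,G}$ on an orbit $G/K$ with the Beck--Chevalley map appearing in the definition of a $\CT$-pregenuine morphism, and reducing from compact $G$-spaces to orbits using exactness of $F_{\CB G}$ and limit-preservation of $\decatcoh{G}{-}{\CR}$ — and these details are accurate.
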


\begin{proof}
Because 
\[\Fun(\CT^{\op},\Fun([1],\Cat^{\otimes}_{\lex}))\subset \Fun(\CT^{\op},\Fun^{\oplax}([1],\Cat^{\lax,\otimes}))\] is a subcategory, it suffices to prove the statement on objects and morphisms. The claim on objects is precisely \Cref{thm:Gamma_strong_strict}. On morphisms it follows immediately from the assumptions placed on $\CT$-pregenuine morphisms, by \Cref{rmk:unwind_funct_unraveling}.
\end{proof}

\section{Cohomology theories on equivariant spectra}\label{sec:inverting}

In this section we impose a further condition on a pregenuine global 2-ring $\CR$ such that $\decatcoh{G}{-}{\CR}$ extends to a cohomology theory on $\Sp_G$, the $\infty$-category of genuine $G$-spectra. First we extend our cohomology theories to pointed spaces. Namely observe that because the categories $\CR_{\CB G}$ are all pointed, the lax symmetric monoidal functors $\decatcoh{G}{-}{\CR}\colon \Spc_G^{\op}\rightarrow \CR_{\CB G}$ canonically lift to symmetric monoidal functors out of the $\infty$-category of pointed $G$-spaces, in such a way that $\decatcoh{G}{X_+}{\CR}:= \decatcoh{G}{X}{\CR}$ for any $G$-space $X$. In the following proposition we ensure such an assignment can be made coherent.

\begin{proposition}
Let $\CR$ be a naive global 2-ring. The diagram 
\[
\decatcoh{\bullet}{-}{\CR}\colon \Glo{\CE}^{\op}\rightarrow \Fun^{\mathrm{oplax}}([1],\tCat^{\otimes,\lax}), \quad \CB G \mapsto [\decatcoh{G}{-}{\CR}\colon \Spc_G^{\op}\rightarrow \CR_{\CB G}]
\]
    extends to a functor
    \[
    \decatcoh{\bullet}{-}{\CR}\colon \Glo{\CE}^{\op}\rightarrow \Fun^{\mathrm{oplax}}([1],\tCat^{\otimes,\lax}), \quad \CB G \mapsto [\decatcoh{G}{-}{\CR}\colon \Spc_{G,\ast}^{\op}\rightarrow \CR_{\CB G}]
    \] such that precomposing with $(-)_+\colon\Spc_{\bullet} \to \Spc_{\bullet,\ast}$ one recovers the original functor. 
\end{proposition}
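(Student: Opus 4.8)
\emph{Strategy.} The plan is to extend the functor levelwise along pointification and to get the coherences essentially for free from the fact that pointification is a symmetric monoidal operation which is invisible to limit-preserving lax symmetric monoidal functors landing in a \emph{pointed} target.

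First I would upgrade the domain diagram. The assignment $\CB G\mapsto\Spc_{G,\ast}$ extends to a functor $\Spc_{\bullet,\ast}\colon\Glo{\CE}^{\op}\to\tCat^{\otimes,\lax}$: each restriction functor $\alpha^{*}\colon\Spc_{G}\to\Spc_{H}$ preserves the terminal object and finite products, hence passes to pointed objects and is strong monoidal for the smash product; and the functors $(-)_{+}\colon\Spc_{G}\to\Spc_{G,\ast}$ assemble into a strong symmetric monoidal natural transformation $(-)_{+}\colon\Spc_{\bullet}\Rightarrow\Spc_{\bullet,\ast}$ over $\Glo{\CE}^{\op}$ (using $\alpha^{*}(X_{+})\simeq(\alpha^{*}X)_{+}$), and similarly when the functoriality is taken along induction as in \Cref{thm-E}. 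Passing to opposites gives a natural transformation $\Spc_{\bullet}^{\op}\Rightarrow\Spc_{\bullet,\ast}^{\op}$ of $\Glo{\CE}^{\op}$-indexed diagrams in $\tCat^{\otimes,\lax}$.

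The key input is the following form of the universal property of pointification: for a presentably symmetric monoidal $\infty$-category $\mathcal{B}$ and a \emph{pointed} presentably symmetric monoidal $\infty$-category $\mathcal{D}$, precomposition with $(-)_{+}\colon\mathcal{B}\to\mathcal{B}_{\ast}$ induces an equivalence
\[
(-)_{+}^{*}\colon\Fun^{\rR,\otimes\text{-}\lax}(\mathcal{B}_{\ast}^{\op},\mathcal{D})\xrightarrow{\ \sim\ }\Fun^{\rR,\otimes\text{-}\lax}(\mathcal{B}^{\op},\mathcal{D}).
\]
Indeed $\mathcal{B}_{\ast}\simeq\mathcal{B}\otimes\Spc_{\ast}$ in $\CAlg(\mathrm{Pr}^{\mathrm{L}})$, and since $\mathrm{Pr}^{\mathrm{L}}_{\ast}\simeq\Mod_{\Spc_{\ast}}(\mathrm{Pr}^{\mathrm{L}})$ by \cite{GGN} one has $\Spc_{\ast}\otimes\mathcal{D}\simeq\mathcal{D}$ for pointed $\mathcal{D}$, so the map $\mathcal{B}\otimes\mathcal{D}\to\mathcal{B}_{\ast}\otimes\mathcal{D}$ induced by $(-)_{+}$ is an equivalence; the claim then follows by applying $\CAlg(-)$ and the identification $\Fun^{\rR,\otimes\text{-}\lax}((-)^{\op},\mathcal{D})\simeq\CAlg((-)\otimes\mathcal{D})$, the target-$\mathcal{D}$ variant of \Cref{rem:mult-cohom-alg-in-spectra}. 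Taking $\mathcal{B}=\Spc_{G}$ and $\mathcal{D}=\CR_{\CB G}$, which is pointed because it is stable, this produces for each $G$ a limit-preserving lax symmetric monoidal functor $\decatcoh{G}{-}{\CR}\colon\Spc_{G,\ast}^{\op}\to\CR_{\CB G}$ restricting along $(-)_{+}^{\op}$ to the original functor; unwinding, $\decatcoh{G}{(X,x_{0})}{\CR}$ is the fibre of $\decatcoh{G}{X}{\CR}\to\decatcoh{G}{\ast}{\CR}\simeq\1_{\CR_{\CB G}}$ induced by $x_{0}$, so that $\decatcoh{G}{X_{+}}{\CR}=\decatcoh{G}{X}{\CR}$ as desired.

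It remains to assemble these into a functor $\Glo{\CE}^{\op}\to\Fun^{\oplax}([1],\tCat^{\otimes,\lax})$ restricting to $\decatcoh{\bullet}{-}{\CR}$ along $(-)_{+}$, and this last coherence is where the real work sits --- but it is formal. All the identifications above ($\mathcal{B}_{\ast}\simeq\mathcal{B}\otimes\Spc_{\ast}$, the module identification $\Spc_{\ast}\otimes\mathcal{D}\simeq\mathcal{D}$, and $\CAlg((-)\otimes\mathcal{D})\simeq\Fun^{\rR,\otimes\text{-}\lax}((-)^{\op},\mathcal{D})$) are natural in $\mathcal{B}$ and compatible with restriction and induction, so $(-)_{+}^{*}$ is natural over $\Glo{\CE}^{\op}$ and respects the whole $\Fun^{\oplax}([1],-)$-structure encoding the change-of-group transformations $Q_{\alpha}$; concretely $(-)_{+}^{*}$ is just precomposition with the natural transformation built in the first step, so its inverse applied to $\decatcoh{\bullet}{-}{\CR}$ yields the extension, and compatibility with $(-)_{+}$ holds by construction. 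The main obstacle is precisely this step: upgrading the fibrewise equivalences to an equivalence of $\Glo{\CE}^{\op}$-indexed diagrams of $[1]$-shaped oplax data, and in particular checking the extended functor is still compatible with the $Q_{\alpha}$. If the monoidal universal property proves awkward to cite in exactly this packaging, the fallback is to bypass it and define the extension by hand as the fibrewise fibre of the oplax transformation $\decatcoh{\bullet}{-}{\CR}\circ\fgt\Rightarrow\mathrm{const}_{\1}$ over the category of pointed objects $\Spc_{\bullet,\ast}\simeq\{\ast\}\times_{\Spc_{\bullet}}\Ar(\Spc_{\bullet})$, using the stability of each $\CR_{\CB G}$, at the cost of a longer diagram chase.
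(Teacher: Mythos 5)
Your argument is correct in substance but takes a genuinely different route from the paper. The paper proves this by \emph{post}-composition: it views each $\decatcoh{G}{-}{\CR}$ as a colimit-preserving functor $\Spc_G\to\CR_{\CB G}^{\op}$, identifies $\CC_\ast$ with a symmetric monoidal subcategory of $\Fun([1],\CC)$ under Day convolution (with $[1]$ monoidal via $\max$), observes that Day convolution makes $(-)_\ast$ a $2$-functor on $\tCat^{\otimes,\lax}_{\mathrm{L}}$, and then simply applies $\Fun^{\oplax}([1],(-)_\ast)$ to the entire diagram $\decatcoh{\bullet}{-}{\CR}\colon \Glo{\CE}^{\op}\to\Fun^{\oplax}([1],\tCat^{\otimes,\lax}_{\mathrm{L}})$. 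All coherence --- including compatibility with the oplax fillers $T_\alpha$/$Q_\alpha$ --- is then automatic from $2$-functoriality, and the restriction along $(-)_+$ is recovered because $(\CC)_\ast\simeq\CC$ for pointed $\CC$. You instead \emph{pre}-compose and invert: your key lemma, that $(-)_+^*$ is an equivalence on lax symmetric monoidal limit-preserving functors into a pointed target, is correct (the identification $\mathcal{B}_\ast\simeq\mathcal{B}\otimes\Spc_\ast$, the module equivalence $\Spc_\ast\otimes\CD\simeq\CD$, and the symmetric monoidality of $\mathcal{B}\otimes\CD\simeq\Fun^{\rR}(\mathcal{B}^{\op},\CD)$ from \Cref{thm:Day-convolution-is-mon-struc-on-PrL-tensor} all hold), and it buys something the paper does not state, namely uniqueness of the extension and the explicit fibre formula. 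What it costs is exactly the step you flag as ``where the real work sits'': promoting the fibrewise equivalences to an equivalence of $\Glo{\CE}^{\op}$-indexed $[1]$-shaped oplax diagrams requires the naturality of all three identifications at the level of $2$-morphisms, which is plausible but not free, whereas the paper's post-composition trick makes that step vacuous. Your fallback (fibrewise fibres over $\Spc_{\bullet,\ast}$) would also work and is consistent with how the paper later computes $\decatcoh{G}{S^V}{\CR}$ in \Cref{gamma-sphere}.
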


\begin{proof}
    For this proof we instead view $\decatcoh{G}{-}{\CR}$ as a functor from $\Spc_{G}$ to $\CR_G^{\op}$, in which case it is a colimit preserving functor of cocomplete categories. Consider $[1]$ as a symmetric monoidal $\infty$-category via the coproduct, also known as $\mathrm{max}$. Given a cocomplete $\infty$-category $\CC$, one may identify $\CC_\ast$ as a symmetric monoidal subcategory of $\Fun([1],\CC)$, equipped with the Day convolution symmetric monoidal structure. Since Day convolution is a 2-functor 
    \[\Fun([1],-)\colon \tCat_\mathrm{L}^{\otimes,\lax} \rightarrow \tCat_\mathrm{L}^{\otimes,\lax},\] 
    we conclude that $(-)_\ast\colon \tCat^{\otimes,\lax}_\mathrm{L}\rightarrow \tCat^{\otimes,\lax}_\mathrm{L}$ is also 2-functorial. The composite 
    \[\Glo{\CE}^{\op}\xrightarrow{\decatcoh{\bullet}{-}{\,\CR}} \Fun^{\mathrm{oplax}}([1],\tCat^{\otimes,\lax}_\mathrm{L})\xrightarrow{\Fun^{\mathrm{oplax}}([1],(-)_\ast)} \Fun^{\mathrm{oplax}}([1],\tCat^{\otimes,\lax}_\mathrm{L})\] is our desired extension.
\end{proof}

We would like to extend our cohomology theory further to take values in $G$-spectra. This involves constructing representation sphere deloopings for the functor $\decatcoh{G}{-}{\CR}$. To explain how we will accomplish this, we recall the process of inverting objects in a symmetric monoidal $\infty$-category and its connection to genuine $G$-spectra. 

\subsection{Inverting representation spheres}

\begin{definition}
We define the $\infty$-category $\Cat^{\otimes,\aug}$ to be the $\infty$-category of symmetric monoidal $\infty$-categories $\CC$ equipped with an augmentation, i.e.~a set $S$ of objects of $\CC$. The morphisms of $\Cat^{\otimes,\aug}$ are strong monoidal functors which preserve the augmentations. Formally we may define $\Cat^{\otimes, \aug}$ as the following pullback:
    \[\begin{tikzcd}
        {\Cat^{\otimes,\aug}} & {\Ar(\Cat)} \\
        {\mathrm{Set}\times \Cat^\otimes} & {\Cat\times \Cat.}
        \arrow[from=1-1, to=1-2]
        \arrow["{\ev_0\times\ev_1}", from=1-2, to=2-2]
        \arrow["{\mathrm{incl} \times \fgt}"', from=2-1, to=2-2]
        \arrow[from=1-1, to=2-1]
    \end{tikzcd}\]
\end{definition}
Recall that an object $X$ in a symmetric monoidal $\infty$-category is called invertible if $X\otimes (-)\colon \CC\rightarrow \CC$ is an equivalence. 
\begin{definition}
    We define $\Cat^{\otimes,\aug^{-1}}$ to be the full subcategory of $\Cat^{\otimes,\aug}$ spanned by those pairs $(\CC,S)$ such that every object $X\in S$ is an invertible object of $\CC$.
\end{definition}

\begin{proposition}\label{prop:invert_augmentation}
The inclusion $\Cat^{\otimes,\aug^{-1}}\subset \Cat^{\otimes,\aug}$ admits a left adjoint $\mathbb{I}$.
\end{proposition}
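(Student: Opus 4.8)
The plan is to construct the left adjoint $\mathbb{I}$ by exhibiting, for each augmented symmetric monoidal $\infty$-category $(\CC,S)$, a symmetric monoidal functor $(\CC,S)\to (\CC[S^{-1}],S)$ into an object of $\Cat^{\otimes,\aug^{-1}}$ which is initial among such functors. First I would reduce to the case of inverting a single object, then to the case where $S$ is a commutative monoid under $\otimes$: since the forgetful functor $\Cat^{\otimes,\aug}\to \Cat^\otimes$ visibly creates colimits along the inclusion-of-generators, and since a set of objects generates a free symmetric monoidal subcategory, one may replace $S$ by the image of the free $\mathbb{E}_\infty$-monoid it generates without changing the universal property. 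Concretely, for a single object $X\in\CC$ I would consider the colimit $\CC[X^{-1}]\coloneqq \colim(\CC\xrightarrow{X\otimes -}\CC\xrightarrow{X\otimes-}\cdots)$ computed in $\Cat^\otimes$, which carries a symmetric monoidal structure since $\Cat^\otimes$ is presentable and filtered colimits of symmetric monoidal $\infty$-categories along symmetric monoidal functors exist and are computed on underlying $\infty$-categories; this is the standard construction (cf.\ the treatment of inverting objects in the literature on symmetric monoidal localizations). In $\CC[X^{-1}]$ the image of $X$ becomes invertible essentially by construction, since the transition functors become equivalences in the colimit.

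The key steps, in order: (1) recall that $\Cat^\otimes$ is presentable and that $\Cat^{\otimes,\aug}$, being defined by the pullback in the statement, is likewise presentable (pullbacks of presentable $\infty$-categories along accessible limit-preserving functors are presentable, and $\ev_0\times\ev_1$ and $\mathrm{incl}\times\fgt$ are of this form); (2) observe that $\Cat^{\otimes,\aug^{-1}}$ is a full subcategory closed under limits and filtered colimits --- closure under limits because invertibility of $X\otimes(-)$ is detected by a limit condition, and closure under filtered colimits because equivalences are stable under filtered colimits in $\Cat$; (3) apply the adjoint functor theorem: a full subcategory of a presentable $\infty$-category that is closed under limits and sufficiently-filtered colimits is a localization and the inclusion admits a left adjoint (this is Lurie, HTT Proposition 5.5.4.15 / the accessible localization criterion). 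Alternatively, and perhaps more cleanly, I would give the explicit construction $\mathbb{I}(\CC,S) = (\CC[S^{-1}],S)$ described above and verify the universal property directly: a strong monoidal functor $\CC\to\CD$ with $\CD\in\Cat^{\otimes,\aug^{-1}}$ (i.e.\ sending $S$ to invertibles) sends each transition map $X\otimes(-)$ to an equivalence, hence factors uniquely through the filtered colimit $\CC[X^{-1}]$, and this factorization is again strong monoidal by the universal property of colimits in $\Cat^\otimes$.

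The main obstacle I anticipate is bookkeeping the symmetric monoidal structure through the filtered colimit: one must check that $\colim(\CC\xrightarrow{X\otimes-}\CC\xrightarrow{X\otimes-}\cdots)$ genuinely lifts to $\Cat^\otimes$, i.e.\ that the functors $X\otimes(-)$, which are \emph{not} themselves symmetric monoidal, nonetheless assemble the diagram into something whose colimit in $\Cat^\otimes$ has underlying $\infty$-category this sequential colimit. The standard fix is to present the colimit instead as a colimit over the free $\mathbb{E}_\infty$-space on $S$, realized as a colimit of the \emph{constant} diagram $\CC$ indexed by the $\infty$-groupoid completion of that monoid acting via tensoring --- then the diagram lives in $\Cat^\otimes$ via the symmetric monoidal structure on $\CC$ itself and one invokes that the forgetful functor $\Cat^\otimes\to\Cat$ preserves sifted (in particular filtered) colimits, together with the fact that a colimit of copies of $\CC$ glued along autoequivalences-in-the-limit inverts exactly $S$. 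Once this is set up correctly, both the existence of the left adjoint and the identification of its value are formal; I would likely present the adjoint-functor-theorem argument as the clean statement and relegate the explicit model to a remark, since for the sequel (constructing representation-sphere deloopings) only the universal property of $\mathbb{I}$ is used.
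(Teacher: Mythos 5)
Your first step (presentability of $\Cat^{\otimes,\aug}$ as a pullback of presentable $\infty$-categories along accessible limit-preserving functors) is exactly the paper's first step. Where you diverge is in how the reflective subcategory is recognized, and both of your proposed routes have a gap that the paper's argument is specifically designed to avoid. The paper observes that $\Cat^{\otimes,\aug^{-1}}$ is precisely the class of objects right orthogonal to a \emph{single morphism} of augmented symmetric monoidal $\infty$-categories, namely the group completion map $\FinSet^{\simeq}\to \Omega^\infty\mathbb{S}$ (with every object of source and target augmented; $\FinSet^\simeq$ being the free symmetric monoidal $\infty$-category on one augmented generator, its group completion the free one on an \emph{invertible} augmented generator). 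Once the subcategory is exhibited as the local objects for a small set of maps, \cite{HTT}*{Proposition 5.5.4.15} applies verbatim and delivers both the left adjoint and, for free, the presentability of $\Cat^{\otimes,\aug^{-1}}$. Your version of this step --- ``closed under limits and filtered colimits, hence reflective by the accessible localization criterion'' --- is not what 5.5.4.15 says: that proposition takes as input a set of morphisms, not closure properties. To run an adjoint functor theorem argument instead, you would need $\Cat^{\otimes,\aug^{-1}}$ to be \emph{accessible}, and accessibility does not follow formally from closure under limits and filtered colimits (the general ``reflection theorem'' in that form is a genuinely harder and much more recent result). So the missing idea is the identification of the orthogonality class; supply that and your argument collapses into the paper's.

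Your alternative explicit construction is the more serious liability. The sequential colimit $\colim(\CC\xrightarrow{X\otimes-}\CC\xrightarrow{X\otimes-}\cdots)$ is well known \emph{not} to compute the symmetric monoidal inversion in general: the transition functors are not symmetric monoidal, and the telescope agrees with the underlying $\infty$-category of $\CC[X^{-1}]$ only under the additional hypothesis that $X$ is symmetric (the cyclic permutation of $X^{\otimes 3}$ acts as the identity --- this is exactly the subtlety in Voevodsky/Robalo's treatment of inverting $\mathbb{P}^1$). Your proposed ``standard fix'' via a colimit of constant diagrams indexed by a group completion is gesturing at the right circle of ideas but is not a proof as written, and verifying it would be substantially more work than the proposition requires. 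Note also that the paper needs the free case of this proposition as \emph{input} to its later explicit formula $\mathbb{I}^{\mathrm{L}}(\CC,S)=\CC\amalg_{\mathrm{PSh}(\mathbb{P}(S))}\mathrm{PSh}(\mathbb{I}(\mathbb{P}(S),S))$, so an explicit model cannot be postponed to a remark without circularity; the orthogonality argument is doing real work here.
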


\begin{proof}
Note that $\Cat^{\otimes,\aug}$ is a pullback in $\mathrm{Pr}_{\mathrm{L}}$ and so is itself a presentable $\infty$-category. Observe that the subcategory $\Cat^{\otimes,\aug^{-1}}$ is equivalent to the subcategory of objects $(\CC,S)$ which are right orthogonal to the group completion map $\FinSet^{\simeq}\rightarrow \Omega^{\infty} \mathbb{S}$, viewed as a map of augmented symmetric monoidal $\infty$-categories by augmenting every object of both $\FinSet^{\simeq}$ and $\Omega^{\infty}\mathbb{S}$. Therefore the result follows from \cite{HTT}*{Proposition 5.5.4.15}.
\end{proof}

\begin{definition}
We define $\Cat^{\otimes,\aug}_{\rex,+}$ to be the (non-full) subcategory of $\Cat^{\otimes,\aug}$ spanned on objects by those pairs $(\CC,S)$ such that $\CC$ is idempotent complete, admits finite colimits and the tensor product commutes with finite colimits in each variable. On morphisms it is spanned by those strong monoidal functors which preserve finite colimits. We define $\Cat_{\rex,+}^{\otimes,\aug^{-1}}$ to be the full subcategory spanned by those objects $(\CC,S)$ in $\Cat^{\otimes,\aug^{-1}}$. Similarly we may define $\mathrm{Pr}_{\mathrm{L}}^{\otimes, \aug}$ and $\mathrm{Pr}_{\mathrm{L}}^{\otimes, \aug^{-1}}$, as well as $\Cat^{\otimes,\aug}_{\rex}$ and $\Cat^{\otimes,\aug^{-1}}_{\rex}$.  
\end{definition}

Using \Cref{prop:invert_augmentation} one can easily prove:

\begin{proposition}
    The inclusions \[\Cat^{\otimes,\aug^{-1}}_{\rex,+}\rightarrow \Cat^{\otimes,\aug}_{\rex,+} \text{,}\quad \Cat^{\otimes,\aug^{-1}}_{\rex}\rightarrow \Cat^{\otimes,\aug}_\rex \quad \text{and} \quad \mathrm{Pr}_{\mathrm{L}}^{\otimes, \aug}\rightarrow \mathrm{Pr}_{\mathrm{L}}^{\otimes, \aug^{-1}}\] admits left adjoints, which we denote by $\mathbb{I}^{\rex,+}$, $\mathbb{I}^{\rex}$, and $\mathbb{I}^{\mathrm{L}}$ respectively.\qed
\end{proposition}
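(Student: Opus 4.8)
The plan is to run the argument of \Cref{prop:invert_augmentation} verbatim in each of the three settings. Fix one of the inclusions, say $\Cat^{\otimes,\aug^{-1}}_{\rex,+}\subset\Cat^{\otimes,\aug}_{\rex,+}$; the remaining two are treated identically, with $\Cat^{\otimes}_{\rex}$ and $\mathrm{Pr}_{\mathrm{L}}^{\otimes}=\CAlg(\mathrm{Pr}_{\mathrm{L}})$ replacing $\Cat^{\otimes}_{\rex,+}$, and $\Cat^{\rex}$ and $\mathrm{Pr}_{\mathrm{L}}$ replacing the $\infty$-category $\Cat^{\rex}_{+}$ of idempotent-complete finitely cocomplete $\infty$-categories. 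First I would observe that $\Cat^{\otimes,\aug}_{\rex,+}$ is presentable: just as for $\Cat^{\otimes,\aug}$, it is the pullback of $\mathrm{Set}\times\Cat^{\otimes}_{\rex,+}$ and $\Ar(\Cat^{\rex}_{+})$ over $\Cat^{\rex}_{+}\times\Cat^{\rex}_{+}$ (along $\mathrm{incl}\times\fgt$ and $\ev_0\times\ev_1$), and all three non-trivial corners are presentable — $\Cat^{\rex}$ is presentable, idempotent completeness carves out a reflective (hence presentable) subcategory, and $\CAlg$ of a presentable symmetric monoidal $\infty$-category is presentable — while the structure maps lie in $\mathrm{Pr}_{\mathrm{L}}$; hence the pullback is presentable.

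Next I would exhibit $\Cat^{\otimes,\aug^{-1}}_{\rex,+}$ as a reflective localization. As in the proof of \Cref{prop:invert_augmentation}, a pair $(\CC,S)$ has every $X\in S$ invertible if and only if it is right orthogonal to the group-completion map $\FinSet^{\simeq}\to\Omega^{\infty}\mathbb{S}$, now transported along the left adjoint of the forgetful functor $\Cat^{\otimes,\aug}_{\rex,+}\to\Cat^{\otimes,\aug}$ — the latter being given by freely (and idempotent-completely) adjoining finite colimits, equipped with the Day convolution symmetric monoidal structure, and carrying the augmentation along. With this single map in hand, \cite{HTT}*{Proposition 5.5.4.15} produces the desired left adjoint, which we name $\mathbb{I}^{\rex,+}$ (and $\mathbb{I}^{\rex}$, $\mathbb{I}^{\mathrm{L}}$ in the other two cases). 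Alternatively one may bypass the transported map and simply note that $\Cat^{\otimes,\aug^{-1}}_{\rex,+}$ is a full, accessible subcategory of the presentable $\infty$-category $\Cat^{\otimes,\aug}_{\rex,+}$ closed under small limits — invertibility is inherited by limits since tensor inverses are unique up to canonical equivalence — and conclude by the usual criterion for a reflective subcategory.

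The content of the word ``easily'' is the bookkeeping around the fact that $\Cat^{\otimes,\aug}_{\rex,+}$, $\Cat^{\otimes,\aug}_{\rex}$ and $\mathrm{Pr}_{\mathrm{L}}^{\otimes,\aug}$ are \emph{not} full subcategories of $\Cat^{\otimes,\aug}$ — their morphisms are required to preserve finite colimits, resp. to be left adjoints — so the localization $\mathbb{I}$ of \Cref{prop:invert_augmentation} cannot simply be restricted, and the argument must be carried out intrinsically in each ambient category, as above. I expect the only genuine subtlety to arise when one wants to recognize $\mathbb{I}^{\rex,+}(\CC,S)$ concretely as the telescope $\colim\bigl(\CC\xrightarrow{-\otimes s_0}\CC\xrightarrow{-\otimes s_1}\cdots\bigr)$ over the words in $S$: the transition functors $-\otimes s$ are not strong monoidal, so this colimit must be formed in the symmetric-monoidalised sense rather than naively, after which stability of finite colimits, idempotent completeness and presentability under such filtered colimits shows that $\mathbb{I}^{\rex,+}$, $\mathbb{I}^{\rex}$ and $\mathbb{I}^{\mathrm{L}}$ are compatible with $\mathbb{I}$ along the forgetful functors. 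This compatibility is not needed for the statement itself, but it is the one place where care is required.
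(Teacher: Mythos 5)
Your route is genuinely different from the paper's, and it has a gap in the third case. The paper does not argue by localization at all: it writes the left adjoint down explicitly as a pushout
\[
\mathbb{I}^{\mathrm{L}}(\CC,S)\;\coloneqq\;\CC\textstyle\coprod_{\mathrm{PSh}(\mathbb{P}(S))}\mathrm{PSh}\bigl(\mathbb{I}(\mathbb{P}(S),S)\bigr)
\]
in $\CAlg(\mathrm{Pr}^{\mathrm{L}})$, where $\mathbb{P}(S)$ is the free symmetric monoidal $\infty$-category on $S$ and $\mathbb{I}$ is the functor of \Cref{prop:invert_augmentation}; the universal property is immediate from the symmetric monoidal universal property of presheaf categories, and the other two cases follow by replacing $\mathrm{PSh}$ with the relevant (idempotent-completed) finite cocompletion. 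This is uniform in all three cases and only requires the ambient category to admit pushouts. Your argument instead needs the ambient category to be \emph{presentable}, and that fails exactly where you treat the cases as ``identical'': $\mathrm{Pr}_{\mathrm{L}}$ is a very large $\infty$-category that is not presentable, hence neither is $\CAlg(\mathrm{Pr}_{\mathrm{L}})$ nor the pullback $\mathrm{Pr}_{\mathrm{L}}^{\otimes,\aug}$. So for the third inclusion neither \cite{HTT}*{Proposition 5.5.4.15} nor your fallback (accessible full subcategory of a presentable $\infty$-category closed under limits) applies, and some other construction --- in practice, the paper's explicit pushout --- is required.

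For the first two inclusions your plan is workable modulo bookkeeping: $\Cat^{\otimes}_{\rex,+}\simeq\CAlg(\Cat^{\rex}_{+})$ is presentable, the pullback defining $\Cat^{\otimes,\aug}_{\rex,+}$ is presentable (though it must be taken over $\Cat\times\Cat$, not over $\Cat^{\rex}_{+}\times\Cat^{\rex}_{+}$ --- a set is not an object of $\Cat^{\rex}_{+}$, so the augmentation arrow lives in $\Cat$), and the full subcategory of pairs with invertible augmentation is accessible and closed under limits, hence reflective. Note, however, that your transported-orthogonality variant already presupposes a left adjoint to the non-full inclusion $\Cat^{\otimes,\aug}_{\rex,+}\to\Cat^{\otimes,\aug}$, i.e.\ the free finite cocompletion with its Day convolution structure; this is essentially the same ingredient the paper feeds into its pushout, so once you have it you may as well give the explicit formula, which also settles the $\mathrm{Pr}_{\mathrm{L}}$ case. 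The telescope discussion in your final paragraph is, as you acknowledge, not needed for the statement.
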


\begin{proof}
We first argue the presentable case. Write $\mathbb{P}(\CC)$ for the free symmetric monoidal $\infty$-category on an $\infty$-category $\CC$. We then define 
\[
\mathbb{I}^{\mathrm{L}}(\CC,S) \coloneqq \CC \coprod_{\mathrm{PSh}(\mathbb{P} (S))} \mathrm{PSh}(\mathbb{I}(\mathbb{P}(S),S))\] as a pushout in $\PrL$. By the symmetric monoidal universal property of taking presheafs, see \cite{HA}*{Proposition 4.8.1.10(3)}, this clearly has the required universal property.  For the other cases, the same proof applies by replacing the presheaf category by the relevant cocompletion functor. 
\end{proof}

\begin{definition}
    Let $G$ be a compact Lie group. Given a $G$-representation $V$, we define $S^V = V\cup \{\infty\}$ to be the one point compactification of $V$, viewed as a pointed $G$-space by choosing $\{\infty\}$ as a basepoint. We enhance the $\infty$-category $\Spc_{G,\ast}^{\omega}$ of compact pointed $G$-spaces to an augmented symmetric monoidal $\infty$-category by the set $\mathrm{RepSph}_G \subset \Spc_{G,\ast}^{\omega}$ of pointed $G$-spaces which are weak homotopy equivalent to a representation sphere $S^V$.
\end{definition}

Key for us will be the following calculation:

\begin{proposition}\label{thm:Sp_fin_univ}    
    Let $G$ be a compact Lie group. Then $\Sp_{G}^{\omega} \simeq \mathbb{I}^{\rex,+}(\Spc_{G,\ast}^{\omega},\mathrm{RepSph}_G)$.
\end{proposition}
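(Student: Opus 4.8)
The plan is to show that $\Sp_G^{\omega}$, equipped with the image under $\Sigma_G^{\infty}$ of the set $\mathrm{RepSph}_G$ as its augmentation, satisfies the universal property which characterizes $\mathbb{I}^{\rex,+}(\Spc_{G,\ast}^{\omega},\mathrm{RepSph}_G)$. Since the suspension spectrum functor $\Sigma_G^{\infty}\colon \Spc_{G,\ast}^{\omega}\to\Sp_G^{\omega}$ is strong symmetric monoidal, preserves finite colimits, and carries every representation sphere to a $\otimes$-invertible object, it defines a morphism in $\Cat^{\otimes,\aug}_{\rex,+}$ whose target lies in $\Cat^{\otimes,\aug^{-1}}_{\rex,+}$; here one uses that $\Sp_G^{\omega}$ is idempotent complete and admits finite colimits, being the subcategory of compact objects of the compactly generated stable $\infty$-category $\Sp_G$, that the smash product preserves compact objects and finite colimits in each variable, and that $\Sigma_G^{\infty}$ carries finite $G$-CW complexes to compact $G$-spectra. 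This morphism induces a comparison functor $\mathbb{I}^{\rex,+}(\Spc_{G,\ast}^{\omega},\mathrm{RepSph}_G)\to \Sp_G^{\omega}$, and what remains is to show it is an equivalence.

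I would prove this by reducing to the corresponding statement about presentable $\infty$-categories. First I would record the (routine) compatibility lemma: for a small idempotent-complete symmetric monoidal $\infty$-category $\CC$ with finite colimits and tensor product exact in each variable, and a set $S$ of compact objects of $\mathrm{Ind}(\CC)$ contained in $\CC$, the presentable localization $\mathbb{I}^{\mathrm{L}}(\mathrm{Ind}(\CC),S)$ is again compactly generated and its subcategory of compact objects is canonically equivalent to $\mathbb{I}^{\rex,+}(\CC,S)$. This is proved by comparing the respective universal properties through the equivalence between small idempotent-complete right-exact symmetric monoidal $\infty$-categories and compactly generated ones, using that $-\otimes X$ preserves compact objects whenever $X$ is compact and invertible, and that compact objects of a presentable $\infty$-category are automatically idempotent complete. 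Applying this with $\CC=\Spc_{G,\ast}^{\omega}$, so that $\mathrm{Ind}(\CC)=\Spc_{G,\ast}$, it then suffices to construct a symmetric monoidal equivalence $\Sp_G\simeq \mathbb{I}^{\mathrm{L}}(\Spc_{G,\ast},\mathrm{RepSph}_G)$ which is compatible with $\Sigma_G^{\infty}$ and passes to compact objects; equivalently, that genuine $G$-spectra are obtained from pointed $G$-spaces by formally inverting the representation spheres in the symmetric monoidal sense.

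This last statement is the heart of the matter, and is where the input from classical equivariant stable homotopy theory enters, so it is the step I expect to be the main obstacle. The key point is that the representation spheres satisfy the symmetry condition needed to apply the universal property of inverting objects in a presentably symmetric monoidal $\infty$-category --- namely the cyclic permutation of $S^{V}\otimes S^{V}\otimes S^{V}\simeq S^{3V}$ is homotopic to the identity, being realized by an orientation-preserving linear isometry --- so that $\Spc_{G,\ast}[\mathrm{RepSph}_G^{-1}]$ may be computed as the filtered colimit, in presentably symmetric monoidal $\infty$-categories, of repeatedly smashing with representation spheres. That this colimit recovers genuine $G$-spectra is then a reformulation of the classical identification of the genuine $G$-stable homotopy category with spectra indexed on a complete $G$-universe; for $G$ finite one may further reduce to inverting the single regular representation sphere $S^{\rho_G}$, since making $S^{\rho_G}$ invertible forces every $S^{V}$ to be invertible. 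The work in the write-up is thus not a new idea but assembling this identification together with all the structure --- the symmetric monoidal structure, the compatibility with $\Sigma_G^{\infty}$, and the passage through the $\mathrm{Ind}$/compact-objects dictionary --- in a form that plugs cleanly into the formalism of $\mathbb{I}^{\rex,+}$ developed above.
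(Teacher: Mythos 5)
Your proposal is correct and follows essentially the same route as the paper: reduce via the $\mathrm{Ind}$/compact-objects dictionary to the presentable-level statement $\Sp_G\simeq \mathbb{I}^{\mathrm{L}}(\Spc_{G,\ast},\mathrm{RepSph}_G)$, and then invoke that statement. The only difference is that the paper outsources this last step entirely to \cite{GM20}*{Appendix C}, whereas you sketch its proof (cyclic permutation condition plus the classical identification with spectra on a complete universe), which is indeed the argument given there.
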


\begin{proof}
    By \cite{GM20}*{Appendix C}, the large $\infty$-category $\Sp_{G}$ is the initial presentably symmetric monoidal $\infty$-category under $\Spc_{G,\ast}$ such that the representation spheres are invertible in $\Sp_G$, that is $\Sp_G \simeq \mathbb{I}^{\mathrm{L}}(\Spc_{G,\ast},\mathrm{RepSph}_G)$. However $\Sp_G$ is compactly generated, and so is equivalent to $\mathrm{Ind}(\Sp^{\omega}_{G})$ as a symmetric monoidal $\infty$-category. Similarly $\Spc_{G,\ast}\simeq \mathrm{Ind}(\Spc^{\omega}_{G,\ast})$. Using the symmetric monoidal universal property of the Ind-functor we conclude that $\mathbb{I}^{\rex,+}(\Spc_{G,\ast}^{\omega},\mathrm{RepSph}_G)$ agrees with $\Sp_G^{\omega}$.
\end{proof}

We can apply the previous proposition levelwise to extend families of equivariant cohomology theories.

\begin{corollary}\label{cor:univ_prop_suspension}
    Let $\CT\subset \Glo{\CE}$ be some full subcategory and let $\Lambda_\bullet \colon \Spc_{\bullet,\ast}^{\omega}\Rightarrow \CC_\bullet$ be a natural transformation of functors $\CT^{\op}\rightarrow \Cat_{\rex,+}^\otimes$ such that $\Lambda_G(S^V)$ is invertible for every group $G\in \CT$ and representation sphere $S^V\in \Spc_{G}^{\omega}$. Then there exists a unique dotted natural transformation in $\Cat^\otimes_{\rex, +}$ such that the triangle 
    \[
    \begin{tikzcd}
        {\Spc_{\bullet,\ast}^{\omega}} \\
        {\Sp_{\bullet}^\omega} & {\CC_\bullet.}
        \arrow["{\Lambda_\bullet}", Rightarrow, from=1-1, to=2-2]
        \arrow["{\exists!}"', Rightarrow, dashed, from=2-1, to=2-2]
        \arrow["{\Sigma^\infty_\bullet}"', Rightarrow, from=1-1, to=2-1]
    \end{tikzcd}
    \] commutes.
\end{corollary}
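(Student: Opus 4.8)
The plan is to deduce the statement from the universal property of $\mathbb{I}^{\rex,+}$ together with \Cref{thm:Sp_fin_univ}, applied levelwise over $\CT^{\op}$. First I would promote all the data to the augmented setting. The assignment $\CB G \mapsto (\Spc_{G,\ast}^{\omega},\mathrm{RepSph}_G)$ defines a functor $\CT^{\op}\to \Cat^{\otimes,\aug}_{\rex,+}$: the restriction--inflation functors $\alpha^*$ are strong monoidal, preserve finite colimits and idempotent completeness, and send a representation sphere $S^V$ to the representation sphere $S^{\alpha^* V}$, so they are morphisms of $\Cat^{\otimes,\aug}_{\rex,+}$. Likewise $\CC_\bullet$ equipped with the augmentation $\Lambda_\bullet(\mathrm{RepSph}_\bullet)$ is a functor $\CT^{\op}\to \Cat^{\otimes,\aug}_{\rex,+}$, and by hypothesis it factors through the full subcategory $\Cat^{\otimes,\aug^{-1}}_{\rex,+}$ of pairs whose augmentation consists of invertible objects. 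The transformation $\Lambda_\bullet$ then upgrades canonically to a morphism in $\Fun(\CT^{\op},\Cat^{\otimes,\aug}_{\rex,+})$ from $(\Spc_{\bullet,\ast}^{\omega},\mathrm{RepSph}_\bullet)$ to $(\CC_\bullet,\Lambda_\bullet(\mathrm{RepSph}_\bullet))$.

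Next I would pass to the localization. Postcomposition preserves adjunctions, so the adjunction $\mathbb{I}^{\rex,+}\dashv \mathrm{incl}$ from the previous section induces an adjunction between $\Fun(\CT^{\op},\Cat^{\otimes,\aug}_{\rex,+})$ and $\Fun(\CT^{\op},\Cat^{\otimes,\aug^{-1}}_{\rex,+})$, computed levelwise. Applying the left adjoint to $(\Spc_{\bullet,\ast}^{\omega},\mathrm{RepSph}_\bullet)$ yields, levelwise and by \Cref{thm:Sp_fin_univ}, the functor $\CB G \mapsto (\Sp_{G}^{\omega},\mathrm{RepSph}_G)$, with the functoriality $\alpha^* \colon \Sp_G^{\omega}\to\Sp_H^{\omega}$ obtained by applying $\mathbb{I}^{\rex,+}$ to $\alpha^*\colon\Spc_{G,\ast}^{\omega}\to\Spc_{H,\ast}^{\omega}$, and the unit of the adjunction being the natural transformation $\Sigma^\infty_\bullet$. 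The point needing a little care here is precisely the identification of this levelwise construction with the functor $\Sp_\bullet^{\omega}$ named in the statement and the identification of the unit with $\Sigma^\infty_\bullet$; both follow from \Cref{thm:Sp_fin_univ}, naturality of the adjunction unit, and uniqueness of adjoints, since $\alpha^*$ on $G$-spectra is characterised as the unique finite-colimit-preserving strong monoidal functor extending $\alpha^*$ on pointed $G$-spaces along $\Sigma^\infty$.

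Finally, since the target $(\CC_\bullet,\Lambda_\bullet(\mathrm{RepSph}_\bullet))$ lies in $\Fun(\CT^{\op},\Cat^{\otimes,\aug^{-1}}_{\rex,+})$, the universal property of the localization supplies a morphism $\Sp_\bullet^{\omega}\Rightarrow\CC_\bullet$ in $\Fun(\CT^{\op},\Cat^{\otimes,\aug^{-1}}_{\rex,+})$, unique up to contractible choice, whose restriction along $\Sigma^\infty_\bullet$ recovers $\Lambda_\bullet$; forgetting augmentations gives the asserted natural transformation in $\Cat^{\otimes}_{\rex,+}$, and both existence and uniqueness are exactly the mapping-space equivalence of the adjunction. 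I expect the main obstacle to be the bookkeeping of the middle step — verifying that applying $\mathbb{I}^{\rex,+}$ levelwise genuinely reproduces the functor $\Sp_\bullet^{\omega}$ \emph{with its native restriction--inflation functoriality}, not merely its values — which is where \Cref{thm:Sp_fin_univ} and the coherence of the adjunction carry the real content.
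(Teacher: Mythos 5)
Your proposal is correct and is essentially the paper's own argument: lift $\Spc_{\bullet,\ast}^{\omega}$ to $\Cat^{\otimes,\aug}_{\rex,+}$ via the representation-sphere augmentation, view $\CC_\bullet$ as landing in $\Cat^{\otimes,\aug^{-1}}_{\rex,+}$ (the paper augments by $\mathrm{Pic}(\CC_{\CB G})$ rather than by $\Lambda_\bullet(\mathrm{RepSph}_\bullet)$, an immaterial difference), and apply the levelwise left adjoint $\mathbb{I}^{\rex,+}$ together with \Cref{thm:Sp_fin_univ}. Your extra care in identifying the levelwise localization with the native restriction functoriality of $\Sp_\bullet^{\omega}$ and its unit with $\Sigma^\infty_\bullet$ is a point the paper leaves implicit, and your justification for it is the right one.
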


\begin{proof}
Note that for every group homomorphism $\alpha\colon H\rightarrow G$, the restriction functor $\alpha^*\colon \Spc_{G,\ast}\rightarrow \Spc_{H,\ast}$ preserves representation spheres. Therefore $\Spc^{\omega}_{\bullet,\ast}\colon \Glo{}^{\op}\rightarrow \Cat^{\otimes}_{\rex,+}$ lifts to a functor into $\Cat^{\otimes,\aug}_{\rex,+}$. Therefore the corollary follows immediately from the previous proposition by viewing $\CC_\bullet$ as a functor into $\Cat^{\otimes,\aug^{-1}}_{\rex,+}$ by augmenting $\CC_{\CB G}$ by $\mathrm{Pic}(\CC_{\CB G})$, the set of invertible objects in $\CC_{\CB G}$.
\end{proof}

\subsection{Extending to equivariant spectra}

As mentioned, we would like to apply the previous results to extend the cohomology theories $\decatcoh{G}{-}{\CR}$ to cohomology theories defined on genuine $G$-spectra. For this we have to assume that the representation spheres are inverted by our cohomology theories. 

\begin{definition}\label{def:T-genuine}
We say that a naive $2$-ring $\CR$ is $\CT$-\emph{genuine} if it is $\CT$-pregenuine and
\begin{enumerate}
\setcounter{enumi}{2}
 \item\label{item:gen-rep-sph} For every group $G\in \CT$ and every irreducible $G$-representation $V$, the object $\decatcoh{G}{S^V}{\CR} \in \CR_{\CB G}$ is invertible.
\end{enumerate}
\end{definition}

For certain statements, such as \Cref{thm:Represented-theorem}, it is in fact not relevant exactly which subcategory $\CT$ of enough injective objects a naive global 2-ring is $\CT$-genuine for, and so we make the following:

\begin{definition}\label{def:genuine}
We say a naive global 2-ring $\CR$ is \emph{genuine} if it there is a subcategory $\CT$ of enough injective objects such that $\CR$ is $\CT$-genuine. We will typically refer to a genuine global 2-ring simply as a \emph{global 2-ring}.
\end{definition}

We now make the object $\decatcoh{G}{S^V}{\CR}$, and so the condition of the previous definition, more explicit.

\begin{lemma}\label{gamma-sphere}
Let $V$ be a $G$-representation and let $p \colon S(V)\sslash G \to \CB G$ be the canonical faithful map of global spaces exhibiting $S(V)$, the unit sphere in $V$, as a $G$-space. Then the fiber of the unit map $\eta \colon \1_{\CR_{\CB G}} \to p_*p^* \1_{\CR_{\CB G}}$ is equivalent to $\decatcoh{G}{S^V}{\CR}$.
\end{lemma}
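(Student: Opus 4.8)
The plan is to realise $\decatcoh{G}{S^V}{\CR}$ via the standard cofiber sequence of pointed $G$-spaces
\[
S(V)_+ \longrightarrow S^0 \longrightarrow S^V,
\]
coming from the homeomorphism $S^V \cong D(V)/S(V)$ and the contractibility of the unit disc $D(V)$: the first map is the inclusion of the boundary sphere composed with the equivalence $D(V)_+\xrightarrow{\sim}S^0$, and the second is the collapse. First I would apply the extension of $\decatcoh{G}{-}{\CR}$ to pointed $G$-spaces. By construction this extension is compatible with the functor $\decatcoh{G}{-}{\CR}\colon\Spc_G^{\op}\to\CR_{\CB G}$ on unpointed $G$-spaces, which is limit preserving by \Cref{prop:Gamma_G_colim_pres} (composed with the limit-preserving forgetful functor $\CAlg(\CR_{\CB G})\to\CR_{\CB G}$); hence it sends cofiber sequences of pointed $G$-spaces to fiber sequences in $\CR_{\CB G}$, giving a fiber sequence
\[
\decatcoh{G}{S^V}{\CR}\longrightarrow \decatcoh{G}{S^0}{\CR}\longrightarrow \decatcoh{G}{S(V)_+}{\CR}.
\]

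Second I would identify the two outer terms. Since $S^0=(\pt)_+$ and $\pt\sslash G\simeq\CB G$ with structure map the identity, the convention $\decatcoh{G}{X_+}{\CR}=\decatcoh{G}{X}{\CR}$ gives $\decatcoh{G}{S^0}{\CR}\simeq\id_*\1_{\CR_{\CB G}}\simeq\1_{\CR_{\CB G}}$. Since $S(V)_+$ corresponds to the $G$-space $S(V)$, which under \Cref{Rezk} is the faithful map $p\colon S(V)\sslash G\hookrightarrow \CB G$, we get $\decatcoh{G}{S(V)_+}{\CR}\simeq p_*\1_{\CR_{S(V)\sslash G}}$; and as $p^*$ is strong monoidal, $\1_{\CR_{S(V)\sslash G}}\simeq p^*\1_{\CR_{\CB G}}$, so $\decatcoh{G}{S(V)_+}{\CR}\simeq p_*p^*\1_{\CR_{\CB G}}$. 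Thus the fiber sequence exhibits $\decatcoh{G}{S^V}{\CR}$ as the fiber of some map $\1_{\CR_{\CB G}}\to p_*p^*\1_{\CR_{\CB G}}$.

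Third, and this is the one point requiring real care, I would check that this map is the unit $\eta$ of the adjunction $(p^*,p_*)$. The map in question is $\decatcoh{G}{h}{\CR}$ for $h\colon S(V)\sslash G\to\CB G$ the image of the collapse $S(V)\to\pt$, which fits into a triangle over $\CB G$ with $h=p$ and other leg $\id_{\CB G}$; by \Cref{def-unravel-incoh}(\ref{item-unravel-morph}) it equals $\id_*(\lax_h)\colon \1_{\CR_{\CB G}}\to p_*\1_{\CR_{S(V)\sslash G}}$, i.e.\ the lax-unit map $\lax_p$. For the monoidal adjunction $(p^*,p_*)$ with $p^*$ strong monoidal, $\lax_p$ is by definition the $(p^*\dashv p_*)$-adjunct of the strong monoidal structure isomorphism $p^*\1_{\CR_{\CB G}}\xrightarrow{\sim}\1_{\CR_{S(V)\sslash G}}$, which under the identification $\1_{\CR_{S(V)\sslash G}}\simeq p^*\1_{\CR_{\CB G}}$ is precisely $\eta_{\1_{\CR_{\CB G}}}\colon \1_{\CR_{\CB G}}\to p_*p^*\1_{\CR_{\CB G}}$. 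Combining the three steps yields $\decatcoh{G}{S^V}{\CR}\simeq\mathrm{fib}(\eta)$.

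The main obstacle is the third step: matching the functorially induced map with the adjunction unit $\eta$ requires unwinding \Cref{def-unravel-incoh} together with the (formal but bookkeeping-heavy) fact that the lax-unit map of the right adjoint of a strong monoidal functor is the adjunct of the unit isomorphism. A secondary point to nail down is that the pointed extension of $\decatcoh{G}{-}{\CR}$ genuinely sends this cofiber sequence to a fiber sequence, which I would deduce directly from the construction of that extension; once both are in hand the identifications in the second step are routine.
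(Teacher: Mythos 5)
Your proposal is correct and follows essentially the same route as the paper's proof: the cofiber sequence $S(V)_+\to S^0\to S^V$, the fact that the pointed extension of $\decatcoh{G}{-}{\CR}$ preserves limits, the identifications $\decatcoh{G}{S^0}{\CR}\simeq\1_{\CR_{\CB G}}$ and $\decatcoh{G}{S(V)_+}{\CR}\simeq p_*p^*\1_{\CR_{\CB G}}$, and the recognition of the induced map as the lax unit of $p_*$, hence the adjunction unit $\eta$. Your third step is in fact spelled out in slightly more detail than the paper's, which simply asserts the identification of the lax unit map with $\eta$.
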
 

\begin{proof}
We may consider the cofiber sequence $S(V)_+ \to S^0 \to S^V$ of pointed $G$-spaces. Because the extension of $\decatcoh{G}{-}{\CR}$ to pointed spaces is again limit preserving, we obtain a fiber sequence
\[
\decatcoh{G}{S^V}{\CR}\coloneqq \mathrm{fib}(\decatcoh{G}{S^0}{\CR} \to \decatcoh{G}{S(V)_+}{\CR}) \in \CR_{\CB G}.
\]
unraveling the definition, we find that $\decatcoh{G}{\pt}{\CR}= \1_{\CR_{\CB G}}$ and $\decatcoh{G}{S(V)}{\CR} =p_* \1_{\CR_{S(V)\sslash G}}=p_*p^* \1_{\CR_{\CB G}}$. Moreover the map $\decatcoh{G}{\pt}{\CR}\to \decatcoh{G}{S(V)}{\CR}$ is induced by the lax unit map of the lax monoidal functor $p_*$, which is in turn induced by $p_*$ being a right adjoint of $p^*$. It then follows that $\decatcoh{G}{\pt}{\CR}\to \decatcoh{G}{S(V)}{\CR}$ can be identified with the unit map $\eta \colon \1_{\CR_{\CB G}} \to p_*p^* \1_{\CR_{\CB G}}$.
\end{proof}

The key result of this section is the following corollary, which by a representability argument that we carry out in the following section will allow us to deduce \Cref{introthm:gen_ref}.

\begin{corollary}\label{prop-Fsp}
Let $\CR$ be a $\CT$-genuine global 2-ring. There exists a unique functor
\[
\decatcoh{\bullet}{-}{\CR}\colon \CT^{\op}\rightarrow \Fun([1],\Cat^{\otimes}_{\lex,+}), \quad \CB G\mapsto [\decatcoh{G}{-}{\CR}\colon (\Sp^{\omega}_{G})^{\op}\rightarrow \CR_{\CB G}]
\] 
which agrees with the functor $\decatcoh{\bullet}{-}{\CR}$ of \Cref{thm:Gamma_strong_strict} when restricted to suspension spectra. 
\end{corollary}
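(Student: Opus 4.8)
The plan is to deduce this from the universal property of $\Sp_\bullet^\omega$ recorded in \Cref{cor:univ_prop_suspension}, applied to a pointed, dualized form of the unravelling of \Cref{thm:Gamma_strong_strict}.

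\emph{Packaging the unravelling into $\Cat^\otimes_{\rex,+}$.} By \Cref{thm:Gamma_strong_strict} we already have a functor $\CT^{\op}\to\Fun([1],\Cat^\otimes_{\lex})$ sending $\CB G$ to the strong monoidal, limit preserving functor $\decatcoh{G}{-}{\CR}\colon(\Spc_G^\omega)^{\op}\to\CR_{\CB G}$, strictly natural in $\CB G$. Since each $\CR_{\CB G}$ is stable and presentable, its tensor product is exact in each variable, and each transition functor $\alpha^*\colon\CR_{\CB G}\to\CR_{\CB H}$—a symmetric monoidal left adjoint between stable presentable $\infty$-categories—is exact; hence $\CR_{\CB G}^{\op}$ lies in $\Cat^\otimes_{\rex,+}$ and the $\alpha^*$ are morphisms there. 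Dualizing source and target of each component, the datum of \Cref{thm:Gamma_strong_strict} is thus the same as a natural transformation $\decatcoh{\bullet}{-}{\CR}\colon\Spc_\bullet^\omega\Rightarrow\CR_\bullet^{\op}$ of functors $\CT^{\op}\to\Cat^\otimes_{\rex,+}$, with finite-colimit preserving strong monoidal components by \Cref{prop:Gamma_G_colim_pres} and \Cref{gamma_G-strong-mon}. Running the $2$-functoriality argument of the pointed-space extension carried out above—post-composing with $(-)_\ast$, which preserves $\Cat^\otimes_{\rex,+}$ and satisfies $(\CR_{\CB G}^{\op})_\ast\simeq\CR_{\CB G}^{\op}$ since $\CR_{\CB G}^{\op}$ has a zero object—upgrades this to a natural transformation $\decatcoh{\bullet}{-}{\CR}\colon\Spc_{\bullet,\ast}^\omega\Rightarrow\CR_\bullet^{\op}$ of functors $\CT^{\op}\to\Cat^\otimes_{\rex,+}$ restricting to the previous one along $(-)_+$.

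\emph{Inverting representation spheres.} To invoke \Cref{cor:univ_prop_suspension} it remains to check that $\decatcoh{G}{S^V}{\CR}$ is invertible in $\CR_{\CB G}$ for every $G\in\CT$ and every representation sphere $S^V$ (or pointed $G$-space weakly equivalent to one). Writing $V\cong\bigoplus_i V_i$ with each $V_i$ irreducible gives $S^V\simeq\bigwedge_i S^{V_i}$ in $\Spc_{G,\ast}^\omega$, so strong monoidality of $\decatcoh{G}{-}{\CR}$ yields $\decatcoh{G}{S^V}{\CR}\simeq\bigotimes_i\decatcoh{G}{S^{V_i}}{\CR}$, a finite tensor product of objects invertible by condition (\ref{item:gen-rep-sph}) of \Cref{def:T-genuine}, hence invertible; weakly equivalent pointed spaces have the same image since $\decatcoh{G}{-}{\CR}$ is a functor of $\infty$-categories.

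\emph{Conclusion.} \Cref{cor:univ_prop_suspension} now produces a unique natural transformation $\decatcoh{\bullet}{-}{\CR}\colon\Sp_\bullet^\omega\Rightarrow\CR_\bullet^{\op}$ of functors $\CT^{\op}\to\Cat^\otimes_{\rex,+}$ whose restriction along $\Sigma^\infty_\bullet$ is the transformation of the first step. Dualizing source and target back turns this into the desired functor $\CT^{\op}\to\Fun([1],\Cat^\otimes_{\lex,+})$, $\CB G\mapsto[\decatcoh{G}{-}{\CR}\colon(\Sp_G^\omega)^{\op}\to\CR_{\CB G}]$; precomposing with the suspension spectrum functor and then with $(-)_+$ recovers the functor of \Cref{thm:Gamma_strong_strict}, while uniqueness is inherited from \Cref{cor:univ_prop_suspension}. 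The one delicate point is the first step: arranging the passage to pointed spaces compatibly with the strict functoriality over $\CT^{\op}$ and with the dualization, so that everything genuinely lives in $\Cat^\otimes_{\rex,+}$—this is precisely where stability of the categories $\CR_{\CB G}$ enters, as it is what makes the tensor products and the structure maps $\alpha^*$ finite-colimit preserving on the opposite categories.
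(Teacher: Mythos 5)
Your proposal is correct and follows essentially the same route as the paper: verify that all representation spheres are inverted by decomposing a representation into irreducibles and using strong monoidality of $\decatcoh{G}{-}{\CR}$ (\Cref{gamma_G-strong-mon}) together with axiom (\ref{item:gen-rep-sph}), then apply the universal property of \Cref{cor:univ_prop_suspension} to the pointwise opposite of the functor from \Cref{thm:Gamma_strong_strict}. The only difference is that you spell out the packaging into $\Cat^{\otimes}_{\rex,+}$ and the pointed-space extension in more detail than the paper, which simply invokes these steps.
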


\begin{proof}
Let us show that all representation spheres are inverted by $\decatcoh{\bullet}{-}{\CR}$. First we note that the set of invertible objects in $\CR_{\CB G}$ is closed under two out of three,~i.e. given two objects $X,Y\in \CR_{\CB G}$, if two of $X$, $Y$ and $X\otimes Y$ are invertible then so is the third. If $V$ and $W$ are irreducible $G$-representations, then we know by axiom (3) that $\decatcoh{G}{S^V}{\CR}$ and $\decatcoh{G}{S^W}{\CR}$ are invertible objects. Using \Cref{gamma_G-strong-mon} we conclude that 
\[
\decatcoh{G}{S^{V \oplus W}}{\CR}=\decatcoh{G}{S^V \otimes S^W}{\CR}\simeq \decatcoh{G}{S^V}{\CR} \otimes \decatcoh{G}{S^W}{\CR} 
\]
is also an invertible object. Because every $G$-representation is a direct sum of irreducible representations, the result follows. Given this, the result follows immediately from \Cref{cor:univ_prop_suspension} applied to (the pointwise opposite of) the functor of \Cref{thm:Gamma_strong_strict}. 
\end{proof}

\begin{remark}\label{rem:extend_to_groups_not_in_T}
Let $\CR$ be a $\CT$-genuine global 2-ring. It is unlikely that the cohomology theory $\decatcoh{G}{-}{\CR}\colon (\Spc_G^\omega)^{\op}\rightarrow \CR_{\CB G}$ extends to a cohomology theory on $\Sp_G^\omega$ when $G\notin \CT$ in general. 

However, there is a specific case where this is possible. Recall that equivariant elliptic cohomology defines an $\ab$-global naive 2-ring $\CQ_\bullet$. We will see that $\CQ_\bullet$ is $\Tori$-genuine, but \emph{not} $\Glo{\ab}$-genuine, in both cases in the sense of \Cref{def:T-genuine}. Therefore the previous theorem only guarantees the lift of equivariant elliptic cohomology to a cohomology theory on equivariant spectra at tori. Nevertheless, it is in fact possible to define such extensions for any compact abelian Lie group. The general construction would follow the strategy of \cite{Chau2021}*{Lemma 2.1} for the group $C_n$. In particular it would in principle depend on a choice of embedding of $G$ into a torus, however we expect that one can show that it is coherently independent of this choice.
\end{remark}

Of course we may also observe the effect of this construction on morphisms of $\CT$-genuine global 2-rings. 

\begin{definition}
We define $\TwoGlRingTgen{\CE}{\CT} \subset \TwoGlRingTpregen{\CE}{\CT}$ to be the full subcategory spanned by the $\CT$-genuine 2-rings.
\end{definition}

The following result is immediate.

\begin{proposition}\label{prop:funct-unravel-gen}
The functor
\[
\mathbb{H}\colon \TwoGlRingTpregen{\CE}{\CT}\rightarrow \Fun(\CT^{\op},\Fun([1],\Cat^{\otimes}_{\lex}))
\] of \Cref{prop:unravel_funct_pregen} induces a functor
\[
\mathbb{H}\colon \TwoGlRingTgen{\CE}{\CT}\rightarrow \Fun(\CT^{\op},\Fun([1],\Cat^{\otimes}_{\lex,+}))
\]
which agrees on objects with the construction of \Cref{prop-Fsp}.
\qednow
\end{proposition}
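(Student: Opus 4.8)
The plan is to feed the functoriality recorded in \Cref{prop:unravel_funct_pregen} into the universal property \Cref{cor:univ_prop_suspension}, using the uniqueness clause of the latter to promote the pointwise extension of \Cref{prop-Fsp} to a functorial one. Since $\TwoGlRingTgen{\CE}{\CT}$ is, by definition, a full subcategory of $\TwoGlRingTpregen{\CE}{\CT}$, one first simply restricts the functor $\mathbb{H}$ of \Cref{prop:unravel_funct_pregen}; after the levelwise passage to pointed spaces of \Cref{sec:inverting} it sends a $\CT$-genuine $\CR$ to the $\CT^{\op}$-indexed arrow $[\decatcoh{G}{-}{\CR}\colon(\Spc_{G,\ast}^{\omega})^{\op}\to\CR_{\CB G}]$, a map $\CB\alpha$ in $\CT$ to the change-of-group square --- which is strictly commutative because $Q_\alpha$ is an equivalence by \Cref{thm:Gamma_strong_strict} --- and a $\CT$-pregenuine morphism $F\colon\CR\to\CR'$ to the natural transformation assembling the Beck--Chevalley squares $u_{F,G}$ of \Cref{rmk:unwind_funct_unravelling}. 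For $\CR$ genuine, the invertibility condition~(\ref{item:gen-rep-sph}) of \Cref{def:T-genuine} together with \Cref{gamma_G-strong-mon} shows --- exactly as in the proof of \Cref{prop-Fsp} --- that every $\decatcoh{G}{-}{\CR}$ inverts all representation spheres. Hence \Cref{cor:univ_prop_suspension}, applied to the pointwise opposite of the natural transformation $\Spc_{\bullet,\ast}^{\omega}\Rightarrow\CR_{\bullet}^{\op}$ of functors $\CT^{\op}\to\Cat_{\rex,+}^{\otimes}$, produces the unique extension along $\Sigma_{\bullet}^{\infty}$ to $[\decatcoh{G}{-}{\CR}\colon(\Sp_{G}^{\omega})^{\op}\to\CR_{\CB G}]$ --- here $(\Sp_{G}^{\omega})^{\op}$ arises as $\mathbb{I}^{\rex,+}$ of the augmented category $\Spc_{G,\ast}^{\omega}$ via \Cref{thm:Sp_fin_univ} --- and this is, by construction, the object assignment of \Cref{prop-Fsp}.

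It remains to check that the morphism-level data extends compatibly. Each functor $F_{\CB G}\colon\CR_{\CB G}\to\CR'_{\CB G}$ occurring in $u_{F,G}$ is strong symmetric monoidal and exact, hence preserves finite limits, and carries invertible objects to invertible objects; so the composite $F_{\CB G}\circ\decatcoh{G}{-}{\CR}$, extended to $(\Sp_{G}^{\omega})^{\op}$ as above, agrees after precomposition with $\Sigma_{G}^{\infty}$ with $\decatcoh{G}{-}{\CR'}$, whence by the uniqueness in \Cref{cor:univ_prop_suspension} it coincides with the extension of $\decatcoh{G}{-}{\CR'}$. Running the same argument on the identity source legs of the $u_{F,G}$ and on the restriction source legs $\alpha^{\ast}$ of the change-of-group squares shows that every square in $\mathbb{H}|_{\TwoGlRingTgen{\CE}{\CT}}$ extends, and uniqueness forces these extensions to be mutually compatible. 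Equivalently, the whole diagram lies in the image of the arrow-wise left adjoint $\Fun([1],\mathbb{I}^{\rex,+})$ (which exists because a left adjoint induces an adjunction on arrow categories): it leaves the target legs $\CR_{\CB G}$ unchanged --- their representation-sphere images being already invertible --- and replaces the source legs $\Spc_{\bullet,\ast}^{\omega}$ by $\Sp_{\bullet}^{\omega}$. Post-composing $\mathbb{H}$ with this operation yields the asserted functor $\TwoGlRingTgen{\CE}{\CT}\to\Fun(\CT^{\op},\Fun([1],\Cat_{\lex,+}^{\otimes}))$, which agrees on objects with \Cref{prop-Fsp}.

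The only point that deserves real care is the bookkeeping behind the ``equivalently'' above: organizing the morphism-level squares into a single diagram valued in \emph{augmented} symmetric monoidal $\infty$-categories --- concretely, augmenting each target leg $\CR_{\CB G}$ by its Picard groupoid so that the squares $u_{F,G}$ and the change-of-group squares become augmentation-preserving --- so that the localization $\mathbb{I}^{\rex,+}$ can be applied to the diagram uniformly rather than to each $\decatcoh{G}{-}{\CR}$ in isolation. The direct argument via the uniqueness clause of \Cref{cor:univ_prop_suspension} sidesteps this, which is presumably why the result is advertised as immediate; everything else is a routine combination of \Cref{prop-Fsp}, \Cref{prop:unravel_funct_pregen}, and \Cref{cor:univ_prop_suspension}.
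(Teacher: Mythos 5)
Your proposal is correct and matches the paper's (implicit) argument: the paper declares the result immediate, and what it has in mind is exactly your third paragraph — augment each target $\CR_{\CB G}$ by its invertible objects so that the entire restricted diagram lands in (arrows of) $\Cat^{\otimes,\aug}_{\rex,+}$, then post-compose with the levelwise localization $\mathbb{I}^{\rex,+}$, which replaces the source legs by $\Sp_\bullet^\omega$ and fixes the already-local targets. Your first two paragraphs' "uniqueness forces compatibility" phrasing would not suffice on its own in the $\infty$-categorical setting, but since you recognize this and supply the uniform localization argument, the proof is complete.
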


Let us finally discuss various simplifications of (3) above which apply in specific instances. One case is when $\CE$ is the family of abelian compact Lie groups. For what follows we denote by $\T$ the torus of rank one and $\tau$ for the tautological representation, which is represented by the identity group homomorphism $\mathrm{id}\colon\T\to\T$.

\begin{proposition}\label{prop-abelian-compact}
Let $\mathrm{Ab}$ be the global family of \emph{abelian} compact Lie groups, let $\Tori \subset \CE$ be the full subcategory of enough injective objects spanned by the tori, and let $\CR$ be a naive global 2-ring which is $\Tori$-pregenuine. Then axiom (\ref{item:gen-rep-sph}) is equivalent to:
\begin{itemize}
    \item[(3')] The object $\decatcoh{\T}{S^\tau}{\CR}$ is invertible in $\CR_{\CB \T}$.
\end{itemize}
\end{proposition}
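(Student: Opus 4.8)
The implication (3) $\Rightarrow$ (3') is immediate: $\T$ is a torus, so $\CB\T \in \Tori$, and $\tau$ is an irreducible $\T$-representation, whence (3) applied to $G=\T$ and $V=\tau$ is precisely (3'). So the content is the converse. Here I would first reduce to a bookkeeping statement about representations of tori: since $\CT=\Tori$, condition (\ref{item:gen-rep-sph}) only ever concerns an irreducible real representation $V$ of a torus $G$. Up to isomorphism these are the trivial one-dimensional representation together with the two-dimensional representations attached to unordered pairs $\{\chi,\chi^{-1}\}$ of nontrivial continuous characters $\chi\colon G\to\T$; and each such two-dimensional representation is isomorphic to $\chi^{*}\tau$, the restriction of the tautological representation of $\T$ along $\chi$. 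Thus it suffices to treat these two cases.

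For the trivial one-dimensional $V$ we have $S^{V}\simeq S^{1}$ as a (trivially pointed) $G$-space, and since $S^1 \simeq \Sigma S^0$, exactness of the pointed extension of $\decatcoh{G}{-}{\CR}$ identifies $\decatcoh{G}{S^{V}}{\CR}$ with a shift of $\decatcoh{G}{S^0}{\CR}\simeq\1_{\CR_{\CB G}}$; a shift of the unit is invertible in any stable symmetric monoidal $\infty$-category, so this case requires nothing beyond $\CR$ being a naive global $2$-ring. For a nontrivial irreducible $V\cong\chi^{*}\tau$, one-point compactification is compatible with restriction along group homomorphisms, so $S^{V}\simeq\chi^{*}S^{\tau}$ as pointed $G$-spaces. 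Since $\CR$ is $\Tori$-pregenuine and $\CB\T\in\Tori$, \Cref{thm:Gamma_strong_strict} (equivalently, the base-change clause \Cref{thm:GKV-axioms}(2)) gives that the change-of-group transformation
\[
Q_{\chi}\colon \chi^{*}\decatcoh{\T}{S^{\tau}}{\CR}\xrightarrow{\ \sim\ }\decatcoh{G}{\chi^{*}S^{\tau}}{\CR}\simeq\decatcoh{G}{S^{V}}{\CR}
\]
is an equivalence. The functor $\chi^{*}=(\CB\chi)^{*}\colon\CR_{\CB\T}\to\CR_{\CB G}$ is symmetric monoidal, hence sends invertible objects to invertible objects; combining this with (3') we conclude that $\decatcoh{G}{S^{V}}{\CR}$ is invertible, which establishes (\ref{item:gen-rep-sph}).

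The steps are all routine once assembled; the only points needing a little care are the classification of the irreducible real representations of a torus together with the fact that every nontrivial one is pulled back from $\tau$ along a character, the identification $S^{\chi^{*}\tau}\simeq\chi^{*}S^{\tau}$, and the input that $Q_{\chi}$ is an equivalence — which is exactly where the $\Tori$-pregenuine hypothesis is used, via \Cref{thm:Gamma_strong_strict}. I do not expect any substantive obstacle.
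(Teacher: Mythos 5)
Your proof is correct and follows essentially the same route as the paper's: both reduce to pulling $S^{\tau}$ back along a character $\chi\colon G\to\T$ and then combine the base-change equivalence $Q_{\chi}$ (which needs only $\CB\T\in\Tori$) with strong monoidality of $\chi^{*}$ to import invertibility from (3'). The only divergence is the preliminary reduction: the paper passes from real to complex irreducibles via complexification and the two-out-of-three property of invertible objects, whereas you classify the irreducible real representations of a torus directly and treat the trivial one separately as a (de)suspension of the unit — both are fine.
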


\begin{proof}
Clearly (\ref{item:gen-rep-sph}) implies (3') so let us prove the converse. First we note that the set of invertible objects in $\CR_{\CB G}$ is closed under two out of three,~i.e. given two objects $X,Y\in \CR_{\CB G}$, if two of $X$, $Y$ and $X\otimes Y$ are invertible then so is the third. Because every real representation is a direct summand of a complex representation it therefore suffices to prove that $\decatcoh{G}{S^V}{\CR}$ is invertible for $V$ an irreducible complex $G$-representation. Because $G$ is abelian, every irreducible complex $G$-representation is one dimensional, i.e., given by a character $\alpha\colon G \to \T$. By \Cref{lem:Q_equivalence} we have equivalences
    \[\decatcoh{G}{S^\alpha}{\CR} = \decatcoh{G}{S^{\alpha^*\tau}}{\CR} \simeq \decatcoh{G}{(\alpha^* S^{\tau}}{\CR} \xrightarrow{Q_\alpha} \alpha^*\decatcoh{\T}{S^\tau}{\CR}.\] 
    Because $\alpha^*\colon \CR_{\CB \T}\rightarrow \CR_{\CB G}$ is strong monoidal (see \Cref{gamma_G-strong-mon}), it preserves invertible objects. Therefore it suffices to prove that $\decatcoh{\T}{S^\tau}{\CR}$ is an invertible object, which is precisely (3').
\end{proof}

\begin{remark}
    Let $\T$ and $\tau$ be as above; then $S(\tau)\simeq \T$ and the map $p\colon S(\tau)\sslash \T \to \CB \T$ identifies with the map $i\colon \pt \to \CB \T$ induced by the inclusion $e \to \T$. 
    Because tensoring in $\CR_{\CB \T}$ is exact, we find that tensoring with $\decatcoh{\T}{S^\tau}{\CR}$ is equivalent to the functor
    \[
    \mathrm{fib}\big( (-) \otimes \1_{\CR_{\CB \T}} \rightarrow (-)\otimes i_*\1_{\CR_{\pt}}\big),
    \] 
    which is in turn equivalent to the functor $\mathrm{fib}\big(\eta\colon (-) \rightarrow i_*i^*(-)\big)$ by an application of the right projection formula for $p$ (which holds by axiom (2) of a global 2-ring). By assumption $\decatcoh{\T}{S^\tau}{\CR}$ is invertible, and so this functor is an equivalence. We conclude that $i^*\dashv i_*$ is a spherical adjunction, in the sense of \cite{Spherical21}.
\end{remark}

Similarly if $\CE$ is a family of finite abelian groups and $\CT = \CE$, then we can again simplify axiom (\ref{item:gen-rep-sph}). We write $\iota_n\colon C_n\rightarrow \T$ for the canonical inclusion, which defines a 1-dimensional complex $C_n$-representation.

\begin{proposition}\label{prop:finite-abelian}
    Let $\CE$ be a global family of \emph{finite abelian} groups and let $\CR$ be a naive global 2-ring which is $\CE$-pregenuine. Then axiom (\ref{item:gen-rep-sph}) is equivalent to:
    \begin{itemize}
        \item[(3')] The object $\decatcoh{C_n}{S^{\iota_n}}{\CR}$ is invertible in $\CR_{\CB C_n}$ for all $C_n\in \CE$.
    \end{itemize}
\end{proposition}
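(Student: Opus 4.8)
The plan is to transcribe the proof of \Cref{prop-abelian-compact} almost verbatim, with the rank-one torus $\T$ and its tautological character $\tau$ replaced by the finite cyclic groups $C_n$ and the inclusions $\iota_n$. The implication (\ref{item:gen-rep-sph}) $\Rightarrow$ (3') is immediate: each $\iota_n$ is a one-dimensional complex $C_n$-representation, hence irreducible, so axiom (\ref{item:gen-rep-sph}) applied with $G = C_n$ says precisely that $\decatcoh{C_n}{S^{\iota_n}}{\CR}$ is invertible.

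For the converse I would first carry out the same reduction to irreducible \emph{complex} representations as in \Cref{prop-abelian-compact}. The set of invertible objects of $\CR_{\CB G}$ is closed under two-out-of-three for $\otimes$, the functor $\decatcoh{G}{-}{\CR}$ is strong monoidal on compact $G$-spaces by \Cref{gamma_G-strong-mon} (which applies since $\CT=\CE$, so every $G\in\CE$ lies in $\CT$), and every real $G$-representation is a direct summand of a complex one; combining these it suffices to show that $\decatcoh{G}{S^V}{\CR}$ is invertible for every $G\in\CE$ and every irreducible complex $G$-representation $V$.

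Then I would use that $G$ is finite abelian, so such a $V$ is one-dimensional, i.e.\ a character $\chi\colon G\to\T$. Its image is a finite cyclic subgroup $C_m\subset\T$, so $\chi$ factors as $G\xrightarrow{\alpha}C_m\xrightarrow{\iota_m}\T$ with $\alpha$ surjective; since $C_m\cong G/\ker\chi$ is a quotient of $G\in\CE$ and $\CE$ is a global family, $C_m\in\CE=\CT$. Because $S^\chi\simeq\alpha^*S^{\iota_m}$ as pointed $G$-spaces, \Cref{lem:Q_equivalence} (applicable because $\CB C_m\in\CT$ and $S^{\iota_m}$ is a compact $C_m$-space) yields an equivalence $\alpha^*\decatcoh{C_m}{S^{\iota_m}}{\CR}\xrightarrow{\sim}\decatcoh{G}{\alpha^*S^{\iota_m}}{\CR}=\decatcoh{G}{S^\chi}{\CR}$. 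As $\alpha^*\colon\CR_{\CB C_m}\to\CR_{\CB G}$ is symmetric monoidal it preserves invertible objects, so (3') forces $\decatcoh{G}{S^\chi}{\CR}$ to be invertible, which finishes the argument.

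I do not expect a genuine obstacle here. The only point that needs a little care is the real-to-complex reduction: one observes that $\decatcoh{G}{S^W}{\CR}^{\otimes 2}\simeq\decatcoh{G}{S^{W_{\C}}}{\CR}$ is a tensor product of the (already-treated) invertible objects $\decatcoh{G}{S^{V_i}}{\CR}$ attached to the complex irreducible summands $W_{\C}\cong\bigoplus_i V_i$, and then that an object of a symmetric monoidal $\infty$-category whose square is invertible is itself invertible (its one-sided inverse is automatically two-sided). Everything else is a line-by-line copy of the proof of \Cref{prop-abelian-compact}.
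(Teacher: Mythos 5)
Your proposal is correct and follows essentially the same route as the paper: reduce (via two-out-of-three, strong monoidality from \Cref{gamma_G-strong-mon}, and the real-to-complex reduction) to irreducible complex characters, factor each character as a surjection onto a cyclic group $C_m\in\CE$ followed by $\iota_m$, and transport invertibility along the symmetric monoidal functor $\alpha^*$ using \Cref{lem:Q_equivalence}. Your explicit remark that an object whose tensor square is invertible is itself invertible fills in a small point the paper leaves implicit in its appeal to two-out-of-three.
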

\begin{proof}
    Clearly (\ref{item:gen-rep-sph}) implies (3'). Conversely, as before we can reduce to showing that for all $G\in \CE$, $\decatcoh{G}{-}{\CR}$ inverts spheres associated to irreducible complex representations $V\colon G\rightarrow \T$ of $G$. We factor $V$ into a surjection $p$ followed by an injection, which is necessarily of the form $\iota_n\colon C_n\rightarrow \T$ for some $n$. As before we obtain an equivalence:
    \[
    \decatcoh{G}{S^V}{\CR} = \decatcoh{G}{S^{p^*\iota_n}}{\CR} \simeq \decatcoh{G}{p^* S^{\iota_n}}{\CR} \xrightarrow{Q_\alpha} p^*\decatcoh{C_n}{S^{\iota_n}}{\CR}.
    \] Because $p^*$ is strong monoidal we conclude that $\decatcoh{G}{S^V}{\CR}$ is invertible.
\end{proof}

\subsection{A canonical class of examples}

Finally, before we discuss more interesting examples in \Cref{examples}, let us first discuss a canonical class of examples of global 2-rings.

\begin{lemma}\label{lem-sp-example}
Let $\CE$ be any  multiplicative global family. Then the naive global 2-ring $\Sp_\bullet$ from \Cref{ex:genuine_spectra} is $\CE$-pregenuine.
\end{lemma}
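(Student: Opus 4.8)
The plan is to verify the two axioms of Definition~\ref{def:oriented_glo_cat} (with $\CT=\CE$) directly from standard structural features of genuine equivariant stable homotopy theory. First I would record the relevant identifications: by Theorem~\ref{Rezk} and Remark~\ref{rem:restriction_functoriality}, a faithful morphism $\CB\alpha\colon\CB H\to\CB G$ in $\Orb{\CE}$ is represented by a subgroup inclusion $H\le G$, the functor $\alpha^*$ is the restriction functor $\mathrm{res}^G_H$, and therefore $\alpha_*$ is its right adjoint $\mathrm{coind}^G_H=F_H(G_+,-)$; moreover $\alpha^*$ also carries a left adjoint $\alpha_!=\mathrm{ind}^G_H=G_+\wedge_H(-)$. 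More generally, since $\Sp_\bullet$ is the right Kan extension of $\CB G\mapsto\Sp_G$ along the Yoneda embedding, for a global space of the form $Y\sslash H$ (with $Y$ an $H$-space) the value $(\Sp_\bullet)_{Y\sslash H}$ can be computed as the limit $\lim_{(H/L\to Y)\in(\Orb{H})_{/Y}}\Sp_L$, which one should think of as genuine $H$-equivariant local systems of spectra on $Y$.

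For axiom~(\ref{item:gen-proj}), the statement to be checked is exactly the right projection formula for the adjunction $(\mathrm{res}^G_H,\mathrm{coind}^G_H)$. I would deduce it from the elementary left projection formula $\mathrm{ind}^G_H((\mathrm{res}^G_H\CF)\otimes\CG)\simeq\CF\otimes\mathrm{ind}^G_H\CG$ (Frobenius reciprocity for $G_+\wedge_H(-)$), together with the Wirthm\"uller isomorphism $\mathrm{coind}^G_H(\CG)\simeq\mathrm{ind}^G_H(S^{-L}\otimes\CG)$, where $L$ is the tangent $H$-representation of $G/H$ at the identity coset and $S^{-L}\in\Sp_H$ is invertible. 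Since tensoring with an invertible object commutes with every functor occurring, the right projection formula for $\alpha_*$ follows from the left projection formula for $\alpha_!$; a short diagram chase then identifies the resulting equivalence with the canonical comparison map of Definition~\ref{def:oriented_glo_cat}(\ref{item:gen-proj}). Alternatively one may simply cite that $\Sp_\bullet$ underlies a $G$-symmetric monoidal structure in which restriction to subgroups is itself $G$-symmetric monoidal, as in \cite{CLL_Global}, which formally encodes this projection formula.

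For axiom~(\ref{item:gen-adjoint}), fix the pullback square with $\CB K\hookrightarrow\CB G$ faithful (so $K\le G$) and $\CB H\to\CB G$ represented by a homomorphism $\beta\colon H\to G$. The pullback $\sX$ has faithful structure map to $\CB H$, so by Theorem~\ref{Rezk} it is $\beta^*(G/K)\sslash H$, and the Beck--Chevalley transformation whose invertibility is required is the comparison $\beta^*\circ\mathrm{coind}^G_K\Rightarrow f_*\circ g^*$, where $f_*$ is the pushforward along $\beta^*(G/K)\sslash H\to\CB H$ and $g^*$ the pullback along $\sX\to\CB K$. I would establish this by reducing, via the limit descriptions above and the fact that $\Sp_H\simeq\lim_{H/L\in\Orb{H}}\Sp_L$, to a pointwise statement over the $H$-orbits of $\beta^*(G/K)$; assembled over the $H$-orbit category of $\beta^*(G/K)$ this is precisely the Mackey double-coset formula for coinduction along a subgroup. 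In $\infty$-categorical form this is the assertion that $\Sp_\bullet$ is \emph{equivariantly complete} as an $\CE$-global $\infty$-category (cf.\ Remark~\ref{rem:para-perspective}), part of the construction of the global category of genuine equivariant spectra in \cite{CLL_Global}; so here I would either invoke that result or spell out the double-coset computation by hand.

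The main obstacle is axiom~(\ref{item:gen-adjoint}) for compact Lie groups that are not finite: the double-coset ``space'' $H\backslash G/K$ may be positive-dimensional, so $\beta^*(G/K)$ is a genuine $H$-manifold rather than a finite disjoint union of orbits, and $(\Sp_\bullet)_{\sX}$ is an honest limit over a non-discrete orbit category. The cleanest route around this is to appeal to the equivariant completeness of the global category of genuine equivariant spectra from \cite{CLL_Global} rather than to re-derive the base-change identity from scratch; axiom~(\ref{item:gen-proj}), by contrast, is genuinely elementary given the Wirthm\"uller and Frobenius isomorphisms.
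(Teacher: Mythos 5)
Your proposal is essentially correct, but it is organised differently from the paper's argument, and the comparison is instructive. For axiom~(\ref{item:gen-proj}) you give a genuinely self-contained derivation (Frobenius reciprocity for $\alpha_!$ plus the Wirthm\"uller twist $\alpha_*\simeq\alpha_!(S^{-L}\otimes-)$), whereas the paper simply cites Proposition~4.17 of \cite{Cnossen22}; your route is fine, modulo the usual care needed to check that the abstract equivalence really is the canonical projection map rather than merely an abstract isomorphism of targets. For axiom~(\ref{item:gen-adjoint}) the two approaches diverge more: the paper factors the horizontal map $g'\colon\CB H\to\CB G$ into a quotient map followed by a faithful one, citing \cite{Cnossen22} for the faithful case and then, for the quotient (inflation) case, using the Wirthm\"uller isomorphism a second time to trade $f'_*$ for $f'_!$ and reduce to the standard Beck--Chevalley statement for induction against inflation. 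You instead attack the general $\beta\colon H\to G$ at once via a double-coset formula for coinduction; as you yourself note, this only literally works when $\beta^*(G/K)$ decomposes into finitely many orbits, so for general compact Lie groups your self-contained argument evaporates and you fall back on citing equivariant completeness from \cite{CLL_Global}. That citation is legitimate (the paper itself records in Remark~\ref{rem:para-perspective} that $\CE$-pregenuineness is equivalent to the parametrized package of fiberwise presentability, stability, equivariant completeness and a compatible monoidal structure), but it makes your treatment of the hardest axiom a black box, whereas the paper's quotient/faithful factorization isolates exactly which standard equivariant facts are being used. If you want a proof that does not outsource the Lie-group case, adopt the paper's factorization: the faithful case is subgroup base-change for coinduction, and the quotient case reduces by Wirthm\"uller to the (elementary) compatibility of induction with inflation.
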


\begin{proof}
The fact that $\Sp_\bullet$ is $\CE$-pregenuine expresses a collection of rather standard properties of the diagram of genuine equivariant spectra; nevertheless, let us give references and arguments here. Restricting the diagram $\Sp_\bullet$ to the subcategory $\Orb{\CE}$ we obtain a diagram which is equivalent to $\underline{\mathrm{OrbSp}}$, as defined in \cite{Cnossen22}. It follows from Proposition 4.17 of \emph{op.~cit.}~that $\Sp_\bullet$ satisfies condition (\ref{item:gen-proj}), and that it satisfies (\ref{item:gen-adjoint}) for faithful $g$. Let us argue the case when $\CB g\colon \CB G \rightarrow \CB K$ is a quotient map. By applying the Wirthm\"uller isomorphism we reduce to the analogous Beck--Chevalley statement for induction, which is discussed in \cite{LinskensGlobalization}*{Example 2.18}.
\end{proof}

From \Cref{lem:Q_equivalence} we obtain a functor $\decatcoh{\bullet}{-}{\Sp_\bullet}\colon \Glo{\CE}^{\op} \rightarrow \Fun([1],\Cat_\infty)$.

\begin{lemma}\label{lem-cohomotopy}
The functor $\decatcoh{\bullet}{-}{\Sp_\bullet}$ agrees with the $\Glo{\CE}^{\op}$-indexed natural transformation 
\[
(\Spc_G^\omega)^{\op} \xRightarrow{\Sigma^\infty_G} \Sp_G^{\op} \xRightarrow{\ul{\Hom}(-,S^0_G)} \Sp_G.
\]
\end{lemma}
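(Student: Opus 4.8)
The plan is to unravel the definition of $\decatcoh{\bullet}{-}{\Sp_\bullet}$ and recognise the pushforward of the unit sphere along a faithful map as a parametrised cohomotopy spectrum. Fix $G\in\CE$ and a compact $G$-space $X$, viewed via \Cref{Rezk} as a faithful map $p\colon X\sslash G\hookrightarrow\CB G$. By item~(\ref{item-unravel-obj}) of \Cref{def-unravel-incoh} we have $\decatcoh{G}{X}{\Sp_\bullet}=p_*\1_{\Sp_{X\sslash G}}$, and since $p^*$ is symmetric monoidal $\1_{\Sp_{X\sslash G}}\simeq p^*S^0_G$ (where $S^0_G=\1_{\Sp_G}$), so $\decatcoh{G}{X}{\Sp_\bullet}\simeq p_*p^*S^0_G$; tracking item~(\ref{item-unravel-morph}) of \Cref{def-unravel-incoh} one sees a map of $G$-spaces is sent to the induced map of these function objects, and by \Cref{mainfunctor} this assembles into the functor $\decatcoh{\bullet}{-}{\Sp_\bullet}\colon\Glo{\CE}^{\op}\to\Fun([1],\Cat_\infty)$ whose value at $\CB G$ is $[(\Spc_G^\omega)^{\op}\to\Sp_G,\ X\mapsto p_*p^*S^0_G]$, with the unit map $S^0_G\to p_*p^*S^0_G$ as the collapse datum. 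It therefore suffices to produce, naturally in $\Glo{\CE}^{\op}$, an equivalence $p_*p^*(-)\simeq\ul{\Hom}_{\Sp_G}(\Sigma^\infty_{G,+}X,-)$ carrying this unit map to $\ul{\Hom}(\Sigma^\infty_{G,+}(X\to\pt),-)$.

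For the underlying objectwise statement there are two routes. The direct one uses that $\Sp_\bullet$ is $\CE$-pregenuine (\Cref{lem-sp-example}), so by the parametrised description of \Cref{rem:para-perspective} the functor $p^*\colon\Sp_G\to\Sp_{X\sslash G}$ preserves limits, hence admits a further left adjoint $p_!$ (the fibrewise colimit), one has $p_!\1_{\Sp_{X\sslash G}}=p_!p^*S^0_G\simeq\Sigma^\infty_{G,+}X$, and the left projection formula $p_!(p^*A\otimes B)\simeq A\otimes p_!B$ holds because $\otimes$ commutes with fibrewise colimits; then for all $A,E\in\Sp_G$,
\[
\Map_{\Sp_G}(A,p_*p^*E)\simeq\Map_{\Sp_{X\sslash G}}(p^*A,p^*E)\simeq\Map_{\Sp_G}(p_!p^*A,E)\simeq\Map_{\Sp_G}(A\otimes\Sigma^\infty_{G,+}X,E),
\]
so $p_*p^*E\simeq\ul{\Hom}(\Sigma^\infty_{G,+}X,E)$ by Yoneda. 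The alternative route avoids $p_!$: both $\decatcoh{G}{-}{\Sp_\bullet}$ (by \Cref{prop:Gamma_G_colim_pres}) and $\ul{\Hom}(\Sigma^\infty_{G,+}(-),S^0_G)$ send colimits of $G$-spaces to limits, so it is enough to compare on orbits; for $X=G/H$ one has $X\sslash G\simeq\CB H$ with $p$ represented by the inclusion $\iota\colon H\hookrightarrow G$, whence $\decatcoh{G}{G/H}{\Sp_\bullet}\simeq\iota_*S^0_H$, while $\ul{\Hom}(\Sigma^\infty_{G,+}(G/H),S^0_G)\simeq\mathbb{D}_G(\mathrm{Ind}^G_H S^0_H)$, and a short adjunction calculation using the projection formula for induction together with the fact that restriction is symmetric monoidal (so commutes with $\mathbb{D}$) yields $\mathbb{D}_G\circ\mathrm{Ind}^G_H\simeq\iota_*\circ\mathbb{D}_H$ on dualisable objects; since $S^0_H$ is self-dual both sides equal $\iota_*S^0_H$.

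The remaining, and only delicate, point is coherence: upgrading this objectwise equivalence to an equivalence of the two $\Glo{\CE}^{\op}$-indexed diagrams of arrows in $\Cat_\infty$, including compatibility with the strict change-of-group squares of \Cref{thm:Gamma_strong_strict}. The cleanest approach is to build the target ``cohomotopy'' diagram by the same machinery: apply the construction of \Cref{thm:Rel_global_sect_functor}/\Cref{thm-E} to $\Sp_\bullet$, but using the fibrewise-colimit sections $p_!p^*\1$ in place of the unit sections, to obtain a $\Glo{\CE}^{\op}$-diagram whose value at $\CB G$ is $X\mapsto\Sigma^\infty_{G,+}X$ with its covariant change-of-group data; the identification $p_!p^*\1\simeq\Sigma^\infty_{\bullet,+}(-)$ is then a map of sections over $\Orb_\CE$, natural in $\Glo{\CE}^{\op}$ by functoriality of the mate and Beck--Chevalley operations already used to assemble $\decatcoh{\bullet}{-}{\CR}$, and applying $\ul{\Hom}(-,S^0_\bullet)$ and passing through the mate equivalence of \Cref{mainfunctor} yields the claim. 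I expect verifying precisely this naturality — in particular that the unit map $S^0_G\to p_*p^*S^0_G$ is carried compatibly in $G$ to $\ul{\Hom}$ of the collapse $\Sigma^\infty_{G,+}X\to S^0_G$ — to be the main bookkeeping obstacle, the objectwise identifications being routine.
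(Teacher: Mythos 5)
Your objectwise identification is correct: for a compact $G$-space $X$ with faithful structure map $p\colon X\sslash G\hookrightarrow\CB G$ one indeed has $\decatcoh{G}{X}{\Sp_\bullet}\simeq p_*p^*S^0_G\simeq\ul{\Hom}(\Sigma^\infty_{G,+}X,S^0_G)$, and either of your two routes (the $p_!\dashv p^*\dashv p_*$ adjunction calculus with the projection formula, or reduction to orbits) establishes this. The problem is that the lemma asserts an equivalence of $\Glo{\CE}^{\op}$-indexed natural transformations, and the coherence upgrade --- which you correctly single out as ``the main bookkeeping obstacle'' --- is exactly the content of the statement. Your final paragraph is a plan for this step, not a proof: rebuilding the suspension-spectrum diagram from ``fibrewise-colimit sections'' $p_!p^*\1$ and then chasing the mate and Beck--Chevalley identifications through the machinery of \Cref{thm:Rel_global_sect_functor} would be a construction comparable in length to the one it is meant to verify, and nothing in your sketch guarantees that the resulting comparison is an equivalence of diagrams rather than merely an objectwise one.

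The paper sidesteps all of this with a rigidity argument that makes the objectwise computation essentially unnecessary. Since $\Sp_\bullet$ is $\CE$-pregenuine (\Cref{lem-sp-example}), the parametrized perspective of \Cref{rem:para-perspective} applies, and the dual of \cite{CLL_Partial}*{Corollary 4.27} (applied to $T=\Glo{\CE}$ and $S=\Orb{\CE}$) says that a natural transformation $F_\bullet\colon(\Spc_\bullet^\omega)^{\op}\Rightarrow\Sp_\bullet$ such that each $F_G$ preserves finite limits, and such that for each faithful $f\colon\CB H\to\CB G$ the Beck--Chevalley map $F_Gf_!\Rightarrow f_*F_H$ is an equivalence, is uniquely determined by the single object $F_e(\ast)\in\Sp$. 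Both $\decatcoh{\bullet}{-}{\Sp_\bullet}$ (by \Cref{rem:Talpha-equivalence} and \Cref{prop:Gamma_G_colim_pres}) and $\ul{\Hom}(\Sigma^\infty_\bullet(-),S^0_\bullet)$ satisfy these hypotheses, and both send $\ast$ to $S^0$, so they agree as natural transformations. If you want to repair your argument, replace your last paragraph with this uniqueness statement; the hypotheses you need to check are precisely the two bullet points above, which your objectwise analysis already makes plausible but which the cited references verify directly.
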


\begin{remark}
This lemma shows that the unraveling associated to $\Sp_\bullet$ agrees with the family of equivariant Spanier--Whitehead duality functors.
\end{remark}

\begin{proof}
To prove this statement it is easiest to apply the parameterized perspective. As noted in \Cref{rem:para-perspective}, $\Sp_\bullet$ admits finite equivariant limits. Therefore, by the dual of \cite{CLL_Partial}*{Corollary 4.27} applied to $T = \Glo{\CE}$ and $S = \Orb{\CE}$, a natural transformation $F_\bullet\colon (\Spc_\bullet^\omega)^{\op}\Rightarrow \Sp_\bullet$ such that
\begin{itemize}
\item each functor $F_G$ preserves finite limits;
\item and for each faithful morphism $f\colon \CB H\rightarrow \CB G$ the Beck--Chevalley transformation $F_G f_! \Rightarrow f_* F_H$ is an equivalence;
\end{itemize}
is determined by the object $F_e(\ast)\in \Sp$. By \Cref{rem:Talpha-equivalence} and the definition of naive global $2$-ring the transformation $\decatcoh{\bullet}{-}{\Sp_\bullet}$ satisfies both properties, while a simple calculation shows that $\underline{\Hom}(\Sigma^\infty_\bullet(-),S^0_G)$ does as well. In both cases $\ast$ is sent to the sphere spectrum $S^0\in \Sp$, and so the two natural transformations must agree. 
\end{proof}

\begin{proposition}
The global 2-ring $\Sp_\bullet$ is $\CE$-genuine.
\end{proposition}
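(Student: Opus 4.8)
The plan is to reduce the statement to results already in hand, namely that $\Sp_\bullet$ is $\CE$-pregenuine (\Cref{lem-sp-example}) and that its unravelling is computed by equivariant Spanier--Whitehead duality (\Cref{lem-cohomotopy}). First I would note that, taking $\CT=\Glo{\CE}$ to be the full subcategory of $\Glo{\CE}$ on all objects, $\CT$ is automatically a subcategory of enough injective objects in the sense of \Cref{def:enough-injectives}: condition (1) holds because $\CE$ is multiplicative, so that $\Glo{\CE}$ is closed under its own finite products (\Cref{rem:Glo_fin_prods}), and conditions (2) and (3) are witnessed by the relevant identity morphisms. By \Cref{def:genuine} it therefore suffices to prove that $\Sp_\bullet$ is $\Glo{\CE}$-genuine in the sense of \Cref{def:T-genuine}, i.e. that it is $\Glo{\CE}$-pregenuine and that it satisfies axiom (\ref{item:gen-rep-sph}).

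Pregenuineness --- axioms (\ref{item:gen-adjoint}) and (\ref{item:gen-proj}) --- is exactly the content of \Cref{lem-sp-example}. For axiom (\ref{item:gen-rep-sph}) I must show that for every $G\in\CE$ and every irreducible $G$-representation $V$ the object $\decatcoh{G}{S^V}{\Sp_\bullet}\in\Sp_G$ is invertible. By (the pointed extension of) \Cref{lem-cohomotopy}, the functor $\decatcoh{G}{-}{\Sp_\bullet}$ on pointed compact $G$-spaces agrees with $\ul{\Hom}(\Sigma^\infty_G(-),S^0_G)$, the equivariant Spanier--Whitehead duality functor. Evaluating at $S^V$ gives $\decatcoh{G}{S^V}{\Sp_\bullet}\simeq \ul{\Hom}(\Sigma^\infty_G S^V,S^0_G)\simeq S^{-V}$, the inverse representation sphere; since $S^{-V}\otimes S^V\simeq S^0_G=\1_{\Sp_G}$, this object is invertible. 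As all three axioms then hold for $\CT=\Glo{\CE}$, the naive global 2-ring $\Sp_\bullet$ is $\Glo{\CE}$-genuine, hence genuine.

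I expect no serious obstacle here; the one point deserving a line of justification is that the identification of \Cref{lem-cohomotopy}, stated for unpointed compact $G$-spaces, remains valid after passing to pointed compact $G$-spaces, so that one may evaluate on the genuine representation spheres $S^V$ rather than merely on $S(V)_+$. This follows either from the uniqueness in \Cref{cor:univ_prop_suspension}, or by a direct computation: by \Cref{gamma-sphere}, $\decatcoh{G}{S^V}{\Sp_\bullet}$ is the fibre of the unit map $S^0_G\to p_*p^*S^0_G$ attached to the faithful map $p\colon S(V)\sslash G\to\CB G$; since $p_*p^*S^0_G$ is the functional dual of $\Sigma^\infty_G S(V)_+$ (again by \Cref{lem-cohomotopy}) and duality sends the cofibre sequence $S(V)_+\to S^0\to S^V$ to a fibre sequence, the fibre in question is identified with $S^{-V}$, as claimed.
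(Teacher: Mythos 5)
Your proof is correct and follows essentially the same route as the paper: pregenuineness is quoted from \Cref{lem-sp-example}, and axiom (3) is verified by identifying $\decatcoh{G}{S^V}{\Sp_\bullet}$ with $S^{-V}$ via the Spanier--Whitehead duality description of \Cref{lem-cohomotopy}. Your extra remarks on why $\Glo{\CE}$ itself is a subcategory of enough injectives and on extending \Cref{lem-cohomotopy} to pointed spaces are sensible elaborations of steps the paper leaves implicit.
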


\begin{proof}
By \Cref{lem-sp-example}, $\Sp_\bullet$ is $\CE$-pregenuine. Moreover by the previous lemma, $\decatcoh{G}{-}{\Sp_\bullet}$ agrees with $\underline{\Hom}(\Sigma^{\infty}-, S^0_G)$. In particular $\decatcoh{G}{S^V}{\Sp_\bullet}$ is equivalent to $S^{-V}$ and so clearly invertible, showing that $\Sp_\bullet$ is $\CE$-genuine.
\end{proof}

\begin{proposition}\label{prop-genuine-from-global}
Suppose $E\in \CAlg(\Spgl{\CE})$ is a global ring. Then the naive global 2-ring $\Mod_{E_\bullet}$ defined in \Cref{ex:cat_gl_cohom_of_gl_spectrum} is $\CE$-genuine and the unraveling $\decatcoh{\bullet}{-}{\Mod_{E_\bullet}}$ agrees with the natural transformation
\[
(\Spc_\bullet^\omega)^{\op}\xRightarrow{\Sigma^\infty_\bullet} \Sp_\bullet^{\op}\xRightarrow{\underline{\Hom}(-,E)} \Sp_\bullet.
\]
\end{proposition}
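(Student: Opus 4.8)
The strategy is to bootstrap everything off the case $\CR=\Sp_\bullet$, which was treated in \Cref{lem-sp-example} and \Cref{lem-cohomotopy}. The first task is to check that $\Mod_{E_\bullet}$ is $\CE$-pregenuine. The forgetful functors $U_G\colon\Mod_{\mathrm{res}_G E}(\Sp_G)\to\Sp_G$ are conservative, preserve all limits and colimits, and strictly commute with the restriction functors $f^*$; and because $\Sp_\bullet$ satisfies the projection formula for faithful maps, the right adjoint $f_*$ on module categories is computed by applying the pushforward of $\Sp_\bullet$ to the underlying spectrum and equipping the outcome with its canonical module structure. Consequently $U_\bullet$ carries the base-change squares and projection-formula comparisons for $\Mod_{E_\bullet}$ to the corresponding ones for $\Sp_\bullet$, which are equivalences by \Cref{lem-sp-example}, so conservativity of $U_\bullet$ gives the claim. (Equivalently, in the language of \Cref{rem:para-perspective}, this is the general fact that modules over a commutative algebra in an equivariantly complete fiberwise presentably symmetric monoidal stable global $\infty$-category again form one.)

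The unravelling is then identified by functoriality. The free-module morphism $\Sigma_E\colon\Sp_\bullet\to\Mod_{E_\bullet}$, $X\mapsto\mathrm{res}_\bullet E\otimes X$, assembled as in \Cref{ex:cat_gl_cohom_of_gl_spectrum}, is a $\CE$-pregenuine morphism in the sense of \Cref{prop:unravel_funct_pregen}: for a faithful $\iota\colon\CB H\to\CB G$, the Beck--Chevalley comparison on units becomes, after applying $U_G$ and using that $U_\bullet$ commutes with $\iota_*$, the projection-formula equivalence $\mathrm{res}_G E\otimes\iota_*\1\simeq\iota_*(\iota^*\mathrm{res}_G E\otimes\1)=\iota_*(\mathrm{res}_H E)$ furnished by \Cref{lem-sp-example}, hence is an equivalence. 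By \Cref{prop:unravel_funct_pregen} and the description in \Cref{rmk:unwind_funct_unravelling}, $\decatcoh{\bullet}{-}{\Mod_{E_\bullet}}$ is therefore obtained from $\decatcoh{\bullet}{-}{\Sp_\bullet}$ by postcomposition with $\mathrm{res}_\bullet E\otimes(-)$. Plugging in the identification $\decatcoh{G}{X}{\Sp_\bullet}\simeq\underline{\Hom}(\Sigma^\infty_G X,S^0_G)$ of \Cref{lem-cohomotopy} and using that for compact $X$ the spectrum $\Sigma^\infty_G X$ is dualizable --- so that the symmetric monoidal functor $\mathrm{res}_G E\otimes(-)$ commutes with its Spanier--Whitehead dual --- one obtains $\decatcoh{G}{X}{\Mod_{E_\bullet}}\simeq\underline{\Hom}(\Sigma^\infty_G X,\mathrm{res}_G E)$ with its canonical $\mathrm{res}_G E$-module structure, which is the asserted formula.

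Given this, axiom (\ref{item:gen-rep-sph}) is immediate: $\decatcoh{G}{S^V}{\Mod_{E_\bullet}}\simeq\mathrm{res}_G E\otimes S^{-V}$ is invertible in $\Mod_{\mathrm{res}_G E}(\Sp_G)$ since $S^{-V}$ is invertible in $\Sp_G$ and $\mathrm{res}_G E\otimes(-)$ is symmetric monoidal. Hence $\Mod_{E_\bullet}$ is $\CE$-genuine, and in particular genuine.

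I expect the main obstacle to be the first step. While it is folklore that module categories inherit base change and the projection formula, making it precise forces one to pin down the behaviour of $f_*$ on module categories --- which genuinely relies on the projection formula already holding for $\Sp_\bullet$ --- and to verify that the Beck--Chevalley transformations for $\Mod_{E_\bullet}$ correspond, under $U_\bullet$, to those for $\Sp_\bullet$. Once this is in place, the identification of the unravelling and the verification of axiom (\ref{item:gen-rep-sph}) are essentially formal.
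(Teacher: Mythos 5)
Your proposal is correct, and its first half coincides with the paper's argument: the paper likewise verifies axioms (\ref{item:gen-adjoint}) and (\ref{item:gen-proj}) by observing that the forgetful functors are conservative and commute with $\alpha^*$ and $\alpha_*$ (citing \cite{BCHNP}*{Recollection 2.29}), so the relevant comparison maps reduce to those for $\Sp_\bullet$ handled in \Cref{lem-sp-example}. Where you genuinely diverge is in identifying the unravelling: the paper simply says ``the remaining arguments are exactly as for $\Sp_\bullet$,'' meaning one re-runs the corepresentability argument of \Cref{lem-cohomotopy} (a finite-limit-preserving transformation satisfying Beck--Chevalley for faithful maps is determined by its value at the point, which is $E$ for both candidates). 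You instead exhibit the free-module functor $\Sp_\bullet\to\Mod_{E_\bullet}$ as a pregenuine morphism and invoke the functoriality of the unravelling (\Cref{prop:unravel_funct_pregen}), then commute $\mathrm{res}_G E\otimes(-)$ past Spanier--Whitehead duality using dualizability of $\Sigma^\infty_G X$ for compact $X$. This is a valid alternative; it buys you the identification without re-invoking the parameterized corepresentability result of \cite{CLL_Partial}, at the cost of having to verify the pregenuineness of the free-module morphism (which you do correctly via the projection formula on units).

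Two small imprecisions, neither fatal. First, the commutation $U_G\circ f_*\simeq f_*\circ U_H$ is formal for right adjoints of strong monoidal functors between module categories and does not rely on the projection formula for $\Sp_\bullet$; the projection formula is what you need for the \emph{left} adjoint $f_!$ to be computed underlyingly. Second, $U_\bullet$ does not literally carry the projection-formula comparison for $\Mod_{E_\bullet}$ to that for $\Sp_\bullet$, since $U$ does not commute with the relative tensor product $\otimes_E$; one should first reduce to free modules $\CF=E\otimes X$ (both sides commute with colimits in $\CF$), where the comparison does become the underlying one. The paper's proof is equally terse on this point, so this is a matter of polish rather than a gap.
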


\begin{proof}
Observe that axioms (\ref{item:gen-adjoint}) and (\ref{item:gen-proj}) of a global 2-ring reduce to showing that certain canonical maps are equivalences, and therefore we can check these axioms forgetting the module structures. However we note that for any continuous group homomorphism 
$\alpha \colon H\to G $, there are commutative diagrams 
\[
\begin{tikzcd}
        \Mod_{E_G}(\Sp_G) \arrow[r,"\alpha^*"] \arrow[d,"\fgt"']& \Mod_{E_H}(\Sp_H) \arrow[d, "\fgt"]& & \Mod_{E_G}(\Sp_G) \arrow[r,"\alpha_*"] \arrow[d,"\fgt"']& \Mod_{E_H}(\Sp_H) \arrow[d, "\fgt"] \\
        \Sp_G \arrow[r,"\alpha^*"] & \Sp_H & & \Sp_G \arrow[r,"\alpha_*"] & \Sp_H	
    \end{tikzcd}
    \]
    see for instance~\cite{BCHNP}*{Recollection 2.29}. Therefore after forgetting the resulting maps agree with those of~\Cref{lem-sp-example}, and are in particular equivalences. The remaining arguments are exactly as for $\Sp_\bullet$.
\end{proof}

\section{Genuine refinements of naive global rings}

Recall that given a naive global 2-ring $\CR$, its decategorification was a naive global ring which represented the multiplicative cohomology theory $\decatcohsp{\gl}{-}{\CR}\colon \Spcgl{\CE}^{\op}\rightarrow \CAlg$. In this section we will show that if $\CR$ is genuine, then this naive global ring admits a canonical genuine refinement. In other words, we will explain how to decategorify a global 2-ring to a global ring. Recall from \Cref{prop:family-of-eq-vs-global}(1) that a genuine refinement was a global ring $\genref{\gl}{\CR}\in \CAlg(\Spgl{\CE})$ together with an equivalence
\[
\decatcohsp{\gl}{-}{\CR} \simeq \Hom_{\Spgl{\CE}}(\Sigma^\infty_\gl(-), \genref{\gl}{\CR})
\]
between $\decatcohsp{\gl}{-}{\CR}$ and the multiplicative cohomology theory represented by $\genref{\gl}{\CR}$. To begin we will first construct the global ring $\genref{\gl}{\CR}$. Therefore let us now assume that $\CR$ is $\CT$-genuine for $\CT\subset \CE$ some family of enough injective objects.

\begin{construction}\label{con:a_object_in_oplaxlim_of_fun}
    Passing the natural transformation of Lemma~\ref{lem-functor-glsec} along the second mate equivalence (\ref{second-mate-equivalence}) gives an oplax natural transformation 
   \[\begin{tikzcd}
	{\CR_{\CB G}} & \Sp \\
	{\CR_{\CB H}} & \Sp.
	\arrow["{\alpha^*}"', from=1-1, to=2-1]
	\arrow["{\Gamma}", from=1-1, to=1-2]
	\arrow["{\Gamma}"', from=2-1, to=2-2]
	\arrow[Rightarrow, no head, from=1-2, to=2-2]
	\arrow[shorten <=7pt, shorten >=7pt, Rightarrow, from=1-2, to=2-1]
    \end{tikzcd}\]
    In fact, applying the generalized mate equivalence of \cite{Mate2024}*{Theorem 5.3.6} to the $(\infty,2)$-category $\tCat^{\otimes,\lax}$, we obtain $\Gamma$ as an oplax natural transformation of lax monoidal functors. Pasting this with the natural transformation $\decatcoh{\bullet}{-}{\CR}$ of strong monoidal functors from \Cref{prop-Fsp} we obtain a new oplax natural transformation
    \[\begin{tikzcd}[column sep =large]
	{(\Sp_G^\omega)^{\op}} & {\CR_{\CB G}} & \Sp \\
	{(\Sp_H^\omega)^{\op}} & {\CR_{\CB H}} & \Sp
	\arrow["{\alpha^*}"', from=1-2, to=2-2]
	\arrow["{\Gamma}", from=1-2, to=1-3]
	\arrow["{\Gamma}"', from=2-2, to=2-3]
	\arrow[Rightarrow, no head, from=1-3, to=2-3]
	\arrow[shorten <=9pt, shorten >=9pt, Rightarrow, from=1-3, to=2-2]
	\arrow["{\alpha^*}"', from=1-1, to=2-1]
	\arrow["{\decatcoh{G}{-}{\,\CR}}", from=1-1, to=1-2]
	\arrow["{\decatcoh{H}{-}{\,\CR}}"', from=2-1, to=2-2]
\end{tikzcd}\]
    in $\tCat^{\otimes,\lax}$ which we denote by $\decatcohsp{\bullet}{-}{\CR}$. The target of this oplax natural transformation is constant at $\Sp$ and so $\decatcohsp{\bullet}{-}{\CR}$ is equivalently an object of $\oplaxlim_{\CT}(\Fun^{\lex,\otimes-\lax}((\Sp_\bullet^\omega)^{\op},\Sp))$, see \Cref{const:functors_into_lax_slice}. Finally, of we equip the functor categories with the localized Day convolution symmetric monoidal structure of \Cref{lem:day_conv_localization}, then this equivalently defines an object $\oplaxlim_{\CT}(\CAlg(\Fun^{\lex}((\Sp_\bullet^\omega)^{\op},\Sp))$ via the universal property of Day convolution, see~\cite{HA}*{Example 2.2.6.9}.
\end{construction}

We now reinterpret this construction by applying higher Brown representability, in the following form

\begin{proposition}\label{lem-eq}
    The Yoneda embedding induces an equivalence $\Sp_\bullet \simeq\Fun^{\lex}((\Sp_\bullet^\omega)^{\op}, \Sp)$, which is moreover symmetric monoidal if the right hand side is given the localized Day convolution symmetric monoidal structure of \Cref{lem:day_conv_localization}. 
    Moreover this equivalence is natural: for $\CB\alpha \colon\CB H \to \CB G$, the following diagram commutes
    \[
    \begin{tikzcd}
        \Sp_H \arrow[d,"\alpha_*"] \arrow[r,"y","\sim"'] & \Fun^{\lex}((\Sp_H^\omega)^{\op}, \Sp) \arrow[d,"(\alpha^*)^*"]\\
        \Sp_G \arrow[r,"y","\sim"'] & \Fun^{\lex}((\Sp_G^\omega)^{\op}, \Sp).
    \end{tikzcd}
    \]
\end{proposition}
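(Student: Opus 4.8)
The plan is to establish the equivalence one group at a time via higher Brown representability, upgrade it to a symmetric monoidal equivalence using the universal property of Day convolution, and then deduce the naturality square from the defining adjunction of $\alpha_*$.

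For a fixed $G$, recall that $\Sp_G$ is compactly generated by the suspension spectra $\Sigma^\infty_+ G/K$ of the orbits, so $\Sp_G \simeq \mathrm{Ind}(\Sp_G^\omega)$. Since $\Sp_G^\omega$ is a small stable $\infty$-category, a finite-limit-preserving functor $(\Sp_G^\omega)^{\op}\to\Spc$ automatically factors through $\Sp$, and $\mathrm{Ind}(\Sp_G^\omega)\simeq\Fun^{\lex}((\Sp_G^\omega)^{\op},\Sp)$. Unwinding this, the composite equivalence is exactly the spectral Yoneda embedding $y$ of \Cref{def:spectral_yoneda} followed by the restriction functor $\Fun^{\rR}(\Sp_G^{\op},\Sp)\to\Fun^{\lex}((\Sp_G^\omega)^{\op},\Sp)$, which is an equivalence precisely because $\Sp_G$ is compactly generated; here one uses the identification $\Sp(\Sp_G)\simeq\Fun^{\rR}(\Sp_G^{\op},\Sp)$ recalled in the excerpt together with $\Sp(\Sp_G)\simeq\Sp_G$. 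This gives the first assertion.

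For the monoidal refinement: every object of $\Sp_G^\omega$ is dualizable and the unit $S^0_G$ is compact, so $\Sp_G^\omega$ is closed under the smash product and $\Sp_G$ is the presentably symmetric monoidal $\infty$-category freely generated under colimits by $\Sp_G^\omega$, i.e.\ $\Sp_G\simeq\mathrm{Ind}(\Sp_G^\omega)$ as presentably symmetric monoidal $\infty$-categories (by \cite{HA}*{Proposition 4.8.1.10}). On the other side, by \Cref{lem:day_conv_localization} the localized Day convolution structure on $\Fun^{\lex}((\Sp_G^\omega)^{\op},\Sp)$ is a smashing localization of Day convolution on $\Fun((\Sp_G^\omega)^{\op},\Sp)$, hence enjoys the same universal property by the universal property of Day convolution (\cite{HA}*{Example 2.2.6.9}). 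The two presentably symmetric monoidal structures therefore agree under the Yoneda equivalence.

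Finally, for naturality I would first record that each $\alpha^*\colon\Sp_G\to\Sp_H$ preserves compact objects: it is exact and sends the compact generators $\Sigma^\infty_+ G/K$ to suspension spectra of finite $H$-spaces (equivalently, its right adjoint $\alpha_*$ preserves colimits, by the Wirthm\"uller isomorphism when $\alpha$ is injective and because categorical fixed points preserve filtered colimits when $\alpha$ is surjective). Hence $\alpha^*$ restricts to a symmetric monoidal functor $\Sp_G^\omega\to\Sp_H^\omega$ with $\mathrm{Ind}(\alpha^*)\simeq\alpha^*$, and for $Y\in\Sp_H$ and $C\in\Sp_G^\omega$ the adjunction $\alpha^*\dashv\alpha_*$ yields $\map_{\Sp_G}(C,\alpha_* Y)\simeq\map_{\Sp_H}(\alpha^* C,Y)$ naturally in $C$, i.e.\ $y(\alpha_* Y)\simeq(\alpha^*)^* y(Y)$. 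Assembling this over $\Glo{\CE}$---using the naturality of the spectral Yoneda embedding and the functoriality of $\mathrm{Ind}$-completion---upgrades it to the claimed commuting square, and in fact to an equivalence of the two functors $\Glo{\CE}\to\mathrm{Pr}^{\rR}$ obtained by passing $\Sp_\bullet$ to right adjoints and by applying $\Fun^{\lex}((-)^{\op},\Sp)$ to $\Sp_\bullet^\omega$. I expect the main difficulty to lie in these two coherence points: matching the localized Day convolution structure with the $\mathrm{Ind}$-monoidal structure (which is why \Cref{lem:day_conv_localization} is isolated as a separate lemma), and making the naturality comparison coherent rather than merely objectwise, for which the fact that every $\alpha^*$ preserves compact objects is the essential input.
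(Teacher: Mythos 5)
Your plan is correct, and its first two steps reconstruct what the paper delegates entirely to its appendix: the paper's proof is the single sentence ``this is the content of \Cref{lem:Brown_rep}, restricted to the diagram $\Sp_\bullet$.'' The difference is in how the symmetric monoidal comparison is made. The paper proves \Cref{lem:Brown_rep} by factoring $y$ as $\CC\xrightarrow{\sim}\CC\otimes\Sp\xrightarrow{\sim}\Fun^{\lex}((\CC^\omega)^{\op},\Sp)$ and showing in \Cref{thm:Day-convolution-is-mon-struc-on-PrL-tensor} that the Lurie tensor product equivalence $\CC\otimes\CD\simeq\Fun^{\rR}(\CC^{\op},\CD)$ is symmetric monoidal for localized Day convolution; you instead invoke the universal property of $\mathrm{Ind}(\Sp_G^\omega)$ as the free presentably symmetric monoidal $\infty$-category on $\Sp_G^\omega$ under filtered colimits. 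These buy the same thing, but your version leaves one identification compressed: that localized Day convolution on $\Fun^{\lex}((\Sp_G^\omega)^{\op},\Sp)$ really does satisfy that $\mathrm{Ind}$-universal property (this is precisely the content of \Cref{lem:day_conv_localization} together with \Cref{thm:Day-convolution-is-mon-struc-on-PrL-tensor}, and it is not a formal consequence of Day convolution's universal property alone; also, ``smashing'' is not the right word --- compatibility of the localization with the tensor product is what is used). Where your write-up genuinely adds value is the naturality claim, which the paper's one-line proof leaves implicit: you correctly isolate that $(\alpha^*)^*$ only makes sense because $\alpha^*$ preserves compact objects (equivalently $\alpha_*$ preserves colimits, via Wirthm\"uller for injections and colimit-preservation of categorical fixed points for quotients), and that the objectwise commutativity of the square is exactly the adjunction identity $\map_{\Sp_G}(C,\alpha_*Y)\simeq\map_{\Sp_H}(\alpha^*C,Y)$. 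Your closing caveat is also apt: promoting this to a coherent equivalence of $\Glo{\CE}$-indexed diagrams, rather than a square-by-square check, is the one point neither your plan nor the paper spells out, and it is where a full argument would need the functoriality of $\mathrm{Ind}$-completion (or of the restricted Yoneda embedding) applied to the compact-object subdiagram $\Sp_\bullet^\omega$.
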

\begin{proof}
    This is the content of \Cref{lem:Brown_rep}, restricted to the diagram $\Sp_\bullet$.
\end{proof}

\begin{construction}
    The natural equivalence $\Sp_\bullet \simeq\Fun^{\lex}((\Sp_\bullet^\omega)^{\op}, \Sp)$ induces an equivalence \[\oplaxlim_{\CT} \CAlg(\Sp_\bullet) \simeq \oplaxlim_{\CT} \CAlg(\Fun^{\lex}((\Sp_
    \bullet^{\omega})^{\op},\Sp)).\] We pass the object $\decatcohsp{\bullet}{-}{\CR}$ along this equivalence to obtain an object of $\oplaxlim_{\CT} \CAlg(\Sp_\bullet)$. We denote the resulting object by $\{\genref{G}{\CR}\}$.
\end{construction}

Next we may pass from the oplax limit of $\Sp_\bullet$ along coinduction to the lax limit of $\Sp_\bullet$ along restriction.

\begin{construction}
    Applying \Cref{prop:adjoint_diag_lax_oplax} to the diagram $\CAlg(\Sp_\bullet)\colon \CT\rightarrow \Cat$ we obtain an equivalence
    \[
    \oplaxlim_{\CT} \CAlg(\Sp_\bullet) \simeq \laxlim_{\CT^{\op}} \CAlg(\Sp_\bullet),
    \]
    where the functoriality on the right hand side is given by restriction. Passing $\{\genref{G}{\CR}\}$ along this equivalence gives an object in $\laxlim \CAlg(\Sp_\bullet)$, which we again denote by $\{\genref{G}{\CR}\}$. 
\end{construction}

\begin{proposition}\label{prop-lies-partlaxlim}
    The object $\genref{\bullet}{\CR}$ lies in the full subcategory $\laxlimdag_{\CT^{\op}}\CAlg(\Sp_\bullet)$, where we mark the faithful maps in $\CT^{\op}$.
\end{proposition}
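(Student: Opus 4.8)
The plan is to reduce the statement to the assertion that certain structure maps of the lax cone $\genref{\bullet}{\CR}$ are equivalences, and then to identify those structure maps explicitly and recognise that they have already been shown to be equivalences.

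First I would invoke the dual of \Cref{rem:laxlim_sect}: regarding $\genref{\bullet}{\CR}$ as a section of the cocartesian unstraightening of $\CAlg(\Sp_\bullet)\colon\CT^{\op}\to\Cat$, it lies in $\laxlimdag_{\CT^{\op}}\CAlg(\Sp_\bullet)$ if and only if that section carries every faithful edge of $\CT^{\op}$ to a cocartesian edge; since the forgetful functor $\CAlg(\Sp_H)\to\Sp_H$ detects equivalences, this amounts to showing that for every faithful morphism $\iota\colon\CB H\hookrightarrow\CB G$ with $\CB H,\CB G\in\CT$ the structure map $s_\iota\colon\iota^{*}\genref{G}{\CR}\to\genref{H}{\CR}$ of the lax cone is an equivalence in $\Sp_H$.

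Next I would check this after mapping out of a set of compact generators. The $\infty$-category $\Sp_H$ is generated as a localising subcategory by the orbit suspension spectra $\Sigma^{\infty}_{+}(H/K)$, so it suffices to prove that $\map_{\Sp_H}(\Sigma^{\infty}_{+}(H/K),s_\iota)$ is an equivalence for every closed $K\le H$. Using the adjunction $\iota_{!}\dashv\iota^{*}$, the fact that $\Sigma^{\infty}_\bullet$ commutes with induction, and $\iota_{!}(H/K)\simeq G/K$ as $G$-spaces, the source identifies with $\map_{\Sp_G}(\Sigma^{\infty}_{+}(G/K),\genref{G}{\CR})\simeq\decatcohsp{G}{G/K}{\CR}$ and the target with $\decatcohsp{H}{H/K}{\CR}$; under these identifications the map becomes the value at the $G$-space $G/K$ of the natural transformation $\decatcohsp{G}{\iota_{!}(-)}{\CR}\Rightarrow\decatcohsp{H}{-}{\CR}$ of spectrum-valued cohomology theories represented by $s_\iota$. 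By naturality of the Brown-representability equivalence of \Cref{lem-eq} and of the identifications made in \Cref{con:a_object_in_oplaxlim_of_fun}, this represented transformation agrees, after restriction along $\Sigma^{\infty}_\bullet$, with the structure transformation of $\decatcohsp{\bullet}{-}{\CR}$ viewed as an object of $\laxlim\,\Fun(\Spc_\bullet^{\op},\CAlg)$ in \Cref{secondway}. By \Cref{obser:two_unravellings_agree} the latter coincides with the structure transformation of $\decatcohsp{\gl}{-\sslash\bullet\,}{\CR}$, and by \Cref{prop:family-of-eq-vs-global}(2) this object lies in $\laxlimdag_{\Glo{\CE}^{\op}}$, so all of its structure transformations at faithful edges — in particular at $\iota$ — are equivalences. (Alternatively, and more concretely, one may observe directly that $\iota_{!}(H/K)\sslash G\simeq\CB K\simeq (H/K)\sslash H$, since a composite of faithful maps is faithful and so induction along a faithful map does not change the underlying global space; unwinding $\decatcohsp{G}{-}{\CR}=\Gamma\circ\decatcoh{G}{-}{\CR}$ then gives $\decatcohsp{G}{G/K}{\CR}\simeq\decatcohsp{\gl}{\CB K}{\CR}\simeq\decatcohsp{H}{H/K}{\CR}$ compatibly with the map, the middle identification being $\Gamma$ of the change-of-group transformation $T_\iota$, which is an equivalence for faithful $\iota$ by \Cref{rem:Talpha-equivalence}.)

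The routine parts are the reduction in the first step and the generator argument. The main obstacle is the bookkeeping in the third step: one must trace the structure map $s_\iota$ backwards through the three successive reinterpretations used to build $\genref{\bullet}{\CR}$ — the pasting with $\Gamma$ in \Cref{con:a_object_in_oplaxlim_of_fun}, the Brown-representability equivalence of \Cref{lem-eq}, and the adjoint-diagram flip of \Cref{prop:adjoint_diag_lax_oplax} — and verify it is the map represented by $\decatcohsp{G}{\iota_!(-)}{\CR}\Rightarrow\decatcohsp{H}{-}{\CR}$. This needs care, since the mate/Beck--Chevalley equivalences involved do not in general send invertible $2$-cells to invertible $2$-cells: the (non-invertible) oplax comparison $2$-cell of $\decatcohsp{\bullet}{-}{\CR}$ over $\Fun^{\lex}((\Sp_\bullet^{\omega})^{\op},\Sp)$ only becomes the invertible map $s_\iota$ after these flips together with the use of $\iota_{!}\dashv\iota^{*}$, so the identification must be checked by hand rather than deduced formally.
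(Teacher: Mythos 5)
Your proposal is correct and follows essentially the same route as the paper's proof: reduce to showing the lax structure map at a faithful edge $\iota$ is an equivalence, use the adjunction $\iota_!\dashv\iota^*$ (available precisely because $\iota$ is faithful) to translate that map into the transformation $T_\iota$ composed with the $\Gamma$-compatibility, and conclude from \Cref{rem:Talpha-equivalence} together with the generation of $\Sp_H^\omega$ by suspension spectra. Your checking against orbit generators $\Sigma^\infty_+(H/K)$ and your parenthetical unwinding via $\iota_!(H/K)\sslash G\simeq\CB K$ are just a concrete rendering of the paper's "yoga with adjoint functors" and its appeal to exactness plus generation by the image of $\Sigma^\infty_+$.
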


\begin{proof}
    Let $\alpha \colon \CB H\to \CB G$ be a faithful map in $\CT$. We need to verify that the lax structure map $f_\alpha$ associated to $\alpha$ is an equivalence. After passing to represented functors the adjoint of the lax structure map (i.e., the structure map in the oplax limit) is given by the composite of the natural transformations
    \[
    \begin{tikzcd}[column sep= large]
        {(\Sp_H^{\omega})^{\op}} & {\CR_{\CB H}} & \Sp. \\
        {(\Sp_G^{\omega})^{\op}} & {\CR_{\CB G}}
        \arrow["{\alpha^*}", from=2-1, to=1-1]
        \arrow["{\decatcoh{H}{-}{\,\CR}}", from=1-1, to=1-2]
        \arrow["{\decatcoh{G}{-}{\,\CR}}"', from=2-1, to=2-2]
        \arrow["\Gamma", from=1-2, to=1-3]
        \arrow[""{name=0, anchor=center, inner sep=0}, "\Gamma"', from=2-2, to=1-3]
        \arrow["{\alpha^*}", from=2-2, to=1-2]
        \arrow["{Q_\alpha}"', shorten <=9pt, shorten >=9pt, Rightarrow, from=2-2, to=1-1]
        \arrow[shorten <=2pt, shorten >=1pt, Rightarrow, from=0, to=1-2]
    \end{tikzcd}
    \] Note that when $\alpha$ is faithful, $\alpha^*$ admits a left adjoint $\alpha_!$. Some yoga with adjoint functors implies that the lax structure map is equivalent to the natural transformation obtained by passing to adjoints in the vertical direction. Doing so, we obtain the diagram
    \[
    \begin{tikzcd}[column sep= large]
        (\Sp_H^\omega)^{\op} \arrow[d,"\alpha_!"'] \arrow[r,"\decatcoh{H}{-}{\,\CR}"] & \CR_{\CB H}\arrow[d,"\alpha_*"] \arrow[r, "\Gamma"] & \Sp \\
        (\Sp_G^\omega)^{\op} \arrow[r,"\decatcoh{G}{-}{\,\CR}"'] & \CR_{\CB G}, \arrow[ur, "\Gamma"'] & 
        \arrow["T_\alpha", shorten <=9pt, shorten >=9pt, Rightarrow, from=2-1, to=1-2]
    \end{tikzcd}
    \]
where the right triangle commutes by \Cref{lem-functor-glsec}. The commutativity of the left square follows from \Cref{rem:Talpha-equivalence}, using that every functor in the diagram is exact and that $\Sp_H^\omega$ is generated as a stable $\infty$-category by the image of $\Sigma_+^{\infty}\colon \Spc_H^\omega \to \Sp_H^\omega.$ We conclude that the composite natural transformation is an equivalence, finishing the proof.
\end{proof}

\begin{definition}
Recall that \Cref{thm:P_gl_sp_from_tori} gave a symmetric monoidal equivalence
\[
\CAlg(\Spgl{\CE}) \simeq \laxlimdag_{\CT^{\op}} \CAlg(\Sp_\bullet).
\]
Passing $\genref{\bullet}{\CR}$ through this equivalence we obtain a global ring $\genref{\gl}{\CR}\in\CAlg(\Sp_{\CE\text{-}\gl})$.
\end{definition}

We can now conclude the main theorem of the paper:

\begin{theorem}\label{thm:Represented-theorem}
Let $\CR$ be a global 2-ring. Then the naive global ring $\decatcohsp{\gl}{-}{\CR}\colon \Spcgl{\CE}^{\op} \to \CAlg$ admits a canonical genuine refinement $\genref{\gl}{\CR}\in \CAlg(\Spgl{\CE})$.
\end{theorem}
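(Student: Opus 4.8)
The plan is to verify that the global ring $\genref{\gl}{\CR}\in\CAlg(\Spgl{\CE})$ constructed above is a genuine refinement of $\decatcohsp{\gl}{-}{\CR}$ in the sense of \Cref{def-genuine-ref}, i.e.\ that $\mathbb{U}_\gl(\genref{\gl}{\CR})\simeq\decatcohsp{\gl}{-}{\CR}$ in $\CAlg(\Sp(\Spcgl{\CE}))$. Since the inclusion $\CT^{\op}\subset\Glo{\CE}^{\op}$ is marked final (\Cref{thm-tori-marked-final}) and the restriction functor gives an equivalence $\CAlg(\Sp(\Spcgl{\CE}))\simeq\laxlimdag_{\CT^{\op}}\CAlg(\Sp(\Spc_\bullet))$ which is compatible with $\mathbb{U}_\gl$ and $\mathbb{U}_\bullet$ and with the identifications $\CAlg(\Sp(\Spc_G))\simeq\Fun^{\rR}(\Spc_G^{\op},\CAlg)$ (\Cref{prop:family-of-eq-vs-global}(3) and \Cref{prop-naive-global-rings}), it suffices to produce an equivalence in $\laxlimdag_{\CT^{\op}}\Fun^{\rR}(\Spc_\bullet^{\op},\CAlg)$ between the family $\{\mathbb{U}_G(\genref{G}{\CR})\}_{G\in\CT}$ --- where $\genref{\bullet}{\CR}$ denotes the image of $\genref{\gl}{\CR}$ under $\CAlg(\Spgl{\CE})\simeq\laxlimdag_{\CT^{\op}}\CAlg(\Sp_\bullet)$ and the family carries the lax structure maps inherited from there --- and the object $\decatcohsp{\bullet}{-}{\CR}$. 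The latter is the image of $\decatcohsp{\gl}{-}{\CR}$ under the equivalence $\Phi$ of \Cref{prop:family-of-eq-vs-global}(2), by \Cref{unravelling1} and \Cref{obser:two_unravellings_agree}.

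For the fibrewise statement, fix $G\in\CT$. By \Cref{prop:family-of-eq-vs-global}(1), $\mathbb{U}_G(\genref{G}{\CR})$ is represented by $\map_{\Sp_G}(\Sigma^\infty_G(-),\genref{G}{\CR})\colon\Spc_G^{\op}\to\CAlg$. On the other hand, by construction $\genref{G}{\CR}$ is the image of $\Gamma\decatcoh{G}{-}{\CR}\colon(\Sp_G^\omega)^{\op}\to\CAlg$ under the symmetric monoidal Brown representability equivalence $\Sp_G\simeq\Fun^{\lex}((\Sp_G^\omega)^{\op},\Sp)$ of \Cref{lem-eq}, whose underlying functor is the spectral Yoneda embedding $X\mapsto\map_{\Sp_G}(-,X)$; hence $\map_{\Sp_G}(-,\genref{G}{\CR})|_{(\Sp_G^\omega)^{\op}}\simeq\Gamma\decatcoh{G}{-}{\CR}$. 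Precomposing with $\Sigma^\infty_G\colon\Spc_G^\omega\to\Sp_G^\omega$ and invoking the fact from \Cref{prop-Fsp} that the extension of $\decatcoh{G}{-}{\CR}$ to finite $G$-spectra agrees with the functor on finite $G$-spaces along suspension spectra, we obtain $\map_{\Sp_G}(\Sigma^\infty_G(-),\genref{G}{\CR})|_{(\Spc_G^\omega)^{\op}}\simeq\decatcohsp{G}{-}{\CR}|_{(\Spc_G^\omega)^{\op}}$, where $\decatcohsp{G}{-}{\CR}=\Gamma\decatcoh{G}{-}{\CR}$ is the functor of \Cref{secondway}. Both sides are limit-preserving functors out of $\Spc_G^{\op}$ --- the left since $\Sigma^\infty_G$ preserves colimits, the right by \Cref{prop:Gamma_G_colim_pres} --- so, as $\Spc_G\simeq\mathrm{Ind}(\Spc_G^\omega)$, each is the right Kan extension of its finite-limit-preserving restriction to $(\Spc_G^\omega)^{\op}$. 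The displayed equivalence therefore promotes to an equivalence $\mathbb{U}_G(\genref{G}{\CR})\simeq\decatcohsp{G}{-}{\CR}$ on all of $\Spc_G^{\op}$.

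The remaining, and only substantive, point is coherence: these fibrewise equivalences must be assembled into a single equivalence in $\laxlimdag_{\CT^{\op}}\Fun^{\rR}(\Spc_\bullet^{\op},\CAlg)$, compatibly with the lax structure maps over $\CT^{\op}$. This is not a new computation but a matter of tracking that every ingredient used above is already functorial over $\CT^{\op}$: the unravelling and its extension to finite spectra are functors on $\CT^{\op}$ by \Cref{prop-Fsp} and \Cref{prop:funct-unravel-gen}; the global-sections pasting of \Cref{con:a_object_in_oplaxlim_of_fun}, the Brown representability equivalence \Cref{lem-eq}, and the adjoint-diagram identification \Cref{prop:adjoint_diag_lax_oplax} used in constructing $\genref{\bullet}{\CR}$ are all natural; and the identification of $\mathbb{U}_G$ with ``restrict along $\Sigma^\infty_G$, then take the represented functor'' is \Cref{prop:family-of-eq-vs-global}(1), itself a component of a natural transformation of diagrams over $\Glo{\CE}^{\op}$. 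I expect the main obstacle to be exactly this bookkeeping --- in particular, aligning the occurrence of Brown representability in the construction of $\genref{\bullet}{\CR}$ with the occurrence of the forgetful functor $\mathbb{U}_\bullet$ in \Cref{prop:family-of-eq-vs-global}(3) --- rather than any genuinely new mathematical input. Once this is in place, the resulting equivalence in $\laxlimdag_{\CT^{\op}}\CAlg(\Sp(\Spc_\bullet))$ transports, by one further application of \Cref{prop:family-of-eq-vs-global}(3), to the desired equivalence $\mathbb{U}_\gl(\genref{\gl}{\CR})\simeq\decatcohsp{\gl}{-}{\CR}$, exhibiting $\genref{\gl}{\CR}$ as a canonical genuine refinement.
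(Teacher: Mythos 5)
Your proposal is correct and follows essentially the same route as the paper: reduce via \Cref{prop:family-of-eq-vs-global} to comparing the two associated families of equivariant cohomology theories over $\CT^{\op}$, identify the represented family with $\decatcohsp{\bullet}{-}{\CR}$ using the Brown-representability step built into the definition of $\genref{\bullet}{\CR}$, and conclude with \Cref{obser:two_unravellings_agree}. The coherence point you flag as the substantive issue is exactly what the paper's (terser) proof relies on implicitly, and your fibrewise unpacking of $\mathbb{U}_G(\genref{G}{\CR})\simeq\decatcohsp{G}{-}{\CR}$ is just a more explicit rendering of the sentence ``the second of these identifications again uses \Cref{prop:family-of-eq-vs-global} and the definition of $\genref{\gl}{\CR}$''.
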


\begin{proof}
We have to exhibit an equivalence between \[\decatcohsp{\gl}{-}{\CR} \simeq \Map_{\Spgl{\CE}}(\Sigma^{\infty}(-),\genref{\gl}{\CR}).\] However by \Cref{prop:family-of-eq-vs-global} it suffices to exhibit an equivalence between the family of equivariant cohomology theories associated to these global theories, which we note are $\{\decatcohsp{\gl}{-\sslash \bullet}{\CR}\}$  and $\{\decatcohsp{\bullet}{-}{\CR}\}$ respectively. The second of these identifications again uses \Cref{prop:family-of-eq-vs-global} and the definition of $\genref{\gl}{\CR}$. However these families agree by \Cref{obser:two_unravelings_agree}, and so we conclude. 
\end{proof}

For the final time, let us note the functoriality of this construction. 

\begin{proposition}\label{prop:funct_gen_ref}
There exists a functor $\Gamma_{\gl}(-)\colon \TwoGlRingTgen{\CE}{\CT} \rightarrow \CAlg(\Spgl{\CE})$, which agrees with the construction of \Cref{thm:Represented-theorem} on objects.
\end{proposition}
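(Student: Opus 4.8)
The plan is to observe that the object $\genref{\gl}{\CR}$ constructed in the proof of \Cref{thm:Represented-theorem} was produced from $\CR$ by a chain of moves, each of which is the application of a functor or an equivalence of $\infty$-categories, together with a single restriction to a \emph{full} subcategory; assembling these moves will yield the desired functor $\Gamma_{\gl}(-)$, and then a diagram chase will identify it on objects with the earlier construction.

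Concretely, I would proceed as follows. By \Cref{prop:funct-unravel-gen}, $\CR \mapsto \bigl[\CB G \mapsto (\decatcoh{G}{-}{\CR}\colon(\Sp_G^\omega)^{\op}\to\CR_{\CB G})\bigr]$ is already a functor $\TwoGlRingTgen{\CE}{\CT}\to\Fun(\CT^{\op},\Fun([1],\Cat^{\otimes}_{\lex,+}))$. Next one carries out \Cref{con:a_object_in_oplaxlim_of_fun} functorially: the global sections transformation of \Cref{lem-functor-glsec} is a \emph{fixed} oplax natural transformation of lax symmetric monoidal functors on $\PrL$ (via the generalized mate equivalence of \cite{Mate2024}), so concatenating the target end of the previous diagram with it is a functorial operation — the same ``pasting'' mechanism for $[1]$-indexed oplax families that appears in the proof of \Cref{thm:Rel_global_sect_functor}. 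Passing through the lax-slice description of \Cref{const:functors_into_lax_slice} and forming commutative algebras — compatible with the localized Day convolution structures appearing throughout and with (op)lax limits by \Cref{calg-laxlim} — produces a functor valued in objects of $\oplaxlim_{\CT}\CAlg(\Fun^{\lex}((\Sp_\bullet^\omega)^{\op},\Sp))$. Postcomposing with the natural equivalence of \Cref{lem-eq} (which, being an equivalence of $\CT$-shaped diagrams, induces an equivalence of the associated oplax-slice categories), rewriting $\oplaxlim_{\CT}\CAlg(\Sp_\bullet)\simeq\laxlim_{\CT^{\op}}\CAlg(\Sp_\bullet)$ via \Cref{prop:adjoint_diag_lax_oplax}, and finally applying the symmetric monoidal equivalence $\laxlimdag_{\CT^{\op}}\CAlg(\Sp_\bullet)\simeq\CAlg(\Spgl{\CE})$ of \Cref{thm:P_gl_sp_from_tori}, gives $\Gamma_{\gl}(-)$.

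The one non-formal point will be the transition from $\laxlim$ to $\laxlimdag$: we must know that the objects produced lie in the partially lax limit, i.e. that the structure maps along faithful morphisms are equivalences, which is precisely \Cref{prop-lies-partlaxlim}. Since $\laxlimdag_{\CT^{\op}}\CAlg(\Sp_\bullet)$ is a \emph{full} subcategory of $\laxlim_{\CT^{\op}}\CAlg(\Sp_\bullet)$, once every object of $\TwoGlRingTgen{\CE}{\CT}$ is sent into it the functor automatically factors through, with nothing to verify on morphisms; agreement on objects with the construction of \Cref{thm:Represented-theorem} is then immediate by tracing the identifications. I expect the main burden — conceptually routine but technically fiddly — to be keeping the lax symmetric monoidal structures coherent through the pasting-with-$\Gamma$ step and checking that $\CAlg(-)$, the lax-slice equivalences, Brown representability (\Cref{lem-eq}), and the (op)lax limit identifications all interact as expected; none of the ingredients is problematic, but stringing them together cleanly is the only real work.
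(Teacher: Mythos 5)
Your proposal is correct and follows essentially the same route as the paper: the paper's proof simply asserts that the constructions of the section are evidently functorial in the input family $\decatcoh{\bullet}{-}{\CR}$, which is functorial by \Cref{prop:funct-unravel-gen}. You have merely spelled out each of those "evident" steps — including the key observation that factoring through the full subcategory $\laxlimdag_{\CT^{\op}}\CAlg(\Sp_\bullet)$ needs only the object-level check of \Cref{prop-lies-partlaxlim} — which matches the paper's argument in substance.
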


\begin{proof}
The constructions of this section are evidently functorial, and take as input the family of cohomology theories $\decatcoh{\bullet}{-}{G}$ on genuine $G$-spectra. This is a functor in $\TwoGlRingTgen{\CE}{\CT}$ by \Cref{prop:funct-unravel-gen}. 
\end{proof}

\begin{example}
Recall that given a global ring $R \in\CAlg(\Spgl{\CE})$, there was a canonical genuine categorification of $R$ given by 
\[
\CR :=\Mod_{R_\bullet}, \quad \CB G \mapsto \Mod_{\mathrm{res}_G R}(\Sp_G),
\] see \Cref{prop-genuine-from-global}. One can easily show that $\genref{\gl}{\Mod_{R_\bullet}} \simeq R$. In fact the assignment $R\mapsto \Mod_{R_\bullet}$ defines a functor $\CAlg(\Spgl{\CE}) \rightarrow \TwoGlRingTgen{\CE}{\CT}$, which is a section of the decategorification $\Gamma_{\gl}(-)$.
\end{example}

\section{Genuine global section functors}\label{sec:genuine_sections}

Suppose $\CR$ is a global 2-ring with respect to a family $\CT$ of enough injective objects. We have associated to $\CR$ a family of equivariant spectra $\genref{G}{\CR}$ for all $G\in \CT$. In this section we will more generally construct a genuine $G$-spectrum $\bGamma_G(\CF)$ associated to any object $\CF \in \CR_{\CB G}$. 

\begin{construction}\label{const:global_sect}
Suppose $\CR$ is a global $2$-ring which is genuine with respect to $\CT$. Then by \Cref{prop-Fsp} we have a natural transformation 
    \[
    \decatcoh{\bullet}{-}{\CR}\colon (\Sp_\bullet^\omega)^{\op}\rightarrow \CR
    \] 
    of functors $\CT^{\op}\rightarrow \Cat^{\otimes}_\lex$. We may precompose this natural transformation with equivariant Spanier-White duality to obtain a natural transformation of finite limit preserving functors
    \[
    \decatcoh{\bullet}{\CBD(-)}{\CR}\colon \Sp_\bullet^\omega \rightarrow \CR.
    \] However the source and target are stable, and so they are also finite colimit preserving. Passing to Ind-categories in the source gives a natural transformation of colimit preserving functors $F_\bullet \colon \Sp_\bullet\Rightarrow \CR$, which one may interpret as $\CR$-valued homology. We may further pass to right adjoints pointwise (via \Cref{second-mate-equivalence}) to obtain a lax natural transformation
    \[
    \bGamma_\bullet \colon \CR \rightarrow \Sp_\bullet.
    \]
    We write $R_f\colon f^*\bGamma_G(-)\Rightarrow \bGamma_H(f^*(-))$ for the natural transformation associated to $f\colon \CB H \rightarrow \CB G$:
    \[\begin{tikzcd}
        {\CR_{\CB G}} & {\Sp_G} \\
        {\CR_{\CB H}} & {\Sp_H.}
        \arrow["{f^*}"', from=1-1, to=2-1]
        \arrow["{\bGamma_G}", from=1-1, to=1-2]
        \arrow["{f^*}", from=1-2, to=2-2]
        \arrow["{\bGamma_H}"', from=2-1, to=2-2]
        \arrow["{R_f}", shorten <=6pt, shorten >=6pt, Rightarrow, from=1-2, to=2-1]
    \end{tikzcd}\]
\end{construction}
We record a few observations about this construction. 

\begin{remark}
    Consider $\CF\in \CR_{\CB G}$. We note that 
    \[
    \bGamma_G(\CF)^G\simeq\map_{\Sp_G}(\1, \bGamma_G(\CF))\simeq \Map_{\CR_{\CB G}}(\decatcoh{G}{\mathbb{D}\1}{\CR}),\CF) \simeq\Map_{\CR_{\CB G}}(\1,\CF)\simeq  \Gamma(\CF).
    \]
    In other words, the genuine fixed points of $\bGamma_G(\CF)$ agree with the global sections of $\CF$.
\end{remark}

\begin{remark}\label{rem:global_sec_of_unit}
    Let $G\in \CT$. Then we claim that $\bGamma_G(\1_{\CB G})\simeq \genref{G}{\CR}$. This follows from the following sequence of equivalences, where $X$ is a compact $G$-spectrum,
    \begin{align*}
        \map_{\Sp_G}(X,\bGamma_G(\1_{\CB G})) \simeq \map_{\CR_{\CB G}}(\decatcoh{G}{\CBD X}{\CR},\1_{\CB G}) &\simeq \map_{\CR_{\CB G}}(\CBD\decatcoh{G}{X}{\CR},\1_{\CB G}) \\ 
        &\simeq \Gamma(\CBD\CBD\decatcoh{G}{X}{\CR}) \simeq \Gamma(\decatcoh{G}{X}{\CR}).
    \end{align*}
    The second equivalence follows from the fact that $\decatcoh{G}{-}{\CR}$ is strong monoidal, and the fourth from the fact that $\decatcoh{G}{X}{\CR}$ is therefore a dualizable object of $\CR_{\CB G}$. 
\end{remark}

Furthermore, one can show that under these equivalences, the map \[R_f\colon f^*\bGamma_G(\1_{\CB G}) \rightarrow \bGamma_H(f^*\1_{\CB G}) \simeq \bGamma_H(\1_{\CB H})\] agrees with the lax structure map of the object $\genref{\gl}{\CR} \in \Spgl{\CE}\simeq \laxlimdag \Sp_\bullet$. In particular we conclude that $R_f$ is an equivalence on the unit when $f$ is faithful, but not generally. We would like to generalize this observation to arbitrary objects $X\in \CR_G$, but it is not clear to the authors if we should always expect this. However, under stronger assumptions, we can show that $R_f$ is a natural equivalence when $f$ is faithful. We introduce these stronger assumptions in the following:

\begin{definition}
    A naive global 2-ring $\CR$ is \emph{rigid} if:
    \begin{enumerate}
    \item each $\infty$-category $\CR_{\CB G}$ is rigidly-compactly generated, which is to say that $\CR_{\CB G}$ is compactly generated $\infty$-category in which an object is compact if and only if it is dualizable;
    \item for every faithful map $f\colon \CB H \rightarrow \CB G$, the functor $f_*\colon \CR_{\CB G}\rightarrow \CR_{\CB H}$ preserves compact objects.
    \end{enumerate}
\end{definition}

\begin{remark}
    If $\CR$ is rigid, then axiom (2) of \Cref{def:oriented_glo_cat} is automatically satisfied, see \cite{GrothendieckNeeman16}*{Theorem 1.3}.
\end{remark}

\begin{example}
    The global 2-ring $\Sp_\bullet$, and more generally $\Mod_{E_\bullet}$, is rigid.
\end{example}

\begin{example}
Let $\bG$ be an oriented $\rP$-divisible group. Then the naive global 2-ring $\LocSys_{\bG}(-)$ is rigid, see \Cref{megaprop}(6).
\end{example}

In the following lemma we introduce the crucial additional structure afforded to a rigid naive global 2-ring.

\begin{lemma}\label{lem:grothendieck_duality}
    Suppose $\CR$ is rigid, and let $f\colon \CB H \rightarrow \CB G$ be a faithful map. Then there exists an object $\omega_f\in \CR_{\CB H}$ such that $f^*$ is right adjoint to $f_\sharp \coloneqq f_*(-\otimes \omega_f)$. Moreover $\omega_f$ is uniquely characterized by the existence of a natural equivalence
    \[
    \underline{\Hom}_{\CR_{\CB G}}(f_*(-),\1_{\CB G}) \simeq f_*\underline{\Hom}_{\CR_{\CB H}}(-,\omega_f).
    \] 
\end{lemma}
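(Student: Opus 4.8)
The plan is to recognize the statement as an instance of Grothendieck--Neeman duality together with the Wirthm\"uller isomorphism for rigidly-compactly generated symmetric monoidal stable $\infty$-categories, and to reduce it to \cite{GrothendieckNeeman16}. First I would set up the relevant adjoints. Since $f^*$ is strong symmetric monoidal it preserves dualizable objects, and in a rigidly-compactly generated $\infty$-category the dualizable objects are exactly the compact ones; as $f^*$ is moreover exact and colimit preserving, its right adjoint $f_*$ preserves all colimits and hence itself admits a right adjoint $f^!\colon \CR_{\CB G}\to\CR_{\CB H}$. I would then put $\omega_f\coloneqq f^!(\1_{\CB G})$; since $f_*$ preserves colimits the functor $f^!$ preserves compact objects, so $\omega_f$ is dualizable. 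The last preliminary is the projection formula for $f_*$: the lax monoidal structure of $f_*$ provides a natural comparison map $f_*(A)\otimes B\to f_*(A\otimes f^*B)$, which is an equivalence whenever $B$ is dualizable by a short Yoneda computation using that $f^*$ is strong monoidal, and therefore for all $B\in\CR_{\CB G}$ because both sides preserve colimits in $B$ and the dualizable objects generate.

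Granting these, the parts of the lemma not involving adjointness are essentially formal. For the displayed natural equivalence one computes, for $Z\in\CR_{\CB G}$ and $A\in\CR_{\CB H}$, that
\[
\Map(Z,\uHom_{\CR_{\CB G}}(f_*A,\1_{\CB G}))\simeq\Map(f_*(f^*Z\otimes A),\1_{\CB G})\simeq\Map(f^*Z\otimes A,\omega_f)\simeq\Map(Z,f_*\uHom_{\CR_{\CB H}}(A,\omega_f)),
\]
using the projection formula, the adjunction $f_*\dashv f^!$ and $\omega_f=f^!\1_{\CB G}$; Yoneda then yields the equivalence $\uHom_{\CR_{\CB G}}(f_*(-),\1_{\CB G})\simeq f_*\uHom_{\CR_{\CB H}}(-,\omega_f)$. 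Taking global sections of this computation shows that any $\omega\in\CR_{\CB H}$ admitting such an equivalence represents the functor $A\mapsto\Map_{\CR_{\CB H}}(f_*A,\1_{\CB G})$ on $\CR_{\CB H}^{\op}$, which also identifies $\omega_f$; hence $\omega\simeq\omega_f$, giving the uniqueness clause. Similarly, Grothendieck--Neeman duality --- the equivalence $f^!(-)\simeq f^*(-)\otimes\omega_f$ --- follows by forming the mate of the composite $f_*(f^*B\otimes\omega_f)\simeq B\otimes f_*\omega_f\to B$ (projection formula followed by the counit $f_*f^!\1_{\CB G}\to\1_{\CB G}$) and checking that the resulting map $f^*(B)\otimes\omega_f\to f^!(B)$ is an equivalence on a compact generator $B$, using the projection formula and rigidity; in the present rigidly-compactly generated setting this step goes through automatically.

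The genuinely non-formal input is the Wirthm\"uller isomorphism, i.e.\ that $\omega_f$ is $\otimes$-invertible, and this is exactly where the second condition in the definition of a rigid naive global 2-ring is used. By that condition $f_*$ preserves compact objects, equivalently $f^!$ preserves colimits; combined with $f^!\simeq\omega_f\otimes f^*(-)$ this makes $f^!$ a colimit preserving functor whose right adjoint is $f_*(\omega_f^\vee\otimes-)$, and playing the units and counits of the two adjunctions $f_*\dashv f^!$ and $f^!\dashv f_*(\omega_f^\vee\otimes-)$ against one another on a compact generator forces $\omega_f\otimes\omega_f^\vee\simeq\1_{\CB H}$. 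I would import this via the main theorems of \cite{GrothendieckNeeman16}, the only point of care being to confirm that their hypotheses are met and that the arguments are applied $\infty$-categorically rather than only at the level of homotopy categories. Once $\omega_f$ is invertible, the first assertion is immediate: for $A\in\CR_{\CB H}$ and $B\in\CR_{\CB G}$,
\[
\Map_{\CR_{\CB G}}(f_*(A\otimes\omega_f),B)\simeq\Map_{\CR_{\CB H}}(A\otimes\omega_f,f^!B)\simeq\Map_{\CR_{\CB H}}(A\otimes\omega_f,\omega_f\otimes f^*B)\simeq\Map_{\CR_{\CB H}}(A,f^*B),
\]
so $f_\sharp=f_*(-\otimes\omega_f)$ is left adjoint to $f^*$ as claimed. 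I expect the invertibility of $\omega_f$ to be the main obstacle; everything else is adjunction bookkeeping around the projection formula.
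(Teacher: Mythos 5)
The paper's own proof is a one-line citation to \cite{GrothendieckNeeman16}*{Theorem 1.7} (in conjunction with Theorem 1.3 of \emph{op.~cit.}), so your decision to reprove the statement is legitimate, and your construction of $\omega_f=f^!\1_{\CB G}$, your derivation of the projection formula, and your Yoneda computation establishing the displayed characterization and its uniqueness clause all match that source. But there is a genuine gap where you prove the main assertion $f_\sharp\dashv f^*$: you assert that rigidity forces $\omega_f$ to be $\otimes$-invertible, and your final chain of equivalences cancels $\omega_f$ against $\omega_f$ inside a mapping space, which is only valid under invertibility (otherwise $\Map(A\otimes\omega_f,\omega_f\otimes f^*B)\simeq\Map(A,\uHom(\omega_f,\omega_f)\otimes f^*B)$ and $\uHom(\omega_f,\omega_f)\not\simeq\1$). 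Invertibility of $\omega_f$ is strictly stronger than Grothendieck--Neeman duality --- it is the content of the separate Wirthm\"uller isomorphism theorem in \cite{GrothendieckNeeman16} --- and it fails under the hypotheses of the lemma: take $\CR_{\CB G}=D(k)$, $\CR_{\CB H}=D(S)$ with $S=k[x,y]/(x,y)^2$ and $f^*=S\otimes_k(-)$. Both categories are rigidly-compactly generated and restriction $f_*$ preserves perfect complexes, yet $\omega_f=\Hom_k(S,k)$ is not invertible in $D(S)$ since $S$ is not Frobenius. This example satisfies every intermediate conclusion you draw, so the ``playing units and counits against one another'' step cannot be made to work. (Relatedly, the inference ``$f_*$ preserves colimits, hence $f^!$ preserves compacts'' is the wrong mate: what rigidity gives you is that $f^!$ preserves colimits; compactness of $\omega_f$ needs a separate argument.)

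The lemma is nonetheless true without invertibility, and the repair uses only what you have already proved. For dualizable $A\in\CR_{\CB H}$ one has
\[
\Map(A,f^*B)\simeq\Map(\1_{\CB G},f_*(A^\vee\otimes f^*B))\simeq\Map(\1_{\CB G},f_*(A^\vee)\otimes B)\simeq\Map(\uHom(f_*(A^\vee),\1_{\CB G}),B),
\]
using the projection formula and compactness (hence dualizability) of $f_*(A^\vee)$; your characterization formula then gives $\uHom(f_*(A^\vee),\1_{\CB G})\simeq f_*\uHom(A^\vee,\omega_f)\simeq f_*(A\otimes\omega_f)$. This shows $f^*$ admits a left adjoint agreeing with $f_*(-\otimes\omega_f)$ on compacts, and since both functors preserve colimits and the compacts generate, they agree everywhere. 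This is exactly the route taken in \cite{GrothendieckNeeman16}, and it never needs $\omega_f$ to be invertible.
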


\begin{proof}
    This is \cite{GrothendieckNeeman16}*{Theorem 1.7}.
\end{proof}

With this we can prove that $R_f$ is an equivalence whenever $f$ is faithful.

\begin{theorem}\label{thm-rigid}
    Suppose $\CR$ is a rigid $\CT$-genuine global 2-ring and let $f\colon \CB H\rightarrow \CB G$ be a faithful map in $\CT$. Then the natural transformation
    \[
    R_f\colon f^*\bGamma_G(-)\Rightarrow \bGamma_H(f^*(-))
    \]
    is an equivalence. 
\end{theorem}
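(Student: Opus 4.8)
The plan is to re-express $R_f$ in terms of the \emph{left} adjoints of the pullback functors and then verify the resulting Beck--Chevalley transformation on a set of compact generators.

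Recall from \Cref{const:global_sect} that $\bGamma_\bullet$ is the pointwise right adjoint of the natural transformation $F_\bullet\colon\Sp_\bullet\Rightarrow\CR$ of colimit-preserving strong symmetric monoidal functors, where $F_G$ is the $\mathrm{Ind}$-extension of $\decatcoh{G}{\CBD(-)}{\CR}$; in particular $F_G(\Sigma_+^\infty X)\simeq\CBD\decatcoh{G}{X}{\CR}$ for a finite $G$-space $X$, since $\decatcoh{G}{-}{\CR}$ is strong monoidal on compact objects by \Cref{prop-Fsp}. By construction, $R_f$ is the Beck--Chevalley mate, obtained by passing the horizontal arrows $F_G,F_H$ to the right adjoints $\bGamma_G,\bGamma_H$, of the commuting square $f^*F_G\simeq F_Hf^*$ witnessing naturality of $F_\bullet$. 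First I would transpose this square: since $\CR$ is rigid, \Cref{lem:grothendieck_duality} provides a left adjoint $f_\sharp^{\CR}=f_*^{\CR}(-\otimes\omega_f)$ to $f^*_{\CR}$, while on the equivariant side $f^*_{\Sp}$ has as left adjoint the equivariant induction functor $f_\sharp^{\Sp}$; passing to these left adjoints turns $R_f$ into the mate
\[
\beta\colon\; f^{\CR}_\sharp\circ F_H\;\Longrightarrow\; F_G\circ f^{\Sp}_\sharp .
\]
All four functors here preserve small colimits, so $R_f$ is an equivalence if and only if $\beta$ is, and since $\beta$ is a transformation of colimit-preserving functors out of $\Sp_H$ it suffices to evaluate it on a set of compact generators of $\Sp_H$.

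For this I would use the orbit suspension spectra $Y=\Sigma_+^\infty(H/K)$, $K\le H$ a closed subgroup. Then $f^{\Sp}_\sharp Y\simeq\Sigma_+^\infty(G/K)$, so $\beta_Y$ compares $f^{\CR}_\sharp F_H Y=f_*^{\CR}\!\big(\CBD\decatcoh{H}{H/K}{\CR}\otimes\omega_f\big)$ with $F_G\Sigma_+^\infty(G/K)\simeq\CBD\decatcoh{G}{G/K}{\CR}$. Writing $p^G_K\colon\CB K\to\CB G$ and $p^H_K\colon\CB K\to\CB H$ for the faithful maps classifying these two orbits under \Cref{Rezk}, one has $p^G_K=f\circ p^H_K$, so directly from the definition of the unravelling $\decatcoh{G}{G/K}{\CR}=(p^G_K)_*\1\simeq f_*\decatcoh{H}{H/K}{\CR}$. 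Applying the Grothendieck duality equivalence $\underline{\Hom}_{\CR_{\CB G}}(f_*(-),\1)\simeq f_*\underline{\Hom}_{\CR_{\CB H}}(-,\omega_f)$ of \Cref{lem:grothendieck_duality} to the object $\decatcoh{H}{H/K}{\CR}$ --- which is dualizable, being the image under the strong monoidal functor $\decatcoh{H}{-}{\CR}$ of a dualizable object --- then gives
\[
\CBD\decatcoh{G}{G/K}{\CR}\simeq\CBD f_*\decatcoh{H}{H/K}{\CR}\simeq f_*\!\big(\CBD\decatcoh{H}{H/K}{\CR}\otimes\omega_f\big),
\]
matching the two sides of $\beta_Y$. (Alternatively, bypassing $\omega_f$: after applying $\map_{\Sp_H}(Y,-)$ the two sides of $R_f(\CF)$ unwind to $\Gamma_{\CB G}(\decatcoh{G}{G/K}{\CR}\otimes\CF)$ and $\Gamma_{\CB H}(\decatcoh{H}{H/K}{\CR}\otimes f^*\CF)$, which agree by the right projection formula of \Cref{def:oriented_glo_cat} together with $\Gamma_{\CB G}\circ f_*\simeq\Gamma_{\CB H}$ and the identity $\decatcoh{G}{G/K}{\CR}\simeq f_*\decatcoh{H}{H/K}{\CR}$.)

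I expect the main obstacle to be the coherence bookkeeping: one must show that the equivalences produced above are genuinely the components of the canonical mate $\beta$ (equivalently of $R_f$), rather than just an abstract identification of source and target. Concretely this amounts to writing $R_f$ as the pasting $f^*\bGamma_G\xrightarrow{\eta}\bGamma_HF_Hf^*\bGamma_G\xrightarrow{\sigma}\bGamma_Hf^*F_G\bGamma_G\xrightarrow{\epsilon}\bGamma_Hf^*$ and tracing it through the adjunctions $f^{\Sp}_\sharp\dashv f^*_{\Sp}$, $F_G\dashv\bGamma_G$ and the induction/Grothendieck-duality identifications; it is in this tracing that rigidity (via \Cref{lem:grothendieck_duality}) is used in an essential way, in contrast to \Cref{rem:global_sec_of_unit} which only handled the unit. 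The remaining inputs --- that $\Sp_H$ is compactly generated by orbit suspension spectra, that $(G/K)\sslash G\simeq\CB K$ over $\CB G$, and that $F_G$ preserves compact objects so that $\bGamma_G$ preserves colimits --- are routine.
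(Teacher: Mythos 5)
Your proposal is correct and follows essentially the same route as the paper: both pass to the total mate $f_\sharp F_H\Rightarrow F_G f_!$ using the Grothendieck--Neeman left adjoint $f_\sharp=f_*(-\otimes\omega_f)$ from \Cref{lem:grothendieck_duality}, reduce to compact objects, and identify the two sides via dualizability of $\decatcoh{H}{X}{\CR}$, the duality formula $\underline{\Hom}(f_*(-),\1)\simeq f_*\underline{\Hom}(-,\omega_f)$, and the equivalence $f_*\decatcoh{H}{X}{\CR}\simeq\decatcoh{G}{f_!X}{\CR}$ (your orbit computation is just the special case of $T_f$ being invertible for faithful $f$, cf.\ \Cref{rem:Talpha-equivalence}); the paper likewise defers the final compatibility to ``a tedious diagram chase,'' exactly the coherence bookkeeping you flag.
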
	

\begin{proof}
    Because $\CR$ is rigid all of the functors involved have left adjoints. Therefore we may instead show that the total mate 
    \[
    U_f\colon f_\sharp F_H(X) \Rightarrow   F_G f_!(X)
    \] of $R_f$ is an equivalence for all $H$-spectra. In fact because all of the functors preserve colimits, it suffices to prove this for $X$ compact. In this case we may compute:
    \begin{align*}
        f_\sharp F_H(X) \coloneqq f_\sharp \decatcoh{H}{\CBD X}{\CR} & \simeq f_\sharp \CBD\decatcoh{H}{ X}{\CR}\\
        & \simeq f_*(\underline{\Hom}_{\CR_{\CB H}}(\decatcoh{H}{X}{\CR}), \1_{\CB H})\otimes \omega_f) \\
        &\simeq f_*\underline{\Hom}_{\CR_{\CB H}}(\decatcoh{H}{X}{\CR}, \omega_f) \\
        &\simeq \underline{\Hom}_{\CR_{\CB G}}(f_*\decatcoh{H}{X}{\CR}, \1_{\CB G}) \\
        &\xrightarrow[\;\smash{\raisebox{0.5ex}{\ensuremath{\scriptstyle\sim}}}\;]{\hspace{.3em}\mathbb{D}(T_f)\hspace{.3em}}\CBD \decatcoh{G}{f_! X}{\CR} \\
        & \simeq \decatcoh{G}{\CBD f_! X}{\CR} \eqqcolon F_G(f_!X).
    \end{align*}
    The second and fourth equivalence use \Cref{lem:grothendieck_duality}, the third that $\decatcoh{H}{X}{\CR}$ is dualizable, and the first and fifth that $\decatcoh{H}{-}{\CR}$ and $\decatcoh{G}{-}{\CR}$ are strong monoidal functors. Finally $T_f$ is an equivalence by \Cref{rem:Talpha-equivalence}. A tedious diagram chase shows that this equivalence agrees with $U_f$.
\end{proof}	

\begin{remark}\label{rem:genuine_sect_lax}
    Recall that when $\CR$ is $\CT$-genuine, the functors $\decatcoh{G}{-}{\CR}$, and therefore also $F_G$, are strong monoidal for all $G \in \CT$. We conclude that the functors $\bGamma_G$ are therefore canonically lax monoidal, and that if $\CR$ is rigid then $\bGamma_\bullet$ is a natural transformation of lax monoidal functors.
\end{remark}

\begin{remark}
Rigidity is a sufficient but not necessary condition for the result of the previous theorem to hold. For example, the naive global 2-ring $\Spgl{\bullet}$ of \Cref{ex:global-spectra-cat-cohom} is \emph{not} rigid. Indeed, in the $\infty$-category $\Sp_{\gl}$ the object $\Sigma_{\gl}^\infty \CB G$ is compact but not dualizable. Nevertheless, one can show that $R_f$ is still an equivalence when $f$ is faithful. The reason for this is that $\Spgl{\bullet}$ does nevertheless admit Wirthm\"uller isomorphisms, and so in particular satisfies the first part of \Cref{lem:grothendieck_duality}. This is in fact the only input for the proof of the previous theorem which does not hold in an arbitrary $\CT$-genuine global 2-ring.
\end{remark}

\part{Examples}\label{examples}

We will now apply the theory developed in previous parts to two families of examples. The first are oriented spectral abelian group objects. Specializing to the universal example of an oriented elliptic curve we obtain a global spectrum enhancing the spectrum of topological modular forms. The second are $\rP$-divisible groups over a commutative ring spectrum $R$, as defined in \cite{Ell3}. We conclude that tempered cohomology is canonically represented by a global spectrum $\ul{R}^{\gl}$. Finally, using the notion of genuine global sections, we show that for an oriented $\rP$-divisible group, the $\infty$-category $\LocSys_{\bG}(\CB A)$ of tempered local systems on $\CB A$ is equivalent to modules over the restriction of $\ul{R}^{\gl}$ to a $A$-spectrum.

\section{Globally equivariant elliptic cohomology}\label{sec:oriented_ab_groups}
We start this section by giving a brief recollection on (pre)oriented strict abelian group objects in a general $\infty$-category. As we will see, these are very closely connected to diagrams out of the global orbit $\infty$-category. We then fix a nonconnective spectral Deligne-Mumford stack $\sfS$ and focus attention on oriented elliptic curves $\bE$ in spectral Deligne-Mumford stacks over $\sfS$, and explain how this data give rise to a genuine $\ab$-global $2$-ring $\CQ_{\bullet}^{\bE}$, see \Cref{ell-satisfies-cond}. Finally, we discuss how to obtain global refinements for the spectra of topological $K$-theory, elliptic cohomology and topological modular forms. 

Let us start by recalling some material on (pre)oriented strict abelian group objects. For more detail we refer the reader to \cite{GM20}*{Section 3}, see also \cite{Ell1}*{Section 1}. Let $\mathrm{Lat}$ denote the full subcategory of the 1-category of abelian groups spanned by the finitely generated free abelian groups and let $\mathcal{C}$ be a complete and cocomplete category.

\begin{definition}
We let $\Aball(\CC)$ denote the $\infty$-category of \textit{abelian group objects in $\CC$}, that is the $\infty$-category of product preserving functors from $\mathrm{Lat}^{\op}$ to $\CC$. 
\end{definition}

\begin{remark}
    Objects of $\Aball(\CC)$ are often called \textit{strict abelian group objects} in $\CC$ to distinguish them from group-like commutative monoids in $\CC$. For example $\mathrm{CGrp}(\Spc) \simeq \Sp^{\geq 0}$ while by \cite{Ell1}*{Remark 1.2.10} $\Aball(\Spc) \simeq \Mod_{\Z}^{\geq 0}$.
\end{remark}

\begin{example}
        Note that every compact abelian Lie group is an abelian group object of the topological category $\mathrm{AbCptLie}$ by \cite{GM20}*{Example 3.3}. Because the functor $\CB (-)\colon \mathrm{AbCptLie} \rightarrow \Glo{\ab}\rightarrow \Spcgl{\ab}$ preserves products, see \Cref{rem:Glo_fin_prods}, we conclude that every orbit stack $\CB A$ is canonically an abelian group object in $\Spcgl{\ab}$. Furthermore for every group homomorphism $\alpha\colon H\rightarrow G$ of abelian groups, $\CB\alpha \colon \CB H\rightarrow \CB G$ is canonically a morphism of abelian group objects.
\end{example}

\begin{example}
    Let $\sfS$ be a nonconnective spectral Deligne–Mumford stack, keeping in mind our convention (item (7) in \Cref{convention}). By definition any \emph{(strict) elliptic curve} over $\sfS$ is an abelian group object in the $\infty$-category of nonconnective spectral
Deligne–Mumford stacks over $\sfS$, see \cite{GM20}*{Definition 5.6}. 
\end{example}

\begin{example}
Every object $A \in \Aball(\Spc)$ canonically defines an object $A\coloneqq A\otimes {\pt}$ in $\Aball(\CC)$ by taking the constant $A$-shaped colimit of the point. For example, $B\T$ is canonically an abelian group object in spaces (corresponding to $\Sigma^2(\Z)$ in $\Mod_{\Z}^{\geq 0}$), and so $B\T$ is also an object of $\Aball(\CC)$ for any presentable $\CC$.
\end{example} 

\begin{definition}
A preorientation of an abelian group object $X$ in $\CC$ is a map $B\T\rightarrow X$ of abelian group objects in $\CC$. We define the $\infty$-category of \textit{preoriented abelian group objects} in $\CC$ as $\mathrm{PreAb}(\CC) \coloneqq \Aball(\CC)_{B\T/ }$.
\end{definition}

\begin{construction}[\cite{GM20}*{Construction 3.8}]\label{const:Pont-duality}
Consider the strict abelian group object $\CB \T$ in $\Spcgl{\ab}$. Mapping into $\CB \T$ defines a functor $\Spcgl{\ab}^{\op}\rightarrow \Aball(\Spc)$. Restricting this along the subcategory $\Glo{\ab}$ we obtain a functor $(\widehat{-})\colon \Glo{\ab}^{\op}\rightarrow \Aball(\Spc)$. The final object $\pt$ of $\Glo{\ab}$ is sent to $\Map_{\Glo{\ab}}(\pt,\CB \T) \simeq B\T$ by $(\widehat{-})$, and so the previous functor lifts to a functor
    \[
    (\widehat{-})\colon \Glo{\ab}^{\op}\rightarrow \mathrm{PreAb}(\Spc).
    \] 
    We call this construction \emph{shifted Pontryagin duality}. Note that $\widehat{\CB G} \simeq \widehat{G}\times B\T$, where $\widehat{G}$ is the ordinary Pontryagin dual of $G$, explaining the name.
\end{construction}

\begin{construction}\label{const:geom_ellipt_cohom}
    By \cite[Prop 3.10]{GM20}, there is an equivalence 
    \[
    \mathrm{PreAb}(\CC) \simeq \Fun^{\mathrm{R}}(\mathrm{PreAb}(\Spc)^{\op},\CC).
    \]
    Given $\bG\in \mathrm{PreAb}(\CC)$, we restrict the associated functor $\mathrm{PreAb}(\Spc)^{\op}\to \CC$ along shifted Pontryagin duality to obtain
    \[
    \bG[\widehat{-}]\colon \Glo{\ab}\rightarrow \CC, \quad \CB A\mapsto \bG[\hat{A}].
    \]
    Clearly this defines a functor 
    \begin{equation}\label{shifted-pontr-dual}
    \mathrm{PreAb}(\CC) \to \Fun(\Glo{\ab},\CC), \quad \bG\mapsto \bG[\widehat{-}]. 
    \end{equation}
\end{construction}

\begin{remark}\label{rem-G-bullet-pres-limits}
As observed in the proof of~\cite{GM20}*{Proposition 3.15}, the functor $\bG[\widehat{-}]\colon \Glo{\ab}\rightarrow \CC$ preserves finite products and pullbacks in which at least one of the maps is a quotient map. Moreover if $\CC$ is cartesian closed, it also follows that $\bG_{\bullet}\colon \Spcgl{\ab} \to \CC$ preserves finite products.  
\end{remark}

\begin{example}
    Unraveling the definitions one finds that $\bG_{\CB \T}\simeq \bG$. Moreover, using the previous remark, one sees that $\bG_{\CB C_n}$ sits in the following pullback square 
    \[
    \begin{tikzcd}
    \bG_{\CB C_n} \arrow[r]\arrow[d] & \pt \arrow[d] \\
    \bG \arrow[r, "n\cdot -"] &   \bG,
    \arrow["\lrcorner"{anchor=center, pos=0.125}, draw=none, from=1-1, to=2-2]
    \end{tikzcd}
    \]
    and so can be identified with the $n$-torsion points of $\bG$. As mentioned, The functor $\bG[\widehat{-}]$ preserves finite products, and so we conclude that the value of $\bG[\widehat{-}]$ on all abelian compact Lie groups is precisely the $\hat{A}$-valued points of $\bG$ as the notation suggests. 
\end{example}


\newcommand{\Aff}{\mathrm{Aff}}
\newcommand{\LRS}{\mathrm{Top}^\mathrm{loc}_\mathrm{CAlg}}
\newcommand{\Shv}{\mathrm{Shv}}

We will now explain how to apply this construction to define the global 2-ring enhancing elliptic cohomology. We begin with the affine case, where it will be induced by a ``geometric" incarnation of elliptic cohomology
\[ 
\mathbf{E}_\bullet\colon \Spcgl{\ab}^{\omega}\rightarrow\mathrm{Shv}(\Aff_\bS)
\]
by passing to $\infty$-categories of quasicoherent sheaves. We first make precise what we mean by quasi-coherent sheaves.

\begin{definition}
Consider the functor $\Mod\colon  \Aff_\bS \rightarrow \PrL$ which assigns to every affine scheme its category of modules. This satisfies descent with respect to Zariski covers, and so we obtain a unique limit preserving extension
\[
\QCoh\colon \mathrm{Shv}(\Aff_{\bS})^{\op}\rightarrow \PrL.
\]
\end{definition}

\begin{construction}[\cite{GM20}*{Construction 6.1}]

Let $\LRS$ denote the $\infty$-category of locally spectrally ringed spaces, as defined in \cite{SAG}.
Observe that fully faithful inclusion $\Aff_{\bS}\to\LRS$ induces a restricted Yoneda embedding functor
\[
\LRS\to\Shv(\Aff_{\bS}),
\]
where we endow $\Aff_{\bS}$ with the Zariski topology.
This is because pushouts of affine spectral schemes along Zariski open immersions can be computed in the $\infty$-category of spectrally locally ringed spaces. Then given an oriented elliptic curve $\bE\to\Spec R$ over an affine base, we may left Kan extend the functor $\bE[\widehat{-}]\colon\Glo{\ab}\to\LRS$ along the fully faithful inclusion $\Glo{\ab}\to\Spcgl{\ab}^\omega$ in order to obtain a functor
\[
\bE_\bullet\colon\Spcgl{\ab}^\omega\to\LRS.
\]
We then define $\CQ^\bE$ to be right Kan extension to all global spaces of the composite
\[
(\Spcgl{\ab}^\omega)^{\op}\to(\mathrm{Top}_{\CAlg}^{\mathrm{loc}})^{\op}\to\Shv(\Aff_\bS)^{\op} \xrightarrow{\QCoh(-)} \PrL.
\]
By definition this functor preserves cofiltered limits, but it will typically not preserve finite limits, as finite colimits are not respected by the functor of points $\LRS\to\Shv(\Aff_\bS)$. 
\end{construction}

\begin{remark}
Observe that by \Cref{const:Pont-duality}, we may identify $\bE_{\CB A}$ with the $\widehat{A}$-torsion points of $\bG$. By definition the value of $\bE_\bullet$ on any finite $\ab$-global space is the Zariski sheaf represented by a locally spectrally ringed space glued together from copies of $\bE[\widehat{A}]$ for various compact abelian Lie groups $A$.
We note that if $A$ is a finite abelian global quotient, then this locally ringed space is in fact a spectral scheme, but we will not make use of this fact. Nevertheless it provides a justification for taking finite colimits of the $\bE[\widehat{A}]$ as spectrally locally ringed spaces instead of Zariski sheaves, since the latter would not be represented by schemes in general.
\end{remark}

As we will prove below, the previous construction provides the global 2-ring associated to an elliptic curve over an affine base. We now explain how to globalize this construction. 

\begin{construction}
    The functor $\CQ^\bE\colon\Spcgl{\ab}^{\op}\to \PrL$ constructed above took as input an oriented elliptic curve $\bE\to\Spec R$ over an affine scheme. Observe that this is evidently contravariantly functorial in affine spectral schemes equipped with an oriented elliptic curve, or equivalently, with a map to $\CM_{\mathrm{ell}}^{\mathrm{or}}$, the moduli stack of oriented elliptic curves. Thus the previous construction refines to a functor
    \[
\Aff_{/\CM_{\mathrm{ell}}^{\mathrm{or}}}^{\op}\to\Fun(\Spcgl{\ab}^{\op},\PrL).
    \]
    The global 2-ring associated to the universal oriented elliptic curve $\bE^{\mathrm{univ}} \to \CM_{\mathrm{ell}}^{\mathrm{or}}$ over the moduli stack itself is obtained by taking the limit of this functor.

    More generally, an arbitrary nonconnective spectral Deligne--Mumford equipped $\sfS$ with an oriented spectral elliptic curve $\bE \to \sfS$ determines a functor $\Aff_{/\sfS}\to\Aff_{/\CM_{\mathrm{ell}}^{\mathrm{or}}}$ and hence by precomposition a functor
    \[\Aff_{/\sfS}^{\op}\to\Fun(\Spcgl{\ab}^{\op},\PrL).
    \]
    Taking the limit of this functor, we obtain the elliptic cohomology global $2$-ring $\CQ^\bE\colon \Spcgl{\ab}^{\op}\to \PrL$
    associated to an oriented elliptic curve $\bE\to \sfS$. Observe that, if $\sfS\simeq\Spec R$ is affine, then the $\infty$-category $\Aff_{/\sfS}$ admits a final object, recovering the original definition in the affine case.
\end{construction}

\begin{lemma}
Let $\mathbf{E}$ be an oriented elliptic curve in spectral Deligne--Mumford stacks over $\sfS$. Then $\CQ^{\bE}$ is a naive global 2-ring.
\end{lemma}

\begin{proof}
Since $\CQ^\bE$ is extended via limits from the affine case, we may immediately reduce to the case that $\sfS$ is affine and locally 2-periodic. Consider the unraveling of $\CQ^{\bE}$ at a compact abelian Lie group $A$:
\[
\decatcoh{G}{-}{\CQ^{\bE}}\colon \Spc_A^{\op} \to \QCoh(\bE_{\CB A}), \quad (X\sslash A \xrightarrow{f} \CB A) \mapsto f_* \1_{\QCoh(\bE_{X\sslash A})}.
\]
If we restrict to compact $A$-spaces, the resulting functor is limit-preserving by the argument of \cite[Lemma 6.5]{GM20}. For a general $A$-space $X$, we may write is as a filtered colimit of compact $A$-spaces $X_i$. Then $\QCoh(\bE_X)=\lim_i \QCoh(\bE_{X_i})$, and so $f_*\1_{X}$ is isomorphic to  $\lim_i (f_i)_*\1_{X_i\sslash A}$ by \cite[Theorem B]{HY}. It follows that $\decatcoh{G}{-}{\CQ^{\bE}}$ is Kan extended from the orbits, and so preserves all limits. 

We finally want to verify that the decategorification is limit-preserving. Since we have right Kan extended from compact global spaces, it suffices to check that the restricted functor 
\[
(\Spcgl{\ab}^\omega)^{\op} \to \PrL \xrightarrow{\mathrm{End}(\1)} \CAlg
\]
is limit preserving. Since taking endomorphisms of the unit in quasicoherent sheaves is naturally equivalent to taking global sections, this composite is equivalent to the functor 
\[
(\Spcgl{\ab}^{\omega})^{\op} \to \mathrm{LRS}^{\op} \xrightarrow{\Gamma} \Sp.
\]
The first functor is limit preserving by definition while the second is a right adjoint, and so we conclude that the composite preserves limits.
\end{proof}

We will now proceed to show that $\CQ^{\bE}$ is a genuine global 2-ring. We begin with a lemma.

\begin{lemma}\label{lem:G_preserve_conn_pullback}
Let $\mathbf{E}$ be an oriented elliptic curve in spectral Deligne--Mumford stacks over $\sfS$. Consider a pullback square 
\[
\begin{tikzcd}
{\sX} & \CB K\ \\
{\CB H} & {\CB G}
\arrow["f", from=2-1, to=2-2]
\arrow[hook, from=1-2, to=2-2]
\arrow[hook, from=1-1, to=2-1]
\arrow["\lrcorner"{anchor=center, pos=0.125}, draw=none, from=1-1, to=2-2]
\arrow[from=1-1, to=1-2]
\end{tikzcd}
\] in $\Spcgl{\ab}$ such that $G$ is a torus. Then 
\[
\begin{tikzcd}
    {\bE_\sX} & \bE_{\CB K}\ \\
    {\bE_{\CB H}} & {\bE_{\CB G}}
    \arrow["f", from=2-1, to=2-2]
    \arrow[hook, from=1-2, to=2-2]
    \arrow[hook, from=1-1, to=2-1]
    \arrow["\lrcorner"{anchor=center, pos=0.125}, draw=none, from=1-1, to=2-2]
    \arrow[from=1-1, to=1-2]
\end{tikzcd}
\]
is a pullback square in $\mathrm{Shv}(\Aff_{\bS})$.
\end{lemma}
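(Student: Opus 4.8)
The plan is to reduce, via factorisations and the pasting law for pullbacks, to two cases that are handled by \Cref{rem-G-bullet-pres-limits} and the cocontinuity of $\bG_\bullet$.

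\emph{Reduction to $f$ faithful.} Factoring the homomorphism representing $f$ as a surjection followed by an injection gives a factorisation $\CB H \xrightarrow{q} \CB H' \xrightarrow{j} \CB G$ in $\Glo{\ab}$ with $j$ faithful and $q$ a quotient map, where $H' = \mathrm{im}(f)\in\ab$. Set $\sX' := \CB H'\times_{\CB G}\CB K$. Then the given square is the horizontal composite of the square for $\sX'$ over $\CB H'\xrightarrow{j}\CB G\hookleftarrow\CB K$ with the square obtained by pulling $q$ back along $\sX'\hookrightarrow\CB H'$, whose corner is $\sX \simeq \CB H\times_{\CB H'}\sX'$. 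Since $\bG_\bullet$ is a functor it sends the composite square to the composite of the two images, so by the pasting law it suffices to show that $\bG_\bullet$ carries each of these two squares to a pullback in $\mathrm{Shv}(\CM_\sfS)$.

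\emph{The square containing a quotient map.} By \Cref{Rezk}, $\sX'\hookrightarrow\CB H'$ corresponds to an $H'$-space $Z$; writing $Z\simeq\colim_j H'/K_j$ as a colimit of orbits and using that pullback functors on the topos $\Spcgl{\ab}$ preserve colimits, together with \Cref{rem:restriction_functoriality}, one gets $\sX'\simeq\colim_j\CB K_j$ and $\sX\simeq\colim_j\CB(\pi^{-1}K_j)$, where $\pi\colon H\to H'$ is the given surjection and each square
\[
\begin{tikzcd}
\CB(\pi^{-1}K_j) & \CB K_j \\
\CB H & \CB H'
\arrow[hook, from=1-1, to=2-1]
\arrow[hook, from=1-2, to=2-2]
\arrow[two heads, from=2-1, to=2-2]
\arrow[from=1-1, to=1-2]
\end{tikzcd}
\]
is a pullback in $\Glo{\ab}$ containing a quotient map, hence preserved by $\bG_\bullet$ by \Cref{rem-G-bullet-pres-limits}. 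As $\bG_\bullet$ is cocontinuous and pullback functors preserve limits, $\bG_\sX\simeq\lim_j\bigl(\bG_{\CB H}\times_{\bG_{\CB H'}}\bG_{\CB K_j}\bigr)\simeq\bG_{\CB H}\times_{\bG_{\CB H'}}\bG_{\sX'}$, compatibly with the structure maps, so this square is preserved.

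\emph{The faithful--faithful square over a torus.} This is the remaining case and the main obstacle; it is the only place where the \emph{orientation} of $\bG$ enters. Here $\CB H'\hookrightarrow\CB G\hookleftarrow\CB K$ with $G$ a torus, so under \Cref{Rezk} the square is the product $G/H'\times G/K$ of $G$-spaces over $\CB G$. I would compute $\bG_\bullet$ on it using the explicit form of shifted Pontryagin duality (\Cref{const:Pont-duality}): for $G$ a torus $\widehat{\CB G}\simeq\widehat G\times B\T$ with $\widehat G$ finitely generated free, and $\widehat{(-)}$ sends the faithful maps $\CB H'\hookrightarrow\CB G$, $\CB K\hookrightarrow\CB G$ to the maps of preoriented abelian group objects induced by the surjections $\widehat G\twoheadrightarrow\widehat{H'}$, $\widehat G\twoheadrightarrow\widehat K$; cotensoring with $\bG$ turns the pushout of this span into a pullback. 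The content is that the cocontinuous extension $\bG_\bullet$ evaluated on $\CB H'\times_{\CB G}\CB K$ agrees with this cotensor, which is exactly where orientedness is needed, since it guarantees the relevant formal-group base change is an equivalence — the same mechanism as in the proof of \cite{GM20}*{Proposition 3.15}, where $\widehat\bG\simeq B\T\otimes\sfS$ and $\bG_{\CB\T}\simeq\bG$. Concretely I would iterate the factorisation trick once more, using $\CB H'\simeq\CB G\times_{\CB(G/H')}\pt$ for the quotient map $\CB G\twoheadrightarrow\CB(G/H')$, to reduce to pullbacks of the form $\CB L\times_{\CB\T}\pt$ with $L\le\T$, where the computation is the one just indicated. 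Once both squares of the factorisation are known to be preserved, the pasting law gives the claim for the original square, and checking that the resulting equivalence respects its structure maps is a routine diagram chase.
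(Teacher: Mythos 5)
Your overall strategy --- factor $f$ as a quotient map followed by a faithful map, split the square in two by pasting, dispose of the quotient-map square by formal colimit arguments, and isolate the faithful--faithful square over a torus as the place where orientedness enters --- correctly locates the crux of the lemma. But the crux is then not proved. For two closed subgroups $H',K$ of a torus $G$, the $G$-space $G/H'\times G/K$ is a $G/(H'\cap K)$-bundle over the (generally positive-dimensional) torus $G/(H'K)$, so $\sX'=\CB H'\times_{\CB G}\CB K$ is not a finite disjoint union of orbits, and $\bG_{\sX'}$ is genuinely a colimit-extended value that must be compared with the cotensor $\bG^{\widehat{\sX'}}\simeq\bG[\widehat{H'}]\times_{\bG[\widehat G]}\bG[\widehat K]$. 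You acknowledge this is "exactly where orientedness is needed," but the mechanism you point to --- the proof of \cite{GM20}*{Proposition 3.15}, i.e.\ \Cref{rem-G-bullet-pres-limits} --- only gives preservation of finite products and of pullbacks along quotient maps; it cannot supply the faithful--faithful case. The input the paper uses here is \cite{GM20}*{Theorem 8.1}, which (for tori) identifies $\bG_{\pt\times_{\CB T}\pt}$ with $\bG_{\pt}\times_{\bG_{\CB T}}\bG_{\pt}$; without that (or an equivalent orientation/ambidexterity statement) your final step is an assertion, not an argument. Your further reduction "to pullbacks of the form $\CB L\times_{\CB\T}\pt$" also does not obviously terminate in a case you have handled.

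Two further points. First, the paper takes a different and cleaner reduction: it lifts the cospan $\CB H\to\CB G\leftarrow\CB K$ to abelian group objects in $\Spcgl{\ab}$ (via \Cref{rem-pointed_glo}), so that, since $\bG_\bullet$ preserves finite products, preserving pullbacks reduces to preserving \emph{fibers} of single maps $\CB H\to\CB G$ with $G$ a torus; this avoids the double-coset/bundle analysis entirely and is why only fibers such as $\pt\times_{\CB(G/H)}\pt\simeq G/H$ (again a torus) need to be fed into \cite{GM20}*{Theorem 8.1}. Second, in your quotient-map step the displayed identification $\bG_\sX\simeq\lim_j(\bG_{\CB H}\times_{\bG_{\CB H'}}\bG_{\CB K_j})$ should be a \emph{colimit}, and the justification is universality of colimits in the $\infty$-topos $\mathrm{Shv}(\CM_\sfS)$ rather than "pullback functors preserve limits"; as written the step is incorrect, though repairable.
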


The previous lemma is adapted from forthcoming work of the first author and Lennart Meier, to appear in~\cite{GM2}.
The argument below is a sketch of the key points in the proof.
\begin{proof}
We immediately reduce to the case where $\sfS$ is an affine scheme. We observe that up to homotopy the diagram $\CB H \to \CB G \leftarrow \CB K$ lifts to a diagram of pointed objects, and so by \Cref{rem-pointed_glo} to a diagram of abelian compact Lie groups.
It follows that the diagram lifts to a diagram in $\mathrm{Ab}(\Spcgl{\ab})$. Since the forgetful functor $\mathrm{Ab}(\Spcgl{\ab})\to \Spcgl{\ab}$ preserves limits, it suffices to check that the diagram is a pullback in abelian group objects. Since we are now working in an additive category and $\bE_\bullet$ preserves finite products, it suffices to show that $\bE_\bullet$ preserves fibers of maps of the form $f\colon\CB H\to\CB G$, where $G$ is a torus and $H$ is an abelian compact Lie group (but not necessary a subgroup).

First consider the case in which the map $f\colon\CB H\to\CB G$ is faithful. Then the fiber of $f$ is the loop space $\pt\times_{\CB(G/H)}\pt$, which is $G/H$. Because $G$ is a torus, $G/H$ is again a torus. Computing the colimit in locally ringed spaces defining $\bE_{\pt\times_{\CB(G/H)}\pt}$, one easily sees that it is affine over $\sfS$. It follows that the map
\[
\bE_{\pt\times_{\CB(G/H)}\pt}\to\bE_{\pt}\times_{\bE_{\CB(G/H)}}\bE_{\pt}
\]
is an equivalence by \cite{GM20}*{Theorem 8.1}.

Now consider a general map $\CB H\to\CB G$ such that $G$ is a torus. We may factor this as a composite
\[
\CB H\twoheadrightarrow \CB G'\hookrightarrow \CB G
\]
such that $H\to G'$ is surjective and $G'\to G$ injective.
Let $H'\subset H$ denote the kernel of the map $H\to G$.

The fiber $F$ of the map $\CB H\to\CB G$ is then a $\CB H'$-bundle over $G/G'$ in $\Aball(\Spcgl{\ab})$.
Using that $G/G'$ is also a torus and $BH'$ is connected, one computes that in fact $F\simeq G/G' \times \CB H'$.
Since $\bE_\bullet$ preserves finite products and pullbacks along quotient maps, see \Cref{rem-G-bullet-pres-limits}, we deduce that the map $\bE_Q\to\bE_{\CB H}\times_{\bE_{\CB G}}\bE_{\pt}$ is an equivalence.
\end{proof}

\begin{theorem}\label{ell-satisfies-cond}
Suppose that $\bE\rightarrow \sfS$ is an oriented elliptic curve in spectral Deligne-Mumford stacks over $\sfS$. Then the naive $\ab$-global $2$-ring
\[
\CQ^{\bE} \colon \Spc_{\ab}^{\op}\rightarrow \PrL
\]
is genuine with respect to the subcategory of tori.
\end{theorem}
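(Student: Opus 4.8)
The plan is to verify the three conditions defining a $\CT$-genuine naive global 2-ring (\Cref{def:T-genuine}) for $\CR=\CQ^{\bG}$ with $\CT=\Tori$ the subcategory of tori; recall that $\CQ^{\bG}$ is a naive $\ab$-global 2-ring by \Cref{ex:QCoh_cohom}, obtained by applying $\QCoh$ to the functor of points of $\bG_\bullet\colon\Spcgl{\ab}\to\mathrm{Shv}(\CM_{\sfS})$. The geometric input we need is that $\bG_\bullet$ sends a faithful map $\CB H\hookrightarrow\CB G$ of abelian orbit stacks to a closed immersion $\bG_{\CB H}\hookrightarrow\bG_{\CB G}$ --- geometrically the inclusion of the $\widehat H$-torsion points into the $\widehat G$-torsion points of $\bG$ --- and in particular to an \emph{affine} morphism; this is part of the analysis of \cite{GM20}*{Section 3}. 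We will then repeatedly use that, for an affine morphism $u$ of spectral Deligne--Mumford stacks, $u_*$ preserves colimits and satisfies both the projection formula and base change along arbitrary morphisms, these being standard facts in spectral algebraic geometry since the pullbacks involved are formed in the derived sense.

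For condition (\ref{item:gen-adjoint}) of \Cref{def:oriented_glo_cat}, consider a pullback square in $\Spcgl{\ab}$ with $f'\colon\CB K\hookrightarrow\CB G$ faithful and $\CB G\in\Tori$, so that $G$ is a torus. By \Cref{lem:G_preserve_conn_pullback}, applying $\bG_\bullet$ produces a pullback square in $\mathrm{Shv}(\CM_{\sfS})$ whose leg $\bG_{\CB K}\to\bG_{\CB G}$ is a closed immersion, hence affine; applying $\QCoh$ and invoking base change along this affine leg shows the relevant Beck--Chevalley transformation is an equivalence. For condition (\ref{item:gen-proj}), a faithful $\CB\alpha\colon\CB H\to\CB G$ in $\Orb{\ab}$ is sent by $\bG_\bullet$ to an affine morphism $\bG_{\CB H}\to\bG_{\CB G}$, and the projection formula for $\QCoh$ along affine morphisms yields precisely the asserted equivalence. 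Thus $\CQ^{\bG}$ is $\Tori$-pregenuine.

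It remains to check condition (\ref{item:gen-rep-sph}). Since $\ab$ is the family of abelian compact Lie groups and $\CQ^{\bG}$ is $\Tori$-pregenuine, \Cref{prop-abelian-compact} reduces this to showing that $\decatcoh{\T}{S^\tau}{\CQ^{\bG}}$ is invertible in $\CQ^{\bG}_{\CB\T}=\QCoh(\bG_{\CB\T})\simeq\QCoh(\bG)$. By \Cref{gamma-sphere}, $\decatcoh{\T}{S^\tau}{\CQ^{\bG}}$ is the fibre of the unit map $\CO_\bG\to p_*p^*\CO_\bG$ attached to the faithful map $p\colon S(\tau)\sslash\T\to\CB\T$; as recalled above this map is $\pt\to\CB\T$ induced by $e\hookrightarrow\T$, so $\bG_\bullet(p)$ is the unit section $e\colon\sfS=\bG_{\pt}\to\bG$. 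Hence $\decatcoh{\T}{S^\tau}{\CQ^{\bG}}\simeq\mathrm{fib}(\CO_\bG\to e_*\CO_{\sfS})\simeq\mathcal{I}_e$, the ideal sheaf of the unit section. Finally, orientedness of $\bG$ implies that $e$ is a relative effective Cartier divisor, equivalently that $\mathcal{I}_e$ is an invertible $\CO_\bG$-module, by the structure theory of oriented abelian group objects in \cite{GM20}*{Section 5}. This establishes condition (\ref{item:gen-rep-sph}) and completes the verification that $\CQ^{\bG}$ is $\Tori$-genuine.

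The main obstacle is assembling the geometric inputs correctly rather than any formal difficulty: one must know that $\bG_\bullet$ carries faithful maps to closed (in particular affine) immersions so that base change and the projection formula for $\QCoh$ apply --- taking care that the relevant squares are cartesian in the derived sense --- and, most importantly, one needs the implication from orientedness to invertibility of $\mathcal{I}_e$, which is where the nontrivial content of \cite{GM20} enters, together with \Cref{lem:G_preserve_conn_pullback} whose proof is itself deferred to \cite{GM2}. Everything else is a direct application of \Cref{prop-abelian-compact} and \Cref{gamma-sphere}.
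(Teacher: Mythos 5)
Your proposal is correct and follows essentially the same route as the paper: conditions (1) and (2) via \Cref{lem:G_preserve_conn_pullback} together with affineness of the torsion-point inclusions and the push-pull/projection formulas for $\QCoh$ in spectral algebraic geometry, and condition (3) via \Cref{prop-abelian-compact} and the identification of $\decatcoh{\T}{S^\tau}{\CQ^{\bG}}$ with the ideal sheaf of the unit section (the paper cites \cite{GM20}*{Lemma 9.1}, which computes it as $\CO_{\bG}(-e_1)$), whose invertibility is exactly the input from orientedness. The only cosmetic difference is that the paper asserts the maps $\bG_{\CB K}\to\bG_{\CB G}$ are \emph{affine and proper} rather than closed immersions, but since only affineness is used this does not affect the argument.
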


    
\begin{proof}
We will show that $\CQ^{\bE}$ is genuine with respect to the family of tori in $\mathrm{Glo}_{\ab}$. By \Cref{lem:G_preserve_conn_pullback} given a pullback square as in condition (1) of \Cref{def:oriented_glo_cat}, the resulting square 
    \[
    \begin{tikzcd}
        {\bE_\sX} & \bE_{\CB K}\ \\
        {\bE_{\CB H}} & {\bE_{\CB G}}
        \arrow["\alpha", from=2-1, to=2-2]
        \arrow[hook, from=1-2, to=2-2]
        \arrow[hook, from=1-1, to=2-1]
        \arrow["\lrcorner"{anchor=center, pos=0.125}, draw=none, from=1-1, to=2-2]
        \arrow[from=1-1, to=1-2]
    \end{tikzcd}
    \]
    is a pullback square in $\mathrm{Shv}(\Aff_{\bS})$. Furthermore the map $\bE_{\CB K}\hookrightarrow \bE_{\CB G}$ is equivalent to the homomorphism of abelian group schemes $\bE[\hat{K}]\to\bE[\hat{G}]$ obtained by taking torsion points indexed by the Pontryagin dual groups, and so affine and proper by \cite{GM20}*{Proposition 3.15} and the proof of Proposition 6.3 of \emph{op.~cit.} Therefore the resulting square given by applying $\QCoh(-)$ is right adjointable by the push-pull formula for quasi-coherent sheaves, see \cite{DAG_QCoh}*{Corollary 3.2.6}. Condition (2) also follows immediately from the previous observations and \cite{DAG_QCoh}*{Proposition 3.2.11}. Finally condition (3) follows from \cite{GM20}*{Lemma 9.1}, which computes that $\decatcoh{\T}{S^\tau}{\CQ^{\bE}}$ is equivalent to the canonical line bundle $\CO_{\bE}(-e_1)$ of the elliptic curve $\bE$, by an application of \Cref{prop-abelian-compact}.
\end{proof}

\begin{remark}\label{rmk:failure-of-basechange}
It is \emph{not} true that the diagram $\CQ^{\bE}$ satisfies right base-change for arbitrary pullback squares 
\[
\begin{tikzcd}
\sX \arrow[r]\arrow[d, hook] & \CB H \arrow[d, hook,"\alpha"] \\ \CB K \arrow[r] & \CB G
\arrow["\lrcorner"{anchor=center, pos=0.125}, draw=none, from=1-1, to=2-2]
\end{tikzcd}
\] of global spaces. For example, as explained in \cite{GM2}*{Remark 2.10}, if $\alpha$ is injective and $\CB G$ is not connected then right base-change is not satisfied. Therefore it is crucial in this example that we have the flexibility to restrict to a family of enough injective objects, in this case the family of tori.
\end{remark}

Having shown that $\CQ^\bE$ is genuine, we obtain a family of equivariant cohomology theories which satisfy the axioms of Ginzburg--Kapranov--Vasserot.

\begin{theorem}
Let $\bE$ be an oriented elliptic curve in spectral Deligne-Mumford stacks over $\sfS$. Then there exists an equivariant elliptic cohomology theory
\[
\decatcoh{G}{-}{\bE}\colon \Spc_G^{\op}\rightarrow \CAlg(\QCoh(\bE[\hat{G}]))
\]
for every abelian compact Lie group $G$, as well as coherently functorial change-of-group transformations
\[
Q_\alpha\colon \decatcoh{H}{\alpha^*(-)}{\bE} \Rightarrow \alpha^*\decatcoh{G}{-}{\bE} 
\]
for any group homomorphism $\alpha\colon H\rightarrow G$. Furthermore this data satisfies the Ginzburg--Kapranov--Vasserot axioms:
\begin{enumerate}[itemsep = 5pt]
    \item \emph{Induction:} Let $\alpha\colon G\rightarrow G/N$ be a surjective group homomorphism with kernel $N$ and let $X$ be a $G$-space such that the action of $N$ on $X$ is free. Write $p\colon X\rightarrow X/N$ for the canonical projection map, which is $G$ equivariant. Then the composite 
    \[\decatcoh{G/N}{\alpha_!(X/N)}{\bE}\xrightarrow{T_\alpha} \alpha_*\decatcoh{G}{X/N}{\bE} \xrightarrow{\alpha_*\decatcoh{G}{p}{\,\bG}} \alpha_*\decatcoh{G}{X}{\bE}\] is an equivalence;
    \item \emph{Basechange:} Let $\CB \alpha \colon \CB H \to \CB G$ be a map in $\Glo{\CE}$ such that $G$ is a torus. Then the natural transformation 
    \[
    Q_\alpha\colon \alpha^* \decatcoh{G}{X}{\bE}\Rightarrow \decatcoh{H}{\alpha^*X}{\bE}
    \] is an equivalence for all compact $G$-spaces;
    \item \emph{K\"unneth:} Let $G$ and $H$ be two tori, $X$ a compact $G$-space and $Y$ a compact $H$-space. Then there is an equivalence 
    \[
    \pi_G^*\decatcoh{G}{X}{\bE}\otimes \pi_H^*\decatcoh{H}{Y}{\bE} \simeq \decatcoh{G\times H}{X \times Y}{\bE},
    \]
    where $\pi_H$ and $\pi_G$ denote the two projections $G\times H\rightarrow H,G$. Moreover the functor $\decatcoh{G}{-}{\bE}\colon (\Spc_G^{\omega})^{\op}\rightarrow \QCoh(\bE[\hat{G}])$ is strong monoidal.
\end{enumerate}
\end{theorem}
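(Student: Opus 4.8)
The plan is to obtain this theorem as a direct application of the general machinery of Parts III and IV to the naive $\ab$-global $2$-ring $\CQ^\bG$, exploiting the fact that $\CQ^\bG$ is genuine with respect to the subcategory $\Tori\subset\Glo{\ab}$ of tori, which is \Cref{ell-satisfies-cond}. In particular all of the substantive content has already been established: the construction of the unravelling and its change-of-group transformations, and the fact that for a $\CT$-pregenuine $2$-ring this unravelling satisfies the Ginzburg--Kapranov--Vasserot axioms on $\CT$. What remains is to specialise these statements, identify the abstract targets with the relevant geometric categories, and collect the pieces.

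First I would apply \Cref{thm:Unraveling}, in the form of \Cref{mainfunctor}, to $\CR=\CQ^\bG$. This produces a functor $\decatcoh{\bullet}{-}{\CQ^\bG}\colon\Glo{\ab}^{\op}\to\Fun^{\oplax}([1],\tCat^{\otimes,\lax})$, hence for every abelian compact Lie group $G$ a lax symmetric monoidal, limit preserving functor $\decatcoh{G}{-}{\CQ^\bG}\colon\Spc_G^{\op}\to\CAlg(\CQ^\bG_{\CB G})$ (limit preservation is \Cref{prop:Gamma_G_colim_pres}), together with, for each group homomorphism $\alpha\colon H\to G$, the change-of-group transformation $Q_\alpha\colon\decatcoh{H}{\alpha^*(-)}{\CQ^\bG}\Rightarrow\alpha^*\decatcoh{G}{-}{\CQ^\bG}$ filling the relevant oplax square. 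To match the statement I would then identify the target: by construction $\CQ^\bG_{\CB G}=\QCoh(\bG_{\CB G})$, and as recorded in the preceding discussion (following \cite{GM20}) there is a (noncanonical) equivalence $\bG_{\CB G}\simeq\bG[\widehat{G}]$ of abelian group schemes over $\sfS$, so $\CQ^\bG_{\CB G}\simeq\QCoh(\bG[\widehat{G}])$; writing $\decatcoh{G}{-}{\bG}$ for the resulting composite yields the first half of the statement. Coherent functoriality of the $Q_\alpha$ in $\alpha$ is precisely the content of the functor $\decatcoh{\bullet}{-}{\CQ^\bG}\colon\Glo{\ab}^{\op}\to\Fun^{\oplax}([1],\tCat^{\otimes,\lax})$ above, and I note that $Q_\alpha$ is already an equivalence for faithful $\alpha$ by \Cref{rem:Talpha-equivalence} applied to the mate $T_\alpha$.

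For the three axioms I would proceed as follows. The induction axiom holds for the unravelling of an arbitrary naive global $2$-ring, so it follows verbatim from \Cref{prop:induction_axiom} with $\CR=\CQ^\bG$. For base change and Künneth, \Cref{ell-satisfies-cond} tells us that $\CQ^\bG$ is $\Tori$-genuine, in particular $\Tori$-pregenuine, so the hypotheses of \Cref{thm:GKV-axioms} are satisfied with $\CT=\Tori$. Point (2) of that theorem then gives that $Q_\alpha$ is an equivalence on compact $G$-spaces whenever $\CB G\in\Tori$, that is, whenever $G$ is a torus; point (3) gives the external Künneth equivalence $\pi_G^*\decatcoh{G}{X}{\bG}\otimes\pi_H^*\decatcoh{H}{Y}{\bG}\simeq\decatcoh{G\times H}{X\times Y}{\bG}$ for $G,H$ tori, together with the strong monoidality of $\decatcoh{G}{-}{\bG}$ on $(\Spc_G^\omega)^{\op}$. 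Postcomposing everything with the identification $\QCoh(\bG_{\bullet})\simeq\QCoh(\bG[\widehat{\bullet}])$ translates these into the asserted statements for elliptic cohomology.

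The only point requiring real care — and the step I expect to be the main obstacle, such as it is — is the bookkeeping ensuring that the abstract category $\CQ^\bG_{\CB G}$ and the abstract functors $Q_\alpha$, $T_\alpha$ are correctly identified with $\QCoh(\bG[\widehat{G}])$ and pullback along the maps $\bG[\widehat{\alpha}]\colon\bG[\widehat{G}]\to\bG[\widehat{H}]$ of torsion subschemes, compatibly with the functoriality in $\alpha$ recorded in \Cref{mainfunctor}. This amounts to tracing through \Cref{const:geom_ellipt_cohom} and the shifted Pontryagin duality of \Cref{const:Pont-duality}, and checking that the restriction to the family of tori inside $\Glo{\ab}$ of the GKV statements of \Cref{thm:GKV-axioms}, which are phrased there for a general $\CT$-pregenuine $\CR$, specialises as claimed. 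No genuine difficulty is anticipated here, as all the hard analytic and geometric input sits in \Cref{ell-satisfies-cond} (via \cite{GM20}) and \Cref{thm:GKV-axioms}; the theorem is, in effect, a packaging of those two results.
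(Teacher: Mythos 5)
Your proposal is correct and follows exactly the paper's own (two-line) proof: apply \Cref{thm:Unraveling} to $\CQ^\bG$ to produce the unravelling and the transformations $Q_\alpha$, then invoke \Cref{ell-satisfies-cond} and \Cref{thm:GKV-axioms} with $\CT=\Tori$ for the three axioms. The extra bookkeeping you flag (identifying $\CQ^\bG_{\CB G}$ with $\QCoh(\bG[\widehat{G}])$ via the noncanonical equivalence $\bG_{\CB G}\simeq\bG[\widehat{G}]$) is left implicit in the paper and is indeed routine.
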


\begin{proof}
Applying \Cref{thm:Unraveling} to the naive global 2-ring $\CQ^\bE$ we obtain the data as in the theorem. Because $\CQ^\bE$ is genuine with respect to the family of tori, the Ginzburg--Kapranov--Vasserot axioms follow from \Cref{thm:GKV-axioms}.
\end{proof}

\begin{remark}
Let us more explicitly compare these axioms to those of Ginzburg--Kapranov--Vasserot. First we note that, as observed in \Cref{rmk:failure-of-basechange}, $\CQ^{\bE}$ is \emph{not} genuine with respect to the family of all groups. In particular, the base-change axiom only hold for group homomorphisms into a torus, and the K\"unneth axiom only holds for tori (see \cite{GM2} for details and counterexamples for abelian groups which are not tori).

Secondly let us address the periodicity axiom, which we have so far not discussed. We compute that 
\[
\decatcoh{\T}{X\wedge S^\tau}{\bE} \simeq \decatcoh{\T}{X}{\bE}\otimes \decatcoh{\T}{S^\tau}{\bE} \simeq \decatcoh{\T}{X}{\bE}\otimes \CO_{\bE}(-e_1).
\] We interpret this as stating that the periodicity axiom holds in $\mathrm{RO}(\T)$-grading. Obtaining the periodicity axiom precisely as stated in \cite{GKV95} would require the object $\CO_{\bE}(-e_1)$ to be equivalent to $\Sigma^{2} \CO_{\bE}$, which is typically not the case.
\end{remark}

From our results, we also obtain a global spectrum representing globally equivariant elliptic cohomology.

\begin{definition}
Let $\bE\rightarrow \sfS$ be an oriented elliptic curve in spectral Deligne-Mumford stacks over $\sfS$. Applying \Cref{thm:Represented-theorem}, we conclude that the naive $\ab$-global ring $\decatcohsp{\gl}{-}{\bE}\colon \Spcgl{\ab}^{\op}\rightarrow \Sp$ is represented by an $\ab$-global spectrum, which we denote by $\Gamma_{\bE}(\sfS;\CO_{\sfS})$.
\end{definition}

\begin{remark}
The underlying spectrum of the global spectrum $\Gamma_{\bE}(\sfS;\CO_{\sfS})$ is simply the global sections $\Gamma(\sfS)=\Gamma(\sfS;\CO_\sfS)$ of the spectral Deligne--Mumford stack $\sfS=(\sfS,\CO_\sfS)$. However, the globally equivariant structure depends on the oriented abelian group object $\bE$ over $\sfS$.
\end{remark}

\begin{example}
Most of the arguments of this section can be adapted and carried out for an arbitrary oriented abelian group object over a spectral Deligne--Mumford stack. In particular we may apply the construction for the multiplicative group scheme $\mathbb{G}_{m}$ over $\Spec(\mathrm{KU})$, oriented as in \cite{Survey}*{Section 3.1}, see also \cite{GM20}*{Section 4}. By our previous construction we obtain a global spectrum $\Gamma_{\mathbb{G}_m}(\mathrm{KU};\CO_{\mathrm{KU}})$, which is a global form of equivariant complex K-theory. 
\end{example}

\begin{remark}
We note that a global spectrum $\mathrm{KU}_{\gl}$ enhancing globally equivariant complex K-theory has previously been constructed by \cite{Schwede18}. In fact, using \cite{GM20}*{Section 4} one can show that the global cohomology theory associated to $\Gamma_{\mathbb{G}_m}(\mathrm{KU};\CO_{\mathrm{KU}})$ agrees with that associated to Schwede's global complex $K$-theory spectrum. However less clear is how to construct an equivalence of these objects as global spectra. In other words, a coherent identification of the representation sphere deloopings provided by both constructions does not seem to follow formally. While we certainly expect this to be possible, we do not attempt a rigorous comparison here.
\end{remark}

\begin{example}
    Suppose $\bE$ is the universal elliptic curve lying over the moduli stack of oriented elliptic curves $(\CM_{\mathrm{ell}}^{\mathrm{or}},\CO_{\CM_{\mathrm{ell}}^{\mathrm{or}}})$, see \cite{Ell2}*{Proposition 7.2.10} and \cite{Meier2022}*{Section 4.1}. Then $\mathrm{TMF}_{\gl}\coloneqq \Gamma_{\bE}(\CM_{\mathrm{ell}}^{\mathrm{or}},\CO_{\CM_{\mathrm{ell}}^{\mathrm{or}}})\in \CAlg(\Spgl{\ab})$ is a global refinement of the spectrum $\mathrm{TMF}$ of topological modular forms.   
\end{example}

\begin{example}
    Let $\bE$ be an oriented elliptic curve over $\sfS$. Restricting the $\ab$-global spectrum $\Gamma_{\bE}(\sfS;\CO_{\sfS})$ to any compact abelian Lie group we obtain a genuine $A$-spectrum representing $A$-equivariant elliptic cohomology. By construction, the restriction of $\Gamma_{\bE}(\sfS;\CO_{\sfS})$ to a $\T$-spectrum agrees with the $\T$-spectrum $R$ of \cite{GM20}*{Construction 9.3}.    
    Moreover, given an \emph{arbitrary} compact Lie group $G$, our construction also gives a definition of a genuine $G$-spectrum representing $G$-equivariant elliptic cohomology. Namely, we may first right induce $\Gamma_{\bE}(\sfS;\CO_{\sfS})$, in the sense of \cite{Schwede18}*{Theorem 4.5.1}, to a fully global spectrum and then restrict this to a $G$-spectrum. This definition is motivated by the case of equivariant K-theory, which is also right induced from abelian groups, and implements the suggestion of \cite{Survey}. 
\end{example}

\section{Tempered cohomology}\label{sec:tempered}

We start this section by recalling the theory of $\bG$-tempered cohomology and $\bG$-tempered local systems for an oriented $\rP$-divisible group $\bG$ over a commutative ring spectrum $R$ following~\cite{Ell3}. We then apply our main results to canonically refine tempered cohomology to a global spectrum, see \Cref{thm-tempered-is-genuine}. In the final part of this section we identify Lurie's $\infty$-category of tempered local systems in terms of equivariant stable homotopy theory, see \Cref{thm-localsym-as-spectra}

\subsection{Tempered cohomology and tempered local systems}
Recall that by \cite{Ell3}*{Remark 3.5.2} the data of a $\rP$-divisible group $\bG$ over a commutative ring spectrum $R$ is equivalent to the data of a functor 
\[
\bG[-]\colon \Aball_{\mathrm{fin}} \rightarrow \CAlg_R
\] from the subcategory of finite abelian groups to commutative $R$-algebras such that
\begin{enumerate}
    \item $\bG[-]$ preserves finite coproducts. In particular $\bG[e] \simeq R$.
    \item For every short exact sequence $A\rightarrow B\rightarrow C$ of finite abelian groups, the square 
    \[\begin{tikzcd}
        {\bG[A]} & {\bG[B]} \\
        \bG[e] & {\bG[C]}
        \arrow[from=2-1, to=2-2]
        \arrow[from=1-1, to=1-2]
        \arrow[from=1-2, to=2-2]
        \arrow[from=1-1, to=2-1]
    \end{tikzcd}\] is a pushout square.
    \item Applied to an injective group homomorphism, $\bG[-]$ is finite flat of positive degree.
\end{enumerate}
\begin{notation}
In this section we exclusively consider the global family of \emph{finite abelian groups}. To simplify notation we make the convention that in this section $\Glo{}$ denotes the global orbit category with isotropy in the family of finite abelian groups. We therefore write $\Spc_{\gl}$ for the associated $\infty$-category of global spaces and $\Orb{}$ for the wide subcategory of $\Glo{}$ spanned by the faithful morphisms.
\end{notation}

By \cite{Ell3}*{Theorem 3.5.5} a \emph{preorientation} for $\bG$ can equivalently be defined as a lift of $\bG[-]$ in the diagram
    \begin{equation}\label{eq-lift}
    \begin{tikzcd}
        {\Aball_{\fin}} & {\CAlg_R} \\
        {\Glo{}^{\op}}
        \arrow["{\CB {(\widehat{-})}}"', from=1-1, to=2-1]
        \arrow["{R_\bG^\bullet}"', dashed, from=2-1, to=1-2]
        \arrow["{\bG[-]}", from=1-1, to=1-2]
    \end{tikzcd}
    \end{equation}
where $(\widehat{-})$ denotes the Pontryagin duality functor. A preoriented $\rP$-divisible group $\bG$ is \emph{oriented} if, roughly, at every prime $p$ the canonical map from $R_{(p)}^{B(\Q_p/\Z_p)}$ to the formal part of $\bG_{(p)}$ is an equivalence of formal groups. See \cite{Ell3}*{Definition 2.6.12} for a precise definition.

\begin{definition}
Let $\bG$ be a preoriented $\rP$-divisible group over a commutative ring spectrum $R$. Limit extending the composite 
\[
\Glo{}^{\op}\xrightarrow{R_\bG^\bullet} \CAlg_R \xrightarrow{\Gamma} \Sp
\] from (\ref{eq-lift}), we obtain a global cohomology theory $R^\bullet_\bG$ which, following \cite{Ell3}, we call $\bG$-\emph{tempered cohomology}.
\end{definition}

To investigate the properties of $\bG$-tempered cohomology, Lurie introduced the notion of $\bG$-tempered local systems. We now recall the definition and list some of the most important properties that this construction satisfies. 

\begin{definition}[{\cite{Ell3}*{Construction 5.1.3}}]\label{def-pretempered}
    Let $R$ be a commutative ring spectrum and let $\bG$ be a preoriented $\rP$-divisible group. For any global space $\sX$, we let ${\Glo{}}_{/\sX}$ denote the fiber product $\Glo{}\times_{\Spc_{\gl}}{\Spc_{\gl}}_{/\sX}$. More informally, ${\Glo{}}_{/\sX}$, is the $\infty$-category whose objects are pairs $(\CB A, \eta)$ where $\CB A \in \Glo{}$ and $\eta \colon \CB A^{(-)} \to \sX$ is a map of global spaces, see \Cref{not-righ-adj-eval}. 
    
    We also let $\underline{R}_\sX$ denote the composite
    \[
    ({\Glo{}}_{/\sX})^{\op} \xrightarrow{\fgt} \Glo{}^{\op} \xrightarrow{R_{\bG}^{\bullet}} \CAlg_R
    \]
    which we can view as a commutative algebra in $\Fun(({\Glo{}}_{/\sX})^{\op}, \Sp)$, equipped with the pointwise tensor product. A $\bG$-\emph{pretempered local system} $\CF$ on $\sX$ is an $\underline{R}_\sX$-module object of the functor $\infty$-category $\Fun(({\Glo{}}_{/\sX})^{\op}, \Sp)$ satisfying:
    \begin{itemize}
        \item[(A)] For any morphism $\alpha \colon \CB A \to \CB A_0$ in ${\Glo{}}_{/\sX}$ which is represented by a surjective group homomorphism, the map $R_{\bG}^{\CB A} \otimes_{R_{\bG}^{\CB A_0}} \CF(\CB A_0) \to \CF(\CB A)$ induced by $\CF(\alpha)$ is an equivalence of $R_{\bG}^{\CB A}$-modules.
    \end{itemize}
    We denote by $\LocSys_{\bG}^{\mathrm{pre}}(\sX)$ the full subcategory of $\underline{R}_\sX$-modules spanned by the $\bG$-pretempered systems on $\sX$. 
\end{definition}

\begin{definition}[{\cite{Ell3}*{Definition 5.2.4}}]
    In the situation of \Cref{def-pretempered} let $\CF$ be a $\bG$-pretempered system on a global space $\sX$. We say that $\CF$ is a $\bG$-\emph{tempered local system} if it satisfies the additional condition:
    \begin{itemize}
        \item[(B)] For any $\CB A \in (\Glo{}^{\op})_{/\sX}$ and faithful morphism $\alpha \colon \CB A_0 \to \CB A$, the canonical map 
        \[
        \CF(\CB A) \to \CF(\CB A_0)^{hA/A_0}
        \]
        exhibits the target as a $I(A_0/A)$-completion of $\CF (\CB A)$ where $I(A_0/A):=\ker(R_{\bG}^{\CB A} \to R_{\bG}^{\CB A_0})$ is the relative augmentation ideal.
    \end{itemize}
    We denote by $\LocSys_{\bG}(\sX)$ the full subcategory spanned by the $\bG$-tempered local systems on $\sX$.
\end{definition}

\begin{example}\label{ex-local-system}
Let $X$ be a space, which we identify with a constant global space as in \Cref{not-constant-global-space}. By~\cite{Ell3}*{Variant 5.1.15} there is an equivalence of categories 
\[
\LocSys_{\bG}^{\mathrm{pre}}(X) \simeq \Fun(X,\Mod_R).
\]
If furthermore $\bG$ is oriented, then by \cite{Ell3}*{Corollary 5.4.3} every pretempered local system on $X$ is tempered, and so there is also an equivalence
\[
\LocSys_{\bG}(X) \simeq \Fun(X,\Mod_R).
\]
\end{example}
The next example shows that the $\infty$-category of (pre)-local system over $\CB A$ is controlled by the faithful maps into $\CB A$.
\begin{example}\label{ex-pre-local-system-over-classifying-space}
    If $\bG$ is a preoriented $\rP$-divisible group over a commutative ring spectrum $R$, we let $\ul{R}_{\bG, \mathrm{fth}}$ denote the composite functor 
    \[
    ({\Orb{}}_{/\CB A})^{\op} \hookrightarrow ({\Glo{}}_{/\CB A})^{\op} \to \Glo{}^{\op} \xrightarrow{R_{\bG}}\CAlg 
    \]
    which we can regard as a commutative algebra objects in $\Fun(({\Orb{}}_{/\CB A}), \Sp)$. Then by \cite{Ell3}*{Proposition 5.1.12}, there is an equivalence 
    \[
    \LocSys_{\bG}^{\mathrm{pre}}(\CB A) \simeq\Mod_{\ul{R}_{\bG,\mathrm{fth}}}(\Fun(({\Orb{}}_{/\CB A})^{\op}, \Sp)).
    \]
    If $\bG$ is oriented, then one can also describe the $\infty$-category $\LocSys_{\bG}(\CB A)$ as a full subcategory of $\Mod_{\ul{R}_{\bG, \mathrm{fth}}}(\Fun(({\Orb{}}_{/\CB A})^{\op}, \Sp))$ satisfying the analogue of condition (B), we refer the reader to \cite{Ell3}*{Proposition 5.4.2} for more details.
\end{example}

\begin{notation}\label{not-evaluation-functor}
    For any $A$ there is an evaluation functor 
    \[
    \LocSys_{\bG}(\CB A)\subseteq \LocSys_{\bG}^{\mathrm{pre}}(\CB A) \to \Mod_{R_{\bG}^{\CB A}}, \qquad \CF \mapsto \CF(A),
    \]
    which we often denote by $\CF^A$. By \Cref{ex-pre-local-system-over-classifying-space}, the collection of functors $\{\CF\mapsto \CF^{A_0}, A_0\subseteq A\}$ is jointly conservative on pretempered local systems.
\end{notation}

Given a morphism of global spaces $f\colon \sX \to \sY$, consider the functor $({\Glo{}}_{/\sX})^{\op} \to ({\Glo{}}_{/\sY})^{\op}$ given by composition with $f$. This induces a functor on $\bG$-pretempered local systems
\[
f^* \colon \LocSys_{\bG}^{\mathrm{pre}}(\sY) \to \LocSys_{\bG}^{\mathrm{pre}}(\sX). 
\]
Furthermore the pullback functor clearly preserves $\bG$-tempered local systems  and so restricts to a functor $f^* \colon \LocSys_{\bG}(\sY) \to \LocSys_{\bG}(\sX)$. In fact by~\cite{Ell3}*{Remark 5.2.11} this construction determines a limit preserving functor 
\begin{equation}\label{localsystemfunctor}
\LocSys_{\bG} \colon \Spc_{\gl}^{\op} \to \widehat{\Cat}_\infty.
\end{equation}

In the next result we record some important facts about this functor that we will use throughout the section. 

\begin{proposition}\label{megaprop}
    Let $\bG$ be an oriented $\rP$-divisible group over a commutative ring spectrum $R$. 
    \begin{itemize}
        \item[(1)] For any $\rP$-global space $\sX$, the $\infty$-category $\LocSys_{\bG}(\sX)$ is stable and presentably symmetric monoidal with unit object given by $\underline{R}_\sX$. 
        \item[(2)] For any morphism of global spaces $f\colon \sX \to \sY$, the functor $f^*\colon \LocSys_{\bG}(\sY)\rightarrow \LocSys_{\bG}(\sX)$ preserves limits and colimits and is symmetric monoidal. In particular, $f^*$ admits a left adjoint $f_!$ and a right adjoint $f_*$.
        \item[(3)] Let $f \colon \CB A_0 \to \CB A$ be a faithful map in $\Glo{}$. There is an equivalence of functors $f_! \simeq f_*$.
        \item[(4)] For any finite abelian group $A$, the evaluation functor $\LocSys_{\bG}(\CB A)\to \Mod_{R_{\bG}^{\CB A}}$ preserves all limits and colimits. 
        \item[(5)] For any finite abelian group $A$, the $\infty$-category $\LocSys_{\bG}(\CB A)$ is rigidly-compactly generated. A set of compact generators is given by $\{f_! \1 \mid f \in {\Orb{}}_{/\CB A}\}$.
        \item[(6)] The functor $\LocSys_{\bG}$ is a rigid naive global 2-ring.  
    \end{itemize}
\end{proposition}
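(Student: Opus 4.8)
The plan is to obtain parts (1)--(5) by recalling Lurie's analysis of tempered local systems, and then to deduce the last item (6) --- the one actually used later --- by a short formal argument. For (1) and (2) I would recall from \cite{Ell3} that $\LocSys_{\bG}(\sX)$ is an accessible symmetric monoidal localization of $\Mod_{\ul{R}_\sX}(\Fun(({\Glo{}}_{/\sX})^{\op},\Sp))$, cut out by the inflation condition (A) and the completeness condition (B); this presents it as a presentably symmetric monoidal stable $\infty$-category with unit $\ul{R}_\sX$, which is (1). Pullback $f^*$ along a map of global spaces is symmetric monoidal and preserves all limits and colimits by \cite{Ell3}*{Remark 5.2.11}; being a colimit-preserving functor of presentable $\infty$-categories it has a right adjoint $f_*$, and being accessible and limit-preserving it has a left adjoint $f_!$, which is (2). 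For (4), the equivalence $\LocSys^{\mathrm{pre}}_{\bG}(\CB A)\simeq\Mod_{\ul{R}_{\bG,\mathrm{fth}}}(\Fun(({\Orb{}}_{/\CB A})^{\op},\Sp))$ of \Cref{ex-pre-local-system-over-classifying-space} identifies evaluation $\CF\mapsto\CF^A$ with evaluation at the terminal object $\id_{\CB A}$, which preserves limits and colimits; Lurie checks that this persists on the subcategory of tempered local systems.

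Part (3) --- ambidexterity of $(f^*,f_*)$ along subgroup inclusions $\CB A_0\hookrightarrow\CB A$ --- I would import from \cite{Ell3}; this is where the orientation of $\bG$ is essential. For (5), Lurie shows $\LocSys_{\bG}(\CB A)$ is compactly generated with generating set $\{f_!\1 : f\in{\Orb{}}_{/\CB A}\}$ and that the unit $\ul{R}_{\CB A}$ is compact. Each such generator is dualizable: the projection formula $f_!(f^*X\otimes Y)\simeq X\otimes f_!Y$ (valid since $f^*$ is symmetric monoidal) gives $\underline{\Hom}(f_!\1,X)\simeq f_*f^*X$, and combining with (3) this is $f_!f^*X\simeq f_!\1\otimes X$, so $f_!\1$ is (self-)dualizable. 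By the standard criterion --- a presentably symmetric monoidal stable $\infty$-category that is compactly generated by dualizable objects and has compact unit is rigidly-compactly generated, since dualizable objects are then compact and every compact object is a retract of a finite colimit of the generators, hence dualizable --- part (5) follows.

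For (6): by (1) every $\LocSys_{\bG}(\sX)$ lies in $\PrL$ and by (2) every $f^*$ is a morphism of $\PrL$, so $\LocSys_{\bG}$ defines a functor $\Spcgl{\fab}^{\op}\to\PrL$; it preserves limits because it does so as a functor to $\widehat{\Cat}_\infty$ by \eqref{localsystemfunctor} and the inclusion $\PrL\hookrightarrow\widehat{\Cat}_\infty$ preserves limits and is conservative. Hence $\LocSys_{\bG}$ is a naive global 2-ring. The first rigidity condition --- that each $\LocSys_{\bG}(\CB A)$ is rigidly-compactly generated --- is part (5). For the second, let $f\colon\CB A_0\to\CB A$ be faithful; since $f_*$ is exact and preserves retracts, by (5) it suffices to show $f_*(g_!\1)$ is compact for every faithful $g\colon\CB K\to\CB A_0$. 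But $fg$ is again faithful, so applying (3) to $g$ and to $fg$ yields $f_*(g_!\1)\simeq f_*(g_*\1)\simeq(fg)_*\1\simeq(fg)_!\1$, which is a compact generator of $\LocSys_{\bG}(\CB A)$ by (5). This proves (6).

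The substantive content sits in the inputs (3) and (5) imported from \cite{Ell3} --- the tempered Wirthm\"uller isomorphism along faithful maps and the rigid-compact generation of $\LocSys_{\bG}(\CB A)$ --- so the main obstacle is really transporting those results correctly into the present framework and verifying the claimed generating sets; once they are in hand, item (6) is the short diagram chase above.
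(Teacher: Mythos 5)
Your proposal follows essentially the same route as the paper: parts (1)--(4) are imported from \cite{Ell3} (with (3) being the $v_{\bG}$-ambidexterity of faithful maps), part (5) is deduced from compactness of the unit together with the fact that the generators $f_!\1$ are (self-)dualizable, and part (6) is a formal consequence. One caveat: your justification of the left projection formula $f_!(f^*X\otimes Y)\simeq X\otimes f_!Y$ as ``valid since $f^*$ is symmetric monoidal'' is not correct as stated --- for a left adjoint $f_!$ of a symmetric monoidal functor this formula is automatic only for dualizable $X$; in general it must be deduced from the equivalence $f_!\simeq f_*$ of (3) together with the right projection formula for $f_*$ (\cite{Ell3}*{Theorem 7.3.10}), or cited directly as \cite{Ell3}*{Proposition 7.3.15}, which is what the paper does. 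Your argument for the second rigidity condition in (6) (that $f_*$ preserves compacts, via $f_*g_!\1\simeq (fg)_!\1$) is actually spelled out more carefully than in the paper, which only asserts that (6) follows from the earlier parts.
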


\begin{proof}
    For part (1) combine ~\cite{Ell3}*{Proposition 5.2.12, Corollary 5.8.6 and Remark 5.8.8}. Part (2) follows by combining~\cite{Ell3}*{Corollary 5.2.13, Corollary 5.3.2 and Proposition 5.8.13}. Part (3) will follow from the fact that $f\colon \CB H \to \CB G$ is $v_{\bG}$-ambidextrous, see discussion before~\cite{Ell3}*{Definition 7.2.4}. We first observe that $f$ is relative $\pi$-finite by \cite{Ell3}*{Proposition 7.2.7} together with \Cref{ex-classifying-space-right-induced}. It then follows from \cite{Ell3}*{Theorem 7.2.10} that the map $f$ is $v_{\bG}$-ambidextrous as needed. Part (4) follows by combining~\cite{Ell3}*{Corollary 5.2.13 and Corollary 5.3.2}. The previous point implies that the unit of $\LocSys_{\bG}(\CB A)$ is compact for all $\CB A$. Note that $f_!$ preserves compact objects as $f^*$ preserves all colimits, and so the object $f_! \1$ associated to a faithful map $f\colon \CB A_0\hookrightarrow \CB A$ is compact and corepresents evaluation at $A_0$. \Cref{not-evaluation-functor} therefore implies that $\{f_! \1 \mid f \in {\Orb{}}_{/\CB A}\}$ is a set of compact generators for $\LocSys_{\bG}(\CB A)$. These objects are even self-dual. This is a consequence of (3), as is explained in \cite{Ell3}*{Proposition 7.3.15}. It then follows that $\LocSys_{\bG}(\CB A)$ is rigidly-compactly generated as the unit is compact and we have a set of compact and dualizable generators, this concludes (5). Finally, part (6) easily follows from (1), (2) and (4). 
\end{proof}

\begin{remark}
Part (3) of the theorem above shows that the categories of tempered local systems satisfy an analogue of the Wirthm\"uller isomorphisms, also known as ambidexterity with respect to faithful maps of orbits. In \cite{Ell3}, Lurie in fact shows that the categories of tempered local systems satisfy ambidexterity for a much larger class of maps, namely all relatively $\pi$-finite maps of global spaces. For the purposes of our article we only require the special case of Wirthm\"uller isomorphisms.
\end{remark}

Decategorifying $\LocSys_\bG$ we obtain a naive global ring with associated global cohomology theory
\[
\decatcohsp{\gl}{-}{\LocSys_{\bG}}\colon \Spc_{\gl}^{\op}\rightarrow \CAlg.
\]
By~\cite{Ell3}*{Example 7.1.5} it agrees with $R^\bullet_{\bG}$. We conclude that $\bG$-tempered local systems function as a categorification of tempered cohomology. We would like to exhibit $\LocSys$ as a genuine global 2-ring. To accomplish this we have to take a digression and first discuss a notion of geometric fixed points for tempered local systems.

\subsection{Geometric fixed points}

To begin, we follow \cite{Ell3}*{Notation 5.6.8} by making the following:
\begin{definition}\label{df-nilp-local-compl}
    Let $R$ be a commutative ring spectrum, $\bG$ be a preoriented $\rP$-divisible group over $R$ and $A$ a finite abelian group. We define $\Spec(R_\bG^{\CB A})^{\deg}\subset \Spec(R_\bG^{\CB A})$ to be the union of the images of all maps $\Spec(R_\bG^{\CB A_0})\rightarrow \Spec(R_\bG^{\CB A})$ associated to all proper subgroups $A_0$ of $A$. Then $\Spec(R_\bG^{\CB A})^{\deg}\subset \Spec(R_\bG^{\CB A})$ is the vanishing locus of some finitely generated ideal $I\subset \pi_0(R_{\bG}^{\CB A})$. We say an $R_{\bG}^{\CB A}$-module $M$ is $\Spec(R_\bG^{\CB A})^{\deg}$-nilpotent/local/complete if and only if it is $I$-nilpotent/local/complete in the sense of \cite{SAG}*{Definitions 7.1.1.1, 7.2.4.1 and 7.3.1.1} respectively. We write $\Mod_{R^{\CB A}_{\bG}}^{\deg\mathrm{-loc}}$ for the subcategory of $\Mod_{R^{\CB A}_{\bG}}$ spanned by the $\Spec(R^{\CB A}_{\bG})^{\deg}$-local modules and write $\iota_{\mathrm{loc}}$ for the inclusion of $\Mod_{R^{\CB A}_{\bG}}^{\deg\mathrm{-loc}}$ into $\Mod_{R^{\CB A}_{\bG}}$. By \cite{SAG}*{Proposition 7.2.4.4} this inclusion admits a left adjoint $L\colon \Mod_{R^{\CB A}_{\bG}}\rightarrow \Mod_{R^{\CB A}_{\bG}}^{\deg\mathrm{-loc}}$.
\end{definition}

\begin{definition}
    Let $\bG$ be an oriented $\rP$-divisible group over a commutative ring spectrum $R$. Let $A$ be a finite abelian group and let $\CF\in\LocSys_{\bG}(\CB A)$. We define $\Phi^{A}(\CF):= L(\CF^A)$, that is as the localization of $\CF^A\in\Mod_{R_{\bG}^{\CB A}}$ at $\Spec(R_\bG^{\CB A})^{\deg}$. We refer to this as the $A$-\emph{geometric fixed points} of $\CF$. This gives a functor 
    \[
    \LocSys_{\bG}(\CB A)\rightarrow \Mod_{A_{\bG}^{\CB A}}^{\deg\mathrm{-loc}}\simeq  \Mod_{\Phi^A(\1)}.
    \]
    For any subgroup $\iota\colon A_0\subset A$, we define $\Phi^{A_0}(\CF) \coloneqq \Phi^{A_0}(\iota^* \CF)$.
\end{definition}

We now show that our definition of geometric fixed points functor enjoys similar properties as the geometric fixed points functor in equivariant stable homotopy theory. We start with the following:

\begin{proposition}
    For any finite abelian group $A$, the functor $\Phi^A \colon \LocSys_{\bG}(\CB A)\rightarrow \Mod_{R_{\bG}^{\CB A}}^{\deg\mathrm{-loc}}$ is a smashing localization, with right adjoint given by $U_A \circ \iota_{\mathrm{loc}}$ where $U_A$ sends a $R_{\bG}^{\CB A}$-module $M$ to the tempered local system determined by the assignment 
    \[
A_0\subset A \mapsto \begin{cases}
M & A_0 = A \\
0 & \text{otherwise.}
\end{cases}
\] Moreover, the functor $\Phi^{A_0}\colon \LocSys_{\bG}(\CB A)\rightarrow \Mod_{R_{\bG}^{\CB A_0}}^{\deg\mathrm{-loc}}$ is symmetric monoidal for all $A_0\subset A$.
\end{proposition}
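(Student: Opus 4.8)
The plan is to realize $\Phi^A$ as the ``open'' localization of a recollement on $\LocSys_{\bG}(\CB A)$, in parallel with the isotropy separation of equivariant homotopy theory and the discussion of \cite{Ell3}*{Section 5.6}. By \cite{Ell3}*{Proposition 5.1.12} one has $\LocSys_{\bG}^{\mathrm{pre}}(\CB A)\simeq \Mod_{\ul{R}_{\bG,\mathrm{fth}}}(\Fun(({\Orb{}}_{/\CB A})^{\op},\Sp))$, under which $\CF\mapsto \CF^A$ becomes evaluation at the terminal object $\id_{\CB A}\in{\Orb{}}_{/\CB A}$; as such it is symmetric monoidal for the pointwise tensor product and preserves all limits and colimits, so it admits a right adjoint, which, since $\id_{\CB A}$ is terminal, is computed by right Kan extension ``by zero'' and is precisely the fully faithful functor $M\mapsto U_A(M)$ of the statement. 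First I would check that $U_A(M)$ is \emph{tempered}, i.e.\ satisfies condition (B), if and only if $M$ is $\Spec(R_{\bG}^{\CB A})^{\deg}$-local: because $U_A(M)$ vanishes on proper subgroups, condition (B) at a proper subgroup $A_0\subsetneq A$ unwinds to the requirement that the $I(A_0/A)$-completion of $M$ vanish, and since $V(I(A_0/A))\subseteq \Spec(R_{\bG}^{\CB A})^{\deg}=V(I)$, imposing this for every proper $A_0$ is equivalent to $M$ being $I$-local. Combining this with the facts that $\LocSys_{\bG}(\CB A)\subseteq\LocSys_{\bG}^{\mathrm{pre}}(\CB A)$ is a reflective localization (\cite{Ell3}*{Corollary 5.3.2}) and that $\iota_{\mathrm{loc}}$ is reflective with reflection $L$, a short chase of adjunctions shows that $\Phi^A=L\circ(-)^A$ is a Bousfield localization with fully faithful right adjoint $U_A\circ\iota_{\mathrm{loc}}$, which gives the first two assertions.

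Next I would verify that this localization is smashing by checking that both $\Phi^A$ and its fully faithful right adjoint preserve colimits. The functor $(-)^A$ preserves colimits by \Cref{megaprop}(4), and $L\colon\Mod_{R_{\bG}^{\CB A}}\to\Mod_{R_{\bG}^{\CB A}}^{\deg\mathrm{-loc}}$ is a smashing (finite) localization because the ideal $I$ is finitely generated (see \cite{SAG}*{Section 7.2}); hence $\Phi^A$ preserves colimits. On the other side, extension by zero preserves colimits and $\iota_{\mathrm{loc}}$ preserves colimits (again since $L$ is smashing), so $U_A\circ\iota_{\mathrm{loc}}$ preserves colimits as well. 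Therefore the idempotent endofunctor $U_A\iota_{\mathrm{loc}}\Phi^A$ of $\LocSys_{\bG}(\CB A)$ preserves colimits, i.e.\ the localization is smashing; equivalently $\Phi^A(-)\simeq\Phi^A(\1)\otimes(-)$, where $\Phi^A(\1)\simeq L(R_{\bG}^{\CB A})$ is the corresponding idempotent object of $\CAlg(\LocSys_{\bG}(\CB A))$.

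Finally, for the symmetric monoidality I would first reduce to the case $A_0=A$: by definition $\Phi^{A_0}=\Phi^{A_0}\circ\iota^*$ for the inclusion $\iota\colon\CB A_0\hookrightarrow\CB A$, and $\iota^*$ is symmetric monoidal by \Cref{megaprop}(2). For $\Phi^A$ itself, since $\LocSys_{\bG}(\CB A)$ is presentably symmetric monoidal (\Cref{megaprop}(1)) and $\Phi^A$ is smashing, the theory of idempotent algebras (\cite{HA}*{Section 4.8.2}) exhibits $\Phi^A$ as a symmetric monoidal localization onto $\Mod_{\Phi^A(\1)}(\LocSys_{\bG}(\CB A))$, and it remains only to match this symmetric monoidal structure with that of $\Mod_{R_{\bG}^{\CB A}}^{\deg\mathrm{-loc}}$ under the equivalence induced by $L\circ(-)^A$. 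Since $(-)^A$ is symmetric monoidal on $\LocSys_{\bG}^{\mathrm{pre}}(\CB A)$ and $L$ is a symmetric monoidal localization of $\Mod_{R_{\bG}^{\CB A}}$, this in turn reduces to the assertion that $\Phi^A$ is insensitive to the tempering reflection $T\colon\LocSys_{\bG}^{\mathrm{pre}}(\CB A)\to\LocSys_{\bG}(\CB A)$, namely that $L((T\CF)^A)\simeq L(\CF^A)$. The main obstacle is precisely this last point: the reflection $T$ can a priori alter the value at the top group $A$ through the completions occurring in condition (B), and one must check these modifications become invisible after $\Spec(R_{\bG}^{\CB A})^{\deg}$-localization, which is exactly where one must appeal to the analysis of \cite{Ell3}*{Section 5.6}. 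Granting it, the monoidality of $\Phi^A$, and hence of all $\Phi^{A_0}$, follows.
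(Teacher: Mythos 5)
Your architecture is the same as the paper's: realize $\Phi^A$ as $L\circ \ev_A$ restricted to tempered local systems, exhibit $U_A\circ\iota_{\mathrm{loc}}$ as its fully faithful right adjoint, deduce that the localization is smashing from colimit-preservation, and extract monoidality. The one substantive divergence is how you show that $U_A(\iota_{\mathrm{loc}}M)$ is tempered: you verify condition (B) directly, which works provided you record that the image of $\Spec(R_{\bG}^{\CB A_0})\to\Spec(R_{\bG}^{\CB A})$ is $V(I(A_0/A))$ and that vanishing of the $I(A_0/A)$-completion of $M$ is equivalent to $M$ being $I(A_0/A)$-local. The paper instead argues by orthogonality: tempered local systems are the right class of an orthogonal decomposition of $\LocSys_{\bG}^{\mathrm{pre}}(\CB A)$ whose left class consists of the \emph{null} local systems (\cite{Ell3}*{Proposition 5.7.7}), and these are visibly killed by $L\circ\ev_A$, so the right adjoint lands in tempered objects. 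Both routes are fine; yours is more hands-on, the paper's avoids unwinding condition (B).

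The only point that needs attention is the conditional you leave at the end. The ``obstacle'' $L((T\CF)^A)\simeq L(\CF^A)$ does not actually arise: the symmetric monoidal structure on $\LocSys_{\bG}(\CB A)$ invoked in \Cref{megaprop}(1) is the restriction of the pointwise relative tensor product of $\ul{R}$-modules, because for oriented $\bG$ the tempered local systems are closed under that tensor product (this is the content of \cite{Ell3}*{Section 5.8}, not of Section 5.6, which concerns the completion statement you use elsewhere). Consequently no tempering reflection $T$ enters the monoidal structure, $\ev_A$ is already strong monoidal on $\LocSys_{\bG}(\CB A)$, and $\Phi^A=L\circ\ev_A$ is strong monoidal because $L$ is a symmetric monoidal (smashing) localization of $\Mod_{R_{\bG}^{\CB A}}$. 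Alternatively, and this is what the paper does, once smashingness is established one simply quotes that a smashing localization is canonically strong monoidal and then reduces $\Phi^{A_0}$ to $\Phi^{A_0}\circ\iota^*$ exactly as you do. With that last step discharged your proof is complete.
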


\begin{proof}
Consider the functor
\[
L \circ \ev_A\colon \LocSys_{\bG}^{\mathrm{pre}}(\CB A)\to \Mod_{R_\bG^{\CB A}} \to \Mod_{R_\bG^{\CB A}}^{\deg\mathrm{-loc}}
\]
where the first functor is the evaluation at $A$, and the second functor is localization with respect to $\Spec(R^{\CB A}_\bG)^{\deg}$. Note that the evaluation functor $\ev_A$ admits a fully faithful right adjoint $U_A$, given by relative right Kan extension, which is defined as in the proposition.  We claim that the right adjoint $U_A\circ \iota_{\mathrm{loc}}$ of $L\circ \ev_A$ takes values in \emph{tempered} local systems. Note that tempered local systems form the right class in an orthogonal decomposition on $\LocSys^{\mathrm{pre}}(\CB A)$ by \cite{SAG}*{Proposition 7.2.1.4}, and therefore it suffices to show that $L\circ \ev_A$ kills those pretempered local systems which are left orthogonal to tempered local systems. However by \cite{Ell3}*{Proposition 5.7.7} these are precisely the \emph{null} local systems, defined as those $\CF\in \LocSys^{\mathrm{pre}}(\CB A)$ such that $\CF^{A_0}$ is $\Spec(R^{\CB A_0}_\bG)^{\deg}$-nilpotent for all faithful maps $A_0\hookrightarrow A$. Such local systems are clearly sent to zero by $L\circ \ev_A$. We conclude that $U_A \circ \iota_{\mathrm{loc}}$ restricts to a fully faithful right adjoint to $\Phi^A$. The essential image of $U_A \circ \iota_{\mathrm{loc}}$ is closed under colimits by \Cref{megaprop}(5), and so we conclude that $\Phi^{A}$ is even a smashing localization, see~\cite{axiomatic}*{Definition 3.3.2} for example. In particular it is canonically strong monoidal. As the pullback functors are symmetric monoidal by \Cref{megaprop}(2), we conclude that $\Phi^B$ is also symmetric monoidal for all $B\subset A$.
\end{proof}

We next recognize geometric fixed points as a localization procedure.

\begin{remark}\label{rem-completion}
 Let $A$ be a finite abelian group and let ${\mathrm{Orb}^\circ_{/A}}$ denote the full subcategory of ${\mathrm{Orb}_{/A}}$ obtained by removing the terminal object. By \cite{Ell3}*{Theorem 5.6.9}, if $\CF \in \LocSys_{\bG}(\CB A)$ then
    \[
    \CF^A\rightarrow \lim\limits_{A_0\in(
{\mathrm{Orb}^\circ_{/A}})^{\op}} \CF^{A_0}
    \]
    exhibits the right hand side as the completion of $\CF^A$ at $\Spec(R_\bG^{\CB A})^{\deg}$. Since local and complete objects form an orthogonal decomposition, we conclude that the functor \[
    \Mod_{R_{\bG}^{\CB A}} \to \Mod_{R_\bG^{\CB A}}^{\deg\mathrm{-loc}} \times \Mod_{R_\bG^{\CB A}}^{\deg\mathrm{-cpl}}, \quad \CF^A \mapsto \Phi^A \CF \times\lim\limits_{A_0\in(
{\mathrm{Orb}^\circ_{/A}})^{\op}} \CF^{A_0}
\]
is conservative. 
\end{remark}

\begin{proposition}\label{lem:geom_kills_induced}
Fix a finite abelian group $A$ and let $I_A$ denote the localizing ideal of $\LocSys(\CB A)$ spanned by the objects of the form $\iota_! \1$, where $\iota\colon A_0\hookrightarrow A$ is an inclusion of a proper subgroup. Consider $\CF\in \LocSys_{\bG}(\CB A)$. Then $\Phi^A \CF = 0$ if and only if $\CF\in I_A$.
\end{proposition}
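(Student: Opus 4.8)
The plan is to identify both $\ker(\Phi^A)$ and $I_A$ with the acyclics of one and the same localisation. Recall from the preceding proposition that $\Phi^A$ factors as $L\circ\ev_A$, where $\ev_A\colon\LocSys_\bG(\CB A)\to\Mod_{R_\bG^{\CB A}}$ is the colimit- and limit-preserving evaluation functor of \Cref{not-evaluation-functor} and $L$ is localisation away from the closed subset $\Spec(R_\bG^{\CB A})^{\deg}\subseteq\Spec(R_\bG^{\CB A})$; thus $\Phi^A\CF\simeq 0$ if and only if the $R_\bG^{\CB A}$-module $\CF^A$ is $\Spec(R_\bG^{\CB A})^{\deg}$-nilpotent. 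Since that proposition also shows $\Phi^A$ to be strong symmetric monoidal, and it is clearly colimit-preserving and exact, the full subcategory $\ker(\Phi^A)\subseteq\LocSys_\bG(\CB A)$ is a localising $\otimes$-ideal.

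First I would prove the implication $\CF\in I_A\Rightarrow\Phi^A\CF\simeq 0$. As $I_A$ is by definition the smallest localising $\otimes$-ideal containing the objects $\iota_!\1$ for proper subgroup inclusions $\iota\colon\CB A_0\hookrightarrow\CB A$, it suffices to show $\iota_!\1\in\ker(\Phi^A)$ for each such $\iota$. Using $\iota_!\simeq\iota_*$ (\Cref{megaprop}(3)) together with \Cref{megaprop}(5), which identifies $\ev_A$ with $\map_{\LocSys_\bG(\CB A)}(\1,-)$, one computes $(\iota_!\1)^A\simeq\map_{\LocSys_\bG(\CB A)}(\1,\iota_*\1)\simeq\map_{\LocSys_\bG(\CB A_0)}(\1,\1)\simeq R_\bG^{\CB A_0}$, with its $R_\bG^{\CB A}$-module structure induced by restriction along $\iota$. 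By the very definition of the degenerate locus (\Cref{df-nilp-local-compl}), the image of $\Spec(R_\bG^{\CB A_0})\to\Spec(R_\bG^{\CB A})$ lies in $\Spec(R_\bG^{\CB A})^{\deg}$, so $R_\bG^{\CB A_0}$ is supported on the degenerate locus and hence $\Spec(R_\bG^{\CB A})^{\deg}$-nilpotent; therefore $\Phi^A(\iota_!\1)\simeq L(R_\bG^{\CB A_0})\simeq 0$.

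For the converse I would compare $\CF$ with its "isotropy-separated" part. Since the generators $\iota_!\1$ of $I_A$ are compact (\Cref{megaprop}(5)), standard localisation theory for compactly generated categories yields a cofibre sequence $a_{<A}\CF\to\CF\to b_{<A}\CF$ with $a_{<A}\CF\in I_A$ and $b_{<A}\CF$ right orthogonal to $I_A$, and it is enough to show $b_{<A}\CF\simeq 0$ whenever $\Phi^A\CF\simeq 0$. The first step is that $b_{<A}\CF$ vanishes at every proper subgroup: for a proper inclusion $\iota\colon\CB A_0\hookrightarrow\CB A$ and any $\CB B\hookrightarrow\CB A_0$, the object $(\CB B\hookrightarrow\CB A)_!\1$ lies in $I_A$, and since $(\CB B\hookrightarrow\CB A)_!\simeq\iota_!\circ(\CB B\hookrightarrow\CB A_0)_!$ we get $(\iota^*b_{<A}\CF)^{B}\simeq\map_{\LocSys_\bG(\CB A)}((\CB B\hookrightarrow\CB A)_!\1,b_{<A}\CF)\simeq 0$; by the joint conservativity of the evaluation functors on $\LocSys_\bG(\CB A_0)$ (\Cref{not-evaluation-functor}) this forces $\iota^*b_{<A}\CF\simeq 0$. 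Consequently $\lim_{A_0\in(\mathrm{Orb}^{\circ}_{/A})^{\op}}(b_{<A}\CF)^{A_0}\simeq 0$, which by \Cref{rem-completion} is precisely the $\Spec(R_\bG^{\CB A})^{\deg}$-completion of $(b_{<A}\CF)^A$. On the other hand $a_{<A}\CF\in I_A\subseteq\ker(\Phi^A)$ by the first part, so $\Phi^A(b_{<A}\CF)\simeq\operatorname{cofib}\!\big(\Phi^A a_{<A}\CF\to\Phi^A\CF\big)\simeq 0$, i.e.\ $(b_{<A}\CF)^A$ is also $\Spec(R_\bG^{\CB A})^{\deg}$-local. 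As the local part and the completion are jointly conservative (\Cref{rem-completion}), this gives $(b_{<A}\CF)^A\simeq 0$; combined with the vanishing at all proper subgroups and one further application of \Cref{not-evaluation-functor}, we conclude $b_{<A}\CF\simeq 0$, hence $\CF\simeq a_{<A}\CF\in I_A$.

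The step I expect to require the most care is the interface with \Cref{rem-completion}: one must correctly match its "$\Spec^{\deg}$-completion" with $\ker(L)$ and its "$\Spec^{\deg}$-local part" with $\Phi^A$, confirm that $b_{<A}\CF$ is genuinely a \emph{tempered} (not merely pretempered) local system so that \Cref{rem-completion} applies to it, and check that $\ev_A$ and $\Phi^A$ carry the cofibre sequence $a_{<A}\CF\to\CF\to b_{<A}\CF$ to cofibre sequences. Everything else — the computation $(\iota_!\1)^A\simeq R_\bG^{\CB A_0}$, the nilpotence of $R_\bG^{\CB A_0}$ over $R_\bG^{\CB A}$, and the existence of the finite localisation attached to $I_A$ — is routine given the results recalled above, so I expect no serious obstacle beyond this bookkeeping.
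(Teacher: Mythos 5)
Your proof is correct; the only substantive input it needs, Lurie's completion statement recorded in \Cref{rem-completion}, is used in exactly the same way and at exactly the same point as in the paper's proof. The difference is one of packaging. The paper argues by orthogonality: it shows $I_A^\perp$ coincides with the essential image of the right adjoint $U_A\iota_{\mathrm{loc}}$ of $\Phi^A$ (the reverse inclusion being where \Cref{rem-completion} enters), and then concludes from $I_A={}^\perp(I_A^\perp)$ and the adjunction $\Map(\Phi^A\CF,M)\simeq\Map(\CF,U_A\iota_{\mathrm{loc}}M)$. You instead materialise the double orthogonal as the isotropy-separation cofibre sequence $a_{<A}\CF\to\CF\to b_{<A}\CF$ and check that $b_{<A}\CF$ has vanishing complete and local parts. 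For the inclusion $I_A\subseteq\ker(\Phi^A)$ you compute $\Phi^A(\iota_!\1)$ directly, via $\iota_!\simeq\iota_*$ and the nilpotence of $R_\bG^{\CB A_0}$ over the degenerate ideal of $R_\bG^{\CB A}$; the paper gets the adjoint statement for free from $\iota^*U_A=0$, which is immediate from the formula for $U_A$. Your route costs a small extra computation but has the virtue of exhibiting the generators of $I_A$ as honest $\Phi^A$-acyclics. Of the worries you list at the end, none is a real issue: $b_{<A}\CF$ is tempered because the Bousfield localisation is performed inside $\LocSys_\bG(\CB A)$, the matching of \Cref{rem-completion} with the local/complete fracture is exactly how the paper itself uses it, and $\ev_A$ and $\Phi^A$ are exact, so they preserve the cofibre sequence.
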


\begin{proof}
   The statement in the proposition will follow from the following three claims:
    \begin{itemize}
        \item[(i)] $I_A={}^{\perp}(I_A^{\perp})$;
        \item[(ii)] $I_A^\perp$ is precisely the essential image of $U_A \iota_{\mathrm{loc}} \colon \Mod_{R_\bG^{\CB A}}^{\deg\mathrm{-loc}} \to \LocSys_{\bG}(\CB A)$;
        \item[(iii)] $\Phi^A \CF=0$ if and only if $\CF$ lies in the left orthogonal of the essential image of $U_A \iota_{\mathrm{loc}}$. 
    \end{itemize}
      
    Claim (i) is \cite{stacks-project}*{Lemma 0CQS} (we use that $I_A$ is localizing so the canonical inclusion functor admits a right adjoint). 
    For claim (ii), note that if $\CF= U_A \iota_{\mathrm{loc}}(M)$ then $\iota^* \CF= \iota^* U_A\iota_{\mathrm{loc}}(M)\simeq 0$ for any proper inclusion $\iota\colon A_0\hookrightarrow A$. This together with a simple adjunction argument shows that $\CF\in I_A^\perp$. For the converse suppose $\CF$ is right orthogonal to the induced objects. Given a faithful map $\iota\colon A_0\rightarrow A$,  mapping out of $\iota_! \1$ corepresents evaluation at $A_0$ by adjunction. Therefore $\CF^{A_0}$ is necessarily zero for all proper subgroups $A_0$. However by \Cref{rem-completion}, the tempered local systems with this property are clearly all of the form $U_A(M)$, where $M$ is a $\Spec(R_{\bG}^{\CB A})^{\deg}$-local object. This proves claim (ii). Finally claim (iii) easily follows from the equivalence 
    \[
    \Map(\Phi^A \CF, M)\simeq \Map(\CF, U_A \iota_{\mathrm{loc}}(M)).
    \]
\end{proof}
\begin{remark}\label{rem-geom-fix-points}
We conclude from the previous result that the quotient $\LocSys_{\bG}(\CB A)/I_A$ is equivalent to $\Mod^{\deg-\mathrm{loc}}_{R_{\bG}^{\CB A}}$ and that the geometric fixed points functor $\Phi^A$ is equivalent to the canonical functor from $\LocSys_{\bG}(\CB A)$ to the quotient.
\end{remark}

We next prove that the geometric fixed points functor form a jointly conservative family.

\begin{lemma}
    Let $A$ be a finite abelian group. For all subgroups $A_0\subseteq A$, the functors 
    \[
    \Phi^{A_0} \colon \LocSys_{\bG}(\CB A)\to \Mod_{R_{\bG}^{\CB A_0}}^{\deg\mathrm{-loc}}
    \]  
    form a jointly conservative family.
\end{lemma}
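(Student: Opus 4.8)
The plan is to proceed by induction on the order of $A$. Since $\LocSys_{\bG}(\CB A)$ and all the functors $\Phi^{A_0}$ are stable and exact (using \Cref{megaprop}(4) and that $L$ is a localization), it suffices to show that an object $\CF\in\LocSys_{\bG}(\CB A)$ with $\Phi^{A_0}\CF\simeq 0$ for every subgroup $A_0\subseteq A$ is itself zero. The two key inputs are the joint conservativity of the evaluation functors $\{\CF\mapsto\CF^{A_0}\}_{A_0\subseteq A}$ on (pre)tempered local systems from \Cref{not-evaluation-functor}, and the completion statement of \Cref{rem-completion}, which says that for $\CF\in\LocSys_{\bG}(\CB A)$ the $R_{\bG}^{\CB A}$-module $\CF^A$ is detected by the pair consisting of its $\Spec(R_{\bG}^{\CB A})^{\deg}$-localization $\Phi^A\CF$ and its $\Spec(R_{\bG}^{\CB A})^{\deg}$-completion $\lim_{A_0\in(\mathrm{Orb}^\circ_{/A})^{\op}}\CF^{A_0}$.

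For the base case $A=e$ there are no proper subgroups, so $\Spec(R_{\bG}^{\CB e})^{\deg}$ is empty, the localization $L$ is the identity, and under $\LocSys_{\bG}(\CB e)\simeq\Mod_R$ the functor $\Phi^e$ is the identity, hence conservative. For the inductive step, suppose $\Phi^{A_0}\CF\simeq 0$ for all $A_0\subseteq A$; by joint conservativity of the evaluation functors it is enough to prove $\CF^{A_0}\simeq 0$ for all $A_0\subseteq A$. First I would treat a proper subgroup $\iota\colon A_0\hookrightarrow A$: since $|A_0|<|A|$, the inductive hypothesis applies to $\iota^*\CF\in\LocSys_{\bG}(\CB A_0)$, and for each $B\subseteq A_0$ one has $\Phi^B(\iota^*\CF)\simeq\Phi^B\CF\simeq 0$, because $\Phi^B$ is by definition $L\circ\ev_B$ precomposed with restriction and the composite $\CB B\hookrightarrow\CB A_0\hookrightarrow\CB A$ agrees with $\CB B\hookrightarrow\CB A$; hence $\iota^*\CF\simeq 0$ by induction, and so $\CF^{A_0}=(\iota^*\CF)(A_0)\simeq 0$. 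Then, for $A_0=A$, I would invoke \Cref{rem-completion}: the localization $\Phi^A\CF$ vanishes by hypothesis, and the completion $\lim_{A_0\in(\mathrm{Orb}^\circ_{/A})^{\op}}\CF^{A_0}$ vanishes since $\mathrm{Orb}^\circ_{/A}$ consists only of faithful maps $\CB A_0\to\CB A$ with $A_0$ a proper subgroup, for which $\CF^{A_0}\simeq 0$ was just established. Therefore $\CF^A\simeq 0$, which completes the induction.

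I do not expect a serious obstacle here: the argument is an assembly of \Cref{not-evaluation-functor}, \Cref{rem-completion}, and \Cref{megaprop}(2). The only point requiring care is the bookkeeping identification $\Phi^B(\iota^*\CF)\simeq\Phi^B\CF$ for a chain $B\subseteq A_0\subseteq A$, which is a formal consequence of the definition of the geometric fixed point functors together with the functoriality of $\LocSys_{\bG}$ along maps in $\Glo{}$; and one should check that the evaluation functors indeed detect zero objects on the full subcategory of tempered (as opposed to merely pretempered) local systems, which is automatic since tempered local systems are a full subcategory.
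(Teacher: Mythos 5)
Your proof is correct and follows essentially the same route as the paper: induction on $|A|$, with the base case handled by the equivalence $\LocSys_{\bG}(\CB e)\simeq \Mod_R$, the proper subgroups handled by applying the inductive hypothesis to $\iota^*\CF$, and the top group handled by the local/complete conservativity statement of \Cref{rem-completion} followed by joint conservativity of the evaluation functors. The bookkeeping point $\Phi^B(\iota^*\CF)\simeq\Phi^B\CF$ that you flag is indeed the only detail the paper leaves implicit, and your justification of it is fine.
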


\begin{proof}
    We argue by induction on the order of the group $A$. If $A$ is the trivial group, then $\Phi^A \CF=\CF^A$ and $(-)^{A}$ is an equivalence by~\cite{Ell3}*{Example 5.1.13} and so conservative. Now suppose $A$ is an arbitrary abelian group, and let $\CF \in \LocSys_{\bG}(\CB A)$ be a tempered local system such that $\Phi^{A_0}(\CF)\simeq 0$ for all $A_0\subseteq A$. 
    By induction we know that for every proper subgroup inclusion $\iota\colon A_0\subset A$, we have $\iota^*\CF\simeq 0$ and so also $\CF^{A_0}\simeq 0$. As noted in \Cref{rem-completion}, there is a conservative functor 
    \[
    \CF^A \mapsto \Phi^A \CF \times\lim\limits_{A_0\in(
{\mathrm{Orb}^\circ_{/A}})^{\op}} \CF^{A_0}.
    \]
    The right hand side is trivial by assumption and by induction. Therefore $\CF^A\simeq 0$. By \Cref{not-evaluation-functor}, we conclude that $\CF \simeq 0$.
\end{proof}

Given the above, we have all the pieces required to show that geometric fixed points detects invertible dualizable objects. 

\begin{proposition}\label{prop:invert_temp_loc_sys}
    An object $\CF$ is an invertible object of $\LocSys_{\bG}(\CB A)$ if and only if 
    \begin{enumerate}
        \item $\CF$ is a dualizable object of $\LocSys_{\bG}(\CB A)$ and 
        \item $\Phi^{A_0}\CF$ is an invertible object for all $A_0\subset A$.
    \end{enumerate}
\end{proposition}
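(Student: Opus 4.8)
The forward implication is immediate and I would dispatch it first. An invertible object is in particular dualizable, its inverse serving as a dual. Moreover, for each $A_0\subseteq A$ the functor $\Phi^{A_0}\colon \LocSys_{\bG}(\CB A)\to\Mod_{R_\bG^{\CB A_0}}^{\deg\mathrm{-loc}}$ is symmetric monoidal: it is the composite of the symmetric monoidal restriction $\iota^*\colon\LocSys_{\bG}(\CB A)\to\LocSys_{\bG}(\CB A_0)$ from \Cref{megaprop}(2) with the geometric fixed points functor on $\LocSys_{\bG}(\CB A_0)$, which is symmetric monoidal by the smashing localization statement proved above. Symmetric monoidal functors preserve invertible objects, so $\Phi^{A_0}\CF$ is invertible for all $A_0\subseteq A$. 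The rest of the argument concerns the converse.

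For the converse I would use the standard characterization of invertibility among dualizable objects: a dualizable object $\CF$ of a symmetric monoidal $\infty$-category, with dual $\CF^\vee$, is invertible if and only if the evaluation map $\ev_\CF\colon \CF^\vee\otimes\CF\to\1$ is an equivalence (if $\ev_\CF$ is an equivalence, then so is $\CF\otimes\CF^\vee\to\1$ by the braiding, exhibiting $\CF^\vee$ as an inverse). Thus, given that $\CF$ satisfies hypothesis (1), it suffices to show that $\ev_\CF$ is an equivalence in $\LocSys_{\bG}(\CB A)$.

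To verify this I would invoke the joint conservativity of the family $\{\Phi^{A_0}\}_{A_0\subseteq A}$ established in the lemma immediately preceding the statement: it is enough to check that $\Phi^{A_0}(\ev_\CF)$ is an equivalence for every $A_0\subseteq A$. Since $\Phi^{A_0}$ is symmetric monoidal, it carries dualizable objects together with their duality data to dualizable objects; under the canonical identifications $\Phi^{A_0}(\CF^\vee)\simeq(\Phi^{A_0}\CF)^\vee$ and $\Phi^{A_0}(\CF^\vee\otimes\CF)\simeq(\Phi^{A_0}\CF)^\vee\otimes(\Phi^{A_0}\CF)$, the map $\Phi^{A_0}(\ev_\CF)$ is identified with the evaluation map $\ev_{\Phi^{A_0}\CF}$ of $\Phi^{A_0}\CF$ in $\Mod_{R_\bG^{\CB A_0}}^{\deg\mathrm{-loc}}$. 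By hypothesis (2) the object $\Phi^{A_0}\CF$ is invertible, hence dualizable with evaluation map an equivalence. Therefore $\Phi^{A_0}(\ev_\CF)$ is an equivalence for all $A_0\subseteq A$, and joint conservativity forces $\ev_\CF$ to be an equivalence, so $\CF$ is invertible.

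I do not anticipate a genuine obstacle in this argument: all the real content has already been assembled in the preparatory results, namely the symmetric monoidality of the geometric fixed points functors and the joint conservativity of the family $\{\Phi^{A_0}\}_{A_0\subseteq A}$. The only point requiring a little care is the bookkeeping in the previous paragraph—that a symmetric monoidal functor transports the evaluation map of a dualizable object to the evaluation map of its image—but this is a formal consequence of the fact that such functors preserve the triangle identities, and I would treat it as routine.
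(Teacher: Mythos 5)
Your argument is correct and is exactly the paper's proof (which is stated in one line: the $\Phi^{A_0}$ form a jointly conservative family of strong monoidal functors), merely unpacked: the forward direction via preservation of invertibles under symmetric monoidal functors, and the converse via detecting the invertibility of the evaluation map of a dualizable object after applying each $\Phi^{A_0}$. No further comment is needed.
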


\begin{proof}
    This follows immediately from the fact that the functors $\Phi^{A_0}$ form a jointly conservative family of strong monoidal functors.
\end{proof}

We would like to apply this to the object $\decatcoh{C_n}{S^{\iota_n}}{\bG}$. For this we will need to understand the geometric fixed points of ``Borel tempered local systems". To this end we introduce the proper Tate construction and explain its connection to the geometric fixed points functor.
\begin{notation}
Consider $R\in \CAlg$  and a finite group $A$. We let $J_A\subset \Fun(BA,\Mod_R)$ denote the thick ideal generated by the objects $i_! R$, for every proper subgroup inclusion $i\colon BA_0 \rightarrow BA$. Following \cite{AMR2021}, we define the \emph{proper Tate construction} as the composite 
\[
(-)^{\tau A}\colon \Fun(BA, \Mod_R) \xrightarrow{p} \Fun(BA,\Mod_R)/J_A \xrightarrow{\Map(\1, -)} \Sp. 
\]
where $p$ is the quotient functor.
\end{notation}

Recall from \Cref{ex-nu-map} that there is a canonical map $\nu\colon B A \rightarrow \CB A$ of global spaces. The next result relates the proper Tate construction to the geometric fixed points functor. 

\begin{theorem}\label{tatecommuteswithgeomfix}
The following triangle commutes:
\[
\begin{tikzcd}
	{\Fun(BA,\Mod_R)} && {\LocSys_{\bG}(\CB A)} \\
	& \Sp
	\arrow["{\Phi^A}", from=1-3, to=2-2]
	\arrow["{(-)^{\tau A}}"', from=1-1, to=2-2]
	\arrow["{\nu_*}", from=1-1, to=1-3]
\end{tikzcd}
\]
\end{theorem}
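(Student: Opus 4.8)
The plan is to reduce both legs of the triangle to the same computation inside $\LocSys_{\bG}(\CB A)$, exploiting that geometric fixed points is a smashing localization. Recall from the discussion around \Cref{rem-geom-fix-points} that $\Phi^A$ factors as the quotient $p_I\colon\LocSys_{\bG}(\CB A)\to\LocSys_{\bG}(\CB A)/I_A\simeq\Mod_{\Phi^A\1}$ followed by $\map(\1,-)$, and that this quotient is a \emph{smashing} localization: writing $\tilde R_A\coloneqq L_{I_A}\1\in\LocSys_{\bG}(\CB A)$ for the local unit, there is an idempotent fibre sequence $E_{I_A}\to\1_{\CB A}\to\tilde R_A$ with $E_{I_A}\in I_A$, and $L_{I_A}X\simeq X\otimes\tilde R_A$ for all $X$. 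Hence, as a spectrum,
\[
\Phi^A(\nu_*M)\;\simeq\;\map_{\LocSys_{\bG}(\CB A)}\bigl(\1_{\CB A},\,\nu_*M\otimes\tilde R_A\bigr).
\]
Dually, on the Borel side, let $\bar J_A$ denote the localizing ideal of $\Fun(BA,\Mod_R)$ generated by $J_A$ (equivalently by the compact objects $i_!\1$, $i\colon BA_0\to BA$ a proper subgroup inclusion). Since $\bar J_A$ is generated by compact objects, the localization $L_{\bar J_A}$ is smashing, and one checks directly from the definition of the proper Tate construction that $M^{\tau A}\simeq\map_{\Fun(BA,\Mod_R)}(\1,L_{\bar J_A}M)\simeq\map_{\Fun(BA,\Mod_R)}(\1,M\otimes\tilde E_A)$ where $\tilde E_A\coloneqq L_{\bar J_A}\1_{BA}$.

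First I would move the pushforward past the tensor factor using the projection formula for $\nu$. As $\nu\colon BA\to\CB A$ is faithful, $\nu^*$ is exact and symmetric monoidal with both adjoints, and I would invoke the general tempered six-functor results of \cite{Ell3} for relatively $\pi$-finite morphisms to get $\nu_*M\otimes\tilde R_A\simeq\nu_*(M\otimes\nu^*\tilde R_A)$. By the $(\nu^*,\nu_*)$-adjunction this gives
\[
\Phi^A(\nu_*M)\;\simeq\;\map_{\LocSys_{\bG}(\CB A)}\bigl(\1_{\CB A},\nu_*(M\otimes\nu^*\tilde R_A)\bigr)\;\simeq\;\map_{\Fun(BA,\Mod_R)}\bigl(\1_{BA},\,M\otimes\nu^*\tilde R_A\bigr).
\]
Comparing with the formula for $M^{\tau A}$ recorded above, it then suffices to produce a natural equivalence $\nu^*\tilde R_A\simeq\tilde E_A$ in $\Fun(BA,\Mod_R)$.

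The heart of the argument is this last identification, which I would deduce from uniqueness of Bousfield factorizations. Applying the exact symmetric monoidal functor $\nu^*$ to $E_{I_A}\to\1_{\CB A}\to\tilde R_A$ yields a fibre sequence $\nu^*E_{I_A}\to\1_{BA}\to\nu^*\tilde R_A$, and it is enough to show $\nu^*E_{I_A}\in\bar J_A$ and $\nu^*\tilde R_A\in(\bar J_A)^{\perp}$. For the first: $I_A$ is generated as a localizing ideal by the objects $\CB\iota_!\1_{\CB A_0}$ for $\iota\colon A_0\subsetneq A$ proper, and for each such $\iota$ one checks, using that faithful maps into the constant global space $BA$ are constant (\Cref{prop:free_G_spaces_as_global_spaces}) together with $\mathrm{res}_e(\nu)\simeq\id$, that the square with corners $BA_0,BA,\CB A_0,\CB A$ is cartesian; base change then gives $\nu^*\CB\iota_!\1\simeq i_!\nu_{A_0}^*\1\simeq i_!\1_{BA_0}\in\bar J_A$, so since $\nu^*$ is exact and colimit-preserving it carries all of $I_A$, and in particular $E_{I_A}$, into $\bar J_A$. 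For the second: for any proper $\iota\colon A_0\subsetneq A$ the restriction $\CB\iota^*\tilde R_A$ vanishes, because by construction $\tilde R_A=U_A\iota_{\mathrm{loc}}\Phi^A\1$ is zero on all proper subgroups and $\CB\iota^*$ commutes with evaluation at subgroups, so all evaluations of $\CB\iota^*\tilde R_A$ vanish and \Cref{not-evaluation-functor} gives $\CB\iota^*\tilde R_A\simeq0$; hence $\map_{\Fun(BA,\Mod_R)}(i_!\1,\nu^*\tilde R_A)\simeq\map(\1_{BA_0},\nu_{A_0}^*\CB\iota^*\tilde R_A)\simeq0$, i.e. $\nu^*\tilde R_A\in(\bar J_A)^{\perp}$. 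By uniqueness of the $(\bar J_A,(\bar J_A)^{\perp})$-factorization of $\1_{BA}$ we conclude $\nu^*E_{I_A}$ is the $\bar J_A$-acyclization and $\nu^*\tilde R_A\simeq\tilde E_A$, which finishes the proof. All equivalences used are natural in $M$ (and in the relevant functors), so they assemble into the asserted commuting triangle rather than merely an objectwise equivalence.

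The main obstacle I expect is the tempered infrastructure for the non-orbit map $\nu\colon BA\to\CB A$: \Cref{megaprop} only records the projection formula and ambidexterity for faithful maps \emph{between orbits}, whereas the argument needs the projection formula for $\nu_*$ and the base-change equivalence $\nu^*\CB\iota_!\simeq i_!\nu_{A_0}^*$, which require either Lurie's general ambidexterity for relatively $\pi$-finite morphisms of global spaces or a direct verification. A secondary point to pin down is the comparison between the localizing ideal $\bar J_A$ and the thick ideal $J_A$ used in the definition of $(-)^{\tau A}$, so that $M^{\tau A}\simeq\map(\1,M\otimes\tilde E_A)$ holds for all (not necessarily compact) $M$; this should follow from $\bar J_A$ being generated by compacts, hence $L_{\bar J_A}$ being a finite, smashing localization.
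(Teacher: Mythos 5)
Your route is genuinely different in form from the paper's: you rewrite both legs of the triangle as ``map out of the unit into a smashing localization'', move the tempered idempotent $\tilde R_A$ across $\nu_*$ by the projection formula, and identify $\nu^*\tilde R_A$ with the Borel-side idempotent by uniqueness of the semiorthogonal factorization of $\1_{BA}$. The paper instead chases a diagram of Verdier quotients after embedding $\Fun(BA,\Mod_R)$ into its Ind-completion (\Cref{rem-verdier}). The two arguments share the same essential inputs: your base-change computation $\nu^*\CB\iota_!\1\simeq i_!\1$ is exactly \Cref{lem-ideal}, and your identification $\nu^*\tilde R_A\simeq\tilde E_A$ plays the role of the equation $\overline{\nu}^*(kr\1)=jq\1$ there. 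Your right-hand leg (the smashing description of $\Phi^A$, the vanishing of $\CB\iota^*\tilde R_A$ via \Cref{not-evaluation-functor}) is fine, and the projection formula for $\nu_*$ is indeed available from Lurie's ambidexterity for relatively $\pi$-finite maps, as you anticipate; this is a citation issue, not a mathematical one.

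The genuine gap is the step you dismiss as ``secondary'': the identification $M^{\tau A}\simeq\map_{\Fun(BA,\Mod_R)}(\1,\,M\otimes\tilde E_A)$. The proper Tate construction is defined via the Verdier quotient of the large category $\Fun(BA,\Mod_R)$ by the \emph{thick} ideal $J_A$, which is not localizing and contains non-compact objects (it contains $i_!i^*X$ for arbitrary $X$); mapping spectra out of $p\1$ in that quotient are computed by a calculus-of-fractions colimit, not by a Bousfield localization internal to $\Fun(BA,\Mod_R)$. There is a canonical comparison map to $\map(\1,L_{\bar J_A}M)$, but proving it is an equivalence is not a formal consequence of ``$\bar J_A$ is generated by compacts'': the standard finite-localization argument would require $\1_{BA}$ to be compact in $\Fun(BA,\Mod_R)$, and it is not (homotopy fixed points do not commute with filtered colimits). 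This is precisely the point the paper's proof is engineered to avoid: it passes to $\mathrm{Ind}(\Fun(BA,\Mod_R))$, where every object of $\Fun(BA,\Mod_R)$ becomes compact, the quotient by $\mathrm{Ind}(J_A)$ is an honest Bousfield localization, and $\Fun(BA,\Mod_R)/J_A$ embeds fully faithfully into it (\cite{AMR2019}*{Corollary 4.2.15}, used in \Cref{rem-verdier}); the argument then never returns to a localization of $\Fun(BA,\Mod_R)$ itself. As written, your computation identifies $\Phi^A(\nu_*M)$ with $\map(\1,L_{\bar J_A}M)$ but not with $M^{\tau A}$. The fix is either to run your idempotent comparison inside the Ind-category, or to import the thick-versus-localizing comparison for the proper Tate construction from \cite{AMR2021} as an additional input.
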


The proof is an adaptation of the argument given in \cite{AMR2021}*{Proposition 5.9} for genuine $G$-spectra. For the convenience of the reader we record a version of the argument here after some preparation.

\begin{remark}\label{rem-nu}
    Since $\nu^*$ preserves colimits and $\LocSys_{\bG}(\CB A)$ is compactly generated, we get an extension $\overline{\nu}^*$ as in the following diagram:
    \[
    \begin{tikzcd}
        \LocSys_{\bG}(\CB A)^\omega \arrow[r, hook] \arrow[d, hook] & \LocSys_{\bG}(\CB A) \arrow[r,"\nu^*"] &\Fun(BA, \Mod_R)\arrow[d, hook, "i"] \\
        \LocSys_{\bG}(\CB A) \arrow[rr, "\overline{\nu}^*"] && \mathrm{Ind}(\Fun(BA, \Mod_R)).
    \end{tikzcd}
    \]
    In other words, for any $\CF \in \LocSys_{\bG}(\CB A)^\omega$ we have the formula 
    \begin{equation}\label{eq-nu-compact}
        \overline{\nu}^*(\CF) =i \nu^*(\CF).
    \end{equation}
    Moreover by construction the functor $\overline{\nu}^*$ fits into the following commutative diagram
    \[
    \begin{tikzcd}
        \LocSys_{\bG}(\CB A) \arrow[d,"\overline{\nu}^*"'] \arrow[r,"\nu^*"] & \Fun(BA, \Mod_R)  \\
         \mathrm{Ind}(\Fun(BA, \Mod_R)). \arrow[ur, "\colim"'] &
    \end{tikzcd}
    \]
    Passing to right adjoints we also deduce that 
    \[
    \begin{tikzcd}
        \Fun(BA, \Mod_R) \arrow[dr,"\nu_*"'] \arrow[r, hook, "i"] & \mathrm{Ind}(\Fun(BA, \Mod_R)) \arrow[d, "\overline{\nu}_*"] \\
        & \LocSys_{\bG}(\CB A).
    \end{tikzcd}
    \]
\end{remark}

Let $\mathrm{Ind}(J_A)$ denote the localizing ideal in $\mathrm{Ind}(\Fun(BA, \Mod_R))$ generated by $i(J_A)$.

\begin{lemma}\label{lem-ideal}
The localizing ideal generated by $\overline{\nu}^*(I_A)$ is equal to $\mathrm{Ind}(J_A)$ in $\mathrm{Ind}(\Fun(BA, \Mod_R))$.
\end{lemma}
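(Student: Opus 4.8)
The plan is to reduce to the generators of the two ideals and then make one essential computation. Both sides are localizing ideals, so it suffices to identify a set of generators of each. Since $\overline{\nu}^*$ is colimit-preserving and symmetric monoidal, and $I_A$ is by definition the localizing ideal generated by the objects $\iota_!\1_{\CB A_0}$ for faithful $\iota\colon \CB A_0\hookrightarrow \CB A$ with $A_0\subsetneq A$, the localizing ideal generated by $\overline{\nu}^*(I_A)$ agrees with the localizing ideal generated by $\{\overline{\nu}^*(\iota_!\1_{\CB A_0})\}_\iota$: the full subcategory of those $X\in\LocSys_\bG(\CB A)$ with $\overline{\nu}^*(X)$ in the latter ideal is itself a localizing ideal (using that $\overline{\nu}^*$ is exact, colimit-preserving and symmetric monoidal) and it contains the generators $\iota_!\1_{\CB A_0}$. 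Dually, since the Ind-inclusion $i\colon \Fun(BA,\Mod_R)\hookrightarrow \mathrm{Ind}(\Fun(BA,\Mod_R))$ is exact and symmetric monoidal, $\mathrm{Ind}(J_A)$ is the localizing ideal generated by the images $i(j_!R)$ of the generators $j\colon BA_0\to BA$ (proper subgroup coverings) of the thick ideal $J_A$. Hence the lemma follows once we show $\overline{\nu}^*(\iota_!\1_{\CB A_0})\simeq i(j_!R)$ under the evident correspondence $\iota\leftrightarrow j$.

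For this key identity I would argue as follows. The object $\iota_!\1_{\CB A_0}$ is compact in $\LocSys_\bG(\CB A)$ by \Cref{megaprop}(5), so by the formula $\overline{\nu}^*(\CF)=i\,\nu^*(\CF)$ valid on compacts (see \Cref{rem-nu}) we reduce to computing $\nu^*(\iota_!\1_{\CB A_0})$ in $\LocSys_\bG(BA)\simeq \Fun(BA,\Mod_R)$. As $\iota$ is faithful, \Cref{megaprop}(3) gives $\iota_!\simeq\iota_*$. Now form the pullback square of global spaces with corners $\sX$, $\CB A_0$, $BA$, $\CB A$ along $\iota$ and $\nu$. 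Its projection $\mathrm{pr}_1\colon \sX\to BA$ is faithful, being a pullback of the faithful map $\iota$, so by \Cref{prop:free_G_spaces_as_global_spaces} the global space $\sX$ is constant and $\mathrm{pr}_1$ is the constantification of a map of spaces; applying $\mathrm{res}_e$ (which preserves limits, sends $\nu$ to an equivalence $BA\xrightarrow{\sim}BA$ by the triangle identities together with \Cref{not-constant-global-space}, and sends $\iota$ to the covering $BA_0\to BA$) identifies $\mathrm{res}_e\sX\simeq BA_0$ and $\mathrm{pr}_1$ with the covering $j\colon BA_0\to BA$. Since $\iota$ is faithful and the target $\CB A$ lies in $\fab$, the base-change axiom \Cref{def:oriented_glo_cat}(1) for the genuine $2$-ring $\LocSys_\bG$ yields $\nu^*\iota_*\1_{\CB A_0}\simeq (\mathrm{pr}_1)_*(\mathrm{pr}_2)^*\1_{\CB A_0}\simeq (\mathrm{pr}_1)_*\1_\sX$. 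Under the (natural) equivalence $\LocSys_\bG(-\sslash e)\simeq\Fun(-,\Mod_R)$ of \Cref{ex-local-system}, this becomes $j_*R$ with $R$ the constant functor; and because $j$ is a finite covering and $\Mod_R$ is stable, the norm map $j_!R\to j_*R$ is an equivalence. Chaining these identifications gives $\overline{\nu}^*(\iota_!\1_{\CB A_0})\simeq i(j_!R)$. Combined with the first paragraph this shows that the localizing ideal generated by $\overline{\nu}^*(I_A)$ equals the localizing ideal generated by $\{i(j_!R)\}$, i.e.\ $\mathrm{Ind}(J_A)$, as claimed.

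The main obstacle is precisely this computation of $\nu^*\iota_!\1$: one must correctly identify the pullback square $\sX\simeq BA\times_{\CB A}\CB A_0$, justify the use of base change (which works here only because $\iota$ is faithful with target in $\fab$, illustrating the role of the genuineness hypothesis), and match the functoriality of $\LocSys_\bG$ on constant global spaces with the ordinary Kan-extension functoriality of $\Fun(-,\Mod_R)$, for which one needs the naturality of the comparison in \Cref{ex-local-system}. Everything surrounding this — the reduction to generators and the passage back from $\{i(j_!R)\}$ to $\mathrm{Ind}(J_A)$ — is formal manipulation of localizing and thick ideals under exact symmetric monoidal functors.
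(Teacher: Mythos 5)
Your proposal is correct in outline and follows the same two-step structure as the paper's proof: reduce both ideals to their generators, then identify $\overline{\nu}^*(\iota_!\1)$ with $i(j_!R)$ using the pullback square $BA_0\simeq BA\times_{\CB A}\CB A_0$ together with the formula $\overline{\nu}^*=i\nu^*$ on compacts. The difference lies in how the key identity $\nu^*\iota_!\1\simeq j_!R$ is established. The paper applies \emph{left} base-change directly, $\nu_A^*\iota_!\simeq (B\iota)_!\nu_{A_0}^*$, citing \cite{Ell3}*{Corollary 7.1.7}, which is stated for general global spaces and so applies to the non-representable corner $BA$ without further comment. You instead take a detour through ambidexterity: convert $\iota_!$ to $\iota_*$ via \Cref{megaprop}(3), apply \emph{right} base-change, and convert back via the norm equivalence $j_!R\simeq j_*R$ for the finite covering $BA_0\to BA$. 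This works, but it puts weight on a step that your write-up treats as automatic: axiom (1) of \Cref{def:oriented_glo_cat} is only stated for pullback squares all of whose corners other than $\sX$ are \emph{representable}, whereas your square has the constant (non-representable) global space $BA$ in the bottom-left corner. Extending right base-change against a fixed faithful map of orbits to arbitrary bases requires a descent argument (writing $BA$ as a colimit of points and checking that $f_*$ is computed levelwise, which itself uses further Beck--Chevalley conditions), or else a direct appeal to \cite{Ell3}*{Theorem 7.1.6}, which does cover general global spaces. So either add that reduction or cite Lurie's base-change theorem in the form that applies to arbitrary orbispaces; with that repaired, your argument is a valid, if slightly longer, alternative to the paper's.
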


\begin{proof}
    We note that $\mathrm{Ind}(J_A)$ can equivalently be described as the localizing ideal generated by the objects $i\iota_! \1$, where $\iota \colon A_0 \to A$ is the inclusion of a subgroup, see \cite{AMR2021}*{Observation 3.8}.  Since $I_A$ is generated by the objects $\iota_! \1\in \LocSys(\CB A)$, it suffices to
    observe the following chain of equivalences
    \[
    \overline{\nu_A}^*(\iota_! \1)\simeq i \nu_A^*\iota_!\1\simeq i \iota_! \nu_{A_0}^* \1 \simeq i \iota_! \1. 
    \]
    The first equivalence follows from~\Cref{eq-nu-compact} and compactness of $\iota_!\1$. For the second we observe that there is a pullback square of global spaces
    \[
    \begin{tikzcd}
        BA_0 \arrow[r, "\nu_{A_0}"] \arrow[d,"B\iota"']& \CB A_0 \arrow[d, "\CB \iota"] \\
        BA \arrow[r, "\nu_A"] & \CB A.
    \end{tikzcd}
    \] Therefore we obtain by~\cite{Ell3}*{Corollary 7.1.7} an equivalence $\nu^*_A \circ \iota_! \simeq \iota_!\circ  \nu_{A_0}^*$.
\end{proof}

\begin{remark}\label{rem-verdier}
    There is a commutative diagram of Verdier quotients
    \[
    \begin{tikzcd}
        \Fun(BA,\Mod_R) \arrow[d,"p"]\arrow[r,hook, "i"] & \mathrm{Ind}(\Fun(BA, \Mod_R)) \arrow[d,shift left, "q"] \arrow[r,"\overline{\nu}_*"]&\LocSys_{\bG}(\CB A) \arrow[d,shift left ,"r"] \\
        \Fun(BA, \Mod_R) / J_A \arrow[r,hook] & \mathrm{Ind}(\Fun(BA, \Mod_R))/\mathrm{Ind}(J_A)\arrow[u, hook, shift left, "j"]\arrow[r,"\overline{\overline{\nu}}_*"] & \LocSys_{\bG}(\CB A)/ I_A \arrow[u, hook, shift left,"k"]
    \end{tikzcd}
    \]
    where $p,q$ and $r$ are symmetric monoidal. The existence of right adjoints $j$ and $k$ is a formal consequence of the fact that $\mathrm{Ind}(J_A)$ and $I_A$ are localizing ideals. It follows from \cite{AMR2019}*{Corollary 4.2.15} together with \Cref{lem-ideal} that 
    \begin{equation}\label{equation-unit}
        \overline{\nu}^*(kr \1)=jq \1.
    \end{equation}
\end{remark}

\begin{proof}[Proof of \Cref{tatecommuteswithgeomfix}]
Consider the following diagram
 \[
    \begin{tikzcd}
        \Fun(BA,\Mod_R) \arrow[d,"p"]\arrow[r,hook, "i"] & \mathrm{Ind}(\Fun(BA, \Mod_R)) \arrow[d,shift left, "q"] \arrow[r,"\overline{\nu}_*"]& \LocSys_{\bG}(\CB A) \arrow[d,shift left ,"r"] \\
        \Fun(BA, \Mod_R) / J_A \arrow[r,hook,"\overline{i}"] \arrow[d, "{\Map(\1, -)}"'] & \mathrm{Ind}(\Fun(BA, \Mod_R))/\mathrm{Ind}(J_A)\arrow[u, hook, shift left, "j"]\arrow[r,"\overline{\overline{\nu}}_*"] & \LocSys_{\bG}(\CB A)/ I_A \arrow[d,"{\Map(\1, -)}"]\arrow[u, hook, shift left,"k"] \\
        \Sp \arrow[rr,"="]&  & \Sp
    \end{tikzcd}
    \]
    where the left vertical composite agrees with $(-)^{\tau A}$ by definition, the right vertical composite agree with the forgetful functor composed with $\Phi^A$ by \Cref{rem-geom-fix-points} and the top horizontal composite agree with $\nu^*$ by \Cref{rem-nu}. 
    In other words, we can rephrase the proposition as claiming that the outer diagram commutes. We first note that the commutativity of the top part of the diagram follows from \Cref{rem-verdier}. Moreover since $p$ and $r$ are symmetric monoidal, we can rewrite the global section functor as $\Map(p \1, -)$ and $\Map(r \1, -)$. Therefore we ought to prove that for all $X\in \Fun(BA, \Mod_R)$ there is a natural equivalence
    \[
    \Map(r\1, r\overline{\nu}_*iX)\simeq \Map(p\1,pX).
    \]
   Applying various adjunction equivalences, together with \Cref{equation-unit} yields
    \[
      \Map(r\1, r\overline{\nu}_*iX) \simeq\Map(\overline{\nu}^*kr \1, iX)\simeq \Map(jq\1, iX)\simeq \Map(q\1, qi X).
    \]
    Finally, using that $i$ is symmetric monoidal, the commutativity of the top right part of the diagram, and the fact that $\overline{i}$ is fully faithful we see that 
    \[
    \Map(q\1, qi X) \simeq \Map(qi\1, qiX)= \Map(\overline{i}p \1, \overline{i}p X) \simeq \Map(p\1, pX)
    \]
    as required.
\end{proof}

\subsection{Tempered local systems is genuine}

We may unravel $\LocSys_\bG$ into a family of equivariant cohomology theories
\[
\decatcoh{A}{-}{\LocSys_{\bG}} \colon \Spc_A^{\op} \to \CAlg(\LocSys_{\bG}(\CB A))
\]
using \Cref{thm:Unraveling}.

\begin{notation}
We will abbreviate the functor $\decatcohsp{\gl}{-}{\LocSys_{\bG}}$ by $\decatcohsp{\gl}{-}{\bG}$ and write $\decatcoh{A}{-}{\bG}$ instead of $\decatcoh{A}{-}{\LocSys_{\bG}}$
\end{notation}

As shown in \cite{Ell3}, tempered local systems function as an incredibly rich and powerful categorification of tempered cohomology. Some of this richness is captured by the following theorem.

\begin{theorem}\label{thm-tempered-is-genuine}
Let $\bG$ be an oriented $\rP$-divisible group over a commutative ring spectrum $R$. Then $\LocSys_\bG$ is genuine with respect to all finite abelian groups, and so a rigid global 2-ring. In particular tempered cohomology is represented by a global spectrum $\ul{R}_{\bG}^{\gl}$.
\end{theorem}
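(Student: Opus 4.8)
The plan is to verify, for $\CR=\LocSys_{\bG}$ and $\CT=\CE=\fab$, the three conditions of \Cref{def:T-genuine} and \Cref{def:oriented_glo_cat}, and then to invoke \Cref{thm:Represented-theorem}. By \Cref{megaprop}(6) the functor $\LocSys_{\bG}$ is already a \emph{rigid} naive global $2$-ring, and by \Cref{megaprop}(5) each $\LocSys_{\bG}(\CB A)$ is rigidly-compactly generated; hence, by the remark that rigidity implies axiom (2) of \Cref{def:oriented_glo_cat}, i.e.\ \cite{GrothendieckNeeman16}*{Theorem 1.3}, the right projection formula for faithful morphisms holds automatically. This leaves the base-change axiom (1) and the invertibility of the representation-sphere objects, axiom (3). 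Note also that $\fab$ is trivially a subcategory of enough injectives for itself, so ``$\fab$-genuine'' yields ``genuine'' in the sense of \Cref{def:genuine}.

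For the base-change axiom, since $\CT=\CE$ the hypothesis $\CB G\in\CT$ imposes nothing, so one must treat every pullback square along a faithful morphism $f'$ (whose pullback $f$ is then also faithful). Here I would use the Wirthm\"uller isomorphisms of \Cref{megaprop}(3): the equivalences $f_!\simeq f_*$ and $f'_!\simeq f'_*$ identify the Beck--Chevalley transformation for the right adjoints with the one for the left adjoints $f_!$, $f'_!$, and this left base change is part of the six-functor calculus for tempered local systems developed in \cite{Ell3}; the special case $\nu_A^*\iota_!\simeq\iota_!\nu_{A_0}^*$ of \cite{Ell3}*{Corollary 7.1.7}, used above in the proof of \Cref{lem-ideal}, is a prototype, and one may in any case verify such equivalences after evaluation at orbits via \Cref{not-evaluation-functor}. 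With axioms (1) and (2) in hand, $\LocSys_{\bG}$ is $\fab$-pregenuine, so \Cref{lem:Q_equivalence} and \Cref{gamma_G-strong-mon} become available.

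For axiom (3), \Cref{prop:finite-abelian} (applicable since $\CE$ consists of finite abelian groups and $\CT=\CE$) reduces us to showing that $\decatcoh{C_n}{S^{\iota_n}}{\LocSys_{\bG}}$ is invertible in $\LocSys_{\bG}(\CB C_n)$ for each $C_n\in\CE$, where $\iota_n\colon C_n\hookrightarrow\T$ is the standard inclusion. By \Cref{prop:invert_temp_loc_sys} this requires (i) dualizability of the object and (ii) invertibility of $\Phi^{A_0}\decatcoh{C_n}{S^{\iota_n}}{\LocSys_{\bG}}$ for every subgroup $A_0\subseteq C_n$. For (i): by \Cref{gamma-sphere} the object is the fibre of $\1\to p_*p^*\1\simeq\decatcoh{C_n}{S(\iota_n)}{\LocSys_{\bG}}$, and $\decatcoh{C_n}{S(\iota_n)}{\LocSys_{\bG}}$ is built by finitely many cofibre sequences from $\iota_!\1$ for the faithful map $\iota\colon\pt\to\CB C_n$; since $\iota_!\1$ is compact by \Cref{megaprop}(5), hence dualizable, and dualizable objects form a thick subcategory of the stable symmetric monoidal $\infty$-category $\LocSys_{\bG}(\CB C_n)$, the object $\decatcoh{C_n}{S^{\iota_n}}{\LocSys_{\bG}}$ is dualizable. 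For (ii): every $A_0\subseteq C_n$ is cyclic and $V:=\iota_n|_{A_0}$ is again a faithful one-dimensional representation, so base change through \Cref{lem:Q_equivalence} gives $\Phi^{A_0}\decatcoh{C_n}{S^{\iota_n}}{\LocSys_{\bG}}\simeq\Phi^{A_0}\decatcoh{A_0}{S^{V}}{\LocSys_{\bG}}$; as $A_0$ then acts freely on $S(V)$, the global space $S(V)\sslash A_0$ is constant by \Cref{prop:free_G_spaces_as_global_spaces} and its structure map to $\CB A_0$ factors through $\nu\colon BA_0\to\CB A_0$, so combining \Cref{gamma-sphere} with \Cref{tatecommuteswithgeomfix} (which identifies $\Phi^{A_0}\nu_*$ with the proper Tate construction) reduces invertibility to a computation with Borel-equivariant $R$-modules. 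This last computation is the analogue for $\rP$-divisible groups of \cite{GM20}*{Lemma 9.1}, and is where the orientation of $\bG$ is genuinely used; I would extract it from the computations of \cite{Ell3}.

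Finally, with $\LocSys_{\bG}$ shown to be genuine with respect to $\fab$ — and rigid by \Cref{megaprop}(6) — \Cref{thm:Represented-theorem} produces a global ring $\genref{\gl}{\LocSys_{\bG}}\in\CAlg(\Spgl{\fab})$ refining the naive global ring $\decatcohsp{\gl}{-}{\LocSys_{\bG}}$, which by \cite{Ell3}*{Example 7.1.5} is identified with tempered cohomology $R^{\bullet}_{\bG}$; setting $\ul{R}_{\bG}^{\gl}:=\genref{\gl}{\LocSys_{\bG}}$ then completes the proof. The main obstacle will be step (ii) above: transporting Lurie's description of the geometric fixed points of Borel tempered local systems through the orientation to conclude that $\decatcoh{C_n}{S^{\iota_n}}{\LocSys_{\bG}}$ has invertible geometric fixed points; the base-change axiom, though essentially routine, also warrants some care about the precise class of squares for which it is valid.
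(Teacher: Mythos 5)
Your overall strategy coincides with the paper's: verify axioms (1)--(3) of \Cref{def:oriented_glo_cat} and \Cref{def:T-genuine} for $\CT=\CE=\fab$, reduce axiom (3) to cyclic groups via \Cref{prop:finite-abelian}, and test invertibility of $\CF_n\coloneqq\decatcoh{C_n}{S^{\iota_n}}{\LocSys_{\bG}}$ through dualizability plus geometric fixed points (\Cref{prop:invert_temp_loc_sys}), using the fiber sequence of \Cref{gamma-sphere}, the freeness of the $C_n$-action on $S(\iota_n)$, and \Cref{tatecommuteswithgeomfix}. Your treatment of axioms (1) and (2) differs in route but not in substance: the paper cites \cite{Ell3}*{Theorem 7.1.6} and \cite{Ell3}*{Theorem 7.3.10} directly, whereas you derive (2) from rigidity via Grothendieck--Neeman (which the paper itself sanctions in a remark) and propose to obtain (1) from left base-change plus the Wirthm\"uller isomorphisms. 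The latter is workable but needs two caveats: the fibre product $\sX$ in axiom (1) is a general global space rather than an orbit, so you need ambidexterity for faithful maps out of such $\sX$ (available, since Lurie proves ambidexterity for all relatively $\pi$-finite maps), and you must check that conjugating the left Beck--Chevalley map by the ambidexterity equivalences really yields the right Beck--Chevalley map, i.e.\ that the ambidexterity equivalence is itself compatible with base change.

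The genuine gap is the final step of axiom (3), which you defer as ``a computation with Borel-equivariant $R$-modules \dots\ the analogue of \cite{GM20}*{Lemma 9.1}.'' That mislocates the difficulty. After reducing via $Q_\alpha$ to the top geometric fixed points $\Phi^{C_n}\CF_n$, no orientation-dependent computation remains for $n>1$: one has $f_*\1\simeq\mathbb{D}(S(\iota_n))\otimes R$, which lies in the thick ideal $J_{C_n}$ because $S(\iota_n)$ is a finite \emph{free} $C_n$-space, so its proper Tate construction vanishes; by \Cref{tatecommuteswithgeomfix} this gives $\Phi^{C_n}(p_*\1)\simeq 0$, whence $\Phi^{C_n}\CF_n\simeq\Phi^{C_n}\1$ is the unit and trivially invertible. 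Only $n=1$ requires an actual computation, namely $\Phi^{C_1}\CF_1\simeq\mathrm{fib}(R\to R^{S^1})\simeq\Sigma^{-2}R$. So the missing step is short and formal rather than a transported version of \cite{GM20}*{Lemma 9.1}; the orientation of $\bG$ enters only through the structural inputs (\Cref{megaprop} and the results of \cite{Ell3} cited for axioms (1) and (2)), not through the invertibility computation itself. With that observation supplied, your argument closes.
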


\begin{proof}
We have already noted in \Cref{megaprop} that $\LocSys_{\bG}(-)$ is a rigid naive global 2-ring. We only need to show that $\LocSys_{\bG}(-)$ satisfies the three conditions of \Cref{def:oriented_glo_cat} for $\mathcal{T}=\Glo{}$. The first two conditions are already contained in \cite{Ell3}:
\begin{itemize}
 \item[(1)] This follows from~\cite{Ell3}*{Theorem 7.1.6};
 \item[(2)] This is a consequence of \cite{Ell3}*{Theorem 7.3.10}, where we note that any faithful map $f\colon \CB H \to \CB G$ is $v_{\bG}$-ambidextrous, see the proof of \Cref{megaprop}(3).
\end{itemize} 
    Condition (3) of a global 2-ring will require more work. By~\Cref{prop:finite-abelian} it suffices to show that the object 
    \[
    \CF_n:=\decatcoh{C_n}{S^{\iota_n}}{\bG} \in \LocSys_{\bG}(\CB C_n)
    \]
    is invertible for all $1\leq n <\infty$. To this end we show that $\CF_n$ verifies conditions (1) and (2) of \Cref{prop:invert_temp_loc_sys}.
    
    For (1) we note that $S^{\iota_n}$ is a finite colimits of $C_n$-orbits. It follows that $\CF_n$ is a finite limit of objects of the form $i_* \1$ associated to subgroup inclusions  $i\colon \CB H\rightarrow \CB G$. Since each object $i_* \1$ is dualizable by \Cref{megaprop}(3) and (4), we conclude that $\CF_n$ is dualizable. 
    
    For (2) we have to show that  $\Phi^{C_m}\CF_n$ is an invertible object for all $1\leq m\leq n$. However note that for all subgroups $\alpha\colon \CB C_m\rightarrow\CB C_n$, we have $\alpha^* S^{\iota_n} \simeq S^{\iota_m}$.     
    Therefore by \Cref{lem:Q_equivalence} with $\mathcal{T}=\Glo{}$, it suffices to prove that $\Phi^{C_n}\CF_n$ is invertible for all $n \geq 1$.  Recall from \Cref{gamma-sphere} that there is a fiber sequence 
    \[
    \CF_n\rightarrow \1_{\LocSys_\bG(\CB C_n)} \to p_* \1_{\LocSys_{\bG}(S(\iota_n)\sslash C_n)}=: \CG_n
    \]
    where $p\colon S(\iota_n)\sslash C_n\rightarrow \CB C_n$ is the structure map of the $C_n$-space $S(\iota_n)$.
    
    If $n=1$, then $\LocSys_{\bG}(\CB C_1)=\Mod_R$. Since $S(\iota_1)\sslash C_1$ is the constant global space $S^1$, we also know that $\LocSys_{\bG}(S(\iota_1)\sslash C_1)\simeq \Fun(S^1, \Mod_R)$. In this case $p_*$ is given right Kan extension. We can then calculate that $\Phi^{C_1}\CG_1 = R^{S^1}\simeq R\oplus \Sigma^{-1}R$, and so $\Phi^{C_1}\CF_1 \simeq \mathrm{fib}(R \to R\oplus \Sigma^{-1} R)= \Sigma^{-2} R$, which is invertible.
    
    Now suppose $n>1$; we will prove that $\Phi^{C_n}(\CG_n)$ is trivial. To do this we have to understand the tempered local system $\CG_n$ better. Note that the action of $C_n$ on $S(\iota_n)$ is free, and so by \Cref{prop:free_G_spaces_as_global_spaces}, $S(\iota_n)\sslash C_n$ is equivalent as a global space to the constant global space on the space $S(\iota_n)_{hC_n}$.Furthermore, using \Cref{prop:free_G_spaces_as_global_spaces} again, $S(\iota_n)\sslash C_n$ lives over $\CB C_n$ via the composite 
    \[
    p\colon S(\iota_n)_{h C_n} \xrightarrow{f} B C_n \xrightarrow{\nu} \CB C_n.
    \]
    So we may compute $p_*$ as the composite $\nu_*f_*$. Now we note that the source and target of $f$ are constant global spaces so 
    \[
    f_* \colon \LocSys_{\bG}(S(\iota_n)_{h C_n}) = \Fun(S(\iota_n)_{h C_n},\Mod_R) \to \LocSys_{\bG}(BC_n)=\Fun(BC_n,\Mod_R)
    \]
    is given by right Kan extension. Applying this to the unit in $\Fun(S(\iota_n)_{h C_n}, \Mod_R)$, we obtain $\mathbb{D}(S(\iota_n))\otimes R\in \Fun(BC_n,\Mod_R)$, the Spanier--Whitehead dual of $S(\iota_n)$ with its induced $C_n$-action tensored by the base $R$. Note that $\mathbb{D}(S(\iota_n)) \otimes R\in J_{C_n}$ since $S(\iota_n)$ is a finite free $C_n$-space. Therefore we conclude that \[\Phi^{C_n} \nu_*(\mathbb{D}(S(\iota_n))\otimes R) \simeq (\mathbb{D}(S(\iota_n))\otimes R)^{\tau C_n}\simeq 0\]
    by \Cref{tatecommuteswithgeomfix}.
\end{proof}
\subsection{Tempered local systems as genuine spectra}

Write $\ul{R}_\bG^{\CB A}$ for the underlying $A$-spectrum of $\ul{R}_{\bG}^{\gl}$. In this section we will identify $\LocSys_{\bG}(\CB A)$ with the $\infty$-category of $\ul{R}_\bG^{\CB A}$-modules in $A$-spectra. 

\begin{construction}
    To begin with we recall that the naive global 2-ring $\LocSys_{\bG}(-)$ is genuine and rigid by \Cref{thm-tempered-is-genuine}. Therefore we obtain a natural transformation 
    \[
    \bGamma_{\bullet}\colon \LocSys_{\bG}\Rightarrow \Sp_\bullet
    \]
    of functors $\Orb{}^{\op}\rightarrow \Cat$ by \Cref{thm-rigid}. As observed in \Cref{rem:genuine_sect_lax}, $\bGamma_\bullet$ is a natural transformation of lax monoidal functors, and so lifts to a natural transformation
    \[
    \bGamma_{\bullet}\colon \LocSys_{\bG}(-)\Rightarrow \Mod_{\ul{R}_{\bG}^\bullet},
    \] valued in modules over the image of the unit in $\LocSys_{\bG}(\bullet)$, which agrees with $\ul{R}_{\bG}^\bullet$ by \Cref{rem:global_sec_of_unit}. We note that this is again a transformation of lax monoidal right adjoints. 
\end{construction}

\begin{lemma}\label{lem:bGamma_gen_fixed_temp}  
    Let $A$ be a finite abelian group and let $\bG$ be an oriented $\rP$-divisible group. Then for every subgroup $B\subset A$ the triangle
    \[\begin{tikzcd}
        {\LocSys_{\bG}(\CB A)} && {\makebox[4em][l]{$\Sp_A$}} \\
        & {\Sp}
        \arrow["{\bGamma_A}", from=1-1, to=1-3]
        \arrow["(-)^B", from=1-3, to=2-2]
        \arrow["(-)^B"', from=1-1, to=2-2]
    \end{tikzcd}\] 
    commutes.
\end{lemma}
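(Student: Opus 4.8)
The plan is to reduce the statement to the case $B = A$, where the identity $\bGamma_G(\CF)^G \simeq \Gamma(\CF)$ is already recorded in \Cref{sec:genuine_sections} for any global $2$-ring, and to carry out the reduction using rigidity of $\LocSys_\bG$. Write $\iota \colon \CB B \hookrightarrow \CB A$ for the faithful morphism in $\Orb{}$ induced by the inclusion $B \subset A$. Recall that $(-)^B \colon \Sp_A \to \Sp$ is the composite of the restriction $\iota^* = \mathrm{res}^A_B \colon \Sp_A \to \Sp_B$ with the categorical $B$-fixed point functor $\map_{\Sp_B}(\1, -) \colon \Sp_B \to \Sp$, whereas $(-)^B \colon \LocSys_\bG(\CB A) \to \Sp$ unwinds to the composite of $\iota^* \colon \LocSys_\bG(\CB A) \to \LocSys_\bG(\CB B)$ with the global sections functor $\Gamma = \map_{\LocSys_\bG(\CB B)}(\1, -)$.

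First I would invoke rigidity: by \Cref{megaprop}(6) together with \Cref{thm-tempered-is-genuine}, $\LocSys_\bG$ is a rigid $\Glo{}$-genuine global $2$-ring. Since $\iota$ is a faithful map in $\Glo{}$, \Cref{thm-rigid} applies and shows that the structure map
\[
R_\iota \colon \iota^* \bGamma_A(-) \Longrightarrow \bGamma_B(\iota^*(-))
\]
of \Cref{const:global_sect} is an equivalence of functors $\LocSys_\bG(\CB A) \to \Sp_B$; equivalently, the naturality square of $\bGamma_\bullet$ at $\iota$ commutes.

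To conclude, post-compose $R_\iota$ with the categorical fixed point functor $\map_{\Sp_B}(\1, -)$ to obtain
\[
\bGamma_A(\CF)^B = \map_{\Sp_B}(\1, \iota^*\bGamma_A(\CF)) \simeq \map_{\Sp_B}(\1, \bGamma_B(\iota^*\CF)).
\]
It remains to identify $\map_{\Sp_B}(\1, \bGamma_B(-))$ with $\Gamma$, which is the functorial form of the remark mentioned above: $\bGamma_B$ is the pointwise right adjoint of the functor $F_B$ of \Cref{const:global_sect}, which is the $\mathrm{Ind}$-extension of the strong monoidal functor $\decatcoh{B}{\mathbb{D}(-)}{\LocSys_\bG}$ (using self-duality of the equivariant sphere and that $\decatcoh{B}{-}{\LocSys_\bG}$ is strong monoidal by \Cref{prop-Fsp} and \Cref{thm-tempered-is-genuine}), hence itself monoidal with $F_B(\1_{\Sp_B}) \simeq \1_{\LocSys_\bG(\CB B)}$. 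Therefore $\map_{\Sp_B}(\1, \bGamma_B(-)) \simeq \map_{\LocSys_\bG(\CB B)}(F_B(\1), -) \simeq \map_{\LocSys_\bG(\CB B)}(\1, -) = \Gamma$, and composing with $\iota^*$ gives $\bGamma_A(\CF)^B \simeq \Gamma(\iota^*\CF) = \CF^B$, as claimed.

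The only delicate point is coherence: one must check that the three ingredients above — the description of $(-)^B$ on $\Sp_A$ as $\iota^*$ followed by categorical fixed points, the equivalence $R_\iota$, and the adjunction identification $\map_{\Sp_B}(\1, \bGamma_B(-)) \simeq \Gamma$ — assemble into an equivalence of functors, and that this is compatible as $B$ ranges over the subgroups of $A$. The first reduces to unwinding the definition of the categorical fixed point functor; the second is precisely \Cref{thm-rigid}; the third is automatic since $(F_B, \bGamma_B)$ is an adjoint pair by construction. I do not anticipate any further difficulty.
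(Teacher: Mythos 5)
Your proposal is correct and follows essentially the same route as the paper's proof: reduce to $B=A$ using that $\bGamma_\bullet$ commutes with restriction along faithful maps (Theorem \ref{thm-rigid}), observe that both fixed-point functors are given by mapping out of the unit, and conclude from strong monoidality of the left adjoint $F_A$. The extra detail you supply on unwinding $(-)^B$ and the adjunction identification is a faithful elaboration of the same argument.
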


\begin{proof}
    Because $\bGamma_A$ commutes with restriction to subgroups it suffices to prove the lemma for all $A$ with $B=A$. In both cases the functor $(-)^A$ is given by mapping out of the unit. Therefore the result follows from the fact that the left adjoint $F_A$ of $\bGamma_A$ is strong monoidal, see \Cref{rem:genuine_sect_lax}.
\end{proof}

\begin{theorem}\label{thm-localsym-as-spectra}
    Let $A$ be a finite abelian group and let $\bG$ be an oriented $\rP$-divisible group. Then 
    \[
    \bGamma_{A}\colon \LocSys_{\bG}(\CB A)\rightarrow \Mod_{\ul R_{\bG}^{\CB A}}
    \] is a symmetric monoidal equivalence.
\end{theorem}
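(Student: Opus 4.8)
The plan is to realize $\bGamma_A$ as an equivalence by using the standard \emph{generators $+$ fully faithfulness on generators} criterion for a colimit-preserving functor between rigidly-compactly generated stable symmetric monoidal $\infty$-categories. Since $\bGamma_A$ is lax symmetric monoidal with strong monoidal colimit-preserving left adjoint $F_A$ (\Cref{const:global_sect}, \Cref{rem:genuine_sect_lax}), it factors through the forgetful functor $\Mod_{\bGamma_A \1_{\CB A}}(\Sp_A)\to \Sp_A$, and by \Cref{rem:global_sec_of_unit} the unit $\bGamma_A\1_{\CB A}$ is identified with $\genref{A}{\LocSys_\bG}=\ul{R}_{\bG}^{\CB A}$; this module-lift is exactly the functor in the statement. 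It has a colimit-preserving left adjoint $F_A'$ with $F_A'(\ul{R}_{\bG}^{\CB A}\otimes X)\simeq F_A(X)$ for $X\in\Sp_A$. It therefore suffices to show $F_A'$ is an equivalence, and then, as $F_A'$ is strong symmetric monoidal, its inverse and hence $\bGamma_A$ are automatically symmetric monoidal.

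Next I would pin down compatible sets of compact generators. By \Cref{megaprop}(5), $\LocSys_\bG(\CB A)$ is rigidly-compactly generated by the objects $p_!\1_{\CB B}$ for the faithful maps $p\colon\CB B\hookrightarrow\CB A$, i.e.\ for subgroups $B\subseteq A$; and $\Mod_{\ul{R}_{\bG}^{\CB A}}(\Sp_A)$ is rigidly-compactly generated by the free modules $\ul{R}_{\bG}^{\CB A}\otimes\Sigma^\infty_A(A/B)_+$, since the orbits $\Sigma^\infty_A(A/B)_+$ are dualizable compact generators of $\Sp_A$. Using that $\decatcoh{A}{-}{\LocSys_\bG}$ restricted to compact $A$-spectra is $\decatcoh{A}{\CBD(-)}{\LocSys_\bG}$ (\Cref{const:global_sect}, \Cref{prop-Fsp}) and that, $A$ being finite, $\Sigma^\infty_A(A/B)_+$ is self-Spanier--Whitehead-dual, one computes
\[
F_A'\bigl(\ul{R}_{\bG}^{\CB A}\otimes\Sigma^\infty_A(A/B)_+\bigr)\simeq F_A\bigl(\Sigma^\infty_A(A/B)_+\bigr)\simeq \decatcoh{A}{A/B}{\LocSys_\bG}\simeq p_*\1_{\CB B}\simeq p_!\1_{\CB B},
\]
where the third equivalence is \Cref{def-unravel-incoh}(1) (note $A/B\sslash A\simeq\CB B$) and the last is the Wirthm\"uller isomorphism \Cref{megaprop}(3). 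In particular $F_A'$ sends compact generators to compact generators; hence $F_A$ preserves compact objects, so $\bGamma_A$ (and its module-lift) preserves colimits.

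Finally I would prove $F_A'$ is fully faithful. As both $F_A'$ and $\bGamma_A$ preserve colimits and the source is generated under colimits by the above compact objects, it is enough to check that the unit map $\ul{R}_{\bG}^{\CB A}\otimes\Sigma^\infty_A(A/B)_+\to\bGamma_A F_A'(\ul{R}_{\bG}^{\CB A}\otimes\Sigma^\infty_A(A/B)_+)\simeq\bGamma_A(p_!\1_{\CB B})$ is an equivalence for each $B\subseteq A$. Here one uses that $\bGamma_\bullet$ commutes with induction along faithful maps: the square relating $\bGamma_\bullet$ to $p^\ast$ commutes by \Cref{thm-rigid}, and passing to vertical left adjoints — legitimate because $p_!\simeq p_\ast$ on both $\LocSys_\bG$ (\Cref{megaprop}(3)) and $\Sp_\bullet$ (Wirthm\"uller), while $F_\bullet$ is \emph{strictly} natural in $p^\ast$ — gives $\bGamma_A(p_!\1_{\CB B})\simeq p_!\bGamma_B(\1_{\CB B})\simeq p_!\ul{R}_{\bG}^{\CB B}\simeq\ul{R}_{\bG}^{\CB A}\otimes\Sigma^\infty_A(A/B)_+$, the last step by the projection formula, with the base case $B=A$ being \Cref{rem:global_sec_of_unit}. (Alternatively, one can compute $\map_{\LocSys_\bG(\CB A)}(p_!\1,q_!\1)$ and $\map_{\Mod_{\ul{R}_{\bG}^{\CB A}}}(\ul{R}_{\bG}^{\CB A}\otimes\Sigma^\infty_A(A/B)_+,\ul{R}_{\bG}^{\CB A}\otimes\Sigma^\infty_A(A/C)_+)$ directly by the double-coset formula and match them; both reduce to a sum of copies of $R_\bG^{\CB D}$ over double cosets in $B\backslash A/C$, using the base-change property established in \Cref{thm-tempered-is-genuine}.) Given this, $F_A'$ is fully faithful with essential image containing a generating set of the cocomplete target, hence an equivalence, and $\bGamma_A$ is the inverse symmetric monoidal equivalence. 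The main obstacle is precisely this last step: verifying that the abstract identification $\bGamma_A(p_!\1_{\CB B})\simeq\ul{R}_{\bG}^{\CB A}\otimes\Sigma^\infty_A(A/B)_+$ is induced by the adjunction unit, which requires the coherence of the Beck--Chevalley transformations attached to $F_\bullet$ together with the triviality of the dualizing twists $\omega_f$ for subgroup inclusions of finite groups (so that $p_\sharp=p_!$ in \Cref{thm-rigid}); everything else is a formal assembly of \Cref{megaprop}, \Cref{thm-rigid} and \Cref{rem:global_sec_of_unit}.
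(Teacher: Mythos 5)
Your argument is essentially correct, but it takes a genuinely different route from the paper. You run a Schwede--Shipley-style argument: identify compact generators on both sides, show the left adjoint $F_A'$ matches them up, and verify full faithfulness by checking the unit on generators. The paper instead applies the monoidal monadicity criterion of Mathew--Naumann--Noel (Proposition 5.29 of \cite{MNN17}) to the adjunction $F_A\dashv\bGamma_A$, which requires checking only three ``absolute'' conditions: the projection formula (which is automatic from rigidity via \cite{GrothendieckNeeman16}), that $\bGamma_A$ preserves colimits (checked by composing with the jointly colimit-detecting genuine fixed point functors $(-)^B$ and using \Cref{lem:bGamma_gen_fixed_temp} to identify $(-)^B\circ\bGamma_A$ with evaluation, which preserves colimits by \Cref{megaprop}), and that $\bGamma_A$ is conservative (again via \Cref{lem:bGamma_gen_fixed_temp} and joint conservativity of the evaluations). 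The trade-off is exactly the one you flag yourself at the end: your route forces you to verify that the abstract equivalence $\bGamma_A(p_!\1_{\CB B})\simeq\ul{R}_{\bG}^{\CB A}\otimes\Sigma^\infty_A(A/B)_+$ is \emph{induced by the adjunction unit}, which requires chasing the coherence of the Beck--Chevalley mates attached to $F_\bullet$ (your passage to vertical left adjoints in the square from \Cref{thm-rigid} produces \emph{some} equivalence $\bGamma_A p_!\simeq p_!\bGamma_B$, but identifying the resulting composite with the unit map is a nontrivial diagram chase, and the double-coset alternative has the same defect of only matching both sides abstractly). The MNN criterion sidesteps this entirely, since conservativity, colimit preservation, and the projection formula make no reference to particular comparison maps on generators. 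So your proof is salvageable and gives the same conclusion, but as written it has one genuinely delicate step left open, and the paper's approach is the cleaner way to close it.
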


\begin{proof}
    Recall that $\bGamma_A\colon \LocSys_{\bG}(\CB A)\rightarrow \Sp_A$ admits a symmetric monoidal left adjoint, which we denote by $F_A$. We will show that the adjunction $F_A\dashv \bGamma_A$ satisfies the criteria of \cite{MNN17}*{Proposition 5.29} and so induces an equivalences as in the statement. For simplicity we recall the conditions of the cited proposition, which are 
    \begin{enumerate}
        \item $F_A\dashv \bGamma_A$ satisfies the left projection formula;
        \item $\bGamma_A$ commutes with colimits;
        \item $\bGamma_A$ is conservative.
    \end{enumerate} 
    For (1) we may appeal to \cite{GrothendieckNeeman16}*{Theorem 1.3}. To show (2) we observe that it suffices to show that the composite $(-)^B\circ \bGamma_A$ preserves colimits for every subgroup $B\subset A$, because the genuine fixed point functors jointly detect colimits. However by \Cref{lem:bGamma_gen_fixed_temp} this agrees with the functor $(-)^B\colon \LocSys_\bG(\CB A)\rightarrow \Sp$, which commutes with colimits by \Cref{megaprop}(5). Finally for (3) let $\CF$ be a tempered local system such that $\bGamma_A(\CF)$ is zero. Then $\CF^B \simeq \bGamma_A(\CF)^B$ is zero for all $B\subset A$, and we conclude that $\CF$ is zero because the evaluation functors $(-)^A$ are jointly conservative on tempered local systems, see \Cref{not-evaluation-functor}.
\end{proof}	

For completeness we record the fact that both notions of geometric fixed points agree.

\begin{proposition}\label{gfpandgamma}
Let $A$ be a finite abelian group and let $\bG$ be an oriented $\rP$-divisible group. Then the triangle
    \[\begin{tikzcd}
        {\LocSys_{\bG}(\CB A)} && {\Mod_{\ul R_{\bG}^{\CB A}}} \\
        & {\Mod_{\Phi^A R^{\CB A}_{\bG}}}
        \arrow["{\bGamma_A}", from=1-1, to=1-3]
        \arrow["{\Phi^A}"', from=1-1, to=2-2]
        \arrow["{\Phi^A}", from=1-3, to=2-2]
    \end{tikzcd}\]
    commutes.
\end{proposition}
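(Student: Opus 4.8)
The plan is to exploit the symmetric monoidal equivalence $\bGamma_A\colon \LocSys_{\bG}(\CB A)\xrightarrow{\sim}\Mod_{\ul R_{\bG}^{\CB A}}$ of \Cref{thm-localsym-as-spectra}, and to show it intertwines the two geometric fixed point functors by recognising both of them as the same Verdier localization transported across this equivalence. Write $G_A\colon \Mod_{\ul R_{\bG}^{\CB A}}\xrightarrow{\sim}\LocSys_{\bG}(\CB A)$ for the inverse equivalence. First I would recall the relevant descriptions of the two $\Phi^A$'s. On the tempered side, \Cref{rem-geom-fix-points} identifies $\Phi^A$ with the Verdier quotient $\LocSys_{\bG}(\CB A)\to\LocSys_{\bG}(\CB A)/I_A\simeq \Mod_{\Phi^A R^{\CB A}_{\bG}}$, where $I_A$ is the localizing $\otimes$-ideal generated by the objects $\iota_!\1$ for $\iota\colon\CB B\hookrightarrow\CB A$ the inclusion of a proper subgroup; moreover this localization is smashing, so $\Phi^A$ is the initial symmetric monoidal colimit-preserving functor out of $\LocSys_{\bG}(\CB A)$ annihilating these generators. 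On the genuine side, $\Phi^A\colon\Sp_A\to\Sp$ is the isotropy-separation smashing localization, whose base change along $R\to\ul R^{\CB A}_{\bG}$ presents $\Phi^A\colon\Mod_{\ul R^{\CB A}_{\bG}}(\Sp_A)\to\Mod_{\Phi^A R^{\CB A}_{\bG}}$ as the Verdier quotient by the localizing $\otimes$-ideal $J_A$ generated by the induced modules associated to proper subgroup inclusions $\iota\colon\CB B\hookrightarrow\CB A$ (see e.g.~\cite{AMR2021} or \cite{MNN17}); again this is the initial symmetric monoidal colimit-preserving functor annihilating those generators.

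Given these two universal properties, it suffices to check that $G_A$ carries the generators of $J_A$ to the generators of $I_A$. Since $G_A$ is symmetric monoidal it preserves units, so $G_B(\1_{\Mod_{\ul R^{\CB B}_{\bG}}})\simeq \1_{\LocSys_{\bG}(\CB B)}$, and the claim is that $G_A$ commutes with $\iota_!$ for every faithful $\iota\colon\CB B\hookrightarrow\CB A$. This is a formal consequence of the fact that $\bGamma_\bullet$, hence also its pointwise inverse $G_\bullet$, is strictly natural with respect to restriction along faithful maps — which is exactly \Cref{thm-rigid}, asserting that the lax coherence cells $R_f$ are equivalences for faithful $f$. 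Indeed, the resulting commuting square $G_B\iota^*\simeq\iota^* G_A$ of restriction functors, together with the facts that $G_A$ and $G_B$ are equivalences and that $\iota^*$ has the left adjoint $\iota_!$ on both $\Mod_{\ul R^{\CB A}_{\bG}}$ and $\LocSys_{\bG}(\CB A)$ (using \Cref{megaprop}(2)), forces $G_A\iota_!\simeq\iota_! G_B$ by uniqueness of adjoints. Therefore $G_A(\iota_!\1)\simeq \iota_! G_B(\1)\simeq\iota_!\1$, which is precisely a generator of $I_A$ by \Cref{megaprop}(5); so $G_A$ restricts to an equivalence of localizing $\otimes$-ideals $J_A\xrightarrow{\sim} I_A$.

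Combining the two steps, $\Phi^A\circ G_A$ and the genuine $\Phi^A$ are both initial among symmetric monoidal colimit-preserving functors out of $\Mod_{\ul R^{\CB A}_{\bG}}(\Sp_A)$ that annihilate the generators of $J_A$, hence they agree; the common target is $\Phi^A G_A(\ul R^{\CB A}_{\bG})\simeq\Phi^A(\1_{\LocSys_{\bG}(\CB A)}) = \Phi^A R^{\CB A}_{\bG}$, matching the bottom-right corner of the triangle (and in particular re-proving $\Phi^A\ul R^{\CB A}_{\bG}\simeq\Phi^A R^{\CB A}_{\bG}$). Transposing along $G_A = \bGamma_A^{-1}$ gives the asserted commutativity $\Phi^A\simeq\Phi^A\circ\bGamma_A$. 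The step requiring the most care is the Verdier-quotient/smashing description of the genuine $\Phi^A$ after base change to $\ul R^{\CB A}_{\bG}$-modules; alternatively one can avoid base-changing by running the same argument one level down in $\Sp_A$, using that the left adjoint $F_A$ of $\bGamma_A$ is symmetric monoidal with $F_A(\1_{\Sp_A})\simeq\ul R^{\CB A}_{\bG}$ and commutes with $\iota_!$ by the identical mate argument. Everything else is formal once \Cref{thm-localsym-as-spectra}, \Cref{thm-rigid}, \Cref{rem-geom-fix-points} and \Cref{megaprop} are available.
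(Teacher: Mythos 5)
Your proposal is correct and is essentially an expanded version of the paper's own one-sentence proof: the paper simply observes that both geometric fixed point functors are the localizations away from the localizing subcategories generated by objects induced from proper subgroups, which is exactly the observation you verify in detail (including the key point that $\bGamma_A$ matches up the two ideals because it commutes with $\iota_!$ for faithful $\iota$, via \Cref{thm-rigid} and uniqueness of adjoints). No gaps; you have just made explicit what the paper leaves implicit.
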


\begin{proof}
It suffices to observe that in both cases geometric fixed points is given by localizing away from the localizing subcategory generated by objects induced from proper subgroups $B\subset A$. 
\end{proof}

Finally we also show that the equivalence $\bGamma_A\colon \LocSys_{\bG}(\CB A) \rightarrow \Mod_{\ul{R}_\bG^{\CB A}}$ is natural in $\Glo{}^{\op}$, in particular with inflation along a surjective group homomorphism. Recall that $\bGamma_\bullet \colon \LocSys_\bG(\bullet)\Rightarrow \Sp_\bullet$ was constructed in \Cref{const:global_sect} by passing a natural transformation $F_\bullet$ of functors $\Glo{}^{\op}\rightarrow \Cat$ through the mate equivalence, and so canonically laxly commutes. By the 2-functoriality of taking modules, the lax transformation $\bGamma_\bullet$ lifts to a lax natural transformation 
\[
\LocSys_\bG(\bullet)\Rightarrow \Mod_{\ul{R}_{\bG}^{\bullet}}
\] 
taking values in modules over the unit. We write $R_\alpha\colon \alpha^*\bGamma_C(-)\Rightarrow \bGamma_A(\alpha^*(-))$ for the natural transformation associated to a map $\CB \alpha\colon \CB A \rightarrow \CB C$ of global spaces.

\begin{proposition}\label{prop:temp_eq_natural}
Let $\CB \alpha\colon \CB A\rightarrow \CB C$ be an arbitrary morphism. Then the natural transformation 
\[R_\alpha\colon \alpha^*\bGamma_C(-)\Rightarrow \bGamma_A(\alpha^*(-))\]
is an equivalence. 
In other words, the square 
\[
    \begin{tikzcd}
        \LocSys_\bG(\CB C)\arrow[r, "\bGamma_C"] \arrow[d, "\alpha^*"'] & \Mod_{\ul R_\bG^{\CB{C}}}  \arrow[d, "\alpha^*"] \\
        \LocSys_\bG(\CB A) \arrow[r, "\bGamma_A"] & \Mod_{\ul R_\bG^{\CB{A}}} 
    \end{tikzcd}
    \] canonically commutes.
\end{proposition}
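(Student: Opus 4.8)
The plan is to reduce the statement to the case of a quotient map, and then to verify it after passing to geometric fixed points.

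\emph{Reduction to quotient maps.} Every morphism $\alpha$ factors as $A \twoheadrightarrow C' \hookrightarrow C$, inducing a factorization $\CB A \xrightarrow{\CB q} \CB C' \xrightarrow{\CB \iota} \CB C$ of $\CB \alpha$ in $\Glo{}$ in which $\CB \iota$ is faithful. Since $\LocSys_{\bG}$ is rigid (\Cref{megaprop}) and $\CB\iota$ is faithful, $R_\iota$ is an equivalence by \Cref{thm-rigid}; as $\bGamma_\bullet$ is a \emph{lax} natural transformation, its structure $2$-cells compose, so $R_\alpha$ is an equivalence if and only if $R_q$ is. Hence it suffices to treat a quotient map $q\colon A \twoheadrightarrow C$.

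\emph{Reduction to top geometric fixed points.} Because $\bGamma_A$ factors through $\Mod_{\ul R_\bG^{\CB A}}$ and the forgetful functor to $\Sp_A$ detects equivalences, it is enough to show that $\Phi^B(R_q)$ is an equivalence for each subgroup $B\subseteq A$, where $\Phi^B\colon \Sp_A\to\Sp$ denotes geometric $B$-fixed points and the family $\{\Phi^B\}_{B\subseteq A}$ is jointly conservative. Using $\Phi^B\simeq\Phi^B_B\circ\mathrm{res}^A_B$ together with the identity $\mathrm{res}^A_B\circ\mathrm{infl}^A_C\simeq\mathrm{infl}^B_{\bar B}\circ\mathrm{res}^C_{\bar B}$ for $\bar B\coloneqq q(B)$, and again that $R_\iota$ is invertible for the faithful inclusions $\CB B\hookrightarrow\CB A$, $\CB{\bar B}\hookrightarrow\CB C$ and that the $R_\bullet$ compose, one identifies $\Phi^B(R_q)$ (after these identifications, and whiskering by $\iota_{\bar B}^*$) with $\Phi^B_B(R_{q_B})$ for the quotient map $q_B\colon B\twoheadrightarrow\bar B$. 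If $q_B$ is an isomorphism this is trivially an equivalence, so everything comes down to the following claim.

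\emph{Main claim.} For any quotient map $q\colon A\twoheadrightarrow C$, the transformation $\Phi^A(R_q)$ is an equivalence, where $\Phi^A$ now denotes geometric $A$-fixed points on both sides. I would prove this by computing the two legs of the square. On the module side $q^{*}$ is $M\mapsto \ul R_\bG^{\CB A}\otimes_{\mathrm{infl}^A_C\ul R_\bG^{\CB C}}\mathrm{infl}^A_C(M)$; using that $\Phi^A$ is symmetric monoidal, the standard identity $\Phi^A\circ\mathrm{infl}^A_C\simeq\Phi^C$ for genuine spectra, the identity $\ul R_\bG^{\CB G}\simeq\bGamma_G(\1_{\CB G})$, and \Cref{gfpandgamma} (which gives $\Phi^G\bGamma_G\simeq\Phi^G$ and hence $\Phi^G\ul R^{\CB G}_\bG\simeq R^{\CB G}_\bG{}^{\deg\mathrm{-loc}}$), the composite $\Phi^A\circ q^{*}_{\Mod}\circ\bGamma_C$ is identified with $R^{\CB A}_\bG{}^{\deg\mathrm{-loc}}\otimes_{R^{\CB C}_\bG{}^{\deg\mathrm{-loc}}}\Phi^C(-)$, the ring map being $\Phi^A$ of the unit component of $R_q$. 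On the $\LocSys_\bG$ side, \Cref{gfpandgamma} first gives $\Phi^A\bGamma_A q^{*}_{LS}\simeq\Phi^A q^{*}_{LS}$; the base-change formula for $\LocSys_\bG$ along quotient maps shows $q^{*}_{LS}$ carries the localizing ideal $I_C$ generated by objects induced from proper subgroups of $C$ into $I_A$ (cf.\ \Cref{lem:geom_kills_induced}, \Cref{rem-geom-fix-points}), hence descends to a symmetric monoidal colimit-preserving — therefore base-change — functor $\Mod_{R^{\CB C}_\bG{}^{\deg\mathrm{-loc}}}\to\Mod_{R^{\CB A}_\bG{}^{\deg\mathrm{-loc}}}$, so that $\Phi^A q^{*}_{LS}\CF\simeq R^{\CB A}_\bG{}^{\deg\mathrm{-loc}}\otimes_{R^{\CB C}_\bG{}^{\deg\mathrm{-loc}}}\Phi^C\CF$ as well. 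The two legs therefore agree, and one checks that $\Phi^A(R_q)$ is exactly this identification by unwinding the mate from \Cref{const:global_sect} defining $R_q$.

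\emph{Main obstacle.} The reductions are routine $2$-categorical bookkeeping; the substantive point is the last step — matching the two base-change descriptions \emph{together with} their structure ring maps, and checking that the mate defining $R_q$ realizes the resulting equivalence. This requires care because $R_q$ is built out of the unit and counit of the adjunctions $F_\bullet\dashv\bGamma_\bullet$, which are not equivalences, so the invertibility of $\Phi^A(R_q)$ is not formal from the invertibility of the strict-naturality isomorphism $q^{*}_{LS}F_C\simeq F_A q^{*}_{\Sp}$; one genuinely needs the compatible gluing of the isomorphisms $\Phi^A\mathrm{infl}\simeq\Phi^C$, \Cref{gfpandgamma}, and the descent of $q^{*}_{LS}$ established above.
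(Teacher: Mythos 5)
Your proposal is correct and follows essentially the same route as the paper: factor $\alpha$ into a quotient map followed by a faithful map, dispose of the faithful case by \Cref{thm-rigid}, reduce to top geometric fixed points using joint conservativity and compatibility with restriction, and then identify both legs after $\Phi^A$ as base change along $\Phi^C \ul R_\bG^{\CB C} \to \Phi^A \ul R_\bG^{\CB A}$. The only real difference is in the computation of the $\LocSys_\bG$-side leg: where you descend $q^*$ to a symmetric monoidal colimit-preserving functor between the Verdier quotients $\Mod^{\deg\textrm{-loc}}$ (using that $q^*$ carries $I_C$ into $I_A$), the paper instead invokes condition (A) of \Cref{def-pretempered} directly, namely $(\alpha^*\CF)^A \simeq \CF^C \otimes_{R_\bG^{\CB C}} R_\bG^{\CB A}$, together with the fact that for surjective $\alpha$ the degenerate ideal of $R_\bG^{\CB A}$ is generated by the image of that of $R_\bG^{\CB C}$ — two equivalent ways of packaging the same input.
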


\begin{proof}
Recall that every map of global spaces factors into a faithful map followed by a \quotientmap{}, and therefore we can consider these two cases separately. The case of a faithful map is already shown in \Cref{thm-rigid}. Therefore we may assume that $\CB \alpha \colon \CB A \rightarrow \CB C$ is a \quotientmap{}, that is $\alpha\colon A\rightarrow C$ is a surjective group homomorphism. To show the result it suffices to show that $R_\alpha$ is an equivalence after taking geometric fixed points for all subgroups $A_0\subset A$. However because $\bGamma_\bullet$ commutes with faithful maps we may reduce to the case $A_0= A$. 

We compute that for all $\CF\in \LocSys_{\bG}(\CB C)$, 
\begin{align*}
 \Phi^A\alpha^*(\bGamma_C \CF ) = \Phi^A(\alpha^* \bGamma_C \CF\otimes_{\alpha^* \ul{R}_{\bG}^{\CB C}} \ul{R}_{\bG}^{\CB A}) &\simeq \Phi^A\alpha^* \bGamma_C \CF\otimes_{\Phi^A \alpha^* \ul{R}_{\bG}^{\CB C}} \Phi^A\ul{R}_{\bG}^{\CB A}\\
 &\simeq \Phi^C \bGamma_C \CF \otimes_{\Phi^C \ul{R}_{\bG}^{\CB C}} \Phi^A\ul{R}_{\bG}^{\CB A} \\ 
 &\simeq \Phi^C \CF \otimes_{\Phi^C \ul{R}_{\bG}^{\CB C}} \Phi^A\ul{R}_{\bG}^{\CB A}
\end{align*}
where in the last step we used \Cref{gfpandgamma}.
Next we note that because $\alpha$ is surjective, the degenerate ideal of $R_{\bG}^{\CB A}$ is generated by the image of the degenerate ideal of $R_{\bG}^{\CB C}$, see \cite{Ell3}*{Remark 5.2.3}. 
Writing $L_A$ and $L_C$ for the localization at $\Spec(R^{\CB A}_{\bG})^{\deg}$ and $\Spec(R^{\CB C}_{\bG})^{\deg}$ respectively, we can continue computing
 \[
 \Phi^C \CF \otimes_{\Phi^C \ul{R}_{\bG}^{\CB C}} \Phi^A\ul{R}_{\bG}^{\CB A} \simeq L_C \CF^C\otimes_{L_C R_{\bG}^{\CB C}} L_A R_{\bG}^{\CB A} \simeq L_A(\CF^C\otimes_{R_{\bG}^{\CB C}} R_{\bG}^{\CB A}).
 \] 
On the other hand, 
\[
\Phi^A\bGamma_A\alpha^*\CF \simeq \Phi^A \alpha^* \CF \simeq L_A (\alpha^* \CF)^A \simeq L_A(\CF(\CB A))\simeq L_A(\CF^C\otimes_{R_{\bG}^{\CB C}} R_{\bG}^{\CB A})
\]
where we used \Cref{gfpandgamma}, the definition of $A$-geometric fixed points, the definition of $\alpha^*$, and finally condition (A) of \Cref{def-pretempered}.
\end{proof}

\appendix
\section{Symmetric monoidal higher Brown representability}\label{sec:Brown}

Suppose that $\CC$ is a compactly generated stable $\infty$-category. Then the restricted spectral Yoneda embedding gives an equivalence $\CC\simeq \Fun^{\lex}((\CC^\omega)^{\op},\Sp)$. In this short appendix we show that this equivalence is symmetric monoidal, where the right hand side is endowed with a localized form of Day convolution. We begin by showing that this localization of Day convolution exists.

\begin{lemma}[\cite{bispans}*{Proposition 3.3.4}]\label{lem:day_conv_localization}
    Let $\CC$ be a presentable $\infty$-category and let $I$ be a symmetric monoidal small $\infty$-category with $\kappa$-small limits, such that the tensor product commutes with $\kappa$-small limits in each variable. Then the localization $L\colon \Fun(I,\CC) \rightarrow \Fun^{\rR_\kappa}(I,\CC)$ is compatible with Day convolution.
\end{lemma}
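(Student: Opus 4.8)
The statement is \cite{bispans}*{Proposition 3.3.4}; the plan is to deduce it from the criterion of \cite{HA}*{Proposition 2.2.1.9}. Day convolution makes $\Fun(I,\CC)$ a presentably symmetric monoidal $\infty$-category (\cite{HA}*{Section 2.2.6}), so that criterion says the reflective localization $L\colon\Fun(I,\CC)\to\Fun^{\rR_\kappa}(I,\CC)$ is compatible with the monoidal structure --- whence $\Fun^{\rR_\kappa}(I,\CC)$ inherits one with $L$ symmetric monoidal --- exactly when the class $W$ of $L$-equivalences is closed under Day convolution with arbitrary objects. That $L$ exists and is an accessible localization is routine: for $\CC=\Spc$ one has $\Fun^{\rR_\kappa}(I,\Spc)\simeq\mathrm{Ind}_\kappa(I^{\op})$ sitting reflectively inside $\CP(I^{\op})=\Fun(I,\Spc)$, and the general case follows by tensoring this localization with $\CC$ in $\mathrm{Pr}^{\mathrm L}$ using $\Fun(I,\CC)\simeq\Fun(I,\Spc)\otimes\CC$. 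So the whole task is to show $f\otimes_{\mathrm{Day}}G\in W$ for every $f\in W$ and every $G\in\Fun(I,\CC)$.

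First I would reduce to generators. For $c\in I$ write $\mathbf h^c:=\Map_I(c,-)\colon I\to\Spc$ for the corepresentable, and for $d\in\CC$ let $\mathbf h^c_d$ be its $\Spc$-copower with $d$; these objects generate $\Fun(I,\CC)$ under colimits. From the description of $\mathrm{Ind}_\kappa$ as a localization (and its compatibility with tensoring by $\CC$), $W$ is the strong saturation of the comparison maps
\[
w_{p,d}\colon\ \colim_{j\in J^{\op}}\mathbf h^{p(j)}_d\ \longrightarrow\ \mathbf h^{\lim_J p}_d ,
\]
as $p\colon J\to I$ ranges over $\kappa$-small diagrams and $d$ over $\CC$ --- these are precisely the maps forcing the corepresentables to realize $\kappa$-small limits. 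Since $\otimes_{\mathrm{Day}}$ preserves colimits in each variable and $W$ is strongly saturated, it is enough to check $w_{p,d}\otimes_{\mathrm{Day}}\mathbf h^{c'}_{d'}\in W$ for all such $w_{p,d}$ and all $c'\in I$, $d'\in\CC$, and then to bootstrap: fixing $\mathbf h^{c'}_{d'}$, the maps landing in $W$ after tensoring with it form a strongly saturated class containing the generators, hence all of $W$; and letting the second variable range over a general $G$, written as a colimit of objects $\mathbf h^{c'}_{d'}$, finishes.

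Two computations make this go through. First, Day convolution of these corepresentables is again of the same form, located at the $I$-tensor: $\mathbf h^c_d\otimes_{\mathrm{Day}}\mathbf h^{c'}_{d'}\simeq\mathbf h^{c\otimes_I c'}_{d\otimes_\CC d'}$ --- this is the monoidality of the (co)Yoneda embedding $I^{\op}\to\Fun(I,\Spc)$ (a case of \cite{HA}*{Section 2.2.6}) together with bilinearity of copowers. Second, the hypothesis that $\otimes_I$ preserves $\kappa$-small limits in each variable says that for $\kappa$-small $p\colon J\to I$ and $c'\in I$ the diagram $p(-)\otimes_I c'$ is again $\kappa$-small with $(\lim_J p)\otimes_I c'\simeq\lim_J(p(-)\otimes_I c')$. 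Combining the two, $w_{p,d}\otimes_{\mathrm{Day}}\mathbf h^{c'}_{d'}$ is identified with the comparison map $\colim_{j\in J^{\op}}\mathbf h^{p(j)\otimes_I c'}_{d\otimes d'}\to\mathbf h^{\lim_J(p(-)\otimes_I c')}_{d\otimes d'}$, that is, with $w_{p(-)\otimes_I c',\,d\otimes d'}$, one of the generating maps, and hence lies in $W$ (closure of $W$ under colimits is what lets us allow $d\otimes d'$ rather than only a generator). Then \cite{HA}*{Proposition 2.2.1.9} concludes.

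I expect the only real obstacle to be bookkeeping: pinning down $W$ via its generating comparison maps (and confirming that the ``tensor with $\CC$'' localization has the expected saturated class of equivalences), verifying the Day-convolution-of-corepresentables formula with the correct variance, and checking that the map produced in the last step genuinely coincides with $w_{p(-)\otimes_I c',\,d\otimes d'}$ and not merely has the same source and target. The sole conceptual input is that the hypothesis on $\otimes_I$ is exactly what is needed for the corepresentable generators of $W$ to be preserved by Day convolution; everything else is formal.
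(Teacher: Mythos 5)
The paper offers no proof of this lemma --- it is imported verbatim as \cite{bispans}*{Proposition 3.3.4} --- so there is no in-paper argument to compare against. Your proof is correct and is essentially the standard (and, as far as I can tell, the cited reference's) argument: identify $W$ as the strong saturation of the comparison maps $w_{p,d}$ using the adjunction $\Map(\mathbf{h}^c_d,F)\simeq\Map_{\CC}(d,F(c))$, note that Day convolution of copowered corepresentables is again one (monoidality of the co-Yoneda embedding), and observe that the hypothesis on $\otimes_I$ turns $w_{p,d}\otimes\mathbf{h}^{c'}_{d'}$ into $w_{p(-)\otimes_I c',\,d\otimes d'}$, after which \cite{HA}*{Proposition 2.2.1.9} and the two saturation bootstraps finish. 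One small streamlining: the ``tensor the localization with $\CC$'' step is dispensable, since the same copower adjunction shows directly that the $\{w_{p,d}\}$-local objects of $\Fun(I,\CC)$ are exactly the $\kappa$-small-limit-preserving functors, so the reflective localization and the identification of $W$ come for free from \cite{HTT}*{Proposition 5.5.4.15}.
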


We call the induced symmetric monoidal structure on $\Fun^{\lex}(I,\CC)$ the \textit{localized Day convolution} symmetric monoidal structure.

\begin{definition}
Suppose $\CC$ and $\CD$ are presentably symmetric monoidal $\infty$-categories. Suppose furthermore that $\CC$ is $\kappa$-compactly generated for some $\kappa$. Consider the Day convolution operad $\Fun(\CC,\CD)^\otimes$. Applying the equivalence 
\[
\Fun^{\rR}(\CC^{\op},\CD) \simeq \Fun^{\rR_\kappa}((\CC^{\kappa})^{\op},\CD)
\] and the previous result, we find that $\Fun(\CC^{\op},\CD)^\otimes$ localizes to a symmetric monoidal structure on $\Fun^{\rR}(\CC^{\op},\CD)$. We again call this the localized Day convolution symmetric monoidal structure. We note that one can easily show that the resulting symmetric monoidal structure is independent of the cardinal chosen above.
\end{definition}
Recall that the Lurie tensor product defines a symmetric monoidal functor $\Pr^\rL \times \Pr^\rL \to \Pr^\rL$ and so passes to commutative algebra objects $\CAlg(\Pr^\rL) \times \CAlg(\Pr^\rL) \to \CAlg(\Pr^\rL)$. In particular we obtain a natural symmetric monoidal structure on $\CC \otimes \CD$ for any pair $(\CC, \CD) $ of presentably symmetric monoidal $\infty$-categories.
\begin{theorem}\label{thm:Day-convolution-is-mon-struc-on-PrL-tensor}
Suppose $\CC$ and $\CD$ are presentably symmetric monoidal $\infty$-categories. Then the natural equivalence 
\[
\CC\otimes \CD \xrightarrow{\sim} \Fun^{\rR}(\CC^{\op},\CD)
\]
of \cite{HA}*{Proposition 4.8.1.17} is symmetric monoidal, where the right hand side is equipped with the Day convolution symmetric monoidal structure.
\end{theorem}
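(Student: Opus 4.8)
The plan is to exhibit the comparison equivalence as a composite of equivalences in $\CAlg(\Pr^\rL)$, by realizing both sides as symmetric monoidal localizations of a common presheaf-type $\infty$-category; since localized Day convolution is \emph{defined} via such a localization, this will simultaneously identify the two symmetric monoidal structures and show the identification refines the equivalence of \cite{HA}*{Proposition 4.8.1.17}.

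First I would fix a regular cardinal $\kappa$ large enough that $\CC$ is $\kappa$-compactly generated, the subcategory $\CC^\kappa\subseteq\CC$ of $\kappa$-compact objects is closed under $\otimes$, and $\1_\CC$ is $\kappa$-compact. Then $\CC^\kappa$ is a small symmetric monoidal $\infty$-category and $\CC\simeq\mathrm{Ind}_\kappa(\CC^\kappa)$ as presentably symmetric monoidal $\infty$-categories, because the $\mathrm{Ind}$-completion of a small symmetric monoidal $\infty$-category carries a canonical such structure and the tautological equivalence $\mathrm{Ind}_\kappa(\CC^\kappa)\xrightarrow{\sim}\CC$ respects it, both sides extending the symmetric monoidal structure of $\CC^\kappa$ compatibly with $\kappa$-filtered colimits (cf.\ \cite{HA}*{\S4.8.1}). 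Moreover, saying that $\CC^\kappa\subseteq\CC$ is a symmetric monoidal subcategory of a presentably symmetric monoidal $\infty$-category is precisely to say that the Day convolution structure on the presheaf category $\CP(\CC^\kappa)=\Fun((\CC^\kappa)^{\op},\Spc)$ descends along the accessible localization $\CP(\CC^\kappa)\to\mathrm{Ind}_\kappa(\CC^\kappa)\simeq\CC$; this is an instance of \Cref{lem:day_conv_localization}, the localization being at the class $W$ of maps of presheaves that become equivalences after restriction to $\kappa$-small-limit-preserving functors. Thus $\CC\in\CAlg(\Pr^\rL)$ is the symmetric monoidal localization $W^{-1}\CP(\CC^\kappa)$.

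Next I would tensor with $\CD$. Since $(-)\otimes\CD\colon\CAlg(\Pr^\rL)\to\CAlg(\Pr^\rL)$ is a symmetric monoidal left adjoint, it sends the symmetric monoidal localization $\CP(\CC^\kappa)\to W^{-1}\CP(\CC^\kappa)=\CC$ to a symmetric monoidal localization
\[
\CP(\CC^\kappa)\otimes\CD \longrightarrow \CC\otimes\CD,
\]
which the compatible-localization bookkeeping of \cite{HA}*{\S4.8.1} identifies with the localization of $\CP(\CC^\kappa)\otimes\CD$ at the saturation of $W\otimes\mathrm{id}_\CD$. Now I use the standard identification $\CP(\CC^\kappa)\otimes\CD\simeq\Fun((\CC^\kappa)^{\op},\CD)$ \emph{as presentably symmetric monoidal $\infty$-categories}, the right-hand side carrying Day convolution: this follows from \cite{HA}*{Proposition 4.8.1.17} together with the fact that Day convolution on $\CD$-valued presheaves is computed by this tensor product. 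Under it, the $W\otimes\mathrm{id}_\CD$-local objects of $\Fun((\CC^\kappa)^{\op},\CD)$ are exactly the functors preserving $\kappa$-small limits, i.e.\ $\Fun^{\rR_\kappa}((\CC^\kappa)^{\op},\CD)$, and the inherited symmetric monoidal structure is by definition the localized Day convolution of the Definition preceding the theorem. Composing with $\Fun^{\rR_\kappa}((\CC^\kappa)^{\op},\CD)\simeq\Fun^{\rR}(\CC^{\op},\CD)$ (restriction along $\CC^\kappa\hookrightarrow\CC$, again a case of \cite{HA}*{Proposition 4.8.1.17}) and stringing the chain together yields a symmetric monoidal equivalence $\CC\otimes\CD\simeq\Fun^{\rR}(\CC^{\op},\CD)$. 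A final routine check, tracing representables and the unit through each arrow of the chain, confirms it refines the underlying equivalence of \cite{HA}*{Proposition 4.8.1.17}, as the statement demands.

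The main obstacle is the third paragraph: making rigorous that $(-)\otimes\CD$ carries a symmetric monoidal Bousfield localization to a symmetric monoidal Bousfield localization and correctly computing the local objects of the result as $\Fun^{\rR_\kappa}((\CC^\kappa)^{\op},\CD)$. Everything else amounts to quoting \Cref{lem:day_conv_localization} and assembling equivalences from \cite{HA}*{\S4.8.1}. I would dispatch this step via the compatible-localization results of \cite{HA}*{\S4.8.1}, which give exactly that a localization of a presentably symmetric monoidal $\infty$-category at a $\otimes$-ideal class of maps stays compatible after applying $(-)\otimes\CD$ and has the expected local objects; the hypothesis that $W$ is a $\otimes$-ideal class is itself just a restatement of $\CC^\kappa$ being closed under $\otimes$ in $\CC$.
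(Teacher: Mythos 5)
Your proposal follows essentially the same route as the paper's proof: reduce to the $\kappa$-compact objects, establish that $\CC$ itself is the symmetric monoidal localization of $\CP(\CC^\kappa)$ with Day convolution (the $\CD=\Spc$ case, which the paper does via strong monoidality of the restricted Yoneda embedding), identify $\CP(\CC^\kappa)\otimes\CD$ with Day convolution on $\Fun((\CC^\kappa)^{\op},\CD)$, and then pass through the compatible localizations of \Cref{lem:day_conv_localization}. The one caution is that your ``standard identification'' of $\CP(\CC^\kappa)\otimes\CD$ with $\Fun((\CC^\kappa)^{\op},\CD)$ \emph{as symmetric monoidal} $\infty$-categories does not follow from \cite{HA}*{Proposition 4.8.1.17} (which gives only the underlying equivalence) together with ``the fact that Day convolution is computed by this tensor product'' --- that fact \emph{is} the identification being claimed, so the justification as phrased is circular; this is exactly the external input the paper sources to \cite{ben-moshe-schlank}*{Proposition 3.10}, and it should be cited as such.
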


\begin{proof}
    Let $\CC$ be $\kappa$-compactly generated. By definition there is an equivalence 
    \[
    \Fun^{\rR}(\CC^{\op},\CD) \simeq \Fun^{\rR_{\kappa}}((\CC^\kappa)^{\op},\CD)
    \] of symmetric monoidal categories, and so it suffices to prove that $\CC\otimes \CD$ is equivalent to the right hand side. We first do this in the case that $\CD \simeq \Spc$, the unit of $\mathrm{Pr}^{\rL}$. In this case the equivalence is given by the restricted Yoneda embedding. In particular it is defined by currying the hom functor, and so is lax symmetric monoidal by the universal property of Day convolution. It suffices to show that it is in fact strong monoidal. We first observe that on tensor products of objects in $\CC^\kappa$ this follows immediately from \cite{HA}*{Corollary 4.8.1.12}. However the symmetric monoidal structure on $\Fun^{\rR_{\kappa}}((\CC^\kappa)^{\op}, \Spc)$ commutes with colimits in each variable, and so we deduce the statement for arbitrary objects in $\CC$.
    
    Now consider the following diagram
\[\begin{tikzcd}
	{\CC\otimes \CD} & {\Fun^{\lex}((\CC^{\kappa})^{\op},\Spc)\otimes \CD} & {\Fun^{\lex}((\CC^{\kappa})^{\op},\CD)} \\
	& {\Fun((\CC^{\kappa})^{\op},\Spc)\otimes \CD} & {\Fun((\CC^{\kappa})^{\op},\CD)}
	\arrow["\sim", from=1-1, to=1-2]
	\arrow[from=1-2, to=1-3]
	\arrow["{L\otimes \Sp}", from=2-2, to=1-2]
	\arrow["\sim", from=2-2, to=2-3]
	\arrow["L"', from=2-3, to=1-3]
\end{tikzcd}\]
    The composite along the top is equivalent to the equivalence $\CC\otimes \CD \simeq \Fun((\CC^\omega)^{\op},\CD)$. Note that the vertical maps are symmetric monoidal localizations by \Cref{lem:day_conv_localization}. Therefore, because the bottom equivalence is symmetric monoidal by \cite{ben-moshe-schlank}*{Proposition~3.10}, the top rightmost horizontal map is again symmetric monoidal. We have previously shown that the first map is symmetric monoidal, and therefore the composite is also symmetric monoidal.
\end{proof}

\begin{corollary}\label{lem:Brown_rep}
Let $\CC$ be a compactly generated symmetric monoidal stable $\infty$-category. Then the restricted spectral Yoneda embedding
\[
y\colon \CC\xrightarrow{\sim} \Fun^{\lex}((\CC^\omega)^{\op}, \Sp)
\] is canonically symmetric monoidal, where the right hand side is given the localized Day convolution structure.
\end{corollary}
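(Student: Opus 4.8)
The plan is to deduce \Cref{lem:Brown_rep} from \Cref{thm:Day-convolution-is-mon-struc-on-PrL-tensor} by setting $\CD=\Sp$. First I would note that since $\CC$ is a compactly generated symmetric monoidal stable $\infty$-category, it is in particular presentably symmetric monoidal (the tensor product preserves colimits in each variable, and stability plus compact generation ensure presentability), so $\CC$ lies in $\CAlg(\Pr^\rL)$. Likewise $\Sp$ is the unit of $\CAlg(\Pr^\rL)$, hence presentably symmetric monoidal. Applying \Cref{thm:Day-convolution-is-mon-struc-on-PrL-tensor} to the pair $(\CC,\Sp)$ yields a symmetric monoidal equivalence $\CC\otimes \Sp \xrightarrow{\sim} \Fun^{\rR}(\CC^{\op},\Sp)$, where the right-hand side carries the (localized) Day convolution symmetric monoidal structure.

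Next I would identify the two sides of this equivalence with the objects appearing in the statement. On the left, $\CC\otimes \Sp \simeq \Sp(\CC)\simeq \CC$, using that $\CC$ is already stable so that $\Sp(\CC)\simeq \CC$; this identification is symmetric monoidal since $-\otimes\Sp$ is the stabilization functor on $\Pr^\rL$, which is the identity on stable objects. On the right, since $\CC$ is compactly generated, $\Fun^{\rR}(\CC^{\op},\Sp)\simeq \Fun^{\rR_\kappa}((\CC^{\kappa})^{\op},\Sp)$ for the appropriate $\kappa$; taking $\kappa=\omega$ (valid as $\CC$ is compactly generated) and recalling that a right-adjoint-type functor out of a stable $\infty$-category into $\Sp$ is the same as a finite-limit-preserving functor, this becomes $\Fun^{\lex}((\CC^{\omega})^{\op},\Sp)$ with its localized Day convolution structure as in \Cref{lem:day_conv_localization}. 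Finally I would check that the composite equivalence $\CC\simeq \Fun^{\lex}((\CC^\omega)^{\op},\Sp)$ is, on underlying $\infty$-categories, the restricted spectral Yoneda embedding $y\colon X\mapsto \map_\CC(-,X)$: this is exactly the content of \Cref{def:spectral_yoneda}, which records that the equivalence $\Sp(\CC)\simeq \Fun^{\rR}(\CC^{\op},\Sp)$ is given by $X\mapsto \map_{\Sp(\CC)}(\Sigma^\infty_+(-),X)$, and after the identification $\Sp(\CC)\simeq \CC$ this is precisely $y$.

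The only genuinely delicate point is bookkeeping the compatibility of the various symmetric monoidal identifications — in particular that the equivalence $\CC\otimes \Sp\simeq \CC$ is symmetric monoidal and compatible with the symmetric monoidal structure used in \Cref{thm:Day-convolution-is-mon-struc-on-PrL-tensor}, and that passing from $\Fun^{\rR}(\CC^{\op},\Sp)$ to $\Fun^{\lex}((\CC^\omega)^{\op},\Sp)$ intertwines Day convolution with localized Day convolution. Both are already handled in the proof of \Cref{thm:Day-convolution-is-mon-struc-on-PrL-tensor} (which explicitly passes through $\Fun^{\rR_\kappa}((\CC^\kappa)^{\op},\CD)$) and in \Cref{lem:day_conv_localization}, so the corollary follows by specialization with essentially no new work; the proof can simply cite \Cref{thm:Day-convolution-is-mon-struc-on-PrL-tensor} together with the identification of the underlying functor from \Cref{def:spectral_yoneda}.
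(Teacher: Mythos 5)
Your proposal is correct and follows essentially the same route as the paper: the paper's proof also factors the equivalence as $\CC \simeq \CC\otimes\Sp \simeq \Fun^{\lex}((\CC^\omega)^{\op},\Sp)$, citing \cite{HA}*{Proposition 4.8.2.10} for the symmetric monoidality of the first step and \Cref{thm:Day-convolution-is-mon-struc-on-PrL-tensor} for the second. Your additional check that the underlying functor is the spectral Yoneda embedding (via \Cref{def:spectral_yoneda}) is left implicit in the paper but is the same identification.
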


\begin{proof}
The equivalence of the corollary factors as the composite 
\[
\CC\xrightarrow{\sim} \CC\otimes \Sp \xrightarrow{\sim} \Fun^{\lex}((\CC^{\omega})^{\op},\Sp).
\]
The first map is symmetric monoidal by \cite{HA}*{Proposition 4.8.2.10} and the second by the previous theorem.
\end{proof}

\bibliographystyle{plain}
\bibliography{reference}
\end{document}